\newtheorem{theorem}{Theorem}[section]
\newtheorem{lemma}[theorem]{Lemma}
\newtheorem{proposition}[theorem]{Proposition}
\newtheorem{corollary}[theorem]{Corollary}
\theoremstyle{definition}
\newtheorem{definition}[theorem]{Definition}
\newtheorem{example}[theorem]{Example}
\newtheorem{remark}[theorem]{Remark}
\newtheorem{claim}{Claim}[theorem]
\newtheorem{definitionconstruction}[theorem]{Construction}
\newcommandx{\change}[2][1=]{\todo[linecolor=red,backgroundcolor=red!25,bordercolor=red,#1]{#2}}
\newcommandx{\maychange}[2][1=]{\todo[linecolor=blue,backgroundcolor=blue!25,bordercolor=blue,#1]{#2}}
\newcommandx{\talk}[2][1=]{\todo[linecolor=OliveGreen,backgroundcolor=OliveGreen!25,bordercolor=OliveGreen,#1]{#2}}
\newcommandx{\improvement}[2][1=]{\todo[linecolor=Plum,backgroundcolor=Plum!25,bordercolor=Plum,#1]{#2}}
\title{Stratified Homotopy Theory and a Combinatorial Whitehead Group for Stratified Spaces}
\author{Lukas Waas}
\begin{document}
\begin{titlepage}
	\centering
	\vspace*{1.5cm} 
	\begin{center} \large 
		{\Large Master Thesis}\\
		\vspace*{2.3cm}
		\textbf{\huge{Stratified Homotopy Theory}\\\vspace*{0.4cm} \Large and a \vspace*{0.4cm}\\ \huge{ Whitehead Group for Stratified Spaces}}\\
		\vspace*{1.5cm}
		Lukas Waas\\
		\vspace*{2cm}

		\begin{align*}	
		\text{Supervisor: }&\text{Prof. Dr. Markus Banagl}\\
		\end{align*}
		Faculty of Mathematics and Computer Science\\
		University of Heidelberg\\
		\vspace*{1cm}
		October 21, 2020\\
		Last Updated on February 13, 2021
	\end{center}
\end{titlepage}
\thispagestyle{empty}
\chapter*{\centering \begin{normalsize}Abstract\end{normalsize}}
\thispagestyle{empty}
\begin{quotation}
	\noindent \small Simple homotopy theory deals with the question of when a homotopy equivalence $f$ between sufficiently combinatorial topological spaces $X$ and $Y$ can be represented through a sequence of elementary combinatorial moves, called elementary expansions and collapses. It turns out, this question is answered completely by an obstruction element, the Whitehead torsion of $f$, inside of an algebraic group, the Whitehead group of $X$. In this master thesis, we extend this classical perspective to the world of stratified homotopy theory. To obtain a well established framework to work in, we prove a series of results on two model categories of simplicial sets and topological spaces, both equipped with a notion of filtration, introduced by Sylvain Douteau in his PHD thesis. Weak equivalences in these categories are essentially stratum preserving maps that induce weak homotopy equivalences on strata and higher homotopy links. Making use of a filtered version of the simplicial approximation theorem, we show that as long as one restricts to finite filtered simplicial sets, their homotopy theory provides a good model for the filtered topological one. In particular, we show that there is a fully faithful embedding of homotopy categories from the (finite) simplicial into the topological setting. We also use these results to characterize the morphisms in the topological filtered homotopy category between filtered spaces that are triangulable and stratified in some very general sense - which includes most definitions of stratified spaces - as stratified homotopy classes of stratum preserving maps. Having then gained a good theoretical understanding of how stratified homotopy theory can be modeled in the world of simplicial sets, we propose a class of combinatorial elementary expansions for filtered simplicial sets that generalize both the classical ones as well as a class of stratified expansions suggested by Banagl et al. We prove that they fulfill a series of axioms, suggested by Eckmann and Siebenmann for the construction of simple homotopy theory. In doing so, we obtain a combinatorially defined Whitehead group and torsion for filtered simplicial sets (and hence also for triangulable stratified spaces). We then begin a detailed investigation of their formal properties, proving for example that a Mayer-Vietoris formula holds. Using these results we show that every filtered simplicial set has the simple homotopy type (in the sense induced by these expansions) of a filtered simplicial complex of the same dimensions. We then apply the results we obtained on the connection between filtered simplicial sets and filtered topological spaces to obtain a more topological description of the Whitehead group and to generalize the Whitehead torsion to arbitrary stratum preserving maps of triangulated filtered spaces. Finally, we prove that our simple homotopy theory is a generalization of the classical one, in the sense that it agrees with the latter when one considers trivially filtered simplicial sets as CW-complexes.
\end{quotation}
\clearpage
\chapter*{\centering \begin{normalsize}Zusammenfassung\end{normalsize}}
\thispagestyle{empty}
\begin{quotation}
	\noindent \small Einfache Homotopietheorie beschäftigt sicht mit der Frage, wann eine Homotopie-\\äquivalenz $f$ zwischen hinreichend kombinatorisch gearteten Räumen $X$ und $Y$ durch eine Abfolge von elementaren kombinatorischen Operationen, genannt elementare Erweiterungen und Kollapse, dargestellt werden kann. Eine vollständige Antwort auf diese Frage liefert ein Obstruktionselement in der sogenannten Whiteheadgruppe von $X$, die Whitehead torsion von $f$. In dieser Masterarbeit verallgemeinern wir diese klassischen Ergebnisse auf stratifizierte Homotopietheorie. Unser homotopietheoretisches Framework hierfür sind zwei Modellkategorien von filtrierten Objekten - eine von simplizialen Mengen, eine von topologischen Räumen - eingeführt von Sylvain Douteau in seiner Promotion. In beiden Kategorien sind schwache Äquivalenzen dadurch charakterisiert, dass sie schwache Homotopieäqui-\\valenzen auf Strata und (höheren) Homotopielinks induzieren. Mit Hilfe einer stratifizierten Version des simplizialen Approximationssatzes beweisen wir, dass sich die simpliziale Homotopiekategorie (unter leichten Endlichkeitsannahmen) volltreu in die topologische einbettet, also die simplizial filtrierte Welt ein gutes Homotopiemodell für die topologische bietet. Weiterhin charakterisieren wir die Morphismen in den topologischen Homotopiekategorie zwischen einer Klasse von endlich triangulierbaren stratifizierten Räumen, die beinahe alle klassischen Beispiele stratifizierter Räume beinhaltet. Sie sind durch Homotopieklassen strata erhaltender Abbildungen unter stratifizierter homotopie gegeben. Mit diesen Ergebnissen ausgestattet, beginnen wir dann unsere Untersuchung einfacher stratifizierter Homotopietheorie. Wir schlagen eine Klasse elementarer Erweiterungen vor, welche sowohl die klassischen elementaren Erweiterungen als auch eine Klasse stratifizierter Erweiterungen die kürzlichen von Banagl et al. vorgeschlagen wurden, verallgemeinert. Für diese Beweisen wir, dass sie einen Axiomensatz erfüllen, der von Eckmann und Siebenmann für die Konstruktion einer einfachen Homotopietheorie vorgeschlagen wurde. Inbesondere erhalten wir eine kombinatorische Whiteheadgruppe und Whitehead torsion für filtrierte simpliziale Mengen, und damit auch für triangulierbare stratifizierte Räume. Als nächstes folgt eine detailierte Untersuchung von formalen Eigenschaften dieser Whiteheadgruppe und der zu ihr korrespondierenden einfachen Äquivalenzen. Inbesondere beweisen wir eine Mayer-Vietoris Formel und zeigen mit ihrer Hilfe, dass jede filtrierte simpliziale Menge den einfachen Homotopietyp eines filtrierten Simplizialkomplexes hat. Dann nutzen wir die im ersten Teil der Arbeit gewonnen Resultate, um die Whitehead torsion auf triangulierte filtrierte Räume zu verallgemeinern und erhalten so eine Reihe äquivalenter Interpretationen der stratifizierten Whiteheadgruppe. Zu guter Letzt beweisen wir mit Hilfe dieser Charakterisierungen, dass unsere Konstruktion der Whiteheadgruppe im Falle trivialer Filtration natürlich isomorph zu der klassischen ist.
\end{quotation}
\clearpage
\thispagestyle{empty}
\chapter*{\centering
\begin{normalsize}Acknowledgements\end{normalsize}}
\thispagestyle{empty}
\begin{quotation}
\noindent \small First and foremost I want to thank my advisor, Professor Markus Banagl. He was the one who awakened my interest in topology during my first calculus course as a freshman and has accompanied my studies of the subject ever since. Next, I want to thank the Studienstiftung des Deutschen Volkes for their continued financial and intellectual support. Finally, big thanks go out to my friends Ricardo, Jonas, Dario, Tim and Tim and my girlfriend Rike for their help in proofreading and hunting down the commas I tend to spice up every sentence with.
\end{quotation}
\clearpage
\thispagestyle{empty}

\thispagestyle{empty}
\tableofcontents
\addtocontents{toc}{\protect\thispagestyle{empty}}
\thispagestyle{empty}
\setcounter{chapter}{-1}
\setcounter{page}{0}
\chapter{Introduction}
The goal of this work is the extension of methods of a rather classical field of algebraic topology, simple homotopy theory, to one that has recently seen a lot of development, stratified homotopy theory. While our investigation is purely theoretical, we want to motivate it by an example originating from topological data analysis (TDA).\\
\\
The pipeline of TDA is often described as follows. Given some dataset, for example a point cloud in euclidean space, one wants to identify some of its topological features. As a point cloud on its own is discrete, the first step to do so is to replace it by an appropriate, more geometrically interesting object. This is usually done by connecting its points through simplices, thus obtaining a simplicial complex (or a family of simplicial complexes, depending on certain parameters), following certain rules that consider, for example, the distance between points. Finally, one applies a topological invariant to this resulting simplicial complex, obtaining some algebraic data, that one can interpret. This invariant is typically (co)homology with coefficients in some field. Homology of course is a homotopy invariant. Recently, however, there has been a lot of interest in replacing homology in this final step by some other (not necessarily topological) invariant (see for example \cite{bendichHarer2011}). Doing so, one hopes to pick up on different, and potentially finer features than homology can identify (see for example \cite{rieck2019persistent} and \cite{bendichHarer2011}). One such invariant is intersection homology, as suggested in \cite{bendichHarer2011}. Intersection homology is an invariant of spaces equipped that are equipped with a filtration that is particularly adapted to working with spaces with singularities. In this setting, the classical pipeline begins with a point cloud in which some points have been marked as belonging to a singularity. The simplicial complex $K^1$ obtained from this point cloud (for example through the Vietoris-Rips construction) is then naturally equipped with a subcomplex $K^0$ given by the full subcomplex spanned by the points marked as singular. Such a pair is called a filtered simplicial complex (filtered over $\{0,1\}$). For this filtered simplicial complex $K = (K^0 \subset K^1)$, one then computes the intersection homology (with respect to some perversity and formal codimension, in the sense of \cite{friedman2020}).
\begin{figure}[H]
	\centering
	\includegraphics[width=0.45\linewidth]{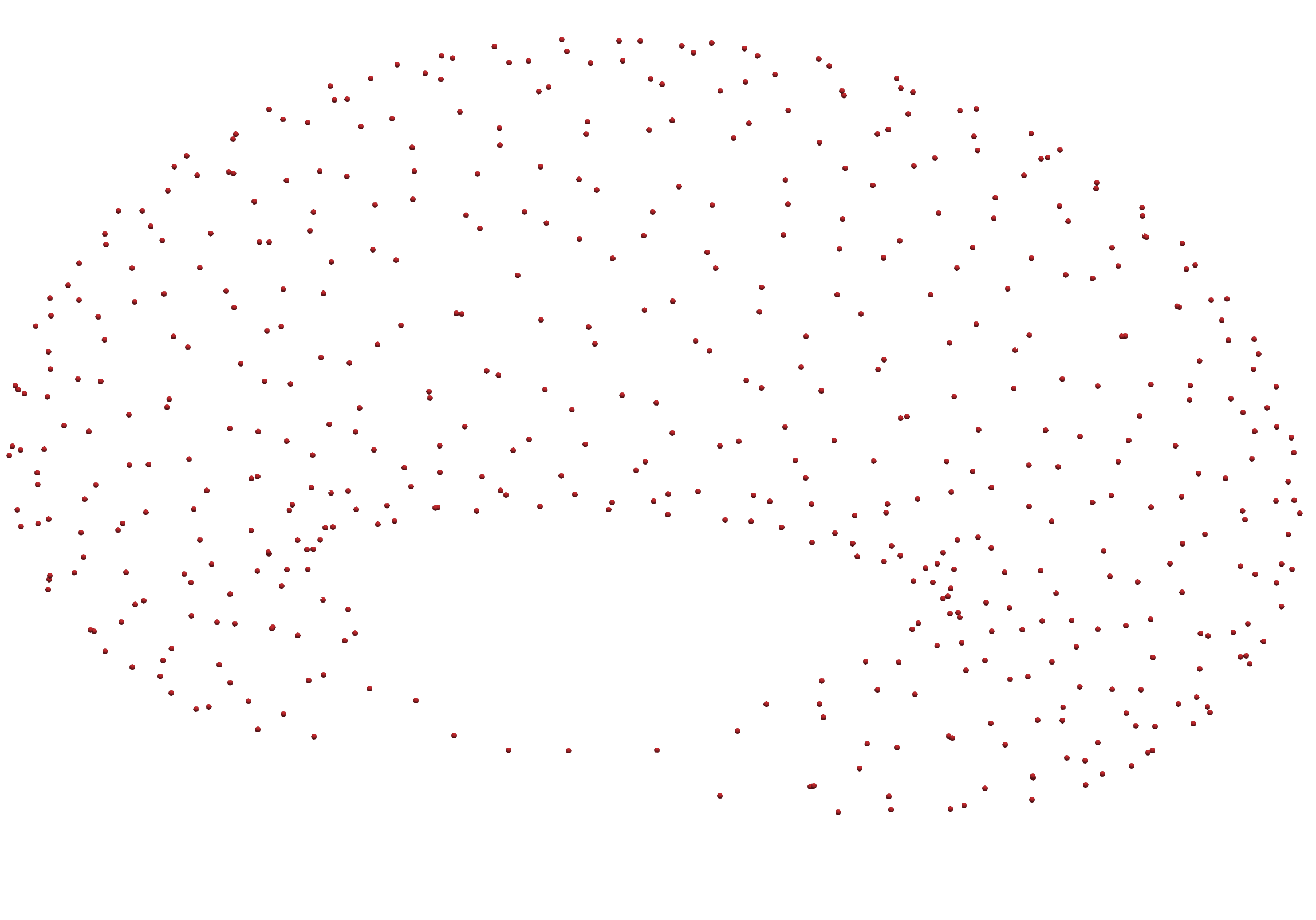}
	\includegraphics[width=0.45\linewidth]{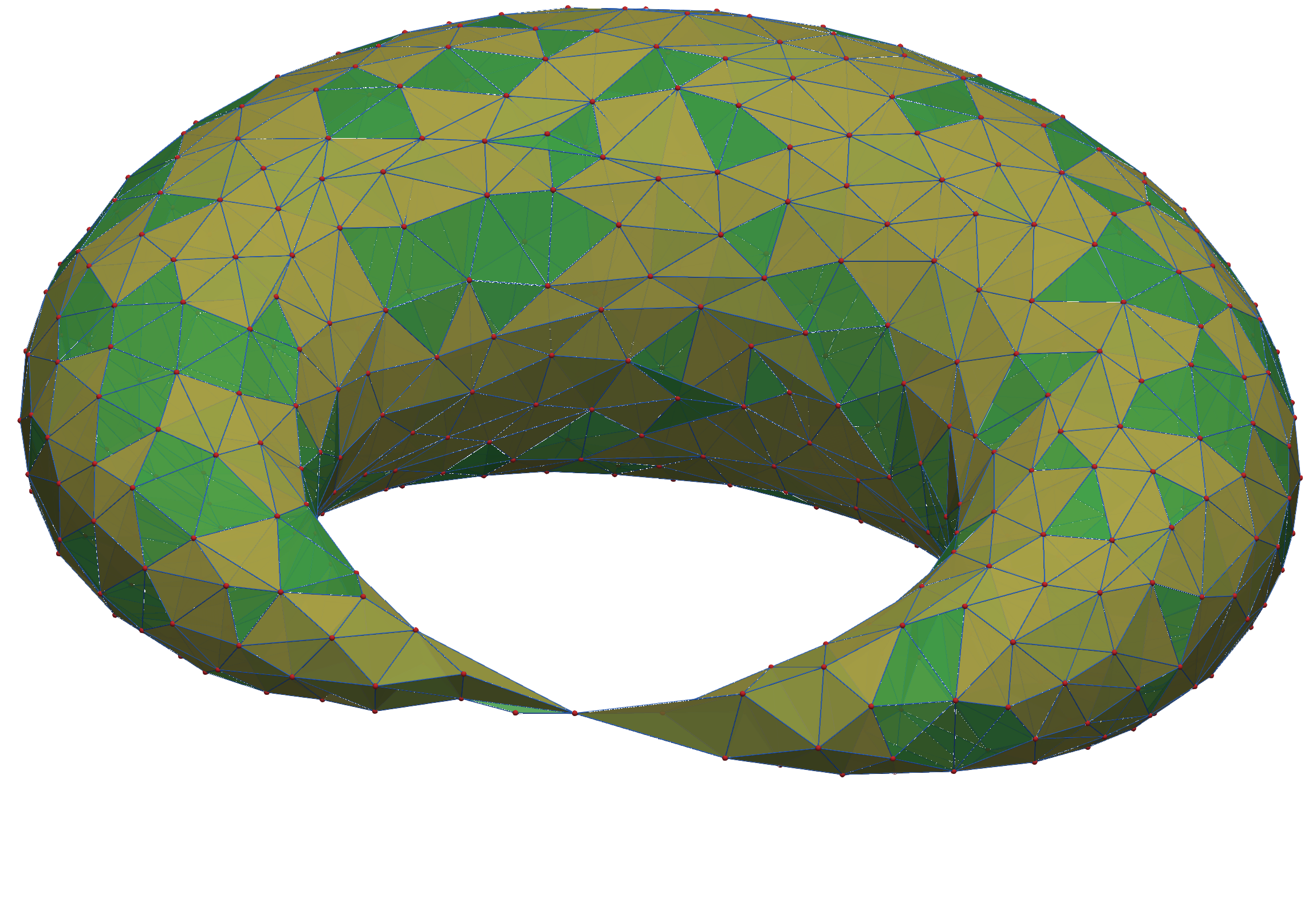}
	\caption{The left shows a point cloud in $\mathbb R^3$ sampled from a pinched torus. The right shows a Vietoris-Rips complex built from it, up to dimension $3$. Three-dimensional simplices are marked in a lighter green. $K^0$ is taken to be the pinched point.}
	\label{fig:exVRc}
\end{figure}
In practice, the filtered complex $K$ can be rather large and high dimensional even if the point cloud is located in low dimensional euclidean space. For computational reasons, one may therefore ask the question whether it is possible to replace $K$ by a smaller filtered complex, without changing its intersection homology. This is essentially the approach of \cite{banagl2020stratified}. There, the authors characterize certain elementary combinatorial moves that do not change the intersection homology - the stratified homotopy type, to be more precise - of a filtered simplicial complex. These moves are a generalization of elementary collapses of simplicial complexes, due to Whitehead. One says that a simplicial complex $K$ collapses elementarily to $L$ or equivalently, $L$ expands elementarily to $K$ if $K$ is obtained from $L$ through filling a horn. That is, $K$ is obtained from $L$ by adding a simplex that has all but one of its proper faces present in $L$ together with the missing face. \begin{figure}[H]
	\centering
	$\vcenter{\hbox{\includegraphics[width=0.45\linewidth]{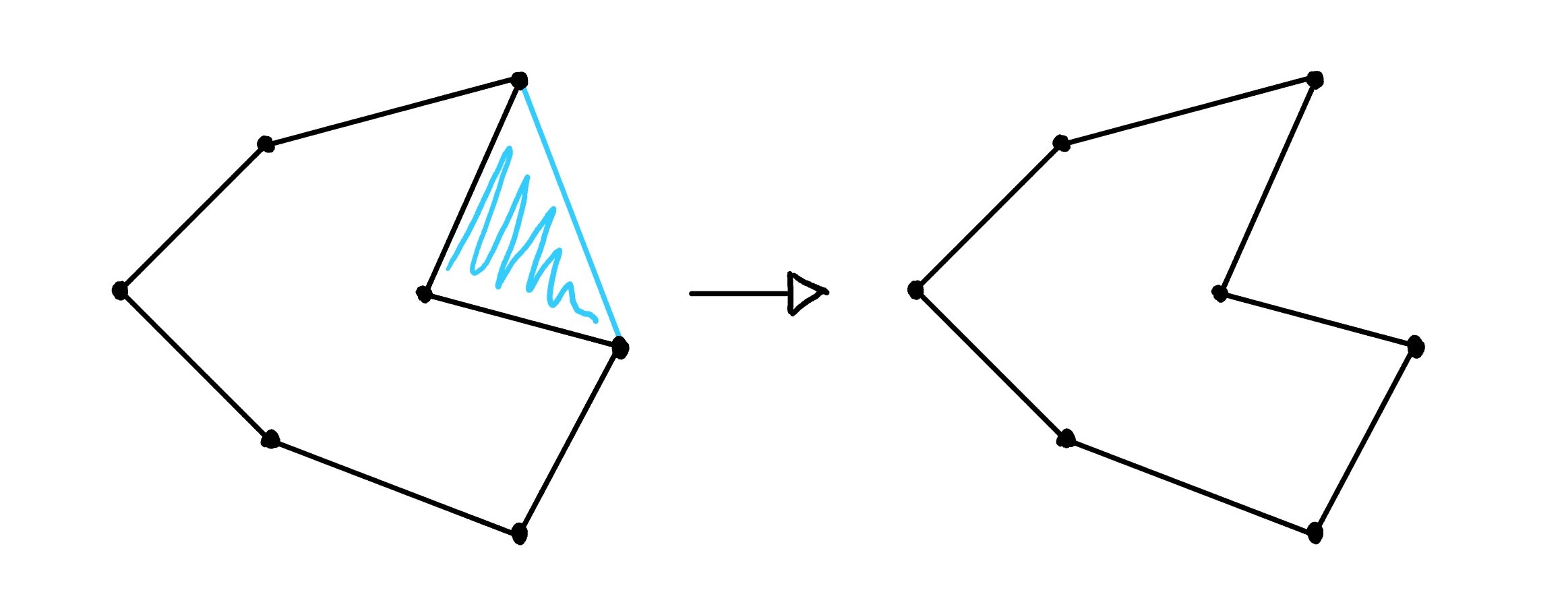}}}$ \hspace*{.2in}
	$\vcenter{\hbox{\includegraphics[width=0.45\linewidth]{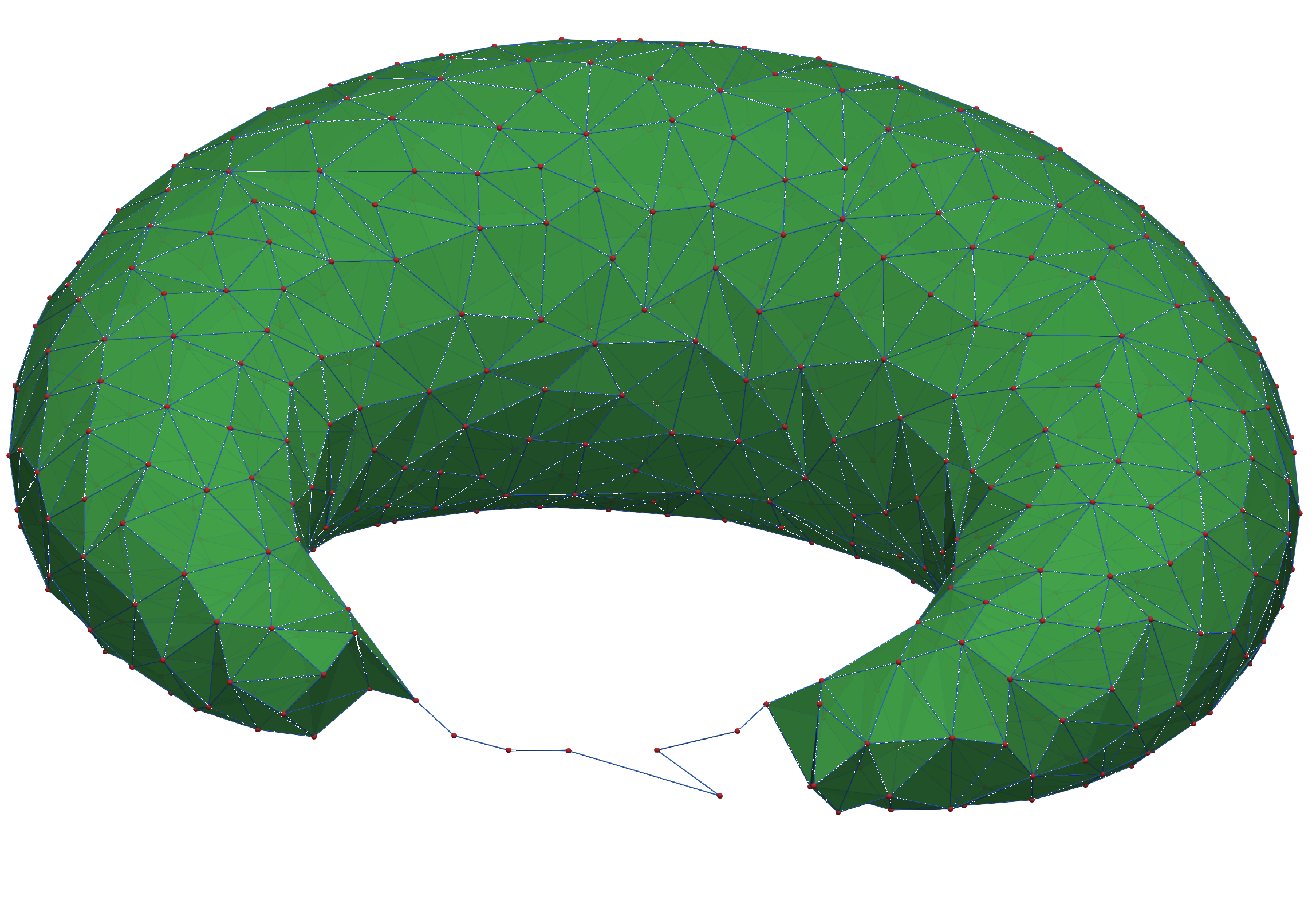}}}$
	\caption{To the left is an elementary collapse in the classical setting. The added simplex and its face are marked in blue. The picture on the right shows a reduced complex obtained from the filtered complex in \Cref{fig:exVRc} through the collapses described in \cite{banagl2020stratified}.}
	\label{fig:introElemCol}
\end{figure} The realization of the inclusion $L \hookrightarrow K$ is then a homotopy equivalence. A map $f:|K_0| \to |K_1|$ homotopic to the composition of the realizations of such elementary expansions and their inverses is called a simple homotopy equivalence. The field concerned with the question of the relationship between simple homotopy equivalences and general homotopy equivalences is simple homotopy theory. At the same time, the field concerned with the question of finding a homotopy theoretical setting to formulate intersection homology in is stratified homotopy theory. Thus, the study of such combinatorial operations in the stratified (filtered) setting lies in the intersection of these two fields. If one wants to obtain a deeper understanding of how to use such operations in practice, what other possible combinatorial manipulations might be useful, and what their theoretical limitations are - that is, which homotopy equivalences (in some appropriate stratified sense) can be represented through such operations - this intersection is what one needs to study. Let us call it simple stratified homotopy theory.\\
\\
A filtered space is (classically) a topological space, $X$, together with a filtration by closed subspaces $X^0 \subset ... \subset X^n = X$. Such a space $X$ then decomposes (on a set level) into the disjoint union of the subspaces $X_k = X^k\setminus X^{k-1}$. These spaces are called strata. Filtered spaces arise naturally in numerous fields of mathematics. Usually, they can be thought of as specifying singularities in a space. For example, a filtration can be obtained by repeatedly taking the singular locus in a complex algebraic variety. In these types of settings, the filtrations are often particularly "nice" when it comes to the shape of their strata (for a complex variety they are smooth complex manifolds) and the interactions between the strata. Depending on what kind of niceness conditions one focuses on, one arrives at various different notions of a stratified spaces (for a good introduction, we recommend for example \cite{banagl2007topological} and \cite{friedman2020}). Stratified homotopy theory is concerned with the question of finding a good homotopy theoretical setting for these kind of objects. Intersection homology for example is generally not a homotopy invariant, however, it is invariant under homotopy equivalences that respect the stratification in a appropriate way, called stratified homotopy equivalences (see \cite[Prop. 4.1.10.]{friedman2020}).\\
\\
In the non-stratified setting, i.e. the classical one, the question whether a homotopy equivalence $f:|K| \to |L|$ between two realized (finite) simplicial complexes (or CW-complexes, with the geometrically analogous notions of expansions and collapses, see \cite{cohenCourse}) is a simple homotopy equivalence is entirely answered by a certain element of the
so called Whitehead group, denoted $Wh(K)$, of $K$. This element is called the Whitehead torsion of $f$, $\tau(f)$. It turns out that $\tau(f)$ is $0$ if and only if $f$ is a simple homotopy equivalence. One might think of this result as the fundamental theorem of simple homotopy theory. The Whitehead group admits numerous equivalent constructions ranging from the purely geometric-combinatorial to algebraic constructions, entirely dependent on the fundamental group (groupoid to be more precise) of $K$ (see for ex. \cite{cohenCourse}). Similarly, there are several different interpretations of what $\tau(f)=0$ entails geometrically. In case where $f$ is an $h$-cobordism - that is, $K$ is (a CW-structure on) a closed, connected manifold $M$, $|L|$ is a compact manifold with boundary $M \sqcup N$, and $f$ is the inclusion into the boundary, in addition to being a homotopy equivalence - then if $\textnormal{dim}(M)\geq 5$, $\tau(f)$ essentially measures whether $f$ is given by a boundary inclusion $M \hookrightarrow M \times I$. This result is called the $s$-cobordism theorem, and it is widely regarded to be one of the greatest contributions of simple homotopy theory (see for example \cite{waldhausen2000spaces} for a detailed study of this phenomenon in the topological, smooth and PL setting). 
Much of the work on simple stratified homotopy theory has been done with this kind of result in mind. See for example \cite{weinberger1994topological} for a good overview. However, this is not the approach we are going to take. The goal of this master thesis is to develop the combinatorial perspective - that is, the one induced by collapses and expansions - for the stratified setting. To be more precise, we identify a certain class of elementary expansions that preserve the (weak) stratified homotopy type (in the sense of \cite{douteauEnTop}). We then prove the existence of a Whitehead group and Whitehead torsion for this setting, which behave analogously to the classical perspective, that is, they measure whether an equivalence can be built from these simple combinatorial operations. This is be the content of \Cref{chII}. To the best of our knowledge, no such work has been done by previous authors. Comparing our formulation of simple stratified homotopy theory to the one of previous authors will certainly prove an interesting topic for future work.\\
\\
To formulate our combinatorial perspective, we make use of the category of simplicial sets filtered over a poset $P$, $\textnormal{s\textbf{Set}}_P$, introduced in \cite{douSimp}. Much of the work of this master thesis consists of showing what they model (homotopically speaking) on the topological side. This is necessary if one wants to understand what the combinatorial operations and the induced notions of Whitehead group and Torsion measure from a more topological perspective. In \cite{douteauFren}, the author has begun this comparison process. He has introduced model structures on $\textnormal{s\textbf{Set}}_P$ and on $\textnormal{\textbf{Top}}_{P}$, the category of topological spaces filtered over $P$, and studied their interaction. We expand on this work. In fact, we show that the realization functor between these categories preserves weak equivalences and, as long as one restricts to a sufficiently compact setting, induces an equivalence of categories on the homotopy categories. These types of investigations are the content of \Cref{chI}. While a priori not necessary for the construction of the Whitehead invariants, this allows us to interpret them from the topological perspective. Independently from this particular application, we think that they might add to a deeper understanding of stratified homotopy theory, in particular when it comes to a homotopy theoretical formulation of intersection homology. 
\section{Statement of results and structural overview}
As we already mentioned in the previous paragraph, the content of this thesis is split up into two partially independent investigations. 
In this subsection, we give a rough overview of our main results. They are marked in bold font. When stating the content theorems, we often state them in a more conceptual form to avoid too many definitions in this introductory statement. The precise statements are of course found under the respective references. \\
\\
\textbf{ \Cref{chI}: }This chapter is an investigation into two suggested model categories for stratified homotopy theory, suggested in \cite{douteauEnTop} and \cite{douSimp}. It can be read entirely independently from any questions on simple homotopy theory, however, the choice of setting is strongly motivated by such questions. For example, we restrict to the case of a fixed underlying type of stratification. This is sufficient for the study of most notions of equivalence, but of course not for a homotopy theory of more general maps of stratified spaces. In \cite{douSimp} and \cite{douteauEnTop}, Douteau introduced a model category of simplicial sets, $\textnormal{s\textbf{Set}}_P$, and one of topological spaces, $\textnormal{\textbf{Top}}_{P}$, both filtered over some fixed partially ordered set $P$. The objects in $\textnormal{\textbf{Top}}_{P}$ are (certain) topological spaces, filtered by a family of closed subspaces indexed over $P$. This induces a notion of strata, indexed over $P$, as in the case where $P$ is linear. The morphisms are then such maps that retain the stratification index.
The two categories are connected through a realization, singular simplicial set adjunction, $|-|_P \dashv \operatorname{Sing}_P$. The notion of weak equivalences in both cases can be expressed by inducing isomorphisms on analogues of homotopy groups, adapted to the stratified setting. Passing to these weak equivalences that are slightly more general than general stratified homotopy equivalences allows for the extra degree of freedom necessary for our combinatorial formulation of simple stratified homotopy theory. \Cref{chI} is mainly concerned with connecting these two model categories.
To be precise, our main results here are the following. \\
\\First off, it is the content of\textbf{ \Cref{thrmWeakEquSustain}} that the realization functor $|-|_P: \textnormal{s\textbf{Set}}_P \to \textnormal{\textbf{Top}}_{P}$ preserves weak equivalences.
We show this result by using results of \cite{douteauFren} in a more rigid filtered setting and translating these to the usual filtered setting by a comparisson of homotopy links argument, \textbf{\Cref{thrmHolWeakEq}}. \\
\\
In particular, as an immediate consequence of this, we have that $|-|_P$ induces a functor of the respective homotopy categories, denoted by adding an $\mathcal H$. Under the additional assumption that we restrict to the full subcategory of $\mathcal H \textnormal{s\textbf{Set}}_P$, $\mathcal H \textnormal{s\textbf{Set}}_P^{fin}$, given by simplicial sets with finitely many non-degenerate simplices, we further show \textbf{\Cref{thrmFullyFaithful}}: The induced functor of homotopy categories $\mathcal H \textnormal{s\textbf{Set}}_P^{fin} \to \mathcal H \textnormal{\textbf{Top}}_{P}$ is fully faithful.\\
\\
To show the latter result, we make use of a filtered analogue of the simplicial approximation theorem. This result is not entirely new. The proof and the theorem are based on methods and results found in \cite{schwartz1971}. However, there are some technical problems in the source which we were able to circumvent. Furthermore, we keep explicit track of subdivisions, which is be necessary for our purposes. The proof is decidedly more involved than the non-filtered setting. We finally obtain \textbf{\Cref{thrmSimplicialApproximationB}}, a filtered analogue to the simplicial approximation theorem, which also states that stratification respecting homotopies between realizations of morphisms in $\textnormal{s\textbf{Set}}_P$, between appropriately filtered ordered simplicial complexes, are witnessed by a sufficiently high degree of subdivision.
This result, independently from the remainder of this work, should be interesting when it comes to comparing the filtered PL-setting to the topological one.
We finally reduce the question of general filtered simplicial sets to that of filtered simplicial complexes, by showing that, at least in a finite setting, every $P$-filtered simplicial set can be replaced by a $P$-filtered ordered simplicial complex, up to simple equivalence. This is a consequence of our result \textbf{\Cref{thrmSSvSC}}, which states that this even holds up to an appropriate notion of simple equivalence. \\
\\
\\As a side effect of the simplicial approximation theorem, we are also able to actually characterize the morphisms in $\mathcal H \textnormal{\textbf{Top}}_{P}$ between objects that are not cofibrant with respect to the model structure. At least between compact, piecewise linear filtered spaces that are stratified in some appropriate sense, we show that they are actually given by stratum respecting homotopy classes between maps of $P$-filtered spaces (\textbf{\Cref{thrmHoClassofFSpace}}). This result hints at the fact that there might be an additional model structure on $\textnormal{\textbf{Top}}_{P}$, having the same equivalences, where the fibrant objects are actually spaces that are "nicely" stratified. We hope to prove such a result in later work.\\
\\
\textbf{ \Cref{chII}: }Having gained a better understanding of our stratified homotopy categories, we then move on to \Cref{chII}, the simple stratified setting. We do so in the setting of $\textnormal{s\textbf{Set}}_P$. Hence, most of the chapter only requires \Cref{subsecNotation} and \Cref{subsecPsset} - \Cref{subsecRepRes} to obtain an abstract understanding. Only at \Cref{secTopWh}, do the results of \Cref{chI} come into play. However, they are of course useful to have in mind as they justify that our combinatorial musings actually have relevance to the topological realm. \\
\\
Our approach to simple homotopy theory is the usage of a general category theoretical framework for simple homotopy theory developed independently by Eckmann and Siebenmann (\cite{eckmann2006} and \cite{siebenmannInfinite}). For this framework to be applicable, one needs to specify a class of morphisms (taking the role of expansions), that fulfill a certain set of axioms (see \Cref{thrmEckSieb}). 
Our analogue to the elementary expansions, described in the classical setting in our introduction before this subsection, are given by pushouts of what Douteau calls ``admissible horn inclusions''. These admissible horn inclusions are a generating set for the trivial cofibrations in $\textnormal{s\textbf{Set}}_P$. The induced notion of expansion is called (finite) filtered strong anodyne extensions, FSAE, for short. They are a more general class of operations than the ones introduced in \cite{banagl2020stratified} and generally only provide weak equivalences in $\textnormal{s\textbf{Set}}_P$ and $\textnormal{\textbf{Top}}_{P}$, not stratified homotopy equivalences (i.e. homotopy equivalences in the sense in the sense of stratum preserving homotopies of stratum preserving maps). However, whenever two realizations of filtered simplicial sets that are "nicely" stratified, not just filtered, are connected by a zigzag of such expansions, they are actually stratified homotopy equivalent, by an analogue to the Whitehead theorem (\Cref{thrmWhitehead}) or an appeal to \Cref{thrmHoClassofFSpace}. At the same time, this additional degree of freedom makes them behave more similar to the classical expansions. We begin illustrating these advantages in \Cref{subsecElemExp}. We then translate methods developed in the non-stratified setting by Moss (\cite{MossSae}) to the stratified one to obtain a series of equivalent characterizations of FSAEs which are often easier to verify (\textbf{\Cref{propEqCharSaeTot}}). \\
\\We use these to show that FSAEs interact with mapping cylinders in the simplicial setting much like expansions of CW-complexes interact with mapping cylinders of cellular maps. This is the content of \textbf{\Cref{propCylIncs}}. 
 \\As a consequence of these types of results, we are then able to show that the axioms of \Cref{thrmEckSieb} are fulfilled in the setting of $\textnormal{s\textbf{Set}}_P^{fin}$ and FSAEs (\textbf{\Cref{thrmEckSiebAxAreTrue}}). In particular, this gives us a of Whitehead group and Whitehead torsion that measures whether a morphism in the homotopy category $\mathcal H \textnormal{s\textbf{Set}}_P^{fin}$ is given by a zigzag of FSAEs, i.e. is a simple equivalence in this setting. We show that these behave much like the classical torsion and Whitehead group, thus laying a solid theoretical groundwork for further, more geometrically minded analysis. This work does not yet contain any attempts at computing our Whitehead group. However, a first step in this direction, allowing for a possible reduction to the classical case, is that we have shown a Mayer-Vietoris formula (\textbf{\Cref{propBigSumFormula}}).\\
 \\
 Next, we obtain a $P$-filtered simplicial set analogue to the well known result that every finite CW-complex has the weak homotopy type of a finite simplicial complex. This is the content of \textbf{\Cref{thrmSSvSC}}, which can be proven independently from any of our results in \Cref{chI}, and in fact was the deciding argument there for the reduction from filtered simplicial sets to filtered ordered simplicial complexes.
 \\
	Then, we move on to obtaining an interpretation of the Whitehead torsion for morphisms in the topological filtered homotopy category $\textnormal{\textbf{Top}}_{P}$. This is of course through the use of methods from \Cref{chI}. In \Cref{propCharOfWh}, we then summarize all the equivalent descriptions of our Whitehead group and Whitehead torsion, that we obtained. Among them is that the Whitehead torsion of a weak equivalence between realizations of finite $P$-filtered simplicial sets, $f: |X|_P \to |Y|_P$, disappears if and only if it comes from a zigzag of (realizations of) elementary expansions in $\mathcal H\textnormal{\textbf{Top}}_{P}$. The Whitehead group of $|X|_P$ (as before) is then described by the class of isomorphism $|X|_P \to |Y|_P$ in $\mathcal H\textnormal{s\textbf{Set}}_P$ modulo post composition with such simple equivalences.\\
	\\
	Finally, we prove that our theory is an extension of the classical one. That is, for the special case where $P = \star$ is a one-point set we show that our Whitehead group is naturally isomorphic to the non filtered one, under the embedding of simplicial sets into CW-complexes. This is the content of \textbf{\Cref{thrmOldNewAgree}}.
\section{Some conventions and notation}
We use this subsection to fix some notation and conventions. We freely make use of standard language of category theory, trying to add a reference when we deviate from the most basic notions.
	\begin{itemize}
			\item Throughout all of this thesis, we denote by $P$ some fixed, possibly infinite, partially ordered set. By a \textit{flag} in $P$, we mean a finite, linearly ordered subset of $P$. Flags are denoted in the form $\{p_0 \leq ... \leq p_n\}$, for $p_i \in P$. We also need flags that allow for repetition. These are called \textit{d-flags}, where the $d$ stands for degenerate. They are denoted in the form $(p_0 \leq ... \leq p_n)$, for $p_i \in P$. We avoid the more standard $[ - ]$ notation as it interferes with the standard notation of setting $[n] := \{0, ..., n\}$.
			\item By $\textbf{Set}$, $\textbf{Ab}$ and $\textbf{AbMon}$ we denote the categories of sets, abelian groups and abelian monoids respectively.
			\item When we speak of topological spaces, we do not mean arbitrary ones, but such topological spaces that are $\Delta$-generated (see \cite{nlab:delta-generated_topological_space}). This means that they have the final topology with respect to all continuous maps coming from realizations of standard simplices (or equivalently from finite dimensional euclidean space). The category of such spaces, together with (continuous) maps as morphisms is denoted by $\textnormal{\textbf{Top}}$. This is a convenient category of topological spaces (in the sense of \cite{nlab:convenient_category_of_topological_spaces}). The reason we do not go with the more standard compactly-generated weakly Hausdorff spaces, is that we are constantly be working with partially ordered sets equipped with the Alexandroff topology (the closed sets are the down-sets). These are only $\operatorname{T}_1$ when they are discrete. Limits in the $\Delta$-generated category are obtained, by taking limits in $\textnormal{\textbf{Top}}$ and then taking the final topology with respect to all maps from simplices into this space. Colimits are computed just as in the naive topological category. We will take care to mention potential issues arising from this, when they occur. However, most constructions we use are purely categorical here and the differences in topoology are basically irrelevant from a homotopy theoretical perspective. This is so, as there is no difference to these settings as long as one ``probes'' with $\Delta$-generated objects such as spheres. The reader having any remaining doubts is recommended a look at \cite{duggerDelta} and at \cite{nlab:delta-generated_topological_space}.
			\item We freely make use of much of the standard language of algebraic topology. Standard notation of spaces includes $I$ for the unit interval and $S^n$ and $D^n$ for the $n$-dimensional sphere and disk respectively.
			\item
			Besides standard results and notions of algebraic topology, we freely make use of the language of simplicial sets and of simplicial homotopy theory. The category of simplicial sets is denotes by $\textnormal{s\textbf{Set}}$. We use most of the standard notation, found for example in \cite{goerss2012simplicial}. 
			\item Further, combinatorial objects of interest are (abstract) ordered and nonordered simplicial complexes. By a simplicial complex $K$ we mean a set $K^{(0)}$ (of vertices), together with a subset of its power set that only contains finite sets and is closed under the subset relation. We have allowed the empty simplex here as it makes some notation more canonical, for example when it comes to joins, but of course this is purely a matter of taste. We freely use most of the language associated to simplicial complexes (simplices, subdivisions, faces etc.), found for example in \cite[Ch. 3]{spanier1989algebraic}. By $\textnormal{\textbf{sCplx}}$, we denote the category of simplicial complexes, with morphisms given by simplicial maps (maps of simplicial complexes), i.e. maps on the underlying vertex sets, such that the image of each simplex is again a simplex. An ordered simplicial complex is a simplicial complex that is additionally equipped with a linear ordering on each simplex such that these orderings are compatible under inclusions of faces. A map of ordered simplicial complexes is a simplicial map that is additionally order preserving on the simplices. The category of ordered simplicial complexes is denoted by $\textnormal{\textbf{sCplx}}^{\operatorname{o}}$. 
			\item
			Finally, we make free use of the language of model categories, in particular simplicial ones. Most of this can be found for example in \cite{hirschhornModel}, but we give references when we first use a new terminology deviating from the most basic definitions. The homotopy category of a model category, obtained by localizing the weak equivalences, is denoted by adding a $\mathcal H$ to the name of the category. We sometimes use this notation for types of homotopy categories, that do not necessarily originate from a notion of model category as well. Morphisms in these homotopy categories that come from ones in the original category are denoted in the form $[f]$. If the model category is simplicial we denote by $[X,Y]$ the set of simplicial homotopy classes between $X$ and $Y$. For the classical simplicial structure on topological spaces, for example, these are just the homotopy classes of continuous maps.
	\end{itemize}
\chapter{Stratified Homotopy Theory} \label{chI}
In this chapter, we are going to set and describe the framework, in which our stratified simple homotopy theory is formulated. As already mentioned, this is based on work of Douteau in \cite{douteauEnTop, douSimp, douteauFren}. Since we have just given a rather detailed overview of the structure and content of this chapter in the overview of results, we refrain from giving another such summary here. The respective sections and subsections are each equipped with such a short summary anyway.
\subsection{Some notation and language for filtered objects}\label{subsecNotation}
Throughout this work, we are constantly be working with objects that are in some sense filtered over a partially ordered set. As it would be quite a hassle to introduce the analogous notation and language over and over again, we summarize some generalities in this subsection. We follow the choice of language in \cite{douteauFren} here. This is in no way intended to be a category theoretical theory of what filtrations are, and purely intended as fixing some language and general constructions. In fact, the language is somewhat conflicting with what is usually defined to be a category of filtered objects (see \Cref{remBadLang}). While writing out such a general theory might be an interesting undertaking, we did not find that it really adds anything mathematically here. In particular, if one deviates too far from the examples we give here, this language can turn out to be somewhat nonsensical. It is not intended in this way. In other words: Is this construction phrased appropriately for a general theory of filtered objects? Definitely not. Is it sufficient to quickly introduce language for a multitude of scenarios we care about? Yes, it is.\\
\\ 
Denote by $\textnormal{\textbf{Poset}}$ the category of partially ordered sets, with morphisms given by order preserving maps.
\begin{definitionconstruction}\label{conFilteredCat}
	Let $\mathcal C$ be some category. (The reader should have the categories of ($\Delta$-generated) topological spaces, simplicial sets, and simplicial complexes in mind). Furthermore, let $F: \textnormal{\textbf{Poset}} \longrightarrow \mathcal C$ be some functor. Then, for each $P \in \textnormal{\textbf{Poset}}$, consider the over category $\mathcal C_{\slash F(P)}$, over $F(P)$. That is, the category with objects given by arrows $p_C:C \to F(P)$ in $\mathcal C$ and morphisms given by commutative diagrams:
	\begin{center}
		\begin{tikzcd}
		C \arrow[rd ,"p_C", swap] \arrow[rr]& & C' \arrow[ld, "p_{C'}"]\\
		&F(P)&
		\end{tikzcd}.
	\end{center}
	We call $\mathcal C_{\slash F(P)}$ the \textit{category of $P$-filtered objects with \textit{stratum preserving morphisms} in $\mathcal C$ (with respect to $F$)}. We mostly omit the "with respect to $F$" from here on out, but one should note that it is important to keep track of it if one wants to obtain a notion of strata. If the objects and morphisms in $\mathcal{C}$ have particular names, let us say spaces and maps, the objects and morphism in $\mathcal C_{\slash F(P)}$ are called \textit{$P$-filtered spaces} and {stratum preserving maps} respectively. We usually omit explicitly mentionining that the morphisms are to be stratum preserving, but one should keep in mind that there are other notions of morphisms of filtered objects (see \cref{remBadLang}). By a slight abuse of notation, we often refer to a $P$-filtered object by its underlying object in $\mathcal C$. The arrow $p_C$ is then referred to as \textit{the filtration} of $C$.
\end{definitionconstruction}
This construction is functorial in a number of ways, summarized in \Cref{conFiltFun}. We start, however, with
 a series of examples of such categories so that the reader has some examples in mind. In our cases, the generated filtered categories have somewhat more concrete descriptions than just being some arrow category. We give such descriptions in the respective subsections.
\begin{example}\label{exFilteredObj}
	Our main examples for $F$ as in \Cref{conFilteredCat} and hence categories of filtered objects are the following. 
	\begin{itemize}
		\item \textbf{Filtered topological spaces: }Given a partially ordered set $P$ we can equip it with its Alexandroff topology; that is, the topology with closed sets given by the sets that are closed below under the partial order on $P$. By abuse of notation, we also refer to this space as $P$, as it contains all the data of the original partially ordered set. Then $P$ is in fact a $\Delta$-generated space. To see this, we need to show that a set whose inverse image under every map from the realization of a $n$-simplex $|\Delta^n| \to P$ is closed is a downset. Let $A$ be such a set. In fact, it suffices to probe with intervals instead of arbitrary simplices. For $p \in A$, $p' \leq p$ consider the map \begin{align*}
			\sigma: I &\longrightarrow P \\
			[0,0.5] \ni t &\longmapsto p'\\
			(0.5, 1] \ni t &\longmapsto p.
		\end{align*}
		This is clearly continuous. But as $(0.5,1]$ is not closed, we have $$p \in A \implies p' \in A.$$ Thus, the Alexandroff topology construction induces a functor $$\textnormal{\textbf{Poset}} \longrightarrow \textnormal{\textbf{Top}},$$ taken as the identity on morphisms. In particular, using this functor as in \Cref{conFilteredCat}, we obtain categories of filtered ($\Delta$-generated) topological spaces. For a fixed poset $P$, we denote this category by $\textnormal{\textbf{Top}}_{P}$. $\textnormal{\textbf{Top}}_{P}$ is studied in detail in the next subsection.
		\item \textbf{Filtered simplicial complexes: } Given a partially ordered set, the subset of its power set, given by all finite linearly ordered subset (flags) of $P$, $\textnormal{sd}(P)$ is a simplicial complex, called the nerve of $P$. Even more, it is an ordered simplicial complex (see \Cref{subsecOrdered}) in the obvious way. The induced covariant map on power sets makes this construction functorial, inducing functors 
		\begin{align*}
			\textnormal{sd}(-): \textnormal{\textbf{Poset}} &\longrightarrow \textnormal{\textbf{sCplx}} \\
			\textnormal{sd}(-): \textnormal{\textbf{Poset}} &\longrightarrow \textnormal{\textbf{sCplx}}^{\operatorname{o}}. 
		\end{align*}
		We denote the induced categories of (ordered) filtered simplicial complexes by $\textnormal{\textbf{sCplx}}_P,(\textnormal{\textbf{sCplx}}^{\operatorname{o}}_P)$. They are studied in detail in \Cref{secSimApp}.
		\item \textbf{Filtered simplicial sets: } Consider the fully faithful embedding of the simplex category $\Delta$ into $\textnormal{\textbf{Poset}}$. Using this embedding, one obtains the nerve functor $$N(-): \textnormal{\textbf{Poset}} \to \textnormal{s\textbf{Set}},$$ by sending $P$ to $$ \Delta \hookrightarrow \textnormal{\textbf{Poset}} \xrightarrow{\textnormal{Hom}_{\textnormal{\textbf{Poset}}}(-,P)} \textnormal{\textbf{Set}}.$$ 
		Explicitly, the $n$-simplices of $N(P)$ are given by (not necessarily strictly) increasing sequences in $P$ of length $n$. In other words, they are given by the d-flags in $P$. The length $n$ of such a d-flag $\mathcal J$ is denoted by $\#\mathcal J$. Note that $N(P)$ can be alternatively thought of as the image of $\textnormal{sd}(P)$ under a Yoneda-style embedding $\textnormal{\textbf{sCplx}}^{\operatorname{o}} \hookrightarrow \textnormal{s\textbf{Set}}$. We do this in detail in \Cref{subsecOrdered}. The induced category of $P$-filtered simplicial sets is be denoted by $\textnormal{s\textbf{Set}}_P$. If $\Delta^n \to N(P)$ is the filtration of an $n$-simplex, then by definition of $N(P)$, it is uniquely determined by a $d$-flag $\mathcal J = (p_0 \leq ... \leq p_n)$. We denote such a filtered simplex by $\Delta^{\mathcal J}$. We begin with a more detailed description of $\textnormal{s\textbf{Set}}_P$ in \Cref{subsecPsset}.
	\end{itemize}
\end{example}
\begin{example}
	It can also be interesting to consider different cases of $P$, for the \Cref{conFilteredCat}. For example if $P =\star$, the one-point set, and $F$ is a functor that preserves terminal objects, then the category $\mathcal C_{\slash F(\star)}$ is the arrow category over a terminal object, which is isomorphic to the original category $\mathcal C$ under the forgetful functor. Hence, in \Cref{exFilteredObj}, we just obtain the categories 
		$\textnormal{\textbf{Top}}$, $\textnormal{\textbf{sCplx}},$ $\textnormal{\textbf{sCplx}}^{\operatorname{o}}$ and $\textnormal{s\textbf{Set}}$. 
\end{example}
\Cref{conFilteredCat} is functorial in the following sense. The reader will of course notice that the following construction can be decomposed into a few varieties of smaller functorialities, but as the examples relevant to us are mostly of the following particular shape this summarized one makes it easier to refer to this construction.
\begin{definitionconstruction}\label{conFiltFun}
	Let $\mathcal C$ and $\mathcal D$ be two categories and $P,Q \in \textnormal{\textbf{Poset}}$. Consider a diagram of functors
\begin{center}
		\begin{tikzcd}
		\mathcal{C} \arrow[rr, "L" {name = L}]& & \mathcal{D} \\
		& \arrow[lu, "F"{name=F}]\textnormal{\textbf{Poset}} \arrow[ru, "G"{name=G}, swap]& 
	\end{tikzcd}
\end{center}
together with a morphism $f: LF(P) \to G(Q)$. Then, for each $P \in \textnormal{\textbf{Poset}}$, we obtain a functor $$\mathcal C_{\slash F(P)} \to \mathcal D_{\slash G(Q)},$$ by applying $L$ to arrows, and then post composing them with $f$. Dual to this, if we are given a diagram of functors 
\begin{center}
	\begin{tikzcd}
		\mathcal{C} & & \arrow[ll, "R" {name = R}]\mathcal{D} \\
		& \arrow[lu, "F"{name=F}]\textnormal{\textbf{Poset}} \arrow[ru, "G"{name=G}, swap]& 
	\end{tikzcd}
\end{center}
together with a morphism $g:F(P) \to RG(Q)$ which $\mathcal C$ has pullbacks along, then we obtain a functor $$\mathcal D_{\slash G(Q)} \to \mathcal C_{\slash F(P)}$$ by first applying $R$ to each arrow, and then changing base (i.e. pulling back) along $g$. 
\\
This is particularly interesting in the case, where $L \dashv R$ are an adjoint pair and $f$ and $g$ correspond to each other under this adjunction. Then, as base change and postcomposition are also adjoint, one obtains that the induced functors also form an adjoint pair (with the same left-right positions).
\end{definitionconstruction}
The first main example one should consider is the case where $Q \subset P$ is a subposet and $R$ is given by the identity functor. In this case this leads to an abstract definition of strata.
\begin{definition}\label{defStrata}
	In the setting of \Cref{conFilteredCat}, fix some partially ordered sets $Q \subset F$. Then we denote by 
	$$(-)_Q: \mathcal C_{\slash P} \to \mathcal C_{\slash Q}$$
	the functor obtained by basechanging along $F(Q) \to F(P)$ (as in \Cref{conFiltFun}). If $Q=\{p\}$ is a one point set, and $F$ preserves terminal objects we can also think of $(-)_{\{p\}}$ as having target in $\mathcal C_{\slash F(\{p\})} \cong \mathcal C$. Then, for $C \in \mathcal C$, we denote this by $C_{p}$ and call it \textit{the $p$-th stratum} of $C$. Furthermore, also for $p \in P$, we denote $C_{\leq p}:=C_{\{q \leq p\}}$. 
\end{definition}
\begin{example}
	If $\mathcal C = \textnormal{\textbf{Top}}$ and $P = [n]$ is some finite linearly ordered set, then a filtered space is a filtered space in the sense of Friedman (see for \cite{friedman2020} (up to being Hausdorff)). Then the strata in our sense are the ($\Delta$-fication of) the strata in the usual sense. The $p$-th filtration space, denoted $T^{p}$ in Friedmans book, is then given by (the underlying space of) $T_{\leq p}$. 
\end{example}
\begin{remark}
	We should make another remark on notation at this point. The reason we chose not to go with the classical notation - of using lower indices for strata and superscript indices for filtration degrees - is that in any sort of inductive proof one quickly runs out of places where one can put their indices. In fact, our situation is already confusing enough, as it is standard to use the lower index for the set of $n$-simplices in a simplicial set $X$. We always denote this set by $X([n])$ to avoid any possible confusion. The superscript is reserved for enumeration of objects.
\end{remark}
Other frequently occurring examples of functors arising through \Cref{conFiltFun} are the following.
\begin{example}\label{exFiltFun} We consider a few examples of functors induced in the filtered setting. 
	\begin{itemize}
		\item \textbf{Subdivisions}: Recall the barycentric subdivision functors, constructed as follows. On a simplex, take its power set as a partially ordered set, then apply $\textnormal{sd}(-)$ or $N(-)$ respectively, inducing a functor $\Delta^{op} \to \mathcal C$, where $\mathcal{C}$ is either $\textnormal{s\textbf{Set}}, \textnormal{\textbf{sCplx}}^{\operatorname{o}}$ or $\textnormal{\textbf{sCplx}}$. This is then left Kan extended to the category of (ordered) simplicial complexes and simplicial sets respectively. More explicitly for simplicial sets one extends via $\textnormal{sd}(X) = \varinjlim_{\Delta^n \to X, n \in \mathbb N} \textnormal{sd}(\Delta^n)$ and for a simplicial complex $K$, one just takes $\textnormal{sd}(K)$ to be the functor of \Cref{exFilteredObj} applied to the set of simplices of $K$ ordered by inclusion. For ordered simplicial complexes and more generally ordered simplicial sets, $\textnormal{sd}$ comes together with a last vertex map $$\textnormal{l.v.}:\textnormal{sd}(\Delta^n) \longrightarrow \Delta^n,$$ induced on the simplex level by sending a vertex in the subdivision $\sigma = (x_0 \leq ... \leq x_k)$ to $x_k$, the maximal vertex with respect to the linear ordering on $\sigma$. This induces a natural transformation $$\textnormal{l.v.}: \textnormal{sd}(-) \to 1_{\mathcal C},$$ where $\mathcal C$ is either $\textnormal{s\textbf{Set}}$ or $\textnormal{\textbf{sCplx}}^{\operatorname{o}}$. Now, in \Cref{conFiltFun}, for $\mathcal C$ either of the three categories above, $L=\textnormal{sd}$ the respective subdivision functor and $f$ the corresponding last vertex map \begin{align*}
			\textnormal{sd}(N(P)) &\to N(P), \\
			\textnormal{sd}^2(P) &\to \textnormal{sd}(P)
		\end{align*} we obtain filtered barycentric subdivision functors. Explicitly, the filtration of a subdivision of a simplex of $X$ is given, by assigning to each vertex $\sigma$ the maximal $p$ in $p_{X}(\sigma)$.
		\begin{figure}[H]
	\centering
	\includegraphics[width=80mm]{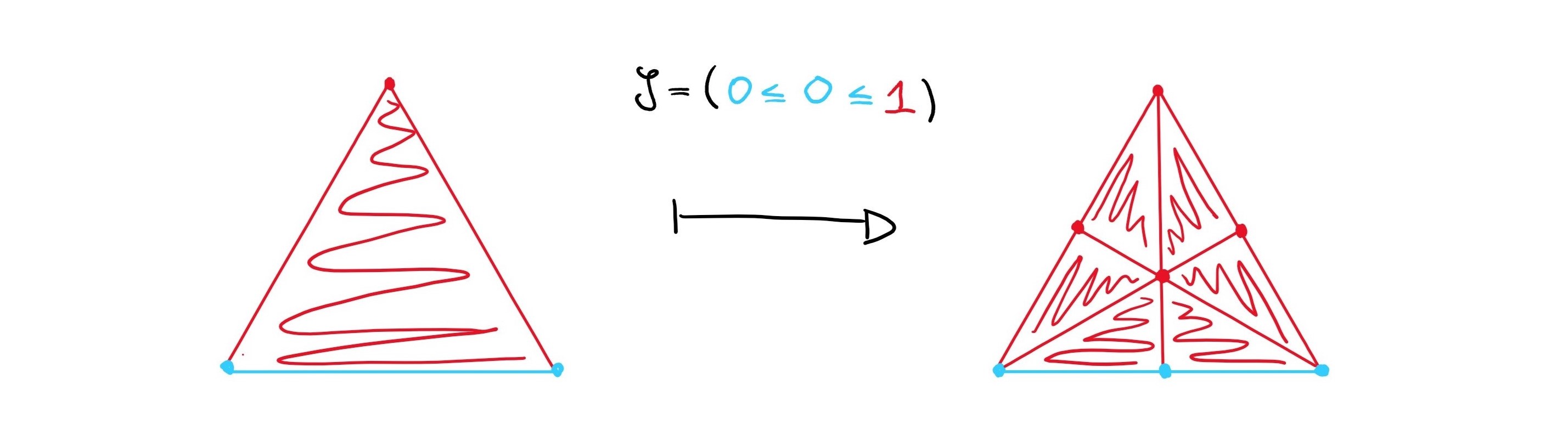}
	\caption{Subdivision of the filtered simplex over $\{0,1\}$ given by $\mathcal J = (0 \leq 0 \leq 1)$.}
	\label{fig:exFiltFunSd}
	\end{figure}
		 By abuse of notation, we denote all of these functors by $\textnormal{sd}(-)$. This is also to distinguish them, from another type of subdivision functor that we introduce later on in \Cref{conSDp}. In the case where $\mathcal C$ is either the category of filtered simplicial sets or of ordered simplicial complexes, the last vertex map is filtered and hence induces a natural transformation $$\textnormal{l.v.}:\textnormal{sd}(-) \longrightarrow 1_{\mathcal C_{\slash P}}.$$ This particular type of subdivision is of course be central to our investigation of the filtered simplicial approximation theorem in \Cref{secSimApp}.
		\item \textbf{Realizations}: Let $|-|$ be either of the classical realization functors from $\textnormal{s\textbf{Set}}$, $ \textnormal{\textbf{sCplx}}^{\operatorname{o}}$, $\textnormal{\textbf{sCplx}}$ into $\textnormal{\textbf{Top}}$. 
		Then the realizations of $|\textnormal{sd}(P)| = |N(P)|$ are naturally filtered as follows. Each realized simplex in $|\textnormal{sd}(P)|$ is given by the convex span 
		$$\Big \{\sum_{p \in \mathcal J} t_p |p| \subset \mathbb{R}^{\mathcal J} \mid t_p \geq 0, \sum_{p \in \mathcal J} t_p = 1 \Big \},$$ 
		where $\mathcal J$ is some flag in $P$ and we denote by $|p|$ the unit vector in $\mathbb R^{\mathcal J}$ corresponding to $p$. 
		Then, we send such an element $ \sum_{p \in \mathcal J} t_p |p|$ to $\max\{p \in \mathcal J \mid t_p > 0\}$. This defines a natural transformation between $|\textnormal{sd}(-)|$ and the Alexandroff space functor. Taking $L$ to be $|-|$ and $f$ to be this natural transformation in \Cref{conFiltFun} then induces filtered realization functors $$|-|_P: \mathcal{C}_{P} \longrightarrow \textnormal{\textbf{Top}}_{P},$$ again all denoted the same. Explicitly, the interior of a simplex $\sigma \in X$ is assigned to the stratum corresponding to $\max\{p_X(\sigma)\}$. 
		\begin{figure}[H]
			\centering
			\includegraphics[width=120mm]{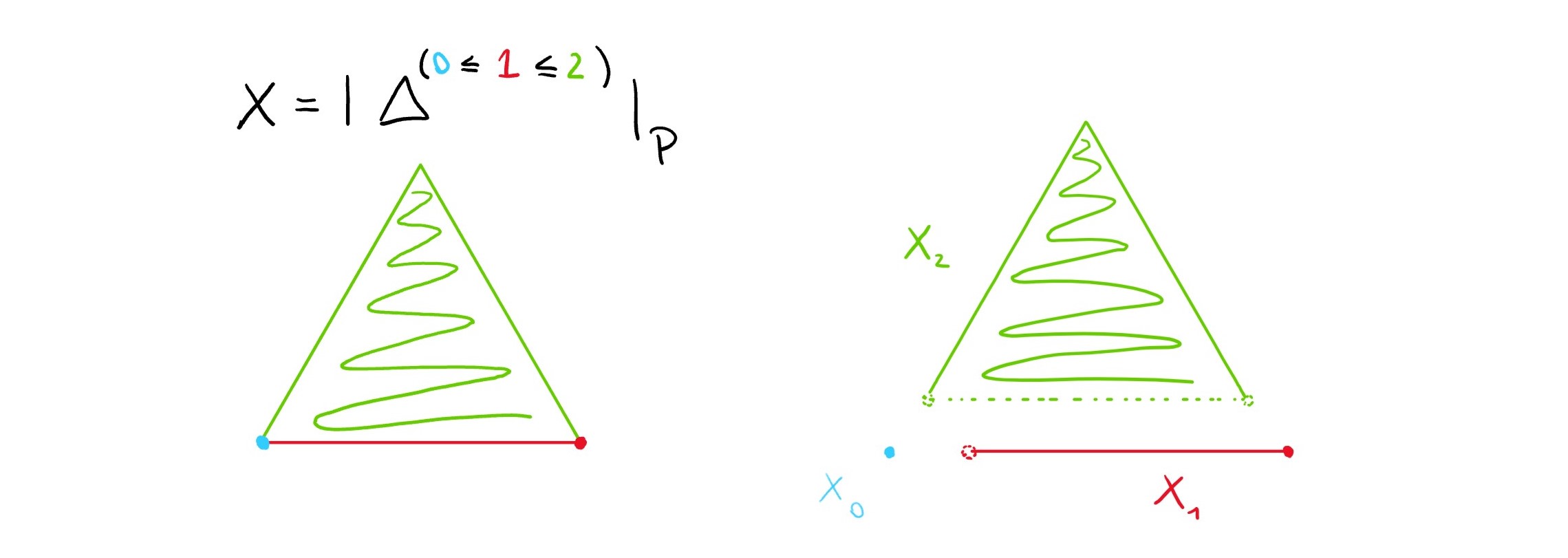}
			\caption{Filtration and induced strata of the realization of a standard simplex corresponding to the flag $\mathcal J= \{0 \leq 1 \leq2\}$ over $P = \{0, 1,2\}$.}
			\label{fig:exFiltFunReal}
		\end{figure}These functors are of course compatible with the various subdivision and embedding functors between different choices of $\mathcal C$, described in more detail in \Cref{secSimApp}. 
		\item \textbf{Singular simplicial sets: }$|-|$ is of particular interest when $\mathcal C$ is the category of simplicial sets, as then it admits a right adjoint given by the singular simplicial set functor $$\operatorname{Sing}: \textnormal{\textbf{Top}} \longrightarrow \textnormal{s\textbf{Set}}$$ (see for ex \cite{goerss2012simplicial}). Now, take $g: N(P) \to \operatorname{Sing}(P)$ to be the adjoint map from the natural map $|N(P)| \to P$, from the last bulletpoint and $R = \operatorname{Sing}(-)$. Then \Cref{conFiltFun} induces a right adjoint $\operatorname{Sing}_P(-)$ to $|-|_P$. Explicitly, $\operatorname{Sing}_P(X) \subset \operatorname{Sing}(X)$ is given only by such singular simplices, that are of the shape $\sigma : |\Delta^\mathcal J|_P \to X$, for d-flags $\mathcal J$. The filtration is then given by mapping $\sigma$ to $\mathcal J$.
	\end{itemize}
	$|-|_P$ and $\operatorname{Sing}_P(-)$ are of course be the integral instruments of connecting homotopy theory in the simplicial and the topological setting. 
\end{example}
To finish this section, we should make a quick note on how limits and colimits are computed in these categories of filtered objects.
\begin{remark}\label{remLimandColim}
	Let $\mathcal C$ be an arbitrary category and $c \in \mathcal C$. It is an easily verified fact on over categories that if $\mathcal C$ is complete (cocomplete) (as $\textnormal{\textbf{Top}}$ and $\textnormal{s\textbf{Set}}$ are), then so is $\mathcal C_{\slash c}$. Limits of a diagram $F: \mathcal J \to \mathcal C_{\slash c}$ are computed by first composing with the forgetful functor into $\mathcal C$, then adding $c$ and the arrows into it to the resulting diagram, and finally taking the limit of this diagram in $\mathcal{C}$ and its structure morphism into $c$ as the limit in $\mathcal C_{\slash c}$. For example, the product in $\textnormal{s\textbf{Set}}_P$ of $X$ and $Y$ is given by the diagonal of the pullback square
	\begin{center}
		\begin{tikzcd}
			X\times_{N(P)} Y \arrow[d] \arrow[r] & X \arrow[d, "p_X"]\\
			Y \arrow[r, "p_Y"] & N(P)
		\end{tikzcd}.
	\end{center}
	Colimits are computed, by again taking the composition with the forgetful functor, and taking the colimit in $\mathcal{C}$ together with its induced arrow into $c$. That is, the forgetful functor $\mathcal C_{\slash c} \to \mathcal C$ preserves and reflect colimits. In particular, colimits in $\textnormal{s\textbf{Set}}_P$ or $\textnormal{\textbf{Top}}_{P}$ are computed on the underlying simplicial sets (spaces), and then naturally filtered.
\end{remark}
We should end this subsection with a quick remark on possible linguistic confusion that may arise.
\begin{remark}\label{remBadLang}
	Sadly, the nomenclature around stratified and filtered objects is somewhat inconsistent. This is in particular the case, when it comes to when something should be called stratified as opposed to filtered. To just give a small outlook on the choice of possible nomenclature, here is a taste: In \cite{weinberger1994topological}, a map of spaces filtered over the same $P$ is said to be stratified, if it is stratum preserving in our sense. In \cite{douteauFren}, such a map is called a filtered map. A filtered map, in \cite{weinberger1994topological} however, is one that satisfies $f(X_{\leq k}) \subset Y_{\leq k}$ instead of $f(X_k) \subset Y_k$. The term stratified, is reserved for maps between filtered spaces over different posets, given by commutative diagrams 
	\begin{center}
		\begin{tikzcd}
			X \arrow[r] \arrow[d, "p_X"]& Y \arrow[d, "p_Y"] \\
			P \arrow[r] &Q.
		\end{tikzcd}
	\end{center}
	 In \cite{haine2018homotopy}, a stratified space is just what we defined to be a filtered space. In \cite{chataur2018intersection} and \cite{friedman2020}, they are however defined to be filtered space that additionally satisfy the so called Frontier Condition. Classically, stratified spaces are often expected to have strata that at least homologically are manifold like. As to not add to the confusion, we only use the term stratified space with additional qualifiers such as \textit{homotopically} or \textit{locally conelike.} When it comes to maps, we decided to go with stratum preserving, simply because it seems to be the most descriptive. Sadly, this has the awkward side effect that the objects and morphisms in our categories are not named analogously. In particular, when we are citing from \cite{douSimp} and \cite{douteauEnTop}, one should be careful to do the necessary translations.
\end{remark}
\section{Model structures for stratified homotopy theory}
Our study of stratified homotopy theory does heavily apply the language of model categories. We do not start the undertaking of giving an introduction to this theory here, as it would go beyond the scope of this master thesis. We recommend \cite{hirschhornModel} for an introduction. There are several approaches that attempt to use model categories to study stratified homotopy theory out there at the moment. We have decided to go with the one recently published by Douteau in his thesis \cite{douteauFren} which is partially summarized in \cite{douteauEnTop} and \cite{douSimp}. Roughly speaking, the perspective of homotopy theory come down to defining weak equivalences to be morphisms that induce weak equivalences (in the classical sense) on all strata and (generalized) links (the spaces connecting the strata), as opposed to stratified homotopy equivalences. Throughout this thesis, we try to motivate why we think this setting is appropriate in particular for our study of simple homotopy theory. In this section, we give a quick introduction into the work done in these publications. The intention however is mostly one of giving and motivating definitions, and for all proofs we refer to the respective sources. The reader familiar with the results in \cite{douteauEnTop} and \cite{douSimp} can probably skip this section as our own work begins from the section following it.
\subsection{The Henrique-Douteau model structure on $P$-filtered topological spaces}
It is a frequent phenomenon in mathematics that for the successful study of a restrictive class of objects it can be immensely helpful to pass to a larger class. Doing so one gains access to a greater range of operations without constantly having to worry about leaving the restrictive setting. In some sense, this is what filtered spaces are to stratified spaces. Intersection homology for example, while originally defined and intended only for pseudomanifolds, extends to all spaces filtered over a finite linear set (see \cite{king1985topological}). Furthermore, if one wants to enjoy all the advantages of a model category, one needs to pass to a large enough class of objects such that it admits limits and colimits. This is also be necessary for the development of our simple homotopy theory, as the filling of horns does in general not sustain the additional properties required for the various definitions of stratified spaces. Our setting, for the topological investigation of stratified homotopy theory, is be the category of filteres spaces over $P$, $\textnormal{\textbf{Top}}_{P}$, introduced in \Cref{exFilteredObj}. We start this subsection by giving a less abstract description of objects and morphisms in $\textnormal{\textbf{Top}}_{P}$ and introduce the various enrichments of it. We then start investigating notions of homotopy in this setting. In doing so, we also summarize the most important results on the model structure defined by Douteau on this category in \cite{douteauEnTop} and \cite{douteauFren}. We mostly follow \cite{douteauFren} for this.\\
\\
In \Cref{exFilteredObj} we defined the category of $P$-filtered topological spaces to be the over-category of $\textnormal{\textbf{Top}}$ over $P$, where the latter is thought of as a topological space, equipped with the Alexandroff topology. 
\begin{remark}
	A $P$-filtered topological space $X$ can alternatively be defined as a space $X \in \textnormal{\textbf{Top}}$ together with a family $(X_{\leq p})_{p \in P}$ of closed subspaces such that $$X_p \subset X_{p'} \iff p \leq p',$$ for $p,p' \in P$. The strata $X_p$ of $X$ are then explicitly given by the subspace (in the $\Delta$-generated sense) of $X$ given by $$X_p = X_{\leq p} - \bigcup_{p' < p} X_{\leq p'}.$$ In particular, most of the various definitions of stratified spaces can be thought of as objects in $\textnormal{\textbf{Top}}_{P}$ for appropriate $P$. Taking this point of view, a morphism in $\textnormal{\textbf{Top}}_{P}$ $f: X \to Y$ alternatively described as a map of the underlying topological spaces such that $$f(X_p) \subset Y_p \textnormal{ ,for all }p \in P.$$ This justifies calling them stratum preserving.
	\begin{figure}[H]
		\centering
		\includegraphics[width=120mm]{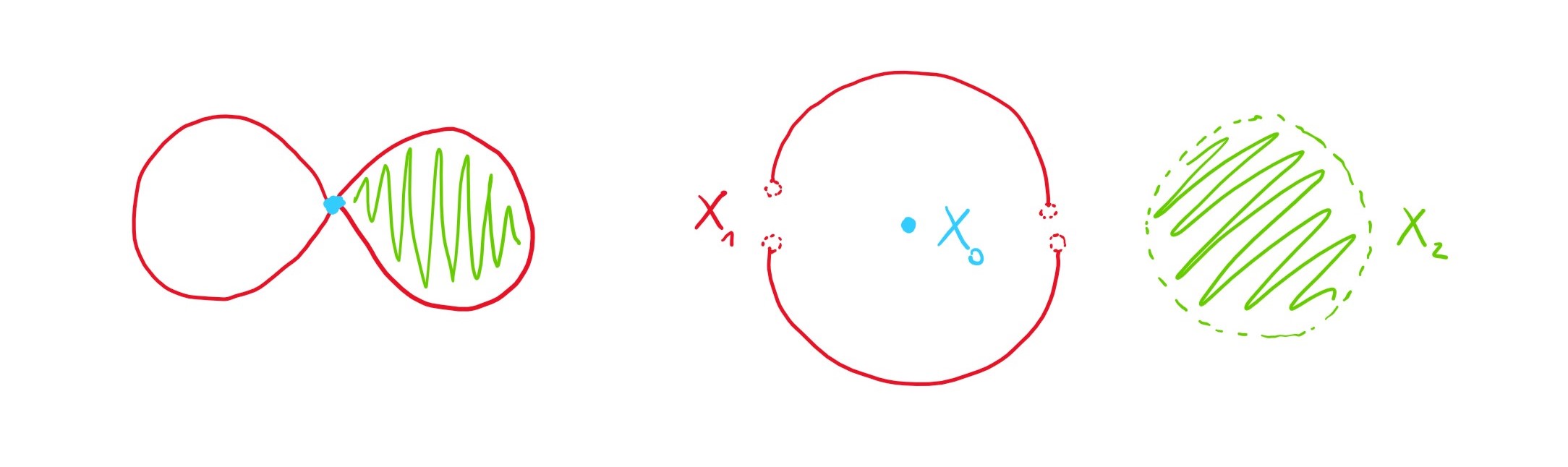}
		\caption{Visualization of a filtered space over $P = [2]$ and its strata, obtained by taking the wedge sum of a $S^1$ and a $D^2$.}
		\label{fig:exFiltSp}
	\end{figure}
\end{remark}
The category $\textnormal{\textbf{Top}}_{P}$ is enriched and copowered (see \cite{nlab:enriched_category} and \cite{nlab:copower} for definitions) over the closed monoidal category $\textnormal{\textbf{Top}}$. This is accomplished as follows.
\begin{definitionconstruction}\label{conEnTop}
	The category $\textnormal{\textbf{Top}}$ is a closed monoidal category enriched over itself. 
	To be more precise, one obtains a monoidal structure through the product, $- \times -$. 
	And an inner hom-functor by equipping $\textnormal{Hom}_{\textnormal{\textbf{Top}}}(T,T')$ with the $\Delta$-ification of the compact open topology, for $T,T' \in \textnormal{\textbf{Top}}$ (see also \cite{duggerDelta}).
 	We denote the resulting space by $C^0(T,T')$. We now equip $\textnormal{Hom}_{\textnormal{\textbf{Top}}_{P}}(X,Y)$ with the ($\Delta$-generated) subspace topology with respect to the inclusion $$\textnormal{Hom}_{\textnormal{\textbf{Top}}_{P}}(X,Y) \subset C^0 (X,Y),$$ where we omitted the forgetful functor into $\textnormal{\textbf{Top}}$ in the right mapping space. 
 	We denote these spaces by $C^0_P(X,Y)$. Furthermore, $\textnormal{\textbf{Top}}_{P}$ is copowered over $\textnormal{\textbf{Top}}$, with the copower $- \otimes X \dashv C^0_P(X,-)$ given by the $P$-filtered space $$ T \times X \xrightarrow{\pi_X} X \xrightarrow{p_X} P.\,$$ for $T \in \textnormal{\textbf{Top}}$ and $X \in \textnormal{\textbf{Top}}_{P}$. For details see \cite[Sec. 5.2]{douteauFren}.
\end{definitionconstruction} 
The copowering with $\textnormal{\textbf{Top}}$ naturally induces a notion of homotopy on $\textnormal{\textbf{Top}}_{P}$.
\begin{definition}
	Two stratum preserving maps of $P$-filtered spaces $f_0,f_1: X \to Y$ are said to be \textit{stratum preserving homotopic} or also \textit{stratified homotopic} if there exists a stratum preserving map $H: I \otimes X \to Y$ such that the diagram
	\begin{center}
		\begin{tikzcd}[column sep = large]			X \sqcup X \cong ( \star \sqcup \star) \otimes X \arrow[r, "f_0 \sqcup f_1"] \arrow[d, hook, "i \otimes 1"]& Y\\
			I \otimes X \arrow[ru, dashed, "H", swap] 
		\end{tikzcd}
	\end{center}
	commutes. $i: \star \sqcup \star \hookrightarrow I$ denotes the inclusion of the endpoints here. It is an easy verification that this notion of homotopy behaves much like the classical one, that is, being homotopic is symmetric, reflexive, transitive and compatible with compositions etc. We use most of the standard nomenclature, also used in the non-filtered setting, adding the prefix \textit{stratum preserving} or \textit{stratified}. So by nomenclature such as\textit{ stratified homotopy classes, stratum preserving homotopy equivalence, etc.} we mean the obvious thing. Stratum preserving homotopy equivalences are also often referred to as stratified homotopy equivalences.
	For $X,Y \in \textnormal{\textbf{Top}}_{P}$, we denote by $[X,Y]_P$ the set of \textit{stratified homotopy classes} of stratum preserving maps $X \to Y$. Furthermore, stratum preserving homotopy between morphisms in $\textnormal{\textbf{Top}}_{P}$ is denoted by $\simeq_P$. 
\end{definition}
It is a straighforward verification using standard properties of mapping spaces that this notion of homotopy is compatible with the enriched structure in the following sense.
\begin{lemma}\label{lemC0retainHo}
	The enriched hom functor $$C^0_P(-,-):\textnormal{\textbf{Top}}_{P}^{op} \times \textnormal{\textbf{Top}}_{P} \to \textnormal{\textbf{Top}}$$ sends stratified homotopic morphisms in $\textnormal{\textbf{Top}}_{P}$ into homotopic ones. In particular, stratified homotopy equivalence induce homotopy equivalences on $C_P^0(-,-)$.
\end{lemma}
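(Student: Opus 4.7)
The plan is to reduce the claim to two standard ingredients: the copower-hom adjunction $-\otimes X \dashv C^0_P(X,-)$ from \Cref{conEnTop}, and the fact that composition in a topologically enriched category is a continuous map of mapping spaces. Once both are in hand, stratified homotopies translate to classical paths in the mapping spaces, and classical homotopies between composites are assembled by composing those paths.

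First, I would prove the statement one variable at a time. Suppose $g_0, g_1 : Y \to Y'$ are stratum preserving homotopic via $G : I \otimes Y \to Y'$. Transposing along $-\otimes Y \dashv C^0_P(Y,-)$ yields a continuous path $\widetilde{G} : I \to C^0_P(Y, Y')$ with $\widetilde{G}(0) = g_0$ and $\widetilde{G}(1) = g_1$. Combining this with the continuous composition map $\mathrm{comp} : C^0_P(Y, Y') \times C^0_P(X, Y) \to C^0_P(X, Y')$ gives a continuous homotopy
\[
I \times C^0_P(X,Y) \xrightarrow{\widetilde{G} \times \mathrm{id}} C^0_P(Y,Y') \times C^0_P(X,Y) \xrightarrow{\mathrm{comp}} C^0_P(X,Y')
\]
from $g_0 \circ -$ to $g_1 \circ -$. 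The analogous construction using a stratified homotopy $F : I \otimes X' \to X$ between $f_0, f_1 : X' \to X$ produces a continuous homotopy from $- \circ f_0$ to $- \circ f_1$ as maps $C^0_P(X,Y) \to C^0_P(X',Y)$. Concatenating (or combining) the two one-variable homotopies yields the desired homotopy between $C^0_P(f_0, g_0)$ and $C^0_P(f_1, g_1)$.

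The second assertion is then a purely formal consequence of the first. If $f : X \to X'$ is a stratified homotopy equivalence with stratified homotopy inverse $f' : X' \to X$, then $f' \circ f \simeq_P 1_X$ and $f \circ f' \simeq_P 1_{X'}$, and applying $C^0_P(-,Y)$ (or $C^0_P(Z,-)$) together with what was just proved shows that $C^0_P(f,Y)$ and $C^0_P(f',Y)$ are mutually inverse up to classical homotopy; identically for the second variable.

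The only non-formal ingredient is the continuity statements: that the transpose $\widetilde{G}$ of a stratum preserving map $I \otimes Y \to Y'$ really is a continuous map $I \to C^0_P(Y,Y')$, and that composition is continuous. Both, however, are automatic from the enriched, copowered structure recorded in \Cref{conEnTop}, taking into account that $C^0_P(X,Y)$ carries the $\Delta$-generated subspace topology inherited from $C^0(X,Y)$, for which the exponential law in $\textnormal{\textbf{Top}}$ applies. Consequently, I expect the main care needed is bookkeeping around the $\Delta$-generated topology on the mapping spaces, rather than any genuinely difficult step.
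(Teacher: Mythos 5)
Your proof is correct. The paper itself does not supply a proof of this lemma -- it simply asserts that the claim is ``a straightforward verification using standard properties of mapping spaces'' -- so there is no proof to compare against, but your argument is exactly the standard verification that the paper is gesturing at: transpose the stratified homotopy along the copower adjunction $-\otimes X \dashv C^0_P(X,-)$ to get a path in the mapping space, compose with the continuous composition map supplied by the $\textnormal{\textbf{Top}}$-enrichment of $\textnormal{\textbf{Top}}_{P}$, handle the two variables separately and concatenate, and deduce the equivalence statement formally. You are also right that the only point requiring care is that the $\Delta$-generated topology on the hom-spaces makes the adjunction and the continuity of composition go through, and that this is exactly what \Cref{conEnTop} records; nothing in your argument would fail.
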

\begin{remark}
	For many intents and purposes, stratum preserving homotopy equivalence is a very useful notion of isomorphism. Singular intersection homology for example, descends to a functor on stratified homotopy classes, and hence is a stratified homotopy equivalence invariant \cite[Prop. 4.1.10.]{friedman2020}. However, for many other intends and purposes, such as the simple homotopy theory (see also our comments in \Cref{subsecElemExp}) we define in this work, stratified homotopy equivalence seems to be too discrete of an equivalence relation. Consider for example the map indicated in the following. 
	\begin{figure}[H]
		\centering
		\includegraphics[width=120mm]{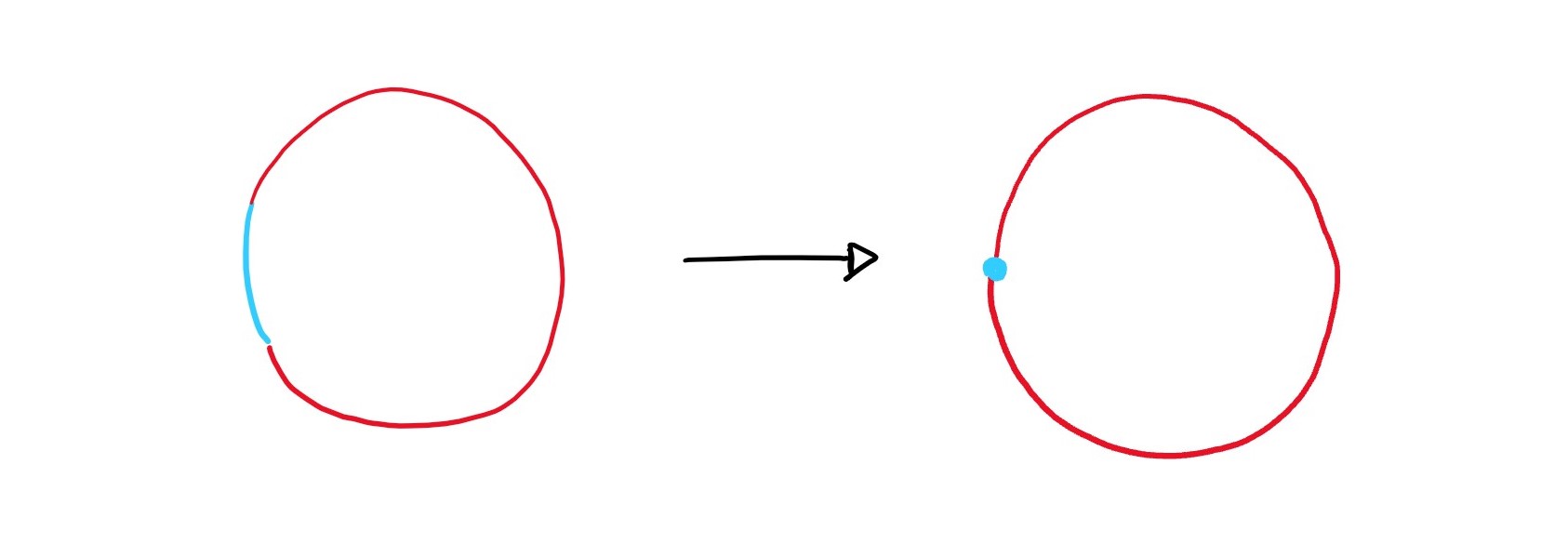}
		\caption{Illustration of a stratum preserving map between two filtrations of $S^1$. It is given by collapsing the blue region to a point.}
		\label{fig:ExNotStratEq}
	\end{figure}
	This map clearly induces a homotopy equivalence of the underlying spaces. However, it is not a stratum preserving homotopy equivalence. Geometrically, this is since there is no way of continuously mapping the cycle on the right to the one on the left, without mapping the red into the blue part.
	An easy way to see this more formally is to compute the intersection homology (see for ex. \cite{friedman2020}), $I^0H_1(-,\mathbb Q)$ (simplicially) for both sides. As the left hand side gives a filtration of $S^1$ that turns it into CS set by invariance of filtration (see \cite[Ch. 2]{friedman2020}) we obtain: 
	$$ I^0H_1(lhs,\mathbb Q) = H_1(S^1,\mathbb Q) = \mathbb Q.$$
	For the right hand side we can not make a similar argument, as the $0$-stratum is of the wrong dimension. But an easy computation, or the geometric argument that no $1$-cycle can intersect the $0$-stratum transversally, shows that $$I^0H_1(rhs,\mathbb Q) = 0.$$ 
	 Similarly to passing from homotopy equivalences to weak homotopy equivalences, it turns out to be useful to broaden the notion of stratum preserving homotopy equivalence a little, to a notion lying between stratum preserving homotopy equivalence and (weak) homotopy equivalence of the underlying spaces.
\end{remark}
To do so, we need the notion of homotopy links.
\begin{definitionconstruction}\label{defHomotopyLink}
	Let $\mathcal J$ be a flag in $P$ and $X \in \textnormal{\textbf{Top}}_{P}$. Consider the space $C^0_P(|\Delta^\mathcal J|_P, X) \in \textnormal{\textbf{Top}}$. If $\mathcal{J} = \{p\}$, then this gives the $p$-th stratum. If $\mathcal{J} = (p_0 < p_1)$, then this is what is usually referred to as the homotopy link (see \cite{quinn1988homotopically}) of the $p_0$-th stratum in the $p_1$-th. Following this nomenclature, we call $C^0_P(|\Delta^{\mathcal J}|_P, X)$ the $\mathcal{J}${-th homotopy link} of $X$ and denote it by $$\textnormal{Hol}_{P}(\mathcal J,X).$$ We also call such objects generalized homotopy links. Functoriality of $C^0_P$ and $|-|_P$ induces a functor $$\textnormal{Hol}_{P}: \textnormal{\textbf{Top}}_{P} \longrightarrow \textnormal{\textbf{Top}}^{\textnormal{sd}(P)^{op}}$$ into the category of space valued presheaves on $\textnormal{sd}(P)$, where we think of $\textnormal{sd}(P)$ as a category by taking the category induced by the partial order of inclusion of simplices. 
\end{definitionconstruction}
\begin{remark}
	One should motivate a little bit why the space in \Cref{defHomotopyLink} are called links. Recall that, for a simplicial complex $K$ with a full subcomplex $S \subset K$, the link of $S$ in $K$ is the subcomplex given by all simplices that do not intersect $K$ but are contained in a simplex doing so. For an illustration, see \Cref{fig:exLinkandHol}.
	In a sense, the link contains geometric information about how $S$ sits in $K$. If one takes a first barycentric subdivision first, then furthermore the link has a natural map into $S$, given by sending a vertex $\sigma \in \textnormal{sd}(K)$ given by a simplex $\sigma $ of $K$ to its intersection with $S$. Then the link gives the boundary of a mapping cylinder neighbourhood of $|S|$ in $|K|$, induced by this map. For piecewise linear filtered spaces (that is, PL spaces filtered by closed PL spaces) one can always construct a regular neighbourhood in this way and the link is unique up to PL homeomorphism. 
	\begin{figure}[H]
		\centering
		\includegraphics[width=120mm]{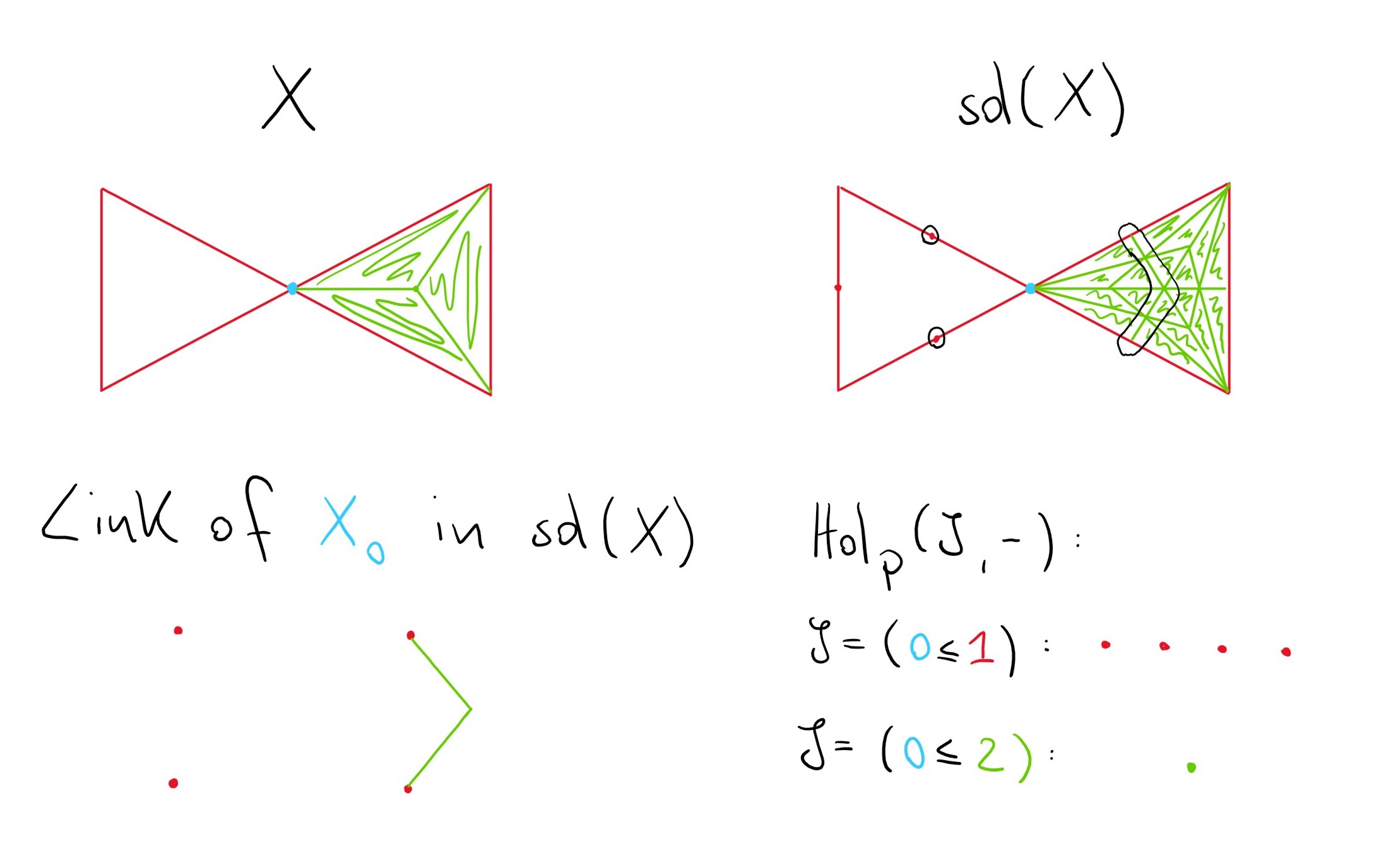}
		\caption{An illustration of the link and of (the homotopy type of the) homotopy links of a PL-model of the space in \Cref{fig:exFiltSp}. This is computed using \Cref{propQuinHol}.}
		\label{fig:exLinkandHol}
	\end{figure}
	For general spaces however, such a mapping cylinder neighbourhood might of course not even exist. The homotopy link (in the case of $\mathcal J = (p_0 < p_1)$) gives a functorial replacement for this. It is in fact a good replacement in the following sense. 
	\begin{proposition}\label{propQuinHol}
		Let $X$ be a metric space filtered over $P= \{0,1\}$. Let $N$ be a neighbourhood of $X_0$ in $X$, such that there exists a deformation retraction $r:N \times I \to N$ of $N$ into $X_0$ which is stratum preserving up to $t=1$. Then $N$ (with the induced filtration) is filtered homotopy equivalent to the $P$-filtered space $\Big ( X_0 \subset Cyl(f) \Big )$, where $Cyl(f)$ is the mapping cylinder (with the teardrop topology, see \cite{quinn1988homotopically}) of the starting point evaluation map \begin{align*}
		f:\textnormal{Hol}_P\big( (0 \leq 1), X \big) &\longrightarrow \textnormal{Hol}_P\big((0), X \big) \cong X_0
		\end{align*}
		induced by the inclusion $(0) \hookrightarrow (0 \leq 1)$.
	\end{proposition}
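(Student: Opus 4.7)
The plan is to use the deformation retraction $r$ itself to associate to every point of $N_1$ a canonical link path. The key observation is that for $x \in N_1$, the reversed trajectory $\gamma_x(s) := r(x, 1-s)$ is an element of $\textnormal{Hol}_P((0 \leq 1), X)$: its starting value $\gamma_x(0) = r(x, 1)$ lies in $X_0$ by the retraction hypothesis, while for $s \in (0, 1]$ the stratum preservation of $r$ on $[0, 1)$ forces $\gamma_x(s) \in X_1$. Crucially, $\gamma_x(1) = x$, so these paths reconstruct their defining point precisely at cylinder time $1$.

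With this in hand I would define $\psi: N \to Cyl(f)$ by setting $\psi|_{N_0} = \text{id}_{X_0}$ and $\psi(x) = [\gamma_x, 1]$ for $x \in N_1$. Continuity at the interface $X_0$ is the first delicate point: as $x \in N_1$ approaches $x_0 \in X_0$, the path $\gamma_x$ converges uniformly to the constant path at $x_0$ and the endpoint $\gamma_x(1) = x$ itself converges to $x_0$, which is exactly the convergence condition that the teardrop topology is designed to capture; the metric assumption on $X$ is what makes basic teardrop neighbourhoods sufficiently concrete for this verification. For the inverse $\phi: Cyl(f) \to N$, the naive evaluation $[\gamma, t] \mapsto \gamma(t)$ may leave $N$, so I would first show that the inclusion $\textnormal{Hol}_P((0 \leq 1), N) \hookrightarrow \textnormal{Hol}_P((0 \leq 1), X)$ is a stratified homotopy equivalence, by reparametrizing link paths onto a sufficiently short initial segment that lies in $N$ (which is possible because $N$ is a neighbourhood of $X_0$ in a metric space). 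After this reduction, $\phi([\gamma, t]) := \gamma(t)$ lands in $N$, is stratum preserving by the link condition on $\gamma$, and takes $X_0$ to itself.

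Finally, for the compositions, $\phi \circ \psi = \text{id}_N$ holds essentially on the nose since $\gamma_x(1) = x$, while $\psi \circ \phi \simeq_P \text{id}_{Cyl(f)}$ is obtained by linearly interpolating the cylinder coordinate from $t$ up to $1$ and simultaneously replacing $\gamma$ by the retraction trajectory $s \mapsto r(\gamma(t), 1-s)$ of its evaluation point, both operations being stratum preserving below the top stratum by the hypothesis on $r$. The main obstacle I foresee is the bookkeeping around the boundary $X_0$ under the teardrop topology, combined with the reparametrization used to force link paths into $N$: it is precisely for these uniform continuity questions that the metric hypothesis on $X$ enters, providing the explicit control needed to choose path truncations and cylinder-coordinate homotopies coherently on compact subsets.
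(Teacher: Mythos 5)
There is a genuine error at the very first step, and it is exactly the delicate point this proposition hinges on. Your map $\psi$ sends every $x \in N_1$ to $[\gamma_x,1]$, i.e.\ to a point with cylinder coordinate constantly equal to $1$. In the teardrop topology on $Cyl(f)$ (base attached at coordinate $0$, which is the convention forced by your formula $\phi([\gamma,t])=\gamma(t)$), a neighbourhood basis of a base point $x_0 \in X_0$ is given by preimages of neighbourhoods of $(x_0,0)$ under the natural map $Cyl(f) \to X_0 \times [0,1]$, $[\gamma,t] \mapsto (f(\gamma),t)$, $x_0 \mapsto (x_0,0)$. Hence $[\gamma_n,t_n] \to x_0$ forces $t_n \to 0$; the behaviour of $\gamma_n$ at its free end plays no role. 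So as $x \in N_1$ approaches $x_0 \in X_0$, the points $\psi(x)=[\gamma_x,1]$ do \emph{not} converge to $\psi(x_0)=x_0$ (compose with the continuous coordinate projection: $1 \not\to 0$), and they have no limit in $\textnormal{Hol}_P((0\leq1),X)\times(0,1]$ either, since $\gamma_x$ converges to the constant path at $x_0$, which does not lie in the homotopy link. Your sentence claiming that convergence of $\gamma_x(1)=x$ to $x_0$ ``is exactly the convergence condition the teardrop topology captures'' misreads the teardrop topology; $\psi$ as defined is discontinuous along $X_0$ whenever $X_0$ meets the closure of $N_1$, which is the only interesting case.

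Repairing this is not cosmetic: to be continuous, the cylinder coordinate of $\psi(x)$ must tend to $0$ as $x$ approaches $X_0$ (e.g.\ by rescaling with a continuous function measuring proximity to $X_0$), and once you do that the asserted identity $\phi\circ\psi=\textnormal{id}_N$ ``on the nose'' is lost, so both composites must be controlled by nontrivial stratum preserving homotopies compatible with the teardrop topology at the base. The same continuity-at-the-base issue resurfaces in your reduction from $\textnormal{Hol}_P((0\leq1),X)$ to paths lying in $N$: a path-truncation depending on $\gamma$ must be chosen continuously and the induced comparison of teardrop mapping cylinders must again be checked at $X_0$. This is precisely the content of Quinn's Lemma 2.4, whose original argument contained an error on this very point and was corrected in Friedman's appendix; accordingly, the paper does not prove the proposition but cites those two sources. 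As it stands, your proposal does not constitute a proof.
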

	For a proof see \cite[Lem, 2.4]{quinn1988homotopically} with a correction found in \cite[A. 1]{friedman2003stratified}. In particular, the homotopy link is actually homotopy equivalent to the link in the PL sense, wherever that exists.
\end{remark}
It is an immediate consequence of \Cref{lemC0retainHo} that $\textnormal{Hol}_{P}(\mathcal J, -)$ sends stratum preserving homotopy equivalences into (weak) homotopy equivalences. 
It turns out that, for most examples of stratified spaces, the converse holds. Such a result was first stated by Miller in \cite[Thm. 6.3]{miller2013} and it has been the starting point of a wealth of investigations into stratified homotopy theory. We use the following Whitehead-Theorem style version of such a result, shown in \cite{douSimp}. First however, we have need for a few definitions in the simplicial setting. 
\begin{definition}\label{defAdmHorn}
	Let $\mathcal J = (p_0 \leq ... \leq p_n)$ be a $d$-flag in $P$ of length $n$. Consider a horn inclusion $ \Lambda_k^n \hookrightarrow \Delta^n$. Denote by $\Lambda_k^\mathcal J \subset \Delta ^\mathcal{J}$ the $P$-filtered space given by $$\Lambda_k^n \hookrightarrow \Delta^n \xrightarrow{p_{\Delta^{\mathcal J}}} N(P).$$ We call such a horn inclusion $\Lambda^\mathcal J_k \hookrightarrow \Delta^{\mathcal J}$ \textit{admissible} if $p_k= p_{k+1}$ or $p_k = p_{k-1}$; that is, if $\mathcal J$ is degenerate with the $k$-th vertex repeated. We also say that $k$ is $\mathcal J$-\textit{admissible}.
\end{definition}
\begin{example}\label{exOfHorn}
	The following illustrates a few examples of admissible and not admissible horn inclusions.
	\begin{figure}[H]
		\centering
		\includegraphics[width=120mm]{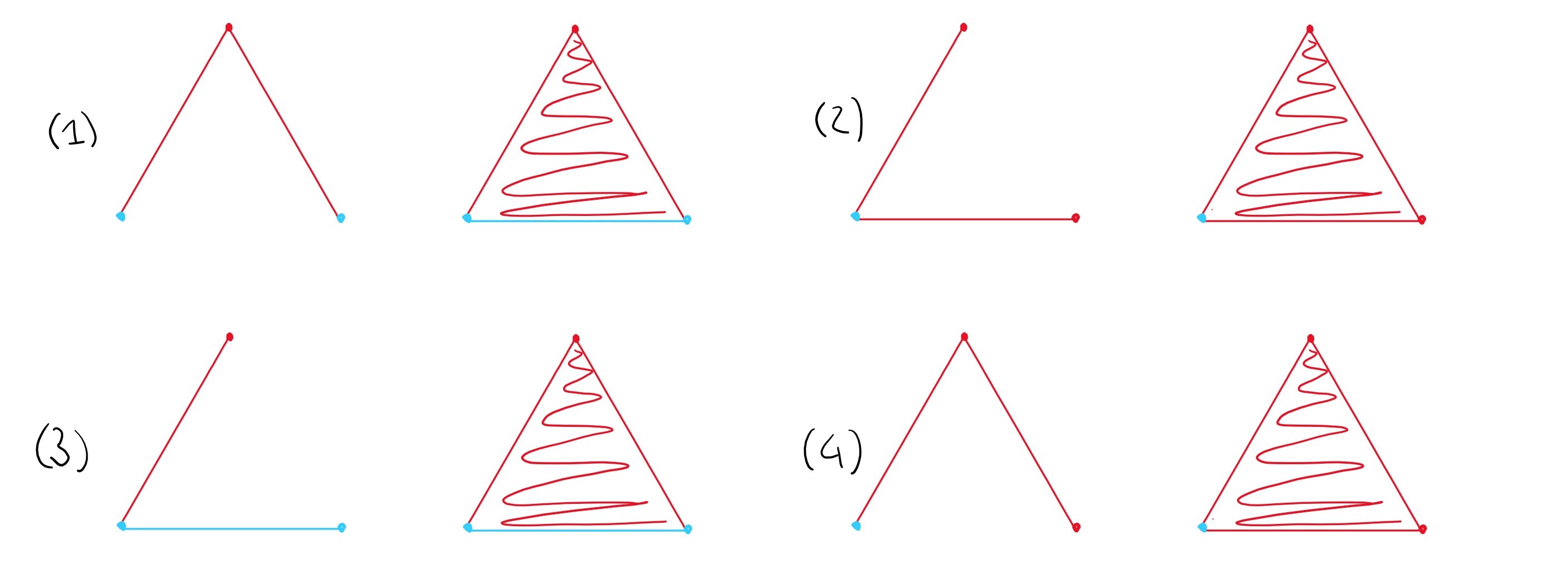}
		\caption{Examples of horn inclusions of $2$-simplices, filtered over $P ={0,1}$}
		\label{fig:exOfHornA}
	\end{figure}
	(1) and (3) show horn inclusions for $\mathcal J = (0 \leq 0 \leq 1)$ and $k= 2$ and $0$ respectively. (2) and (4) show horn inclusions for $\mathcal J = (0 \leq 1 \leq 1)$ and $k= 0$ and $2$ respectively. Out of these, only (3) and (4) are admissible.
\end{example}
Later on, in our investigation of stratified simple homotopy theory, admissible horn inclusions will be the elementary building blocks for simple equivalences. For now, the following homotopical characterization should make the definition of admissible horn inclusion a little more motivated.
\begin{lemma}\label{lemCharHornInc}
	A horn inclusion $i_k:\Delta^\mathcal J_k \hookrightarrow \Delta^\mathcal J$ is admissible if and only if $|i_k|_{P}$ is a stratum preserving homotopy equivalence.
\end{lemma}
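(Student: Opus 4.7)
The plan is to treat the two directions separately. For the forward direction I will exhibit $|i_k|_P$ as a stratum preserving strong deformation retract. For the converse, I use the functor $\textnormal{Hol}_P(\mathcal J, -)$, which by \Cref{lemC0retainHo} carries stratum preserving homotopy equivalences to (weak) homotopy equivalences, and show that in the non-admissible case the corresponding link space on one side is empty while on the other it contains the identity.

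For the forward direction, suppose $k$ is admissible. By symmetry I may assume $p_k = p_{k+1}$ (the case $p_k = p_{k-1}$, which arises in particular for $k = n$, is handled analogously). In barycentric coordinates on $|\Delta^\mathcal J|_P$, define
$$H_s(t_0, \ldots, t_n) := (t_0, \ldots, t_{k-1},\, t_k + s\,t_{k+1},\, (1-s)\,t_{k+1},\, t_{k+2}, \ldots, t_n), \qquad s \in I.$$
Then $H_0 = \operatorname{id}$ and $H_1$ takes values in the face $d_{k+1}\Delta^n \subset |\Lambda^\mathcal J_k|_P$. The homotopy is stratum preserving because the only change is a transfer of mass between coordinates $k$ and $k+1$, and since $p_k = p_{k+1}$ this does not alter the maximum filtration of the support. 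A short case analysis on which non-$k$ coordinate of $t$ vanishes shows that $H$ further restricts to a stratum preserving homotopy on $|\Lambda^\mathcal J_k|_P \times I$. Hence $|i_k|_P$ is a stratum preserving strong deformation retract, in particular a stratified homotopy equivalence.

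For the converse I argue by contraposition. Assume $k$ is not admissible, so $p_{k-1} < p_k$ whenever $k \geq 1$ and $p_k < p_{k+1}$ whenever $k \leq n - 1$. Suppose for contradiction that a stratum preserving $\phi\colon |\Delta^\mathcal J|_P \to |\Lambda^\mathcal J_k|_P$ exists. The missing face $d_k \Delta^n$ has its interior in the top stratum $p_n$ of $|\Delta^\mathcal J|_P$ (or $p_{n-1}$ if $k = n$), so $\phi$ sends that interior into the corresponding stratum of $|\Lambda^\mathcal J_k|_P$, whose closure is the closed star of $v_n$ (resp.\ $v_{n-1}$) in $|\Lambda^\mathcal J_k|_P$. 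By continuity $\phi(\partial d_k\Delta^n)$ lies in this closed star as well. In the illustrative case $n = 2$, $k = 0$ the contradiction is immediate: $\phi$ on the interior of the edge $[v_1, v_2] = d_0\Delta^2$ lies in $(v_0, v_2]$, so $\phi(v_1) \in \overline{(v_0, v_2]} = [v_0, v_2]$, while stratum preservation forces $\phi(v_1) \in (v_0, v_1]$; these two subsets of $|\Lambda^\mathcal J_0|_P$ are disjoint. For general $n$ the argument is the same, using an appropriate face $F \subset \partial d_k\Delta^n$ whose interior lies in an intermediate stratum and whose endpoint is thrown by the closed-star constraint into a subset of $|\Lambda^\mathcal J_k|_P$ incompatible with stratum preservation at that endpoint. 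Since $\textnormal{Hol}_P(\mathcal J, |\Delta^\mathcal J|_P) \ni \operatorname{id}$, the map $\textnormal{Hol}_P(\mathcal J, |i_k|_P)$ thus fails to be a weak homotopy equivalence, so $|i_k|_P$ cannot be a stratum preserving homotopy equivalence.

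The main obstacle will be executing this local analysis uniformly across all cases: the boundary positions $k = 0$ and $k = n$, and d-flags $\mathcal J$ with repetitions outside of the critical positions $k - 1, k, k+1$. The underlying geometric obstruction, however, is always the same: the removed open face $d_k\Delta^n$ is precisely the piece along which the high-filtration interior could have been continuously connected in a stratum preserving way to the low-filtration boundary, so the combinatorial bookkeeping in each case reduces to an application of this single principle.
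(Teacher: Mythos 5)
Your forward direction is essentially the paper's own argument: your $H_s$ is exactly the deformation used there, it visibly preserves the horn, and $H_1$ lands in the face $d_{k+1}\Delta^{\mathcal J}\subset\Lambda^{\mathcal J}_k$, so both the simplex and the horn deformation retract stratum preservingly onto that face and $|i_k|_P$ is a stratified homotopy equivalence. One caveat: your stronger claim that $|i_k|_P$ is a stratum preserving \emph{strong} deformation retract is not established (your homotopy moves points of the horn) and is in fact false for admissible but not strictly admissible horns, e.g.\ $\mathcal J=(0\leq 0\leq 1)$, $k=0$; compare \Cref{RemElemAreEq}. Since only the homotopy equivalence is needed, this does not harm the direction.

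The converse, however, has a genuine gap. Your contradiction is derived from the mere existence of a stratum preserving map $\phi\colon|\Delta^{\mathcal J}|_P\to|\Lambda^{\mathcal J}_k|_P$, i.e.\ you are claiming the mapping space $C^0_P(|\Delta^{\mathcal J}|_P,|\Lambda^{\mathcal J}_k|_P)$ is empty whenever $k$ is not admissible. That claim fails as soon as $\mathcal J$ is degenerate away from position $k$, which is allowed in the non-admissible case. Concretely, for $\mathcal J=(0\leq 1\leq 1)$ and $k=0$ (not admissible, case (2) of \Cref{exOfHorn}), the map $(t_0,t_1,t_2)\mapsto(t_0,0,t_1+t_2)$ is a continuous stratum preserving map of $|\Delta^{\mathcal J}|_P$ onto the face opposite the vertex $1$, which lies in $|\Lambda^{\mathcal J}_0|_P$; similarly for $(0\leq 0\leq 1)$, $k=2$. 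In general, if $p_j=p_{j+1}$ with $j,j+1\neq k$ (and non-admissibility forces any repetition to be of this form), then your own forward-direction formula $H_1$ applied at position $j$ already produces such a $\phi$. So in all these cases the obstruction is not emptiness of the mapping space, and no amount of ``bookkeeping'' with closed stars will produce the contradiction; a finer invariant is required (note also that comparing $\pi_0$ of strata does not cover the non-degenerate flags, e.g.\ $(p_0<p_1<p_2)$ with $k=1$). The paper's converse avoids this by restricting along $Q=P\setminus\{p_k\}$: the functor $(-)_Q$ preserves stratum preserving homotopy equivalences, $(|\Delta^{\mathcal J}|_P)_Q$ is convex and hence contractible, while $(|\Lambda^{\mathcal J}_k|_P)_Q$ deformation retracts onto $\partial(d_k\Delta^n)\simeq S^{n-2}$ (empty for $n=1$), a contradiction. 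You would need to replace your converse by an argument of this kind, using more than the existence of a single stratum preserving map.
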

\begin{proof} Let $\mathcal J = (p_0 \leq ... \leq p_n)$.
For the only if part, let without loss of generality $p_k = p_{k+1}$. Now, consider the deformation retraction of $|\Delta^n|$ onto its $({k+1})$-th face given by \begin{align*}
	R:I \otimes |\Delta^\mathcal J|_P &\longrightarrow |\Delta^\mathcal J|_P\\
	(s, \sum_{i \in [n]} t_i|p_i|) &\longmapsto \sum_{i \in [n]\setminus\{k,k+1\}} t_i |p_i| + (st_{k+1} + t_k)|p_k| + (1-s)t_{k+1}|p_{k+1}|. 
\end{align*}
This is easily checked to be stratum preserving (elementarily, as a quick consequence of \Cref{lemLineSeqStrat}, or by using an underlying simplicial stratum preserving homotopy as in \cite[Prop 1.13.]{douSimp}). Furthermore, it restricts to a deformation retraction of $|\Delta^{\mathcal J}_k|_P$ onto the $(k+1)$-th face. Now, for the converse, assume that $|i_k|_P: |\Lambda_k^{\mathcal J}|_P \hookrightarrow |\Delta^\mathcal{J}|_P$ is a stratum preserving homotopy equivalence and $p_k$ is not repeated in $\mathcal J$. Consider the restriction of $|i_k|_P$ to $Q=P \setminus \{p_k\} \subset P$. 
\begin{figure}[H]
	\centering
	\includegraphics[width=80mm]{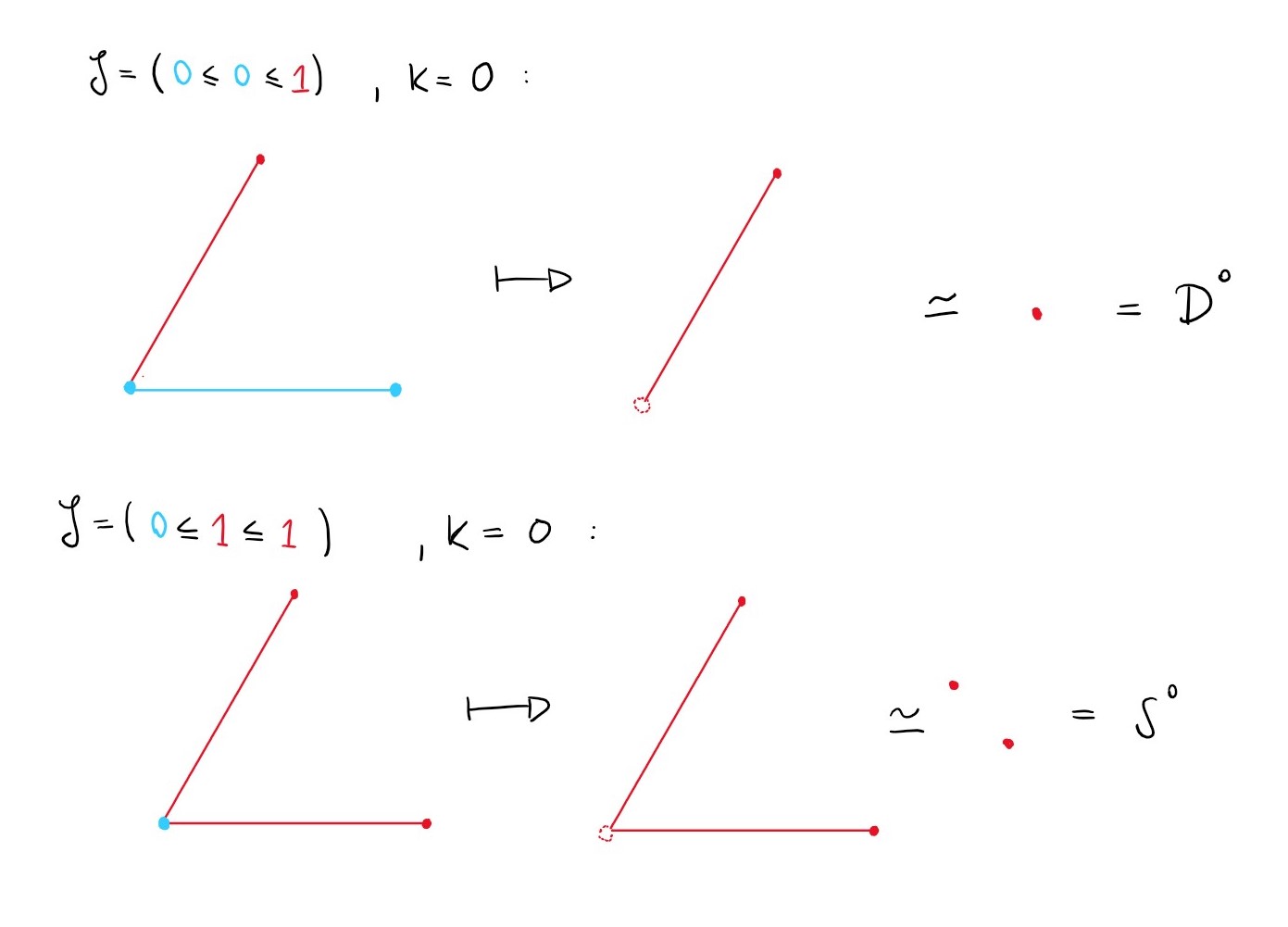}
	\caption{An illustration of $(|\Lambda^{\mathcal J}_k|)_Q$, for $k=0$ and $\mathcal J = (0 \leq 0 \leq 1)$ and $\mathcal J = (0 \leq1 \leq1 )$.}
	\label{fig:proofChaAdm}
\end{figure} This is still a stratum preserving homotopy equivalence (as the $\textnormal{\textbf{Top}}$ copower structures on $\textnormal{\textbf{Top}}_{P}$ and $\textnormal{\textbf{Top}}_{Q}$ are clearly compatible under the base change functor). In particular, it induces a homotopy equivalence, on the underlying topological spaces. We then have: 
\begin{align*}
\big(|\Delta^{\mathcal{J}}|_P \big )_Q = \{ \sum_{i \in [n]} t_i|p_i| \in |\Delta^\mathcal J|_P \mid t_i = 0 \textnormal{ for $i < k$} \implies t_k = 0 \}
\end{align*}
This set is clearly convex, hence contractible. For the horn we obtain $\big (|\Lambda^{\mathcal J}_k|_P \big)_Q = ..$.
\begin{align*}	
\Big \{ \sum_{i \in [n]} t_i|p_i| \in |\Delta^\mathcal J|_P \mid \big ( t_i = 0 \textnormal{, for $i < k$} \implies t_k = 0 \big ) \textnormal{ and } \big (t_i =0 \textnormal{ for some }i \in [n], i\neq k \big )\Big \}
\end{align*}
which contracts linearly onto the boundary of the $k$-th face via the linear homotopy between the identity and $$\sum_{i \in [n]} t_i|p_i| \longmapsto \sum_{i \in [n] \setminus \{k\}} \frac{t_i}{1-t_k}|p_i|.$$ In particular, the underlying space of $(\Lambda^{\mathcal J}_k)_Q$ is homotopic to $S^{n-2}$ (hence empty for $n =1$), in contradiction to the contractibility of the one underlying $(\Delta^\mathcal J)_Q$. 
\end{proof}
If one now takes the perspective that realizations of simplicial subsets should give cofibrations (in the sense of model categories) in $\textnormal{\textbf{Top}}_{P}$, then in light of \Cref{lemCharHornInc} the following definition should not be surprising.
\begin{definition}\label{defFStrar}
	A $P$-filtered space $X \in \textnormal{\textbf{Top}}_{P}$ is called an \textit{f-stratified} space (where the f stands for fibrant) if it has the right lifting property with respect to all realizations of admissible horns inclusions; that is, if for each $d$-flag $\mathcal J$ in $P$, and $k$ $\mathcal J$-admissible and each solid diagram in $\textnormal{\textbf{Top}}_{P}$ as below, a dashed arrow making the diagram commute exists.
	\begin{center}
		\begin{tikzcd}
			{|\Lambda^\mathcal{J}_k|_P} \arrow[d, hook] \arrow[r] &X \\
			{|\Delta^\mathcal{J}|_P} \arrow[ru, dashed] &
		\end{tikzcd}
	\end{center}
\end{definition}
\begin{remark}
	These spaces are often called fibrant spaces as their $\operatorname{Sing}_P(-)$ is fibrant with respect to a certain model structure, defined later on in \Cref{subsecPsset}. We find this nomenclature a bit confusing, as it conflicts with the model structure defined on $\textnormal{\textbf{Top}}_{P}$ we will be using. It seems to be an open question whether there exists a model structure on $\textnormal{\textbf{Top}}_{P}$ with respect to which these spaces are actually the fibrant objects. We thus decided to call them f-stratified.
\end{remark}
\begin{example}\label{exOfFstrat}
	Most examples of stratified spaces encountered in practice are f-stratified. More explicitly all homotopically stratified metric spaces with finite stratification (see \cite[Prop. 8.1.2.6.]{nand2019simplicial}) and all conically stratified spaces (see \cite[Prop. 4.212]{douSimp}) are f-stratified. Hence, the class of all f-stratified spaces also includes various of the classical definitions of stratified space, such as Whitney stratified, Thom-Mather stratified and topologically stratified spaces (see for example \cite[Rem. 5.1.0.14]{nand2019simplicial}).
\end{example}
Douteau has shown the following result. (It is formulated slightly differently there, but easily seen to be equivalent under the various inner hom adjunctions, analogously to the proof of \cite[Cor. 5.12]{douSimp}).
\begin{theorem}\cite{douSimp}[Thm. 4.23]\label{thrmWhitehead}
	Let $g: X \to Y$ be a stratum preserving map of f-stratified spaces over $P$ that are stratum preserving homeomorphic to the realizations of $P$-filtered simplicials sets. Then $g$ is a stratum preserving homotopy equivalence if and only if, for each flag $\mathcal J$ of $P$, $\textnormal{Hol}_P(\mathcal J, g)$ is a weak homotopy equivalence. 
\end{theorem}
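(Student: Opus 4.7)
The easy direction is immediate: if $g$ is a stratum preserving homotopy equivalence, then by \Cref{lemC0retainHo}, applied with $|\Delta^{\mathcal J}|_P$ placed in the contravariant slot of $C^0_P(-,-)$, the induced map $\textnormal{Hol}_P(\mathcal J,g)$ is a homotopy equivalence of topological spaces and in particular a weak homotopy equivalence.

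For the converse, the plan is to invoke the model structure on $\textnormal{\textbf{Top}}_P$ due to Douteau. In it, the weak equivalences are characterized (up to the reduction mentioned below) as those maps inducing weak homotopy equivalences on all generalized homotopy links; the generating trivial cofibrations are (conjugate to) the realizations of admissible horn inclusions $|\Lambda^{\mathcal J}_k|_P \hookrightarrow |\Delta^{\mathcal J}|_P$ with $\mathcal J$-admissible $k$; the generating cofibrations are the realizations of the boundary inclusions $|\partial \Delta^{\mathcal J}|_P \hookrightarrow |\Delta^{\mathcal J}|_P$; and the copowering with the unit interval $I$ from \Cref{conEnTop} supplies a cylinder functor. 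Under this setup, the hypothesis that $X,Y$ are stratum preserving homeomorphic to realizations of filtered simplicial sets entails cofibrancy (such a realization is built, via \Cref{remLimandColim}, as a transfinite composition of cellular pushouts of the generating cofibrations indexed by non-degenerate simplices), while the f-stratified assumption is, by \Cref{defFStrar}, exactly the statement of fibrancy. Thus $g$ becomes a weak equivalence between cofibrant–fibrant objects, and standard model-category theory furnishes a homotopy inverse with respect to the model-categorical notion of homotopy. Since that notion is represented by the $I \otimes -$ cylinder, it coincides with stratum preserving homotopy, yielding the desired stratified homotopy equivalence.

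The main obstacle I anticipate is reconciling the flag-indexed hypothesis of the theorem with the d-flag-indexed definition of weak equivalence that naturally arises from the admissible horns (which live over d-flags with a repeated vertex). For a d-flag $\mathcal J'$ obtained from a flag $\mathcal J$ by repetition of some vertex, the linear deformation retraction already constructed in the proof of \Cref{lemCharHornInc} identifies the repeated coordinate, yielding a stratum preserving homotopy equivalence $|\Delta^{\mathcal J'}|_P \simeq_P |\Delta^{\mathcal J}|_P$, after which \Cref{lemC0retainHo} produces a canonical homotopy equivalence $\textnormal{Hol}_P(\mathcal J',-) \simeq \textnormal{Hol}_P(\mathcal J,-)$; so the d-flag data is redundant given the flag data, and the hypothesis suffices. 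A secondary technical check is that the model-categorical cylinder relation agrees with the elementary stratum preserving homotopy relation, which should reduce to showing that the endpoint inclusion $(\star \sqcup \star) \otimes Z \hookrightarrow I \otimes Z$ is a cofibration for cofibrant $Z$, a consequence of closure of cofibrations under the pushout–product with the generating cofibration $\varnothing \hookrightarrow I$ in $\textnormal{\textbf{Top}}$.
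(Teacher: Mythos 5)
Your easy direction is fine (\Cref{lemC0retainHo} does exactly that), but the converse rests on a misidentification of the fibrant and cofibrant objects, and this is a genuine gap. In the Henrique--Douteau model structure of \Cref{thrmDouMod}, the acyclic cofibrations are generated by $|\Lambda^n_k|\otimes|\Delta^{\mathcal J}|_P \hookrightarrow |\Delta^n|\otimes|\Delta^{\mathcal J}|_P$, i.e.\ by products of \emph{unfiltered} horns with filtered simplices, not by realizations of admissible horn inclusions; consequently \emph{every} object of $\textnormal{\textbf{Top}}_{P}$ is fibrant there, and the f-stratified condition of \Cref{defFStrar} is not the fibrancy condition of that structure. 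Dually, realizations of filtered simplicial sets are in general \emph{not} cofibrant in this structure: the paper states explicitly that $|-|_P$ does not preserve cofibrant objects, and the cofibrant replacement one actually has is $|\textnormal{sd}_P(X)|_P$ (\Cref{propSDPisCof}), not $|X|_P$ itself. The model structure you are implicitly invoking --- one with the same weak equivalences, trivial cofibrations generated by realizations of admissible horn inclusions, cofibrations generated by realizations of filtered boundary inclusions, and the f-stratified spaces as fibrant objects --- is precisely the conjectural structure of \Cref{remWeirdStruct}, whose existence the paper says it was unable to establish. So the step ``$g$ is a weak equivalence between cofibrant--fibrant objects, hence a stratified homotopy equivalence'' has no established model structure to run in, and the argument does not go through as written.

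For comparison: the paper does not prove this statement at all; it is imported from Douteau \cite{douSimp} (Thm.\ 4.23), where it is established by a filtered Whitehead-theorem style argument on filtered homotopy groups, relative to the simplicial fibrancy of $\operatorname{Sing}_P$ of an f-stratified space, rather than by locating $X$ and $Y$ as bifibrant objects of a topological model structure. That the model-categorical shortcut is not cheap is also visible in \Cref{thrmHoClassofFSpace}, where the weaker statement that maps into f-stratified, finitely triangulable targets are classified by stratified homotopy classes already requires the cofibrant replacement $|\textnormal{sd}_P(-)|_P$ together with the filtered simplicial approximation theorem. Your remark on reducing d-flags to flags is unobjectionable but orthogonal to the real difficulty.
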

\begin{remark}
	If one strengthens the requirements of \Cref{thrmWhitehead} to $X$ and $Y$ being conically stratified, then it in fact suffices to check for flags of length $\leq 2$, i.e. for homotopy links and strata. (See \cite[Cor. 5.12]{douSimp}). 
\end{remark}
The approach to stratified homotopy theory we follow here is essentially to take the equivalent characterization of stratum preserving homotopy equivalences in \Cref{thrmWhitehead} for a definition. For most stratified spaces classically encountered, the new notion of equivalence just coincides with stratum preserving (stratified) homotopy equivalences. However, for filtered spaces farther away from what is typically called stratified, this provides extra degrees of freedom allowing for the construction of a combinatorial simple homotopy theory in \Cref{subsecEckSiebAppHolds} (see also \Cref{subsecElemExp} for reasons why this notion is preferable). In fact, this notion of weak equivalence fits into a (simplicial) model structure on $\textnormal{\textbf{Top}}_{P}$. This was originally shown in \cite{douteauEnTop}, and is formulated in the form we are going to use in \cite[1.3.7]{haine2018homotopy}. Essentially what one does is post-compose the $\textnormal{Hol}$ functor with $\operatorname{Sing}$. One then right-transfers the projective model structure on $\textnormal{s\textbf{Set}}^{\textnormal{sd}(P)}$ over to $\textnormal{\textbf{Top}}_{P}$ along this functor (see \cite{nlab:combinatorial_model_category} for a definition of combinatorial model categories).
\begin{theorem}\label{thrmDouMod}
	There exists a combinatorial model structure on $\textnormal{\textbf{Top}}_{P}$, which is explicitly described as follows.
	\begin{itemize}
		\item $f: X \to Y$ is a weak equivalence if and only if $\textnormal{Hol}_P(\mathcal J, f)$ is a weak equivalence for each flag $\mathcal J$ in $\textnormal{sd}(P)$.
		\item $f: X \to Y$ is a fibration if and only if $\textnormal{Hol}_P(\mathcal J, f)$ is a serre fibration for each flag $\mathcal J$ in $\textnormal{sd}(P)$.
		\item The acyclic cofibrations are generated by $$ \big \{ |\Lambda_k^n| \otimes |\Delta^\mathcal J|_P \hookrightarrow |\Delta^n| \otimes |\Delta^\mathcal{J}|_P \mid \mathcal J \in \textnormal{sd}(P), n\geq 0, k \in [n] \big \}.$$
		\item The cofibrations are generated by $$ \big \{ |\partial \Delta^n| \otimes |\Delta^\mathcal J|_P \hookrightarrow |\Delta^n| \otimes |\Delta^\mathcal{J}|_P \mid \mathcal J \in \textnormal{sd}(P), n\geq 0 \big \}.$$
	\end{itemize}
\end{theorem}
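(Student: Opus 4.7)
The plan is to obtain this model structure by right-transferring the projective model structure on $\textnormal{\textbf{Top}}^{\textnormal{sd}(P)^{op}}$ along the generalized homotopy link functor $\textnormal{Hol}_P$, following the standard combinatorial transfer recipe of Kan and Crans.

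First I would exhibit the left adjoint $L \dashv \textnormal{Hol}_P$. The pointwise formula $\textnormal{Hol}_P(\mathcal J, -) = C^0_P(|\Delta^{\mathcal J}|_P, -)$ together with the copower adjunction $(-) \otimes |\Delta^{\mathcal J}|_P \dashv C^0_P(|\Delta^{\mathcal J}|_P, -)$ from \Cref{conEnTop} forces the left adjoint, on $\textnormal{\textbf{Top}}$-copowers of the representables $h^{\mathcal J}$, to be $L(T \otimes h^{\mathcal J}) = T \otimes |\Delta^{\mathcal J}|_P$. This extends uniquely and colimit-preservingly to all of $\textnormal{\textbf{Top}}^{\textnormal{sd}(P)^{op}}$, since the latter is the free $\textnormal{\textbf{Top}}$-enriched cocompletion of $\textnormal{sd}(P)$ under representables. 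Applying $L$ to the standard projective generating (acyclic) cofibrations $|\partial\Delta^n|\otimes h^{\mathcal J} \hookrightarrow |\Delta^n|\otimes h^{\mathcal J}$ and $|\Lambda_k^n|\otimes h^{\mathcal J} \hookrightarrow |\Delta^n|\otimes h^{\mathcal J}$ yields precisely the sets stated in the theorem.

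Next I would invoke a combinatorial right-transfer theorem. This reduces the existence claim to three checks: the source is a combinatorial model category (standard for projective structures on diagrams valued in a combinatorial model category), the category $\textnormal{\textbf{Top}}_P$ is locally presentable (true since $\textnormal{\textbf{Top}}$ is locally presentable in the $\Delta$-generated setting, and slices of locally presentable categories are locally presentable, giving the requisite smallness of the domains of $L(I)$ and $L(J)$), and the acyclicity condition, namely that every relative $L(J)$-cell complex is mapped by $\textnormal{Hol}_P$ to a pointwise weak equivalence in $\textnormal{\textbf{Top}}^{\textnormal{sd}(P)^{op}}$.

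The acyclicity condition is where the genuine work lies. For each flag $\mathcal K$, I observe that each generating acyclic cofibration $|\Lambda_k^n| \otimes |\Delta^{\mathcal J}|_P \hookrightarrow |\Delta^n| \otimes |\Delta^{\mathcal J}|_P$ is a closed stratum preserving strong deformation retract in $\textnormal{\textbf{Top}}_P$, since the classical deformation of $|\Delta^n|$ onto $|\Lambda_k^n|$ moves only the $|\Delta^n|$-coordinate, and the filtration on the copower is $p_{|\Delta^{\mathcal J}|_P}\circ \pi_{|\Delta^{\mathcal J}|_P}$, hence unaffected. Such closed stratum preserving strong deformation retract inclusions are stable under pushouts (gluing lemma, applied inside the enriched category $\textnormal{\textbf{Top}}_P$) and under transfinite composition of closed inclusions. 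By \Cref{lemC0retainHo}, the continuous enriched hom $\textnormal{Hol}_P(\mathcal K, -) = C^0_P(|\Delta^{\mathcal K}|_P, -)$ sends stratified homotopy equivalences to homotopy equivalences, so any relative $L(J)$-cell complex becomes an objectwise weak equivalence in $\textnormal{\textbf{Top}}^{\textnormal{sd}(P)^{op}}$ after $\textnormal{Hol}_P$. Once the transfer applies, the characterizations of weak equivalences, fibrations, and generating sets in the theorem read off directly from the definition of the transferred structure and the pointwise description of the projective model structure on $\textnormal{\textbf{Top}}^{\textnormal{sd}(P)^{op}}$.
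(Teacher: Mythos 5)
Your proposal follows the same right-transfer strategy the paper gestures at, but with one genuine variation: you transfer directly from the projective model structure on $\textnormal{\textbf{Top}}^{\textnormal{sd}(P)^{op}}$ along $\textnormal{Hol}_P$, whereas the paper (following Douteau and Haine) post-composes with $\operatorname{Sing}$ and transfers from the projective model structure on $\textnormal{s\textbf{Set}}^{\textnormal{sd}(P)^{op}}$. Both routes land on the same generating sets and the same weak equivalences and fibrations (since $\operatorname{Sing}$ preserves and reflects weak equivalences and Serre/Kan fibrations), but the simplicial route avoids having to invoke that the projective model structure on $\textnormal{\textbf{Top}}$-valued presheaves is combinatorial in the $\Delta$-generated setting; that fact is true but less frequently quoted than the corresponding statement for $\textnormal{s\textbf{Set}}$. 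The simplicial route also simplifies the acyclicity verification, because in $\textnormal{s\textbf{Set}}$ filtered colimits along monomorphisms of weak equivalences are automatically weak equivalences, with no gluing of homotopies required.

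There is one technical gap in your acyclicity argument that is worth flagging. You assert that closed stratum preserving strong deformation retract inclusions are stable ``under transfinite composition of closed inclusions,'' and use this to conclude that every relative $L(J)$-cell complex is a stratified strong deformation retract. That stability fails in general for limit ordinals of uncountable cofinality, since one cannot nest uncountably many homotopy reparametrizations into $[0,1]$. The conclusion you want is nevertheless true, but you should argue it differently: each $|\Delta^{\mathcal K}|_P$ is compact, so $\textnormal{Hol}_P(\mathcal K, -)=C^0_P(|\Delta^{\mathcal K}|_P,-)$ commutes with the relevant filtered colimits of closed inclusions, turning a transfinite composition of pushouts of generating acyclic cofibrations into a transfinite composition of closed inclusions that are weak homotopy equivalences of topological spaces; such transfinite compositions are weak equivalences, giving the pointwise acyclicity without needing the colimit itself to be a strong deformation retract. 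With that patch, your proof goes through.
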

In particular, by a\textit{ weak equivalence of $P$-stratified spaces} we mean a $P$-stratified map as in the first bullet point of \Cref{thrmDouMod}. We denote by $\mathcal H\textnormal{\textbf{Top}}_{P}$ the homotopy category, obtained by localizing the weak equivalences in $\textnormal{\textbf{Top}}_{P}$. We call this model structure the Henrique-Douteau model structure on $\textnormal{\textbf{Top}}_{P}$, following the nomenclature in \cite{haine2018homotopy}. The obvious question arises how the category $\mathcal H \textnormal{\textbf{Top}}_{P}$ differs from the one obtained by only localizing stratum preserving homotopy equivalences. Essentially the difference comes down to allowing certain thickenings of strata.
\begin{example}\label{exWeakNotStr}
Consider the stratum preserving map shown in \Cref{fig:ExNotStratEq}. We have already seen that it is not a stratum preserving homotopy equivalence. However, it is a weak equivalence of filtered spaces. If $X$ denotes the space on the left, and $Y$ the space on the right, then \Cref{propQuinHol} shows that (up to homotopy equivalences) the maps on homotopy links are given as in the following figure.
\begin{figure}[H]
	\centering
	\includegraphics[width=80mm]{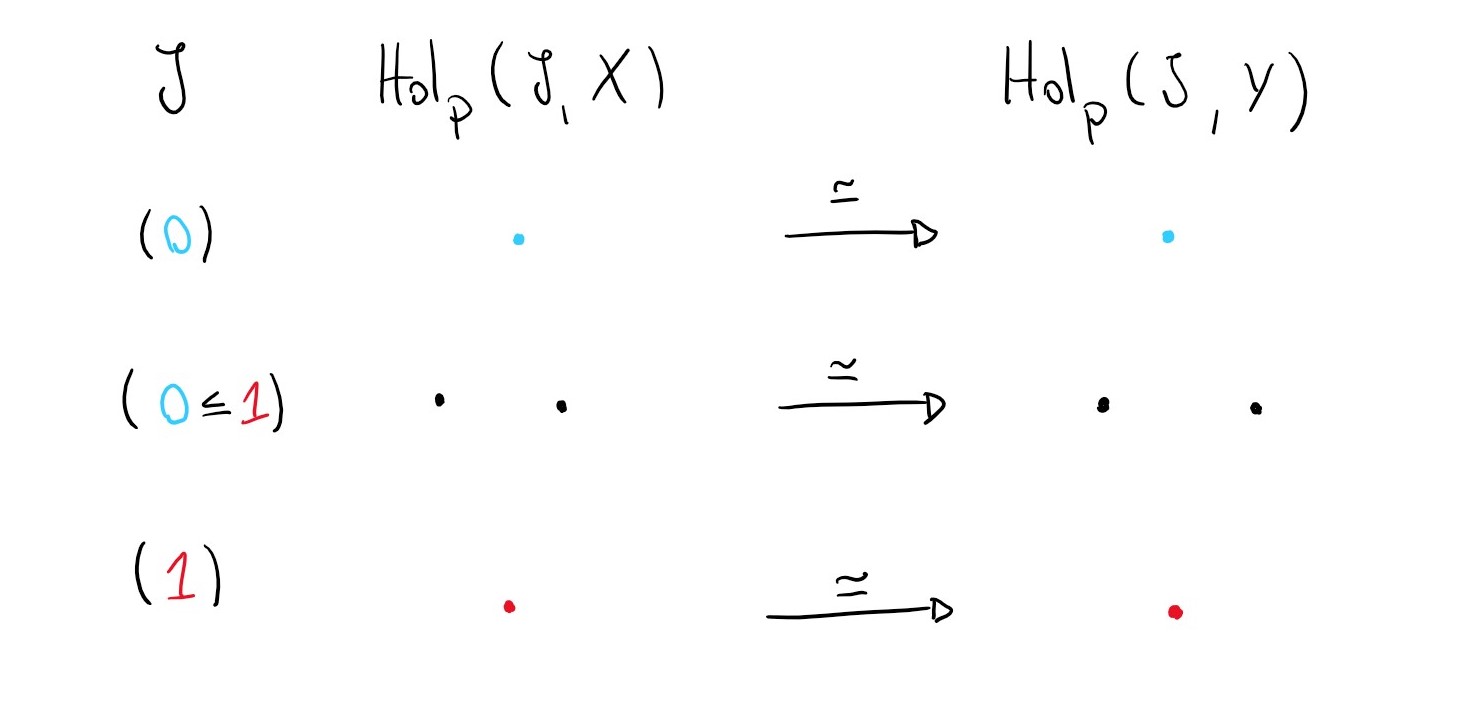}
	\caption{Illustration of the induced maps on generalized homotopy links of \Cref{fig:ExNotStratEq}, up to homotopy equivalence.}
	\label{fig:exWeakNotStr}
\end{figure}
This of course is not in contradiction to \Cref{thrmWhitehead} as $X$ is not f-stratified. 
\end{example}
This strata thickening interpretation is made more precise by \Cref{propSDPisCof} later on. However, as long as one is interested in most classical examples of stratified spaces (and willing to allow for some compactness restrictions), these two categories actually agree. We have shown this in \Cref{corCatEqu}.
\begin{remark}
The choice of calling such maps weak equivalences is not purely due to consistency with model theoretic language. In fact, one can define filtered homotopy groups, which essentially come down to being the homotopy groups of the generalized homotopy links. Then a weak equivalence is precisely one that defines isomorphisms on all of these homotopy groups (for appropriate choice of basepoints). For details see \cite{douSimp}.
\end{remark}
\begin{remark}\label{remWeirdStruct}
	The astute reader might be somewhat surprised by the choice of fibrations and cofibrations in \Cref{thrmDouMod}. Especially, after the definition of f-stratified space, one would expect the acyclic cofibrations to be generated by the realizations of admissible horn inclusions, and the fibrations to be defined accordingly. In particular, this would make the f-stratified spaces the fibrant objects and justify this nomenclature. In fact, it seems highly likely that such a model structure does exist, and can be obtained by right-transfer from one defined on the category $\textnormal{s\textbf{Set}}_P$ (see \Cref{thrmDouModSS}). We have however, so far not been able to obtain a complete proof of this transfer. For our intends and purposes, that is, for the formulation of a simple homotopy theory, the model structure in this form, will suffice. Nevertheless, we were able to obtain many of the results such a transfer of model structure would entail through other means. In particular, this applies to our results \Cref{thrmWeakEquSustain}, \Cref{thrmFullyFaithful} and \Cref{thrmHoClassofFSpace}.
\end{remark}
Finally, we need the fact that the model structure on $\textnormal{\textbf{Top}}_{P}$ is actually a simplicial one (see \cite[Ch.2, Sec. 2]{goerss2012simplicial} for a definition). The additional simplicial structure, is constructed as follows.
\begin{definitionconstruction}
	The enrichment and copower structure of $\textnormal{\textbf{Top}}_{P}$ over $\textnormal{\textbf{Top}}$ induces one over $\textnormal{s\textbf{Set}}$ as follows. Let $X,Y \in \textnormal{\textbf{Top}}_{P}$ and $S \in \textnormal{s\textbf{Set}}$.
	\begin{itemize}
		\item The copowering is given by $$S \otimes X = |S| \otimes X.$$
		\item The simplicial enrichment is then induced, by defining 
\begin{align*}
	Map(X,Y): \Delta^{op} \hookrightarrow \textnormal{s\textbf{Set}} \xrightarrow{- \otimes X} \textnormal{\textbf{Top}}_{P} \xrightarrow{\textnormal{Hom}_{\textnormal{\textbf{Top}}_{P}}(-,Y)} \textnormal{\textbf{Set}};
\end{align*}
that is, $Map(X,Y)([n]) = \textnormal{Hom}_{\textnormal{\textbf{Top}}_P}(\Delta^n \otimes X, Y)$.
\item Finally, the power structure $X^S$ is induced by the inclusion 
	$$\bigsqcup_{p \in P} \textnormal{Hom}_{\textnormal{\textbf{Top}}}(|S|, X_p) \hookrightarrow C^0(|S|, X);$$ that is, one equips the set of maps $|S| \to X$ whose image only intersects one stratum in $X$ with the ($\Delta$-generated) subspace topology, with respect to this inclusion.
	\end{itemize}
All of these are functorial in both arguments and checked to define a simplicial structure in \cite[5.1.3]{douteauFren}.
\end{definitionconstruction}
Then we have:
\begin{proposition}\cite{haine2018homotopy}\label{propModTopSim}
The model structure defined in \Cref{thrmDouMod} is combinatorial and simplicial.
\end{proposition}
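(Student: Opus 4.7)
The plan is to verify the two claims separately. Combinatoriality is already part of the content of \Cref{thrmDouMod}, so the real task is the pushout-product axiom (SM7) for the simplicial structure just defined.

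Since both $\textnormal{s\textbf{Set}}$ (with the Kan--Quillen model structure) and $\textnormal{\textbf{Top}}_{P}$ (by \Cref{thrmDouMod}) are cofibrantly generated, and pushout products commute with colimits in each variable, SM7 reduces to checking it on generating (acyclic) cofibrations. The generators for $\textnormal{s\textbf{Set}}$ are the boundary inclusions $\partial \Delta^{m} \hookrightarrow \Delta^{m}$ and the horn inclusions $\Lambda^{m}_{k} \hookrightarrow \Delta^{m}$, while those of $\textnormal{\textbf{Top}}_{P}$ are recalled in \Cref{thrmDouMod}. Using the identification $S \otimes X = |S| \otimes X$ together with the preservation of products by geometric realization in the $\Delta$-generated setting, the pushout product of a generator from $\textnormal{s\textbf{Set}}$ with a generator from $\textnormal{\textbf{Top}}_{P}$ takes the form
\[
|C| \otimes |\Delta^{\mathcal{J}}|_{P} \hookrightarrow |D| \otimes |\Delta^{\mathcal{J}}|_{P},
\]
where $C \hookrightarrow D$ is a pushout product of two boundary or horn inclusions in $\textnormal{s\textbf{Set}}$. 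This is always a monomorphism, and an acyclic cofibration whenever at least one factor is a horn inclusion.

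For the cofibration half of SM7, the monomorphism $C \hookrightarrow D$ realizes to a relative CW-inclusion, so $|C| \otimes |\Delta^{\mathcal{J}}|_{P} \hookrightarrow |D| \otimes |\Delta^{\mathcal{J}}|_{P}$ can be assembled, by induction up the skeletal filtration of $D$, from pushouts of the generating cofibrations of \Cref{thrmDouMod}. For the acyclic case, the realization $|C| \hookrightarrow |D|$ is additionally an acyclic cofibration between cofibrant spaces in the Serre--Quillen model structure on $\textnormal{\textbf{Top}}$. To promote this to a weak equivalence in $\textnormal{\textbf{Top}}_{P}$ after copowering with $|\Delta^{\mathcal{J}}|_{P}$, I would use the canonical identification
\[
\textnormal{Hol}_{P}(\mathcal{J}', T \otimes |\Delta^{\mathcal{J}}|_{P}) \;\cong\; C^{0}(|\Delta^{\mathcal{J}'}|, T) \times \textnormal{Hol}_{P}(\mathcal{J}', |\Delta^{\mathcal{J}}|_{P}),
\]
which arises from the fact that the filtration of the copower $T \otimes X$ factors through projection to $X$, so a stratum-preserving map into $T \otimes X$ splits as a pair of an arbitrary continuous map to $T$ and a stratum-preserving map to $X$. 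Under this splitting the map $|C| \hookrightarrow |D|$ induces on each holink the product of the identity on $\textnormal{Hol}_{P}(\mathcal{J}', |\Delta^{\mathcal{J}}|_{P})$ with $C^{0}(|\Delta^{\mathcal{J}'}|, |C|) \to C^{0}(|\Delta^{\mathcal{J}'}|, |D|)$, which is a weak equivalence by SM7 for the simplicial Serre--Quillen model structure applied to the cofibrant $|\Delta^{\mathcal{J}'}|$.

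The main obstacle I expect is the last splitting step: while conceptually immediate, checking that the displayed identification is an honest homeomorphism in the $\Delta$-generated category --- rather than a mere continuous bijection --- requires some care with the internal-hom topology on $C^{0}_{P}(-,-)$ and with the finality built into $\Delta$-generation, since these can interact nontrivially with products of mapping spaces. Once this is dispensed with, the remainder is standard Quillen-model-category bookkeeping, and together with combinatoriality from \Cref{thrmDouMod} this establishes the proposition.
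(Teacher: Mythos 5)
The paper does not actually prove this statement: it is quoted from the literature (the proposition carries the citation to \cite{haine2018homotopy}, and the verification that the copower/power/mapping-space constructions satisfy the simplicial-category axioms is deferred to \cite[5.1.3]{douteauFren}). So there is no internal proof to compare against; what you have written is an independent verification, and its overall strategy is sound. Combinatoriality is indeed already part of \Cref{thrmDouMod}; the reduction of SM7 to generating (acyclic) cofibrations is legitimate because the copower is a left adjoint in each variable (the power $X^S$ and $Map(X,Y)$ supply the two-variable adjunction) and preserves colimits; and your identification of the pushout product of generators with $|C|\otimes|\Delta^{\mathcal J}|_P \hookrightarrow |D|\otimes|\Delta^{\mathcal J}|_P$, where $C\hookrightarrow D$ is a pushout product of boundary/horn inclusions in $\textnormal{s\textbf{Set}}$, is correct, as is the cell-by-cell argument for the cofibration half. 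Two remarks on the acyclic half. First, the obstacle you flag (that the splitting $\textnormal{Hol}_{P}(\mathcal{J}', T \otimes |\Delta^{\mathcal{J}}|_{P}) \cong C^{0}(|\Delta^{\mathcal{J}'}|, T) \times \textnormal{Hol}_{P}(\mathcal{J}', |\Delta^{\mathcal{J}}|_{P})$ be a homeomorphism in the $\Delta$-generated sense) is genuine but harmless: even if one only secures a natural continuous bijection, $\Delta$-fication does not change the sets of maps from realized simplices, hence not the weak homotopy type, and weak equivalence of homotopy links is all that \Cref{thrmDouMod} asks for. Second, you can avoid the holink computation entirely by running the acyclic case exactly like your cofibration case: when one factor is a horn inclusion, the pushout product $C\hookrightarrow D$ in $\textnormal{s\textbf{Set}}$ is not merely anodyne but a relative cell complex on horn inclusions (this is precisely \cite[Prop. 16]{MossSae}, used elsewhere in the paper), so applying the colimit-preserving functor $|-|\otimes|\Delta^{\mathcal J}|_P$ exhibits the pushout product directly as a relative cell complex on the generating acyclic cofibrations of \Cref{thrmDouMod}; this gives an acyclic cofibration with no mapping-space topology to worry about, and makes the whole verification purely combinatorial.
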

This ends our introduction into the category $\textnormal{\textbf{Top}}_{P}$. 
\subsection{The category of $P$-filtered simplicial sets}\label{subsecPsset}
In some sense, the study of simple homotopy theory can be understood as asking the question: Can a specified "weak equivalence" be witnessed in a purely combinatorial way and if yes, how can we tell? In particular, to even make this question well-defined, one needs a combinatorial model for the homotopy setting that one is working in. Originally, Whitehead used simplicial complexes (\cite{whitehead1939simplicial}) as such a model. However, the category of simplicial complexes is a rather inconvenient one in many aspects. For example, pushouts, even along inclusions of simplicial complexes, will usually not agree with what one would geometrically expect them to look like (see also \Cref{remPushoutInSim}). This reflects for example in the fact that the PL-category does not seem to be equipable with a functorial mapping cylinder (see for example \cite[Remark 4.3.2]{waldhausen2000spaces}), even though non functorial ones exist. It was probably due to difficulties such as this one that Whitehead decided to move on to (and introduce) the setting of CW-complexes to formulate his simple homotopy theory later on in \cite{whitehead1950simple}. Since then, CW-complexes have been recognized as formidable objects for the study of algebraic topology. An area somewhat less well explored, however, seems to be their role in stratified topology. For example, to the author there seems to be no known stratified analogue to CW-complexes that fulfill a filtered version of the CW-approximation theorem. Furthermore, for our purposes CW-complexes have the disadvantage of only being partially combinatorial in nature as they still contain the data of continuous attaching maps.\\
\\
In the spirit of modern homotopy theory, much of the recent results about stratified homotopy theory have been formulated in the language of simplicial sets (see for example \cite{douSimp}, \cite{haine2018homotopy}, \cite{nand2019simplicial}). We are thus going to also take the approach of defining our simple stratified homotopy theory in this setting. This allows us to work in an entirely combinatorial but still categorially convenient world (appropriate for the use of model categories), fulfilling many of the advantages of simplicial and CW-complexes. Furthermore, we are going to show that in the trivially filtered case our theory agrees with the classical CW-formulation of simple homotopy theory at the end of this work (see \Cref{thrmSSvSC}). \\
\\
This subsection serves as a quick introduction to the category of simplicial sets filtered over a poset $P$, which we defined in \Cref{exFilteredObj}. For a more detailed introduction including proofs we refer to \cite{douSimp}, \cite{douteauFren} or \cite{haine2018homotopy}. We should first make the notational remark that we denote the set of $n$ simplices of a (filtered) simplicial set $X$ by $X([n])$ and not by $X_n$, as is usually done, to avoid any possible confusion with the strata. \\
\\
To begin with, it can be helpful to obtain a somewhat more explicit and geometrical description of the category of $P$-filtered simplicial sets.
\begin{remark}\label{remDesofSSP}
	First note that $N(P) \in \textnormal{s\textbf{Set}}$ is not just any simplicial set; It is one that comes from an ordered simplicial complex. We analyze this relationship in more detail in \Cref{subsecOrdered}, but for now it suffices to realize that a $n$-simplex in $N(P)$ is entirely determined by its family of vertices, that is, its images under the face maps induced by the inclusions $\Delta^0 \hookrightarrow \Delta^n$. We denote these vertices for $\sigma \in S([n])$ by $x_{i,\sigma}$, for $i \in [n]$. In particular, a simplicial map from an arbitrary simplicial set $f:S \to N(P)$ is entirely determined by $f[0]: S([0]) \to N(P)([0]) = P$. Conversely, a map on the $0$-skeletons $g: S([0]) \to P$ extends to a simplicial map $f: S \to N(P)$ if and only if for each simplex $\sigma \in S([n])$, $$g(x_{0,\sigma}) \leq ... \leq g(x_{n,\sigma}).$$ In particular, it already suffices to check this for all $2$-simplices. Hence, a $P$-filtered simplicial set can simply be thought of as a simplicial set $X$ together with a map $p_X:X([0]) \to P$, such that whenever two vertices $x_0$ and $x_1$ are connected by a $1$-simplex from $x_0$ to $x_1$ in $X$, then $p_X(x_0) \leq p_X(x_1)$. In this formulation, morphism $f: X \to Y$ is simply a map of the underlying simplicial sets such that $$p_X(x) = p_Y(f(x)),$$ for all vertices $x$ of $X$. Again, this justifies the term \textit{stratum preserving.}
\end{remark}
\begin{example}
	The following figure shows two simplicial sets whose vertices have been assigned values in $P = \{0,1\}$. However, only the first one of the two is a filtered simplicial set in the sense of \Cref{remDesofSSP}. In the second one, the upper left $1$-simplex is attached with the wrong orientation. 
	\begin{figure}[H]
		\centering
		\includegraphics[width=120mm]{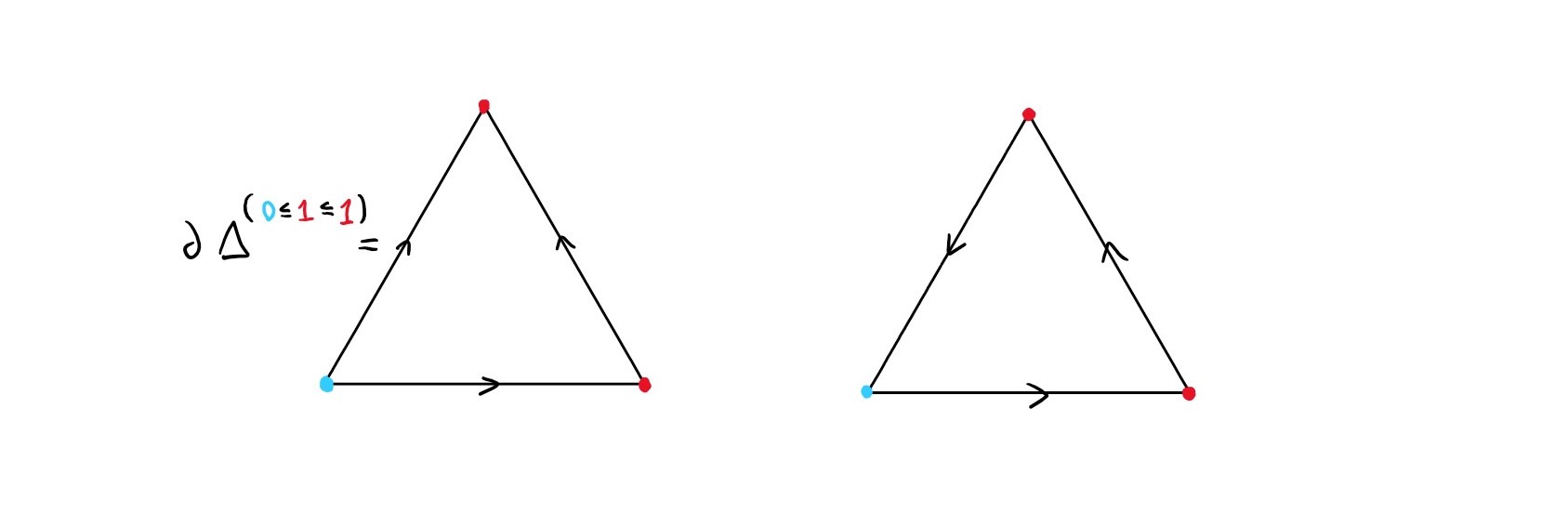}
		\caption{Examples of two simplicial sets, whose vertices have been assigned values in $P = \{0,1\}$.}
		\label{fig:exNotSimSet}
	\end{figure}
\end{example} 
While this perspective is very useful for a visual understanding and for verification that a map stratum preserving, there is another perspective that turns out to be very powerful from a model theoretic point of view. 
\begin{definitionconstruction}\label{conPreShPers}
	We denote by $\Delta(P)$, the category of simplices of $N(P)$; that is the subcategory of $\textnormal{s\textbf{Set}}_P$ of filtered simplicial sets with underlying simplicial set a standard simplex. 
	Less abstractly, by \Cref{remDesofSSP}, $\Delta(P)$ is equivalently given by the set of d-flags of $P$, with a morphism of d-flags $(p_0 \leq ... \leq p_n) \to (q_0 \leq ... \leq q_m)$ given by a map of posets $f: [n] \to [m]$ such that $p_{f(i)} = q_i$. Geometrically this is just the category of filtered simplices with stratum preserving face inclusions and degeneracy maps. Now, consider the Yoneda-Style functor 
	\begin{align*}
		\textnormal{s\textbf{Set}}_P &\hookrightarrow \textnormal{\textbf{Set}}^{\Delta(P)^{op}} \\
		X &\mapsto \big \{ \Delta(P) \hookrightarrow \textnormal{s\textbf{Set}}_P \xrightarrow{\textnormal{Hom}_{\textnormal{s\textbf{Set}}_P}(-,X)} \textnormal{\textbf{Set}} \big \},
	\end{align*}
	doing the obvious thing on morphisms. 
	This defines an equivalence of categories. In fact, the analogous statement is true for any category of presheaves, not just for $\textnormal{s\textbf{Set}}$. An inverse is explicitly given by sending $X \in \textnormal{\textbf{Set}}^{\Delta(P)^{op}}$ to $\tilde X$ defined by
	$$\tilde X([n]) = \bigsqcup_{\mathcal J \in \Delta(P), \# \mathcal J=n} X( \mathcal J),$$ with the obvious induced face and degeneracy maps and functoriality induced by functoriality of $\bigsqcup$. This is naturally filtered by just sending $\sigma \in \tilde X$ to the d-flag $\mathcal J$, denoting its component in the disjoint union. (For more details see \cite[Prop. 1.3]{douSimp})
\end{definitionconstruction}
This presheaf perspective is be particularly useful as it allows for the usage of work of Cisinski on model structures of presheaves \cite{cisinski2006prefaisceaux}.\\
\\
The category $\textnormal{s\textbf{Set}}_P$ is a simplicial category (see \cite[Ch.2, Sec. 2]{goerss2012simplicial}). The simplicial structure is constructed as follows:
\begin{definitionconstruction}\label{conSimSSP}
Let $S \in \textnormal{s\textbf{Set}}$ and $X,Y\in \textnormal{s\textbf{Set}}_P$. 
\begin{itemize}
	\item	We define their outer product $S \otimes X$, as the composition $$ S \times X \xrightarrow{\pi_X} X \xrightarrow{p_X} N(P).$$ This is clearly functorial in both arguments and associative (up to natural isomorphism) with respect to products, in both arguments. This defines the copowering. 
	\item The simplicial enrichment on $\textnormal{s\textbf{Set}}_P$ is then induced by defining 
	\begin{align*}
		Map(X,Y): \Delta^{op} \hookrightarrow \textnormal{s\textbf{Set}} \xrightarrow{- \otimes X} \textnormal{s\textbf{Set}}_P \xrightarrow{\textnormal{Hom}_{\textnormal{s\textbf{Set}}_P}(-,Y)} \textnormal{\textbf{Set}}.
	\end{align*}
	That is, $Map(X,Y)([n]) = \textnormal{Hom}_{\textnormal{s\textbf{Set}}_P}(\Delta^n \otimes X, Y)$. 
	\item Finally, the powering is given (in the presheaf perspective of \Cref{conPreShPers}) by \begin{align*}
		X^S: \Delta(P)^{op} \hookrightarrow \textnormal{s\textbf{Set}}_P \xrightarrow{S \otimes -} \textnormal{s\textbf{Set}}_P \xrightarrow{\textnormal{Hom}_{\textnormal{s\textbf{Set}}_P}(-,X)} \textnormal{\textbf{Set}}.
	\end{align*}
	That is, $$ X^S(\mathcal J) = \textnormal{Hom}_{\textnormal{s\textbf{Set}}_P}(S \otimes \Delta^\mathcal J, X) .$$
\end{itemize} 
All of these are clearly functorial, in all arguments involved, in the obvious way. They define a simplicial structure on $\textnormal{s\textbf{Set}}_P$ (see \cite[Sec. 3]{douSimp}).
\end{definitionconstruction}
In particular, we have a notion of (strictly) simplicial homotopy on $\textnormal{s\textbf{Set}}_P$ and the resulting notions of simplicial homotopy equivalence, simplicial homotopy class etc., induced by the cylinder functor given by $ \Delta^1 \otimes - $ (see \cite[Ch. 9.5]{hirschhornModel} for definitions.). We will sometimes switch up the order of arguments in this functor, in particular when we use it to define homotopies. However, it should always be clear from context which one the space inducing the filtration is.
We denote by $[X,Y]_P$ the set of simplicial homotopy classes, that is the quotient of $\textnormal{Hom}_{\textnormal{s\textbf{Set}}_P}(X,Y)$ by the equivalence relation generated by elementary simplicial homotopies, also called strictly simplicial homotopies.\\
\\
It is of course already clear for the case $P = \star$ and the classical theory simplicial homotopy theory (see for ex. \cite{goerss2012simplicial}) that strictly simplicial homotopy does not define an equivalence relation for arbitrary targets. For this to work, one needs to pass to a class of objects that fulfill certain horn filler conditions. More generally, for simplicial homotopy classes in a simplicial model category to properly describe the homotopy category, one needs the source object to be cofibrant and the target to be fibrant. This is made more precise in the following standard proposition of model category theory (see for example \cite[Prop. 5.11.]{dwyer1995homotopy} together with \cite[Prop. 9.5.24.]{hirschhornModel}).
\begin{proposition}\label{propHoClasses}
	Let $\mathcal M$ be a simplicial model category. Let $X,Y \in \mathcal M$. If $X$ is cofibrant and $Y$ is fibrant, then the natural map $$[X,Y] \to \textnormal{Hom}_{\mathcal H \mathcal M}(X,Y)$$ is a bijection.
\end{proposition}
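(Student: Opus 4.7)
The plan is to reduce the statement to the classical model-categorical fact that, when $X$ is cofibrant and $Y$ is fibrant, Quillen left homotopy is an equivalence relation on $\textnormal{Hom}_{\mathcal M}(X,Y)$ whose quotient is naturally bijective to $\textnormal{Hom}_{\mathcal H \mathcal M}(X,Y)$. The substance therefore lies in identifying elementary simplicial homotopy, defined via the copower cylinder $\Delta^1 \otimes X$, with left homotopy through a \emph{good} cylinder object in the model-theoretic sense; the rest is a citation.

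First I would verify that $\Delta^1 \otimes X$ is a good cylinder object for $X$ whenever $X$ is cofibrant. The inclusion $\partial\Delta^1 \otimes X \hookrightarrow \Delta^1 \otimes X$ is the pushout-product of the cofibration $\partial\Delta^1 \hookrightarrow \Delta^1$ in $\textnormal{s\textbf{Set}}$ with the cofibration $\emptyset \to X$ in $\mathcal M$, and is therefore a cofibration by the pushout-product axiom of a simplicial model category. To see that the projection $\Delta^1 \otimes X \to \Delta^0 \otimes X \cong X$ is a weak equivalence, note that either endpoint inclusion $\Delta^0 \otimes X \hookrightarrow \Delta^1 \otimes X$ is a trivial cofibration (since $\Delta^0 \hookrightarrow \Delta^1$ is an acyclic cofibration in $\textnormal{s\textbf{Set}}$), whence two-out-of-three applies. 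Dually, when $Y$ is fibrant, $Y^{\Delta^1}$ is a good path object for $Y$; this follows by the same argument after passing across the enriched adjunction between copower and power, applied to the pushout-product of $\partial\Delta^1 \hookrightarrow \Delta^1$ with $Y \to \star$.

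With these two facts, a strictly simplicial homotopy $H: \Delta^1 \otimes X \to Y$ between $f_0, f_1: X \to Y$ is literally a left homotopy through the good cylinder $\Delta^1 \otimes X$, and conversely any such left homotopy is by definition an elementary simplicial homotopy. The equivalence relation on $\textnormal{Hom}_{\mathcal M}(X,Y)$ generated by elementary simplicial homotopies therefore coincides with Quillen left homotopy, and the classical result (as recorded in \cite{dwyer1995homotopy} and \cite{hirschhornModel}) then identifies its quotient $[X,Y]$ with $\textnormal{Hom}_{\mathcal H \mathcal M}(X,Y)$. The only non-formal input is the pushout-product axiom, which is part of the hypothesis that $\mathcal M$ is a simplicial model category; accordingly I do not expect a substantial obstacle and the argument is essentially bookkeeping around standard model-categorical lemmas.
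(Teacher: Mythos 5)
Your proposal is correct and takes essentially the same route as the paper: the paper offers no argument of its own but simply cites \cite[Prop. 5.11]{dwyer1995homotopy} together with \cite[Prop. 9.5.24]{hirschhornModel}, and your reasoning (showing $\Delta^1 \otimes X$ is a good cylinder and $Y^{\Delta^1}$ a good path object via the pushout-product axiom, identifying strict simplicial homotopy with left homotopy, then invoking the classical bijection with $\textnormal{Hom}_{\mathcal H \mathcal M}(X,Y)$) is exactly the content of those two citations.
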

\subsection{The Douteau model structure on $P$-filtered simplicial sets}
In \Cref{conPreShPers} we noted that $\textnormal{s\textbf{Set}}_P$ can be thought of as the category of presheaves $\textnormal{\textbf{Set}}^{\Delta(P)^{op}}$, where $\Delta(P)$ is the category of $d$-flags in $P$. In particular, this opens $\textnormal{s\textbf{Set}}_P$ up to the usage of work of Cisinski on model structures on categories of presheaves (\cite{cisinski2006prefaisceaux}). In \cite{douSimp}, Douteau uses this perspective to define a model structure on $\textnormal{s\textbf{Set}}_P$ that makes the simplicial analogue to $f$-stratified spaces the fibrant objects.
\begin{theorem}\cite[Thm. 2.14 and Thm. 3.4]{douSimp}\label{thrmDouModSS}
	With respect to the simplicial structure defined in \Cref{conSimSSP}, $\textnormal{s\textbf{Set}}_P$ can be given the structure of a combinatorial, simplicial model category such that:
	\begin{itemize}
		\item The cofibrations are the monomorphisms, that is, the inclusions of $P$-filtered simplicial sets $X \hookrightarrow Y$. They are generated by the boundary inclusions $\partial \Delta ^{\mathcal J} \hookrightarrow \Delta^{\mathcal J}$, for $d$-flags $\mathcal J$ in $P$.
		\item The acyclic cofibrations are generated by the class of admissible horn inclusions $\Lambda_k^\mathcal J \hookrightarrow \Delta^{\mathcal J}$, for $d$-flags $\mathcal J$ in $P$ and $k$ $\mathcal J$-admissible. (Furthermore, they are also the saturated class (see \Cref{defSat}) generated by these inclusions).
		\item The weak equivalences are the morphisms $f: X \to Y$ that induce bijections on simplicial homotopy classes $$f^*: [Y,Z]_P \to [X,Z]_P,$$ for all fibrant $Z$ (in the sense of the definition of the previous item). 
	\end{itemize}
\end{theorem}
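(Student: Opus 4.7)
The plan is to apply Cisinski's general machinery for constructing model structures on presheaf categories \cite{cisinski2006prefaisceaux}, using the identification $\textnormal{s\textbf{Set}}_P \cong \textnormal{\textbf{Set}}^{\Delta(P)^{op}}$ from \Cref{conPreShPers}. Cisinski's theorem produces a combinatorial model structure from two pieces of data on a presheaf topos: an exact cylinder functor, and a class of anodyne extensions compatible with that cylinder. Once these are supplied, the resulting cofibrations are automatically all monomorphisms, and the fibrant objects are precisely those having the right lifting property against the chosen generating anodyne extensions.

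For the cylinder, I would take $\Delta^1 \otimes -$, with the two natural transformations $\partial_0, \partial_1: \mathrm{Id} \to \Delta^1 \otimes -$ coming from the endpoint inclusions and the contraction $\sigma: \Delta^1 \otimes - \to \mathrm{Id}$ from projection onto $\Delta^0$. That this is an exact cylinder follows from the fact that the underlying simplicial set functor preserves monomorphisms and pushouts, so the required pushout-product conditions reduce to the well-known classical facts in $\textnormal{s\textbf{Set}}$. For the anodyne extensions, I would take the saturated class $\mathrm{An}$ generated by the admissible horn inclusions $\Lambda^{\mathcal J}_k \hookrightarrow \Delta^{\mathcal J}$. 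Stability under pushouts, retracts and transfinite composition is automatic from the definition of saturated class, so the real content is Cisinski's compatibility axiom: for every $i \in \mathrm{An}$ and every monomorphism $j$ in $\textnormal{s\textbf{Set}}_P$, the pushout-products of $i$ and of $\partial_\epsilon \otimes 1_X$ with $j$ again lie in $\mathrm{An}$.

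By the small object argument, both conditions reduce to checking them on generators. Thus the crux is a combinatorial analysis of pushout-products of admissible horns with boundary inclusions $\partial \Delta^{\mathcal K} \hookrightarrow \Delta^{\mathcal K}$, and of endpoint inclusions $\{\epsilon\} \otimes \Delta^{\mathcal J} \hookrightarrow \Delta^1 \otimes \Delta^{\mathcal J}$. Concretely, one shows that each such pushout-product admits an anodyne filtration: a finite sequence of pushouts along admissible horn inclusions that realizes it. The standard non-stratified argument produces such a filtration using classical horns; the stratified refinement requires that at each stage, the missing face corresponds to a vertex repeated in the combined $d$-flag, so that the horn used is indeed admissible in the sense of \Cref{defAdmHorn}. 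The combinatorics of $\Delta^1 \otimes \Delta^{\mathcal J}$ (shuffles through the prism) cooperate with this, since inserting the extra $\Delta^1$-coordinate naturally produces repetitions in the flag at the right spots. This verification is what I expect to be the main obstacle, essentially a careful bookkeeping argument.

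Once these two data are in place, Cisinski's theorem directly delivers the model structure with cofibrations equal to monomorphisms and fibrations characterized by the right lifting property against $\mathrm{An}$; the acyclic cofibrations then coincide with $\mathrm{An}$ itself, giving the second bullet. The third bullet, characterizing weak equivalences as maps inducing bijections $f^*: [Y,Z]_P \to [X,Z]_P$ for all fibrant $Z$, is then a formal consequence: in any simplicial model category, combining \Cref{propHoClasses} with the fact that $\Delta^1 \otimes -$ gives a functorial cylinder on cofibrant objects (and every object here is cofibrant) shows that weak equivalences between arbitrary objects are detected on simplicial homotopy classes into fibrant objects. Finally, to upgrade the structure to a simplicial model category, I would verify Quillen's pushout-product (SM7) axiom, which again reduces to checking generators: the pushout-product of a boundary inclusion in $\textnormal{s\textbf{Set}}$ with a generating (acyclic) cofibration in $\textnormal{s\textbf{Set}}_P$ is an (acyclic) cofibration, both cases following from the generator-level analysis already carried out.
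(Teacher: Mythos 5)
First, note that the paper does not prove this theorem itself: it is quoted from \cite{douSimp}, whose proof does indeed start from Cisinski's machinery on the presheaf category $\textnormal{\textbf{Set}}^{\Delta(P)^{op}}$ with the cylinder $\Delta^1 \otimes -$ and the anodyne class generated by admissible horns, so your overall plan follows the same route as the actual source.

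However, there is a genuine gap in the step where you claim that Cisinski's theorem ``directly delivers \dots the acyclic cofibrations then coincide with $\mathrm{An}$ itself, giving the second bullet.'' Cisinski's construction from a homotopical datum yields a model structure in which the cofibrations are the monomorphisms and the anodyne extensions are \emph{contained in} the trivial cofibrations, and it characterizes fibrations by lifting against $\mathrm{An}$ only for maps with fibrant codomain; it does \emph{not} give, in general, that every trivial cofibration lies in the saturated class generated by the chosen anodyne maps. That equality (``completeness'' of the anodyne class) is exactly the content of the second bullet, and it is the substantive part of the cited result (Thm.\ 3.4 of \cite{douSimp}). Already in the classical case $P=\star$ the statement ``trivial cofibration $=$ anodyne extension'' is a nontrivial theorem of Quillen, usually proved via minimal fibrations; the standard retract argument reduces it to showing that a naive fibration which is a weak equivalence has the right lifting property against all monomorphisms, and that implication is precisely what does not come for free from Cisinski. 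In the filtered setting Douteau has to supply this by additional arguments (built around the filtered $\operatorname{Ex}^\infty_P$ functor and filtered analogues of the classical fibration theory), so your proposal as written is missing the key lemma. The remaining parts of your sketch are fine: the pushout-product verifications for the exact-cylinder and compatibility axioms (and for SM7) are indeed the kind of combinatorial bookkeeping you describe --- compare \Cref{propIncInProd} --- and the third bullet is essentially the definition of weak equivalence in Cisinski's framework rather than something needing \Cref{propHoClasses}.
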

We denote the induced homotopy category by $\mathcal H \textnormal{s\textbf{Set}}_P$ and call this model structure the Douteau model structure on $\textnormal{s\textbf{Set}}_P$. One should directly note the following connection to the topological setting, which is immediate from the adjunction $|-|_P \dashv \operatorname{Sing}_P$.
\begin{lemma}\cite{douSimp}
$X \in \textnormal{\textbf{Top}}_{P}$ is a f-stratified space if and only if $\operatorname{Sing}_P(X) \in \textnormal{s\textbf{Set}}_P$ is fibrant, with respect to the Douteau model structure.
\end{lemma}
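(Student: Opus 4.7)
The proof is essentially a one-line application of the adjunction $|-|_P \dashv \operatorname{Sing}_P$, combined with the explicit description of the generating acyclic cofibrations for the Douteau model structure given in \Cref{thrmDouModSS}.

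My plan is as follows. First, I would invoke \Cref{thrmDouModSS}, which states that the acyclic cofibrations in $\textnormal{s\textbf{Set}}_P$ are (generated by) the class of admissible horn inclusions $\Lambda_k^{\mathcal J} \hookrightarrow \Delta^{\mathcal J}$. Since fibrancy in any cofibrantly generated model category is equivalent to having the right lifting property against a generating set of acyclic cofibrations, $\operatorname{Sing}_P(X)$ is fibrant in the Douteau model structure if and only if every lifting problem
\begin{center}
\begin{tikzcd}
\Lambda_k^{\mathcal J} \arrow[r] \arrow[d, hook] & \operatorname{Sing}_P(X) \\
\Delta^{\mathcal J} \arrow[ru, dashed] &
\end{tikzcd}
\end{center}
with $\mathcal J$ a $d$-flag in $P$ and $k$ being $\mathcal J$-admissible, admits a solution.

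Next, I would apply the adjunction $|-|_P \dashv \operatorname{Sing}_P$ (whose existence and explicit description are recorded in \Cref{exFiltFun}). By the standard adjunction-based transfer of lifting properties, such a lift exists for every admissible horn inclusion into $\operatorname{Sing}_P(X)$ if and only if the adjoint lifting problem
\begin{center}
\begin{tikzcd}
{|\Lambda_k^{\mathcal J}|_P} \arrow[r] \arrow[d, hook] & X \\
{|\Delta^{\mathcal J}|_P} \arrow[ru, dashed] &
\end{tikzcd}
\end{center}
admits a solution in $\textnormal{\textbf{Top}}_{P}$ for every choice of $\mathcal J$ and $\mathcal J$-admissible $k$. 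But this last condition is precisely the defining property of $X$ being f-stratified, in the sense of \Cref{defFStrar}. Chaining the two equivalences together yields the claim.

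The only step that requires minor verification is that the filtered realization of an admissible horn inclusion in $\textnormal{s\textbf{Set}}_P$ coincides with the admissible horn inclusion in $\textnormal{\textbf{Top}}_{P}$ referenced in \Cref{defFStrar}; this is a direct consequence of how $|-|_P$ was constructed in \Cref{exFiltFun} (it preserves the filtration coming from the maximal vertex assignment), and of the fact that $\Lambda^{\mathcal J}_k$ and $\Delta^{\mathcal J}$ were themselves defined via this same filtration procedure. There is no real obstacle here — the proof is a formal consequence of the adjunction and the characterization of fibrancy via lifting against generators.
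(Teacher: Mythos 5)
Your proof is correct and is exactly the argument the paper intends: the text introduces this lemma as ``immediate from the adjunction $|-|_P \dashv \operatorname{Sing}_P$'', and your fleshed-out version (fibrancy tested against the generating admissible horn inclusions of \Cref{thrmDouModSS}, then transferred across the adjunction to the lifting condition of \Cref{defFStrar}) is precisely that reasoning made explicit.
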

While the definition of fibration is clearly motivated by the topological setting (see \Cref{defFStrar}), this characterization of weak equivalence is of course completely abstract. In fact, the characterization given here is simply the one holding in any simplicial, left proper model category, and is completely unspecific (see \cite[Thm 7.8.6.]{hirschhornModel} together with \cite[Prop. 9.5.24.]{hirschhornModel}). If one wants to get a better understanding of the weak equivalences and the filtered homotopy category, it can be very helpful to have an explicit description of fibrant replacement functors. Note that in the case where $P = \star$ this is of course already well understood as then the model structure described in \Cref{thrmDouModSS} is just the classical Kan-Quillen model structure on $\textnormal{s\textbf{Set}}_\star = \textnormal{s\textbf{Set}}_P$. Douteau explicitly constructs such a fibrant replacement functor in \cite{douSimp}. We try to motivate its description a little bit here. Recall first the classical setting of the Kan-Quillen model structure on the category of simplicial sets. One obtains fibrant replacements as follows (see \cite[Ch III. Sec. 4]{goerss2012simplicial}, for a detailed analysis of $\operatorname{Ex}^\infty$).
\begin{remark}\label{remClaFRep}
	Fibrant replacements in the category of simplicial sets, with respect to the Kan-Quillen model structure, are in a sense "geometric up to adjoint". To be more precise, the situation is the following. The subdivision functor $\textnormal{s\textbf{Set}} \to \textnormal{s\textbf{Set}}$ admits a (simplicial) right adjoint $\operatorname{Ex}(-)$. Explicitly it is defined by the composition
	$$\operatorname{Ex}(X): \Delta^{op} \hookrightarrow \textnormal{s\textbf{Set}} \xrightarrow{\textnormal{sd}} \textnormal{s\textbf{Set}}\xrightarrow{\textnormal{Hom}_{\textnormal{s\textbf{Set}}}(-,X)} \textnormal{\textbf{Set}},$$
	 i.e. $\operatorname{Ex}(X)([n]) = \textnormal{Hom}_{\textnormal{s\textbf{Set}}}(\textnormal{sd}(\Delta^n), X)$, with functoriality induced in the obvious way. Pulling simplices back with the last vertex map, induces a monomorphism $X \hookrightarrow \operatorname{Ex}(X)$. A fibrant replacement in the Kan-Quillen model structure is then given by the structure map into the following limit.
	 \begin{align*}
	 	1_{\textnormal{s\textbf{Set}}} \hookrightarrow \operatorname{Ex} \hookrightarrow \operatorname{Ex}^2 \hookrightarrow ... \hookrightarrow \varprojlim \operatorname{Ex}^n =: \operatorname{Ex}^\infty
	 \end{align*}
	 Geometrically, we should think of $\operatorname{Ex}^\infty $ as being the right adjoint to "infinite barycentric subdivision". As every object in the Kan-Quillen model structure is cofibrant, we then have $$\textnormal{Hom}_{\mathcal H\textnormal{s\textbf{Set}}}(X,Y) \cong [X, \operatorname{Ex}^\infty(Y)],$$ induced by the inclusion $Y \hookrightarrow \operatorname{Ex}^\infty(Y)$, for $X,Y \in \textnormal{s\textbf{Set}}$. In the case where $X$ is finite however, a compactness argument shows that every arrow in $\mathcal H \textnormal{s\textbf{Set}}$ is already witnessed by some finite $n$, and the same holds for every identification of arrows happening when one passes to the homotopy category. Using the induced adjunction $\textnormal{sd}^n \dashv \operatorname{Ex}^n$, this essentially means that every homotopy class as well as every homotopy is witnessed by some degree of barycentric subdivision. (We do this argument in detail in the filtered setting in \Cref{subsecRepRes}). Hence, from this perspective (i.e. the finite one), using that $[X,Y] \cong [|X|,|Y|]$, we may understand the fact that $Y \hookrightarrow \operatorname{Ex}^{\infty}(Y)$ gives a fibrant replacement as a simplicial approximation theorem. This is also the perspective we take for the proof of one of our main results (\Cref{thrmFullyFaithful}).
\end{remark}
Douteau takes the analogous approach for his fibrant replacement. 
\begin{definitionconstruction}\label{conSDp}
Consider the subdivision functor $\textnormal{sd}: \textnormal{s\textbf{Set}}_P \to \textnormal{s\textbf{Set}}_P$ induced by subdivision on $\textnormal{s\textbf{Set}}$, as in \Cref{exFiltFun}. This is not be the subdivision functor we use. Instead, define $\textnormal{sd}_P(N(P))$ as the $P$-filtered simplicial subset of $\textnormal{sd}(N(P)) \otimes N(P)$ (filtered via the second component) defined by
 $$
 \textnormal{sd}_P(N(P))([k]) := 
 \Big \{ \big ( (\sigma_0 \subset ... \subset \sigma_k ), \tau \big ) \mid 
 \hat \tau \subset \hat \sigma_0 \Big \} 
 \subset \big (\textnormal{sd}(N(P)) \otimes N(P) \big )([k]),
 $$ 
 where by the hat we denote the unique non-degenerate flags that the $d$-flags $\sigma$ and $\tau$ degenerate from. Note that this actually defines a simplicial set, i.e. that the defining property for simplices is closed under face and degeneracy maps of $\textnormal{sd}(N(P)) \otimes N(P)$. The projection to the first component, defines a (non stratum preserving) simplicial map $\textnormal{sd}_P(N(P)) \to \textnormal{sd}(N(P))$. Now, 
for some $X \in \textnormal{s\textbf{Set}}_P$ consider the following pullback diagram.
\begin{center}
	\begin{tikzcd}
		X' \arrow[d] \arrow[r] & \textnormal{sd}(X) \arrow[d,"\textnormal{sd}(p_X)"] \\
		{\textnormal{sd}_P(N(P))} \arrow[r] &{\textnormal{sd}(N(P))}
	\end{tikzcd}
\end{center}
$X'$ is naturally filtered via the composition $X' \to \textnormal{sd}_P(N(P)) \xrightarrow{p_{\textnormal{sd}_P (N(P)) } }N(P)$. 
We set $\textnormal{sd}_P(X)$ to this filtered space. This defines a functor
 $$\textnormal{sd}_P: \textnormal{s\textbf{Set}}_P \longrightarrow \textnormal{s\textbf{Set}}_P,$$ through functoriality of all the constructions involved. We call this functor the \textit{($P$-)filtered subdivision functor.} The subscript $P$ is there to distinguish it from the subdivision defined in \Cref{exFiltFun}. To the latter we refer to as the \textit{naive subdivision functor.}
 For details see \cite[Subsec. 2.1]{douSimp}. 
\end{definitionconstruction}
\begin{lemma}\label{lemSDPColim}
	$\textnormal{sd}_P$ preserves all small colimits. 
\end{lemma}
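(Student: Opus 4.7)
The plan is to exhibit $\textnormal{sd}_P$ as a composite of three functors, each of which preserves small colimits. Identifying $\textnormal{s\textbf{Set}}_P$ with the slice $\textnormal{s\textbf{Set}}/N(P)$ (via \Cref{conPreShPers} and \Cref{remDesofSSP}), the construction in \Cref{conSDp} factors as
$$\textnormal{s\textbf{Set}}/N(P) \xrightarrow{\textnormal{sd}_\bullet} \textnormal{s\textbf{Set}}/\textnormal{sd}(N(P)) \xrightarrow{\iota^*} \textnormal{s\textbf{Set}}/\textnormal{sd}_P(N(P)) \xrightarrow{\pi_!} \textnormal{s\textbf{Set}}/N(P),$$
where $\textnormal{sd}_\bullet$ applies the naive subdivision to both the object and the structure map, $\iota^*$ denotes base change along $\iota \colon \textnormal{sd}_P(N(P)) \to \textnormal{sd}(N(P))$, and $\pi_!$ denotes post-composition with the projection $\pi \colon \textnormal{sd}_P(N(P)) \to N(P)$.

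Each of these three functors preserves small colimits. The first does so because the naive subdivision $\textnormal{sd} \colon \textnormal{s\textbf{Set}} \to \textnormal{s\textbf{Set}}$ is a left adjoint (to $\operatorname{Ex}$, recalled in \Cref{remClaFRep}), while colimits in any slice $\textnormal{s\textbf{Set}}/A$ are created by the forgetful functor with the canonical induced structure map, and this interacts with $\textnormal{sd}$ in the expected functorial way. The third functor $\pi_!$ is itself a left adjoint to base change $\pi^*$, so it certainly preserves colimits. The middle functor $\iota^*$ preserves colimits because $\textnormal{s\textbf{Set}}$ is a presheaf topos, hence locally cartesian closed: base change along any morphism admits a right adjoint (the dependent product), so colimits are universal. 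Composing three colimit-preserving functors yields the claim.

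I do not expect any real obstacle here, as the argument is essentially formal. The only care required is to verify that the factorization above truly reproduces the pullback construction of $\textnormal{sd}_P$: applying $\textnormal{sd}_\bullet$ to $(X, p_X)$ produces the right-hand column of the pullback square in \Cref{conSDp}; applying $\iota^*$ is the formation of the pullback itself; and applying $\pi_!$ attaches the filtration $\textnormal{sd}_P(X) \to \textnormal{sd}_P(N(P)) \to N(P)$ recorded there. All of this is immediate from unwinding definitions, and then the three-fold composite is colimit-preserving by the argument above.
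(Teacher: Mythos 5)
Your argument is correct and is essentially the elaborated version of the paper's own (very terse) proof, which likewise appeals to the facts that $\textnormal{sd}$ preserves colimits and that base change in $\textbf{Set}$, hence in any presheaf category, does; your three-functor factorization through the slices over $\textnormal{sd}(N(P))$ and $\textnormal{sd}_P(N(P))$ just makes that sketch precise and does faithfully reproduce the pullback construction of \Cref{conSDp}. The paper also records an even shorter alternative, namely that $\textnormal{sd}_P$ is later shown to admit the right adjoint $\operatorname{Ex}_P$, but no comparison beyond that is needed.
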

\begin{proof}
 Later on, it will alternatively follow directly from the fact that $\textnormal{sd}_P$ admits a right adjoint. Alternatively, it follow from the fact that $\textnormal{sd}$ preserves colimits, together with the fact that base change in $\textbf{Set}$ and hence in any category of presheaves does.
\end{proof}
\begin{example}\label{exSDPforSim}
	As a consequence of \Cref{lemSDPColim} it really suffices to understand what $\textnormal{sd}_P$ looks like when it is applied to filtered simplices $\Delta^\mathcal J$. In other words, we could have alternatively constructed $\textnormal{sd}_P$ by left Kan-extension of a functor defined only for filtered simplices. For these it can explicitly computed as:
	$$\textnormal{sd}_P(\Delta^\mathcal J)[k] \cong \Big \{ \big ((\mathcal \sigma_0, p_0) \leq ... \leq (\mathcal \sigma_k, p_k) \big )\mid p_i \in \sigma_0 \textnormal{, }\forall i \Big \} \subset {\textnormal{sd}(\Delta^\mathcal J \otimes N(P))},$$ where $\leq$ is with respect to the product order induced by $\subset$ and $\leq_P$. The face and degeneracy maps are induced by leaving out and repeating entries (\cite[Subsec. 2.1]{douSimp}). Below, we have illustrated two such examples of subdivisions of filtered simplices. Geometrically, this means that $\textnormal{sd}_P$ can be understood as thickening each stratum into the higher ones. This can also be seen in the following illustration of $\textnormal{sd}_P$ for two filtered simplices.
	\begin{figure}[H]
		\centering
		\includegraphics[width=120mm]{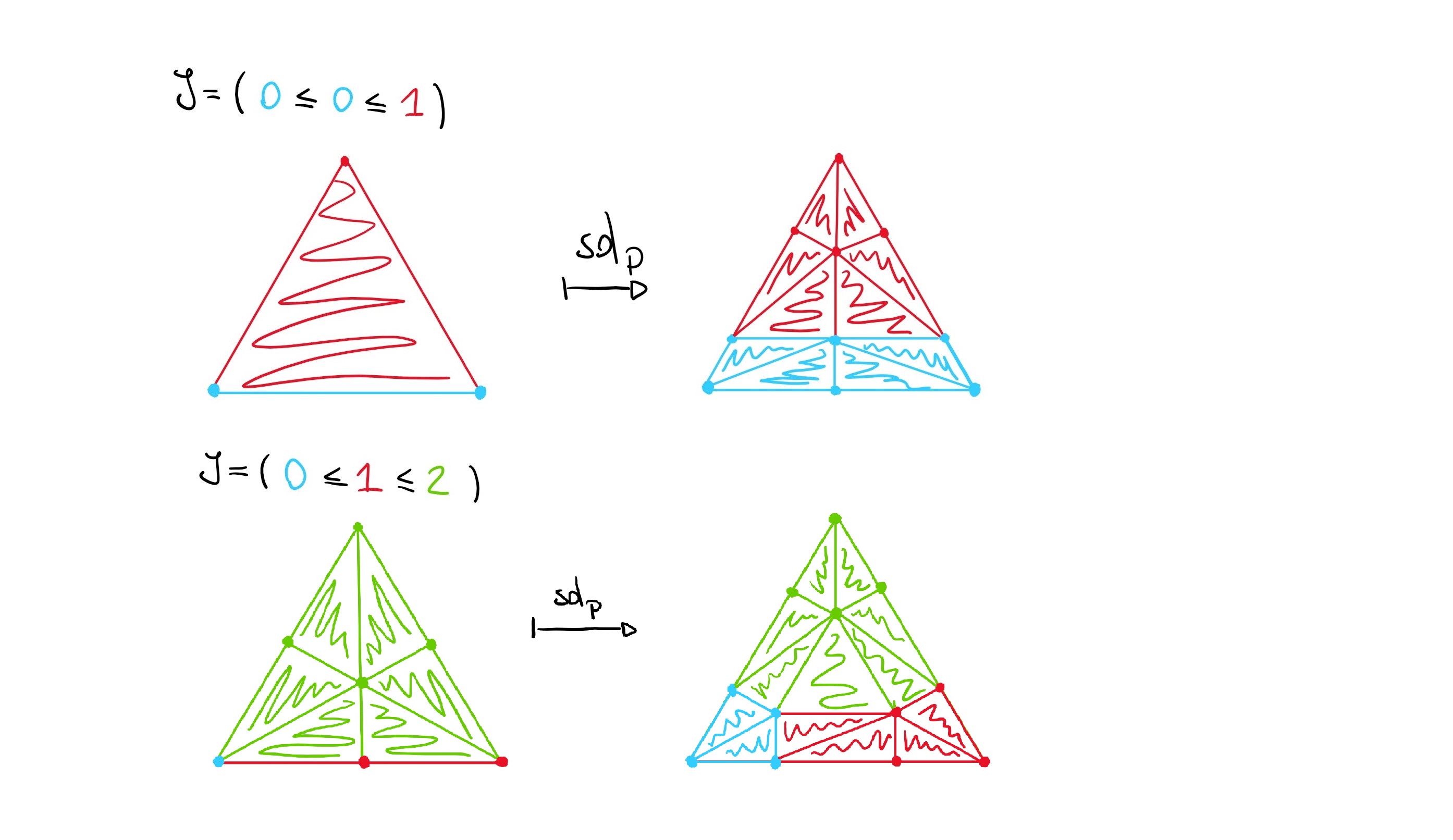}
		\caption{Illustration of $\textnormal{sd}_P(\Delta^{\mathcal J})$ for $P = \{0,1,2\}$.}
		\label{fig:exSDPforSim}
	\end{figure}
\end{example}
As a direct consequence of \Cref{lemSDPColim}, we can construct a last vertex map for $\textnormal{sd}_P$.
\begin{definitionconstruction}\label{conLVP}
By the left Kan extension perspective taken in \Cref{exSDPforSim}, to construct a last vertex map $\textnormal{sd}_P \to 1_{\textnormal{s\textbf{Set}}_P}$ it suffices to construct such a map for the filtered simplices, and show that it is compatible with face and degeneracy maps. Let $\mathcal J$ be a d-flag in $P$. By the definition of $\textnormal{sd}(\Delta^\mathcal J)$ and the fact that the underlying simplicial set of $\Delta^\mathcal J$ is $\Delta^n= N([n])$, with $n=\# \mathcal J$, it suffices to construct a map on the vertices and show that it extends. Such a map is defined by $$\big((x_0 \leq ... \leq x_i),p \big) \longmapsto \max{ \{i \in [k] \mid p_{\Delta^\mathcal J}(x_i) = p\}}.$$ I.e. instead of taking the last vertex of a simplex, one takes the last vertex lying over the correct $p$, thus making the map $P$-filtered. This, induces a natural transformation $$\textnormal{l.v.}_P: \textnormal{sd}_P \longrightarrow 1_{\textnormal{s\textbf{Set}}_P}.$$ For details see \cite[Def. 3.3.4]{douSimp}. We denote the $n$-th iteration of this functor by $\textnormal{sd}_P^n$. Further, we denote by $\textnormal{l.v.}_P^n$ the natural transformation $\textnormal{sd}^n_P \to 1_{\textnormal{s\textbf{Set}}_P}$ induced by $n$-times composition of $\textnormal{l.v.}_P$.
\end{definitionconstruction}
Then, just as in the classical setting, one constructs a right adjoint to $sd_P$. \begin{definitionconstruction}\cite[Def 2.4]{douSimp}
	Let $\operatorname{Ex}_P: \textnormal{s\textbf{Set}}_P \to \textnormal{s\textbf{Set}}_P$ be the functor defined just as in \Cref{remClaFRep}, replacing $\Delta$ by $\Delta(P)$, $\textnormal{s\textbf{Set}}$ by $\textnormal{s\textbf{Set}}_P$ (thought of the category of presheaves over $\Delta(P)$) and $\textnormal{sd}$ by $\textnormal{sd}_P$. 
	This defines a right adjoint to $\textnormal{sd}_P$. As in \Cref{remClaFRep}, through pulling back simplices with $\textnormal{l.v.}_P$ one obtains a natural inclusion $1_{\textnormal{s\textbf{Set}}_P} \hookrightarrow \operatorname{Ex}_P(-)$. 
	Passing to the limit we obtain a functor $\operatorname{Ex}^\infty_P: \textnormal{s\textbf{Set}}_P \hookrightarrow \operatorname{Ex}^\infty_P$ together with a natural inclusion $1_{\textnormal{s\textbf{Set}}_P} \hookrightarrow \operatorname{Ex}^\infty_P$. 
\end{definitionconstruction}
This construction defines a fibrant replacement functor in the Douteau model structure on $\textnormal{s\textbf{Set}}_P$.
\begin{proposition}\cite[Lem. 3.3.19 + App. B]{douSimp}
	The natural transformation $1_{\textnormal{s\textbf{Set}}_P} \hookrightarrow \operatorname{Ex}^\infty_P$ defines a cofibrant fibrant replacement (approximation). 
\end{proposition}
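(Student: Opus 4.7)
The plan is to verify the three properties required of a cofibrant fibrant replacement: (a) the map $X \hookrightarrow \operatorname{Ex}^\infty_P(X)$ is a cofibration, (b) $\operatorname{Ex}^\infty_P(X)$ is fibrant, and (c) the map is a weak equivalence. Property (a) is essentially by construction. The unit $X \hookrightarrow \operatorname{Ex}_P(X)$ is adjoint to $\textnormal{l.v.}_P : \textnormal{sd}_P(X) \to X$, and inspection of Example 1.2.42 shows that $X$ embeds into $\operatorname{Ex}_P(X)$ as the subfunctor of simplices which factor through the last-vertex projection, so the unit is injective on all sets of simplices. Passing to the sequential colimit, which preserves monomorphisms in the presheaf category $\textnormal{\textbf{Set}}^{\Delta(P)^{op}}$, gives that $X \hookrightarrow \operatorname{Ex}^\infty_P(X)$ is a monomorphism, and thus a cofibration by Theorem 2.14.

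For (b), given an admissible horn inclusion $\Lambda^{\mathcal{J}}_k \hookrightarrow \Delta^{\mathcal{J}}$ and a map $\varphi : \Lambda^{\mathcal{J}}_k \to \operatorname{Ex}^\infty_P(X)$, the finiteness of the non-degenerate simplices in $\Lambda^{\mathcal{J}}_k$ lets us factor $\varphi$ through $\operatorname{Ex}^n_P(X)$ for some $n$. By the adjunction $\textnormal{sd}_P \dashv \operatorname{Ex}_P$, this corresponds to a filtered simplicial map $\textnormal{sd}_P(\Lambda^{\mathcal{J}}_k) \to \operatorname{Ex}^{n-1}_P(X)$. The key combinatorial lemma, analogous to Kan's trick in the classical case, is the construction of a stratum preserving retract $r : \Delta^{\mathcal{J}} \to \textnormal{sd}_P(\Lambda^{\mathcal{J}}_k)$ with the property that the composition through $\textnormal{sd}_P(\Lambda^{\mathcal{J}}_k) \hookrightarrow \textnormal{sd}_P(\Delta^{\mathcal{J}})$ followed by $\textnormal{l.v.}_P$ is the identity of $\Delta^{\mathcal{J}}$. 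The admissibility hypothesis $p_k = p_{k\pm 1}$ is used precisely here: it ensures that the missing $k$th face of $\Delta^{\mathcal{J}}$ has an adjacent filtered counterpart inside $\textnormal{sd}_P(\Lambda^{\mathcal{J}}_k)$ onto which one can contract while preserving filtration, matching the deformation retraction constructed in the proof of Lemma 2.8. Precomposing the map $\textnormal{sd}_P(\Lambda^{\mathcal{J}}_k) \to \operatorname{Ex}^{n-1}_P(X)$ with $\textnormal{sd}_P(r)$ and passing back under the adjunction yields the required extension $\Delta^{\mathcal{J}} \to \operatorname{Ex}^n_P(X) \hookrightarrow \operatorname{Ex}^\infty_P(X)$.

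For (c), we use the characterization of weak equivalences in Theorem 2.14: it suffices to show that for every fibrant $Z$ the map $[\operatorname{Ex}^\infty_P(X), Z]_P \to [X, Z]_P$ is a bijection. Surjectivity is obtained by lifting: given $f : X \to Z$, the inclusions $\operatorname{Ex}^n_P(X) \hookrightarrow \operatorname{Ex}^{n+1}_P(X)$ lie in the saturated class generated by admissible horn inclusions, as one sees by applying $\operatorname{Ex}_P^n$ to the inclusion $X \hookrightarrow \operatorname{Ex}_P(X)$ and observing that $\operatorname{Ex}_P$ preserves this saturated class by adjunction; fibrancy of $Z$ then allows an inductive extension to all $\operatorname{Ex}^n_P(X)$, and hence to the colimit. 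Injectivity is obtained by the same argument applied to the filtered cylinder $\Delta^1 \otimes -$, using that the model structure is simplicial. The main obstacle throughout is step (b), the construction of the stratum preserving retract into the subdivided horn, which is substantially more delicate than its classical counterpart: every intermediate simplex in the retract must respect the $P$-filtration at every stage of iterated subdivision, and it is exactly this compatibility that forces the admissibility hypothesis on $k$.
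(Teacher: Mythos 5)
The paper itself gives no argument here: it delegates the statement entirely to Douteau (Lem.\ 3.3.19 and Appendix B of the cited source), so the only question is whether your reconstruction of that Kan-style proof is sound. Your overall architecture (monomorphism plus sequential colimit for the cofibration; horn-filling through the adjunction $\textnormal{sd}_P \dashv \operatorname{Ex}_P$ for fibrancy; anodyne-ness of the tower for the weak equivalence) is the right one, but the two load-bearing steps are not correct as written. For fibrancy, the ``key combinatorial lemma'' you state is false: a stratum preserving map $r:\Delta^{\mathcal J}\to \textnormal{sd}_P(\Lambda^{\mathcal J}_k)$ with $\textnormal{l.v.}_P\circ\iota\circ r=1_{\Delta^{\mathcal J}}$ cannot exist, because $\textnormal{l.v.}_P\circ\iota$ carries $\textnormal{sd}_P(\Lambda^{\mathcal J}_k)$ into the horn $\Lambda^{\mathcal J}_k$, a proper simplicial subset not containing the top non-degenerate simplex, so the composite can never be the identity of $\Delta^{\mathcal J}$. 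What Kan's trick (and Douteau's filtered version) actually requires is the opposite-direction extension: a stratum preserving map $\pi:\textnormal{sd}_P(\Delta^{\mathcal J})\to \Lambda^{\mathcal J}_k$ restricting to $\textnormal{l.v.}_P$ on $\textnormal{sd}_P(\Lambda^{\mathcal J}_k)$; one then composes $\pi$ with the original horn map into $\operatorname{Ex}^n_P(X)$ and passes back across the adjunction, checking via naturality of $\textnormal{l.v.}_P$ that the resulting $\Delta^{\mathcal J}\to\operatorname{Ex}^{n+1}_P(X)$ restricts to the given horn. Constructing this $\pi$ so that it is stratum preserving is exactly the content of Douteau's appendix and is where admissibility of $k$ enters; your bookkeeping with ``precomposing with $\textnormal{sd}_P(r)$'' does not even typecheck ($\textnormal{sd}_P(r)$ has source $\textnormal{sd}_P(\Delta^{\mathcal J})$ and target $\textnormal{sd}_P(\textnormal{sd}_P(\Lambda^{\mathcal J}_k))$, which cannot be precomposed with a map out of $\textnormal{sd}_P(\Lambda^{\mathcal J}_k)$).

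For the weak equivalence, the crucial claim is that each stage $\operatorname{Ex}^n_P(X)\hookrightarrow\operatorname{Ex}^{n+1}_P(X)$ is an anodyne extension, and your justification --- apply $\operatorname{Ex}_P^n$ to $X\hookrightarrow\operatorname{Ex}_P(X)$ and note that ``$\operatorname{Ex}_P$ preserves this saturated class by adjunction'' --- is not a valid argument: a right adjoint has no reason to preserve a saturated class generated under colimit constructions, and indeed preservation of anodyne maps by $\operatorname{Ex}_P$ is not formal. The repair is partly cosmetic (the stage maps are the unit-type inclusions instantiated at $\operatorname{Ex}^n_P(X)$, so one only needs that $Y\hookrightarrow\operatorname{Ex}_P(Y)$ is anodyne for every $Y$), but that statement is itself a substantive theorem --- the filtered analogue of Moss's strong-anodyne pairing for $\operatorname{Ex}$, which is exactly what Douteau's thesis appendix supplies and what this paper quotes elsewhere --- and you neither prove it nor give a correct reason for it. Note also that once you have it, your $[-,Z]_P$ detour is unnecessary: a transfinite composition of anodyne maps is an acyclic cofibration by the cited description of the Douteau model structure, so the weak-equivalence claim follows immediately, leaving fibrancy of $\operatorname{Ex}^\infty_P(X)$ (your step (b), correctly restated) as the only remaining point.
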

We will be making extensive use of this somewhat explicit description of fibrant replacements. We start this process in \Cref{subsecRepRes}.
One should probably lose a few words on why $\textnormal{sd}_P$ is the "correct" choice of subdivision here, and not the more straightforward $\textnormal{sd}$. First, note the following result, which probably makes the classical subdivision seem like a quite intuitive choice at first. It is of course well known in the non filtered setting. It uses \Cref{propBarSd}, which essentially states that $\textnormal{sd}$ gives a subdivision, in the sense of simplicial complexes, in a filtered way. However, as we only use this for heuristic purposes up to \Cref{subsecOrdered}, there is no risk of circularity here.
\begin{proposition}\label{propHomoFilteredSubd}
	For $X \in \textnormal{s\textbf{Set}}_P$ there is an isomorphism $$|\textnormal{sd}(X)|_P \cong |X|_P.$$Furthermore, this isomorphism is stratum preserving homotopic to the realization of $\textnormal{l.v.}:\textnormal{sd} \to 1_{\textnormal{s\textbf{Set}}_P}$. If $X$ is a filtered ordered simplicial complex (see \Cref{defFOSaSS}), it can be taken to be defined by the barycentric subdivision isomorphism (see \cref{propBarSd}).
\end{proposition}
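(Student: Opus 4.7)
The plan is to reduce to the case of a filtered simplex and construct everything explicitly there, then extend by colimits. Both the naive subdivision $\textnormal{sd}$ (constructed by left Kan extension from $\Delta(P) \hookrightarrow \textnormal{s\textbf{Set}}_P$ in \Cref{exFiltFun}) and the realization $|-|_P$ (a left adjoint, by the adjunction of \Cref{exFiltFun}) preserve small colimits. Since every $X \in \textnormal{s\textbf{Set}}_P$ is canonically the colimit $\varinjlim_{\Delta^{\mathcal J} \to X} \Delta^{\mathcal J}$, it suffices to produce, naturally in $\mathcal J \in \Delta(P)$, an isomorphism $\varphi_{\mathcal J}: |\textnormal{sd}(\Delta^{\mathcal J})|_P \xrightarrow{\cong} |\Delta^{\mathcal J}|_P$ together with a stratum preserving homotopy from $\varphi_{\mathcal J}$ to $|\textnormal{l.v.}|_P$.

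For $\mathcal J = (p_0 \leq \ldots \leq p_n)$, take $\varphi_{\mathcal J}$ to be the classical barycentric subdivision homeomorphism: it sends a vertex $\sigma = (x_{i_0} < \ldots < x_{i_k})$ of $\textnormal{sd}(\Delta^n)$ to the barycenter $\frac{1}{k+1}\sum_{j}|x_{i_j}|$ and extends affinely on each subdivision simplex. I need to check $\varphi_{\mathcal J}$ is stratum preserving for the filtrations of \Cref{exFiltFun}. The filtration on $\textnormal{sd}(\Delta^{\mathcal J})$ assigns the vertex $\sigma$ to $\max\{p_{i_j} : j\} = p_{i_k}$ (using that the sequence $(p_i)$ is non-decreasing in $i$), while the filtration on $|\Delta^{\mathcal J}|_P$ assigns its barycenter to $\max\{p_{i_j} : j\} = p_{i_k}$ as well. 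A point $\sum_l s_l \sigma_l$ in a subdivision simplex $(\sigma_0 \subsetneq \ldots \subsetneq \sigma_r)$ has stratum $p_X(\sigma_{l_{\max}})$, where $l_{\max}$ is the largest index with $s_l > 0$, and one checks that its image under $\varphi_{\mathcal J}$ — a convex combination of the vertices in $\bigcup_{s_l > 0} \sigma_l = \sigma_{l_{\max}}$ — has precisely the same maximal $p$-value.

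For the homotopy, take the affine homotopy $H_t(y) = (1-t)\varphi_{\mathcal J}(y) + t|\textnormal{l.v.}|_P(y)$ inside the realization $|\Delta^{\mathcal J}|_P \subset \mathbb R^{\mathcal J}$. This is a well-defined continuous map $I \otimes |\textnormal{sd}(\Delta^{\mathcal J})|_P \to |\Delta^{\mathcal J}|_P$ (the target simplex is convex in $\mathbb R^{\mathcal J}$, and the endpoints $\varphi_{\mathcal J}(y)$ and $|\textnormal{l.v.}|_P(y)$ already lie in it). The critical observation is that on a vertex $\sigma = (x_{i_0} < \ldots < x_{i_k})$ both $\varphi_{\mathcal J}(\sigma)$ and $|x_{i_k}| = |\textnormal{l.v.}|_P(\sigma)$ lie in the stratum $p_{i_k}$, which equals the stratum of $\sigma$ in $|\textnormal{sd}(\Delta^{\mathcal J})|_P$; extending affinely to a subdivision simplex, the set of vertices $|x_j|$ that appear with positive coefficient at any time $t \in [0,1]$ is unchanged compared to time $t=0$, and this is precisely the set that determines the ambient stratum. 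Thus $H$ is stratum preserving.

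Naturality in $\mathcal J$ of both $\varphi$ and $H$ is straightforward from the explicit formulas (face and degeneracy maps of $\Delta(P)$ induce affine inclusions/projections of realized simplices that commute with barycenter and last-vertex constructions), so both data extend by the colimit argument to all of $\textnormal{s\textbf{Set}}_P$. The main bookkeeping obstacle is the identification $\max\{p_{i_j}\} = p_{i_k}$, which uses that the indexing of the vertices of $\Delta^{\mathcal J}$ respects the filtration order; once this is in hand, the argument reduces to the standard affine-homotopy proof in the non-filtered case. Finally, if $X$ is a filtered ordered simplicial complex, then $\varphi$ agrees with the simplicial barycentric subdivision isomorphism of \Cref{propBarSd} by construction, giving the last sentence of the statement.
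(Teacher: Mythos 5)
There is a genuine gap at the naturality step. You assert that the barycentric homeomorphisms $\varphi_{\mathcal J}$ are natural in $\mathcal J \in \Delta(P)$ because ``face and degeneracy maps of $\Delta(P)$ induce affine inclusions/projections of realized simplices that commute with barycenter and last-vertex constructions.'' Face maps do commute with both, and degeneracy maps commute with the last-vertex map, but degeneracy maps do \emph{not} commute with the barycenter construction. Concretely, take the degeneracy $s_0 : \Delta^2 \to \Delta^1$ (so $s_0(0)=s_0(1)=0$, $s_0(2)=1$), whose realization is the affine map $(t_0,t_1,t_2)\mapsto (t_0+t_1, t_2)$. The top vertex $\{0,1,2\}$ of $\textnormal{sd}(\Delta^2)$ is sent by $\varphi_{\Delta^2}$ to the barycenter $(\tfrac13,\tfrac13,\tfrac13)$, and then by $|s_0|$ to $(\tfrac23,\tfrac13)$. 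But $\textnormal{sd}(s_0)$ sends $\{0,1,2\}$ to $\{0,1\}$, which $\varphi_{\Delta^1}$ maps to $(\tfrac12,\tfrac12)$. So $|s_0|\circ\varphi_{\Delta^2}\neq\varphi_{\Delta^1}\circ|\textnormal{sd}(s_0)|$, and the family $(\varphi_{\mathcal J})$ does not glue along the canonical colimit $X = \varinjlim_{\Delta^{\mathcal J}\to X}\Delta^{\mathcal J}$ (the same filtered situation inherits this failure over any $P$, since the underlying maps are identical). The paper's proof opens with exactly this warning: the isomorphism for general filtered simplicial sets is \emph{not} induced by barycentric subdivision precisely because the latter is not natural with respect to degeneracy maps, and the resulting comparison homeomorphisms are only natural in the homotopy category.

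Your argument as written therefore only establishes the case of FOS-complexes, which is the one place where it does go through: there every non-degenerate simplex is determined by its vertex set, the colimit over non-degenerate simplices only involves face maps, and face maps do commute with the barycenter construction (this is what gives the last sentence of the proposition, via \cref{propBarSd}). For a general $X\in\textnormal{s\textbf{Set}}_P$ one has to build the homeomorphism by the more delicate inductive argument of Fritsch--Piccinini (\cite[Thrm.~4.6.4]{fritsch1990cellular}), where the simplexwise homeomorphisms are chosen \emph{compatibly} with degeneracies (by a coherence argument, not by taking barycenters). The filtered upgrade is then routine, since the maps there are defined simplexwise and the homotopies are straight-line, so one checks stratum preservation exactly as you did --- the content of \Cref{lemLineSeqStrat} and the fact that the $p$-coordinate depends only on which vertices appear with positive weight, which is unchanged along the straight line. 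So the local half of your argument is sound; what is missing is the global existence of a coherent system of simplexwise homeomorphisms, which is precisely the non-trivial input from Fritsch--Piccinini.
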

\begin{proof}
	First note, that in the case of a general filtered simplicial set the respective isomorphism is not induced by barycentric subdivision, as the latter is not natural with respect to degeneracy maps. In fact, the homeomorphisms above can only be made natural in the homotopy category.
	The proof is pretty much identical to the (non-trivial) proof found for example in \cite[Thrm. 4.6.4]{fritsch1990cellular} if one checks that all maps and homotopies involved are stratum preserving. However, as all maps there are defined on a simplex level and all homotopies are given by straight line homotopies this is readily verified.
\end{proof}
\begin{remark}\label{remSDCor}
Our goal is to use the model structure on $\textnormal{s\textbf{Set}}_P$ for a study of $\mathcal H \textnormal{\textbf{Top}}_{P}$. If, motivated by the filtered Whitehead theorem (\Cref{thrmWhitehead}), we agree that the notion of fibrancy given by \Cref{thrmDouModSS} is the correct one, then the correct notion of subdivision should be one that allows an adjoint inducing a fibrant replacement as in \Cref{remClaFRep}. We will see in a bit that this works out for $\textnormal{sd}_P$. $\textnormal{sd}$ however, while it does admit a right adjoint, constructed just as in \Cref{remClaFRep}, does not fulfill this criterion (see \cite[Rem. 2.5]{douSimp}). Another, very heuristic way to think about this is the following. In \Cref{remClaFRep}, we have taken the perspective that, at least in a finite setting, the subdivision functor should witness all morphisms between realizations in the topological homotopy category. We think of this as the filtered simplicial approximation theorem. That the analogous argument holds for $\textnormal{sd}_P$ is a consequence of our result \Cref{thrmFullyFaithful} together with the results in \Cref{subsecRepRes}. $\textnormal{sd}$ however, can not witness all morphisms in $\mathcal H\textnormal{\textbf{Top}}_{P}$. By \Cref{propHomoFilteredSubd}, $\textnormal{sd}$ witnesses at most the morphisms in the category obtained by taking stratified homotopy classes in $\textnormal{\textbf{Top}}_{P}$. That it witnesses all of these (between realizations of filtered simplicial complexes) is precisely the content of \Cref{thrmSimplicialApproximationB}. The category obtained in this fashion, however, is not $\mathcal H\textnormal{\textbf{Top}}_{P}$. We have already seen that there are countless examples of weak homotopy equivalences in $\textnormal{\textbf{Top}}_{P}$ that are not stratum preserving homotopy equivalences (see \Cref{exWeakNotStr}). The thickenings of strata, described in \Cref{exSDPforSim}, allow precisely for the additional degree of freedom that is obtained when localizing all weak equivalences of $P$-filtered topological spaces.
\end{remark}
The fact that $\textnormal{sd}_P$ witnesses the passage to $\mathcal H\textnormal{s\textbf{Set}}_P$ in the sense of \Cref{remSDCor} is also reflected in the following statement which is integral to the proof of \Cref{thrmFullyFaithful}.
\begin{proposition}\cite[Prop. 8.1.1]{douteauFren}\label{propSDPisCof}
Let $Y \in \textnormal{s\textbf{Set}}_P$. Then $|\textnormal{sd}_P(Y)|_P \in \textnormal{\textbf{Top}}_{P}$ is a cofibrant object in the Henrique-Douteau model structure. 
\end{proposition}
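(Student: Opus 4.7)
The plan is to reduce the statement to showing that $|\textnormal{sd}_P(-)|_P$ carries generating cofibrations of the Douteau model structure on $\textnormal{s\textbf{Set}}_P$ into cofibrations of the Henrique--Douteau model structure on $\textnormal{\textbf{Top}}_P$. Indeed, since every object $Y$ admits the monomorphism $\emptyset \hookrightarrow Y$, and monomorphisms are precisely the cofibrations in $\textnormal{s\textbf{Set}}_P$ (Theorem \ref{thrmDouModSS}), this yields that $\emptyset \to |\textnormal{sd}_P(Y)|_P$ is a cofibration, hence $|\textnormal{sd}_P(Y)|_P$ is cofibrant. By Lemma \ref{lemSDPColim} and the fact that $|-|_P$ is a left adjoint, the composite $|\textnormal{sd}_P(-)|_P$ preserves all small colimits; since the cofibrations in $\textnormal{\textbf{Top}}_P$ are closed under transfinite composition, pushouts, and retracts, and since every monomorphism in $\textnormal{s\textbf{Set}}_P$ lies in the saturated class of $\{\partial \Delta^{\mathcal J} \hookrightarrow \Delta^{\mathcal J}\}_{\mathcal J \in \Delta(P)}$, it suffices to verify that
\[
|\textnormal{sd}_P(\partial \Delta^{\mathcal J})|_P \hookrightarrow |\textnormal{sd}_P(\Delta^{\mathcal J})|_P
\]
is a cofibration in $\textnormal{\textbf{Top}}_P$ for every $d$-flag $\mathcal J$.

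To establish this, I would exhibit this inclusion as a finite composition of pushouts of generating cofibrations $|\partial \Delta^n|\otimes |\Delta^{\mathcal L}|_P \hookrightarrow |\Delta^n|\otimes |\Delta^{\mathcal L}|_P$ from Theorem \ref{thrmDouMod}. Using the explicit description of $\textnormal{sd}_P(\Delta^{\mathcal J})$ from Example \ref{exSDPforSim}, the non-degenerate $k$-simplices of $\textnormal{sd}_P(\Delta^{\mathcal J})$ that do not lie in $\textnormal{sd}_P(\partial \Delta^{\mathcal J})$ are precisely the strict chains $\tau=((\sigma_0,p_0)<\cdots<(\sigma_k,p_k))$ with $\sigma_k=\Delta^{\mathcal J}$. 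Each such chain decomposes canonically into a \emph{vertical} part (the positions at which the $p$-coordinate strictly increases, producing a non-degenerate flag $\mathcal L^{\mathrm{nd}}(\tau)\in\textnormal{sd}(P)$) and a \emph{horizontal} part of dimension $n(\tau):=k-|\mathcal L^{\mathrm{nd}}(\tau)|$ (the positions at which only $\sigma$ increases, which all lie in a single stratum). The cell I would attach for $\tau$ is $|\Delta^{n(\tau)}|\otimes|\Delta^{\mathcal L^{\mathrm{nd}}(\tau)}|_P$, realizing $\tau$ as a filtered prism matching one of the generating cofibrations.

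The main obstacle, requiring the most care, is identifying an ordering on these cells so that each attaching map factors through the subspace built by previous attachments. Cells with $n(\tau)=0$ have empty source (since $|\partial\Delta^0|=\emptyset$) and so must be attached as disjoint pieces, which is only problematic when $\mathcal L^{\mathrm{nd}}(\tau)$ is nontrivial; cells with $n(\tau)\geq 1$ allow for nontrivial boundary gluings, but inherently only along single-stratum components parametrized by $|\Delta^{\mathcal L^{\mathrm{nd}}(\tau)}|_P$. The ordering I propose is lexicographic: first by decreasing $|\mathcal L^{\mathrm{nd}}(\tau)|$ (so that maximally stratum-crossing cells are attached freely first) and then by increasing $n(\tau)$ (so that the horizontal fillers used to glue pieces within one stratum are introduced last). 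Verifying this ordering is realizable reduces to a combinatorial check that every face of $\tau$ either lies in $\textnormal{sd}_P(\partial\Delta^{\mathcal J})$ or corresponds to a simplex contributing a cell earlier in the ordering; this can be read off from the face formulas implicit in Example \ref{exSDPforSim} and established by induction on the pair $(|\mathcal L^{\mathrm{nd}}(\tau)|, n(\tau))$, simultaneously with an induction on $\#\mathcal J$.
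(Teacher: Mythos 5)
Your reduction is fine, and it is worth noting that the paper itself sidesteps the entire combinatorial question: it simply cites Douteau's Prop.~8.1.1, which proves the stronger statement that $|\textnormal{sd}_P(Y)|_{N(P)}$ is cofibrant in $\textnormal{\textbf{Top}}_{N(P)}$, and then pushes this forward along the forgetful functor $\textnormal{\textbf{Top}}_{N(P)} \to \textnormal{\textbf{Top}}_{P}$, which is left Quillen and hence preserves cofibrant objects. So you are attempting from scratch exactly the hard content that the paper outsources, and the skeleton (colimit preservation of $|\textnormal{sd}_P(-)|_P$, reduction to the boundary inclusions $\partial\Delta^{\mathcal J}\hookrightarrow\Delta^{\mathcal J}$, exhibiting $|\textnormal{sd}_P(\partial\Delta^{\mathcal J})|_P \hookrightarrow |\textnormal{sd}_P(\Delta^{\mathcal J})|_P$ as a relative cell complex for the generating cofibrations of \Cref{thrmDouMod}) is the right skeleton.

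The gap is in the cell decomposition itself: attaching one generating cell $|\Delta^{n(\tau)}|\otimes|\Delta^{\mathcal L^{\mathrm{nd}}(\tau)}|_P$ \emph{per new non-degenerate simplex} $\tau$ cannot reproduce $|\textnormal{sd}_P(\Delta^{\mathcal J})|_P$, and no ordering of the attachments can repair this. The generating cofibration only glues along the horizontal boundary $|\partial\Delta^n|\otimes|\Delta^{\mathcal L}|_P$; the vertical faces $|\Delta^n|\otimes|\partial\Delta^{\mathcal L}|$ are always added freely, and a pushout along a generating cofibration never identifies points of the already-built space. Concretely, for $P=\{0<1\}$ and $\mathcal J=(0\leq 1)$, the complex $\textnormal{sd}_P(\Delta^{\mathcal J})$ is an interval subdivided into three edges, the middle edge being the unique stratum-crossing one. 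In your scheme that middle edge has $n(\tau)=0$ and is attached disjointly, while its two endpoints are themselves new non-degenerate simplices receiving their own free point cells; the endpoints of the middle cell can then never be identified with those points (nor with the endpoints of the two horizontal edge cells), so the output has spurious disjoint pieces rather than the subdivided interval. The underlying problem is overcounting: the vertical faces of the prism you attach for $\tau$ are precisely the cells you also attach for certain faces $\tau'$ of $\tau$, so your ``cells'' overlap in the target; moreover a $k$-simplex with both $n(\tau)\geq 1$ and $\dim\mathcal L^{\mathrm{nd}}(\tau)\geq 1$ is not itself a prism, so the per-simplex attachment is already geometrically incoherent in that case. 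The correct prismatic decomposition (this is what Douteau does) indexes cells by the non-degenerate simplices $\sigma$ of the \emph{naive} subdivision $\textnormal{sd}(\Delta^{\mathcal J})$ not lying in $\textnormal{sd}(\partial\Delta^{\mathcal J})$: each such $\sigma$ contributes a single prism $|\sigma|\otimes|\Delta^{\mathcal L}|_P$ (with $\mathcal L$ the flag of strata attached to the minimal vertex of $\sigma$), attached along $|\partial\sigma|\otimes|\Delta^{\mathcal L}|_P$, and this one prism contains all the simplices of $\textnormal{sd}_P(\Delta^{\mathcal J})$ lying over $\sigma$. With that indexing your induction on dimension goes through; with yours it does not.
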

\begin{proof}
The content of \cite[Prop. 8.1.1]{douteauFren} actually is a slightly stronger statement, using the strongly filtered category $\textnormal{\textbf{Top}}_{N(P)}$ (see \Cref{subsecStrongFilt}) but the forgetful functor $\textnormal{\textbf{Top}}_{N(P)} \to \textnormal{\textbf{Top}}_{P}$ is a left Quillen functor, and hence preserves cofibrant objects (\cite[Subsec 2.4]{douteauEnTop}).
\end{proof}
As a consequence of this, together with the fact that every object in $\textnormal{\textbf{Top}}_{P}$ is fibrant and \Cref{propHoClasses}, we obtain that $\textnormal{sd}_P$ sends morphisms that realize to weak equivalences, to such morphisms that realize to stratum preserving homotopy equivalences. 
\begin{example}\label{exSdWtoS}
	Consider the following simplicial set model for the weak equivalence in \Cref{exWeakNotStr}. It is not hard to see that the realization of $\textnormal{sd}_P$ applied to it is actually a stratum preserving homotopy equivalence.
	 \begin{figure}[H]
		\centering
		\includegraphics[width=120mm]{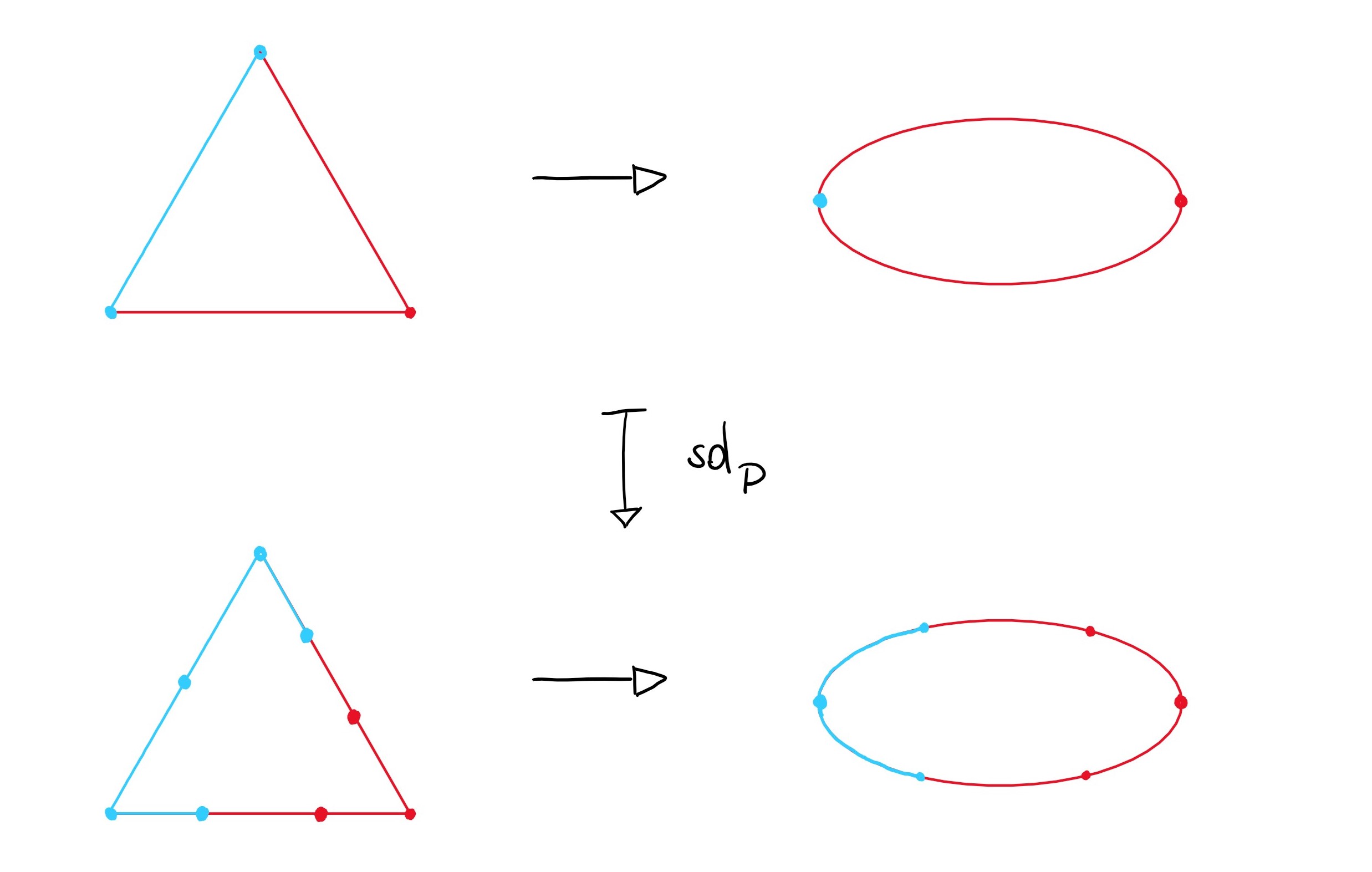}
		\caption{Illustration of a simplicial model of the map in \Cref{fig:ExNotStratEq}. The first map is given by collapsing the simplex in the $0$-stratum (in blue) to a point.}
		\label{fig:exSdWtoS}
	\end{figure}
\end{example}
This finishes our summary of the work of Douteau published in \cite{douSimp}, \cite{douteauEnTop} and \cite{douteauFren}. We finalize this section, by making a little bit more precise the heuristic in \Cref{remSDCor} that $\textnormal{sd}_P$ witnesses things happening in the homotopy category $\mathcal H \textnormal{s\textbf{Set}}_P$, at least as long as one restricts to simplicial sets with finitely many non-degenerate simplices.
\subsection{A representation result for homotopy classes}\label{subsecRepRes}
So far, morphisms in $\mathcal{H}\textnormal{s\textbf{Set}}_P$ are rather abstract things, at least within a finite framework. That is due to the fact that fibrant replacements in $\textnormal{s\textbf{Set}}_P$ are usually highly complicated infinite filtered simplicial sets, even if $X$ is finite. In fact, $\textnormal{Ex}^{\infty}_P(X)$ is clearly infinite whenever $X$ is not a point or the empty simplicial set. However, if we restrict to filtered simplicial sets with finitely many non-degenerate simplices, we can use the adjunction $\textnormal{sd}_P \dashv \operatorname{Ex}_P$ to give a more explicit description of morphisms in the homotopy category.
\begin{definition}
	We call a $P$-filtered simplicial set $X$ \textit{finite} if the underlying simplicial set has only finitely many non-degenerate vertices. We denote by $\textnormal{s\textbf{Set}}_P^{fin}$ and $\mathcal H \textnormal{s\textbf{Set}}_P^{fin}$ the respective full subcategories of $\textnormal{s\textbf{Set}}_P$ and $\mathcal H \textnormal{s\textbf{Set}}_P$ given by taking finite filtered simplicial sets as objects.
\end{definition}
\begin{lemma}\label{lemCharComp}
	The finite $P$-filtered simplicial sets are precisely the compact (small) objects in $\textnormal{s\textbf{Set}}_P$ (see \cite{nlab:compact_object} for a definition).
\end{lemma}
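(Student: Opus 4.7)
The plan is to prove both directions using the presheaf description of Construction \ref{conPreShPers}, $\textnormal{s\textbf{Set}}_P \simeq \textnormal{\textbf{Set}}^{\Delta(P)^{op}}$, together with the fact that filtered colimits in $\textnormal{s\textbf{Set}}_P$ are computed on the underlying simplicial set (Remark \ref{remLimandColim}).

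For the ``finite implies compact'' direction, I would argue as follows. Let $X$ be finite and let $Y = \operatorname{colim}_{i \in I} Y_i$ be a filtered colimit in $\textnormal{s\textbf{Set}}_P$ indexed by a filtered category $I$. By Remark \ref{remLimandColim}, the underlying simplicial set of $Y$ is $\operatorname{colim}_{i \in I}$ of the underlying simplicial sets of the $Y_i$, with the filtration induced levelwise. A stratum preserving map $f : X \to Y$ is determined by its values on the finitely many non-degenerate simplices of $X$, and for each such simplex $\sigma$ the image $f(\sigma)$ lies in some $Y_{i_\sigma}$. Since $X$ has only finitely many non-degenerate simplices and $I$ is filtered, I can choose a common index $j \in I$ together with maps $i_\sigma \to j$ so that all these images are pushed into $Y_j$. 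By taking $j$ sufficiently large, the face and degeneracy relations between the $f(\sigma)$ already hold in $Y_j$ (this is a finite set of equations), yielding a lift $X \to Y_j$. The lift is automatically stratum preserving since the filtration of $Y_j$ is pulled back from that of $Y$. The same argument applied to pairs of lifts shows that $\textnormal{Hom}_{\textnormal{s\textbf{Set}}_P}(X, -)$ commutes with filtered colimits, so $X$ is compact.

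For the converse, I would express $X$ as the filtered colimit $X = \operatorname{colim}_{X_\alpha \subset X} X_\alpha$ of its finite $P$-filtered sub-simplicial sets, where $X_\alpha$ ranges over the sub-presheaves generated (under iterated face maps) by finite sets of non-degenerate simplices of $X$. The inclusions are stratum preserving by construction, and the poset of such $X_\alpha$ is directed under union. If $X$ is compact, applying $\textnormal{Hom}_{\textnormal{s\textbf{Set}}_P}(X, -)$ to the identity gives
\begin{equation*}
1_X \in \textnormal{Hom}_{\textnormal{s\textbf{Set}}_P}(X, X) = \operatorname{colim}_\alpha \textnormal{Hom}_{\textnormal{s\textbf{Set}}_P}(X, X_\alpha),
\end{equation*}
so $1_X$ factors as $X \to X_\alpha \hookrightarrow X$ for some finite $X_\alpha$. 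Hence $X$ is a retract of the finite filtered simplicial set $X_\alpha$; since any retract of a simplicial set with finitely many non-degenerate simplices again has finitely many non-degenerate simplices (the retraction surjects the non-degenerate simplices of $X_\alpha$ onto those of $X$, up to possible degeneration), $X$ is finite.

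The only mildly subtle point will be verifying that the finite sub-objects $X_\alpha$ indeed form a directed diagram with colimit $X$ in $\textnormal{s\textbf{Set}}_P$ (rather than just in $\textnormal{s\textbf{Set}}$), but this is immediate from Remark \ref{remLimandColim} together with the fact that the filtration on each $X_\alpha$ is inherited from $X$. Everything else reduces to the classical argument for compactness in presheaf categories, now applied to the presheaf topos $\textnormal{\textbf{Set}}^{\Delta(P)^{op}}$.
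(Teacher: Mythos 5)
Your proof is correct and follows essentially the same route as the paper: for compactness implying finiteness you factor $1_X$ through a finite filtered sub-simplicial set exactly as the paper does (your retract step is a harmless elaboration, since the split inclusion $X_\alpha \hookrightarrow X$ is in fact an isomorphism), and for the converse your element-chasing argument is just the concrete unwinding of the paper's abstract interchange of the finite limit over the non-degenerate simplices of $X$ with the filtered colimit in $\textnormal{\textbf{Set}}$. The stratum-preservation points you flag are handled in the paper by reducing at the outset to underlying simplicial sets via \Cref{remLimandColim}, which is the same observation you make inline.
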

\begin{proof}
	First off, as colimits are computed on the underlying simplicial sets (\Cref{remLimandColim}) one easily reduces this statement to the respective one about simplicial sets. Now, let $S \in \textnormal{s\textbf{Set}}_P$ be compact. Then, consider the filtered diagram given by the finite simplicial subsets of $S$ and inclusions of the latter, $\mathcal S$. We have $S = \varinjlim \mathcal S$ as every simplex is contained in some finite sub simplicial set. By the definition of compactness, the identity $1_S: S \to S$ then factors through some finite $S_i$ in the diagram. Hence, $S$ is a sub simplicial set of a finite simplicial set and hence finite. Conversely, let $S$ be a finite simplicial set, and $\mathcal S'$ be a filtered colimit diagram in $\textnormal{s\textbf{Set}}$. We show that the natural morphism $$\varinjlim_{S'_i \in \mathcal S'}\textnormal{Hom}_{\textnormal{s\textbf{Set}}}(S,S'_i) \longrightarrow \textnormal{Hom}_{\textnormal{s\textbf{Set}}}(S, \varinjlim \mathcal S)$$ is a bijection. Denote by $\mathcal S$ the diagram given by the non-degenerate simplices of $S$. Then we have natural bijections:
	\begin{align*}
	\varinjlim_{S'_i \in \mathcal S'}\textnormal{Hom}_{\textnormal{s\textbf{Set}}}(S,S'_i) &\cong \varinjlim_{S'_i \in \mathcal S'}\textnormal{Hom}_{\textnormal{s\textbf{Set}}}(\varinjlim \mathcal S,S'_i) \\
	&\cong \varinjlim_{S'_i \in \mathcal S'} \big( \varprojlim_{\Delta^{n_j} \in \mathcal S}\textnormal{Hom}_{\textnormal{s\textbf{Set}}}(\Delta^{n_j},S'_i) \big ) \\ 
	& \cong \varprojlim_{\Delta^{n_i} \in \mathcal S} \big(\varinjlim_{S'_i \in \mathcal S'} \textnormal{Hom}_{\textnormal{s\textbf{Set}}}(\Delta^{n_j},S'_i)\big) \\
	& \cong \varprojlim_{\Delta^{n_j} \in \mathcal S} \big(\varinjlim_{S'_i \in \mathcal S'} S'_i([n_j])\big) \\
	& \cong \varprojlim_{\Delta^{n_j} \in \mathcal S} \big( (\varinjlim_{S'_i \in \mathcal S'} S'_i)([n_j])\big) \\
	&\cong \varprojlim_{\Delta^{n_j} \in \mathcal S} \big( \textnormal{Hom}_\textnormal{s\textbf{Set}}(\Delta^{n_j}, \varinjlim S')\big) \\
	& \cong \textnormal{Hom}_\textnormal{s\textbf{Set}}(S, \varinjlim \mathcal S')
	\end{align*}
	composing to the bijection we needed to verifiy. Note that while all of the other bijections involved hold for arbitrary $S$, we needed the fact that $\mathcal S$ is a finite diagram in the third bijection to commute finite limits with filtered colimits in the category $\textnormal{\textbf{Set}}$.
\end{proof}
As the objects of simple homotopy theory are classicaly finite in some combinatorial sense, these are the types of filtered simplicial sets we mostly are concerned with. In the finite setting, $\textnormal{sd}_P$ can serve as a more geometric placeholder for $\textnormal{Ex}_P^\infty$. We use this section to elaborate on this. Most of what follows now is certainly well known in the unfiltered setting, but as we needed a proof for the filtered one anyway, we are just going to go ahead and give them here.
 We start by illuminating the homotopy relationship between $\textnormal{Ex}_{P}$ and $\textnormal{sd}_{P}$ a bit. 
\begin{lemma}
	Consider the diagram in $\textnormal{s\textbf{Set}}_P$
	\begin{center}
		\begin{tikzcd}
		\textnormal{sd}_{P}(X) \arrow[r, "\textnormal{l.v.}_P"] \arrow[hook, rd] & X \arrow[d, "\eta"]\\
		& \textnormal{Ex}_{P}(\textnormal{sd}_{P}(X))
		\end{tikzcd},
	\end{center}
	where $\eta$ is the unit of the adjunction $\textnormal{sd}_{P} \dashv \textnormal{Ex}_{P}$. It commutes in $\mathcal{H}\textnormal{s\textbf{Set}}_P$.
\end{lemma}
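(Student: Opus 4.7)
The plan is to reduce the commutativity in $\mathcal{H}\textnormal{s\textbf{Set}}_P$ to on-the-nose identities in $\textnormal{s\textbf{Set}}_P$ together with a single weak-equivalence input. The decisive observation is the natural identity
\[\iota_Y \;=\; \textnormal{Ex}_P(\textnormal{l.v.}_P^Y) \circ \eta_Y\]
holding in $\textnormal{s\textbf{Set}}_P$ for every $Y \in \textnormal{s\textbf{Set}}_P$. One verifies this directly on a $\mathcal{J}$-simplex $\sigma : \Delta^{\mathcal{J}} \to Y$: $\eta_Y$ sends it to $\textnormal{sd}_P(\sigma) \in \textnormal{Hom}_{\textnormal{s\textbf{Set}}_P}(\textnormal{sd}_P(\Delta^{\mathcal{J}}), \textnormal{sd}_P(Y))$, whose post-composition with $\textnormal{l.v.}_P^Y$ under $\textnormal{Ex}_P$ is $\textnormal{l.v.}_P^Y \circ \textnormal{sd}_P(\sigma) = \sigma \circ \textnormal{l.v.}_P^{\Delta^{\mathcal{J}}} = \iota_Y(\sigma)$ by naturality of $\textnormal{l.v.}_P$ applied to $\sigma$.

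The next step is to post-compose both paths of the diagram with the morphism $\textnormal{Ex}_P(\textnormal{l.v.}_P^X) : \textnormal{Ex}_P(\textnormal{sd}_P(X)) \to \textnormal{Ex}_P(X)$. For the hypotenuse $\iota_{\textnormal{sd}_P(X)}$ one obtains, by naturality of $\iota$ applied to $\textnormal{l.v.}_P^X$, the composite $\iota_X \circ \textnormal{l.v.}_P^X$. For the right-angled path $\eta_X \circ \textnormal{l.v.}_P^X$ one obtains, by the identity above applied at $Y = X$, the same composite $\iota_X \circ \textnormal{l.v.}_P^X$. Thus both paths already agree on the nose in $\textnormal{s\textbf{Set}}_P$ after post-composition with $\textnormal{Ex}_P(\textnormal{l.v.}_P^X)$.

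To cancel this post-composition in $\mathcal{H}$, it suffices to show $\textnormal{Ex}_P(\textnormal{l.v.}_P^X)$ is a weak equivalence. This follows from the naturality square for $\iota$ along $\textnormal{l.v.}_P^X$,
\[\begin{tikzcd}
\textnormal{sd}_P(X) \arrow[r, "\textnormal{l.v.}_P^X"] \arrow[d, hook, "\iota_{\textnormal{sd}_P(X)}"'] & X \arrow[d, hook, "\iota_X"] \\
\textnormal{Ex}_P(\textnormal{sd}_P(X)) \arrow[r, "\textnormal{Ex}_P(\textnormal{l.v.}_P^X)"'] & \textnormal{Ex}_P(X)
\end{tikzcd}\]
combined with the fact that $\iota_{\textnormal{sd}_P(X)}$, $\iota_X$ and $\textnormal{l.v.}_P^X$ are all weak equivalences: two-out-of-three then forces $\textnormal{Ex}_P(\textnormal{l.v.}_P^X)$ to be a weak equivalence as well. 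Since post-composition with an isomorphism in $\mathcal{H}\textnormal{s\textbf{Set}}_P$ is injective on homsets, the sought-after equality descends to $\mathcal{H}\textnormal{s\textbf{Set}}_P$.

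The main obstacle is administrative: confirming the weak-equivalence inputs of the preceding paragraph inside the Douteau model structure, namely that $\iota_Y$ is a trivial cofibration produced by the small object argument underlying the cofibrant-fibrant replacement $1 \hookrightarrow \textnormal{Ex}_P^{\infty}$, and that $\textnormal{l.v.}_P^Y$ is a weak equivalence (the filtered analogue of Kan's classical result, implicitly used to establish $\textnormal{Ex}_P^{\infty}$ as a fibrant replacement in the first place). The genuinely new ingredient is the identity $\iota = \textnormal{Ex}_P(\textnormal{l.v.}_P) \circ \eta$ itself; everything else is formal diagram chasing, and crucially one never has to produce an explicit simplicial homotopy into the non-fibrant object $\textnormal{Ex}_P(\textnormal{sd}_P(X))$.
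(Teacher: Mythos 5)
Your proof is correct and follows essentially the same route as the paper: both arguments post-compose with $\textnormal{Ex}_P(\textnormal{l.v.}_P^X)$, verify that the two composites agree on the nose in $\textnormal{s\textbf{Set}}_P$, and then use that this map is a weak equivalence (via $\textnormal{l.v.}_P$ being a weak equivalence and $\iota$ being anodyne, i.e.\ two-out-of-three) to cancel it in $\mathcal{H}\textnormal{s\textbf{Set}}_P$. The only difference is cosmetic: the paper carries out the strict-commutativity check by an explicit simplex-level diagram through $\textnormal{sd}_P^2(X)$, whereas you package the same computation as the identity $\iota_Y = \textnormal{Ex}_P(\textnormal{l.v.}_P^Y)\circ\eta_Y$ together with naturality of $\iota$.
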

\begin{proof}
	First, note that $\textnormal{sd}_{P}(X) \xrightarrow{\textnormal{l.v.}_P} X$ is a weak equivalence (see \cite[A.3]{douSimp}). As $ X \hookrightarrow \textnormal{Ex}_{P}(X)$ is an anodyne extension \cite[Appendix 2]{douteauFren}, $\textnormal{Ex}_{P}$ preserves weak equivalences. In particular, $$\textnormal{Ex}_{P}(\textnormal{sd}_{P}(X)) \xrightarrow{\textnormal{Ex}_{P}(\textnormal{l.v.}_P)} \textnormal{Ex}_{P}(X)$$ is a weak equivalence. Hence, it suffices to show that the diagram commutes in $\textnormal{s\textbf{Set}}_P$ after composing both $\eta$ and $\textnormal{sd}_{P}(X) \hookrightarrow \textnormal{Ex}_{P}(\textnormal{sd}_{P}(X))$ with this weak equivalence. We show this elementarily. Let $\Delta^{\mathcal J} \xrightarrow{\sigma} \textnormal{sd}_P(X)$ be a (filtered) simplex of $\textnormal{sd}(X)$. Consider the following commutative diagram
	\begin{center}
		\begin{tikzcd}
		\textnormal{sd}_{P}(\Delta^{\mathcal J}) \arrow[r,"\textnormal{sd}_{P}(\sigma)" ] \arrow[d, "\textnormal{l.v.}_P"] & \textnormal{sd}^2_{P}(X) \arrow[r, "\textnormal{sd}_{P}(\textnormal{l.v.}_P)"] \arrow[d, "\textnormal{l.v.}_P"]& \textnormal{sd}_{P}(X) \arrow[d, "\textnormal{l.v.}_P"]\\
		\Delta^{\mathcal J} \arrow[r, "\sigma"] &\textnormal{sd}_{P}(X) \arrow[r, "\textnormal{l.v.}_P"] & X
		\end{tikzcd}.
	\end{center}
	Now, the image of $\sigma$ under the composition containing $\eta$ is given by first postcomposing with the bottom right arrow, then applying $\textnormal{sd}_{P}$ (which corresponds to $\eta$), and finally postcomposing with the right vertical. Hence, by funtoriality of $\textnormal{sd}_{P}(X)$ this is given by the composition starting at the upper left, then going all the way to the right horizontally and then down. On the other hand, the image of $\sigma$ under the other composition is given by first pulling $\sigma$ back with the left vertical and then postcomposing with the bottom right horizontal. By commutativity of the diagram, this agrees with the first construction.
\end{proof}
Just as above (or by a simple inductive argument) one obtains the more general version:
\begin{lemma}\label{lemHoComSdEx}
	Let $n \geq 1$.
	Consider the diagram in $\textnormal{s\textbf{Set}}_P$
	\begin{center}
		\begin{tikzcd}
		\textnormal{sd}^n_{P}(X) \arrow[r, "\textnormal{l.v.}^n_P"] \arrow[hook, rd] & X \arrow[d, "\eta"]\\
		& \textnormal{Ex}^n_{P}(\textnormal{sd}^n_{P}(X))
		\end{tikzcd},
	\end{center}
	where $\eta$ is the unit of the adjunction $\textnormal{sd}^n_{P} \dashv \textnormal{Ex}^n_{P}$. It commutes in $\mathcal{H}\textnormal{s\textbf{Set}}_P$.
\end{lemma}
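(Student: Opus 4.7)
The plan is to mimic the argument of the $n=1$ lemma verbatim, after verifying that all of its key ingredients are preserved under iteration. First, I would observe that $\textnormal{l.v.}^n_P : \textnormal{sd}^n_P(X) \to X$ is a composition of $n$ factors of the form $\textnormal{l.v.}_P$ applied to subdivisions of $X$, hence a weak equivalence since each factor is one by \cite[A.3]{douSimp}. Dually, the inclusion $X \hookrightarrow \textnormal{Ex}^n_P(X)$ is a composition of $n$ instances of $X \hookrightarrow \textnormal{Ex}_P(X)$, each of which is an anodyne extension by \cite[Appendix 2]{douteauFren}; since anodyne extensions are closed under composition, the whole inclusion is anodyne as well, and therefore $\textnormal{Ex}^n_P$ preserves weak equivalences. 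In particular, $\textnormal{Ex}^n_P(\textnormal{l.v.}^n_P) : \textnormal{Ex}^n_P(\textnormal{sd}^n_P(X)) \to \textnormal{Ex}^n_P(X)$ is a weak equivalence, hence invertible in $\mathcal{H}\textnormal{s\textbf{Set}}_P$.

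It then suffices to show that the diagram in the statement commutes strictly in $\textnormal{s\textbf{Set}}_P$ after postcomposing both paths with $\textnormal{Ex}^n_P(\textnormal{l.v.}^n_P)$. For a filtered simplex $\sigma : \Delta^{\mathcal J} \to \textnormal{sd}^n_P(X)$, I would consider the rectangle
\begin{center}
\begin{tikzcd}
\textnormal{sd}^n_P(\Delta^{\mathcal J}) \arrow[r, "\textnormal{sd}^n_P(\sigma)"] \arrow[d, "\textnormal{l.v.}^n_P"] & \textnormal{sd}^{2n}_P(X) \arrow[r, "\textnormal{sd}^n_P(\textnormal{l.v.}^n_P)"] \arrow[d, "\textnormal{l.v.}^n_P"] & \textnormal{sd}^n_P(X) \arrow[d, "\textnormal{l.v.}^n_P"] \\
\Delta^{\mathcal J} \arrow[r, "\sigma"] & \textnormal{sd}^n_P(X) \arrow[r, "\textnormal{l.v.}^n_P"] & X
\end{tikzcd}
\end{center}
whose two squares commute by naturality of $\textnormal{l.v.}^n_P$. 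Under the standard description of the unit $\eta$ of the adjunction $\textnormal{sd}^n_P \dashv \textnormal{Ex}^n_P$, the image of $\sigma$ along the composition containing $\eta$ (and then $\textnormal{Ex}^n_P(\textnormal{l.v.}^n_P)$) is obtained by going right across the top and down the right-hand edge; the image along the inclusion composed with $\textnormal{Ex}^n_P(\textnormal{l.v.}^n_P)$ is obtained by going down the left edge and then right across the bottom. Commutativity of the rectangle then identifies the two.

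Alternatively, the inductive argument alluded to in the text can be implemented directly: writing $\textnormal{l.v.}^n_P = \textnormal{l.v.}_P \circ \textnormal{sd}_P(\textnormal{l.v.}^{n-1}_P)$ and $\eta_n = \textnormal{Ex}_P(\eta_{n-1}) \circ \eta_1$, one applies the $n=1$ lemma at the outside and the inductive hypothesis inside $\textnormal{Ex}_P$, using that $\textnormal{Ex}_P$ preserves weak equivalences (and hence descends to $\mathcal{H}\textnormal{s\textbf{Set}}_P$). I do not expect any real obstacle here; the only thing to double-check is the standard factorization of the unit of an $n$-fold adjunction and the analogous factorization of $\textnormal{l.v.}^n_P$, both of which are purely formal once one recalls how $\textnormal{l.v.}^n_P$ was built in \Cref{conLVP}.
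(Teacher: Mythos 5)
Your proposal is correct and is essentially the paper's own argument: the paper proves the $n=1$ case exactly this way (reduce to strict commutativity after postcomposing with the weak equivalence $\textnormal{Ex}_{P}(\textnormal{l.v.}_P)$, then chase a naturality rectangle for a filtered simplex $\sigma$), and for general $n$ it simply remarks that the same proof, or the induction you sketch, goes through. Both of your routes are the ones the paper intends, and the details you check (naturality of $\textnormal{l.v.}^n_P$, anodyne-ness of the iterated $\operatorname{Ex}_P$-inclusion, the description of the unit as $n$-fold subdivision) are exactly what is needed.
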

As a consequence we obtain the following.
\begin{lemma}\label{lemRepExbySd}
	For $n \in \mathbb N$, consider a diagram in $\textnormal{s\textbf{Set}}_P$ \begin{center}
		\begin{tikzcd}
		& \textnormal{sd}^n_{P}(X) \arrow[ld, "\textnormal{l.v.}^n_P", swap] \arrow[rd, "\hat f"]&\\
		X \arrow[rd ,"f", swap]& &Y \arrow[ld, hook']\\
		& \textnormal{Ex}^n_{P}(Y)&
		\end{tikzcd}
	\end{center}
	where $\hat f$ is the adjoint to $f$ under $\textnormal{sd}^n_{P} \dashv \textnormal{Ex}^n_{P}$. Then this diagram commutes in $\mathcal{H}\textnormal{s\textbf{Set}}_P$.
\end{lemma}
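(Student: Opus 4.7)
The plan is to reduce this to \Cref{lemHoComSdEx} via the standard triangle identity for the adjunction $\textnormal{sd}^n_P \dashv \textnormal{Ex}^n_P$, combined with naturality of the canonical inclusion $\iota_W \colon W \hookrightarrow \textnormal{Ex}^n_P(W)$. First, I would observe that since $\hat f$ is the adjunct of $f$, we have the factorisation
\[
f \;=\; \textnormal{Ex}^n_P(\hat f) \circ \eta_X
\]
in $\textnormal{s\textbf{Set}}_P$, where $\eta$ is the unit of $\textnormal{sd}^n_P \dashv \textnormal{Ex}^n_P$. Consequently, in $\textnormal{s\textbf{Set}}_P$ itself,
\[
f \circ \textnormal{l.v.}^n_P \;=\; \textnormal{Ex}^n_P(\hat f) \circ \eta_X \circ \textnormal{l.v.}^n_P.
\]

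Next, I would invoke \Cref{lemHoComSdEx}, which asserts that $\eta_X \circ \textnormal{l.v.}^n_P$ agrees with the canonical inclusion $\iota_{\textnormal{sd}^n_P(X)} \colon \textnormal{sd}^n_P(X) \hookrightarrow \textnormal{Ex}^n_P(\textnormal{sd}^n_P(X))$ as morphisms in $\mathcal{H} \textnormal{s\textbf{Set}}_P$. Post-composing this identity with $\textnormal{Ex}^n_P(\hat f)$ (which descends to the homotopy category) yields
\[
f \circ \textnormal{l.v.}^n_P \;=\; \textnormal{Ex}^n_P(\hat f) \circ \iota_{\textnormal{sd}^n_P(X)} \quad \text{in } \mathcal{H}\textnormal{s\textbf{Set}}_P.
\]

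Finally, I would invoke naturality of $\iota$ with respect to the morphism $\hat f \colon \textnormal{sd}^n_P(X) \to Y$, giving the commuting square $\textnormal{Ex}^n_P(\hat f) \circ \iota_{\textnormal{sd}^n_P(X)} = \iota_Y \circ \hat f$ already in $\textnormal{s\textbf{Set}}_P$. Splicing this into the previous identity gives the desired equality $f \circ \textnormal{l.v.}^n_P = \iota_Y \circ \hat f$ in $\mathcal{H}\textnormal{s\textbf{Set}}_P$, which is exactly commutativity of the outer diagram.

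There is no real obstacle here: the argument is entirely formal once one has \Cref{lemHoComSdEx} in hand, with the only things to check being (i) the unit–counit factorisation of an adjunct, which is purely categorical, and (ii) naturality of $\iota$, which holds strictly in $\textnormal{s\textbf{Set}}_P$ and hence in particular in $\mathcal{H}\textnormal{s\textbf{Set}}_P$. The slight subtlety worth mentioning is that the argument uses the fact that functors pass to the homotopy category, which for $\textnormal{Ex}^n_P$ is justified by the fact, noted in the proof of the previous lemma, that $\textnormal{Ex}_P$ preserves weak equivalences (since $\iota$ is an anodyne extension).
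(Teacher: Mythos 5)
Your proposal is correct and follows essentially the same route as the paper: factor $f$ as $\textnormal{Ex}^n_P(\hat f)\circ\eta_X$ via the adjunction, apply \Cref{lemHoComSdEx} to identify $\eta_X\circ\textnormal{l.v.}^n_P$ with the canonical inclusion in $\mathcal H\textnormal{s\textbf{Set}}_P$, and conclude by naturality of the inclusion into $\textnormal{Ex}^n_P$ — the paper presents this as a three-triangle diagram refinement, you as a chain of equalities. (Your closing remark about $\textnormal{Ex}^n_P$ preserving weak equivalences is not actually needed here, since you only post-compose with the class $[\textnormal{Ex}^n_P(\hat f)]$ in the homotopy category, but this is harmless.)
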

\begin{proof}
	By the unit counit form of an adjunction, $f$ is given by $$ X \xrightarrow{\eta} \textnormal{Ex}^n_{P}(\textnormal{sd}^n_{P}(X)) \xrightarrow{\textnormal{Ex}^n_{P}(\hat f)} \textnormal{Ex}^n_{P}(Y).$$ Hence, the diagram in the statement refines to a diagram:
	\begin{center}
		\begin{tikzcd}[column sep= huge, row sep= huge]
		& \textnormal{sd}^n_{P}(X) \arrow[ld, "\textnormal{l.v.}^n_P", swap] \arrow[rd, "\hat f"] \arrow[d, hook]&\\
		X \arrow[rd ,"f", swap] \arrow[r,"\eta"] & \textnormal{Ex}^n_{P}(\textnormal{sd}^n_{P}(X)) \arrow[d,"\textnormal{Ex}^n_{P}(\hat f)"] &Y \arrow[ld, hook']\\
		& \textnormal{Ex}^n_{P}(Y)&
		\end{tikzcd}.
	\end{center}
	The large right triangle commutes by naturality of the inclusion into $\textnormal{Ex}^n_{P}$. The lower left triangle commutes by assumption. The upper right triangle commutes in $\mathcal{H}\textnormal{s\textbf{Set}}_P$, by \Cref{lemHoComSdEx}. Hence, the outer square also commutes in $\mathcal{H}\textnormal{s\textbf{Set}}_P$. 
\end{proof}
As a direct consequence of this result, we obtain the following useful representation result for morphisms in $\mathcal H\textnormal{s\textbf{Set}}^{fin}_P$.
\begin{proposition}\label{propRepHoClasses}
	Let $X \xrightarrow{\alpha} Y$ be a morphism in $\mathcal H\textnormal{s\textbf{Set}}^{fin}_P$. Then there is an $n \in \mathbb{N}$ and a morphism $\textnormal{sd}^n_{P}(X) \xrightarrow{f} Y$ such that $\alpha$ fits into a commutative diagram
	\begin{center}
		\begin{tikzcd}
		& \textnormal{sd}^n_{P}(X) \arrow[ld, "{[\textnormal{l.v.}^n_P]}", swap] \arrow[rd, "{[f]}"]&\\
		X \arrow[rr, "\alpha"]& &Y 
		\end{tikzcd}.
	\end{center}
In particular, $$ \alpha = [f] \circ [\textnormal{l.v.}^n_P]^{-1}.$$
\end{proposition}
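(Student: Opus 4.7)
The strategy is to exploit the fact that $\textnormal{Ex}^\infty_P(Y) = \varinjlim_n \textnormal{Ex}^n_P(Y)$ is a fibrant replacement of $Y$ together with the compactness of finite $P$-filtered simplicial sets. Since every object of $\textnormal{s\textbf{Set}}_P$ is cofibrant (cofibrations are exactly the monomorphisms) and $\textnormal{Ex}^\infty_P(Y)$ is fibrant, \Cref{propHoClasses} identifies $\textnormal{Hom}_{\mathcal H \textnormal{s\textbf{Set}}_P}(X,Y)$ with the set of simplicial homotopy classes $[X, \textnormal{Ex}^\infty_P(Y)]_P$, via post-composition with the weak equivalence $Y \hookrightarrow \textnormal{Ex}^\infty_P(Y)$. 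In particular, $\alpha$ is represented by some honest morphism $g: X \to \textnormal{Ex}^\infty_P(Y)$ in $\textnormal{s\textbf{Set}}_P$.

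Now I would invoke compactness of $X$. By \Cref{lemCharComp}, finite $P$-filtered simplicial sets are exactly the compact (small) objects in $\textnormal{s\textbf{Set}}_P$. Since $\textnormal{Ex}^\infty_P(Y)$ is the colimit of the sequence $Y \hookrightarrow \textnormal{Ex}_P(Y) \hookrightarrow \textnormal{Ex}^2_P(Y) \hookrightarrow \dots$, which is a filtered (in fact sequential) diagram, the morphism $g$ must factor through some finite stage, yielding $\bar g: X \to \textnormal{Ex}^n_P(Y)$ for some $n$ such that the composition with $\textnormal{Ex}^n_P(Y) \hookrightarrow \textnormal{Ex}^\infty_P(Y)$ recovers $g$. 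Under the adjunction $\textnormal{sd}^n_P \dashv \textnormal{Ex}^n_P$, $\bar g$ corresponds to a morphism $f: \textnormal{sd}^n_P(X) \to Y$, and by construction $\bar g$ is the adjoint of $f$.

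Finally, I would apply \Cref{lemRepExbySd} to the pair $(f, \bar g)$: this lemma already tells us that the diagram
\begin{center}
\begin{tikzcd}
& \textnormal{sd}^n_P(X) \arrow[ld, "\textnormal{l.v.}^n_P"', ] \arrow[rd, "\hat f"]&\\
X \arrow[rd, "\bar g"', ]& &Y \arrow[ld, hook']\\
& \textnormal{Ex}^n_P(Y) &
\end{tikzcd}
\end{center}
commutes in $\mathcal H \textnormal{s\textbf{Set}}_P$. Composing further with the weak equivalence $\textnormal{Ex}^n_P(Y) \hookrightarrow \textnormal{Ex}^\infty_P(Y)$ (which represents the canonical fibrant replacement map, and thus is inverted by $\alpha$) produces the factorization $\alpha = [f] \circ [\textnormal{l.v.}^n_P]^{-1}$, after noting that $\textnormal{l.v.}^n_P$ is a weak equivalence (used repeatedly in the previous subsection) and so has an inverse in $\mathcal H\textnormal{s\textbf{Set}}_P$.

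The main obstacle, if any, is the compactness step: one needs the small-object property that any map from a finite $X$ into a sequential colimit factors through some finite stage. Given \Cref{lemCharComp}, this is immediate, so the proposition should follow essentially formally from the preceding lemmas. The only subtlety is being careful about what happens in $\mathcal H \textnormal{s\textbf{Set}}_P$ versus $\textnormal{s\textbf{Set}}_P$ at each stage, but since every map appearing above is either an outright equality in $\textnormal{s\textbf{Set}}_P$ or has been shown to hold in $\mathcal H \textnormal{s\textbf{Set}}_P$ via \Cref{lemRepExbySd}, the argument goes through cleanly.
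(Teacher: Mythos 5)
Your proposal is correct and follows the same line of argument as the paper's own proof: use fibrant replacement $Y \hookrightarrow \textnormal{Ex}^\infty_P(Y)$ to lift $\alpha$ to an honest morphism, use compactness of the finite source $X$ (via \Cref{lemCharComp}) to factor through a finite stage $\textnormal{Ex}^n_P(Y)$, and then invoke \Cref{lemRepExbySd} together with the adjunction $\textnormal{sd}^n_P \dashv \textnormal{Ex}^n_P$. The only wrinkle is notational: your $f$ plays the role of $\hat f$ in \Cref{lemRepExbySd} and the phrase ``inverted by $\alpha$'' is not quite what you mean (you want that $\textnormal{Ex}^n_P(Y) \hookrightarrow \textnormal{Ex}^\infty_P(Y)$ is a weak equivalence, hence invertible in $\mathcal H\textnormal{s\textbf{Set}}_P$, allowing you to identify $\alpha$ with $[i_n]^{-1}\circ[\bar g]$), but these are cosmetic and the substance of the argument matches the paper.
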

\begin{proof}
	As $ Y \hookrightarrow \textnormal{Ex}^{\infty}_{P}(Y)$ is a fibrant replacement of $Y$, $\alpha$ fits into a commutative diagram \begin{center}
		\begin{tikzcd}
		X \arrow[rr, "\alpha"] \arrow[rd, "{[f_0]}", swap] && Y \arrow[ld, hook]\\
		&\textnormal{Ex}^\infty_{P}(Y) &
		\end{tikzcd},
	\end{center}
for some morphisms $f_0$ in $\textnormal{s\textbf{Set}}_P$. As $X$ is finite (hence compact), and $\operatorname{Ex}^n_P(Y) \hookrightarrow \operatorname{Ex}^\infty_P(Y)$ is a weak equivalence, this diagram splits off a commutative diagram 
\begin{center}
	\begin{tikzcd}
	X \arrow[rr, "\alpha"] \arrow[rd, "{[f_1]}", swap] && Y \arrow[ld, hook]\\
	&\textnormal{Ex}^n_{P}(Y) &
	\end{tikzcd}
\end{center}
for some sufficiently large $n$ in $\mathbb N$ and some morphism $f_1: X \to \textnormal{Ex}^n_{P}(Y)$. Now, the result follows by \Cref{lemRepExbySd}. 
\end{proof}
\section{Realizations of weak equivalences}\label{secRealofWeak}
Now, that we have a notion of weak equivalence on $\textnormal{s\textbf{Set}}_P$ and one on $\textnormal{\textbf{Top}}_{P}$, the obvious question arises of what the relationship between the two is. In particular, one would like to understand the relationship with respect to the realization functor
$$|-|_P: \textnormal{s\textbf{Set}}_P \longrightarrow \textnormal{\textbf{Top}}_P.$$
In the case where $P$ is a singleton, it is of course a well known classical result that $|-|$ preserves weak equivalences and induces an equivalence of homotopy categories (where on the right hand side one uses the Quillen-Model structure \cite[Thm. 11.4]{goerss2012simplicial}). \\
\\
With the model structures given by \Cref{thrmDouModSS} and \Cref{thrmDouMod} however, $|-|_P$ is not the left adjoint of a Quillen equivalence. In fact, its right adjoint $\textit{Sing}_P$ does not even preserve fibrant objects (see \cite[Sec. 8.1]{douteauFren} for an example) and $|-|_P$ does not preserve cofibrant objects. This might of course not be due to the choice of weak equivalences on $\textnormal{\textbf{Top}}_P$, but due to the choice of fibrations (or cofibrations respectively.) So, it might very well still be true that for some other choice of model structures, inducing the same homotopy theory, the two form a Quillen equivalence (see also \Cref{remWeirdStruct}). \\
\\
We will not take the path of trying to define different model structures here. However, we are going to show that the realization functor does preserve weak equivalences. A first step to showing this result was taken in \cite[Thm. 8.3.7]{douteauFren}. There, it is shown that the realization functor to the more rigid model category of topological spaces over $|N(P)|$, $\textnormal{\textbf{Top}}_{N(P)}$, called the category of strongly filtered topological spaces, preserves weak equivalences. The latter is Quillen equivalent to $\textnormal{\textbf{Top}}_{P}$ \cite[Thm. 215]{douteauEnTop}. The right adjoint functor of this Quillen equivalence fits into a commutative diagram:
$$ \begin{tikzcd}
\textnormal{s\textbf{Set}}_P \arrow[r, "{|-|_{N(P)}}"] \arrow[rr, bend left= 60, "{|-|_P}"]& \textnormal{\textbf{Top}}_{N(P)} \arrow[r] & \textnormal{\textbf{Top}}_{P}
\end{tikzcd}.$$
So, one might hope that it preserves weak equivalences making the the same also hold for the composition. We show however in \Cref{exFDoesntRetain} that this is fact not the case. Nevertheless, we show that it holds for all weak equivalences between objects of the form $|X|_{N(P)}$ for $X \in \textnormal{s\textbf{Set}}_P$. This is the content of \Cref{thrmWeakEquSustain}. It is an immediate consequence of \Cref{thrmHolWeakEq}, which essentially says that for realizations of $P$-filtered simplicial sets the (generalized) homotopy links in the $N(P)$ and the $P$ setting are weakly homotopy equivalent.
\subsection{Strongly filtered topological spaces}\label{subsecStrongFilt}
We start by very roughly summarizing some results and definition of \cite[Sec. 1]{douteauEnTop}. The category $\textnormal{\textbf{Top}}_{N(P)}$ of strongly ($P$-)filtered topological spaces is obtained by taking the composition $$\textnormal{\textbf{Poset}} \xhookrightarrow{N} \textnormal{s\textbf{Set}} \xrightarrow{|-|} \textnormal{\textbf{Top}}$$ instead of the Alexandroff space functor, in \Cref{exFilteredObj}. Objects and morphisms in $\textnormal{\textbf{Top}}_{N(P)}$ are called \textit{strongly ($P$-)filtered spaces and strongly stratum preserving maps} respectively. Strongly stratum preserving maps are called \textit{strongly filtered maps} in \cite{douteauEnTop}.
Analogously to the case of $\textnormal{\textbf{Top}}_{P}$ (just replace $P$ by $|N(P)|$ in \Cref{conEnTop}) this category is enriched over $\textnormal{\textbf{Top}}$.
We denote the hom space by $C^0_{N(P)}(X,Y)$, for $X,Y \in \textnormal{\textbf{Top}}_{N(P)}$. 
Also, just as in the $P$ case we obtain a cotensoring with the precisely analogous construction of $T \otimes X$, for $T \in \textnormal{\textbf{Top}}$ and $X \in \textnormal{\textbf{Top}}_{N(P)}$. Again tensoring with $I$ induces a notion of \textit{strongly stratified homotopy and strongly stratum preserving homotopy equivalence}. Then $C^0_{N(P)}(-,-)$ is compatible with this notion of homotopy. Further, one also has a realization functor $$|-|_{N(P)}: \textnormal{s\textbf{Set}}_P \to \textnormal{\textbf{Top}}_{N(P)}$$ together with a right adjoint \begin{align*}
\textnormal{Sing}_{N(P)}: \textnormal{\textbf{Top}}_{N(P)} \longrightarrow \textnormal{s\textbf{Set}}_P.
\end{align*} They are constructed via \Cref{conFiltFun}, by taking: $L = |-|$, $R= Sing$, $f=1_{\textnormal{\textbf{Top}}}$ and finally $g$ as the unit of the adjunction $N(P) \to Sing \circ |N(P)|$. 
Using, the realization $|-|_{N(P)}$ we can then define a (strong) homotopy link functor.
\begin{align*}
\textnormal{Hol}_{N(P)}: \textnormal{\textbf{Top}}_{N(P)} &\longrightarrow \textnormal{\textbf{Top}}^{\textnormal{sd}(P)^{op}}\\
X &\longmapsto C^0_{N(P)}(|-|_{N(P)}, X),
\end{align*} 
as in \Cref{defHomotopyLink}.
We call a morphism $X \xrightarrow{f} Y$ in $\textnormal{\textbf{Top}}_{N(P)}$ a (\textit{weak equivalence of strongly ($P$-)filtered spaces}) if for each flag $\mathcal J \in \textnormal{sd}(P)$ the induced map $\textnormal{Hol}_{N(P)}(\mathcal{J} , f )$ is a weak homotopy equivalence of topological spaces. As int he $P$-filtered setting one shows that every strongly stratum preserving homotopy equivalence is also a weak equivalence of strongly filtered spaces.
Finally, as for $\textnormal{\textbf{Top}}_{P}$, one obtains from \cite{douteauEnTop}[Thm. 2.8] the analogue to \Cref{thrmDouMod}:
\begin{theorem}\label{thrmDouModNP}
	There exists a cofibrantly generated model structure on $\textnormal{\textbf{Top}}_{N(P)}$ such that a morphism $f:X\to Y$ is \begin{itemize}
		\item a fibration if $\textnormal{Hol}_{N(P)}(\mathcal{J} , f)$ is a Serre fibration, for each flag $\mathcal J \in \textnormal{sd}(P)$;
		\item a weak equivalence if $f$ is a weak equivalence of strongly filtered topological spaces over $P$, that is if for each flag $\mathcal J \in \textnormal{sd}(P)$, $\textnormal{Hol}_{N(P)}(\mathcal{J} , f)$ is a weak homotopy equivalence;
		\item a cofibration if and only if it has the left lifting property against all acyclic fibrations. 
	\end{itemize}
\end{theorem}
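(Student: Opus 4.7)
The plan is to obtain this model structure by Kan's right-transfer theorem applied to the adjunction
\begin{align*}
L : \textnormal{s\textbf{Set}}^{\textnormal{sd}(P)^{op}} \rightleftarrows \textnormal{\textbf{Top}}_{N(P)} : R,
\end{align*}
where $R(X)$ is the presheaf $\mathcal J \mapsto \operatorname{Sing}(\textnormal{Hol}_{N(P)}(\mathcal J, X))$ and $L$ is its left adjoint, uniquely determined by sending each tensor-generator $\Delta^n$ at a flag $\mathcal J$ to $|\Delta^n| \otimes |\Delta^{\mathcal J}|_{N(P)}$. Equipping the source with its projective model structure, $L$ turns the generating cofibrations and trivial cofibrations into the sets appearing in the statement, and by definition of $R$ a morphism $f$ is a fibration, respectively a weak equivalence, in the transferred structure if and only if every $\textnormal{Hol}_{N(P)}(\mathcal J, f)$ is a Serre fibration, respectively a weak homotopy equivalence of spaces, as required.

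First I would verify the hypotheses of Kan's theorem that are of a purely categorical nature: bicompleteness of $\textnormal{\textbf{Top}}_{N(P)}$ (which follows from \Cref{remLimandColim}) and smallness of every object relative to the transferred generating sets. Since $\textnormal{\textbf{Top}}$ is locally presentable in our $\Delta$-generated convention, so is the over-category $\textnormal{\textbf{Top}}_{N(P)}$, and smallness is automatic; this makes the small object argument available to produce the required factorizations.

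The main obstacle is the remaining hypothesis of the transfer theorem: showing that every element of the saturated class generated by the $L$-images of the generating trivial cofibrations is a weak equivalence of strongly filtered spaces. For the generators themselves I would exploit the copower structure; because the filtration of a copower $T \otimes X$ factors through $X$, one has a natural homeomorphism
\begin{align*}
\textnormal{Hol}_{N(P)}(\mathcal J', T \otimes X) \cong C^0(|\Delta^{\mathcal J'}|, T) \times \textnormal{Hol}_{N(P)}(\mathcal J', X)
\end{align*}
for every flag $\mathcal J'$. Under this identification $\textnormal{Hol}_{N(P)}(\mathcal J', -)$ carries the map $|\Lambda_k^n| \otimes |\Delta^{\mathcal J}|_{N(P)} \hookrightarrow |\Delta^n| \otimes |\Delta^{\mathcal J}|_{N(P)}$ to the product of the classical trivial cofibration $C^0(|\Delta^{\mathcal J'}|, |\Lambda_k^n|) \hookrightarrow C^0(|\Delta^{\mathcal J'}|, |\Delta^n|)$ with an identity, which is a trivial cofibration of topological spaces. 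To extend this from the generators to the entire saturated class, where $\textnormal{Hol}_{N(P)}(\mathcal J', -)$ need not commute with the relevant colimits, I would use that each generator admits an explicit strongly stratum preserving deformation retraction inherited from the standard horn retraction, so that pushouts of it along arbitrary strongly stratum preserving maps remain strongly stratum preserving deformation retracts and in particular induce trivial cofibrations on each generalized homotopy link. The closure of weak equivalences under transfinite compositions of cofibrations of spaces then concludes the verification, and Kan's theorem produces the desired cofibrantly generated model structure whose fibrations and weak equivalences are exactly as described; the cofibrations are then determined by the left lifting property against the acyclic fibrations.
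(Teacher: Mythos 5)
The paper does not prove this theorem; it cites it directly from Douteau's thesis (\cite{douteauEnTop}, Thm.~2.8), so there is no in-paper proof to compare against. Your transfer strategy is, however, the very approach the paper itself describes informally a few pages earlier when discussing the analogous $\textnormal{\textbf{Top}}_{P}$ model structure (``post-compose the $\textnormal{Hol}$ functor with $\operatorname{Sing}$ and right-transfer the projective model structure''), and the identification of $L$ on representable generators via the copower adjunction and the product decomposition $\textnormal{Hol}_{N(P)}(\mathcal J', T \otimes X) \cong C^0(|\Delta^{\mathcal J'}|, T) \times \textnormal{Hol}_{N(P)}(\mathcal J', X)$ are both correct.

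There is, however, a genuine gap in the final step. You correctly observe that $\textnormal{Hol}_{N(P)}(\mathcal J', -)$ need not commute with the relevant colimits, and you handle pushouts by showing that pushouts of the generating trivial cofibrations remain strongly stratum preserving deformation retracts. But your closing sentence, ``The closure of weak equivalences under transfinite compositions of cofibrations of spaces then concludes the verification,'' does not follow from what precedes it: from the fact that each stage $X_\alpha \hookrightarrow X_{\alpha+1}$ induces a trivial cofibration on every $\textnormal{Hol}_{N(P)}(\mathcal J', -)$ you cannot conclude the same for $X_0 \hookrightarrow \varinjlim X_\alpha$, precisely because $\textnormal{Hol}_{N(P)}(\mathcal J', \varinjlim X_\alpha)$ is not $\varinjlim \textnormal{Hol}_{N(P)}(\mathcal J', X_\alpha)$ in general. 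You need one of two further arguments: either show that a transfinite composition of strongly stratum preserving deformation retracts whose underlying inclusions are closed NDR pairs is again a strongly stratum preserving deformation retract (the Str\o m-style gluing argument, carried out while keeping track of the $|N(P)|$-filtration, which works because all the homotopies live in the copower direction), or invoke the compactness of $|\Delta^{\mathcal J'}|_{N(P)}$ together with the fact that transfinite compositions of pushouts of the generators are transfinite sequences of closed $T_1$-inclusions, so that a map from a compact source into the colimit factors through a finite stage and the homotopy groups of $\textnormal{Hol}_{N(P)}(\mathcal J', -)$ do commute with the colimit. Either completes the argument, but as written the last step is unjustified.
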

With respect to this model structure and the Douteau-Henrique model structure on $\textnormal{s\textbf{Set}}_P$ one obtains:
\begin{theorem}\cite[Thm. 8.3.7]{douteauFren}\label{thrmDouteauNPrelRet}
	Both functors of the adjunction $$ |-|_{N(P)}: \textnormal{s\textbf{Set}}_P \longleftrightarrow \textnormal{\textbf{Top}}_{N(P)}: \textnormal{Sing}_{N(P)}$$
	preserve weak equivalences.
\end{theorem}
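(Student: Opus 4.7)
The plan is to deduce both preservation statements from Ken Brown's lemma. The key preliminary observation is that every object in $\textnormal{s\textbf{Set}}_P$ is cofibrant, since cofibrations are precisely the monomorphisms (\Cref{thrmDouModSS}) and $\varnothing \hookrightarrow X$ is always one. Dually, every object in $\textnormal{\textbf{Top}}_{N(P)}$ is fibrant: the terminal object is $1_{|N(P)|}: |N(P)| \to |N(P)|$, so $\textnormal{Hol}_{N(P)}(\mathcal{J}, 1_{|N(P)|}) = C^0_{N(P)}(|\Delta^{\mathcal{J}}|_{N(P)}, |N(P)|)$ is a singleton (the unique strongly stratum preserving map being the filtration itself), and consequently every map $\textnormal{Hol}_{N(P)}(\mathcal{J}, X) \to \{\ast\}$ is trivially a Serre fibration. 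Therefore, once the adjunction is verified to be Quillen, Ken Brown's lemma applied on each side yields both preservation results simultaneously.

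The bulk of the work is to verify that $|-|_{N(P)} \dashv \operatorname{Sing}_{N(P)}$ is a Quillen adjunction, i.e.\ that $|-|_{N(P)}$ carries the generating (trivial) cofibrations of the Douteau model structure to (trivial) cofibrations of $\textnormal{\textbf{Top}}_{N(P)}$. For the generating cofibrations $\partial \Delta^{\mathcal{J}} \hookrightarrow \Delta^{\mathcal{J}}$, I would identify a generating set for the cofibrations in $\textnormal{\textbf{Top}}_{N(P)}$ by analogy with \Cref{thrmDouMod}, namely inclusions of the form $|\partial \Delta^n| \otimes |\Delta^{\mathcal{K}}|_{N(P)} \hookrightarrow |\Delta^n| \otimes |\Delta^{\mathcal{K}}|_{N(P)}$, and exhibit each realized boundary inclusion as a finite composition of pushouts of such maps, organized along the skeletal filtration of $|\Delta^{\mathcal{J}}|_{N(P)}$. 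For the generating acyclic cofibrations — the admissible horn inclusions $\Lambda^{\mathcal{J}}_k \hookrightarrow \Delta^{\mathcal{J}}$ — the weak equivalence aspect follows from a strongly filtered upgrade of \Cref{lemCharHornInc}: the affine deformation retraction constructed in that proof not only preserves strata, but also lifts to a map over $|N(P)|$, since whenever $p_k = p_{k+1}$ the vertices $|p_k|$ and $|p_{k+1}|$ coincide in $|N(P)|$ and the sum of the coefficients $(st_{k+1} + t_k) + (1-s)t_{k+1} = t_k + t_{k+1}$ is preserved under the straight line homotopy, so the formula is literally strongly stratum preserving. Applying $\textnormal{Hol}_{N(P)}(\mathcal{J}', -)$ converts this strongly stratified deformation retraction into a deformation retraction of topological spaces, hence a weak equivalence. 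The cofibration aspect is then obtained by the same skeletal attachment argument as for the boundary inclusions.

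The main obstacle I foresee is precisely this cofibration aspect. Cofibrations in $\textnormal{\textbf{Top}}_{N(P)}$ are characterized in \Cref{thrmDouModNP} only via the right lifting property against acyclic fibrations, with no explicit generating set given in the excerpt. One must therefore either extract an explicit generating family from the cofibrant generation of the model structure — plausibly the maps $|\partial \Delta^n| \otimes |\Delta^{\mathcal{J}}|_{N(P)} \hookrightarrow |\Delta^n| \otimes |\Delta^{\mathcal{J}}|_{N(P)}$, by close parallel to the $P$-filtered case — or else verify the lifting property against arbitrary acyclic fibrations directly, via a CW-style induction that uses the strata of the horn and boundary complexes as the filtration for cell attachment. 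With this hurdle cleared, the rest of the argument is formal: Ken Brown for $|-|_{N(P)}$ gives preservation of weak equivalences between cofibrant objects (all objects), and Ken Brown for $\operatorname{Sing}_{N(P)}$ gives preservation between fibrant objects (again all objects).
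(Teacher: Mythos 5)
The paper does not prove this statement at all---it is imported verbatim from Douteau's thesis \cite[Thm.~8.3.7]{douteauFren}---so there is no in-paper argument to compare with; what can be assessed is whether your proposal would constitute a proof, and it would not. The bookkeeping parts are fine: every object of $\textnormal{s\textbf{Set}}_P$ is cofibrant, every object of $\textnormal{\textbf{Top}}_{N(P)}$ is fibrant (your computation that $\textnormal{Hol}_{N(P)}(\mathcal J, 1_{|N(P)|})$ is a point is correct), and the affine retraction of \Cref{lemCharHornInc} is indeed a homotopy over $|N(P)|$ when $p_k=p_{k+1}$. But the step you flag as the ``main obstacle'' is not an obstacle to be cleared; it is false, and with it the Quillen-adjunction/Ken Brown strategy collapses. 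The realized boundary inclusion $|\partial\Delta^{\mathcal J}|_{N(P)}\hookrightarrow|\Delta^{\mathcal J}|_{N(P)}$ is in general not a cofibration of $\textnormal{\textbf{Top}}_{N(P)}$. Take $P=\{0<1\}$, $\mathcal J=(0\le 1)$, let $B=|\Delta^{\mathcal J}|_{N(P)}$ (the edge $e\cong[0,1]$ over itself) and let $E=\big([0,1]\times\{0\}\big)\cup\big(\{0\}\times[0,1]\big)$, with structure map and $q\colon E\to B$ both given by the first coordinate. Then $\textnormal{Hol}_{N(P)}\big((0),q\big)$ is an interval mapping to a point, while $\textnormal{Hol}_{N(P)}\big((1),q\big)$ and $\textnormal{Hol}_{N(P)}\big((0\le 1),q\big)$ are maps between one-point spaces (the only strongly stratum preserving map $e\to E$ over $e$ is $t\mapsto(t,0)$, by continuity at $t=0$), so $q$ is an acyclic fibration by the characterization in \Cref{thrmDouModNP}. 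Yet the square sending the vertex over $0$ to $(0,1)$, the vertex over $1$ to $(1,0)$, with bottom map the identity of $B$, admits no lift, since a lift would be a section of $q$ taking the value $(0,1)$ at $0$. Hence the boundary inclusion fails the left lifting property against acyclic fibrations; equivalently $\textnormal{Sing}_{N(P)}$ does not preserve acyclic fibrations, and $|-|_{N(P)}\dashv\textnormal{Sing}_{N(P)}$ is not a Quillen adjunction. Your hoped-for skeletal attachment argument cannot work because the generating cells $|\Delta^n|\otimes|\Delta^{\mathcal K}|_{N(P)}$ are trivial bundles over flag simplices of $|N(P)|$ (structure map the projection), whereas $|\Delta^{\mathcal J}|_{N(P)}$ lies over $|N(P)|$ essentially by the identity and cannot be assembled from such products.

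Nor is the argument rescued by the weaker form of Ken Brown's lemma, which for the left adjoint would only require that $|-|_{N(P)}$ send trivial cofibrations (anodyne extensions) to weak equivalences. Your strongly stratified deformation retraction handles the generating admissible horns themselves, but an arbitrary anodyne extension is a transfinite composite of pushouts of these; since the realized horns are not cofibrations in $\textnormal{\textbf{Top}}_{N(P)}$ (a variant of the example above) and $\textnormal{Hol}_{N(P)}(\mathcal J',-)=C^0_{N(P)}(|\Delta^{\mathcal J'}|_{N(P)},-)$ does not commute with pushouts, there is no formal reason why a pushout of such a map remains a weak equivalence. Controlling exactly this---how the homotopy links of $|X|_{N(P)}$ change when a filtered simplex is glued along an admissible horn, and dually why $\textnormal{Sing}_{N(P)}$ of a (trivial) fibration is a weak equivalence in the Douteau structure---is the actual content of \cite[Thm.~8.3.7]{douteauFren}, and it is precisely what is missing from your proposal.
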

The Quillen equivalence between $\textnormal{\textbf{Top}}_{N(P)}$ and $\textnormal{\textbf{Top}}_{P}$ described in \cite[Subsec. 2.4]{douteauEnTop} is then given by the following construction.
There is a forgetful functor into the category $\textnormal{\textbf{Top}}_{P}$ given by postcomposing with the filtration of $|N(P)|_P$, $|N(P)| \to P$. We denote this functor by $$F: \textnormal{\textbf{Top}}_{N(P)} \longrightarrow \textnormal{\textbf{Top}}_{P}.$$ $F$ has a (enriched) right adjoint $$G: \textnormal{\textbf{Top}}_{P} \longrightarrow \textnormal{\textbf{Top}}_{N(P)},$$ given by pulling back along $|N(P)| \to P$. By the construction of $|-|_P$, $F$ fits into an on the nose commutative diagram: 
$$ \begin{tikzcd}
\textnormal{s\textbf{Set}}_P \arrow[r, "{|-|_{N(P)}}"] \arrow[rr, bend left= 60, "{|-|_P}"]& \textnormal{\textbf{Top}}_{N(P)} \arrow[r,"F"] & \textnormal{\textbf{Top}}_{P}
\end{tikzcd}.$$
Due to this, one might expect that the composition of $|-|_{N(P)}$ and $F$, $|-|_{P}$ also preserves weak equivalences. One might hope to prove this, by showing that $F$ preserves weak equivalences.
In fact, it turns out that the pair $(F,G)$ induces a Quillen equivalence, between the two model structures given in \Cref{thrmDouMod} and \Cref{thrmDouModNP} (see \cite[Thm. 2.15]{douteauEnTop}). However, this of course does not mean that $F$ preserves all weak equivalences, only that it preserves weak equivalences between cofibrant objects. For general weak equivalences, $F$ does indeed not preserve them:
\begin{example}\label{exFDoesntRetain}
	Consider the strongly stratum preserving map shown in \Cref{fig:ExNotEq}. The strong filtration is given by the height map, defined by projection to the vertical. The map is defined by identifying the upper two and lower two line segments respectively.
	\begin{figure}[H]
		\centering
		\includegraphics[width=80mm]{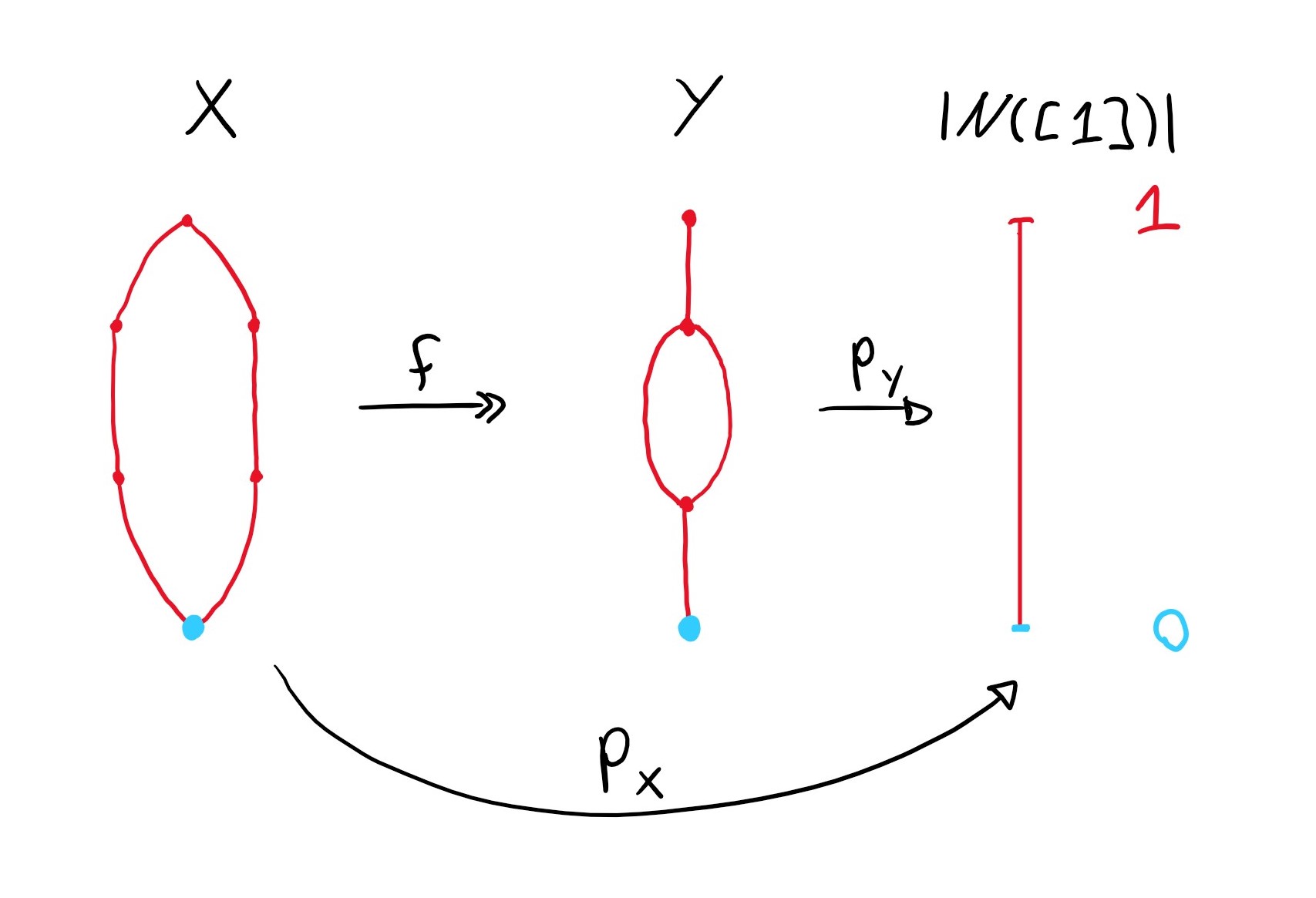}
		\caption{A strongly stratum preserving map over $P =[1]$}.
		\label{fig:ExNotEq}
	\end{figure}
	Figure \Cref{fig:ExNotEq2} shows the induced maps of \Cref{fig:ExNotEq} on generalized homotopy links, where the latter are depicted up to homotopy equivalence, on the right hand side. Homotopy links were computed using \Cref{propQuinHol}.
	\begin{figure}[H]
		\includegraphics[width=\linewidth]{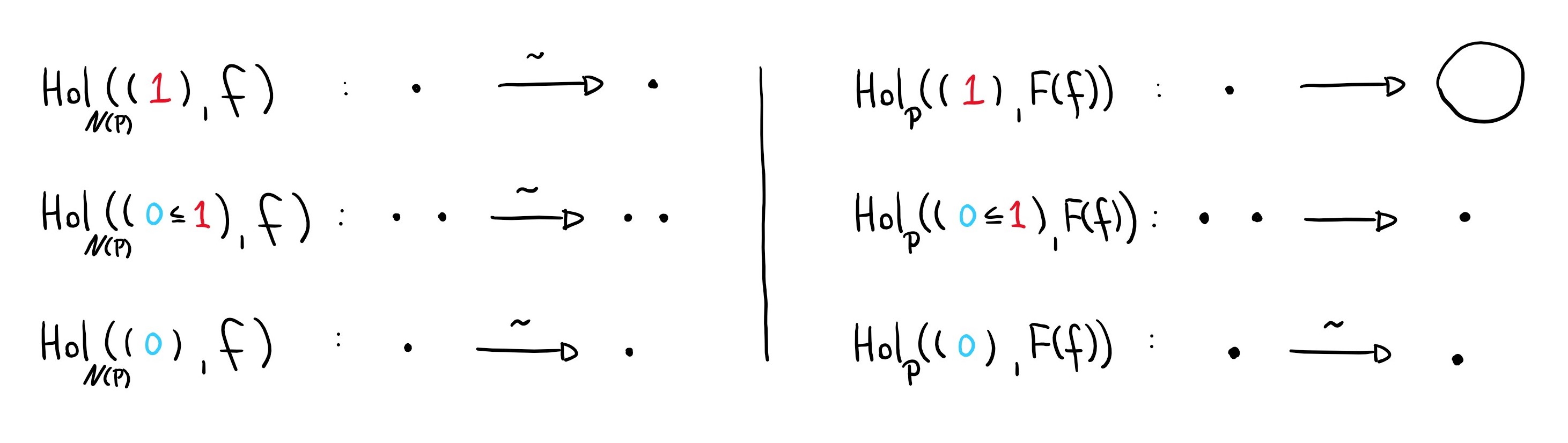}
		\caption{The induced maps on the generalized homotopy links of \Cref{fig:ExNotEq}. Homotopy links are shown up to homotopy equivalence. }
		\label{fig:ExNotEq2}
	\end{figure}
	While on the left hand side all of the maps are easily seen to be homeomorphisms even, this is clearly not the case on the right. In particular, $F$ does not sustain the property of being a weak equivalence here.
\end{example}
The failure of $F$ to preserve weak equivalences here comes from the fact that $F$ did not preserve the weak homotopy type of the strongly filtered spaces. As $F$ is an enriched functor, it induces a natural inclusion of homotopy links:
$$\textnormal{Hol}_{N(P)}(\mathcal{J} , X ) = C^0_{N(P)}(|\Delta^{\mathcal{J}}|_{N(P)}, X) \xhookrightarrow{F} C^0_{P}(F(|\Delta^{\mathcal{J}}|_{N(P)}), F(X)) = \textnormal{Hol}_{P}(\mathcal{J} , F(X)),$$
for a flag $\mathcal J\in \textnormal{sd}(P)$ and $X \in \textnormal{\textbf{Top}}_{N(P)}$. This induces a natural transformation:
$$\alpha: \textnormal{Hol}_{N(P)} \hookrightarrow \textnormal{Hol}_{P} \circ F$$
By naturality and the two out of three property for weak equivalences, one obtains:
\begin{lemma}\label{lemCondForFretain}
	Let $f:X \to Y$ in $\textnormal{\textbf{Top}}_{N(P)}$ be such that both $\alpha_X$ and $\alpha_Y$ are weak homotopy equivalences, at each $\mathcal J \in \textnormal{sd}(P)$. Then $f$ is a weak equivalence of strongly $P$-filtered spaces if and only if $F(f)$ is a weak equivalence of $P$-filtered spaces.
\end{lemma}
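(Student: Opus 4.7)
The plan is to exploit the naturality of the transformation $\alpha$ combined with the two-out-of-three property, which is essentially the hint given just before the statement.

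First, I would fix an arbitrary flag $\mathcal J \in \textnormal{sd}(P)$ and write down the naturality square of $\alpha$ at $\mathcal J$ applied to the map $f$:
\begin{center}
\begin{tikzcd}[column sep = large]
\textnormal{Hol}_{N(P)}(\mathcal J, X) \arrow[r, "\alpha_{X}"] \arrow[d, "\textnormal{Hol}_{N(P)}(\mathcal J, f)", swap] & \textnormal{Hol}_{P}(\mathcal J, F(X)) \arrow[d, "\textnormal{Hol}_{P}(\mathcal J, F(f))"] \\
\textnormal{Hol}_{N(P)}(\mathcal J, Y) \arrow[r, "\alpha_{Y}"] & \textnormal{Hol}_{P}(\mathcal J, F(Y)).
\end{tikzcd}
\end{center}
By hypothesis the two horizontal arrows are weak homotopy equivalences of topological spaces.

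Next, I would invoke the two-out-of-three property of weak homotopy equivalences in $\textnormal{\textbf{Top}}$: since the horizontals are weak equivalences, the left vertical $\textnormal{Hol}_{N(P)}(\mathcal J, f)$ is a weak equivalence if and only if the right vertical $\textnormal{Hol}_{P}(\mathcal J, F(f))$ is. As this equivalence holds for every flag $\mathcal J \in \textnormal{sd}(P)$, I can apply it uniformly over all flags.

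Finally, I would conclude by unpacking the definitions: by \Cref{thrmDouModNP}, the map $f$ is a weak equivalence in $\textnormal{\textbf{Top}}_{N(P)}$ precisely when $\textnormal{Hol}_{N(P)}(\mathcal J, f)$ is a weak homotopy equivalence for every $\mathcal J \in \textnormal{sd}(P)$, and by \Cref{thrmDouMod}, the map $F(f)$ is a weak equivalence in $\textnormal{\textbf{Top}}_{P}$ precisely when $\textnormal{Hol}_{P}(\mathcal J, F(f))$ is a weak homotopy equivalence for every $\mathcal J \in \textnormal{sd}(P)$. Combining these characterizations with the flag-wise equivalence established in the previous paragraph yields the desired biconditional. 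Since the entire argument is a direct consequence of naturality and two-out-of-three, I do not anticipate any substantive obstacle here; the hypothesis on $\alpha_X$ and $\alpha_Y$ was tailored exactly to make the naturality square apply.
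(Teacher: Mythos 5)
Your proposal is correct and is exactly the argument the paper intends: the paper simply states the lemma follows ``by naturality and the two out of three property for weak equivalences,'' and you have spelled out precisely that — the naturality square of $\alpha$ at each flag $\mathcal J$, two-out-of-three applied to that square, and the flag-wise characterizations of weak equivalence from \Cref{thrmDouModNP} and \Cref{thrmDouMod}.
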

It is the content of the next subsection to show that the requirements of $F$ are fulfilled if $X$ and $Y$ are isomorphic (or even weaker, strongly stratum preserving homotopy equivalent) to realizations of filtered simplicial sets.
\subsection{Weak equivalence of homotopy links}
The content of this subsection is the proof of the following theorem:
\begin{theorem}\label{thrmHolWeakEq}
	Let $X \in \textnormal{s\textbf{Set}}_P$. Then, for each flag $\mathcal{J} \in \textnormal{sd}(P)$, the inclusion of generalized homotopy links: $$ \alpha_{|X|_{N(P)},\mathcal J}: \textnormal{Hol}_{N(P)}(\mathcal{J} , |X|_{N(P)} ) \hookrightarrow \textnormal{Hol}_{P}(\mathcal{J} , |X|_P)$$ is a weak equivalence of topological spaces.
\end{theorem}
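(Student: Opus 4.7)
The plan is to exhibit $\alpha_{|X|_{N(P)}, \mathcal J}$ as the inclusion of a strong deformation retract, and therefore as a homotopy equivalence (in particular a weak equivalence) of topological spaces.

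First I would unpack precisely how the two sides differ. Both $\textnormal{Hol}_{N(P)}(\mathcal{J}, |X|_{N(P)})$ and $\textnormal{Hol}_{P}(\mathcal{J}, |X|_P)$ sit as subspaces of the compact-open space $C^0(|\Delta^{\mathcal{J}}|, |X|)$, and with $\pi := |p_X|: |X| \to |N(P)|$ the first is cut out by the strict identity $\pi \circ f = p_{|\Delta^{\mathcal{J}}|, N(P)}$, while the second is cut out by the weaker pointwise condition that $\pi(f(x))$ and $p_{|\Delta^{\mathcal{J}}|, N(P)}(x)$ lie in the same stratum of $|N(P)|$ for every $x$.

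The essential geometric input is the CW-structure of $|X|$: each closed cell is a filtered simplex $|\Delta^{\mathcal{I}}|$ indexed by a non-degenerate simplex $\mathcal{I}$ of $X$, and the restriction of $\pi$ to that cell is the canonical (possibly degenerate) simplicial map $|\Delta^{\mathcal{I}}| \to |N(P)|$. Two key observations follow. First, each stratum of $|\Delta^{\mathcal{I}}|$ is a convex subset of the affine simplex: it is cut out by linear equalities (vanishing of all coordinates strictly above the stratum height $p$) together with an affine open half-space condition (positivity of the sum of coordinates at height $p$). Second, the canonical map $|\Delta^{\mathcal{I}}| \to |N(P)|$ admits, on each stratum of $|N(P)|$, a canonical affine section over its image. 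Using these, I would construct a straightening $h: I \times \textnormal{Hol}_P(\mathcal J, |X|_P) \to \textnormal{Hol}_P(\mathcal J, |X|_P)$ cell-by-cell: for $f \in \textnormal{Hol}_P$ and $x$ with $f(x) \in |\Delta^{\mathcal{I}}|$, move $f(x)$ along the affine-linear path inside $|\Delta^{\mathcal{I}}|$ whose $\pi$-image is the straight segment in $|N(P)|$ joining $\pi(f(x))$ to $p_{|\Delta^{\mathcal{J}}|, N(P)}(x)$, keeping the fibre-coordinates of $f(x)$ over $\pi$ fixed. Convexity of strata ensures the path stays in the correct stratum, and naturality of the affine sections along face inclusions $\partial_i \Delta^{\mathcal{I}} \hookrightarrow \Delta^{\mathcal{I}}$ ensures the cellular recipe glues across cell boundaries.

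The main obstacle will be verifying that this straightening is continuous as a map on mapping spaces (not just pointwise), and globally well-defined across cell boundaries. Continuity should follow because $f(|\Delta^{\mathcal{J}}|)$ is compact and hence meets only finitely many cells, reducing the question to continuous affine-linear manipulations inside a finite union of affine simplices, together with a verification that the matching datum across face inclusions depends continuously on $f$. It then remains to check the three defining properties of a strong deformation retraction: $h_0$ is the identity, $h_1(f)$ satisfies $\pi \circ h_1(f) = p_{|\Delta^{\mathcal{J}}|, N(P)}$ and so lies in $\textnormal{Hol}_{N(P)}$, and $h_t$ fixes elements already in $\textnormal{Hol}_{N(P)}$ pointwise; all three should follow immediately from the construction, since on elements of $\textnormal{Hol}_{N(P)}$ the connecting segment in $|N(P)|$ is constant, forcing the affine path in $|\Delta^{\mathcal{I}}|$ to be constant as well.
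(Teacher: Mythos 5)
There is a genuine gap, and it sits exactly at the heart of your construction: the pointwise, cell-by-cell straightening is not well-defined, and where it is defined it is not continuous. Stratum preservation of $f\in\textnormal{Hol}_{P}(\mathcal J,|X|_P)$ only controls the \emph{top} nonzero height-block of $f(x)$; it says nothing about the lower blocks, whereas the prescribed target over $p_{|\Delta^{\mathcal J}|,N(P)}(x)$ requires positive mass in every height of the flag below that of $x$. Concretely, take $P=\{0<1\}$, $\mathcal J=(0\leq 1)$, $X=\Delta^{(0\leq1)}$, and $f(t)=\min(2t,1)$ in the obvious coordinates: $f$ is stratum preserving, but for $t\in[\tfrac12,1)$ the point $f(t)$ is the height-$1$ vertex, a $0$-cell, and no point of that cell (indeed no point with zero mass at height $0$) lies over the prescribed image; your ``affine path with prescribed $\pi$-image and fixed fibre coordinates'' does not exist. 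Equivalently, in coordinates your recipe is the rescaling $(\xi,t)\mapsto\sum_p \frac{t_p}{\|\xi_{\{p\}}\|}\xi_{\{p\}}$, which is singular (and genuinely discontinuous, not just formally undefined) whenever $\|\xi_{\{p\}}\|\to 0$ while $t_p\not\to 0$ --- a situation that occurs for completely general elements of $\textnormal{Hol}_P$. The amount by which a map must be moved before any such formula applies depends on the map globally (how closely it approaches deeper strata), so no pointwise or carrier-cell formula can work; relatedly, your ``canonical affine sections natural along face inclusions'' also fail, since a face map deleting one of several vertices of equal height does not commute with any equal-distribution section.

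This is precisely the difficulty the paper's proof is organized around. After reducing to locally finite $X$ (\Cref{propColimPiHol}) and to $\mathcal J=P$ linear, it does \emph{not} retract $\textnormal{Hol}_P$ onto $\textnormal{Hol}_{N(P)}$ pointwise. Instead it first homotopes the inclusion by precomposing with the shrinking map $\lambda$ on the \emph{domain} (\Cref{conFactorization}), so that the maps in question land in the subspace $\mathcal X^{red}$, where vanishing of a height-block forces vanishing of all higher blocks; only there is the rescaling $\rho$ of \Cref{conReparam} well-defined and continuous, and it yields a homotopy inverse for the first factor of the factorization. The remaining factor, $\textnormal{Hol}_{P}(P,\mathcal X^{red})\hookrightarrow\textnormal{Hol}_{P}(P,|X|_P)$ (\Cref{propSecondMapEq}), is handled by an inductive neighbourhood argument over the spaces $\mathcal X^{k}$, where the needed compression parameter $t_\varepsilon(\sigma)$ is chosen \emph{per map} $\sigma$ via a partition of unity on the mapping space (using metrizability, hence the finiteness reductions) --- exactly because no uniform or pointwise choice exists. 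So your overall goal (a homotopy equivalence, hence weak equivalence) is right, but the proposed deformation retraction collapses at the step you flagged as ``should follow immediately,'' and repairing it requires the domain-shrinking and map-dependent compression machinery rather than a refinement of the cellwise formula.
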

We are first going to reduce to the case where $X$ is a finite filtered simplicial set. This allows us to assume for the homotopy links to be metrizable and in particular paracompact. Recall the following result on CW-complexes.
\begin{proposition}\cite{hatcherAlgTop}[Proposition A.1]\label{propCompactSub}
	A compact subspace of a CW-complex is contained in a finite subcomplex.
\end{proposition}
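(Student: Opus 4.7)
My plan is to follow the standard argument: reduce the claim that $C$ is contained in a finite subcomplex to the combinatorial statement that $C$ meets only finitely many open cells. Once the latter is established, each open cell is contained in a closed cell, which in turn lies in a finite subcomplex (since the attaching map of an $n$-cell has image in a compact, hence finite, subcomplex of the $(n-1)$-skeleton by induction on dimension). Taking the union of finitely many finite subcomplexes gives a finite subcomplex containing $C$.

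The core of the argument is therefore to show that $C$ meets only finitely many open cells. I would do this by choosing, for every open cell $e$ that $C$ meets, a single point $x_e \in C \cap e$, and assembling these into a set $S \subset C$. The strategy is to prove that every subset $T \subseteq S$ is closed in $X$; this shows $S$ is closed and discrete (as a subspace of $X$, hence of $C$), and since $S$ is also a closed subset of the compact space $C$, it must be finite.

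The verification that any $T \subseteq S$ is closed uses the weak topology defining CW-complexes, proceeding by induction over the skeleta $X^n$. For the base case, $T \cap X^{0}$ is automatically closed in $X^0$, since $X^0$ is discrete. For the inductive step, assume $T \cap X^{n-1}$ is closed in $X^{n-1}$. For each characteristic map $\Phi_\alpha : D^n \to X^n$ of an $n$-cell $e_\alpha$, the preimage $\Phi_\alpha^{-1}(T)$ consists of $\Phi_\alpha^{-1}(T \cap X^{n-1}) \subset S^{n-1}$, which is closed by the inductive hypothesis and continuity of $\Phi_\alpha|_{S^{n-1}}$, together with at most one extra point in the interior of $D^n$ (namely the preimage of $x_{e_\alpha}$, if the cell $e_\alpha$ was selected and $x_{e_\alpha} \in T$). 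Thus $\Phi_\alpha^{-1}(T)$ is closed in $D^n$, so $T \cap X^n$ is closed in $X^n$ by the definition of the weak topology on $X^n$ via the characteristic maps. Passing to the colimit over $n$, $T$ is closed in $X$.

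The only subtle point, and the place where one must be careful, is the inductive step showing that the closed cells themselves lie in finite subcomplexes, since this requires applying the just-proved statement to the compact set $\Phi_\alpha(S^{n-1})$ in $X^{n-1}$. This is handled by a separate induction on the dimension of cells: in a zero-dimensional skeleton every compact subset is finite by discreteness, and assuming the result for cells of dimension less than $n$, an $n$-cell's boundary image is compact in the $(n-1)$-skeleton, hence by induction lies in a finite subcomplex, so adjoining the single $n$-cell yields a finite subcomplex containing the whole closed cell. Combined with the finiteness of the number of cells $C$ meets, this yields the claim.
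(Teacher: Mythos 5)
Your argument is correct and is essentially the standard proof of Hatcher's Proposition A.1, which the paper simply cites without reproving: first showing a compact set meets only finitely many open cells via the closed-and-discrete point-selection argument over the skeleta, then an induction on cell dimension to place each closed cell (and hence the compact set) in a finite subcomplex. No gaps; this matches the cited source's approach.
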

It is in an immediate consequence of this result that homotopy groups of a CW-complex $K$ can be computed as the colimit over the homotopy groups of finite subcomplexes of $K$. We obtain an analogous result in the filtered case:
\begin{proposition}\label{propColimPiHol}
	$X \in \textnormal{s\textbf{Set}}_P$. Let $X^i$ be the filtered diagram given by the finite filtered simplicial subsets of $X$. Then, for each $\mathcal J \in \textnormal{sd}(P)$, $n \geq 1$ and $x$ in some sufficiently large $|X^i|$ the morphisms:
	\begin{align*}
	\varinjlim(\pi_{0}(\textnormal{Hol}_{N(P)}(\mathcal{J} , |X^i|_{N(P)} ))) &\longrightarrow \pi_{0}(\textnormal{Hol}_{N(P)}(\mathcal{J} , |X|_{N(P)} )),\\
	\varinjlim(\pi_{n}(\textnormal{Hol}_{N(P)}(\mathcal{J} , |X^i|_{N(P)}),x)) &\longrightarrow \pi_{n}(\textnormal{Hol}_{N(P)}(\mathcal{J} , |X|_{N(P)} ),x)\\ 
	\end{align*} 
	and 
	\begin{align*}
	\varinjlim(\pi_{0}(\textnormal{Hol}_{P}(\mathcal{J} , |X^i|_{P} ))) &\longrightarrow \pi_{0}(\textnormal{Hol}_{P}(\mathcal{J} , |X|_{P} )),\\
	\varinjlim(\pi_{n}(\textnormal{Hol}_{P}(\mathcal{J} , |X^i|_{P}),x)) &\longrightarrow \pi_{n}(\textnormal{Hol}_{P}(\mathcal{J} , |X|_{P} ),x),\\ 
	\end{align*} 
	are isomorphisms (where for $n \geq 1$ the colimit is of course only taken over the final subdiagram given by $X^i$ such that $x \in |X^i|$).
\end{proposition}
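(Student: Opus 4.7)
The plan is to run the standard compactness argument, translated into the stratified setting via the exponential adjunction defining the homotopy links. By the enriched copower-hom adjunction of \Cref{conEnTop}, an element of $\pi_n(\textnormal{Hol}_{N(P)}(\mathcal J,|X|_{N(P)}),x)$ for $n\geq 1$ corresponds to a homotopy class (relative to $\partial I^n \times |\Delta^{\mathcal J}|_{N(P)}$) of strongly stratum preserving maps
\[
I^n \times |\Delta^{\mathcal J}|_{N(P)} \longrightarrow |X|_{N(P)}
\]
whose restriction to $\partial I^n \times |\Delta^{\mathcal J}|_{N(P)}$ is constantly $x$; similarly for $\pi_0$ (drop $I^n$) and for the $P$-filtered version. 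Crucially, the source space $I^n \times |\Delta^{\mathcal J}|$ is compact, and the underlying topological space of $|X|_{N(P)}$ (resp.\ $|X|_P$) is the CW-complex $|X|$, filtered only in the choice of filtration map to $|N(P)|$ (resp.\ $P$). Thus the question reduces to a statement about compact subsets of $|X|$.

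First I would prove surjectivity. Given a representative $\sigma : I^n \times |\Delta^{\mathcal J}| \to |X|$ of an element of $\pi_n$, the image $\sigma(I^n\times |\Delta^{\mathcal J}|)$ is compact, so by \Cref{propCompactSub} it sits inside a finite subcomplex $K\subset |X|$. The non-degenerate simplices of $X$ whose open realizations meet $K$ form a finite set, and together with their faces they generate a finite filtered simplicial subset $X^i \subset X$ with $K\subset |X^i|$. The filtration and strong filtration of $|X^i|$ are restrictions of those of $|X|$, so $\sigma$ factors as a strongly stratum preserving map through $|X^i|_{N(P)}$. The analogous argument works for $|X^i|_P$ since the filtration is determined simplex-wise. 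Applied to a chosen basepoint $x : |\Delta^{\mathcal J}| \to |X|$, this also exhibits a finite $X^{i_0}$ with $x$ factoring through $|X^{i_0}|$, which is the index at which the cofinal subdiagram in the statement starts; the construction above then produces a preimage in $\pi_n(\textnormal{Hol}(|X^{j}|))$ for some $X^{j}\supset X^{i_0}\cup X^i$.

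For injectivity I would take two representatives $\sigma_0, \sigma_1 : I^n \times |\Delta^{\mathcal J}| \to |X^{j}|$ that become homotopic after inclusion into $|X|$; a witnessing homotopy transposes under the adjunction to a map
\[
I \times I^n \times |\Delta^{\mathcal J}| \longrightarrow |X|
\]
which is strongly stratum preserving, constantly $x$ on the relevant face, and has compact domain. Applying the same finite-subcomplex argument to this homotopy produces $X^{j'}\supset X^j$ in which the homotopy already takes place, thereby identifying the two classes at finite level. The same argument runs verbatim in the $P$-filtered case.

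The only place requiring care is the passage from a finite subcomplex of $|X|$ to a finite filtered simplicial subset of $X$; I expect this to be the main (mild) obstacle, but it is resolved by observing that the subcomplex structure of $|X|$ as a CW-complex is indexed by the non-degenerate simplices of $X$ and that the filtration of $|X|_P$ (resp.\ $|X|_{N(P)}$) is defined cell-by-cell via the filtration $p_X$, so any subcomplex automatically inherits a compatible filtered simplicial subset structure without needing to verify stratum preservation separately.
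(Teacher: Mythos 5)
Your argument is correct and is essentially the paper's own proof: the paper likewise uses the copower adjunction to identify elements of $\pi_n(\textnormal{Hol}_P(\mathcal J,-),x)$ with stratum preserving maps out of the compact space $S^n\otimes|\Delta^{\mathcal J}|_P$ (your cube-rel-boundary formulation is an equivalent packaging) and then invokes \Cref{propCompactSub} for both surjectivity and, applied to homotopies, injectivity. Your extra remark on promoting a finite subcomplex of $|X|$ to a finite filtered simplicial subset of $X$ is a detail the paper leaves implicit, and it is handled correctly.
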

\begin{proof}We are going to prove the result for $n \geq 1$ and $P$.
	The proof of the $N(P)$ case is completely analogous. The same can be said about the $0$ vs. the $n \geq 1$ case. Note that, for $Y \in \textnormal{\textbf{Top}}_{P}$, by the copower structure on $\textnormal{\textbf{Top}}_{P}$ , $\pi_{n}(\textnormal{Hol}_{P}(\mathcal{J} , Y),x)$ can alternatively described as the subquotient of $\textnormal{Hom}_{\textnormal{\textbf{Top}}_{P}}(S^n \otimes |\Delta^{\mathcal{J}}|_P, Y)$ given by such maps that restrict to $x$ on $\{\star\} \times |\Delta^{\mathcal{J}}|_P$ modulo stratified homotopies relative to this subspace, where $\star$ denotes the basepoint of $S^n$. The pointed filtered space $S^n \otimes |\Delta ^{\mathcal{J}}|$ is clearly compact and the realization of a simplicial set is a CW-complex. Hence, by \Cref{propCompactSub}, the map coming from the colimit is onto. Applying the same argument to homotopies gives injectivity.
\end{proof}
We are prove the following version of \Cref{thrmHolWeakEq} which by \Cref{propColimPiHol} implies the former.
\begin{theorem}\label{thrmHolWeakEqB}
	Let $X \in \textnormal{s\textbf{Set}}_P$ be locally finite (i.e. such that $|X|$ is locally compact). Then, for each flag $\mathcal{J} \in \textnormal{sd}(P)$, the inclusion of generalized homotopy links: $$ \alpha_{|X|_{N(P)}\mathcal J}: \textnormal{Hol}_{N(P)}(\mathcal{J} , |X|_{N(P)} ) \hookrightarrow \textnormal{Hol}_{P}(\mathcal{J} , |X|_P)$$ is a homotopy equivalence.
\end{theorem}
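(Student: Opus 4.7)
The plan is to establish the theorem by constructing an explicit strong deformation retraction of $\textnormal{Hol}_{P}(\mathcal J, |X|_P)$ onto the subspace $\alpha(\textnormal{Hol}_{N(P)}(\mathcal J, |X|_{N(P)}))$. First I would exploit the local finiteness of $X$: since $|\Delta^{\mathcal J}|$ is compact, an argument in the spirit of \Cref{propCompactSub}, applied to the source rather than the target, would let one reduce to the situation where $X$ itself is finite, so that $|X|$ is a compact metrizable filtered space and both homotopy links are metrizable. The image of $\alpha$ is then precisely the set of $P$-stratum preserving maps $f:|\Delta^\mathcal J|\to |X|$ whose composition with the canonical filtration $p_{|X|_{N(P)}}:|X|\to |N(P)|$ coincides on the nose with the fixed map $\iota_\mathcal J:|\Delta^\mathcal J|\to |N(P)|$. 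The task is therefore to continuously modify an arbitrary $f\in \textnormal{Hol}_P$ until this equality holds, while remaining inside $\textnormal{Hol}_P$ along the way.

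The central observation driving the construction is that $p_{|X|_{N(P)}}$ is a piecewise-linear map which, restricted to each simplex $|\sigma|$ of $|X|$ with non-degenerate flag $\mathcal J_\sigma=(q_0<\dots<q_m)$, is the convex-linear map sending a point with barycentric coordinates $\sum_j \lambda_j w_j$ (the $w_j$ being the vertices grouped by $P$-value) to $\sum_j \lambda_j |q_j|\in |\Delta^{\mathcal J_\sigma}|\subset |N(P)|$. I would use this to define, for $x\in|X|$ and a target $y\in|N(P)|$ sufficiently close to $p_{|X|_{N(P)}}(x)$ and lying in the simplex of $|N(P)|$ spanned by $\mathcal J_{\sigma_x}^{nd}$, a PL fiberwise straightening $\Psi(x,y)\in |X|$ which is the unique point of the enveloping simplex $\sigma_x$ whose $|N(P)|$-image is $y$ (obtained by rescaling the $\lambda_j$ appropriately). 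The homotopy is then $H_s(f)(t):=\Psi\big(f(t),\, (1-s)\,p_{|X|_{N(P)}}(f(t)) + s\,\iota_{\mathcal J}(t)\big)$, where the linear interpolation happens inside the simplex of $|N(P)|$ that contains both endpoints; $P$-stratum preservation of $f$ forces the maximal $P$-value among the $w_j(f(t))$ to equal $\max\{p_{\mathcal J}(t')\mid t'\leq t\}$, so the barycentric weight on the top vertices of $\sigma_{f(t)}$ never vanishes during the interpolation and the straightening stays well defined in $\sigma_{f(t)}$.

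I expect the main obstacle to be continuity of $H_s$ as $f(t)$ crosses between enveloping simplices of $|X|$, since the coordinate formula I sketched above refers to $\sigma_{f(t)}$, which jumps across faces. To address this, I plan to reformulate $\Psi$ not as a fiberwise barycentric rescaling, but as the PL map induced globally on the fiber product $|X|_{N(P)}\times_{|N(P)|} |\mathrm{sd}(N(P))\otimes\Delta^1|$, exploiting that $p_{|X|_{N(P)}}$ is simplicial with respect to suitable subdivisions and that the fibers over each closed simplex of $|N(P)|$ deformation retract linearly onto the fibers over a sub-simplex. The fact that the interpolating path of target $|N(P)|$-values, as a function of $t\in |\Delta^\mathcal J|$ and $s\in[0,1]$, stays in the closed simplex of $|N(P)|$ determined by the face of $\mathcal J_{\sigma_{f(t)}}$ containing both $p_{|X|_{N(P)}}(f(t))$ and $\iota_\mathcal J(t)$ then makes the global PL formula agree with the local one, giving the required continuity. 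It remains to check that $H_0=\mathrm{id}$, that $H_1(f)\in \alpha(\textnormal{Hol}_{N(P)})$ for all $f$, that $H_s$ is the identity on elements already in $\alpha(\textnormal{Hol}_{N(P)})$ (so the retraction is strong), and that every $H_s(f)$ stays $P$-stratum preserving — all of which follow from the construction once the continuity is in hand.
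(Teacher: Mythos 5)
Your plan hinges on the single formula $H_s(f)(t)=\Psi\bigl(f(t),\,(1-s)\,p_{|X|_{N(P)}}(f(t))+s\,\iota_{\mathcal J}(t)\bigr)$, and this is where it breaks. For the fiberwise rescaling $\Psi(x,y)$ to make sense you need every $P$-value in the support of $y$ to carry positive barycentric weight in the enveloping simplex of $x$; but $P$-stratum preservation of $f$ only pins down the \emph{maximal} element of the support of $p_{|X|_{N(P)}}(f(t))$ (it must equal the stratum of $t$), and says nothing about lower values. Concretely, take $P=\{0<1\}$, $\mathcal J=(0\leq 1)$ and $t$ with $t_0,t_1>0$, so $t$ lies in the $1$-stratum; $f(t)$ may then sit in the interior of a simplex of $|X|$ all of whose vertices have value $1$, so $||f(t)_{\{0\}}||=0$ while the interpolated target has positive $0$-coordinate for every $s>0$, and there is nothing to rescale. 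Your sentence ``$P$-stratum preservation forces \dots so the straightening stays well defined'' is exactly the false step; the obstruction is the lower vertices, not the top ones. This is precisely why the paper does not attempt a one-shot retraction: it first deforms into the reduced subspace $\mathcal X^{red}$ (points whose support is downward closed), using the barycentric shrinking map $\lambda$ of \Cref{conFactorization} to factor $\alpha$ up to homotopy through $\textnormal{Hol}_P(P,\mathcal X^{red})$ (\Cref{corFac}); only on $\mathcal X^{red}\times_P|N(P)|$ is the rescaling $\rho$ of \Cref{conReparam} (your $\Psi$) defined, and it yields a homotopy inverse to the first factor. The second factor, $\textnormal{Hol}_P(P,\mathcal X^{red})\hookrightarrow\textnormal{Hol}_P(P,|X|_P)$, is then handled separately (\Cref{propSecondMapEq}) by a chain of deformation retractions through the spaces $\mathcal X^k$ of \Cref{conRedFiltr}; there the amount by which one must shrink near the lower strata depends on the individual map $\sigma$, and the paper needs a partition of unity on the metrizable mapping space to produce the shrinking parameter. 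No single explicit formula of the kind you propose can absorb this non-uniformity, and your sketch contains no substitute for it.

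Two secondary points. First, your opening reduction from locally finite to finite $X$ is not justified: compactness of $|\Delta^{\mathcal J}|$ shows each individual map lands in a finite subcomplex, which suffices for statements about homotopy \emph{groups} of the homotopy links (this is how the paper passes from \Cref{thrmHolWeakEqB} to \Cref{thrmHolWeakEq} via \Cref{propColimPiHol}), but it does not reduce a genuine homotopy equivalence of the full mapping spaces to the finite case; the paper instead works with locally finite $X$ directly, where $|X|$ is already metrizable and the sup metric makes the homotopy links metrizable and paracompact. Second, you aim for a strong deformation retraction onto the image of $\alpha$, which is stronger than what the theorem asserts and than what the paper's factorization produces; even if such a retraction exists, your route to it collapses at the well-definedness issue above, and the continuity-across-simplices problem you flag (handled in the paper by simplexwise definitions checked against face and degeneracy maps) is the lesser of the two difficulties.
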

So, from here on out we assume $X$ to be locally finite. Further, we drop the indices from $\alpha$.
Next, we should make a quick remark on the topology of the mapping spaces.
\begin{remark}
	Recall that we take $\textnormal{\textbf{Top}}$ to be the category of $\Delta$-generated spaces; i.e. such spaces that have the final topology with respect to simplices (for details see \cite{duggerDelta}). Recall further that the topology on $C^0(T,T')$, for $T, T' \in \textnormal{\textbf{Top}}$, is the "$\Delta$-fication", denoted $k_{\Delta}$, of the compact open topology. That is, one takes the final topology with respect to all maps from simplices into the mapping space with the compact open topology. However, for questions of homotopy equivalence, this distinguishment is really not that important. This is so, as $k_{\Delta}$ preserves products where one of the factors is locally compact (see \cite{duggerDelta}) and hence homotopy equivalences. So, we instead take the compact open topology on $C^0$, for the remainder of this section. This has the advantage of making $\textnormal{Hol}_{P}(\mathcal{J}, Y)$ a metrizable space, when $Y$ is metrizable. A metric is then given by $d(f,g) = \sup_{x \in |\Delta^{\mathcal J}|_P}(f(x),g(x))$. The same construction works for $N(P)$. In particular, this is the case when $Y = |X|_P$ for $X$ a finite filtered simplicial set (as compact CW-complexes are metrizable \cite{fritsch1993cw}[Thm. A]). We make use of this fact later on. Note that it would of course suffice to assume that the filtered simplicial sets are locally finite so that the resulting spaces are locally compact.
\end{remark} 
As a first step in the proof of \Cref{thrmHolWeakEqB} we now reduce to the case, where $\mathcal J = P$. In particular, we can just assume $P$ to be a finite linearly ordered set $P = [q]$, $q \in \mathbb N$. 
\begin{definitionconstruction}\label{conStringRed}
	Recall the restriction functors described in \Cref{defStrata}. Then, by construction of the generalized homotopy links, one immediately obtains natural homeomorphisms (given by the postcomposition): \begin{align*}
	\textnormal{Hol}_{N(\mathcal J)}(\mathcal{J},Y_{\mathcal J}) &\cong \textnormal{Hol}_{N(P)}(\mathcal{J},Y), \\
	\textnormal{Hol}_{\mathcal J}(\mathcal{J},Y_{\mathcal J}) &\cong \textnormal{Hol}_{P}(\mathcal{J},Y), 
	\end{align*} 
	for $Y$ either in $\textnormal{\textbf{Top}}_{N(P)}$ or in $\textnormal{\textbf{Top}}_{P}$. By construction, we have a natural inclusion $$F(Y_{\mathcal{J}}) \hookrightarrow F(Y)_{\mathcal{J}}$$ (induced by the universal property of the pullback). In general however, this inclusion will not be a filtered homeomorphism. This is illustrated in \Cref{fig:ExSubspaces} for example.
	\begin{figure}[H]
		\includegraphics[width=\linewidth]{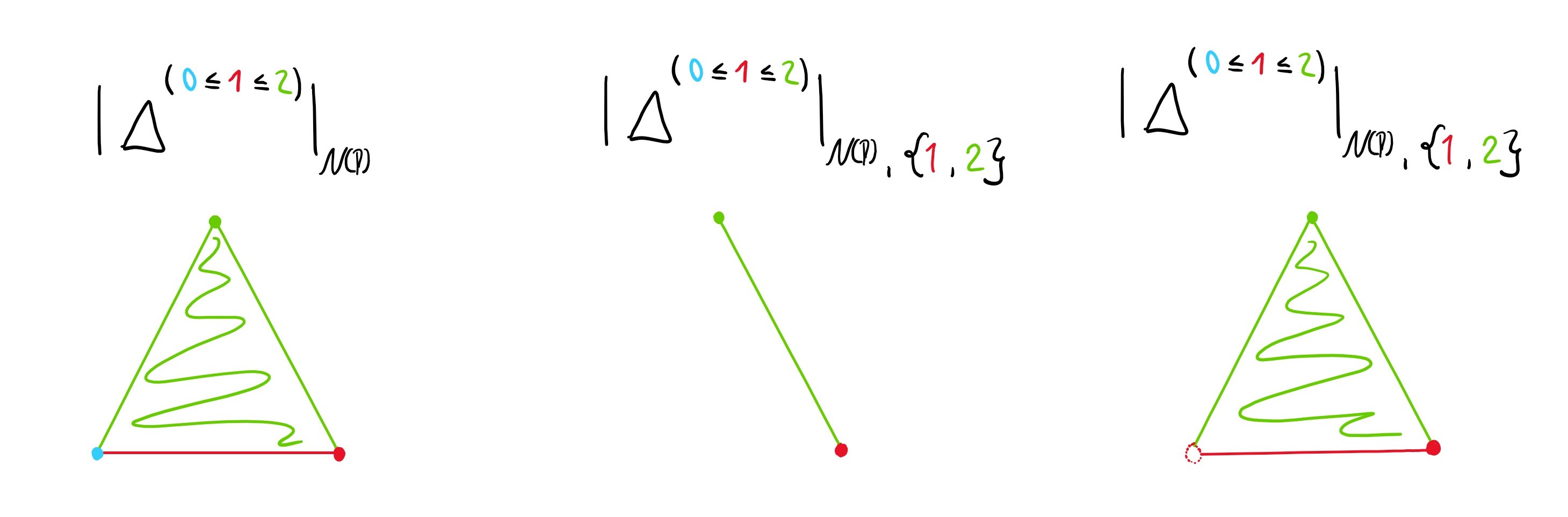}
		\caption{Comparison of $F(Y_{\mathcal J})$ and $F(Y)_{\mathcal J}$ in the case, where $P = [2]$, $\mathcal{J} = (0 \leq 1)$ and $Y = |\Delta^{\mathcal J}|_{N(P)}$. The right hand triangle is missing only the point in the $0$-stratum.}
		\label{fig:ExSubspaces}
	\end{figure}
\end{definitionconstruction}
In the case where $Y$ is the realization of a locally filtered simplicial set, one can give a more explicit construction of $Y_{\mathcal{J}}$. Before we do so, we introduce some notation that comes in handy during the technical parts of this section. 
\begin{definitionconstruction}
	Given a d-flag $\mathcal I $ in $P$, and a point $\xi$ in $|\Delta^{\mathcal I}|_P$ or in $|\Delta^{P}|$ we denote by $\xi_{i}$ its $p_i$-th coordinate in the standard realization in $\mathbb R^{\mathcal I}$, where $p_i$ is the $i$-th entry of $\mathcal I$. Even more, for a subset $\mathcal D \subset P$ we denote by $\xi_{\mathcal D}$ the vector given by such $\xi_i$ where $p_i \in \mathcal D$. In case $p_i \in \mathcal D$ holds for no $p_i$ we take this to be $0$. By a slight abuse of notation, we will sometimes think of these vectors as embedded into $\mathbb R^{\mathcal I}$ so that we can write expressions like $\xi_{\mathcal D} + \xi_{\mathcal D'}$. For any such vector, we denote by $|| - ||$ the sum of its values. Further, note that the vector $(||\xi_{\{p\}}||)_{p \in P}$ is precisely the value of $\xi$ under $|\Delta^{\mathcal I}|_P \to |N(P)|$, where we think of the right hand side as realized in $\mathbb R^{P}$. The points in the $p$-th stratum of $|\Delta^{\mathcal J}|_P$, for $p \in P$, are then precisely those points $\xi$, with $\xi_{\{p\}}\neq 0$ and $\xi_{\{p'\}}= 0$ for $p' \in \mathcal I$ with $p' > p$.
\end{definitionconstruction}
\begin{remark}\label{remExplDesc}
	Let $Y \in \textnormal{s\textbf{Set}}_P$ and $\mathcal J \in \textnormal{sd}(P)$. 
	One can think of $Y_{\mathcal J}$ as the full simplicial subset spanned by such vertices that map into $\mathcal J$ under $Y \to N(P)$. Then $(|Y|_{N(P)})_{\mathcal J} \cong (|Y_\mathcal{J}|)_{N(P)}$. This follows immediately from the fact that the realization functor into $\textnormal{\textbf{Top}}$ from simplicial sets, sustains finite limits (see \cite{nlab:geometric_realization}).\\
	However, $(|Y|_{P})_{\mathcal J}$ is a strictly larger space than this (see \Cref{fig:ExSubspaces}). It is given by the union of the strata with index in $\mathcal J$. We will be making use of another explicit description quite frequently. Let $Y$ be locally finite. Denote by $\Delta_i$ the diagram given by the non-degenerate simplices of $Y$. By \Cref{AppPropPullback}, $(|Y|_P)_{\mathcal J}$ is given by $\varinjlim \big( (|\Delta_i|_P)_{\mathcal J} \big)$. Hence, for most intends and purposes, it suffices to know what $(|\Delta|_P)_{\mathcal J}$ looks like for a filtered simplex. Let $\mathcal I = (p_0 \leq ... \leq p_k)$ be a d-flag in $P$. We identify the vertices of $\Delta^{\mathcal I}$ with the corresponding unit vectors in $\mathbb R^{\mathcal I}$ and $|\Delta^{\mathcal I}|$ with their convex hull. Then 
	\begin{align*}
	(|\Delta^\mathcal{I}|_P)_{\mathcal J} = \Big\lbrace \xi \in |\Delta^{\mathcal I}|_P \ \big | \xi \textnormal{ fulfills Condition \ref{equRemExplDesc}} \Big \rbrace.
	\end{align*}
	with
	\begin{equation}\label{equRemExplDesc}
	\max(\{p_i \mid ||\xi_{\{p_i\}}|| > 0 \}) \in \mathcal J
	\end{equation}
	Under this condition, we clearly have $||\xi_{\mathcal J}|| > 0$.
\end{remark}
Using this explicit description, we obtain:
\begin{proposition}
	Let $Y = |\hat Y|_{N(P)}$ where $\hat Y$ is a locally finite filtered simplicial set. Then the inclusion $F(Y_\mathcal{J}) \hookrightarrow F(Y)_\mathcal{J}$, from \Cref{conStringRed}, is a stratum preserving homotopy equivalence. 
\end{proposition}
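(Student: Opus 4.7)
The plan is to construct an explicit strong deformation retraction of $F(Y)_{\mathcal J}$ onto $F(Y_{\mathcal J})$, using the explicit simplex-by-simplex description from \Cref{remExplDesc}. Recall that on each realized (non-degenerate) simplex $|\Delta^{\mathcal I}|_P \hookrightarrow F(Y)$, the intersection with $F(Y)_{\mathcal J}$ is cut out by condition \eqref{equRemExplDesc}, which in particular guarantees $\|\xi_{\mathcal J}\| > 0$ for every $\xi$ therein. The crucial observation is that since the max-stratum of $\xi$ already lies in $\mathcal J$, one may freely shrink away the mass on non-$\mathcal J$ vertices without leaving the prescribed stratum.

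First, I define the retraction $R \colon F(Y)_{\mathcal J} \to F(Y_{\mathcal J})$ simplex-wise by
$$R(\xi) \;=\; \frac{\xi_{\mathcal J}}{\|\xi_{\mathcal J}\|}.$$
On each simplex this is a well-defined continuous map into the subsimplex spanned by the $\mathcal J$-vertices, and it is manifestly the identity on $F(Y_{\mathcal J})$. A direct verification using the formulas for face and degeneracy maps (both commute with restricting to $\mathcal J$-coordinates and with renormalization) shows that $R$ is compatible with the simplicial identifications, so it assembles to a well-defined map on $F(Y)_{\mathcal J}$; local finiteness of $\hat Y$ guarantees that simplex-wise continuity passes to continuity on the realization. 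To see that $R$ is stratum preserving, note that the stratum of $\xi$ is $p = \max\{p_i : \|\xi_{\{p_i\}}\| > 0\}$, which by \eqref{equRemExplDesc} lies in $\mathcal J$; since $R$ only rescales the $\mathcal J$-coordinates by the positive scalar $\|\xi_{\mathcal J}\|^{-1}$, the maximum positive-coordinate index of $R(\xi)$ is still $p$.

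Second, I connect $R$ to the identity via the straight-line homotopy
$$H \colon I \otimes F(Y)_{\mathcal J} \longrightarrow F(Y)_{\mathcal J}, \qquad H_t(\xi) \;=\; (1-t)\xi + t R(\xi).$$
On each simplex the convex combination remains in that simplex (coordinates are non-negative and sum to $1$ at both endpoints), so $H$ is a well-defined continuous homotopy, again glued via the simplex-wise definition. Throughout the homotopy the coordinates at indices $p_i > p$ stay zero, and the coordinate at $p$ stays strictly positive; therefore $H_t$ is stratum preserving for every $t$. Finally, since $R$ is the identity on $F(Y_{\mathcal J})$, the homotopy $H$ is stationary on $F(Y_{\mathcal J})$, making the inclusion $F(Y_{\mathcal J}) \hookrightarrow F(Y)_{\mathcal J}$ a stratum preserving strong deformation retraction, hence in particular a stratum preserving homotopy equivalence.

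The main obstacle is the gluing step: verifying that the naive simplex-wise retraction descends to a continuous map on the whole realization, especially in the presence of degenerate simplices and when $\hat Y$ is infinite. This is where the hypothesis of local finiteness intervenes, supplying enough local compactness to reduce continuity on $F(Y)_{\mathcal J}$ (which carries a subspace topology of a CW-realization) to the simplex-wise statement. Everything else — well-definedness of $R$, the stratum preservation of both $R$ and $H$, and the fact that $H$ fixes $F(Y_{\mathcal J})$ — reduces to the direct coordinate computations above.
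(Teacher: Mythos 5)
Your proof is correct and follows essentially the same route as the paper: the same renormalization retraction $\xi \mapsto \xi_{\mathcal J}/\|\xi_{\mathcal J}\|$ defined simplex-wise, the same straight-line homotopy to the identity, the same coordinate argument for stratum preservation, and the same gluing via compatibility with face and degeneracy maps under the local finiteness hypothesis. No gaps to report.
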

\begin{proof}
	By \Cref{AppPropPullback}, we can construct a homotopy inverse on the simplex level, and check that the map as well as the homotopies are compatible with degeneracy and face maps, hence glue to a global one. For a filtered simplex $\Delta^{\mathcal I}$, with notation as in \Cref{remExplDesc}, the inclusion is 
	\begin{align*}
	&\Big\lbrace \xi \in |\Delta^{\mathcal I}|_P \ \Big | \ ||\xi_{\mathcal J}|| = 1 \Big\rbrace \\
	\mbox{\large$\subset$}\quad &\Big\lbrace \xi \in |\Delta^{\mathcal I}|_P \ \Big |\textnormal{ fulfills Condition \ref{equRemExplDesc}} \Big \rbrace.
	\end{align*} 
Under Condition \ref{equRemExplDesc}, we clearly have $||\xi_{\mathcal J}|| > 0$.
	Define a retract, $r$, of this inclusion, $i$, via
	$$ \xi \mapsto \frac{1}{||\xi_\mathcal{J}||} \xi_{\mathcal J}.$$
	By Condition \ref{equRemExplDesc}, this does not change the maximal indices for which $\xi$ does not disappear and hence is stratum preserving.
	Clearly, $$ r \circ i = 1_{(|\Delta^{I}|_N(P)|)_{\mathcal J}}.$$ Conversely, $ i \circ r $ is stratified homotopic to the identity on $(|\Delta^\mathcal{I}|_P)_\mathcal{J}$ by the straight line homotopy, which is easily checked to be well-defined. It is not hard to see that the straight line homotopy of $i \circ r$ and the latter identity is compatible with degeneracy and face maps, hence induces a global stratum preserving deformation retraction of $F(Y_\mathcal{J}) \hookrightarrow F(Y)_\mathcal{J}$.
\end{proof}
We have now reduced the proof of \Cref{thrmHolWeakEqB} to the case where $\mathcal J = P = [q]$, for some $q \in \mathbb N$. We will still not write $|N(P)|$ at every possible location, just because it makes the notation look horribly convoluted. By using $|N(P)|$ instead of $|\Delta^{P}| = |\Delta^{\mathcal J}|$, we usually indicate what role in the proof the space is taking at this moment. One should however keep in mind that on as $P$-filtered spaces we have $|\Delta^{\mathcal J}| = |\Delta^\mathcal{P}|=|N(P)|$. We now illustrate the proof of \Cref{thrmHolWeakEqB} on a $\pi_0$ level, i.e. pointwise and for $q=1$, before we give a rigorous proof in more generality. 
\begin{example}\label{exEasyVerOfProof}
	It can be illustrative to see how every point in $\gamma \in \textnormal{Hol}_{P}(\mathcal J, |X|_{P})$ lies in the path component of one in $\textnormal{Hol}_{N(P)}(\mathcal J, |X|_{N(P)})$, for $\mathcal J = [1]$ the linear set with $2$ elements. Then $\textnormal{Hol}_{P}(\mathcal J, |X|_{P})$ is the space of paths, starting in the $0$-stratum and immediately leaving it. The proof of this statement serves as a model for the more general case. 
	Essentially, the idea is the followingUnder this condition, we clearly have $||\xi_{\mathcal J}|| > 0$.. Denote by $\mathcal X^{red}$ the subspace of $|X|_{N(\mathcal J)}$ obtained by by pulling back along $[0,1) \hookrightarrow I \cong |N([1])|$. By \Cref{AppPropPullback} this space is glued together from subspaces of simplices of the form $$|\Delta^{\mathcal I}|_{N(P)}^{red} = \Big\lbrace \xi \in |\Delta^{\mathcal I}|_P \ \Big | \ || \xi_{\{0\}}|| > 0 \Big \rbrace.$$ We have illustrated this space in \Cref{fig:exEasyVerOfProof1}.
	\begin{figure}[H]
		\centering
		\includegraphics[width=120mm]{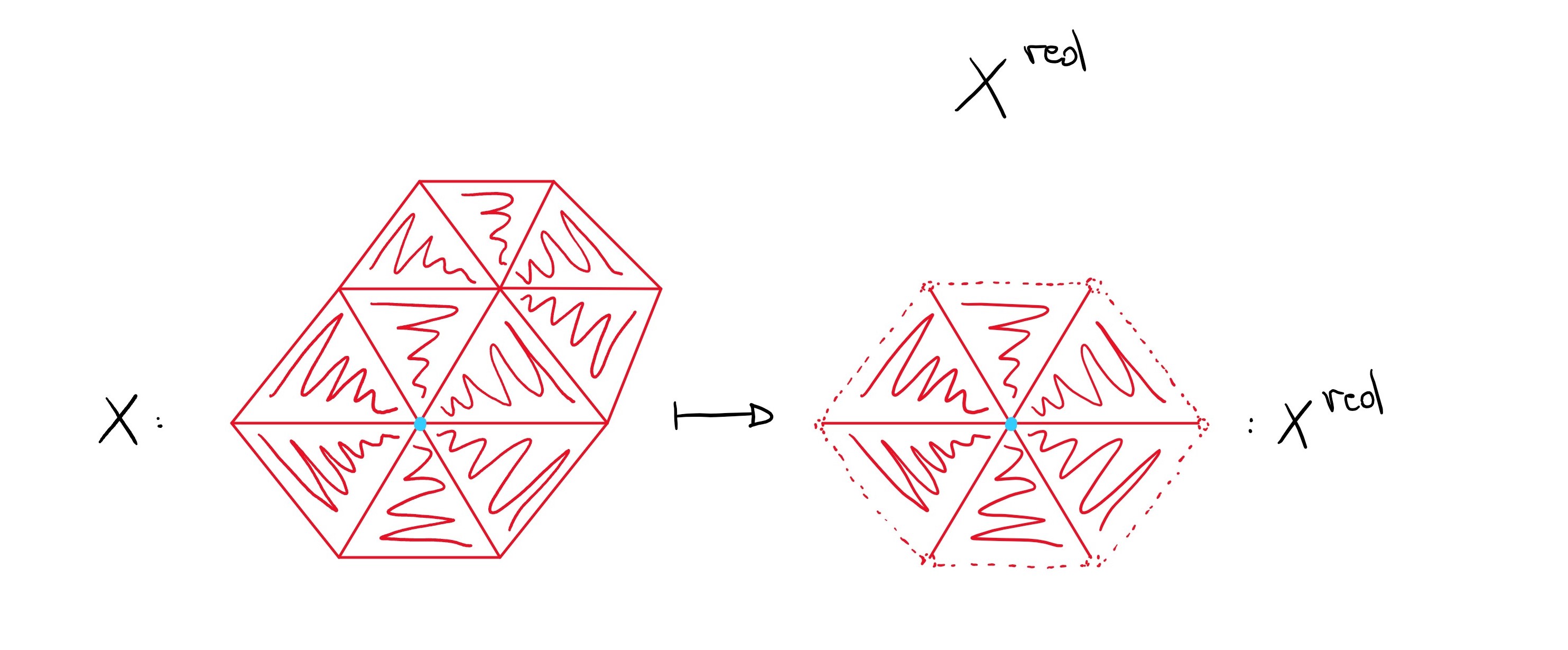}
		\caption{Illustration of an example of $\mathcal X^{red}$.}
		\label{fig:exEasyVerOfProof1}
	\end{figure}
	Now, $\mathcal X ^{red}$ admits a rescaling map: $$\rho: \mathcal X ^{red} \times_{P} |N(P)| \longrightarrow |X|_{N(P)}$$ defined on a simplex level by $$ (\xi, (t_0,t_1) )\mapsto \frac{t_0}{||\xi_{\{0\}}||}\xi_{\{0\}} + \frac{t_1}{||\xi_{\{1\}}||}\xi_{\{1\}}.$$
	It is not hard to see that this also makes sense for $||\xi_{\{j\}}||= 0$ as then also $t_j = 0$, and one can just set the corresponding summand to $0$. This construction is compatible with degeneracy and face maps, and hence extends to all of $\mathcal{X}^{red}$. Note that $\rho$ fits into a commutative diagram 
	$$
	\begin{tikzcd}
	{\mathcal X ^{red} \times_{P} |N(P)|} \arrow[rd, "{\pi_{|N(P)|}}", swap] \arrow[rr,"\rho"] & &{|X|_{N(P)}} \arrow[ld] \\
	&{|N(P)|}&
	\end{tikzcd}; 
	$$
	that is, it gives a strongly stratum preserving map (with the filtration on the left hand side given by the second component).
	Furthermore, $\rho$ is stratified homotopic to $$\mathcal{X} ^{red} \times_P |N(P)| \xrightarrow{\pi_{\mathcal{X}^{red}}} \mathcal{X}^{red} \hookrightarrow |X|_{P},$$ by a straight line homotopy $$R: \big (\mathcal{X} ^{red} \times_P |N(P)| \big) \times \Delta^1\to |X|_{P}$$ constructed simplex-wise (as a morphism in $\textnormal{\textbf{Top}}_{P}$).
	Now, $\mathcal{X}^{red}$ is an open neighbourhood of $(|X|_{P})_0$ in $|X|_{P}$. Thus, if we restrict $\gamma$ to a sufficiently small neighbourhood of $0$, it has image in $\mathcal{X}^{red}$. So, by continuously scaling down the domain of definition, $\gamma$ lies in the same path component as some (stratified) path $\gamma'$ with image in $\mathcal{X}^{red}$. Next, one uses $R$ to continuously rescale $\gamma'$ to a map over $|N(P)|$. This is done by defining \begin{align*}
	\hat \gamma: |\Delta^{\mathcal J}|_{P} &\longrightarrow \mathcal{X} ^{red} \times_{P} |N(P)|\\
	(t_0,t_1) &\longmapsto \big(\gamma'(t_0, t_1), (t_0 , t_1) \big)
	\end{align*} 
	and then applying $R$ to obtain a stratified homotopy from $\gamma'$ to a strongly stratum preserving map. The whole process is illustrated in the following picture.
	\begin{figure}[H]
		\centering
		\includegraphics[width=\textwidth]{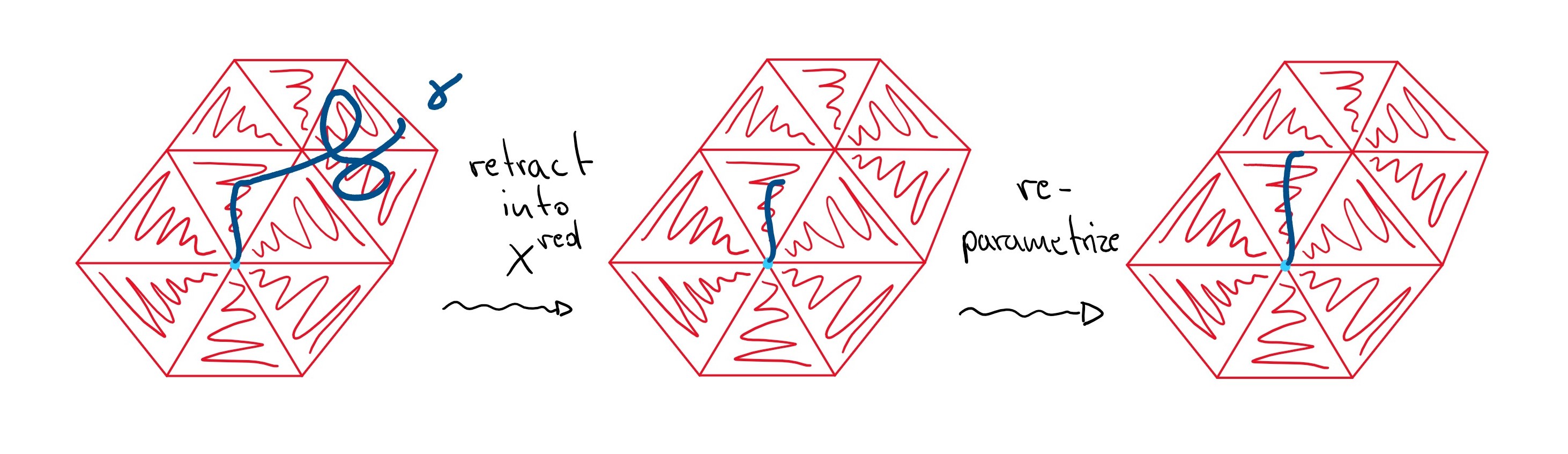}
		\caption{Illustration of the process described in \Cref{exEasyVerOfProof}.}
		\label{fig:exEasyVerOfProof2}
	\end{figure}
\end{example}
We start the proof of the general case by replicating the ``retracting-$\gamma$-part''

 of \Cref{exEasyVerOfProof}. To make notation a little bit more concise, we omit the index from the realization functors for simplices whenever it is clear from context what is meant.
\begin{definitionconstruction}\label{conFactorization}
	For a d-flag $\mathcal I$ in $P=[q]$ denote by $|\Delta^\mathcal{I}|^{red}$ the filtered subspace of $|\Delta^{\mathcal J}|$ given by
	$$|\Delta^{\mathcal I}|^{red} := \Big \lbrace \xi \in |\Delta^\mathcal I| \ \Big | \ \xi_{\{p\}} = 0 \implies \xi_{\{p'\}} = 0, \text{ for }p' \geq p \in \mathcal I \Big \rbrace.$$
	Further, denote by $\mathcal X^{red}$ the space filtered over $P$ obtained by pulling back $|X|_{N(P)}$ along $|\Delta^{P}|^{red} \hookrightarrow |\Delta^{P}| = |N(P)|.$ 
	$\mathcal X^{red}$ is essentially defined as the subspace of points $\xi \in |X|_{N(P)}$ fulfilling the implication $$\big ( p_{|X|_{N(P)}}(\xi) \big )_{\{p\}} = 0 \implies p_{|X|_P}(\xi) < p,$$ for all $p \in P$. \\
	Now, consider the following shrinking map $$\lambda:|\Delta ^{P}| \longrightarrow |\Delta^{P}|$$ induced by affinely extending 
	$$p \mapsto \textnormal{bar}(|\Delta^{[p]}|)\in |\Delta^{[p]}|\subset |\Delta^{P}|,$$ which sends a vertex to the barycenter of the simplex spanned by all vertices smaller than it in the linear order on $P$. It is clearly stratum preserving.
	We denote these barycenters by $\textnormal{bar}([p])$ for short. This map is stratified homotopic to the identity through a straight line homotopy. In particular, the inclusion $\textnormal{Hol}_{N(P)}(P, |X|_{N(P)}) \hookrightarrow \textnormal{Hol}_{P}(P , |X|_{P} )$ is homotopic to the map given by
	$$ \sigma \mapsto \sigma \circ \lambda.$$ 
	Furthermore, $\lambda$ has image in $|\Delta^{P}|^{red}$. 
	To see this, just note that any point in the image $\xi$ is of the shape $$ \xi = \sum_{p \in P} t_p \textnormal{bar}([p]) = \sum_{p \in P} \frac{t_p}{p+1}(|0| + ... +|p|),$$ for $t_p\geq 0, \sum_{p \in P}t_p=1 $ (using again the $|-|$ notation to denote the respective unit vectors). Hence, if $$\xi_{\{p\}} = \sum_{p' \geq p} \frac{t_{p'}}{p'+1} = 0,$$ so are all $t_p'$ with $p' \geq p$ and hence also $\xi_{\{p'\}} = 0$, for $p' \geq p$. Thus, for any $\sigma \in \textnormal{Hol}_{N(P)}(P , |X|_{N(P)} )$ we have $$ \sigma(\lambda (\xi)) \in \mathcal{X}^{red}.$$
\end{definitionconstruction}
As an immediate corollary of \Cref{conFactorization} we obtain:
\begin{corollary}\label{corFac}
	Let $X \in \textnormal{s\textbf{Set}}_P$ be locally finite. We use the notation from \Cref{conFactorization}. Then up to homotopy the inclusion $$\alpha:\textnormal{Hol}_{N(P)}(P, |X|_{N(P)}) \hookrightarrow \textnormal{Hol}_{P}(P , |X|_{P} )$$ factors as $$ \begin{tikzcd}
	{\textnormal{Hol}_{N(P)}(P, |X|_{N(P)})} \arrow[rr , hook] \arrow[rd, dashed]& & {\textnormal{Hol}_{P}(P, |X|_{P})} \\
	& \textnormal{Hol}_{P}(P, \mathcal X^{red}) \arrow[ru, hook]&
	\end{tikzcd},$$
	with the dashed arrow given by $$ \sigma \mapsto \Big \lbrace \xi \mapsto \sigma ( \lambda(\xi)) \Big \rbrace.$$
\end{corollary}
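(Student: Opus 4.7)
The plan is to assemble the three key observations already recorded in Construction \ref{conFactorization}: that $\lambda$ is stratum preserving, that $\lambda$ has image inside $|\Delta^P|^{red}$, and that $\lambda$ is stratified homotopic to the identity via a straight-line homotopy. Very little additional work is needed; one only has to verify well-definedness of the dashed arrow and produce the homotopy witnessing commutativity.

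For well-definedness, note that a $\sigma\in \textnormal{Hol}_{N(P)}(P,|X|_{N(P)})$ is by definition a strongly stratum preserving map $\sigma:|\Delta^P|_{N(P)}\to |X|_{N(P)}$, i.e.\ a map over $|N(P)|$. Since $\lambda(\xi)\in |\Delta^P|^{red}$ for every $\xi\in|\Delta^P|$, and since $\mathcal X^{red}$ is by definition the pullback of $|X|_{N(P)}\to |N(P)|$ along $|\Delta^P|^{red}\hookrightarrow |N(P)|$, the universal property produces the factorisation $\xi\mapsto \sigma(\lambda(\xi))\in\mathcal X^{red}$. To see that this is a morphism in $\textnormal{\textbf{Top}}_P$, recall that the $P$-filtration on $\mathcal X^{red}$ is induced (via $F$) from the $N(P)$-filtration on its underlying space, which in turn factors through $|N(P)|\to P$; hence the $P$-filtration of $\sigma(\lambda(\xi))$ agrees with that of $\lambda(\xi)$, and since $\lambda$ itself is stratum preserving the assignment $\xi\mapsto \sigma(\lambda(\xi))$ lies in $\textnormal{Hol}_P(P,\mathcal X^{red})$. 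Continuity of $\sigma\mapsto \sigma\circ\lambda$ between the respective mapping spaces is a routine consequence of the exponential adjunction (we work in the locally compact setting since $X$ is locally finite, so the compact-open vs.\ $\Delta$-generated distinction is inessential).

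For homotopy commutativity, postcomposing the dashed arrow with the inclusion $\textnormal{Hol}_P(P,\mathcal X^{red})\hookrightarrow \textnormal{Hol}_P(P,|X|_P)$ yields exactly $\sigma\mapsto \sigma\circ\lambda$, viewed as a stratum preserving map $|\Delta^P|_P\to|X|_P$. The straight-line stratum preserving homotopy $H:I\otimes |\Delta^P|_P\to|\Delta^P|_P$ between $\lambda$ and the identity (constructed in \ref{conFactorization}) induces by postcomposition with $\sigma$ and by the enriched hom a continuous homotopy $I\to C^0_P(|\Delta^P|_P,|X|_P)$ between $\sigma\circ\lambda$ and $\sigma$; adjunction in the variable $\sigma$ promotes this to a homotopy between the two composites $\textnormal{Hol}_{N(P)}(P,|X|_{N(P)})\to\textnormal{Hol}_P(P,|X|_P)$, which is what was required.

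There is no real obstacle here, but the one point to keep an eye on is the interaction between the $N(P)$- and $P$-filtrations on $|X|_{N(P)}=|X|_P$ (as topological spaces they coincide, only the filtration data differs). The argument hinges on the fact that the $P$-filtration factors through the $N(P)$-filtration, which is what ensures both that the pullback $\mathcal X^{red}$ inherits a consistent $P$-filtration and that stratification-preservation of $\lambda$ transports to stratification-preservation of $\sigma\circ\lambda$ into $\mathcal X^{red}$.
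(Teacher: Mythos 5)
Your proposal is correct and follows exactly the route the paper intends: the corollary is stated as an immediate consequence of Construction \ref{conFactorization}, whose three ingredients (stratum preservation of $\lambda$, the straight-line stratified homotopy $\lambda\simeq \mathrm{id}$, and $\operatorname{im}(\lambda)\subset|\Delta^{P}|^{red}$) are precisely what you assemble. The additional care you take with the well-definedness of the dashed arrow (factorisation through the pullback $\mathcal X^{red}$ and the compatibility of the $P$- and $N(P)$-filtrations) and with continuity of $\sigma\mapsto\sigma\circ\lambda$ is just a more explicit spelling-out of what the paper leaves implicit.
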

Thus, to prove \Cref{thrmHolWeakEqB}, it suffices to show that both arrows in the factorization given by \Cref{corFac} are homotopy equivalences. We start with the dashed one, by replicating the straightening map from \Cref{exEasyVerOfProof}. 
\begin{definitionconstruction}\label{conReparam}
	Note that, by the natural isomorphism \begin{align*}
		\mathcal{X}^{red} \times_{P} |N(P)| &= \big (|X|_{N(P)} \times_{|N(P)|} |\Delta^{P}|^{red} \big ) \times_{P} |N(P)| \\
		&\cong \big |X|_{N(P)} \times_{|N(P)|} \big (|\Delta^{P}|^{red} \times_{P} |N(P)| \big),
	\end{align*} the left hand side is a pullback in the sense of \Cref{AppPropPullback}. Hence, it is given by a colimit over the diagram $$|\Delta^{\mathcal I_i}|^{red} \times_{P} |N(P)|,$$ induced by pulling back the realizations of the non-degenerate simplices of $X$. The projection to the second component makes this a colimit over $|N(P)|$, i.e. one in $\textnormal{\textbf{Top}}_{N(P)}$. For a d-flag $\mathcal I \in N(P)$, define the straightening map 
	\begin{align*}
	\rho_{\mathcal I}: |\Delta^{\mathcal I}|^{red} \times_{P} |N(P)| &\longrightarrow |\Delta^{\mathcal I}| \\
	(\xi, (t_0,...,t_q)) &\longmapsto \sum_{p \in P}\frac{t_p}{||\xi_{\{p\}}||}\xi_{\{p\}},
	\end{align*}
	where the summands are taken to be $0$, when $||\xi_{\{p\}}|| = 0$. We should make a few remarks on why this is well-defined and continuous.
	First off, note that for right hand side vector in $\mathbb R^{\mathcal I}$ to actually lie in $|\Delta^\mathcal{J}|$, we have to have $$\xi_{\{p\}}=0 \implies t_p=0.$$ Otherwise, the vector does not have $1$ as the sum over its entries. To see this is the case, let $p \in P$ and $\xi_{\{p\}} = 0$. Note that as $\xi \in |\Delta^{\mathcal I}|^{red}$ we have that $p_{|\Delta^{\mathcal J}|_P}(\xi) < p$. Hence, as we have taken the fiber product over $P$ on the left hand side, this also means $t_{p'} = 0$, for $p' \geq p$.\\
	We can replicate the same argument, but replacing equality with convergence, to show that the map defined in this fashion is also continuous. It is clearly strongly stratum preserving, using the projection to the second component on the left hand side and $ |\Delta^{\mathcal I}| \to |\Delta^{P}| = |N(P)|$ on the right hand side. Furthermore, note that it fits into a commutative diagram
	\begin{equation*}
	\begin{tikzcd}
	{|\Delta^{\mathcal I}|^{red} } \arrow[bend right = 60, hook, rr] \arrow[r, "d"] & {|\Delta^{\mathcal I}|^{red} \times_P |N(P)| } \arrow[r, "\rho_{\mathcal I}"] & {|\Delta^{\mathcal I}| } 
	\end{tikzcd},
	\end{equation*} where $d$ denotes the map induced by $1_{|\Delta^{\mathcal I}|^{red} }$ and the composition with the strong filtration of $|\Delta^{\mathcal I}|^{red} $. Denote by $$R_{\mathcal I}: \Big (|\Delta^{\mathcal I}|^{red} \times_{P} |N(P)| \Big ) \times I\longrightarrow |\Delta^{\mathcal I}| $$ the straight line homotopy, between $\rho_{\mathcal I}$ and $$|\Delta^{\mathcal I}|^{red} \times_{P} |N(P)| \xrightarrow{\pi_{|\Delta^{\mathcal I}|^{red}}} |\Delta^{\mathcal I}|^{red} \hookrightarrow |\Delta^{\mathcal I}| $$ As both maps are stratum preserving, so is the straight line homotopy (as the strata of a filtered simplex are convex, see also \Cref{lemLineSeqStrat}). One can easily check that $R$ is compatible with the face and degeneracy maps of $X$. Hence, this induces a stratified homotopy $$R:\Big ( \mathcal{X}^{red} \times_{P} |N(P)| \Big ) \otimes \Delta^1\to |X|_{P}$$ between a strongly stratum preserving map $\rho$ and the stratum preserving map $$\mathcal X^{red} \times_{P} |N(P)| \xrightarrow{\pi_{\mathcal X ^{red}}} \mathcal X^{red} \hookrightarrow |X|_{P}.$$ Finally, note that $R$ maps into $\mathcal X^{red}$ at every point in time, but $t=1$. 
\end{definitionconstruction}
\begin{remark}\label{remRhoHat}
	$\rho$ can be slightly extended. Instead of pulling back to $|\Delta^{\mathcal P}|^{red} \times_{P} |N(P)| $, we can pull back to $$\Big(|\Delta^{P}|^{red} \times_{P} |N(P)| \Big) \cup \Big (|\Delta^{P}| \times_{|N(P)|} |N(P)| \Big )\subset |\Delta^{P}| \times |N(P)| = |\Delta^{P}|^2.$$ We essentially add points on the diagonal. Denote this space by $ \hat \Delta$. 
	One can easily check that the formula for the simplexwise definition of $\rho_{\mathcal{I}}$ in \Cref{conReparam} extends to $$|\Delta^{\mathcal I}| \times_{|N(P)|} \hat \Delta= \Big ( |\Delta^{\mathcal I}|^{red} \times_{P} |N({P})| \Big ) \cup \Big (|\Delta^{\mathcal I}|\times_{|N({P})|} |N(P)| \Big ),$$ where the union on the right hand side is to be understood in $|\Delta^{\mathcal I}| \times |N(P)|$. On $\Big (|\Delta^{\mathcal I}|\times_{|N(P)|} |N(P)| \Big )$ this is just given by the the projection to the first component. By the same argument as in \Cref{conFactorization}, we obtain an extension of $\rho$ to \begin{equation}\label{eqXtimesHat}
	|X|_{N(P)} \times_{|N(P)|} \hat \Delta = \Big ( \mathcal{X}^{red} \times_{P} |N(P)| \Big ) \cup \Big ( |X|_{N(P)} \times_{|N({P})|} |N(P)| \Big ) \subset |X|_{N(P)} \times_{P} |N(P)|.
	\end{equation} 
	$\hat \rho$ fits into a commutative diagram
	\begin{equation}\label{equXtimesHat2}
	\begin{tikzcd}
	{\mathcal |X|_{N(P)}} \arrow[bend right = 30, "1", rr] \arrow[r, "d", hook] & {|X|_{N(P)} \times_{|N(P)| } \hat \Delta }\arrow[r, "\hat \rho"] & {|X|_{N(P)}} 
	\end{tikzcd},
	\end{equation}
	where $d$ is just the identification with the second component of the union in \eqref{eqXtimesHat}.
\end{remark}
\begin{proposition}
	The dashed map from \Cref{corFac} is a homotopy equivalence.
\end{proposition}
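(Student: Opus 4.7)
The plan is to build an explicit homotopy inverse $\beta$ for the dashed map $\alpha'$ using the straightening map $\hat\rho$ from Remark \ref{remRhoHat}. Given $\gamma \in \textnormal{Hol}_P(P, \mathcal X^{red})$, I set $\beta(\gamma)(\xi) := \hat\rho(\gamma(\xi),\xi)$. The stratum preservation of $\gamma$ puts the pair $(\gamma(\xi),\xi)$ into $\mathcal X^{red}\times_P |N(P)| \subset |X|_{N(P)}\times_{|N(P)|}\hat\Delta$, so $\hat\rho$ is defined on it, and the strong stratum preservation of $\hat\rho$ in its second variable shows that $\beta(\gamma)$ projects to the identity on $|N(P)|$ and hence lies in $\textnormal{Hol}_{N(P)}(P, |X|_{N(P)})$; continuity of $\hat\rho$ makes $\beta$ continuous in the compact-open topologies.

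For $\beta\circ\alpha'\simeq\mathrm{id}$, one unwraps $\beta(\alpha'(\sigma))(\xi) = \hat\rho(\sigma(\lambda(\xi)),\xi)$ and uses the straight-line homotopy $\lambda_s = (1-s)\lambda + s\cdot\mathrm{id}$ on $|\Delta^P|$ (stratum preserving because strata of $|N(P)|$ are convex) to define $H_s(\xi) := \hat\rho(\sigma(\lambda_s(\xi)),\xi)$. The key verification is that $(\lambda_s(\xi),\xi)\in\hat\Delta$ for every $s\in [0,1]$: from the explicit barycentric formula defining $\lambda$, a vanishing $p$-coordinate of $\lambda_s(\xi)$ with $s\in [0,1)$ forces both $\xi_{\{p'\}} = 0$ and $\lambda(\xi)_{\{p'\}}=0$ for all $p'\geq p$, so $\lambda_s(\xi)\in |\Delta^P|^{red}$; while at $s=1$ the pair lies in the diagonal component $|\Delta^P|\times_{|N(P)|}|N(P)|$. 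Each $H_s$ is strongly stratum preserving by construction of $\hat\rho$, and $H_1 = \sigma$ by the triangle in \eqref{equXtimesHat2}.

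For $\alpha'\circ\beta\simeq\mathrm{id}$, first note that $\alpha'(\beta(\gamma))(\xi) = \rho(\gamma(\lambda(\xi)),\lambda(\xi))$ actually lies in $\mathcal X^{red}$ because $\lambda(\xi)\in |\Delta^P|^{red}$. I concatenate two homotopies inside $\textnormal{Hol}_P(P,\mathcal X^{red})$: the stratified homotopy $R$ of Construction \ref{conReparam}, evaluated at $(\gamma(\lambda(\xi)),\lambda(\xi),t)$, deforms $\rho(\gamma(\lambda(\xi)),\lambda(\xi))$ to $\gamma(\lambda(\xi))$ while remaining in $\mathcal X^{red}$, because at positive times the convex combinations add a positive multiple of a point whose $P$-support is downward closed and thus restore the defining condition of $|\Delta^P|^{red}$; then the homotopy $s\mapsto\gamma(\lambda_s(\xi))$ passes from $\gamma\circ\lambda$ to $\gamma$, trivially valued in $\mathcal X^{red}$ since $\gamma$ is.

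The principal obstacle is the bookkeeping needed to keep the intermediate maps inside $\hat\Delta$ and $\mathcal X^{red}$ respectively; both points reduce to the observation that the "downward closed $P$-support" condition defining $|\Delta^P|^{red}$ is preserved when one forms convex combinations with a point whose $P$-support already dominates the ambient flag, a feature tailored into the barycentric construction of $\lambda$. Given these verifications, the pointwise homotopies assemble into continuous homotopies in the relevant mapping spaces by the standard exponential law, using compactness of $|\Delta^P|$.
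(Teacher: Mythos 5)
Your proof is correct and takes essentially the same route as the paper: your inverse $\beta(\gamma)(\xi)=\hat\rho(\gamma(\xi),\xi)$ is exactly the paper's $\rho_*$ transported through the enriched adjunction, your homotopy $H_s(\xi)=\hat\rho(\sigma(\lambda_s(\xi)),\xi)$ is the paper's homotopy for $\rho_*\circ\eta\simeq 1$, and the other direction uses the same two ingredients, the homotopy $R$ and the straight-line homotopy from $\lambda$ to the identity. The only divergence is presentational: you concatenate those two homotopies directly inside $\textnormal{Hol}_{P}(P,\mathcal X^{red})$, keeping the second argument of $R$ fixed at $\lambda(\xi)\in|\Delta^{P}|^{red}$ so that every stage stays in $\mathcal X^{red}$, whereas the paper composes them diagonally after postcomposing with $i_*\circ\psi$ and then factors back through $\mathcal X^{red}$ -- your bookkeeping is slightly leaner, but the argument is the same.
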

\begin{proof}
	First, note that under the enriched the adjunction $F: \textnormal{\textbf{Top}}_{N(P)} \longleftrightarrow \textnormal{\textbf{Top}}_{P}:G$ we have a natural homeomorphism: $$\textnormal{Hol}_{P}(P, \mathcal X^{red}) \cong \textnormal{Hol}_{N(P)}(P, \mathcal X^{red}\times_{P} |N(P)| ),$$ where again the argument in the right hand Hol is strongly filtered by the projection to $|\Delta^{P}| = |N(P)|$. Under this natural homeomorphism the dashed map corresponds to $$ \sigma \mapsto \Big \lbrace \xi \mapsto (\sigma \circ \lambda (\xi), (\xi_0, ..., \xi_q))\Big \rbrace.$$ Denote this map by $\eta$. Denote by $\rho_*$ the map $$\textnormal{Hol}_{N(P)}(P, \mathcal X^{red}\times_{P} |N(P)| ) \longrightarrow \textnormal{Hol}_{N(P)}(P, |X|_{N(P)})$$ induced by $\rho$. We claim that $\rho_*$ is a homotopy inverse to $\eta$.\\
	\\
	First, consider the homotopy between the two maps $$	{\textnormal{Hol}_{N(P)}(P, |X|_{N(P)})} \longrightarrow \textnormal{Hol}_{P}(P, |X|_{P})$$ given 
	by $\sigma \mapsto \sigma \circ \lambda$ and the inclusion that is induced by the straight line homotopy from $\lambda$ to the indentity. Up to the last point in time, this maps into $\textnormal{Hol}_{P}(P, \mathcal{X}^{red})$, and at $t=1$ it maps into $\textnormal{Hol}_{N(P)}(P, |X|_{N(P)})$.
	 Hence, if we post-compose this homotopy with $$\textnormal{Hol}_{P}(P, |X|_{P}) \cong \textnormal{Hol}_{N(P)}(P, |X|_{P} \times_{P} |N(P)|),$$ (using\eqref{eqXtimesHat}) it factors through $$\textnormal{Hol}_{N(P)}(P, |X|_{N(P)} \times_{|N(P)|} \hat \Delta ) \hookrightarrow \textnormal{Hol}_{N(P)}(P, |X|_{P} \times_{P} |N(P)|).$$ That is, we obtain a new homotopy: $$H: \textnormal{Hol}_{N(P)}(P,|X|_{N(P)}) \times \Delta^1\longrightarrow \textnormal{Hol}_{N(P)}(P, |X|_{N(P)} \times_{N(P)} \hat \Delta ).$$ 
	$H$ is given by 
	\begin{align}
	\sigma &\longmapsto \Big \lbrace \xi \mapsto (\sigma \circ \lambda(\xi),(\xi_0,...,\xi_1)) \Big \rbrace \textnormal{, for $t=0$;} \label{proofH0} \\
	\sigma &\longmapsto \Big \lbrace \xi \mapsto (\sigma (\xi),(\xi_0,...,\xi_1)) \Big \rbrace \textnormal{, for $t=1$.} \label{proofH1} 
	\end{align} 
	Now, consider the composition
	$$
	\begin{tikzcd}
	{\textnormal{Hol}_{N(P)}(P,|X|_{N(P)})} 
	\arrow[rr] \arrow[rd, "H_t", swap] 
	&& 
	{\textnormal{Hol}_{N(P)}(P, |X|_{N(P)})} \\
	& {\textnormal{Hol}_{N(P)}(P, |X|_{N(P)} \times_{|N(P)|} \hat \Delta )} \arrow[ru,"{\textnormal{Hol}_{N(P)}(P, \hat \rho)}", swap]&
	\end{tikzcd}.$$
	By \eqref{proofH0} for $t=0$ the horizontal is $\rho_* \circ \eta$. By\eqref{proofH1} and the fact that $\sigma$ is strongly stratum preserving, $H_1$ maps into $\textnormal{Hol}_{N(P)}(P, |X|_{N(P)} \times_{|N(P)|} |N(P)|)$. Therefore, by \eqref{equXtimesHat2}, which essentially states that $\hat \rho$ is the projection to the first component on $|X|_{N(P)} \times_{|N(P)|} |N(P)|$ and again, by \eqref{proofH1}, we obtain that, for $t=1$, the horizontal is the identity. In particular, $\rho_* \circ \eta \simeq 1$.\\ \\
	%
	It remains to be shown that $\eta \circ \rho_*$ is also homotopic to the identity. We first post-compose $\eta \circ \rho_*$ with the enriched adjunction homeomorphism $$\textnormal{Hol}_{N(P)}(P, \mathcal X^{red} \times_{P} |N(P)|) \xrightarrow[\sim]{\psi} \textnormal{Hol}_{P}(P, \mathcal X^{red})$$ and then with the inclusion into $\textnormal{Hol}_{P}(P, |X|_{P})$, $i_*$. Note that the adjunction homeomorphism $\psi$ is given by first including into $\textnormal{Hol}_{P}(P, F(\mathcal X^{red}\times_{P} |N(P)|))$ via $\alpha$ and then pushing forward with the projection to $\mathcal X^{red}$, which we denote $\pi_*$.
	We then have a commutative diagram
	$$ \begin{tikzcd}
	{\textnormal{Hol}_{N(P)}(P, \mathcal X^{red} \times_{P} |N(P)|)}\arrow[rr, "i_* \circ \psi \circ \eta \circ \rho_*"] \arrow[rd, "\tilde \lambda", swap]&& {\textnormal{Hol}_{P}(P, |X|_{P})}\\ & {\textnormal{Hol}_{P}(P, F(\mathcal X^{red} \times_{P} |N(P)|))} \arrow[ru, "{\textnormal{Hol}_{P}(P, F( \rho))}", swap] & 
	\end{tikzcd},
	$$
	where $\tilde \lambda$ is given by first including into $\textnormal{Hol}_{P}(P, F(\mathcal X^{red} \times_{P} |N(P)|))$, via $\alpha$, and then pulling back with $\lambda$.
	Using the straight line homotopy between $\lambda$ and the identity, $\tilde \lambda$ is homotopic to $\alpha$. Denote this homotopy by $L$. $ R$ induces a homotopy $R_*$ from
	${\textnormal{Hol}_{P}(P, F(\rho))}$ to $i_* \circ \pi_*$. Now, consider the diagonally composed homotopy of $L$ and $ R_*$, i.e. $$D_t:= R_{*,t} \circ L_t.$$ At $t=1$, this is $i_* \circ \psi$. At $t=0$ it is $i_* \circ \psi \circ \eta \circ \rho_*$, by the commutativity of the last diagram. For $t \in (0,1)$, $R$ maps into $\mathcal X^{red}$ and hence, $D_t$ maps into $\textnormal{Hol}_{P}(P, \mathcal X ^{red})$. In particular, $D$ factors through $ \textnormal{Hol}_{P}(P, \mathcal X ^{red})$. That is, there is a unique $\tilde D$ with $i_* \circ \tilde D= D$. Then at $t=0$, $\tilde D$ is $\psi \circ \eta \circ \rho_*$. As $D$ is $i_* \circ \psi$ at $t=1$, $\tilde D$ is $\psi$ at $t=1$. As $\psi$ is a homeomorphism, in particular we also obtain $\eta \circ \rho_* \simeq 1$.
\end{proof}
For a proof of \Cref{thrmHolWeakEqB} we are left with showing the following.
\begin{proposition}\label{propSecondMapEq}
	In the setting of \Cref{corFac}, $$\textnormal{Hol}_{P}(P, \mathcal X ^{red}) \hookrightarrow \textnormal{Hol}_{P}(P, |X|_{P})$$ is a homotopy equivalence.
\end{proposition}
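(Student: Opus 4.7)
The plan is to mirror the proof structure of the previous proposition (where the dashed map was shown to be a homotopy equivalence via $\rho$ and the homotopy $R$), but replacing $\rho$ with the extended map $\hat\rho$ from \Cref{remRhoHat} and $R$ with a corresponding extension $\hat R$ to the larger domain $|X|_{N(P)} \times_{|N(P)|} \hat\Delta$. The reason the extension is needed is that a general stratum preserving map $\sigma \in \textnormal{Hol}_P(P,|X|_P)$ need not have image in $\mathcal{X}^{red}$, so $\rho$ cannot be applied directly; however, $\hat\rho$ agrees with the projection on the diagonal part of $\hat\Delta$ and with $\rho$ on $\mathcal{X}^{red} \times_P |N(P)|$, allowing both cases to be treated uniformly.

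To construct a candidate homotopy inverse $\eta: \textnormal{Hol}_P(P,|X|_P) \to \textnormal{Hol}_P(P,\mathcal{X}^{red})$, first use the enriched adjunction $F \dashv G$ to identify $\sigma$ with a strongly stratum preserving map $\hat\sigma: |\Delta^P|_{N(P)} \to G(|X|_P) = |X|_P \times_P |N(P)|$, given by $\hat\sigma(\xi) = (\sigma(\xi),\xi)$. Precompose with $\lambda$ (whose image lies in $|\Delta^P|^{red}$, so the resulting pairing is a strongly stratum preserving map into $|X|_{N(P)} \times_{|N(P)|} \hat\Delta$), apply $\hat\rho$ to obtain a strongly stratum preserving map into $|X|_{N(P)}$, and verify that it in fact factors through $\mathcal{X}^{red}$.

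The two homotopies showing that $\eta$ and the inclusion are mutually inverse are built by the diagonal composition trick of the previous proof. For the composition $\iota \circ \eta$, one combines the pushforward $\hat R_*$ (interpolating between $\hat\rho$ and $\pi_1$) with the pullback-homotopy induced by $\lambda_t: \lambda \simeq 1$, taking diagonals and exploiting that $\hat R$ is constant on the diagonal part of $\hat\Delta$ to ensure intermediate maps remain in $\mathcal{X}^{red}$ where required. For the composition $\eta \circ \iota$, the argument simplifies since for $\sigma$ already landing in $\mathcal{X}^{red}$, the full homotopy $\sigma \circ \lambda_t$ stays in $\textnormal{Hol}_P(P,\mathcal{X}^{red})$.

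The main obstacle I expect is verifying well-definedness of $\eta$: one must show that applying $\hat\rho$ to the $\lambda$-paired version of $\hat\sigma$ really does land in $\mathcal{X}^{red}$, not merely in $|X|_P$. The essential ingredient is that after pairing with $\lambda$, the second component of $\hat\Delta$ lies in $|\Delta^P|^{red}$, and $\hat\rho$ restricted to the $\mathcal{X}^{red}$-part of $\hat\Delta$ is the straightening $\rho$, which by construction outputs points whose strong filtration image lies in $|\Delta^P|^{red}$, hence in $\mathcal{X}^{red}$. Once this technical step is in place, the remaining verifications reduce to manipulations of commutative diagrams of enriched hom adjunctions that closely parallel those in the proof of the previous proposition.
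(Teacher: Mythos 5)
There is a genuine gap, and it is at exactly the step you flagged as the ``main obstacle'', but the problem is worse than you anticipate: your candidate inverse $\eta$ is not even defined. You propose to feed the pair $\big(\sigma(\lambda(\xi)),\lambda(\xi)\big)$ into $\hat\rho$ from \Cref{remRhoHat}. But the domain of $\hat\rho$ is $|X|_{N(P)}\times_{|N(P)|}\hat\Delta = \big(\mathcal X^{red}\times_P|N(P)|\big)\cup\big(|X|_{N(P)}\times_{|N(P)|}|N(P)|\big)$, so the pair lies in it only if either $\sigma(\lambda(\xi))\in\mathcal X^{red}$ or $\varphi_X\big(\sigma(\lambda(\xi))\big)=\lambda(\xi)$, where $\varphi_X$ denotes the strong filtration $|X|\to|N(P)|$. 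An arbitrary $\sigma\in\textnormal{Hol}_P(P,|X|_P)$ is only stratum preserving over $P$; it gives no control whatsoever over $\varphi_X\circ\sigma$, so neither condition holds in general. The fact that $\lambda(\xi)\in|\Delta^P|^{red}$ constrains only the second component of the pair, whereas membership in $\hat\Delta$'s pullback is a condition on the \emph{first} component (this is precisely what \Cref{conFactorization} exploits for \emph{strongly} stratum preserving $\sigma$, where $\varphi_X(\sigma(\lambda(\xi)))=\lambda(\xi)\in|\Delta^P|^{red}$ automatically). Concretely, the straightening formula $\sum_p \frac{t_p}{\|\xi_{\{p\}}\|}\xi_{\{p\}}$ of \Cref{conReparam} breaks down whenever $t_p\neq 0$ but $\xi_{\{p\}}=0$, which does occur for a general point $\sigma(\lambda(\xi))$ of the relevant stratum; so there is no extension of $\hat\rho$ that would rescue the construction. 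In short, ``$\sigma(\lambda(\xi))\in\mathcal X^{red}$'' is the conclusion you are trying to reach, not a hypothesis you may use, and the $\rho/\hat\rho$ machinery can only straighten maps that already land in $\mathcal X^{red}$ (or are strongly stratum preserving). This is exactly the obstruction the paper points out before introducing \Cref{conRedFiltr}: the trick of \Cref{exEasyVerOfProof} (a whole neighbourhood of the bottom vertex maps into $\mathcal X^{red}$) does not generalize to $q>1$.

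The paper's proof takes a genuinely different, point-set route that your proposal does not replace: it interpolates $\mathcal X^{red}=\mathcal X^{q}\subset\mathcal X^{q-1}\subset\dots\subset\mathcal X^{0}=|X|_P$ (\Cref{conRedFiltr}) and shows each $\textnormal{Hol}_P(P,\mathcal X^{k+1})\hookrightarrow\textnormal{Hol}_P(P,\mathcal X^{k})$ admits a deformation retraction. The key inputs are that $\mathcal X^{k+1}$ is a neighbourhood of the $k$-th stratum of $\mathcal X^{k}$, a squeeze map $S$ pushing the coordinates above $k$ towards $|\Delta^{[k]}|$, and — crucially — a continuity-in-$\sigma$ argument: one must choose, continuously in $\sigma$, how far to squeeze so that the image lands in $\mathcal X^{k+1}$, and this is done with a partition of unity on the metrizable mapping space together with bump functions in the radial parameter. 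Your proposal contains no substitute for this quantitative ``how much to deform, uniformly near a given $\sigma$'' step, and the diagonal-composition homotopies you describe cannot supply it, since they all presuppose the ill-defined $\eta$. To repair the argument you would essentially have to reproduce the paper's neighbourhood-retraction scheme (or some equivalent fibration/NDR-type argument), not extend the straightening map.
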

One would hope that one can make a similar Argument as in \Cref{exEasyVerOfProof}. However, note that the argument there involved the fact that for $q=1$, a neighbordhood of $|0| \in |\Delta^{P}|$ is always mapped into $\mathcal X^{red}$. This can not immediately be replicated in the general case. The argument needs a little bit of refinement. 
\begin{definitionconstruction}\label{conRedFiltr}
	Let $ k \in P$. Define $$|\Delta^{ P}|^{red,k}:= \{ \xi \in |\Delta^{P}| \mid \xi_p = 0 \implies \xi_{p'} = 0 \textnormal{, for all $p < k, p \leq p' $} \}.$$ Define the filtered space $\mathcal X^{k}$ by pulling back $|X|_{N(P)}$ along $|\Delta^{P}|^{red,k} \hookrightarrow |\Delta^{\mathcal P}|$. Note, how in case where $X = \Delta^{\mathcal I}$ for a d-flag $\mathcal{I}$ in $P$ this is equivalently described by
	$$|\Delta^{\mathcal I}|^{red,k}= \{ \xi \in |\Delta^{\mathcal I}| \mid \xi_{\{p \}} = 0 \implies \xi_{\{p'\}} = 0 \textnormal{, for all $p < k, p \leq p' $} \}.$$
	As these spaces are easily seen to be $\Delta$-generated (in fact they can be triangulated), the pullback here, is actually the pullback in the naive topological category (by \Cref{AppPropPullback}). Thus, we can actually think of $\mathcal{X}^{k}$ as actual topological subspaces of $|X|_{P}$ and not just in the $\Delta$-generated subspace sense (i.e. as the $\Delta$-fication of a subspace). They are then alternatively described as the inverse image under $p_{|X|_{N(P)}}$ of $|\Delta^{P}|^{red,k}$.
	Clearly, we then have: 
	$$\mathcal{X}^{red} = \mathcal{X}^{q} \subset \mathcal{X}^{q-1} \subset ... \subset \mathcal{X}^{0} = |X|_{P}.$$
	Furthermore, $\mathcal X^{k+1}$ is a neighbourhood of the $k$-th stratum of $\mathcal X^{k}$ in $\mathcal X^{k}$. To see this, note first that, by the pullback construction, we only need to show this for $|\Delta^{P}|$. The $k$-th stratum is given by such $\xi \in |\Delta^{P}|^{red,k}$ where $\xi_{k} >0$ and $\xi_{p'} = 0$, for $p' \geq k+1$. Hence, the defining property of $|\Delta^{P}|^{red,k+1}$ is clearly fulfilled in a small open ball around such a point. 
\end{definitionconstruction}
Instead of directly showing that the inclusion in \Cref{propSecondMapEq} is a homotopy equivalence, we show the analogous statement for the inclusions $\mathcal{X}^{k+1} \hookrightarrow \mathcal{X}^{k}$. \Cref{propSecondMapEq} is then an immediate consequence of this. 
\begin{lemma}
	In the setting of \Cref{conRedFiltr}, for $k \in P$, the map $$ \textnormal{Hol}_{P}(P, \mathcal{X}^{k+1}) \hookrightarrow \textnormal{Hol}_{P}(P, \mathcal{X}^{k})$$ induced by $\mathcal{X}^{k+1} \hookrightarrow \mathcal{X}^{k}$ admits a deformation retraction.
\end{lemma}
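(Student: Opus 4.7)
The plan is to construct a stratum preserving self-homotopy $H \colon \mathcal{X}^{k} \times I \to \mathcal{X}^{k}$ with $H_0 = \mathrm{id}$, $H_t$ the identity on $\mathcal{X}^{k+1}$ for all $t$, and $H_1(\mathcal{X}^{k}) \subseteq \mathcal{X}^{k+1}$. Post-composition with $H$ then induces the required deformation retraction on $\textnormal{Hol}_{P}(P, -)$. The pattern follows the argument behind $\rho$ in \Cref{conReparam}, but the rescaling must now be engineered to force only the $k$-th strong-filtration coordinate, rather than all of them, to become positive.

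First, I would define a stratum preserving retraction $\lambda_{k} \colon |\Delta^{P}|^{red,k} \to |\Delta^{P}|^{red,k+1}$ by affinely extending the assignment $|p| \mapsto |p|$ for $p \leq k$ and $|p| \mapsto \tfrac{1}{2}(|k| + |p|)$ for $p > k$. A direct computation on coordinates shows that $(\lambda_{k}(\xi))_{k} > 0$ whenever $\xi_{p'} > 0$ for some $p' > k$, and that under the defining condition of $|\Delta^{P}|^{red,k}$ the image lands inside $|\Delta^{P}|^{red,k+1}$. Since $|\Delta^{P}|^{red,k}$ is closed under convex combinations, the straight-line homotopy from $\lambda_{k}$ to the identity remains inside it and is stratum preserving, yielding the desired retraction on the base of the pullback defining $\mathcal{X}^{k}$. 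Next, this is lifted to $\mathcal{X}^{k} \subseteq |X|_{N(P)}$ simplex by simplex. On a cell $|\Delta^{\mathcal I}|$ containing a vertex $v_{j}$ at level $k$, one writes $y = \sum_{i} a_{i} v_{i}$ and realizes the shift of $\lambda_{k}$ by transferring $\tfrac{1}{2} \sum_{i\colon p_{i} > k} a_{i}$ from the level-$(>k)$ vertices onto $v_{j}$, while halving the weights at higher levels; on cells with $k \notin \mathcal I$ the formula is the identity. The straight-line homotopy to the identity gives $H_{t}$, and stratum preservation is immediate from the fact that the maximal positive coordinate is unchanged.

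The main obstacle will be continuity and well-definedness of the simplex-wise construction at faces where the containment of $k$ in the flag $\mathcal I$ changes, along with arranging that $H_{t}$ is strictly the identity on $\mathcal{X}^{k+1}$. For the former, one chooses the level-$k$ vertex $v_{j}$ to be the lowest-indexed such vertex in $\mathcal I$, and checks that the induced formulas on shared faces agree or can be patched via a partition of unity indexed by the nerve of $X$; the key observation making this feasible is that the bad locus $\mathcal{X}^{k} \setminus \mathcal{X}^{k+1}$ lies in cells whose flag does contain $k$, once one restricts to the closure of the image of any $\sigma \in \textnormal{Hol}_{P}(P, \mathcal{X}^{k})$, since a stratum-preserving map from $|\Delta^{P}|_{P}$ hits strata requiring full initial segments of $P$ in the relevant flags. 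For the identity-on-$\mathcal{X}^{k+1}$ condition, one multiplies the shift amount by a continuous cut-off that vanishes precisely where the $|\Delta^{P}|^{red,k+1}$ condition is already satisfied, so that no actual modification occurs on $\mathcal{X}^{k+1}$. Verifying these two points, along with compatibility under face and degeneracy maps, will constitute the bulk of the technical work.
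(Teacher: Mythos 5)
Your plan is to build a single stratum-preserving self-homotopy $H \colon \mathcal{X}^k \times I \to \mathcal{X}^k$ retracting $\mathcal{X}^k$ onto $\mathcal{X}^{k+1}$, and then post-compose. This approach cannot work in general, and the obstruction you flag as a ``main obstacle'' is in fact fatal. Consider $X = \Delta^{\mathcal I}$ with $\mathcal I = (0,2)$, $P = \{0,1,2\}$, $k=1$. Then $\mathcal{X}^1$ is the half-open edge (the full edge minus the level-$2$ vertex) and $\mathcal{X}^2$ is just the level-$0$ vertex, so $\mathcal{X}^1_2$ is nonempty while $\mathcal{X}^2_2$ is empty; there is no stratum-preserving map $\mathcal{X}^1 \to \mathcal{X}^2$ at all, let alone a retraction. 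More structurally: a point of the bad locus $\mathcal{X}^k \setminus \mathcal{X}^{k+1}$ can lie in a cell $|\Delta^{\mathcal I}|$ with $k \notin \mathcal I$ (its image in $|\Delta^P|$ has zero $k$-coordinate but a nonzero coordinate above $k$), and on such a cell your formula has no level-$k$ vertex onto which to transfer mass. Your fallback observation --- restricting attention to the closure of the image of a given $\sigma$ --- concedes that the construction must become $\sigma$-dependent, but then you no longer have a single $H$ to post-compose with, and the whole post-composition strategy collapses.

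The paper takes a structurally different route precisely to avoid this. Rather than deforming the target $\mathcal{X}^k$, it deforms the \emph{source} $|\Delta^P|$ via the shrinking map $S$, and the amount of shrinking is allowed to depend continuously on $\sigma$ (the function $t(\sigma, s)$ in the paper's \Cref{proofLemmaClaim}). The key geometric fact being used is local: for each fixed $\sigma$, the preimage $\sigma^{-1}(\mathcal{X}^{k+1})$ is an open neighbourhood of $\sigma^{-1}(\mathcal{X}^k_k)$ in $|\Delta^P|$, so a $\sigma$-dependent amount of shrinking pushes the image into $\mathcal{X}^{k+1}$. All of the technical work --- the metrizability discussion, the partition of unity over $\textnormal{Hol}_P(P,\mathcal{X}^k)$, the dyadic scales $\varepsilon = 2^{-n}$, and the infinite-product cut-off --- is there to make this $\sigma$-dependent shrinking time a single continuous function on the mapping space. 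The resulting $\mathcal S$ is a deformation of $\textnormal{Hol}_P(P, \mathcal{X}^k)$ itself, not a post-composition with a retraction of $\mathcal{X}^k$. If you want to salvage your proposal, you would have to abandon the ``single $H$'' framing and instead promote your cut-off to a function of $\sigma$, at which point you will essentially be re-deriving the paper's claim.
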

\begin{proof}
	First note that, by our local finiteness assumption, all spaces involved are metrizable and the sources in all mapping spaces involved are compact. In particular, the compact open topology on all mapping spaces involved can be thought of as coming from the supremum metric. This makes most continuity verifications very easy and they are mostly omitted.\\
	Consider the map 
	\begin{align*}
	S: &|\Delta^{P}| \times [0,1] \longrightarrow |\Delta^{P}|;\\
	&(\xi,s) \longmapsto (\xi_{[k-1]}, \xi_{k} + (1-s)||\xi_{\{k+1,...,q\}}||, s\xi_{\{k+1,...,q\}}).
	\end{align*}
	Note that this is the identity at $s=1$, stratum preserving outside of $s=0$ and that it restricts to the identity on $|\Delta^{[k]}| \subset |\Delta^{P}|$.
	We now claim the following.
	\begin{claim}\label{proofLemmaClaim}
		There exists a map of topological spaces: \begin{align*}
		t: \textnormal{Hol}_{P}(P , \mathcal{X}^{k}) \times \Delta^1\longrightarrow I
		\end{align*}
		such that 
		\begin{enumerate}
			\item $t(\sigma, s) = 0$ if and only if $s=0$. \label{claim1}
			\item $\sigma \Big (S \big (\xi, t\big (\sigma,||\xi_{\{k+1,...,q\}}||\big) \big ) \Big) \in \mathcal{X}^{k+1}$, for $\sigma \in \textnormal{Hol}_{P}(P , \mathcal{X}^{k})$ and $\xi \in |\Delta^{P}|$. \label{claim2}
		\end{enumerate}
	\end{claim} 
	We are going to prove this claim at the end of the proof. We show first how it implies the statement of the lemma. Consider the map 
	\begin{align*}
	\mathcal S: \textnormal{Hol}_{P}(P , \mathcal{X}^{k}) \times \Delta^1&\longrightarrow \textnormal{Hol}_{P}(P , \mathcal{X}^{k} )\\
	(\sigma,s) &\longmapsto \Big \{\xi \mapsto \sigma \big (S \big (\xi, (1-s) + st \big (\sigma, ||\xi_{\{k+1,...,q\}}||) \big ) \Big )\Big\}.
	\end{align*} 
	We need to verify that this is well-defined in the sense that the map described on the right hand side is actually stratum preserving. As $S$ is stratum preserving at each time outside of $0$, we only need to worry about the case where the second argument of $S$ i.e. $(1-s) +st(\sigma, ||\xi_{\{k+1,...,q\}})||$ is $0$. Then, in particular $s =1$, hence $t(\sigma, ||\xi_{\{k+1,...,q\}}||) = 0$ and thereby, using \ref{claim1} we obtain that $|| \xi_{\{k+1,...,q\}} || = 0$. Thus, $\xi \in |\Delta^{[k]}|$ which by the definition of $S$ implies $S(\xi,s) = \xi$ and thereby preservation strata is also verified in this case. Now, note furthermore that $H$ maps $\textnormal{Hol}_{P}(P , \mathcal{X}^{k+1} )$ into itself. At $s = 0$, it is given by pulling back with $S(-,1)$ which is the identity. At $s = 1$, by \ref{claim2}, it maps into $\textnormal{Hol}_{P}(P , \mathcal X^{k+1} )$. Thus, we have verified that indeed $\textnormal{Hol}_{P}(P , \mathcal{X}^{k+1}) \hookrightarrow \textnormal{Hol}_{P}(P , \mathcal{X}^{k}) $ admits a deformation retraction. \\
	\\
	It remains to show \Cref{proofLemmaClaim}.
	We denote by $d$ the metric on $|\Delta^{P}|$ induced by the supremum norm on $\mathbb R^{P}$.
	Let $\varepsilon \in (0,1]$. Denote \begin{align*}
	\Delta_{\varepsilon}&:= \{\xi \in |\Delta^{P}| \mid ||\xi_{\{k,...,q\}}|| \geq \varepsilon \},\\\Delta_{k,\varepsilon}&:=|\Delta^{[k]}| \cap \Delta_{\varepsilon} \subset |\Delta^{P}|.
	\end{align*} Alternatively, the latter is given by such $\xi \in |\Delta^{[k]}|$ with $\xi_{k} \geq \varepsilon$. Clearly, $\Delta_{k,\varepsilon}$ is a compact subset of the $k$-th stratum of $|\Delta^{P}|$ and $\Delta^{\varepsilon}$ is compact. Furthermore, note that \begin{equation}\label{equProofLemmaConv}
	\underset{\substack{\xi \in S(\Delta_{\varepsilon} \times [0,t])\\\xi' \in \Delta_{k,\varepsilon}}} {\sup \inf} d(\xi,\xi') \xrightarrow{t \to 0} 0.
	\end{equation}
	Now, let $\sigma_i \in \textnormal{Hol}_{P}(P, \mathcal X^{k})$. As $\sigma_i$ is stratum preserving, it maps $\Delta_{k,\varepsilon}$ into the $k$-th stratum of $\mathcal X^{k}$, $\mathcal X^k_k$. Hence, as we have seen in \Cref{conRedFiltr} that $\mathcal X^{k+1}$ is a neighbourhood of $\mathcal X^k_k$, a neighbourhood of $\Delta_{k,\varepsilon}$ maps into $\mathcal X^{k+1}$ under $\sigma_i$. Hence, by \eqref{equProofLemmaConv}, we also get \begin{equation}\label{equProofLemmaCont}
	\sigma(S(\Delta_{\varepsilon} \times [0,t_{i, \varepsilon}]) \subset \mathcal{X}^{k+1}
	\end{equation}
	for a sufficiently small $t_{i, \varepsilon} > 0$. 
	By the definition of the compact open topology, this also holds in a neighbourhood of $\sigma_i$, $U_i \subset \textnormal{Hol}_{P}(P, \mathcal X^{k})$. Now, cover $\textnormal{Hol}_{P}(P, \mathcal X^{k})$ by such neighbourhoods to obtain a covering $(U_i)$ together with $t_{i,\varepsilon}$ as above. 
	As $\textnormal{Hol}_{P}(P, \mathcal X^{red})$ is metrizable, in particular, it is paracompact. Let $\varphi_i$ be a partition of unity subordinate to $(U_i)$. Next, define
	\begin{align*}
	t_{\varepsilon}: \textnormal{Hol}_{P}(P, \mathcal X^{k}) &\longrightarrow (0,1]\\
	\sigma &\longmapsto \sum \varphi_i(\sigma) t_{i,\varepsilon}.
	\end{align*}
	Then, for each $\sigma \in \textnormal{Hol}_{P}(P, \mathcal X^{k})$, by \eqref{equProofLemmaCont}, we obtain 
	\begin{equation}\label{HUngrraaa}
	\sigma\Big (S \big (\Delta_{\varepsilon} \times [0,t_{\varepsilon}(\sigma)] \big ) \Big) \subset \mathcal{X}^{k+1}.	
	\end{equation}
	Suppose we have done this construction for $\varepsilon = \frac{1}{2^n}$. 
	Consider the covering of $(0,2)$ given by the open intervals $I_n:=(2^{-(n+1)}, 2^{-(n-2)})$ for $n \geq 1.$ Take a family of bump functions $\psi_n$ on $[0,2]$ such that $\psi_n$ has support in $(I_n)$ and is $1$ on $[2^{-n}, 2^{-(n-1)}]$. Then, define:
	\begin{align*}
	t:\textnormal{Hol}_{P}(P, \mathcal{X}^{k}) \times \Delta^1&\longrightarrow [0,1]\\
	(\sigma, s) &\longmapsto s\prod_{n \geq 1} \Big( 1- \psi_n(s)\big(1-t_{2^{-n}}(\sigma)\big ) \Big ).
	\end{align*}
	Note that the product on the right hand side is always bounded by $1$, so the expression actually makes sense even at $s=1$.
	Further, note that locally in the $s$ coordinate, for $s > 0$, this is actually just a finite product. Furthermore, as $\prod ...$ is always smaller than one, this is easily seen to be continuous in $s=1$ also. We need to see that this construction fulfills \Cref{claim1,claim2} of \Cref{proofLemmaClaim}. The first is immediate from the facts that clearly $t(\sigma, 0)= 0$, that the product at any other time is finite, and that $t_{2^{-n}}(\sigma) > 0$, for all $\sigma \in \textnormal{Hol}_{P}(P , \mathcal{X}^{k})$. For the second, note that for $s \in [2^{-n},2^{-(n-1)}]$ and $\sigma \in \textnormal{Hol}_{P}(P , \mathcal{X}^{k})$ we have:
	\begin{equation}\label{HUNGRYY}
	t(\sigma,s) \leq t_{2^{-n}}(\sigma)
	\end{equation}
	Now, let $\xi \in |\Delta^{P}|$. If $||\xi_{k+1,...,q}|| = 0$, then the expression in \ref{claim2} is just $\sigma(\xi)$ and thus $p_{|\Delta^P|}(\xi) = p_{|X|_P}(\sigma(\xi)) \leq k$. But for those $p$ we have $\mathcal X^{k}_{p} = \mathcal X^{k+1}_{p}$. If $||\xi_{\{k+1,...,q\}}|| > 0$, take $n \geq 1$ such that $||\xi_{k+1,...,q}|| \in [2^{-n}, 2^{-(n-1)}]$. Then $\xi \in \Delta_{2^{-n}}$ and by \eqref{HUNGRYY} $t(\sigma, ||\xi_{\{k+1,...,q\}}||) \leq t_{2^{-n}}(\sigma)$. Hence, by \eqref{HUngrraaa}, the result follows.
\end{proof}
This finished the proof of \Cref{thrmHolWeakEqB} and hence of \Cref{thrmHolWeakEq}. 
\subsection{The realization theorem}
The following realization theorem is now an immediate consequence of \Cref{thrmHolWeakEq}. \begin{theorem}\label{thrmWeakEquSustain}
	The realization functor: 
\begin{align*}
	|-|_P: \textnormal{s\textbf{Set}}_P \longrightarrow \textnormal{\textbf{Top}}_{P}
\end{align*}
preserves weak equivalences.
\end{theorem}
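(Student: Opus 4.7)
The plan is to combine the three principal ingredients already at hand: Douteau's result \Cref{thrmDouteauNPrelRet} that the strongly filtered realization $|-|_{N(P)}$ preserves weak equivalences, the comparison \Cref{thrmHolWeakEq} between homotopy links computed in the strongly filtered and the filtered setting, and the naturality criterion \Cref{lemCondForFretain} for the forgetful functor $F: \textnormal{\textbf{Top}}_{N(P)} \to \textnormal{\textbf{Top}}_P$ to reflect weak equivalences along a given morphism. Since $|-|_P$ factors as $F \circ |-|_{N(P)}$, everything is set up to assemble these pieces.

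More concretely, let $f: X \to Y$ be a weak equivalence in $\textnormal{s\textbf{Set}}_P$. I would first invoke \Cref{thrmDouteauNPrelRet} to conclude that $|f|_{N(P)}: |X|_{N(P)} \to |Y|_{N(P)}$ is a weak equivalence in $\textnormal{\textbf{Top}}_{N(P)}$, i.e. induces a weak homotopy equivalence on $\textnormal{Hol}_{N(P)}(\mathcal{J}, -)$ for every flag $\mathcal{J} \in \textnormal{sd}(P)$. Next, I would apply \Cref{thrmHolWeakEq} to both $X$ and $Y$, which tells us that the natural comparison maps
\begin{equation*}
\alpha_{|X|_{N(P)}, \mathcal{J}} : \textnormal{Hol}_{N(P)}(\mathcal{J}, |X|_{N(P)}) \hookrightarrow \textnormal{Hol}_P(\mathcal{J}, |X|_P),
\end{equation*}
and the analogous one for $Y$, are weak homotopy equivalences for every $\mathcal{J} \in \textnormal{sd}(P)$. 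Thus the source and target of $|f|_{N(P)}$ both satisfy the hypothesis of \Cref{lemCondForFretain}.

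Finally, \Cref{lemCondForFretain} then gives the desired conclusion: because $|f|_{N(P)}$ is already a weak equivalence of strongly filtered spaces and $\alpha$ is a weak equivalence at both $|X|_{N(P)}$ and $|Y|_{N(P)}$, the image $F(|f|_{N(P)}) = |f|_P$ is a weak equivalence in $\textnormal{\textbf{Top}}_P$. There is essentially no additional obstacle here, as the hard technical work has already been carried out in \Cref{thrmHolWeakEq}; the only subtlety is to make sure to invoke the naturality of $\alpha$ so that the two-out-of-three argument internal to \Cref{lemCondForFretain} applies to the specific morphism $|f|_{N(P)}$ rather than to some cofibrant replacement, which is exactly what the hypothesis on both endpoints of $f$ provides.
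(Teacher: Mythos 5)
Your argument is correct and coincides with the paper's own proof: both factor $|-|_P$ as $F \circ |-|_{N(P)}$, apply \Cref{thrmDouteauNPrelRet} to the first factor, and use \Cref{thrmHolWeakEq} to verify the hypotheses of \Cref{lemCondForFretain} so that $F$ preserves the weak equivalence $|f|_{N(P)}$. Nothing is missing.
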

\begin{proof}
	We have already seen that $|-|_P$ factors as $F \circ |-|_{N(P)}$. By \Cref{thrmDouteauNPrelRet} $|-|_{N(P)}$ preserves weak equivalences. Furthermore, by \Cref{thrmHolWeakEq}, every strongly filtered space of the shape $|X|_{N(P)}$ fulfills the requirements of \Cref{lemCondForFretain} for $F$ to preserve weak equivalences. Hence, the composition, that is $|-|_P$, preserves weak equivalences.
\end{proof}
This result, even despite the lack of a Quillen equivalence between the simplicial and the topological model categories, gives strong justification, for the former being a good candidate to ``model'' the latter. By the universal property of the localization, $|-|_P$ induces a functor 
\begin{equation*}
	|-|_P: \mathcal{H}\textnormal{s\textbf{Set}}_P \longrightarrow \mathcal{H}\textnormal{\textbf{Top}}_{P}, 
\end{equation*}
denoted the same by abuse of notation.
We conjecture that this is in fact an equivalence of categories. While we do not have a proof of this statement, we will at least see later on (in \Cref{thrmFullyFaithful}) that this functor is fully faithful if one restricts to finite filtered simplicial sets. In particular, one obtains an equivalence of categories between the homotopy category of finite filtered simplicial sets and the category of filtered topological spaces that are weakly equivalent to the realization of a finite filtered simplicial set.
\section{The filtered simplicial approximation theorem}\label{secSimApp}
To understand the connection between the simplicial and the topological categories of filtered objects, we take a detour via the category of simplicial complexes. This approach might seem a bit unusual as the usage of simplicial sets and model categories tends to circumvent any appeals to the more classical and rigid simplicial complexes. However, as we are mainly concerned with understanding the underlying homotopy categories, the usage of a simplicial approximation theorem allows for a (admittedly somewhat bruteforce) connection of the two settings. This finally reflects in the fully faithful embedding described in \Cref{thrmFullyFaithful}. However, despite the path we are following maybe not being the most elegant from a model category perspective, it has the nice side-effect of shedding a lot more light on the piecewise linear world of filtered spaces. Much of the content of this section is a priori independent from the world of simplicial sets and can be understood as an investigation of the purely piecewise linear setting.\\
\\
The simplicial approximation theorem (\cite[Ch. I, Sec. 4., Theorem 8]{spanier1989algebraic}) is probably one of the most powerful and most used theorems in classical algebraic topology. It states that for any map continuous map $\phi: |K| \to |L|$ between the realizations of finite simplicial complexes $K$ and $L$ there exists a subdivision $K'$ of $K$ and a simplicial map $f: K' \to L$, such that $|K'| \xrightarrow{\sim} |K| \xrightarrow{\phi} |L|$ is homotopic to $|f|$ (for most of the standard language on simplicial complexes used here we refer to \cite[Ch. I]{spanier1989algebraic}). Thus, it allows one to reduce many questions on maps of polyhedra to the setting of purely combinatorial maps of simplicial complexes (see for example the proof of the Lefschetz fixed-point theorem in \cite[Ch.4, Sec. 23]{bredon2013topology}).\\
\\ Furthermore, there is also a relative version of this theorem, allowing one to keep the homotopy constant on a subcomplex $L \subset K$ such that $\phi$ on $|L|$ is already simplicial (see \cite{zeeman1964relative}). In particular, applying this relative version to homotopies, one obtains that every continuous map of compact polyhedra is homotopic to a piecewise linear map, and that two piecewise linear maps are homotopic if and only if they are homotopic through a piecewise linear homotopy. In other words, one obtains an equivalence of categories between the p.l. homotopy category of compact polyhedra and the full subcategory of the homotopy category of topological spaces given by compact polyhedra. Thus, at least as long as one is interested mostly in spaces homotopy equivalent to the latter, many questions of homotopy theory can be reduced to the piecewise linear setting.\\
\\
This of course begs the question whether a similar statement can be made in the filtered (stratified setting). Such a result was given by C.H. Schwartz in \cite{schwartz1971}. However, the proof there has several flaws. (To our best understanding condition $\bar \gamma $ of \cite[Theorem 2]{schwartz1971} is only fulfilled if the map is locally constant on the subcomplex $\tilde L$. In addition to that, the proof of the existence of the homotopy, as it is of now, seems to be based on a mistaken assumption in \cite[equation 1.51]{schwartz1971} which leads to the linear interpolation in the homotopy not being well-defined.)
Furthermore, the volume of Mathematica (Cluj) it is published in seems to be rather hard to access as of this moment. In the English language, to the best of our knowledge, there is no correct proof of such a theorem available at all.\\
\\
The proof of the simplicial approximation theorem for the filtered (stratified) setting is loosely based on the one in \cite{schwartz1971}. However, as we restrict our-self to the finite setting, our proofs are somewhat more concise. Furthermore, we give an explicit description of the subdivision used. Lastly, we give a new proof of the existence of the homotopy, circumventing the difficulties mentioned above.
\subsection{Standard constructions for filtered simplicial complexes}
Recall that in \Cref{exFilteredObj} we constructed a category of $P$-filtered simplicial complexes namely $\textnormal{\textbf{sCplx}}_P$. Similarly to the setting of $P$-filtered simplicial sets this category admits the following more explicit description.
\begin{remark}\label{remDeSC}
	Note that, by the definition of maps of simplicial complexes, a filtered simplicial complex can be equivalently characterized as a simplicial complex together with a map $p: K^{(0)} \to P$ such that whenever $\{x_0,...,x_k\}$ is a simplex of $K$, then $\{p(x_0), ..., p(x_k)\}$ is a flag in $P$. Furthermore, a morphism of $P$-filtered simplicial complexes $f: K \to L$ is equivalently a simplicial map of the underlying simplicial complexes such that $p_L(f(x)) = p_K(x))$, for $x \in K^{(0)}$. This justifies calling such morphisms \textit{stratum preserving} simplicial maps.
\end{remark}
Recall that we equipped $\textnormal{\textbf{sCplx}}_P$ with the realization functor $|-|_P$ into $\textnormal{\textbf{Top}}_{P}$ in \Cref{exFiltFun}. For readabilities sake, we mostly omit the $P$ index, for the remainder of this section. As we do not make use of other realization functors here, this should not lead to confusion.
\begin{remark}\label{remDesReal}
	The realization functor is naturally isomorphic to the functors given by the following construction. For a filtered simplicial complex $K$, consider $\mathbb R_{\geq 0}^{K^{(0)}}$ (where in the infinite case we take the weak topology induced by finite dimensional subspaces).
	We identify $x \in K^{(0)}$ with the vector in $\mathbb R_{\geq 0}^{K^{(0)}}$ that has $1$ at $x$ and $0$ for all other entries, and denote this vector by $|x|$. We send $K$ to
	$$ \Big\{ \xi = \sum t_x |x| \in \mathbb R_{\geq 0}^{K^{(0)}} \mid \sum t_x = 1, 
	\{x \mid t_x \neq 0\} \in K \Big\} \subset \mathbb R_{\geq 0}^{K^{(0)}},$$
	filtered by $$\xi = \sum t_x |x| \longmapsto p_K(x_m),$$ where the latter is \textnormal{max}imal with respect to $t_{x_m} \neq 0$.
	For $\sigma \in K$, we denote by $\mathring{\sigma}$ the open simplex in $|K|_P$ corresponding to $\sigma$. By construction, $\mathring{\sigma}$ lies in the stratum of $|K|_P$ corresponding to the \textnormal{max}imum of the flag $p_K(\sigma)$, i.e. $$ \mathring{\sigma} \subset |K|_{\textnormal{max}(p_K(\sigma))}.$$
	We usually think of filtered simplicial complexes as being embedded in $\mathbb R_{\geq 0}^{K^{(0)}}$ in this fashion, which makes the description of linear homotopies somewhat cleaner.
\end{remark}
\begin{definitionconstruction}
	Given a $A$ subcomplex of a $P$-filtered simplicial complex $A$, the filtration of $K$ naturally restricts to a filtration of $A$. Hence, we think of any full subcomplex of a $P$-filtered complex as also being $P$-filtered. For such a pair $A \subset K$ of filtered complexes, we denote by $K-A$ the ($P$-filtered) subcomplex of $K$ spanned by all vertices not in $A$. We use the "$-$" to distinguish this from the set of simplices not in $A$, denoted $K \setminus A$.
\end{definitionconstruction}
%
%
We now quickly recap some standard constructions from p.l. topology. For more details see for example \cite{rourke2012introduction}. 
\begin{definitionconstruction}\label{conJoin}
	Recall that for two simplicial complexes $K_0$ and $K_1$ the join of the two, $K_0 \star K_1$, is given by the simplicial complex with vertices $K_0^{(0)} \sqcup K_1^{(0)}$ and simplices gives by subsets $\sigma$ of $K_0^{(0)} \sqcup K_1^{(0)}$ such that $K_0^{(0)} \cap \sigma \in K$ and $K_1^{(0)} \cap \sigma \in K_1$. The simplex in $K_0 \star K_1$ given by $\sigma \sqcup \tau$ for $\sigma \in K_0$ and $\tau \in K_1$ is denoted by $\sigma \star \tau$. Note that if $K$ and $L$ are filtered simplicial complexes over $P$, the join is in general not naturally filtered. There might be vertices $x$ and $y$ connected by a $1$-vertex in the join such that neither $p_K(x) \leq p_L(y) $ nor $p_L(y) \leq p_K(x)$. Thus, one has to be a little bit careful with using joins here and if we talk about the join of filtered complexes, we usually mean the underlying simplicial complex.
	Further, recall that for two simplices $\sigma $ and $\tau$ in $K_0$, $\sigma \star_{K_0} \tau \subset K_0^{(0)}$ denotes the union of $\sigma$ and $\tau$.
	In general of course, $\sigma \star_{K_0} \tau$ will not be a simplex of $K_0$. We use the same notation for filtered simplicial complexes. 
\end{definitionconstruction}
As expected from the proof of a simplicial approximation theorem, we are be making extensive use of the notion of stars.
\begin{definitionconstruction}
	Recall (see \cite[Ch. 3]{rourke2012introduction}) that the closed star of a simplex $\sigma$ in a simplicial complex $K$, denoted $\overline{\textnormal{star}}_K(\sigma)$, is the subcomplex given by all simplices that are contained in a simplex containing $\sigma$. Recall further that the open star of a simplex $\sigma$ in $K$, denoted $\textnormal{star}_K(\sigma)$, is the set of all simplices containing $\sigma$. 
	We denote by $|\textnormal{star}_K(\sigma)|$ the union of the open simplices in $\textnormal{star}_K(\sigma)$, $$|\textnormal{star}_K(\sigma)|:=\bigcup_{\tau \in \textnormal{star}_K(\sigma)} \mathring{\tau} \subset |K|.$$ The choice of notation should not mislead one to think that $\textnormal{star}_K(\sigma)$ is actually a simplicial complex. We use the same notation in the filtered setting.
\end{definitionconstruction}
The following properties of the star constructions are immediate from the non-filtered case.
\begin{remark}\label{remFactsOnStars} Let $K$ be a filtered simplicial complex and $\sigma = \{x_0,...,x_k\} \in K$. Then the following hold.
	\begin{enumerate}
		\item $\textnormal{star}_K(\sigma) = \bigcap_{x_i \in \sigma } \textnormal{star}_K(x_i)$ \label{remFactsOnStars1}
		\item $\overline{\textnormal{star}}_K(\sigma) \subset \bigcap_{x_i \in \sigma } \overline{\textnormal{star}}_K(x_i)$ \label{remFactsOnStars2}
		\item $|\textnormal{star}_K(\sigma)| = \{\sum_{x \in K^{(0)}}t_x x \in |K| \mid t_x > 0 \textnormal{ for all } x \in \sigma \} $ \label{remFactsOnStars3}
		\item $|\textnormal{star}_K(\sigma)|$ is an open neighbourhood of $\mathring{\sigma}$ in $|K|$\label{remFactsOnStars4}.
		\item $\overline{|\textnormal{star}_K(\sigma)|} = |\overline{\textnormal{star}}_K(\sigma)|$ \label{remFactsOnStars5}
	\end{enumerate}
	Now, let $\tau \in K$ be another simplex. Then the following are equivalent: 
	\begin{itemize}
		\item $\sigma \star_K \tau$ in $K$ .
		\item $\textnormal{star}_K(\sigma) \cap \textnormal{star}_K(\tau) \neq \emptyset$
		\item $|\textnormal{star}_K(\sigma)| \cap |\textnormal{star}_K(\tau)| \neq \emptyset$
		\item $\textnormal{star}_K(\sigma) \cap \overline{\textnormal{star}}_K(\tau) \neq \emptyset$
		\item $|\textnormal{star}_K(\sigma)| \cap |\overline{\textnormal{star}}_K(\tau)| \neq \emptyset$
	\end{itemize}
	In particular, one obtains a geometrical condition for when a set of vertices $\{x'_0, ...,x'_k\} \subset K^{(0)}$ forms a simplex in $K$; i.e. that the intersection of their (realized) open stars is non-empty. Now, if $f: K \to L$ is a map of simplicial complexes, then: 
	\begin{enumerate}[label = (\alph*)]
		\item $|f|(|\textnormal{star}_K(\sigma)|) \subset |\textnormal{star}_L(f(\sigma))|$
		\item $|f|(|\overline{\textnormal{star}}_K(\sigma)|) \subset |\overline{\textnormal{star}}_L(f(\sigma))|$
	\end{enumerate}
\end{remark}
As an immediate consequence of this remark together with \Cref{remEquCharFilt}, we obtain an equivalent characterization of when a map on the vertices of filtered complexes induces one of filtered complexes.
\begin{lemma}\label{lemEquCharMap}
	Let $K, L \in \textnormal{\textbf{sCplx}}_P$ and $f^0 : K^{(0)} \to L^{(0)}$ be a map such that $f^0(p_K(x)) = p_L(f(x))$. Then $f$ extends to a morphism in $\textnormal{\textbf{sCplx}}_P$ if and only if for every $\sigma \in K$, $\bigcap_{x \in \sigma } \textnormal{star}_L(f(x))$ or equivalently $\bigcap_{x \in \sigma } |\textnormal{star}_L(f(x))|$ is non-empty.
\end{lemma}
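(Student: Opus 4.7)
The plan is to reduce the statement to a direct unpacking of the definitions of open stars and of morphisms in $\textnormal{\textbf{sCplx}}_P$, using the characterizations collected in \Cref{remFactsOnStars}. First I would note that the hypothesis on $f^0$ already encodes the stratum preservation condition at the level of vertices; hence the only remaining issue is whether $f^0$ extends to a simplicial map of the underlying (unfiltered) simplicial complexes, i.e.\ whether for every simplex $\sigma = \{x_0,\dots,x_k\} \in K$ the image set $\{f^0(x_0),\dots,f^0(x_k)\}$ is a simplex of $L$.

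For the forward direction, assume $f$ extends $f^0$ as a morphism in $\textnormal{\textbf{sCplx}}_P$. Then for any $\sigma \in K$ the set $f(\sigma) = \{f^0(x) \mid x \in \sigma\}$ is itself a simplex of $L$, so by \Cref{remFactsOnStars}\ref{remFactsOnStars1} it lies in $\bigcap_{x\in\sigma} \textnormal{star}_L(f^0(x))$, which is therefore non-empty. For the converse, suppose the intersections are non-empty for every $\sigma \in K$. Non-emptiness of $\bigcap_{x\in\sigma}\textnormal{star}_L(f^0(x))$ produces a simplex $\tau \in L$ with $f^0(x) \in \tau$ for each $x \in \sigma$; since $L$ is closed under taking subsets of its simplices, $\{f^0(x) \mid x \in \sigma\}$ is itself a simplex of $L$. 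Thus $f^0$ extends to a simplicial map $f \colon K \to L$, and the assumed identity on filtrations makes $f$ stratum preserving, i.e.\ a morphism in $\textnormal{\textbf{sCplx}}_P$.

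For the last clause, the equivalence of the combinatorial condition ``$\bigcap_x \textnormal{star}_L(f^0(x)) \neq \emptyset$'' and the geometric one ``$\bigcap_x |\textnormal{star}_L(f^0(x))| \neq \emptyset$'' is immediate from \Cref{remFactsOnStars}\ref{remFactsOnStars3}: a simplex $\tau$ contains all the $f^0(x)$ precisely when its open realization $\mathring{\tau}$ lies in every $|\textnormal{star}_L(f^0(x))|$, and conversely any point of $\bigcap_x |\textnormal{star}_L(f^0(x))|$ lies in the interior of some simplex that must contain all the $f^0(x)$ as vertices.

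There is really no main obstacle here: the statement is a translation between the combinatorial and geometric descriptions of when a vertex map extends, and all the ingredients have already been assembled in \Cref{remFactsOnStars}. The only care needed is to keep the stratum preservation condition on $f^0$ separate from the question of extendability, since the latter is a purely unfiltered statement about the underlying simplicial complexes.
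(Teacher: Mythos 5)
Your proof is correct and follows exactly the route the paper intends: the paper states the lemma as an immediate consequence of \Cref{remFactsOnStars} (the open-star characterization of when a vertex set spans a simplex, in both its combinatorial and realized forms) together with the vertex-level description of stratum preserving simplicial maps, and your write-up simply fills in those omitted details. No gaps; the only cosmetic point is that the hypothesis in the statement should be read as $p_K(x) = p_L(f^0(x))$, which is how you correctly interpreted it.
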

Even when the filtration condition of \Cref{lemEquCharMap} is not fulfilled, the open stars can be used to obtain an upper boundary for the stratum of an open simplex.
\begin{lemma}\label{lemBoundOnStrata}
	Let $K \in \textnormal{\textbf{sCplx}}_P$ and $x \in K^{0}$. Then for any $\xi \in |\textnormal{star}_K(x)|$, $$p_K(x) \leq p_{|K|}(\xi).$$ 
\end{lemma}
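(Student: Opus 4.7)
The plan is to prove this by simply unwinding the two relevant definitions from \Cref{remDesReal} and \Cref{remFactsOnStars}. First I would recall that by \Cref{remFactsOnStars}\ref{remFactsOnStars3} (applied to the $0$-simplex $\{x\}$), a point $\xi = \sum_{y \in K^{(0)}} t_y |y| \in |K|_P$ lies in $|\textnormal{star}_K(x)|$ if and only if its $x$-coordinate satisfies $t_x > 0$.

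Next I would recall that by the explicit description of the filtration on $|K|_P$ given in \Cref{remDesReal}, the value $p_{|K|}(\xi)$ is defined as the maximum of $p_K(y)$ over all vertices $y$ with $t_y \neq 0$. In particular, since $t_x > 0$ puts $x$ into the indexing set of this maximum, we immediately obtain
\[
p_K(x) \leq \max\{p_K(y) \mid t_y \neq 0\} = p_{|K|}(\xi),
\]
which is the claim.

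There is no real obstacle here; the statement is essentially a bookkeeping lemma whose content is entirely absorbed by the explicit coordinate description of both the open star and the filtration on the realization. It is worth stating it as a separate lemma only because it will be used repeatedly in the sequel (for instance to give upper bounds on strata in the absence of the stronger hypothesis of \Cref{lemEquCharMap}).
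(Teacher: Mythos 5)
Your proof is correct and amounts to the same definition-unwinding argument as the paper's: the paper takes the carrier simplex $\tau$ with $\xi \in \mathring{\tau}$ and $x \in \tau$ and notes $p_K(x) \leq \max(p_K(\tau)) = p_{|K|}(\xi)$, which is exactly your coordinate formulation since $\tau$ is the simplex spanned by the vertices with nonzero coordinate. No gaps.
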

\begin{proof}
	Let $\tau$ be a simplex such that $\xi \in \mathring \tau$ and $x \in \tau$. Then $$p_K(x) \leq \textnormal{max}(p_K(\tau)) = p_{|K|}(\xi).$$
\end{proof}
We also need a few remarks about line segments.
\begin{definition}
	Let $\xi$ and $\eta$ in $|K|$ be two points that lie in a common simplex $|\sigma|$. Denote by
	$$[\xi,\eta] := \{(1-t)\xi + t\eta \mid t \in [0,1] \} \subset |\sigma| \subset |K| \subset \mathbb R^{K^{(0)}}$$ the affine line segment between them. 
	We call this the \textit{closed line segment between $\xi$ and $\eta$}. We further denote by $$(\xi, \eta) := [\xi, \eta] \setminus \{\xi, \eta\}$$ 
	and call this \textit{the open line segment} between $\xi$ and $\eta$. Furthermore, we define half open line segments in the obvious way.
\end{definition}
 By construction, every open line segment lies in a unique open simplex of $|K|$, corresponding to the join of $\tau,\tau' \subset \sigma$ with $\xi \in \mathring{\tau}$ and $\eta \in \mathring{\tau}'$. As a direct consequence there is the following lemma, which will be quite useful later on for the construction of homotopies.
\begin{lemma}\label{lemLineSegmentsInStars}
	Let $K \in \textnormal{\textbf{sCplx}}_P$ and $\sigma \in K$. Then, for any half open line segment $[\xi,\eta)$ with $\xi \in |\textnormal{star}_K(\sigma)|$, we also have $[\xi,\eta) \subset |\textnormal{star}_K(\sigma)|$. 
\end{lemma}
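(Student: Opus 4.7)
The plan is to reduce the statement to the description of open stars in terms of barycentric coordinates (item (iii) of Remark~\ref{remFactsOnStars}) together with the fact, noted just before the lemma, that every open line segment in $|K|$ lies in a single open simplex.

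First, I would write $\xi$ and $\eta$ as interior points of simplices $\tau,\tau' \in K$ respectively, both contained in a common simplex, and observe by the discussion preceding the lemma that the open segment $(\xi,\eta)$ is contained in the interior $\mathring{\rho}$ of the unique simplex $\rho = \tau \star_K \tau'$. In particular,
\[
[\xi,\eta) = \{\xi\} \cup (\xi,\eta) \subset \mathring{\tau} \cup \mathring{\rho}.
\]
So it suffices to show that both $\mathring{\tau}$ and $\mathring{\rho}$ are contained in $|\textnormal{star}_K(\sigma)|$.

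Next I would use the hypothesis $\xi \in |\textnormal{star}_K(\sigma)|$: by item (iii) of Remark~\ref{remFactsOnStars}, writing $\xi = \sum_{x \in K^{(0)}} t_x\,x$, we have $t_x > 0$ for every $x \in \sigma$. Since the support of $\xi$ is precisely the vertex set of $\tau$, this forces $\sigma \subset \tau$, i.e. $\tau \in \textnormal{star}_K(\sigma)$, and hence $\mathring{\tau} \subset |\textnormal{star}_K(\sigma)|$. Because $\rho$ contains $\tau$ as a face, also $\sigma \subset \rho$, so $\rho \in \textnormal{star}_K(\sigma)$ and $\mathring{\rho} \subset |\textnormal{star}_K(\sigma)|$. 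Combining with the inclusion above gives $[\xi,\eta) \subset |\textnormal{star}_K(\sigma)|$, as desired.

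There is no real obstacle here; the only thing to keep straight is that one must not try to place $\eta$ into the star (it genuinely need not lie there, which is exactly why the segment is \emph{half}-open), and that the argument uses nothing about the filtration beyond the ambient simplicial structure — so the same proof works verbatim for both $K \in \textnormal{\textbf{sCplx}}$ and $K \in \textnormal{\textbf{sCplx}}_P$.
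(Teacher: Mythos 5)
Your proof is correct and follows essentially the same route as the paper's: both decompose $[\xi,\eta)$ as $\{\xi\}\cup(\xi,\eta)$ and reduce to showing that the unique open simplex carrying $(\xi,\eta)$ belongs to $\textnormal{star}_K(\sigma)$. The only difference is minor: the paper deduces this membership from the openness of $|\textnormal{star}_K(\sigma)|$ together with the pairwise disjointness of open simplices, whereas you read it off directly from the barycentric-coordinate description in \Cref{remFactsOnStars}, which also identifies the carrier explicitly as the join $\tau\star_K\tau'$ — both arguments are equally valid.
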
 
\begin{proof}
	As $|\textnormal{star}_K(\sigma)|$ is open and $\xi \in \textnormal{star}_K(\sigma)$, we have $(\xi, \eta)\cap |\textnormal{star}_K(\sigma)| \neq \emptyset.$ Hence, as the open simplices of $|K|$ are pairwise disjoint, and $|\textnormal{star}_K(\sigma)|$ is the union of such simplices, we get $\tau \in \textnormal{star}_K(\sigma)$, where $\tau$ is the unique simplex with $(\xi,\eta) \subset \mathring{\tau}$. In particular, $$[\xi,\eta) = \{\xi\} \cup (\xi,\eta) \subset |\textnormal{star}_K(\sigma)|.$$
\end{proof}
Furthermore, line segments also interact rather nicely with the stratification.
\begin{lemma}\label{lemLineSeqStrat}
	Let $|\sigma|$ be a simplex of a filtered simplicial complex $|K|$. Let $\xi, \eta \in |\sigma|$ and $p_{|K|}(\xi) \leq p_{|K|}(\eta)$. Then $(\xi,\eta]$ lies in the $p_{|K|}(\eta)$-th stratum of $|K|$.
\end{lemma}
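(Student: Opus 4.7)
The plan is to argue by a direct computation with the barycentric coordinates, using the description of strata of realizations of filtered simplicial complexes given in \Cref{remDesReal}. First I would fix $\sigma \in K$ with vertices and write $\xi = \sum_{x \in \sigma} t_x |x|$ and $\eta = \sum_{x \in \sigma} s_x |x|$ in the realization $|\sigma| \subset \mathbb R^{K^{(0)}}_{\geq 0}$. Setting $p := p_{|K|}(\eta)$ and $p' := p_{|K|}(\xi)$, I would unpack the definition of the filtration: $p$ is the maximum element of $p_K(\sigma)$ such that some $s_x$ with $p_K(x) = p$ is nonzero, and similarly for $p'$. In particular, $s_x = 0$ for every $x \in \sigma$ with $p_K(x) > p$, and $t_x = 0$ for every $x$ with $p_K(x) > p'$. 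Since $p' \leq p$, the latter gives $t_x = 0$ for every $x$ with $p_K(x) > p$ as well.

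Next I would parametrize the half-open segment as $\zeta_\lambda = (1-\lambda)\xi + \lambda\eta = \sum_{x \in \sigma}\big((1-\lambda)t_x + \lambda s_x\big)|x|$ for $\lambda \in (0,1]$ and track the support of its coefficients. For every $x$ with $p_K(x) > p$ both $t_x$ and $s_x$ vanish, so the corresponding coefficient is zero. On the other hand, by definition of $p$ there is some $x_0 \in \sigma$ with $p_K(x_0) = p$ and $s_{x_0} > 0$; since $\lambda > 0$ and $t_{x_0} \geq 0$, the coefficient $(1-\lambda)t_{x_0} + \lambda s_{x_0}$ is strictly positive. Thus the maximum of $p_K(x)$ over the vertices with nonzero coefficient in $\zeta_\lambda$ is exactly $p$, i.e.\ $p_{|K|}(\zeta_\lambda) = p$.

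This shows $\zeta_\lambda$ lies in the $p$-th stratum for every $\lambda \in (0,1]$, which is precisely the claim that $(\xi, \eta] \subset |K|_{p_{|K|}(\eta)}$. There is no real obstacle here; the only point requiring any care is to remember that $p_{|K|}(\xi) \leq p_{|K|}(\eta)$ is used to ensure that no vertex $x$ with $p_K(x) > p$ contributes positively to $\xi$ either, so that the ``upper'' coordinates all stay zero along the segment. The endpoint $\eta$ case ($\lambda = 1$) is immediate from the definition of $p$, and everything in between is then the same calculation.
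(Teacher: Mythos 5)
Your proof is correct and takes the same approach as the paper, which simply declares the claim immediate from the barycentric-coordinate characterization of the strata of $|K|$; you have spelled out the (short) computation the paper leaves implicit.
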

\begin{proof}
	This is immediate from the characterization of $|K|_p$ as the set of elements \begin{align*}
\Big \{\sum_{x \in K^{(0)}}t_x |x| \in |K| \mid p=\textnormal{max}\left(p_K \left(\{x \mid t_x > 0\} \right) \right) \Big \}\subset \mathbb{R}^{K^{(0)}}.
	\end{align*}
\end{proof}
Finally, we need to make use of regular neighbourhoods (see \cite[Ch. 3]{rourke2012introduction}).
\begin{definition}
	Recall that the simplicial neighbourhood of a subcomplex $L \subset K$, $N(K,L)$, is the subcomplex of $K$ given by $$N(K,L):=\bigcup_{\sigma \in L}\overline{\textnormal{star}}_K(\sigma).$$ Denote by $\partial N(K,L)$ the intersection of $K-L$ and $N(K,L)$. Then one has $$K = (K-L) \cup N(K,L).$$ Furthermore, we denote by $\mathring{N}(K,L)$ the set of simplices given by $N(K,L) \setminus \partial N(K,L)$. Just as for stars, denote by $|\mathring{N}(K,L)|$ the union of the open simplices in $\mathring{N}(K,L)$ in $|K|$. An open line segment $(\xi,\eta) \subset |N(K,L)|$ with $\xi \in |L|$ and $\eta \in |K-L|$ is called a \textit{ray of $N(K,L)$}. We use the same notation in the filtered setting.
\end{definition} Similarly to the setting of stars one then has: 
\begin{remark}\label{remPropOfN} Let $K \in \textnormal{\textbf{sCplx}}_P$ and $A$ be a full subcomplex of $K$. Then the following hold:
	\begin{enumerate}[label = (\roman*), ref = (\roman*)]
		\item $N(K,A)= \{\sigma \star_K \tau | \sigma \in A, \tau \in \partial N(K,A) \textnormal{ s.t. } \sigma \star \tau \in K\}$ \label{remPropOfN1}
		\item $|\mathring{N}(K,A)|$ is an open (regular in the topological sense) neighbourhood of $|A|$ in $|K|$. \label{remPropofN2}
		\item $|\mathring{N}(K,A)|\setminus|A|$ is the disjoint union of the rays of $N(K,A)$. \label{remPropofN3}
		\item $|\mathring{N}(K,A)|\setminus|A|$ is covered by sets of the shape $|\textnormal{star}_K(\{a,x\})|$, for $a \in A^{(0)}$ and $x \in \partial N(K,A)^{(0)}$, $\{a,x\} \in K$. \label{remPropOfNCover}
	\end{enumerate}
\end{remark}
We will be making explicit use of the constructions that turn $|\mathring{N}(K,L)|$ into a regular neighbourhood later on so we now go a little bit more into detail, describing them here. We use the notation from \cref{remDesReal}.
\begin{definitionconstruction}\label{conProjectionsOfSimNbhd}
	Let $K \in \textnormal{\textbf{sCplx}}_P$ and $A$ a be full subcomplex of $K$. Consider the map \begin{align*}
	s_A:|K| &\longrightarrow [0,1] \\
	\sum_{x \in (K-A)^{(0)}} t_x|x| + \sum_{a \in A^{(0)}} t_a|a| &\longmapsto \sum_{x \in (K-A)^{(0)}} t_x.
	\end{align*}
	Then $s_A^{-1}(\{1\})= |K-A|$, $s_A^{-1}(\{0\})= |A|$ and $s_A^{-1}((0,1))= |\mathring{N}(K,A)|\setminus|A|.$ $s_A$ restricted to $|N(K,A)|$ is called the \textit{radial parameter}. Furthermore, we have the two maps: 
	\begin{align*}
	\alpha_A: |\mathring{N}(K,A)| &\longrightarrow |A|\\
	\sum_{x \in \partial N(K,A)^{(0)}} t_x|x| + \sum_{a \in A^{(0)}} t_a|a| & \longmapsto \sum_{a \in A^{(0)}}\frac{t_a}{\sum_{a \in A^{(0)}} t_a}|a|,\\
	&
	\\
	\beta_A: |N(K,A)|\setminus|A| &\longrightarrow |\partial N(K,A)|\\
	\sum_{x \in \partial N(K,A)^{(0)}} t_x|x| + \sum_{a \in A^{(0)}} t_a|a| & \longmapsto \sum_{x \in \partial N(K,A)^{(0)}}\frac{t_x}{\sum_{x \in \partial N(K,A)^{(0)}} t_x}|x|.
	\end{align*}
	For $\xi \in |\mathring{N}(K,A)|\setminus|A|$, $(\alpha_A(\xi),\beta_A(\xi))$ is then the unique ray of $N(K,A)$ containing $\xi$. Further, the three maps are then related via:
	\begin{equation}\label{conProjectionsOfSimNbhdEqSum}
	\xi = (1-s_A(\xi))\alpha_A(\xi) + s_A(\xi)\beta_A(\xi).
	\end{equation} 
	Furthermore, we obtain a deformation retraction of $|\mathring{N}(K,A)|$ onto $|A|$ via:
	\begin{align*}
	H: |\mathring{N}(K,A)| \times \Delta^1&\longrightarrow |\mathring{N}(K,A)|\\
	(\xi,t) &\longmapsto (1-t)\xi + t \alpha_A(\xi).
	\end{align*} 
	Note that, by \Cref{lemLineSeqStrat}, this is a stratum preserving map up to $t=1$. Such a homotopy is called a nearly strict homotopy (see \cite{quinn1988homotopically}).
\end{definitionconstruction}
\subsection{Subdivisions and relative subdivisions}
Just as in the classical setting, the filtered simplicial setting has a notion of subdivision. This of course just comes down to subdividing the underlying simplicial complexes in a way compatible with the filtration.
\begin{definition}\label{defSubdivision}
	Let $K \in \textnormal{\textbf{sCplx}}_P$. A \textit{subdivision} $(K', s)$ of $K$, written $K' \vartriangleleft K$, is a filtered simplicial complex $K'$ over $P$, together with an inclusion $s:K'^{(0)} \hookrightarrow |K|$ fulfilling the following conditions:
	\begin{itemize}
		\item $s$ respects strata, that is $p_{|K|}(s(x')) = p_{K'}(x')$.
		\item For every $\sigma \in K'$ there exists a $\tau \in K$ such that $s(\sigma) \subset |\tau|.$
		\item The induced linear extension of $s$ $|K'| \to |K|$ with respect to $|K| \subset \mathbb R_{\geq 0}^{K^{(0)}}$ is a homeomorphism.
	\end{itemize}
\end{definition}
\begin{remark}\label{remEquCharFilt}
	Note that under the conditions of \Cref{defSubdivision} the induced map $|K'| \to |K|$ is automatically a stratum preserving homeomorphism. To see this, let $\sigma$ be a simplex in $K'$ and $\xi \in \mathring{\sigma}$. Let $\tau$ be the minimal simplex in $K$ such that $s(\sigma) \subset |\tau|$. Then $s(\xi) \subset \mathring{\tau}$ (compare \cite[Ch. I, Sec. 4]{spanier1989algebraic}). Now, let $x'_m$ be a \textnormal{max}imal vertex in $\sigma$ with respect to $p_{K'}$, i.e. one such that $\xi$ lies in the $p_{K'}(x'_m)$-th stratum. Then, by minimality of $\tau$, and the fact that $s$ respects strata, we also know that for a \textnormal{max}imal vertex $x_m$ of $\tau $ we get $p_K(x_m) = p_{K'}(x'_m)$. In particular, $\mathring{\tau } $ and hence also the image of $\xi$ lies in the $p_{K'}(x'_m)$-th stratum.\end{remark}
In the following, we will usually omit any mention of the map $s$ and just write $K' \vartriangleleft K$. \begin{remark}\label{remTransitive}
	Clearly, just as in the classical setting $\vartriangleleft$ behaves transitively i.e. subdivisions can be composed. That is, for filtered simplicial complexes $K'',K',K$ we have and subdivisions $K'' \vartriangleleft K', K' \vartriangleleft K$ one obtains a subdivision $K'' \vartriangleleft K$ by composing the map $K''^{(0)} \to |K'|$ with the induced stratum preserving homeomorphism $|K'|\to |K|$.
\end{remark}
Since we will frequently be subdividing simplicial complexes $K$ together with a full subcomplex $A$, the following lemma will find a lot of - albeit sometimes implicit - usage.
\begin{lemma}\label{lemFulnesusTained}
	Let $A \subset K$ be a full subcomplex of a (filtered) simplicial complex. Further, let $K' \vartriangleleft K$ be a subdivision of $K$ and $A' \vartriangleleft A$ the induced subdivision of $A$. Then $A' \subset K'$ is again a full subcomplex.
\end{lemma}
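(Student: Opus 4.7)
The plan is to use fullness of $A$ together with the minimality of the carrying simplex of $\sigma'$ in $K$. Recall that the induced subdivision $A' \vartriangleleft A$ has as vertices exactly those $x' \in K'^{(0)}$ with $s(x') \in |A|$, and a simplex $\sigma' \in K'$ belongs to $A'$ precisely when there exists $\tau \in A$ with $s(\sigma') \subset |\tau|$ (and equivalently when $s(\sigma') \subset |A|$). So the content of the lemma is: if every vertex $x'$ of a simplex $\sigma' \in K'$ satisfies $s(x') \in |A|$, then in fact $s(\sigma') \subset |\tau|$ for some $\tau \in A$.

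First I would pick, for a given simplex $\sigma' \in K'$ all of whose vertices lie in $A'^{(0)}$, a \emph{minimal} simplex $\tau \in K$ with $s(\sigma') \subset |\tau|$; such a $\tau$ exists by the definition of a subdivision (\Cref{defSubdivision}). By minimality, every vertex $v$ of $\tau$ must appear with positive coefficient in $s(x')$ for at least one vertex $x'$ of $\sigma'$ — otherwise $s(\sigma')$ would already be contained in the corresponding proper face of $\tau$, contradicting minimality.

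Next, for each vertex $x'$ of $\sigma'$, the point $s(x') \in |A|$ lies in a unique open simplex $\mathring{\sigma}_{x'}$ of $K$ with $\sigma_{x'} \in A$, since $|A|$ is the disjoint union of the open simplices of $A$. Because $s(x')$ also lies in $|\tau|$, necessarily $\sigma_{x'}$ is a face of $\tau$, and the support of $s(x')$ — i.e.\ the set of vertices of $K$ at which the coordinates of $s(x')$ are nonzero — is exactly $\sigma_{x'} \subset A^{(0)}$.

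Combining the two previous paragraphs, every vertex $v$ of $\tau$ lies in some $\sigma_{x'} \subset A^{(0)}$, so $\tau^{(0)} \subset A^{(0)}$. By the fullness of $A$, this forces $\tau \in A$, which gives $s(\sigma') \subset |\tau| \subset |A|$ and hence $\sigma' \in A'$, as needed. The only potentially subtle point is the reduction to a minimal $\tau$ so that one may invoke fullness on its full vertex set; everything else is bookkeeping with the standard description of $|A|$ as a disjoint union of open simplices of $A$.
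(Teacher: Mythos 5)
Your proof is correct. It follows the same basic skeleton as the paper's: pass to the minimal simplex $\tau \in K$ carrying $\sigma'$ and then invoke fullness of $A$; but the middle step is handled differently. The paper argues by a case split — either $|\sigma'|$ fills $|\tau|$ (then the vertices of $\sigma'$ map onto those of $\tau$ and fullness applies) or some vertex of $\sigma'$ lies in the interior of $\tau$ (then $\tau \in A$ directly, since that vertex's image lies in $|A|$, which is a union of open simplices of $A$). Your argument instead shows, via the barycentric supports, that by minimality every vertex of $\tau$ occurs in the support of $s(x')$ for some vertex $x'$ of $\sigma'$, and that each such support is the vertex set of a simplex of $A$; hence $\tau^{(0)} \subset A^{(0)}$ and fullness applies uniformly. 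This is actually the more robust route: the paper's dichotomy, read literally, can fail (e.g.\ star a $2$-simplex at an interior point of one of its edges; the subdivision simplex spanned by the new vertex and the two opposite original vertices has the whole $2$-simplex as minimal carrier, yet is not all of it and has no vertex in its interior), whereas your support argument covers all configurations at once, since the union of the supports of the vertex images exhausts $\tau^{(0)}$ exactly by minimality. So your write-up both matches the intended strategy and quietly repairs the one imprecise step in the paper's version; the bookkeeping with open simplices and with the characterization of $A'$ via carriers in $A$ (\Cref{defSubdivision}) is all in order.
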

\begin{proof}
	Let $\sigma' \in K'$, such that all of its vertices lie in $A'$. Let $\sigma \in K$ be the minimal simplex supporting $\sigma'$. That is, $\sigma$ is minimal with respect to $|\sigma'| \subset |\sigma|$, where we have identitfied $|K'|$ and $|K|$ under the induced p.l. isomorphism. It suffices to show $\sigma \in A$. By minimality, either $|\sigma|=|\sigma'|$ or some vertex of $\sigma'$ is contained in the interior of $\sigma$. In the latter case, this directly implies $\sigma \in A$, and hence $\sigma'$ in $A'$. In the former, this means the vertices in $\sigma$ agree with those in $\sigma'$, and hence, by fullness of $A$, we obtain $\sigma \in A$.
\end{proof}
Recall that in \Cref{exFiltFun}, we constructed a barycentric subdivision functor $\textnormal{sd}: \textnormal{\textbf{sCplx}}_P \to \textnormal{\textbf{sCplx}}_P$.
As the name suggests the filtered barycentric subdivision actually gives a subdivision in the sense of \Cref{defSubdivision}.
\begin{proposition}\label{propBarSd}
	For $K \in \textnormal{\textbf{sCplx}}_P$, consider the map $s:\textnormal{sd}(K)^{(0)} \to |K|$ sending $\sigma $ to the barycenter of $|\sigma| \subset |K|$- We denote this by $\textnormal{bar}{(\sigma)}$. This induces a subdivision $\textnormal{sd}(K) \vartriangleleft K.$
\end{proposition}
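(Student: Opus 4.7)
The plan is to check the three conditions in Definition~\ref{defSubdivision} one after the other, invoking the classical unfiltered barycentric subdivision for the analytic content and doing only the filtration-level bookkeeping by hand.

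First I would verify that $s$ respects strata. By the construction of the filtered barycentric subdivision in Example~\ref{exFiltFun}, the filtration $p_{\textnormal{sd}(K)}$ assigns to a vertex $\sigma \in \textnormal{sd}(K)^{(0)}$ the value $\max(p_K(\sigma))$. On the other hand, by the explicit description in Remark~\ref{remDesReal}, the barycenter $\textnormal{bar}(\sigma) = \frac{1}{\#\sigma}\sum_{x \in \sigma}|x|$ has all coordinates at vertices of $\sigma$ strictly positive, so it lies in the open simplex $\mathring{\sigma}\subset |K|$, which in turn lies in the stratum $|K|_{\max(p_K(\sigma))}$. Hence $p_{|K|}(s(\sigma)) = \max(p_K(\sigma)) = p_{\textnormal{sd}(K)}(\sigma)$.

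Next I would verify the support condition. A simplex $\sigma' \in \textnormal{sd}(K)$ is by definition a chain $\sigma_0 \subsetneq \sigma_1 \subsetneq \dots \subsetneq \sigma_k$ of simplices of $K$. Setting $\tau := \sigma_k \in K$, each vertex $\sigma_i$ of $\sigma'$ maps under $s$ to $\textnormal{bar}(\sigma_i)\in |\sigma_i| \subset |\sigma_k| = |\tau|$, so $s(\sigma') \subset |\tau|$ as required.

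Finally, the third condition is purely unfiltered: the linear extension $|\textnormal{sd}(K)| \to |K|$ of the barycenter map is precisely the classical barycentric subdivision homeomorphism for simplicial complexes, see for instance \cite[Ch.~3]{spanier1989algebraic}. Once the first three conditions are in place, Remark~\ref{remEquCharFilt} automatically upgrades this homeomorphism to a stratum preserving one, so nothing extra needs to be checked. I do not expect any serious obstacle here: the only thing one has to be careful about is that the filtration on $\textnormal{sd}(K)$ defined in Example~\ref{exFiltFun} is \emph{by maxima of} $p_K$, and this is exactly matched by the stratum in which the barycenter lands, which is the one mildly substantive compatibility to be recorded.
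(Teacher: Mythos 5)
Your proposal is correct and matches the paper's own argument: the paper likewise reduces everything except the strata condition to the classical statement about barycentric subdivision and then observes that $\textnormal{bar}(\sigma)\in\mathring{\sigma}$ lies in the stratum indexed by $\textnormal{max}(p_K(\sigma)) = p_{\textnormal{sd}(K)}(\sigma)$. Your explicit check of the support condition via the chain $\sigma_0\subsetneq\dots\subsetneq\sigma_k$ is a minor elaboration of what the paper absorbs into the citation, not a different route.
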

\begin{proof}
	By the classical statement (see \cite[Ch I, Sec. 3]{spanier1989algebraic}) all one needs to show is that $s$ respects strata. This holds by definition of the filtration of $|K|$ as the barycenter of $|\sigma|$ always lies in $\mathring{\sigma}$, and thus in the stratum corresponding to $\textnormal{max}(p_K(\sigma)) = p_{\textnormal{sd}(K)}(\sigma)$.
\end{proof}
Denote by $\textnormal{sd}^n(K)$ the $n$-times application of $\textnormal{sd}$. Then as an immediate consequence of this proposition and \Cref{remTransitive} we have for $K \in \textnormal{\textbf{sCplx}}_P$ a subdivision $$\textnormal{sd}^n(K) \vartriangleleft K.$$ 
\begin{remark}\label{remSubDivGetSmall}
	As the barycentric subdivision of a filtered simplicial set is just given by putting a filtration on one of the underlying simplicial set, one obtains in particular the following classical result (Compare \cite[Ch I. Sec. 3]{spanier1989algebraic}). For a simplex $\sigma \in \textnormal{sd}^n(K) \vartriangleleft K$ denote by $d(\sigma)$ the diameter of the image of $|\sigma|$ in $|K|$, with respect to the metric induced by $|K| \subset \mathbb{R}^{K^{(0)}}$. 
	Denote by $D_n$ the supremum over the $d(\sigma)$ of $\textnormal{sd}^n(K)$. Then, if $K$ is finite, $$ D_n \xrightarrow{n \to \infty } 0.$$ In particular, by Lebesgues Lemma, for every open covering of $|K|$ there exists an $N$ such that for all $n \geq N$ in the pulled back covering on $|\textnormal{sd}^n(K)|$ every closed simplex is contained in some open set of the covering. 
\end{remark}
For the proof of the filtered simplicial approximation theorem, we will need to make use of relative barycentric subdivisions, akin to what Zeeman used in \cite{zeeman1964relative}. 
\begin{definitionconstruction}\label{constrRelSd}
	Let $K$ be a filtered simplicial complex with a full subcomplex $A$. Consider the simplicial complex given by $$ \{\sigma \star \{\tau_0 \subset ... \subset \tau_k\} \mid \sigma \star_K \tau_k \in K\} \subset A \star \textnormal{sd}(K-A).$$ For a simplex in the above complex, we have by definition that $p_K(\sigma) \cup p_K(\tau_k)$ is a flag in $P$. In particular, the same holds for $$p_K(\sigma)\cup \{\textnormal{max}(p_{K}(\tau_0)), ... , \textnormal{max}(p_K(\tau_k))\} \subset p_K(\sigma) \cup p_K(\tau_k).$$ Hence, by \Cref{remEquCharFilt} the above complex is filtered by the map induced by the disjoint union of $p_A$ and $p_{\textnormal{sd}(K-A)}$ on $A^{(0)} \sqcup \textnormal{sd}(K-A)$. We denote the induced filtered simplicial complex by $\textnormal{sd}(K\textnormal{ rel } A)$ and \textit{call it the barycentric subdivision of $K$ relative to $A$}. This comes with inclusions of filtered simplicial complexes \begin{align*}
	A &\hookrightarrow \textnormal{sd}(K\textnormal{ rel } A)\\
	\textnormal{sd}(K-A) &\hookrightarrow \textnormal{sd}(K\textnormal{ rel } A),
	\end{align*}
	and $$\textnormal{sd}(K\textnormal{ rel } A)^{(0)} = A^{(0)} \sqcup \textnormal{sd}(K-A)^{(0)}.$$ 
	\begin{figure}[H]
		\centering
		\includegraphics[width=120mm]{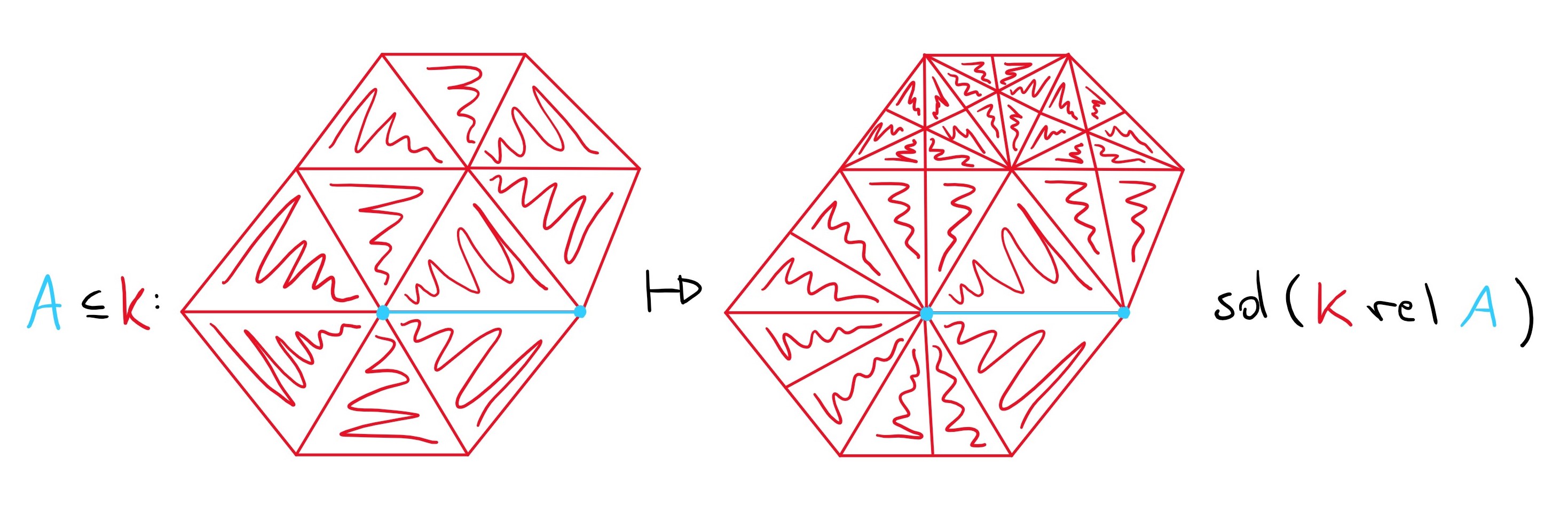}
		\caption{Illustration of a relative subdivision.}
		\label{fig:constrRelSd}
	\end{figure}
\end{definitionconstruction}
The next proposition shows that in the setting of \Cref{constrRelSd} $$\textnormal{sd}(K) \vartriangleleft \textnormal{sd}(K\textnormal{ rel } A) \vartriangleleft K.$$ 
\begin{proposition}\label{propBarRelSd}
	In the setting of \Cref{constrRelSd}, consider the maps
	\begin{align*}
	s_0:\textnormal{sd}(K)^{(0)} &\longrightarrow |\textnormal{sd}(K\textnormal{ rel } A)|\\
	\textnormal{sd}(K-A)^{(0)} \ni \tau &\longmapsto |\tau|,
	\\
	\textnormal{sd}(A)^{(0)} \ni \sigma &\longmapsto \textnormal{bar}(\sigma) \in |\sigma|,
	\\		
	\textnormal{sd}(N(K,A))^{(0)} \ni \sigma^n \star_K \tau^m &\longmapsto \frac{n+1}{m+n+2} \textnormal{bar}(\sigma) + \frac{m+1}{m+n+2}|\tau| \in |\sigma \star_K \tau|, 
	\\\intertext{for $\sigma$ and $\tau$ non-empty in the last case, and $n$ and $m$ indicating the dimensions, and} 	
	s_1: \textnormal{sd}(K\textnormal{ rel } A)^{(0)} &\longrightarrow |K|
	\\
	\textnormal{sd}(K-A)^{(0)} \ni \sigma &\longmapsto \textnormal{bar}(\sigma) \in |\sigma|,
	\\
	A^{(0)} \ni a &\longmapsto |a|.
	\end{align*}
	They induce filtered subdivisions $$\textnormal{sd}(K) \vartriangleleft \textnormal{sd}(K\textnormal{ rel } A) \vartriangleleft K.$$ The composition of these two subdivisions is the barycentric one, namely $\textnormal{sd}(K) \vartriangleleft K$ from \Cref{propBarSd}.
\end{proposition}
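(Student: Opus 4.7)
The plan is to verify first that $s_1$ defines a filtered subdivision $\textnormal{sd}(K\textnormal{ rel } A) \vartriangleleft K$ directly from the definition, and then to combine this with \Cref{propBarSd} in order to deduce the subdivision property of $s_0$ and the composition claim simultaneously. For $s_1$, stratum preservation is immediate: a vertex $a \in A^{(0)}$ is sent to itself, while a vertex $\tau \in \textnormal{sd}(K-A)^{(0)}$ is sent to $\textnormal{bar}(\tau)$, which lies in the stratum $\max(p_K(\tau)) = p_{\textnormal{sd}(K\textnormal{ rel }A)}(\tau)$. Simplex containment follows from the very definition of $\textnormal{sd}(K\textnormal{ rel } A)$: a simplex $\sigma \star \{\tau_0 \subset \cdots \subset \tau_k\}$ is present precisely when $\sigma \star_K \tau_k \in K$, and the images of its vertices all lie in $|\sigma \star_K \tau_k|$. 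For the homeomorphism property I would argue simplex by simplex in $K$: over a simplex of $A$, $|s_1|$ restricts to the identity; over a simplex $\tau \in K-A$ disjoint from $A$, it restricts to the classical barycentric subdivision homeomorphism; and over a mixed simplex $\sigma \star_K \tau$ with $\sigma \in A$ and $\tau \in \partial N(K,A)$, $|s_1|$ restricts to the join of the identity on $|\sigma|$ with the barycentric subdivision of $|\tau|$, which is again a homeomorphism and glues consistently on common faces.

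For $s_0$, stratum preservation reduces to a case distinction on the three types of vertices of $\textnormal{sd}(K)$; the interesting case is the mixed vertex $\sigma^n \star_K \tau^m$, whose image lies in the open simplex of $\textnormal{sd}(K\textnormal{ rel }A)$ spanned by the vertices of $\sigma$ together with $|\tau|$, and this open simplex has stratum $\max(p_K(\sigma \star_K \tau)) = p_{\textnormal{sd}(K)}(\sigma^n \star_K \tau^m)$. For simplex containment, take a chain $\sigma_0 \subset \cdots \subset \sigma_k$ representing a simplex of $\textnormal{sd}(K)$, let $j$ be the largest index with $\sigma_j \in A$ (possibly $-1$ if no such index exists), and for $i > j$ decompose $\sigma_i = \alpha_i \star_K \tau_i$ with $\alpha_i \subset A^{(0)}$ and $\tau_i$ non-empty in $K-A$. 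Then all the images $s_0(\sigma_i)$ lie in the simplex $\alpha_k \star \{\tau_{j+1} \subset \cdots \subset \tau_k\}$ of $\textnormal{sd}(K \textnormal{ rel } A)$, which is valid since $\alpha_k \star_K \tau_k = \sigma_k \in K$; the degenerate cases where all $\sigma_i$ lie in $A$ or none of them does are handled analogously.

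For the homeomorphism property of $s_0$ together with the composition claim, the crucial observation is that the weight $\frac{n+1}{m+n+2}\textnormal{bar}(\sigma) + \frac{m+1}{m+n+2}|\tau|$ in the definition of $s_0$ is chosen precisely so that, after applying $|s_1|$ (which sends $|\tau|$ to $\textnormal{bar}(\tau)$), one recovers $\textnormal{bar}(\sigma \star_K \tau)$. A vertex-wise check then shows that $|s_1| \circ |s_0|$ agrees with the barycentric subdivision map from \Cref{propBarSd} on every vertex of $\textnormal{sd}(K)$, and hence by simplex-wise affinity on all of $|\textnormal{sd}(K)|$. Since both $|s_1|$ and the barycentric subdivision are stratum preserving homeomorphisms, so is $|s_0|$, yielding simultaneously the subdivision property of $s_0$ and the composition identity. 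The main technical obstacle is the splitting argument for simplex containment of $s_0$, where the image of a chain in $\textnormal{sd}(K)$ genuinely decomposes according to which members lie in $A$; the rest is bookkeeping guided directly by the construction.
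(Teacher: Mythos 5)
Your proposal is correct and follows essentially the same skeleton as the paper: verify the first two conditions of a filtered subdivision directly, check commutativity of the triangle vertex-wise (the key arithmetic being $\frac{n+1}{m+n+2}\textnormal{bar}(\sigma) + \frac{m+1}{m+n+2}\textnormal{bar}(\tau) = \textnormal{bar}(\sigma \star_K \tau)$, exactly as in the paper), and then leverage this commutativity to settle the homeomorphism claims simultaneously. The one place you diverge is the finishing argument for the homeomorphism property. You argue that $|s_1|$ is a homeomorphism directly, simplex-by-simplex in $K$ (identity over $A$, barycentric over $K-A$, join of identity and barycentric over mixed simplices), and then deduce the homeomorphism property of $|s_0|$ by two-out-of-three from the commutative triangle. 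The paper instead reduces to finite $K$ via the colimit topology, notes that all three realizations are then compact Hausdorff so bijectivity suffices, already knows the composition is a bijection (it is the barycentric subdivision), and therefore only needs to check that $|s_1|$ is injective — which it declares immediate from the definition. Both arguments are sound; the paper's is slightly shorter because it never has to build or glue an inverse for $|s_1|$, while yours is more explicit about where the homeomorphism structure lives on each type of simplex, which may be clarifying for a reader who wants to see the piecewise-linear inverse. Your spelled-out splitting argument for simplex containment of $s_0$ (picking out the largest index $j$ with $\sigma_j \in A$ and decomposing the rest) fills in detail that the paper dismisses as ``easily verified'', and it is correct.
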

\begin{proof}
	The first two conditions of \Cref{defSubdivision} are easily verified. Next we show that the diagram of induced stratum preserving maps 
	$$\begin{tikzcd}
	{|\textnormal{sd}(K)|} \arrow[rd] \arrow["{\sim}", rr] & & {|K|}\\
	& {|\textnormal{sd}(K\textnormal{ rel } A)|} \arrow[ru] &
	\end{tikzcd}$$
	commutes. As all of them are given by piecewise linear extension, it suffices to show that this holds on the restriction to $|\textnormal{sd}(K)^{0}|.$ For $x$ in $\textnormal{sd}(K-A)^{(0)}$ and $\textnormal{sd}(A)^{(0)}$ this is obvious by definition. Now, in the case where $x = \sigma^n \star_K \tau^m$ for $\tau, \sigma \neq \emptyset$ we obtain: 
	\begin{align*}
	x & \longmapsto \frac{n+1}{m+n+2} \textnormal{bar}(\sigma) + \frac{m+1}{m+n+2}|\tau| \\ 
	&\longmapsto \frac{n+1}{m+n+2} \textnormal{bar}(\sigma) + \frac{m+1}{m+n+2}\textnormal{bar}(\tau)\\
	& = \textnormal{bar}(\sigma \star_K \tau) \\
	& = \textnormal{bar}(x)
	\end{align*}
	by (local) linearity of the maps. Hence, we have shown commutativity. To see that all maps involved are homeomorphisms, it suffices to restrict to the finite case as all spaces involved have the final topology with respect to finite subcomplexes. Then all the spaces involved are compact Hausdorff, hence it suffices to show bijectivity. We already know that the horizontal arrow is a bijection so it suffices to show that $|\textnormal{sd}(K \textnormal{ rel } A)| \to |K|$ is injective. This is immediate from the definition.
\end{proof}
\subsection{Proof of the filtered simplicial approximation theorem}\label{subsecProofofSimApp}
We now have all the necessary tools available for the statement and the proof of the filtered simplicial approximation theorem. We first state and prove the theorem in the case where $P = [0,...,q]$ is a finite linearly ordered set. We formally set $K_{-1} := \emptyset$. For the remainder of this section, let $K, L \in \textnormal{\textbf{sCplx}}_P$ be some fixed filtered simplicial complexes over $P$. \begin{definitionconstruction}\label{conIteratedSub}
	Let $\Sigma = ({\Sigma_p})_{p \in P}$ be a family of natural numbers indexed over $P$.
	Define inductively subdivisions of $K$ via $$\textnormal{sd}^{\Sigma_{\leq -1}}(K):= K$$ and 
	$$\textnormal{sd}^{\Sigma_{\leq p+1}}(K):= \textnormal{sd}^{\Sigma_{p}}(\textnormal{sd}^{\Sigma_{\leq p}}(K) \textnormal{ rel } \textnormal{sd}^{\Sigma_{\leq p}}(K_{\leq p})).$$
	Further, set $$\textnormal{sd}^{\Sigma}(K):= \textnormal{sd}^{\Sigma_{\leq q}}(K).$$
	Let $A$ be a full subcomplex of $K$ such that also, for all $p \in P$, $K_{\leq p} \cup A \subset K$ is a full subcomplex. Define analogously $$\textnormal{sd}^{\Sigma_{\leq -1}}(K \textnormal{ rel } A):= K$$
	$$\textnormal{sd}^{\Sigma_{\leq p+1}}(K \textnormal{ rel } A):= \textnormal{sd}^{\Sigma_{p+1}}(\textnormal{sd}^{\Sigma_{\leq p}}(K) \textnormal{ rel } \textnormal{sd}^{\Sigma_{\leq p}}(K_{\leq p} \cup A)).$$ 
	Then, set $$\textnormal{sd}^{\Sigma}(K \textnormal{ rel } A):= \textnormal{sd}^{\Sigma_{\leq q}}(K \textnormal{ rel } A).$$ Note that we have used \Cref{lemFulnesusTained} here to justify that in each step we again subdivide relative to a full subcomplex. By composability of subdivisions and \Cref{propBarRelSd} we get a sequence of subdivisions
	\begin{align*}
	\textnormal{sd}^{\Sigma}(K) &= \textnormal{sd}^{\Sigma_{\leq q}}(K) \vartriangleleft &...& &\vartriangleleft \textnormal{sd}^{\Sigma_{\leq -1}}(K) &= K;\\
	\textnormal{sd}^{\Sigma}(K \textnormal{ rel } A) &= \textnormal{sd}^{\Sigma_{\leq q}}(K \textnormal{ rel } A) \vartriangleleft &...& &\vartriangleleft \textnormal{sd}^{\Sigma_{\leq -1}}(K \textnormal{ rel } A) &= K,
	\end{align*}
	where in each step, $p+1$, there are no new vertices added to $|K|_{\leq p}$ (the union of the latter with $|A|$).
\end{definitionconstruction}
We then have:
\begin{theorem}[Filtered Simplicial Approximation A]\label{thrmSimplicialApproximation} 
	Let $K$ be a finite filtered simplicial complex over a finite linearly ordered set $P$. Further, let $L$ be another filtered simplicial complex over $P$ and $$\phi:|K|_P \longrightarrow |L|_P$$ be a stratum preserving map. Then there exists a $\Sigma$ as in \Cref{conIteratedSub} and a stratum preserving simplicial map $$f:\textnormal{sd}^{\Sigma}(K) \longrightarrow L$$ such that $$|f|_P \simeq_P \big (|\textnormal{sd}^{\Sigma}(K)|_P \xrightarrow{\sim} |K|_P \xrightarrow{\phi} |L|_P \big ).$$
\end{theorem}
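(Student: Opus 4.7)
The plan is to follow the classical proof of the simplicial approximation theorem (cf. \cite[Ch.~I, Sec.~4]{spanier1989algebraic}), adapted to the filtered setting by performing the subdivision and vertex assignment stratum by stratum via the iterated relative subdivision $\textnormal{sd}^\Sigma$ of \Cref{conIteratedSub}. The classical template is: choose a vertex map $f: \textnormal{sd}^\Sigma(K)^{(0)} \to L^{(0)}$ satisfying the star condition $\phi(|\textnormal{star}_{\textnormal{sd}^\Sigma(K)}(x)|) \subset |\textnormal{star}_L(f(x))|$ for each vertex $x$, extend to a simplicial map by \Cref{lemEquCharMap}, and witness the homotopy by the straight-line assignment $H(\xi, t) := (1-t)|f|(\xi) + t\phi(\xi)$ (implicitly identifying $|\textnormal{sd}^\Sigma(K)|$ with $|K|$ via the subdivision). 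Two items require genuine filtered arguments: that $f$ can be chosen stratum-preservingly, and that $H$ is then stratum-preserving.

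For the first, given any $\xi \in |K|$ of stratum $p$, stratum-preservation of $\phi$ yields $\phi(\xi) \in |L|_p$, so $\phi(\xi) \in \mathring\tau$ for some $\tau \in L$ with $\max p_L(\tau) = p$. Hence $\tau$ contains a vertex $y$ with $p_L(y) = p$ and $\phi(\xi) \in |\textnormal{star}_L(y)|$; openness and continuity provide an open neighborhood $W_\xi \subset |K|$ of $\xi$ with $\phi(W_\xi) \subset |\textnormal{star}_L(y)|$. Once the star condition is enforced, the second item is automatic: for $\xi \in \mathring\sigma$ with $\sigma \in \textnormal{sd}^\Sigma(K)$ and $p := \max p_{\textnormal{sd}^\Sigma(K)}(\sigma)$, the star condition places $\phi(\xi)$ into $\bigcap_{x \in \sigma}|\textnormal{star}_L(f(x))| = |\textnormal{star}_L(f(\sigma))|$, so $\phi(\xi)$ and $|f|(\xi)$ lie in a common closed simplex of $L$; both lie in the $p$-th stratum of $|L|$ (the former by stratum-preservation of $\phi$, the latter since $|f|(\xi)$ is a positive combination of the $f(x_i)$ whose maximal stratum equals $p$), and \Cref{lemLineSeqStrat} then forces the full line segment $H(\xi, -)$ into the $p$-th stratum.

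The construction of $\Sigma = (\Sigma_0, \ldots, \Sigma_q)$ and $f$ proceeds by induction on $p$. Base case $p=0$: apply the classical simplicial approximation theorem to $\phi|_{|K_0|}: |K_0| \to |L_0|$, viewed as a map between realizations of plain simplicial complexes, obtaining $\Sigma_0$ and a simplicial $f_0: \textnormal{sd}^{\Sigma_0}(K_0) \to L_0$ (automatically stratum-preserving since both sides are concentrated in stratum $0$). Inductive step: assume $\Sigma_0, \ldots, \Sigma_p$ and $f$ defined on $\textnormal{sd}^{\Sigma_{\leq p}}(K_{\leq p})$ with the star condition. Passing to $\textnormal{sd}^{\Sigma_{\leq p+1}}(K_{\leq p+1}) = \textnormal{sd}^{\Sigma_{p+1}}(\textnormal{sd}^{\Sigma_{\leq p}}(K_{\leq p+1}) \textnormal{ rel } \textnormal{sd}^{\Sigma_{\leq p}}(K_{\leq p}))$, every newly introduced vertex $x$ is a barycenter of a simplex meeting stratum $p+1$, hence $p(x) = p+1$; assign $f(x) \in L^{(0)}_{p+1}$ by the procedure of the previous paragraph, and choose $\Sigma_{p+1}$ large enough that, by Lebesgue's lemma applied to the open cover $\{\phi^{-1}(|\textnormal{star}_L(y)|)\}_{y \in L^{(0)}}$ of the compact space $|K_{\leq p+1}|$ combined with the mesh-shrinkage of \Cref{remSubDivGetSmall}, every open star of a vertex in $\textnormal{sd}^{\Sigma_{\leq p+1}}(K_{\leq p+1})$ is contained in the preimage corresponding to its chosen image.

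The main obstacle is ensuring that the star condition verified at stage $p$ survives the further subdivisions carried out at stages $p+1, \ldots, q$: the open star of a stratum-$\leq p$ vertex $x$ in the final subdivision $\textnormal{sd}^\Sigma(K)$ strictly contains its star in $\textnormal{sd}^{\Sigma_{\leq p}}(K_{\leq p})$, since $x$ becomes incident to refined simplices in higher strata. I handle this by strengthening the inductive hypothesis to carry, for each already-assigned vertex $x$, an explicit open neighborhood $W_x$ of $|x|$ in all of $|K|$ (not merely in $|K|_{\leq p}$) with $\phi(W_x) \subset |\textnormal{star}_L(f(x))|$; a further application of \Cref{remSubDivGetSmall} to the relative subdivisions at later stages allows one to choose the subsequent $\Sigma_{p+2}, \ldots, \Sigma_q$ large enough that the final open star of $x$ in $\textnormal{sd}^\Sigma(K)$ is still contained in $W_x$. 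Once $\Sigma$ and $f$ are assembled, \Cref{lemEquCharMap} yields the desired simplicial stratum-preserving map $f: \textnormal{sd}^\Sigma(K) \to L$, and the straight-line homotopy $H$ provides the required stratum-preserving homotopy $|f|_P \simeq_P \phi \circ (\text{subdivision homeomorphism})$.
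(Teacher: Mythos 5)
Your proposal has a genuine gap: the full open-star condition $\phi\bigl(|\textnormal{star}_{\textnormal{sd}^\Sigma(K)}(x)|\bigr) \subset |\textnormal{star}_L(f(x))|$ that you want to enforce for every vertex $x$ cannot in general be achieved, and the mechanism you propose to rescue it does not work. The key point is that once a vertex $x$ lies in the relative subcomplex being held fixed, its open star is \emph{not} shrunk by further relative subdivision. Concretely, if $x$ has stratum $p$, then at stages $p+1,\ldots,q$ the subdivision is taken rel a subcomplex containing $x$, and the mixed simplices $\{x\}\star\tau$ (with $\tau$ in the non-fixed part) are joined, not broken: one checks directly from \Cref{constrRelSd} that the realized open star of $x$ in $\textnormal{sd}(K'\,\textnormal{rel}\,A)$ is equal to its realized open star in $K'$ for any $x\in A^{(0)}$ (this is the usual ``cone point has cone-shaped star'' phenomenon). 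So the sentence ``a further application of \Cref{remSubDivGetSmall} to the relative subdivisions at later stages allows one to choose $\Sigma_{p+2},\ldots,\Sigma_q$ large enough that the final open star of $x$ is still contained in $W_x$'' is simply false; those later $\Sigma$'s do not touch the star of $x$ at all.

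Worse, the full star condition is genuinely unachievable for some stratum-preserving $\phi$, even with an optimal choice of $\Sigma$. Take $P=\{0,1\}$, $K$ and $L$ both (triangulations of) the closed $2$-disk with the origin in stratum $0$ and the rest in stratum $1$, $L$ triangulated with at least five spokes from the center, and $\phi(z) = |z|\,e^{i(\arg z + 1/|z|)}$, $\phi(0)=0$. This is continuous and stratum preserving, but for any vertex $m$ adjacent to the center $a$, the open star of $m$ in \emph{any} iterated relative subdivision still contains the half-open radial segment $(|a|,|m|]$, and $\phi$ of that segment spirals through every angle near the origin, so its image fits into no single open star of a stratum-$1$ vertex of $L$. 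This is exactly why the paper's \Cref{propDetailedSimMapProp} only imposes the star condition on the part of the closed star lying in $B_{A,p}$, i.e.\ away from the lower-stratum boundary, rather than the whole star; the part near lower strata is deliberately left uncontrolled, and the price is that the homotopy can no longer be the straight line but has to be assembled inductively stratum by stratum via the regular-neighborhood parametrization (the $\lambda$, $\Lambda$, $\tilde\Lambda$ construction in the final proposition of \Cref{subsecProofofSimApp}). Your straight-line homotopy argument, which \emph{does} correctly derive stratum preservation from \Cref{lemLineSeqStrat} granted the full star condition, is therefore built on a premise that the theorem's generality does not support.
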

We also have a relative version of this theorem. As we are really only interested in using it to approximate homotopies, we only state the existence part and not the homotopy part though. For a different version see \cite{schwartz1971}. Note however that one should beware of the caveats we mentioned in the beginning of this section when it comes to the version of the theorem in \cite{schwartz1971}. 
\begin{theorem}[Filtered Simplicial Extension]\label{thrmSimplicialExtension}
	Let $K$ be a finite filtered simplicial complex over a finite linearly ordered set $P$. Let $L$ be another filtered simplicial complex over $P$. 
	Let $A$ be a full subcomplex of $K$ such that for all $p \in P$, the complex $A \cup K_{\leq p} \subset K$ is a full subcomplex. Let $$\phi: |K|_P \longrightarrow |L|_P$$ be a stratum preserving map such that \begin{enumerate}[label = (\alph*)]
		\item $\phi$ restricted to $|A|$ is the realization of a stratum preserving simplicial map $f_A$.
		\item \label{thrmFAFAFAFAFA} $\phi$ is such that for $\sigma \in A$, we have $\phi(|\overline{\textnormal{star}}_K(\sigma)|) \subset |\overline{\textnormal{star}}_L(f_A(\sigma))|$.
	\end{enumerate}
	Then there is a $\Sigma $ as in \Cref{conIteratedSub} and a stratum preserving simplicial map $$\textnormal{sd}^\Sigma(K \textnormal{ rel }A) \xrightarrow{f} L$$ such that $f|_{A} = f_A$.
\end{theorem}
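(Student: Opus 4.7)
The plan is to induct on $p \in \{0, 1, \ldots, q\}$ (with $P = \{0, \ldots, q\}$), at each stage choosing $\Sigma_p$ and extending the partial simplicial approximation obtained so far to all vertices in stratum $p$ of a sufficiently fine relative subdivision. Writing $K'_p := \textnormal{sd}^{\Sigma_{\le p}}(K \textnormal{ rel } A)$, the invariant I would maintain at the end of step $p$ is: there is a stratum preserving simplicial map $f_p$ defined on the full subcomplex of $K'_p$ spanned by the $A$-vertices together with the vertices in strata $\le p$, extending $f_A$, and satisfying the closed-star condition
$$\phi\bigl(|\overline{\textnormal{star}}_{K'_p}(x)|\bigr) \subset |\textnormal{star}_L(f_p(x))|$$
for every vertex $x$ in its domain. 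The closed star on the left is chosen because closed stars only shrink under further relative subdivision: indeed, for any vertex $x$ of $K'_p$ that persists in $K'_{p+1}$ one has $|\overline{\textnormal{star}}_{K'_{p+1}}(x)| \subset |\overline{\textnormal{star}}_{K'_p}(x)|$, so the invariant automatically carries over when $f_p$ is regarded as a partial map on the refined complex, while a closed-star containment trivially implies the open-star containment required by simplicial approximation.

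The inductive step $p-1 \to p$ (with the convention $f_{-1} := f_A$ on $A$ for the base case $p = 0$) is handled by two ingredients. First, for each $a \in A^{(0)}$, continuity of $\phi$ at $a$ and the fact that $\phi(a) = f_A(a) \in |\textnormal{star}_L(f_A(a))|$ (open by \Cref{remFactsOnStars}) provide an open neighbourhood $U_a$ of $a$ in $|K|$ with $\phi(U_a) \subset |\textnormal{star}_L(f_A(a))|$; hypothesis (b) makes the existence of such $U_a$ essentially immediate, since it already locates $\phi(|\overline{\textnormal{star}}_K(a)|)$ inside $|\overline{\textnormal{star}}_L(f_A(a))|$. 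Second, Lebesgue's covering lemma applied to the open cover $\{\phi^{-1}(|\textnormal{star}_L(y)|) : y \in L^{(0)}\}$ of $|K|$, combined with \Cref{remSubDivGetSmall} and finiteness of $K$, guarantees that for $\Sigma_p$ large enough, simultaneously $|\overline{\textnormal{star}}_{K'_p}(a)| \subset U_a$ for all $a \in A^{(0)}$ and each closed star of a vertex $x$ of $K'_p$ in stratum $p$ is contained in some $\phi^{-1}(|\textnormal{star}_L(y)|)$. For an $A$-vertex this preserves the value $f_A(a)$ and verifies the invariant; for a vertex $x$ in stratum $p$ outside $A$, stratum preservation of $\phi$ puts $\phi(x)$ in an open simplex $\mathring{\tau}$ of $L$ with $\max p_L(\tau) = p$, and one may take $y_x$ to be the unique vertex of $\tau$ in stratum $p$, defining $f_p(x) := y_x$ in a stratum preserving way.

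Because the relative subdivisions are taken with respect to $A \cup K_{\le p-1}$ (whose fullness along the induction is assured by the theorem's hypothesis together with \Cref{lemFulnesusTained}), no new vertices are introduced in $A$ or in strata $\le p-1$; in particular $f_{p-1}$ remains literally defined on its previous domain, and its invariant transfers to $K'_p$ for free. That the enlarged vertex-level assignment extends to a simplicial map on the subcomplex it spans is the standard star-intersection argument: for any simplex $\sigma$ of that subcomplex, the non-empty open simplex $\mathring{\sigma}$ is contained in $\bigcap_{x \in \sigma} |\textnormal{star}_{K'_p}(x)|$ and therefore maps under $\phi$ into $\bigcap_{x \in \sigma} |\textnormal{star}_L(f_p(x))|$; non-emptiness of this intersection, combined with \Cref{lemEquCharMap}, produces the required simplicial extension. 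After step $p = q$, the map $f := f_q$ is defined on every vertex of $\textnormal{sd}^{\Sigma}(K \textnormal{ rel } A)$ (with $\Sigma := \Sigma_{\le q}$), is stratum preserving, extends $f_A$, and is a simplicial map.

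The main obstacle is the coherent interplay between the two competing demands of the construction: the relative subdivisions must leave previously committed vertex assignments literally intact, while simultaneously the overall refinement must become fine enough that the star conditions required by simplicial approximation can be enforced in the current stratum. The closed-star form of the invariant is exactly what reconciles these demands, since it is simultaneously inherited by further relative subdivisions at no cost and reinforceable by Lebesgue's lemma; without it, one would have to redo the approximation work on lower strata after each subdivision.
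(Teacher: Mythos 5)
Your plan founders on the invariant itself: it is, in general, impossible to establish, no matter how large you take $\Sigma_p$. Fix a vertex $x$ of $K'_p$ with $p_{K'_p}(x)=p\geq 1$ whose closed star meets the lower strata (this happens for every new vertex lying in the simplicial neighbourhood of $K_{\leq p-1}\cup A$, i.e.\ exactly where the theorem is hard). Then $|\overline{\textnormal{star}}_{K'_p}(x)|$ contains points of $|K|_{<p}$, whose images under the stratum preserving map $\phi$ lie in $|L|_{<p}$. On the other hand, for any vertex $y$ of $L$ with $p_L(y)=p$, the open star $|\textnormal{star}_L(y)|$ contains no points of strata $<p$ (every point with positive $y$-coordinate lies in an open simplex containing $y$; cf.\ \Cref{lemBoundOnStrata}). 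So the condition $\phi(|\overline{\textnormal{star}}_{K'_p}(x)|)\subset|\textnormal{star}_L(y_x)|$ with $p_L(y_x)=p$ cannot hold, and subdividing does not help: at stage $p$ the subdivision is relative to $K_{\leq p-1}\cup A$, so the adjacency of such $x$ to the lower part persists, and the closed stars of these vertices do not even become small (they always contain whole simplices of the lower part, by the join description in \Cref{constrRelSd}). Your Lebesgue-lemma step only produces \emph{some} vertex $y$ with $\phi(\overline{\textnormal{star}}(x))\subset|\textnormal{star}_L(y)|$, and near the interface that $y$ is forced into a stratum $<p$ (or the containment fails altogether); redefining $f_p(x)$ to be a stratum-$p$ vertex of the carrier of $\phi(x)$ then destroys the star condition. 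This is precisely the difficulty the paper's proof is built around: in \Cref{propDetailedSimMapProp} the star condition is only imposed on $\overline{\textnormal{star}}_{K'}(x)\cap B_{A,p}$ (the part of the star away from the lower strata), during the induction one only demands $p_{K'}(x)\leq p_L(f^0(x))$ — cases (b) and (c) deliberately assign to vertices near $L_{\leq p}$ vertices on $\partial N(L,L_{\leq p})$, i.e.\ in \emph{higher} strata — and stratum preservation of the resulting simplicial map is only recovered at the very end (\Cref{corThmExtHolds}) from \Cref{lemBoundOnStrata} together with stratum preservation of $\phi$.

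Your treatment of the $A$-vertices has the same defect in a second form. Iterated subdivision relative to $A$ adds no vertices to $A$ and keeps the cone structure of $N(\cdot,A)$, so $|\overline{\textnormal{star}}_{K'_p}(a)|$ does \emph{not} shrink towards $a$ — already for a $2$-simplex with $A$ a vertex, the closed star of $a$ in every iterated relative subdivision is the whole simplex. Hence the claim that $|\overline{\textnormal{star}}_{K'_p}(a)|\subset U_a$ for $\Sigma_p$ large is false, and you cannot upgrade hypothesis \ref{thrmFAFAFAFAFA} of \Cref{thrmSimplicialExtension} (containment in the \emph{closed} star of $f_A(a)$) to containment in the open star; the closed-star form is all you have, and it must be used as such in the extension step, as is done in the proof of \Cref{corThmExtHolds} for simplices of the form $\sigma\star_{K'}\tau$ with $\sigma\in A$. (A smaller slip: a simplex $\tau$ of $L$ with $\max p_L(\tau)=p$ need not have a unique vertex in stratum $p$.) So the overall inductive architecture — process strata in increasing order over relative subdivisions — matches the paper, but the invariant you carry is strictly stronger than what is achievable, and the argument does not go through without the paper's weakenings (restricted stars, inequality on strata during the construction, and the separate end-game argument for stratum preservation).
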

\begin{remark}\label{remOnbCond}
	First, it should be noted that the fullness conditions on the subcomplexes $A \cup X_p$ in \Cref{thrmSimplicialExtension} are really not too much of a restriction. They can always be obtained, by simply subdividing both $K$ and $L$ once barycentrically.\\
	It is interesting to illuminate condition \ref{thrmFAFAFAFAFA} of \Cref{thrmSimplicialExtension} a little bit. It arises, for example, if $\phi$ is not only a simplicial map on $A$ but also on the simplicial neighbourhood $N(K,A)$ of $A$. In the non-filtered case, (at least up to subdivision and homotopy of $\phi$ rel $|A|$) this can always be accomplished. Roughly speaking, to do this take a subdivision $K' \vartriangleleft K$ in a way that adds no new vertices to $|A|$, but vertices to the half way points (with respect to the radial parameter) to every open simplex in $|\mathring N(K,A)| \setminus |A|$. For a more detailed construction, see \cite{zeeman1964relative}. Then the identity on $|K|$ is homotopic relative to $|A|$ to a map that is given as follows. Take the identity on $|K-A|$ and then map the new vertices in $|N(K,A)|$ onto $|A|$ along a last vertex map (for some appropriate ordering). Then extend piecewise linearly. This map is given on $N(K',A)$ by a simplicial map $N(K',A) \to A$. In particular, $\phi$ is then homotopic to a map that is given on $|N(K',A)|$ by the simplicial map $$N(K',A) \longrightarrow A \xrightarrow{f_A} L.$$
	What we have done, is effectively constructing a particularly nice regular neighbourhood. In the filtered case however, these homotopies can in general not be chosen to be stratum preserving. Take for example $A = K_{\leq p}$ for some space with nontrivial filtration. Clearly, if $|K_{\leq p}|$ lies in the closure of $|K|\setminus|K_{\leq p}|$, there is no way to contract any neighbourhood of $|K_p|$ into $|K_p|$ in a stratum preserving way. Any such neighbourhood contains points outside of $|K_p|$. In other words, only very rarely are subspaces of filtered spaces NDRs in a filtered way. However, for the application we are most interested in, i.e. for approximating homotopies, the situation is a lot more favourable as we can see in the next example. 
\end{remark}
\begin{example}\label{exAppOfHo}
	Let $K$ be an ordered $P$-filtered simplicial complex and $L$ another $P$-filtered simplicial complex. For another ordered simplicial complex $M$, denote by $K \otimes M$ the filtered (ordered) simplicial complex obtained by taking the product of ordered simplicial complexes, and projecting to the first component, to obtain a filtration (see \Cref{subsecOrdered} for details). Let $H: |K \otimes \Delta^{1}| \cong |K| \otimes \Delta^1\to |L|$ be any stratum preserving homotopy, such that $H$ restricted to $|K| \sqcup |K|$ comes from a stratum preserving simplicial map $f \sqcup g: K \sqcup K \to L$. Then, consider $$| K \otimes \textnormal{sd}^2 \Delta^{1}| = |K| \otimes [0,4],$$ with the stratum preserving homeomorphism given by by thinking of $\textnormal{sd}^2 \Delta^{1}$ as being glued from $4$ $1$-simplices. The homotopy: $$\tilde H: | K \otimes \textnormal{sd}^2 \Delta^{1}| = |K| \otimes [0,4] \longrightarrow |L|$$ given by $H$ from $1$ to $3$ and the constant homotopies on the remainder, fulfills all of the requirements of \Cref{thrmSimplicialExtension}, but the fullness condition. To see this, just note that on $$|N(K \otimes \textnormal{sd}^2 \Delta^{1}, K \sqcup K)|= |K \otimes \Delta^{1} \sqcup K \otimes \Delta^{1}|$$ $\tilde H$ is given by the realization of $$K \otimes \Delta^{1} \sqcup K \otimes \Delta^{1} \longrightarrow K \sqcup K \xrightarrow{f \sqcup g} L,$$ where the left map is just the disjoint union of the projections onto $K$. Thus, by \Cref{remOnbCond}, it fulfills the requirements. We can then subdivide once barycentrically to obtain the situation of \Cref{thrmSimplicialExtension}.
\end{example}
We proceed with the proof of these two theorems in several steps. Notationally we stay in the setting of \Cref{thrmSimplicialExtension}. First we construct the simplicial map in \Cref{thrmSimplicialExtension} and show it has several additional properties. In fact, we construct a map on vertices in \Cref{propDetailedSimMapProp}, fulfilling a set of conditions that imply that is extends to a stratum preserving simplicial map as in \Cref{thrmSimplicialExtension}. Finally, we show that for $A = \emptyset$ its realization is stratum preserving homotopic to $\phi$, thus proving \Cref{thrmSimplicialApproximation}. \\
\\
For the remainder of this section, we denote$$B_{A,p} := \textnormal{sd}^\Sigma(K \textnormal{ rel }A) - \textnormal{sd}^\Sigma(K_{\leq p-1} \cup A \textnormal{ rel }A),$$ for a string $\Sigma$ as in \Cref{thrmSimplicialExtension} and $p \in P$. If $A = \emptyset$ we just write $B_p$ for this. Note that in this case $$B_p \cap \textnormal{sd}^\Sigma(K \textnormal{ rel }A)_{\leq p} = \textnormal{sd}^\Sigma (K \textnormal{ rel } A)_p.$$ Furthermore, we omit the subdivision homeomorphisms from the presentation, identifying $|K'|$ with $|K|$ to make notation a little more concise.
\begin{proposition}\label{propDetailedSimMapProp}
	There exists a $\Sigma$ and a map $$f^0: \textnormal{sd}^\Sigma(K \textnormal{ rel }A)^{(0)} \longrightarrow L^{(0)}$$ such that the following conditions hold. Let $K':=\textnormal{sd}^\Sigma(K \textnormal{ rel }A)$. Then: 
	\begin{enumerate}
		\item For $x \in K'^{(0)}$, we have $ p_{K'}(x) \leq p_L(f^0(x))$. \label{propDetailedSimMapProp1}
		\item For $p \in P$ and $x \in K'^{(0)}_{\leq p} \setminus A^{(0)}$, we have $\phi(|\overline{\textnormal{star}}_{K'}(x) \cap B_{A,p}|) \subset |\textnormal{star}_{L}(f^0(x))|$. \label{propDetailedSimMapProp2}
		\item $f^0$ agrees with $f_A$ on $A^{(0)}$. \label{propDetailedSimMapProp3}
	\end{enumerate}
\end{proposition}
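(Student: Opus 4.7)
The plan is to induct on $p \in P = \{0,\dots,q\}$. At stage $p$ I would choose $\Sigma_p$ and extend $f^0$ to the new vertices of stratum $p$ not in $A$; the vertices of lower strata and of $A^{(0)}$ will already have been handled. This is viable because relative subdivision of a complex with respect to a full subcomplex $A'$ does not introduce any new vertices into $A'$, so once $\textnormal{sd}^{\Sigma_{\leq p}}(K_{\leq p}\cup A)$ is fixed at stage $p$, no subsequent stage disturbs the vertices or simplicial structure lying over strata $\leq p$, and the partial definition of $f^0$ persists into the final complex $K' := \textnormal{sd}^{\Sigma}(K\textnormal{ rel }A)$. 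Moreover, for $x$ of stratum $p_0 := p_{K'}(x)$ the subcomplexes $B_{A,p'}$ shrink as $p'$ grows, so the containment required in condition (\ref{propDetailedSimMapProp2}) for any $p' \geq p_0$ is implied by the $p' = p_0$ case — it suffices to secure (\ref{propDetailedSimMapProp2}) at the single stratum $p = p_0$ of $x$.

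The heart of the inductive step will be a Lebesgue-number argument. Write $K^{(p)} := \textnormal{sd}^{\Sigma_{\leq p}}(K\textnormal{ rel }A)$ and $B^{(p)} := K^{(p)} - \textnormal{sd}^{\Sigma_{\leq p}}(K_{\leq p-1}\cup A)$. All simplices of $B^{(p)}$ have vertices in strata $\geq p$, so $|B^{(p)}| \subset |K|_{\geq p}$ and $\phi(|B^{(p)}|) \subset |L|_{\geq p}$ by stratum-preservation of $\phi$. The open stars $|\textnormal{star}_L(y)|$ for $y \in L^{(0)}$ with $p_L(y) \geq p$ cover $|L|_{\geq p}$ — every $\xi$ there lies in the open star of a maximum-stratum vertex of the simplex carrying it — so pulling back by $\phi$ yields an open cover of the compact set $|B^{(p)}| \subset |K|$, and Lebesgue's lemma supplies a $\delta > 0$ such that every subset of diameter $< \delta$ (in the Euclidean metric from $|K| \subset \mathbb{R}^{K^{(0)}}$) lies in some $\phi^{-1}(|\textnormal{star}_L(y)|)$. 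A crucial combinatorial check will be that, as a simplicial complex, $B^{(p)}$ coincides with the ordinary iterated barycentric subdivision $\textnormal{sd}^{\Sigma_p}(B^{(p-1)}_{A,p})$: in the relative subdivision the straddling simplices $\sigma \star \{\tau_0 \subset \dots \subset \tau_k\}$ have all their base-side vertices killed when one passes to the full subcomplex on vertices of stratum $\geq p$ not in $A$. By \Cref{remSubDivGetSmall}, I can then pick $\Sigma_p$ large enough that every closed simplex of $B^{(p)}$ has diameter less than $\delta/2$.

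With this choice, for each vertex $x$ of $B^{(p)}$ of stratum $p$ the closed star $\overline{|\textnormal{star}_{B^{(p)}}(x)|}$ is a union of simplices of diameter $<\delta/2$ all containing $x$, hence of diameter $<\delta$, and so lies in $\phi^{-1}(|\textnormal{star}_L(y)|)$ for some $y$ with $p_L(y) \geq p$; I set $f^0(x) := y$, and $f^0 := f_A$ on $A^{(0)}$. Condition (\ref{propDetailedSimMapProp1}) is then immediate from $p_L(y) \geq p = p_{K^{(p)}}(x)$ together with the stratum-preservation of $f_A$, condition (\ref{propDetailedSimMapProp3}) holds by construction, and condition (\ref{propDetailedSimMapProp2}) at stage $p$ follows once one verifies $\overline{\textnormal{star}}_{K^{(p)}}(x) \cap B^{(p)} = \overline{\textnormal{star}}_{B^{(p)}}(x)$ for $x \in B^{(p)}$. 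The hard part will be checking that this containment, arranged in $K^{(p)}$, survives in the final $K'$. This rests on two monotonicity facts: $|\overline{\textnormal{star}}_{K'}(x)| \subset |\overline{\textnormal{star}}_{K^{(p)}}(x)|$ under further subdivision (since every simplex of $K'$ containing $x$ is contained in a simplex of $K^{(p)}$ containing $x$), and the invariance of the underlying set $|B_{A,p}| \subset |K|$ under later relative subdivisions, which only refine the triangulation of $B^{(p)}$ without altering its support. Once these are in place, iterating over $p \in P$ produces the sequence $\Sigma$ and the map $f^0$ required by the proposition.
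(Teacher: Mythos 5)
Your proposal is correct, but it takes a genuinely different route from the paper's proof. The paper assigns trial values to \emph{all} vertices of $K-A$ already at stage $0$, using the cover of $|L|$ by arbitrary open stars, and then runs an inductive correction procedure: at each stage $p+1$ the values of vertices lying over strata $>p$ are revised, either by borrowing the value of a neighbouring vertex that already maps into a high stratum, or by pushing the value out of $L_{\leq p}$ using a cover by stars of $1$-simplices $\{y_p,y\}$ in the simplicial neighbourhood $N(L,L_{\leq p})$; to make this work it maintains a stronger invariant (its condition (ii)$_r$, a star-control statement for \emph{every} vertex of $K^r-A$ at every stage). You instead defer the definition of $f^0$ at a vertex until the stage equal to its stratum, cover only $|L|_{\geq p}$ by stars of vertices of stratum $\geq p$, and exploit the fact that condition \ref{propDetailedSimMapProp2} only constrains $\overline{\textnormal{star}}_{K'}(x)\cap B_{A,p}$, whose support lies in $|K|_{\geq p}$ and is therefore mapped by $\phi$ into $|L|_{\geq p}$; this makes condition \ref{propDetailedSimMapProp1} automatic and eliminates the correction cases and the use of $N(L,L_{\leq p})$ entirely. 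The supporting facts you invoke are the same ones the paper asserts ``by construction'' at the end of its proof and are all true: new vertices created at stage $r$ have stratum $\geq r$; $B^{(p)}$ is the plain iterated barycentric subdivision of the corresponding full subcomplex at stage $p-1$ (because the relative subdivision restricted to the part spanned by vertices outside the relative subcomplex is the ordinary one); the support $|B_{A,p}|$ is unchanged by later relative subdivisions; $|\overline{\textnormal{star}}_{K'}(x)|\subset|\overline{\textnormal{star}}_{K^{(p)}}(x)|$; and $\overline{\textnormal{star}}_{K^{(p)}}(x)\cap B^{(p)}=\overline{\textnormal{star}}_{B^{(p)}}(x)$ by fullness of $B^{(p)}$ — together with $|C\cap D|=|C|\cap|D|$ for subcomplexes, these give exactly the chain of inclusions reducing condition \ref{propDetailedSimMapProp2} for the final $K'$ to the smallness arranged at stage $p$, and your reduction to $p=p_{K'}(x)$ is valid since $B_{A,p}$ shrinks as $p$ grows. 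In a write-up you should spell out the support-invariance of $B_{A,p}$ under later relative subdivisions (a short computation with simplices $\sigma\star\{\tau_0\subset\cdots\subset\tau_k\}$) and tidy the $\delta/2$ versus Lebesgue-number bookkeeping, but these are routine; what your argument buys is a substantially shorter proof with a weaker inductive hypothesis, at the cost of not producing the paper's intermediate star-control for higher-stratum vertices, which the stated proposition does not require anyway.
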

Before we prove this proposition, we show that it implies \Cref{thrmSimplicialExtension} to motivate the conditions a little more.
\begin{corollary}\label{corThmExtHolds}
	$f^0$ as in \Cref{propDetailedSimMapProp} extends to a morphism of filtered simplicial complexes $f: K' \to L$ fulfilling: \begin{enumerate}
		\item For $p \in P$ and $\sigma \in (K'-A)_{\leq p}$, $\phi(|\overline{\textnormal{star}}_{K'}(\sigma) \cap B_{A,p}|) \subset |\textnormal{star}_{L}(f(\sigma))|$. \label{corThmExtHolds1}
		\item $f|_A = f_A$.
	\end{enumerate}
	In particular, \Cref{thrmSimplicialExtension} holds.
\end{corollary}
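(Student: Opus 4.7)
The plan is to deduce the corollary from \Cref{propDetailedSimMapProp} in three stages: upgrade condition (1) of the proposition from an inequality to an equality on strata, invoke \Cref{lemEquCharMap} to extend $f^0$ simplicially, and finally transfer condition (2) from vertex stars to simplex stars.

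First I would verify that $f^0$ is stratum-preserving on the nose. For $x \notin A^{(0)}$, I take $p = p_{K'}(x)$ in condition (2) of the proposition. Since $x$ itself satisfies $p_{K'}(x) = p$ and $x \notin A$, it is a vertex of $B_{A,p}$, so $x \in |\overline{\textnormal{star}}_{K'}(x) \cap B_{A,p}|$ and hence $\phi(x) \in |\textnormal{star}_L(f^0(x))|$. \Cref{lemBoundOnStrata} then gives $p_L(f^0(x)) \leq p_{|L|}(\phi(x)) = p_{K'}(x)$, where the second equality uses that $\phi$ is stratum-preserving and that $x$ lies in its own stratum. Combined with condition (1) this yields $p_L(f^0(x)) = p_{K'}(x)$. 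For $x \in A^{(0)}$, condition (3) of the proposition together with stratum preservation of $f_A$ does the job.

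Next, by \Cref{lemEquCharMap} it remains to show that, for every $\sigma \in K'$, $\bigcap_{x \in \sigma}|\textnormal{star}_L(f^0(x))| \neq \emptyset$. I would split into three sub-cases. If $\sigma \subset A$, then $f^0|_\sigma = f_A|_\sigma$ is already simplicial, so $f_A(\sigma) \in L$ and \Cref{remFactsOnStars} gives the non-emptiness. If $\sigma \cap A^{(0)} = \emptyset$ and all vertices of $\sigma$ share the same stratum $p$, then $\sigma \in B_{A,p}$, so picking any $\xi \in \mathring\sigma$ and applying condition (2) at each vertex of $\sigma$ with this $p$ shows $\phi(\xi) \in \bigcap_{x \in \sigma}|\textnormal{star}_L(f^0(x))|$. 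The remaining (mixed) case — where $\sigma$ has vertices in both $A$ and $K'-A$, or in several strata — has to be handled by combining condition (2) with hypothesis (b) of \Cref{thrmSimplicialExtension} and with the structural observation about $\textnormal{sd}^\Sigma(K \textnormal{ rel } A)$ that in any mixed simplex the $A$-vertices necessarily sit in strata no higher than those of the non-$A$-vertices; this lets one match the $A$-part of $f^0(\sigma)$ to a coface in $L$ produced by condition (2) on the non-$A$ part.

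Once $f: K' \to L$ is known to exist, condition (1) of the corollary falls out of condition (2) of the proposition: by \Cref{remFactsOnStars} (i) and (ii), $|\overline{\textnormal{star}}_{K'}(\sigma)| \subset \bigcap_{x\in\sigma}|\overline{\textnormal{star}}_{K'}(x)|$ and $|\textnormal{star}_L(f(\sigma))| = \bigcap_{x\in\sigma}|\textnormal{star}_L(f(x))|$, so applying condition (2) at each vertex $x \in \sigma$ with the common $p$ (valid because $\sigma \in (K'-A)_{\leq p}$ forces $p \geq p_{K'}(x)$ and $x \notin A$) and intersecting yields the desired inclusion. Condition (2) of the corollary is just condition (3) of the proposition, and \Cref{thrmSimplicialExtension} is immediate. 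The main obstacle throughout is the mixed sub-case of the second stage: the naive choice of $p$ never places $\sigma$ in $B_{A,p}$ simultaneously with making condition (2) applicable to every vertex, so the argument must genuinely exploit the inequality between $A$- and non-$A$-strata in a simplex of the relative subdivision, together with the closed-star hypothesis (b) of the theorem, rather than condition (2) alone.
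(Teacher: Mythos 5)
Your first and third stages are fine and match the paper: the on-the-nose stratum preservation via \Cref{lemBoundOnStrata}, condition (1)/(2) of \Cref{propDetailedSimMapProp} and stratum preservation of $\phi$, and the deduction of condition (1) of the corollary from condition (2) of the proposition via \Cref{remFactsOnStars} are exactly the intended arguments. The genuine gap is in your second stage: the case you call "mixed" is never actually proven, only described as something that "has to be handled". Worse, you have pushed too much into that case. For a simplex $\sigma \subset K'-A$ whose vertices lie in \emph{several} strata, no new idea is needed: take $p = \max(p_{K'}(\sigma))$; then condition (2) applies to \emph{every} vertex of $\sigma$ (they all lie in $K'^{(0)}_{\leq p}\setminus A^{(0)}$), and you do not need $\sigma \in B_{A,p}$ — you only need one point of $|\overline{\textnormal{star}}_{K'}(\sigma)\cap B_{A,p}|$, and the realization of the maximal vertex of $\sigma$ is such a point; applying condition (2) at each vertex and intersecting (via \Cref{remFactsOnStars} (i),(ii)) already shows $f^0(\sigma)$ is a simplex. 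Your worry that "the naive choice of $p$ never places $\sigma$ in $B_{A,p}$" is therefore a red herring.

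For the genuinely mixed simplices the paper's argument is: by fullness of $A$ such a simplex has the form $\sigma_A \star_{K'} \tau$ with $\sigma_A \in A$, $\tau \in K'-A$. Let $x$ be a $p_{K'}$-maximal vertex of $\tau$. By the pure case just treated, $\phi(|x|) \in |\textnormal{star}_L(f^0(\tau))|$; on the other hand $|x| \in |\overline{\textnormal{star}}_{K'}(\sigma_A)| = |\overline{\textnormal{star}}_{K}(\sigma_A)|$, so hypothesis (b) of \Cref{thrmSimplicialExtension} gives $\phi(|x|) \in |\overline{\textnormal{star}}_L(f_A(\sigma_A))|$. Hence $|\textnormal{star}_L(f^0(\tau))| \cap |\overline{\textnormal{star}}_L(f^0(\sigma_A))| \neq \emptyset$, and the star/closed-star criterion in \Cref{remFactsOnStars} shows $f^0(\sigma_A) \star_L f^0(\tau)$ is a simplex of $L$. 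Note that this uses only a single well-chosen point and hypothesis (b); it does not use, and does not need, your "structural observation" that in a mixed simplex of $\textnormal{sd}^\Sigma(K \textnormal{ rel } A)$ the $A$-vertices sit in strata no higher than the non-$A$-vertices — that claim is false in general (take $A$ a vertex in a top stratum joined to a face of lower strata; its barycentric neighbours in the relative subdivision have strictly smaller filtration value), so an argument leaning on it would not go through.
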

\begin{proof}[Proof of \Cref{corThmExtHolds}]
	What we have to show is that $f^0$ extends to a map of simplicial complexes; that is, that for $\sigma \in K'$ $f^0(\sigma)$ is a simplex in $L$ and that furthermore (by \Cref{remEquCharFilt}) for each $x \in K'^{(0)}$ we also have $$ p_{K'}(x) \geq p_L{(f(x))}.$$ For the first condition, note that, by the equivalent characterization of when a join of simplices is a simplex in $K$ (\Cref{remFactsOnStars}), it suffices to show that certain intersections of stars in $L$ are non-empty. We start with the cases where $\sigma \in A$ or $\sigma \in K' - A$. For the former, this is obvious as we already know that $f^0$ extends to a simplicial map on $A$. For the latter case, let $p = \textnormal{max}(p_{K'}(\sigma))$. Then $\overline{\textnormal{star}}_{K'}(\sigma) \cap B_{A,p}$ is non-empty. 
	Hence, (using from \Cref{remFactsOnStars} \ref{remFactsOnStars1} and \ref{remFactsOnStars2}), together with \ref{propDetailedSimMapProp1} we also obtain that $\bigcap_{x_i \in \sigma}| \textnormal{star}(f^0(x_i))|$ is non-empty, hence by \Cref{remFactsOnStars} that $f^0(\sigma)$ is a simplex of $L$. It remains to show the case where the simplex lies in neither of the two. 
	Since $A$ is a full subcomplex of $K'$, such a simplex is of the shape $\sigma \star_{K'} \tau$, with $\tau \in K'-A$ and $\sigma \in A$. Now, let $x \in \tau $ be a vertex, maximal with respect to $p_{K'}$ and $p:=p_{K'}(x)$. 
	As we have just seen before, 
	$$\phi(|x|) \in |\textnormal{star}_{L}(f^0(\tau))|.$$ 
	At the same time $$|x| \in |\overline{\textnormal{star}}_{K'}(\sigma)| = |\overline{\textnormal{star}}_{K}(\sigma)|.$$ 
	Therefore, by the assumption \ref{thrmFAFAFAFAFA} of \Cref{thrmSimplicialExtension}, $$\phi(|x|) \in |\overline{\textnormal{star}}_L(f^0(\sigma))|.$$ 
	In particular, $|{\textnormal{star}}_L(f^0(\sigma))|$ and $|\overline{\textnormal{star}}_{L}(f^0(\tau))|$ have non-empty intersection. Hence, 
	$f^0(\sigma) \star_L f^0(\sigma)= f^0(\sigma \star_{K'} \tau )$ is a simplex of $L$ and thus $f^0$ extends to a map of simplicial complexes. \\
	\\
	Now, preservation of strata is an immediate consequence of \Cref{lemBoundOnStrata} and \ref{propDetailedSimMapProp2} of \Cref{propDetailedSimMapProp} and the fact that $\phi$ preserves strata.
\end{proof}
\begin{proof}[Proof of \Cref{propDetailedSimMapProp}]
	We define $\Sigma$ and $f$ inductively. As only values of $p'$ lesser or equal to $p$ occur in the definition of $\textnormal{sd}^{\Sigma}(-)$ it makes sense to talk about the latter, even if $\Sigma$ is only defined up to $p$. Assume inductively that we already have defined $\Sigma$ up to $p \in P$ and set $K^p:=\textnormal{sd}^{\Sigma_{\leq q}}(K \textnormal{ rel } A)$. Denote by $B^p_{r,A}$ the subcomplex of $K^p$ given by $K^p - (K^p_{\leq r} \cup A)$, for $r,p \in P$. Further, assume we have already defined $$g^{r}: K^{r,{(0)}} \longrightarrow L^{(0)}$$, for $0 \leq r \leq p$ such that $g^0$ agrees with $f_A$ on $A^{(0)}$, and that the following hold:
	\begin{enumerate}[label = (\roman*)$_r$, ref = (\roman*)$_r$]
		\item For $x \in \big (K_r^{r}\big )^{(0)}$, we have $p_{K^r}(x) \leq p_{L}(g^p(x)).$												\label{proofStatementsOnG1}
		\item For $x \in \big (K^{r}- A \big )^{(0)}$, we have \label{proofStatementsOnG2} $\phi(|\overline{\textnormal{star}}_{K^r}(x) \cap B^{r}_{A,r}|) \subset |\textnormal{star}_{L}(g^r(x))|.$
		\item $g^r$ agrees with $g^{r-1}$ on $K^{r,(0)}_{\leq r-1} \cup A^{(0)}= K^{r-1,(0)}_{\leq r-1} \cup A^{(0)}$, for $r \geq 1$.\label{proofStatementsOnG3}
	\end{enumerate} 
	We be drop the $(0)$ superscript for the $0$ skeleton from here on out as it will be clear when we mean a vertex or a positively dimensional simplex from usage of $x,y$ vs $\sigma, \tau$ respectively.
	For $p = 0$ consider the open covering of $|K-A|$ given by pulling back the covering of $|L|$ by the open stars $|\textnormal{star}_L(y)|$, $y \in L$. 
	By \Cref{remSubDivGetSmall}, for some sufficiently large $\Sigma_0$ we have that for each $x \in \textnormal{sd}^{\Sigma_0}(K-A)$ there exists a $y \in L$ such that $$\phi(|\overline{\textnormal{star}}_{\textnormal{sd}^{\Sigma_0}(K-A)}(x)|) \subset \textnormal{star}_L(y).$$
	Set $g^0(x)$ to such a $y$, for $ x \in \textnormal{sd}^{\Sigma_0}(K-A)$ and to $f_A(x)$ for $x \in A$. Then clearly the conditions \ref{proofStatementsOnG1}, \ref{proofStatementsOnG2} and \ref{proofStatementsOnG3}, for $r= 0$, are satisfied.\\ 
	\\
	For the following $p$, no new vertices are added to $|\mathring N(K,A)|$ and by \ref{proofStatementsOnG3} we keep $g^p = f_A$ on $A^{(0)}$. Hence, on $A$ the condition \ref{proofStatementsOnG1} is satisfied by assumption. As \ref{proofStatementsOnG2} does not depend on what is happening on $|A|$, we may as well replace $K$ by $K-A$ and hence assume $ A = \emptyset$ to make notation a little cleaner.\\
	\\ 
	For the inductive step $p$ to $p+1$, let $C$ be the subcomplex given by such simplices $\sigma \in K^{p}$ where $\phi(|\sigma|) \subset |\mathring{N}(L,L_{\leq p})| - |L_{\leq p}|$. Since $\phi$ is a stratum preserving map, $C \subset B^p_{p+1}$. Then, by \Cref{remPropOfN}, specifically \ref{remPropOfNCover} of the latter, $|C|$ is covered by the pulled back cover given by open sets of shape $|\textnormal{star}_L(\{y_p,y\})|$, for a $1$-simplex $\{y_p,y\} \in L$ with $y_p \in L_{\leq p}$ and $y \in \partial(N(L,L_{\leq p})$. Hence, for some sufficiently large $\Sigma_{p+1}$, we have that for every $x \in \textnormal{sd}^{\Sigma_{p+1}}(C) 
	$, there are $y_p \in L_p$ and a $y \in \partial N(L,L_p)$ such that $\{y_p,y\} \in L$ and 
	\begin{align}\label{proofOfDetPropCCond}
	\phi(|\overline{\textnormal{star}}_{K^{p+1}}(x) \cap \textnormal{sd}^{\Sigma_{p+1}}(C)|) \subset \textnormal{star}_L(\{y_p,y\}) \subset \textnormal{star}_L(y).
	\end{align}
	This defines $K^{p+1}$. Now, for $g^{p+1}$, consider the following cases: 
	\begin{enumerate}[label = (\alph*)]
		\item $x \in K^{p+1}_{\leq p} = K^{p}_{\leq p}$: \\
		$$g^{p+1}(x) := g^p(x).$$
		\item $x \in B^{p+1}_{p+1}$, $|x| \in \mathring{\sigma}$ for $\sigma \in K^p$, and $\sigma$ has a least one vertex, $x_0 \in \sigma $, such that $p_L \big(g^p(x_0) \big )> p $: \\
		$$g^{p+1}(x) := g^p(x_0),$$ for such a $x_0$.
		\item Neither are the case. Hence, $x \in B^{p+1}_{p+1}$. Take the $\sigma \in K^p$ such that $|x| \in \mathring{\sigma}$. Then $|\sigma| \subset |B^{p+1}_{p+1}|.$ and $g^p(\sigma)\subset L_{\leq p}$. As no new vertices were added to $|K^p|$ outside of $|B^{p+1}_{p+1}|$, we have that $|B^{p+1}_{p+1}| = |B^p_{p+1}|$. In particular, $\sigma \in B^p_{p+1} \subset B^p_p$. Hence, by \Cref{remFactsOnStars}, specifically \ref{remFactsOnStars1} and \ref{remFactsOnStars2} there together with \ref{proofStatementsOnG2} for $r=p$: $$\phi(|\sigma|) \subset \phi(|\overline{\textnormal{star}}_{K^p}(\sigma) \cap B^p_p|) \subset \bigcap_{x_i \in \sigma }|\textnormal{star}_L(g^p(x_i))| \subset |\mathring N(L, L_p)|.$$ Here, the last inclusion comes from $g^p(\sigma) \subset L_{\leq p}$. In other words, as $\phi$ preserves strata, $\sigma \in C$. Now, set $$g^{p+1}(x):=y$$ for a $y$ given by \eqref{proofOfDetPropCCond}.
	\end{enumerate}
	We need to see that this satisfies the inductive conditions (\ref{proofStatementsOnG1}, \ref{proofStatementsOnG2} and \ref{proofStatementsOnG3}), for $r=p+1$. The first condition is immediate by construction. We have not changed $g^{p+1}$ on vertices in $K^p_p$ and outside of it we have systematically set $g^{p+1}(x)$ to vertices in strata of index higher than $p$. The third condition is also obviously fulfilled. \\
	\\It remains to show \ref{proofStatementsOnG2}. For $x \in K^{p+1}_{\leq p}$, first note that as we have not added any new vertices to $|\mathring N(K^p,K^p_p)|$, $|\overline{\textnormal{star}}_{K^p}(x)| = |\overline{\textnormal{star}}_{K^{p+1}}(x)|$ and $|B^{p+1}_{p+1}| = |B^{p}_{p+1}|$. Hence, in this case \ref{proofStatementsOnG2} for $r=p$ gives the result. We are left with the case $x \in B^{p+1}_{p+1}$. So, $g^{p+1}(x)$ is either given as in (b) on in (c). \\
	\\
	In case (b), let $\sigma$ be the simplex with $|x| \in |\mathring{\sigma}|$ and $x_0 \in \sigma$ with $g^{p}(x_0) \notin L_{\leq p}$. Then $|\overline{\textnormal{star}}_{K^{p+1}}(x)| \subset |\overline{\textnormal{star}}_{K^{p}}(x_0)|$ and $|B^{p+1}_{p+1}| = |B^{p}_{p+1}| \subset |B^{p}_{p}|$. Hence, 
	$$\phi(|\overline{\textnormal{star}}_{K^{p+1}}(x) \cap B^{p+1}_{p+1}|) \subset \phi(|\overline{\textnormal{star}}_{K^{p}}(x_0) \cap B^{p}_{p}|) \subset |\textnormal{star}_{L}(g^p(x_0))| = |\textnormal{star}_{L}(g^{p+1}(x))|.$$ \\
	
	In case (c), consider $\sigma$ as in (c). For any vertex $x_i$ of $\sigma$ we have $g^{p}(x_i) \in L_{\leq p}$. In particular, by \ref{proofStatementsOnG2} , for $r=p$, and again \Cref{remFactsOnStars} we obtain: 
	$$\phi(|\overline{\textnormal{star}}_{K^{p}}(\sigma) \cap B^{p}_{p}|) \subset \bigcap_{x_i \in \sigma } \textnormal{star}_L(x_i) \subset |\mathring{N}(L,L_{\leq p})|.$$ Hence, as $\phi$ preserves strata, $|\overline{\textnormal{star}}_{K^{p}}(\sigma) \cap B^{p}_{p+1}|$ is mapped into $|\mathring{N}(L,L_{\leq p})| \setminus |L_{\leq p}|$. 
	In other words, $$\overline{\textnormal{star}}_{K^{p}}(\sigma) \cap B^{p}_{p+1} \subset C.$$ In particular, $$|\overline{\textnormal{star}}_{K^{p+1}}(x) \cap B^{p+1}_{p+1}| \subset |\overline{\textnormal{star}}_{K^{p}}(\sigma) \cap B^{p}_{p+1}| \subset |C|$$ and thereby $$|\overline{\textnormal{star}}_{K^{p+1}}(x) \cap B^{p+1}_{p+1}| = |\overline{\textnormal{star}}_{K^{p+1}}(x) \cap \textnormal{sd}^{\Sigma_{\leq p+1}}C|.$$ 
	Hence, by construction of $g^{p+1}(K)$, we have $$\phi(|\overline{\textnormal{star}}_{K^{p+1}}(x) \cap B^{p+1}_{p+1}|) \subset \textnormal{star}_L(\{y_p,y\}) \subset \textnormal{star}_L(y) = \textnormal{star}_L(g^{p+1}(x)),$$ 
	for $y$ and $y_p$ as in (c).\\
	\\
	Next we show that if we set $f^0 := g^q$ the conditions in the statement of the proposition are fulfilled. \ref{propDetailedSimMapProp3} is obvious by \ref{proofStatementsOnG3} and the start of the induction. For \ref{propDetailedSimMapProp1}, note that by construction for $p \leq q$ we have $$|B^{p}_{A,p}| = |B^{p+1}_{A,p}| = ... = |B^{q}_{A,p}| = |B_{A,p}|.$$ and for $x \in K'_{\leq p} = K^p_{\leq p}$: 
	$$|\overline{\textnormal{star}}_{K^p}(x)| = |\overline{\textnormal{star}}_{K^{p+1}}(x)| = ... = |\overline{\textnormal{star}}_{K^q}(x)| = |\overline{\textnormal{star}}_{K'}(x)|.$$
	Hence, \ref{propDetailedSimMapProp2} follows by \ref{proofStatementsOnG3} together with \ref{proofStatementsOnG2}. We are lacking that $f^0$ fulfills \ref{propDetailedSimMapProp1}. Using \ref{proofStatementsOnG3} and \ref{proofStatementsOnG1} we immediately obtain that, for each $x \in K'$, $$p_{K'}(x) \leq p_L(g^q(x)) = p_L(f^0(x)).$$ This finishes the proof.

\end{proof}
To prove \Cref{thrmSimplicialApproximation} we are left with showing the following.
\begin{proposition}
	The stratum preserving simplicial map $K' \to L$ from \Cref{corThmExtHolds} is stratified homotopic to $\phi$.
\end{proposition}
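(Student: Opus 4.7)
The natural candidate homotopy is the affine straight-line map $H : |K'|_P \otimes \Delta^1 \to |L|_P$, defined by
\begin{equation*}
H(\xi,t) := (1-t)\phi(\xi) + t\,|f|(\xi),
\end{equation*}
where the convex combination is formed in the ambient vector space $\mathbb{R}^{L^{(0)}}$ from \Cref{remDesReal}. Proving the proposition then reduces to three verifications: (i) that $H$ actually takes values in $|L|$, i.e.\ for every $\xi \in |K'|$ the two points $\phi(\xi)$ and $|f|(\xi)$ lie in a common closed simplex of $L$; (ii) that $H$ is stratum preserving; and (iii) that $H$ is continuous with the right boundary values $H_0 = \phi$, $H_1 = |f|_P$.

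The heart of the argument is (i). Fix $\xi \in |K'|$ with carrier $\sigma \in K'$ and let $\tau \in L$ be the carrier of $\phi(\xi)$. Since $|f|(\xi) \in |f(\sigma)|$ by definition of the realization of a simplicial map, it suffices to show that $\tau \cup f(\sigma)$ is again a simplex of $L$. By the equivalent characterizations of joins in \Cref{remFactsOnStars}, this is in turn equivalent to the star condition $\phi(\xi) \in |\overline{\textnormal{star}}_L(f(v))|$ holding for every vertex $v$ of $\sigma$. For vertices $v$ of minimum filtration $p_0 = \min(p_{K'}(\sigma))$ in $\sigma$, the carrier $\sigma$ lies in $B_{p_0}$, and so \ref{propDetailedSimMapProp2} of \Cref{propDetailedSimMapProp} yields $\phi(\xi) \in |\textnormal{star}_L(f(v))|$ at once. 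For vertices $v$ of higher filtration the conditions do not apply directly, and the plan is to iterate condition~(1) of \Cref{corThmExtHolds} along the ascending chain of stratum truncations $\sigma_{(0)} \subset \sigma_{(1)} \subset \cdots \subset \sigma_{(r)} = \sigma$ of $\sigma$, combining each step with the fact that $\phi$ preserves strata in order to propagate the star condition stratum-by-stratum up to the top vertices of $\sigma$.

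Once (i) is established, step (ii) is painless: both $\phi$ and $|f|_P$ are stratum preserving, so $\phi(\xi)$ and $|f|(\xi)$ lie in the same stratum $p = p_{|K'|}(\xi)$ of $|L|$. Having shown in (i) that they also share a common closed simplex of $L$, \Cref{lemLineSeqStrat} then gives that the entire open line segment between them is contained in stratum $p$, so $H(\xi,t) \in |L|_p$ for every $t \in (0,1)$; stratum preservation at the endpoints holds by assumption. Step (iii) is standard, reducing to local finiteness of $K'$ and continuity of $\phi$, $|f|$ and affine combinations, and the equalities $H_0 = \phi$, $H_1 = |f|_P$ are immediate from the formula.

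The main obstacle will be the propagation argument in (i): the conditions of \Cref{propDetailedSimMapProp} are naturally formulated for ``pure'' single-stratum simplices, and upgrading them to a statement about mixed-stratum carriers is exactly the point at which earlier proofs in the literature (in particular \cite{schwartz1971}) encountered the difficulties mentioned in the introduction to this subsection. Making this propagation rigorous requires a careful inductive use of the tower of subdivisions in \Cref{conIteratedSub} together with the full strength of \Cref{corThmExtHolds}, and it is here that our argument departs from earlier treatments.
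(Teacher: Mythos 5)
The global straight-line homotopy you propose is not well-defined, and the gap you flag in step~(i) is not a technical loose end but a fundamental obstruction that forces a different construction altogether — which is exactly what the paper does.

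Concretely: fix $\xi$ in the interior of a mixed-stratum simplex $\sigma \in K'$ with $p_{\min} := \min(p_{K'}(\sigma))$, and let $w$ be a vertex of $\sigma$ with $r := p_{K'}(w) > p_{\min}$. You want to apply \ref{propDetailedSimMapProp2} of \Cref{propDetailedSimMapProp} (or \ref{corThmExtHolds1} of \Cref{corThmExtHolds}) to $w$ to obtain $\phi(\xi) \in |\textnormal{star}_L(f(w))|$. But that condition only constrains $\phi$ on $|\overline{\textnormal{star}}_{K'}(w) \cap B_{p}|$ for $p \geq r$, and $\mathring\sigma$ is disjoint from $|B_p|$ for every such $p$, because $\sigma$ has a vertex of filtration $p_{\min} < r \leq p$. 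So the conditions say nothing about $\phi(\xi)$ relative to $\textnormal{star}_L(f(w))$, and the ``propagation stratum-by-stratum'' you describe has nothing to propagate along. Without $\phi(\xi) \in |\textnormal{star}_L(f(\sigma))|$, there is no guarantee that $\phi(\xi)$ and $|f|(\xi)$ lie in a common closed simplex of $L$, so the convex combination $(1-t)\phi(\xi) + t|f|(\xi)$ need not lie in $|L|$ at all. The paper's proof is written around exactly this point: the straight-line homotopy is used \emph{only} on the stratum-pure locus $|K'_q \sqcup \cdots \sqcup K'_0|$, where \ref{corThmExtHolds1} applies verbatim, and is then extended inward over the regular-neighbourhood strata $\mathcal{K}^{p+1} \subset \mathcal{K}^p$ by a genuinely non-straight homotopy. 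On a ray $(\alpha(\xi), \beta(\xi))$ of $N(B_p, K'_p)$, the paper first travels along $\phi$ of the ray, then concatenates with the already-constructed $F^{p+1}$ at the endpoint $\beta(\xi)$, then cones off against $|f|(\alpha(\xi))$ via the affine map $\Lambda$, and finally reparametrizes via $\kappa$ to recover the correct boundary values. The extra room to cone off is created precisely because $\alpha(\xi)$ lies in $K'_p$, where the star condition does hold; nothing analogous is available at $\xi$ itself. (There is also a minor slip in your reduction: \Cref{remFactsOnStars} gives an intersection formula for \emph{open} stars, not closed ones, so the equivalence you invoke should be phrased with $|\textnormal{star}_L(f(v))|$, not $|\overline{\textnormal{star}}_L(f(v))|$; this is the version you actually use later, but the statement as written is incorrect.) To salvage your approach you would either need to strengthen \Cref{propDetailedSimMapProp} so that the star condition holds without the $B_p$ restriction — which, as the paper's discussion of \cite{schwartz1971} indicates, is not achievable by filtered subdivision alone — or abandon the global straight line and adopt an inductive extension over the regular-neighbourhood filtration, as the paper does.
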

\begin{proof}
		First, note that by \Cref{lemLineSeqStrat}, whenever the straight line homotopy between to stratum preserving maps can be constructed it is automatically stratum preserving. For the straight line homotopy between a continuous map between realizations of simplicial complexes $\psi: |\tilde K| \to |\tilde L|$ and the realization of a simplicial map $g: \tilde K \to \tilde L$ to exist it suffices that, for any simplex $\sigma \in \tilde K$, the condition $$\psi(\mathring \sigma) \subset |\overline{\textnormal{star}}_L(g(\sigma))|$$ is fulfilled. In particular, by \ref{corThmExtHolds1} of \Cref{corThmExtHolds}, we can use the straight line homotopy on $|K'_q \sqcup ... \sqcup K'_0|$. We now inductively extend this homotopy over $$\mathcal{K}^p:=|B_p \sqcup K'_{p-1} \sqcup K'_{p-2} \sqcup ... \sqcup K'_0|,$$ going from $p=q$ to $p=0$. We induce over the following assumption. There exists a stratum preserving homotopy $$F^p:\mathcal K^{p} \otimes \Delta^1\longrightarrow |L|$$ between the restrictions of $\phi$ and $|f|$, fulfilling additionally the following smallness condition. For $r \leq p$, $\sigma \in K'_r$ and $\xi \in |\overline{\textnormal{star}}_{K'}(\sigma) \cap B_r| \cap \mathcal{K}^p$ we also have \begin{equation}\label{proofHomotopyIndCond}
	F^p(\{\xi\} \times [0,1)) \subset |\textnormal{star}_L(f(\sigma)|.
	\end{equation}
	Note how, by \Cref{lemLineSegmentsInStars} and \ref{corThmExtHolds1} of \Cref{corThmExtHolds}, this condition always holds on $\mathcal{K}^q$ for any $p$ as then $$F^p(\{\xi\} \times [0,1)) = [\phi(\xi), |f|(\xi)),$$ which is a half open line segment starting in $|\textnormal{star}_L(f(\sigma)|$. This gives the start of the induction. Now, from $p+1$ to $p$, note that \begin{align*}
	\mathcal{K}^{p} \setminus \mathcal{K}^{p+1} &= (|\mathring N(K',K'_{\leq p})| \setminus |K'_{\leq p}|) \cap |B_{p}|\\
	&= |\mathring{N}(B_p,K'_p)| \setminus |K'_p|.
	\end{align*}
	In particular, by \Cref{remPropOfN}, $\mathcal{K}^{p} - \mathcal{K}^{p+1}$ is given by the disjoint union of rays $(\alpha, \beta)$ of $N(B_p,K'_p)$, with $\alpha \in |K'_p|$ and $\beta \in |B_p - K'_p| = |B_{p+1}|$. Now, consider the maps \begin{align*}
	\alpha_{K'_p}: |\mathring{N}(B_p,K'_p)| \longrightarrow |K'_p|;\\
	\beta_{K'_p}: |N(B_p,K'_p)| \setminus |K'_p| \longrightarrow |B_{p+1}|
	\end{align*}
	from \Cref{conProjectionsOfSimNbhd}, sending a $\xi \in \mathcal{K}^{p} - \mathcal{K}^{p+1}$ to the respective end points of the ray it is contained in. We omit the subscript $K'_p$ from here on out. Now, define: \begin{align*}
	\lambda: (\mathcal{K}^p \setminus \mathcal{K}^{p+1}) \times \left[0,2\right] &\longrightarrow |L|\\
	(\xi, t) &\longmapsto \phi((1-t) \alpha(\xi) + t\beta(\xi))\textnormal{ for $t\leq 1$},\\
	(\xi, t) &\longmapsto F^{p+1}(\beta(\xi), t-1) \textnormal{ for $t\geq 1$}.
	\end{align*}
	Note that for the radial parameter map $$s_{K'_p}: |\mathring{N}(B_p,K'_p)| \longrightarrow [0,1]$$ from \Cref{conProjectionsOfSimNbhd} (where we again omit the subscript from here on out) we have $$\lambda(\xi, s(\xi)) = \phi(\xi).$$
	Now, for a fixed $\xi \in \mathcal{K}^{p} \setminus \mathcal{K}^{p+1}$, let $r \leq p$ and $\sigma \in K'_p$ be such that $\xi \in |\overline{\textnormal{star}}_{K'}(\sigma) \cap B_r|.$ Then, as $\xi \in (\alpha(\xi), \beta(\xi))$, we also have that $$(\alpha(\xi), \beta(\xi)) \subset |\overline{\textnormal{star}}_{K'}(\sigma) \cap B_r|$$ and hence that $$[\alpha(\xi), \beta(\xi)] \subset |\overline{\textnormal{star}}_{K'}(\sigma) \cap B_r|.$$ Consequently, by \ref{corThmExtHolds1} of \Cref{corThmExtHolds}, we obtain $$\phi([\alpha(\xi),\beta(\xi)]) \subset \textnormal{star}_L(f(\sigma)).$$ By the induction hypothesis, we also have $$F^{p+1}(\{\beta(\xi)\} \times [0,1)) \subset |\textnormal{star}_L(f(\sigma))|.$$ In particular, we obtain \begin{equation}\label{proofHomotopyEqLambdaStar}
	\lambda(\{\xi\}\times [0,2)) \subset |\textnormal{star}_L(f(\sigma))|\end{equation} and 
	$$ \lambda(\{\xi\}\times [0,2]) \subset |\overline{\textnormal{star}}_L(f(\sigma))|.$$
	Such a $\sigma$ and $r$ always exist as $\xi \in |N(B_p,K'_p)|$. $\sigma$ can be taken to be the simplex in $K'_p$ such that $\alpha(\xi) \in \mathring{\sigma}$. In this case, we also have $$|f|(\alpha(\xi)) \in \mathring{f(\sigma)}.$$ In particular, the following map defined by linearly interpolating between $\lambda(\xi, t)$ and $|f|(\alpha(\xi))$ is well-defined.
	\begin{align*} 
	\Lambda: (\mathcal{K}^p \setminus \mathcal{K}^{p+1}) \times \left[0,2\right] \times [0,1] & \longrightarrow |L|\\
	(\xi, t, s) &\longmapsto (1-s)|f|(\alpha(\xi)) + s\lambda(\xi,t)
	\end{align*}
	Then, by \Cref{lemLineSegmentsInStars} and \eqref{proofHomotopyEqLambdaStar}, we obtain that, for a $\sigma$ as in \eqref{proofHomotopyIndCond}, we always have \begin{equation}\label{proofHomotopyEqLLambdaStar}
	\Lambda(\{\xi\} \times [0,2) \times (0,1] ) \subset |\textnormal{star}_L(f(\sigma))|.
	\end{equation}
	Furthermore, by \Cref{lemLineSeqStrat}, for $\xi \in \mathcal{K}^p \setminus \mathcal{K}^{p+1}$, $(\alpha(\xi), \beta(\xi)]$ lies in the same stratum as $\beta(\xi)$. Denote its index by $p'$. Then, again by \Cref{lemLineSeqStrat}, the fact that $\Lambda$ is constructed by linear interpolation and the assumption that $F^{p+1}$ is stratum preserving, we obtain that 
	\begin{equation}\label{proofHomotopyEqLLambdaFiltered}
	\Lambda(\{\xi\} \times (0,2] \times (0,1]) \subset |L|_{p'}
	\end{equation}
	Now, let $$\kappa:\sfrac{[0,2] \times [0,1]}{[0,2] \times \{0\}} \longrightarrow [0,1] \times [0,1],$$ such that $\kappa$ restricts to the linear homeomorphisms (where we indicate the orientation by the ordering of the interval):
	\begin{align*}
	[0,1] \times \{1\} &\longrightarrow \{0\} \times [0,1],\\
	[1,2] \times \{1\} &\longrightarrow [0,1] \times \{1\},\\
	\{2\} \times [1,0] &\longrightarrow \{1\} \times [1,0],\\
	\{0\} \times [0,1] &\longrightarrow [1,0] \times \{0\}.
	\end{align*}
	As $\Lambda$ is constant along $\{\xi\} \times [0,2] \times \{0\}$ and $(\mathcal{K}^p \setminus \mathcal{K}^{p+1})$ is locally compact (hence quotients commute with products with the latter) $\Lambda$ descends to a map $$\bar \Lambda: (\mathcal{K}^p \setminus \mathcal{K}^{p+1}) \times \big (\sfrac{[0,2] \times [0,1]}{[0,2] \times \{0\}} \big ) \longrightarrow |L|$$ and then by composing with an inverse to $1_{\mathcal{K}^p \setminus \mathcal{K}^{p+1}} \times \kappa$ induces a map: $$ \tilde \Lambda: (\mathcal{K}^p \setminus \mathcal{K}^{p+1}) \times I^2 \longrightarrow |L|$$ such that: 
	\begin{equation}\label{proofHomotopyEqTildeLambda}
	\tilde \Lambda(\xi,t,s) = 
	\begin{cases}
	\phi((1-s) \alpha(\xi) + s\beta(\xi)) & \text{, if } t = 0 \\
	F^{p+1}(\beta(\xi),t) & \text{, if } s= 1 \\
	(1-s)|f|(\alpha(\xi)) + s|f|(\beta(\xi)) & \text{, if } t= 1 \\
	F^{p+1}(\alpha(\xi),t) & \text{, if } s=0
	\end{cases}
	\end{equation}
	Now, set $F^{p}$ on $(\mathcal{K}^p \setminus \mathcal{K}^{p+1}) \times I$ to $$
	\begin{tikzcd}[row sep = 0]
	(\mathcal{K}^p \setminus \mathcal{K}^{p+1}) \times \Delta^1\arrow[r]& (\mathcal{K}^p \setminus \mathcal{K}^{p+1}) \times I^2 \arrow[r, "\tilde \Lambda"]&{|L|} \\
	(\xi, t) \arrow[r, mapsto]& (\xi, t, s(\xi)) \arrow[r, mapsto] &\tilde{\Lambda}(\xi, t, s(\xi)).
	\end{tikzcd}.$$
	By \eqref{conProjectionsOfSimNbhdEqSum} of \eqref{conProjectionsOfSimNbhd} and the piecewise linearity of $|f|$, we obtain that this in fact gives a homotopy between
	$\phi|_{\mathcal{K}^p \setminus \mathcal{K}^{p+1}}$ and
	$|f||_{\mathcal{K}^p \setminus \mathcal{K}^{p+1}}$. (For an illustration, see \Cref{fig:Lambda}). If $\xi$ converges to $\xi_0$ in the boundary of $\mathcal K ^p \setminus \mathcal K^{p+1}$, then $s(\xi)$ converges either to $1$ or to $0$ and $\xi$ to either $\beta(\xi_0)$ or $\alpha(\xi_0)$ respectively. Hence, again by \eqref{proofHomotopyEqTildeLambda}, we obtain a continuous extension of $F^{p+1}$ to $\mathcal{K}^p$. We need to check that this homotopy is in fact stratum preserving, and that the inductive condition \eqref{proofHomotopyIndCond} holds. Let $\xi \in \mathcal{K}^p \setminus \mathcal{K}^{p+1}$. Under the map $$(\mathcal{K}^p \setminus \mathcal{K}^{p+1}) \times \Delta^1\longrightarrow (\mathcal{K}^p \setminus \mathcal{K}^{p+1}) \times I^2$$ ${\xi} \times [0,1)$ maps into $ \{\xi\} \times [0,1) \times (0,1).$ Under the inverse of $1 \times \kappa$ the latter maps into $\{\xi\} \times (0,2) \times (0,1) \cup [0,1] \times \{1\}.$ Hence, the inductive smallness condition follows by \eqref{proofHomotopyEqLLambdaStar}. Analogously, $\{\xi\} \times [0,1]$ is mapped into $(0,1] \times [0,1]$ which in turn is mapped into $(0,2] \times (0,1]$. Hence, by \eqref{proofHomotopyEqLLambdaFiltered}, we conclude that $F^{p}$ is stratum preserving.
\end{proof}
\begin{figure}[H]
	\centering
	\includegraphics[width=120mm]{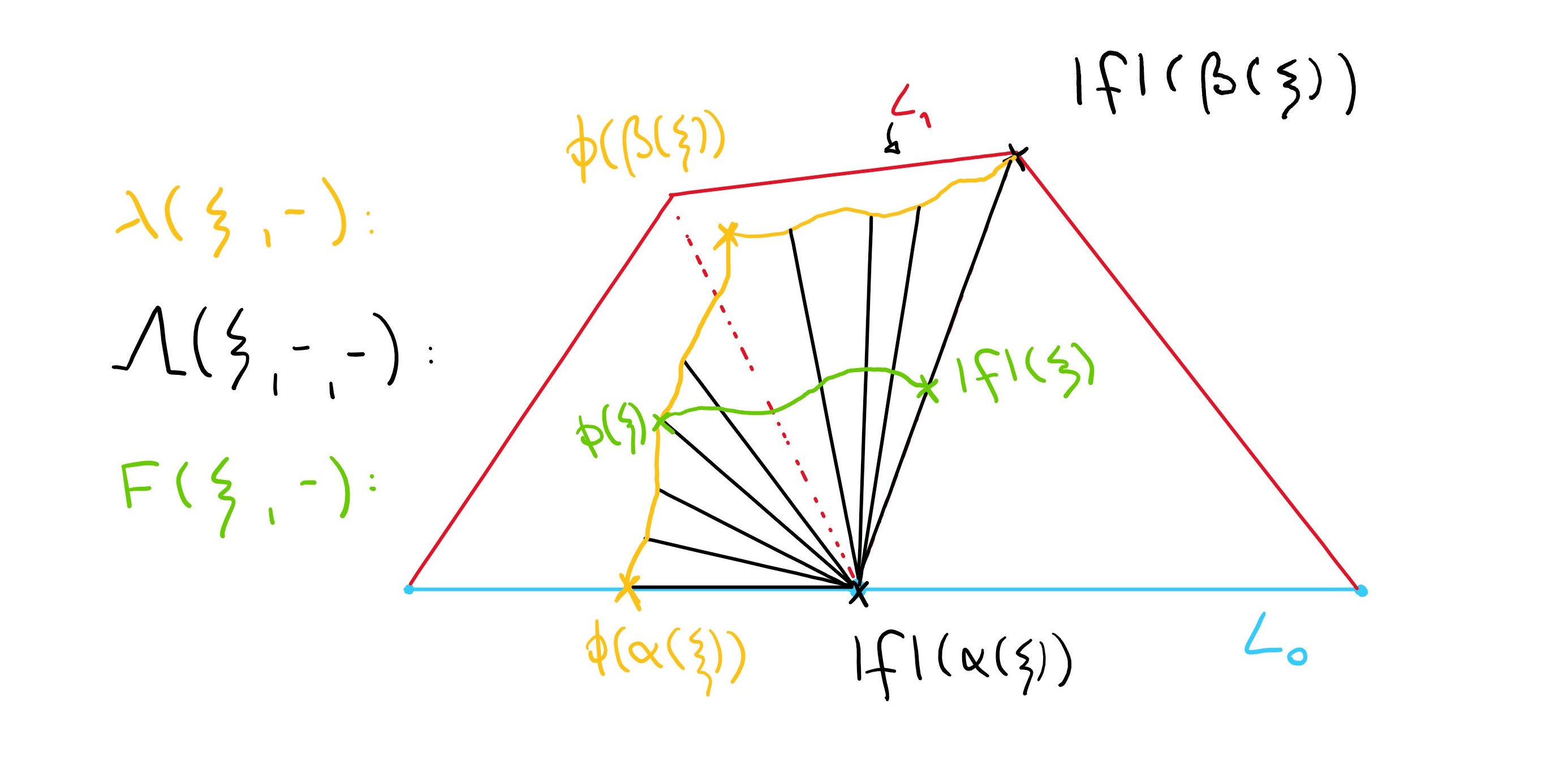}
	\caption{An illustration of $\lambda$, $\Lambda$ and $F$, marked respectively in different colours. }
	\label{fig:Lambda}
\end{figure}
\subsection{Filtered ordered simplicial complexes}\label{subsecOrdered}
Recall that an ordered simplicial complex is a simplicial complex $K$ together with a linear ordering on every simplex $\sigma \in K$, compatible in the sense that if $\tau \subset \sigma$, then the ordering on $\tau$ is the restriction of the one in $\sigma$. We usually write simplices in an ordered simplicial complex in the shape $\{x_0 \leq ... \leq x_k \}$. Further, recall that a map of ordered simplicial complexes is a map of simplicial complexes restricting to a monotonous map on each simplex. In \Cref{exFilteredObj} we defined the category of $P$-filtered ordered simplicial complexes, $\textnormal{\textbf{sCplx}}^{\operatorname{o}}_P$. As we did for the several other categories of filtered objects we introduced, we begin this subsection by giving some alternative descriptions of this category. 
\begin{remark}\label{remDeOSC}
	Similarly to \Cref{remDeSC}, a $P$-filtered ordered simplicial complex is alternatively described as a simplicial complex $K$ together with a map $p_K^{0}:K^{(0)} \to P$ fulfilling some condition. In the non-ordered case, this condition was that the image of every simplex is a flag in $P$. In the ordered case, we need to reflect the fact that $p_K: K \to \textnormal{sd}(P)$ is a map of ordered complexes. This is equivalently specified by: \begin{align*}
		(x < y) \in K \implies p^0_K(x) \leq p^0_K(y).
	\end{align*}
	In other words, this requires that the ordering on $K$ is such that vertices in lower strata always come before vertices in higher strata.
	A morphism of ordered filtered simplicial complexes $f:K \to L$ is equivalently a map of the underlying ordered simplicial complexes, fulfilling $p_K(x) = p_L\big(f(x) \big ).$ This justifies calling such simplicial maps \textit{stratum preserving}.
\end{remark}
The advantage of the ordered over the unordered category is that it embeds fully faithfully into $\textnormal{s\textbf{Set}}_P$. We use this to apply the simplicial approximation theorems \Cref{thrmSimplicialApproximation} and \Cref{thrmSimplicialExtension} in the setting of filtered simplicial sets.
\begin{definitionconstruction}\label{conEmbedFOS}
Consider the functor 
\begin{align*}
	S^o:\textnormal{\textbf{sCplx}}^{\operatorname{o}} &\longrightarrow \textnormal{s\textbf{Set}}\\
	K &\longmapsto \big \{ [n] \mapsto \textnormal{Hom}_{{\textbf{sCplx}}^o}(\Delta^n,K) \big \}
\end{align*} 
with the obvious functoriality on morphisms and $\Delta^{n}$ thought of as the ordered simplicial complex given by flags of $[n]$.\\
Simplicial sets of the form $S^o(K)$ have the property that every simplex $\sigma \in X([k])$ is uniquely determined by its multiset of vertices, that is by $(x_i)_{i \in [k]}$ allowing for permutations. This follows immediately from the definition of a map of (ordered) simplicial complexes. It turns out that this property completely describes simplicial sets that are isomorphic to $S(K)$, for some $K \in \textnormal{\textbf{sCplx}}$.\\ To see this, denote by $\mathcal C$ the full subcategory of simplicial sets fulfilling the latter property. Consider the functor from $\textnormal{s\textbf{Set}}$ to $\textnormal{\textbf{sCplx}}$ constructed as follows. For $X\in \textnormal{s\textbf{Set}}$ define a simplicial complex with vertices $X([0])$, by taking the simplices to be such subsets $\{x_0,...,x_k\} \subset X_0$ that are the set of vertices of a simplex $\sigma \in X([k])$. This construction becomes functorial by checking that for a map of simplicial set $f:X \to Y$ the induced map $f([0]):X([0]) \to Y([0])$ induces a map of simplicial complexes. We obtain a functor \begin{align*}
	 C: \textnormal{s\textbf{Set}} \longrightarrow \textnormal{\textbf{sCplx}}.
\end{align*}
In general, the image of this functor does not naturally carry the structure of an ordered simplicial complex. This is due to the fact that two vertices in a simplicial set might be connected by two $1$-simplices with different orientation. However, in the case where $X \in \mathcal C$, the relation given on $X([0])$ by $$x \leq y \iff x= d_0(\sigma), y= d_1(\sigma)$$ for some $\sigma \in X([1])$, turns $C(X)$ into an ordered simplicial complex as then no two conflicting orders occur on any simplex. For $f: X \to Y \in \mathcal C$, $C(f)$ is then a map of ordered simplicial complexes, with respect to this additional structure.
Hence, we obtain a functor:
\begin{align*}
	C^o: \textnormal{s\textbf{Set}} \longrightarrow \textnormal{\textbf{sCplx}}^{\operatorname{o}}.
\end{align*}
We then have the following proposition.
\end{definitionconstruction}
\begin{proposition}\label{propCharFOS}
	In the setting of \Cref{conEmbedFOS}, the functor $S$ is a fully faithful embedding of categories. An inverse of its restriction in the image to $\mathcal C$ is given by $C^o$. 
\end{proposition}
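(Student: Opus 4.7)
The plan is to exhibit explicit mutually inverse equivalences. First I would verify that the essential image of $S^o$ indeed lands in $\mathcal{C}$: for $K \in \textnormal{\textbf{sCplx}}^{\operatorname{o}}$, a simplex $\sigma \in S^o(K)([n])$ is by definition a map of ordered simplicial complexes $\Delta^n \to K$, and since the ordering on $\Delta^n$ is total and every simplex of $K$ carries a total order, such a map is determined by where it sends each vertex of $\Delta^n$. In particular, $\sigma$ is determined by its multiset of vertices $(x_{0,\sigma}, \ldots, x_{n,\sigma})$.

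Next I would construct the unit and counit. For $K \in \textnormal{\textbf{sCplx}}^{\operatorname{o}}$, there is a canonical map $\eta_K : K \to C^o(S^o(K))$, given on vertices by the identification $S^o(K)([0]) = K^{(0)}$. A simplex of $C^o(S^o(K))$ is the vertex set of some $\sigma \in S^o(K)([n])$, i.e. the vertex set of a map $\Delta^n \to K$, which is precisely a simplex of $K$ (possibly after eliminating repetitions). The induced ordering on such a simplex is precisely the original one on $K$, so $\eta_K$ is an isomorphism of ordered simplicial complexes. Conversely, for $X \in \mathcal{C}$, define $\varepsilon_X : S^o(C^o(X)) \to X$ by sending a map of ordered simplicial complexes $\Delta^n \to C^o(X)$, i.e. a monotone sequence $x_0 \leq \ldots \leq x_n$ whose underlying set is a simplex of $C^o(X)$, to the unique $n$-simplex of $X$ with that multiset of vertices, which exists by the definition of $\mathcal C$. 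Naturality in both cases is immediate from the construction of $C^o$ and $S^o$ on morphisms.

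I would then check that $\varepsilon_X$ is a simplicial map (compatibility with faces and degeneracies follows from the uniqueness of simplices in $\mathcal{C}$ with a prescribed vertex multiset) and that it is bijective on each $X([n])$. Surjectivity follows because every $n$-simplex of $X$ yields a monotone sequence of vertices (its faces under the $d_0^{n-i}d_1^{i}$ maps) that spans a simplex of $C^o(X)$; injectivity is exactly the defining property of $\mathcal{C}$. Together, $\eta$ and $\varepsilon$ establish mutually inverse equivalences between $\textnormal{\textbf{sCplx}}^{\operatorname{o}}$ and $\mathcal{C}$, so in particular $S^o$ restricts to an equivalence onto $\mathcal{C}$ and is therefore fully faithful as a functor into $\textnormal{s\textbf{Set}}$.

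The only mildly delicate step is checking that $\eta_K$ really recovers the ordering: one must observe that the $1$-simplices of $S^o(K)$ encode the restricted orderings on edges of $K$ correctly, because a map $\Delta^1 \to K$ of ordered complexes selects precisely a pair $x \leq y$ in some simplex of $K$. This shows $C^o$ applied to $S^o(K)$ reconstructs the order relation used to define $C^o$. Once this is in hand, the rest is bookkeeping about how every construction is natural in morphisms, which proceeds directly from functoriality of $S^o$ and $C^o$ on vertex maps.
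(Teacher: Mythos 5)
Your proposal is correct and follows essentially the same route as the paper: both verify $1_{\textnormal{\textbf{sCplx}}^{\operatorname{o}}} \cong C^o \circ S^o$ directly and define the comparison $S^o(C^o(X)) \to X$ by sending a simplex to the unique simplex of $X$ with the prescribed multiset of vertices, with bijectivity coming from the defining property of $\mathcal C$. Your write-up merely packages this as an explicit unit/counit pair and spells out the order-recovery check that the paper dismisses as a straightforward verification.
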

\begin{proof}
	By a slight abuse of notation, we denote the factorization of $S$ into $\mathcal C$ also by $S$. It is a straightforward verification that $1_{\textnormal{\textbf{sCplx}}^{\operatorname{o}}} \cong C \circ S$. For the other composition, we obtain a natural transformation to the identity by the map $$\textnormal{Hom}_{\textnormal{\textbf{sCplx}}^{\operatorname{o}}}(\Delta^n, C^o(X)) \to X([n]),$$ sending a map $\sigma$ on the left to the unique simplex in $X$ given by the multiset given by $(\sigma(\{i\})_{i \in [n]}$. It suffices to show, this is a bijection. This is immediate, from the fact that $X \in \mathcal C$.
\end{proof}
Further, note that $S^o(\textnormal{sd}(P))$ is naturally isomorphic to $N(P)$. In particular, as $\textnormal{\textbf{sCplx}}^{\operatorname{o}}_P$ and $\textnormal{s\textbf{Set}}_P$ are given by over-categories of these respectively, we obtain:
\begin{corollary}\label{corEmbedFos}
$S^o$ from \Cref{conEmbedFOS} induces a fully faithful embedding $$\textnormal{\textbf{sCplx}}^{\operatorname{o}}_P \hookrightarrow \textnormal{s\textbf{Set}}_P.$$ Filtered simplicial sets isomorphic to spaces in the image of this embedding are precisely those $X \in \textnormal{s\textbf{Set}}_P$ where every $n$-dimensional simplex $\sigma: \Delta^n \to X$ is uniquely determined by the multiset of its vertices, that is by the family $(\sigma(\{i\}))_{i \in [n]}$ modulo permutation.
\end{corollary}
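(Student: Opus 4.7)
The plan is to reduce the corollary to Proposition \ref{propCharFOS} via a purely formal argument about over-categories, using that $S^o$ sends the simplicial complex $\textnormal{sd}(P)$ to (something canonically isomorphic to) $N(P)$.

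First I would verify the identification $S^o(\textnormal{sd}(P)) \cong N(P)$ in $\textnormal{s\textbf{Set}}$. This is essentially by construction: an $n$-simplex of $S^o(\textnormal{sd}(P))$ is a map of ordered simplicial complexes $\Delta^n \to \textnormal{sd}(P)$, which by definition of $\textnormal{sd}(P)$ corresponds bijectively to a d-flag of $P$ of length $n$, which is precisely an $n$-simplex of $N(P)$; compatibility with face and degeneracy operators is immediate from the explicit description. Fixing this isomorphism, post-composition with it yields, for any ordered simplicial complex $K$, a natural bijection between stratum preserving maps $K \to \textnormal{sd}(P)$ in $\textnormal{\textbf{sCplx}}^{\operatorname{o}}$ and filtrations $S^o(K) \to N(P)$ in $\textnormal{s\textbf{Set}}$.

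Next I would invoke the standard fact that any fully faithful functor $F: \mathcal{A} \to \mathcal{B}$ equipped with an isomorphism $F(a) \cong b$ induces a fully faithful functor of over-categories $\mathcal{A}_{/a} \to \mathcal{B}_{/b}$: simply apply $F$ to arrows and post-compose with the chosen isomorphism. Since $\textnormal{\textbf{sCplx}}^{\operatorname{o}}_P = \textnormal{\textbf{sCplx}}^{\operatorname{o}}_{/\textnormal{sd}(P)}$ and $\textnormal{s\textbf{Set}}_P = \textnormal{s\textbf{Set}}_{/N(P)}$ by \Cref{exFilteredObj}, specializing this observation to $F=S^o$ immediately produces the desired fully faithful embedding $\textnormal{\textbf{sCplx}}^{\operatorname{o}}_P \hookrightarrow \textnormal{s\textbf{Set}}_P$.

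For the characterization of the essential image, a filtered simplicial set $p_X: X \to N(P)$ lies in the image precisely when it is isomorphic (over $N(P)$) to $S^o(K) \to N(P)$ for some $P$-filtered ordered simplicial complex $K$. By \Cref{propCharFOS} the underlying simplicial set $X$ is isomorphic to some $S^o(K_0)$ iff $X$ satisfies the multiset condition stated, and in that case $K_0 \cong C^o(X)$ is canonically recovered. The mild technical point, which I view as the only potential obstacle, is to check that once $X$ satisfies the multiset condition, the structure map $p_X$ automatically equips $C^o(X)$ with the structure of a $P$-filtered ordered simplicial complex in the sense of \Cref{remDeOSC}. This amounts to an unwinding: $p_X$ on $0$-simplices gives a map $p^0: C^o(X)^{(0)} \to P$, and the requirement that every $1$-simplex $x \to y$ in $X$ map to a d-flag in $N(P)$ translates exactly into $p^0(x) \leq p^0(y)$, which is the defining condition of a $P$-filtered ordered simplicial complex. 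Hence $X \cong S^o(C^o(X))$ as filtered simplicial sets, completing the characterization.
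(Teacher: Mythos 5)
Your proposal is correct and follows essentially the same route as the paper, which likewise deduces the corollary from the identification $S^o(\textnormal{sd}(P)) \cong N(P)$, the over-category description of the two filtered categories, and \Cref{propCharFOS}. Your extra check that the filtration on $X$ induces a valid $P$-filtered ordered structure on $C^o(X)$ (via \Cref{remDeOSC}) only spells out a detail the paper leaves implicit.
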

Using this fact, we will often think of filtered ordered simplicial complexes as filtered simplicial sets. In particular, the following definition makes sense:
\begin{definition}\label{defFOSaSS}
Let $X \in \textnormal{s\textbf{Set}}_P$ be such that every $n$-dimensional simplex $\sigma: \Delta^n \to X$ by its' multiset of vertices. Then, we call $X$ a $P$-\textit{filtered ordered simplicial complex}, or ($P$-)\textit{FOS-complex} for short. We nearly always omit the $P$.
\end{definition}
It is easy to see that this definition is also compatible with realization functors, i.e. that there are natural isomorphism between 
$$\mathcal C \xrightarrow{C^o} \textnormal{\textbf{sCplx}}^{\operatorname{o}}_P \to \textnormal{\textbf{sCplx}}_P \xrightarrow{|-|_P} \textnormal{\textbf{Top}}_{P}$$ and 
$$|-|_P: \textnormal{s\textbf{Set}}_P \to \textnormal{\textbf{Top}}_{P}.$$ Where the functor $\textnormal{\textbf{sCplx}}^{\operatorname{o}}_P \to \textnormal{\textbf{sCplx}}_P$ is the forgetful functor forgetting about the orderings on simplices. Hence, we do not distinguish between these two ways of realizing FOS-complexes, and denote them by $|-|_P$, or just $|-|$, for the sake of notational brevity.\\
\\ 
It can also be useful to have the following alternative characterization of an FOS-complex.
\begin{lemma}\label{lemCharFOS}
	A $P$-filtered simplicial set is an FOS-complex if and only if every non-degenerate simplex is uniquely determined by its set of vertices.
\end{lemma}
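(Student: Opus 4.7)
The plan is to reduce both directions to a single preliminary claim: in either setting, every non-degenerate simplex has pairwise distinct vertices. Once this is known, the equivalence falls out by a routine Eilenberg--Zilber decomposition argument.

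For the forward direction, assume $X$ is an FOS-complex. If $\sigma$ is non-degenerate with $\sigma(\{i\}) = \sigma(\{i+1\})$, then $s_i d_i \sigma$ has the same vertex family and hence the same vertex multiset as $\sigma$, so the FOS property forces $\sigma = s_i d_i \sigma$, contradicting non-degeneracy. With adjacent repeats excluded, suppose still $\sigma(\{i\}) = \sigma(\{j\})$ for some $j > i+1$. The $(n+1)$-simplices $s_i \sigma$ and $s_j \sigma$ differ only in the position of an extra copy of the common vertex $\sigma(\{i\}) = \sigma(\{j\})$, so their vertex multisets coincide; FOS gives $s_i \sigma = s_j \sigma$. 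Applying $d_j$ and using the simplicial identity $d_j s_i = s_i d_{j-1}$ (valid for $j > i+1$) yields $\sigma = d_j s_j \sigma = d_j s_i \sigma = s_i d_{j-1}\sigma$, once more contradicting non-degeneracy. Hence non-degenerate simplices have distinct vertices, vertex set and vertex multiset coincide for them, and the FOS property restricted to non-degenerates is exactly (B).

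For the backward direction, assume (B). First repeat the distinctness argument but in the opposite logical direction: if $\sigma:\Delta^n \to X$ is non-degenerate with $\sigma(\{i\}) = \sigma(\{j\})$ for some $i < j$, then the face $d_j \sigma$ has the same vertex set as $\sigma$ (the duplicate remains) but strictly smaller dimension. Its non-degenerate core $\nu$ (from the Eilenberg--Zilber decomposition) therefore has the same vertex set as $\sigma$, so (B), interpreted across all dimensions as the natural reading of ``uniquely determined by its set of vertices'' demands, forces $\nu = \sigma$, contradicting the dimension gap. Now take $\sigma, \tau : \Delta^n \to X$ sharing a vertex multiset and decompose $\sigma = \alpha^* \sigma^{\#}$, $\tau = \beta^* \tau^{\#}$ with $\alpha, \beta$ order-preserving surjections and $\sigma^{\#}, \tau^{\#}$ non-degenerate. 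Since $\sigma^{\#}$ and $\tau^{\#}$ have distinct vertices, their vertex sets equal those of $\sigma$ and $\tau$ respectively, and therefore coincide; (B) gives $\sigma^{\#} = \tau^{\#}$, a common simplex $\rho : \Delta^k \to X$. The multiplicity of $\rho(\{j\})$ in the vertex multiset is $|\alpha^{-1}(j)|$ on the $\sigma$ side and $|\beta^{-1}(j)|$ on the $\tau$ side, so these agree for every $j \in [k]$. An order-preserving surjection $[n] \twoheadrightarrow [k]$ is determined by its fiber sizes, hence $\alpha = \beta$ and $\sigma = \tau$, establishing (A).

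The main technical obstacle is the non-adjacent repeat in the forward direction: here the degeneracy of $\sigma$ is not visible from any single $s_i d_i \sigma$, and must be extracted via the detour through the pair $s_i \sigma, s_j \sigma$ combined with the simplicial identity $d_j s_i = s_i d_{j-1}$. Every other step is essentially bookkeeping once vertex-distinctness of non-degenerate simplices has been secured on both sides.
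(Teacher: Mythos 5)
Your proof is correct and follows essentially the same route as the paper: in both directions you first establish that non-degenerate simplices have pairwise distinct vertices and then conclude via the Eilenberg--Zilber decomposition, with the degeneracy operator recovered from vertex multiplicities. Your handling of the non-adjacent repeat (via $s_i\sigma = s_j\sigma$ and $d_js_i = s_id_{j-1}$) and your use of the non-degenerate core of a face are just more explicit versions of the paper's terser steps, not a different argument.
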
 
\begin{proof}
	This has nothing to do with the filtrations, so let us assume they are trivial.
	First, note that an FOS-complex has no non-degenerate simplices with degenerate faces. In particular, every non-degenerate simplex of dimension $n$-has precisely $n$ different vertices. Thus, being an FOS-complex implies the other property. Now, let $X$ be a simplicial set satisfying the latter. Note that for such an $X$ every nondegenerate simplex of dimension $n$ has precisely $n$ different vertices. We can always take a maximal face of such a simplex without repetitions in the vertices. Such a face is non-degenerate and has the same underlying vertex set as the original simplex. Hence, it is the simplex. Next, let $\sigma$ and $\tau$ be two possibly degenerate simplices with the same multiset of vertices. Then, by assumption, the two simplices they degenerate from agree and have no repeating vertices. The respective degeneracy maps giving $\sigma$ and $\tau$ are already completely specified by the multiplicities in their corresponding multisets. Hence, as the latter agree, $\sigma$ and $\tau$ come from the same nondegenerate simplex and through the same degeneracy map, in other words $\sigma= \tau$.
\end{proof}
We finish this subsection with a type of ``dense up to homotopy equivalence'' result, for the embedding $S^o$. It is an immediate consequence of \Cref{thrmSSvSC}. The reader worried about circularity may be assured that we only use this result in \Cref{secDetHTop}. The largest part of the second chapter up to \Cref{secTopWh} is completely independent from this result. 
\begin{proposition}\label{propSSvSC}
	Every finite filtered simplicial set $X \in \textnormal{s\textbf{Set}}_P^{fin}$ is isomorphic in $\mathcal H\textnormal{s\textbf{Set}}_P$ to a finite FOS-complex of the same dimension.
\end{proposition}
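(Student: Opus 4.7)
The plan is to proceed by induction on the number of non-degenerate simplices of $X$, closely mirroring the classical argument that every finite CW-complex has the weak homotopy type of a finite simplicial complex. The base case $X = \emptyset$ is trivial since the empty simplicial set is vacuously an FOS-complex. For the inductive step, pick a non-degenerate simplex $\sigma: \Delta^{\mathcal J} \to X$ of maximal dimension and let $X' \subset X$ be the filtered simplicial subset obtained by removing $\sigma$. Then $X$ is the pushout of $X'$ and $\Delta^{\mathcal J}$ along the attaching map $\phi = \sigma|_{\partial \Delta^{\mathcal J}}: \partial \Delta^{\mathcal J} \to X'$, and since the Douteau model structure (\Cref{thrmDouModSS}) is left proper with all monomorphisms as cofibrations, this is a homotopy pushout.

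By the induction hypothesis, $X' \simeq Y'$ in $\mathcal{H}\textnormal{s\textbf{Set}}_P$ for some finite FOS-complex $Y'$ of the same dimension as $X'$. By the representation result \Cref{propRepHoClasses}, the induced morphism $[\phi]$ composed with $[X' \simeq Y']$ in $\mathcal{H}\textnormal{s\textbf{Set}}_P$ is represented by an honest stratum-preserving simplicial map $\hat \phi : \textnormal{sd}^k_P(\partial \Delta^{\mathcal J}) \to Y'$ for some $k \geq 0$. Forming the pushout
\begin{center}
\begin{tikzcd}
\textnormal{sd}^k_P(\partial \Delta^{\mathcal J}) \arrow[r,"\hat\phi"] \arrow[d,hook] & Y' \arrow[d] \\
\textnormal{sd}^k_P(\Delta^{\mathcal J}) \arrow[r] & Y
\end{tikzcd}
\end{center}
produces a filtered simplicial set $Y$. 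Because $\textnormal{l.v.}^k_P : \textnormal{sd}^k_P(\Delta^{\mathcal J}) \to \Delta^{\mathcal J}$ is a weak equivalence, left-properness combined with \Cref{lemRepExbySd} ensures that $Y \simeq X$ in $\mathcal{H}\textnormal{s\textbf{Set}}_P$, closing the induction.

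The main obstacles are ensuring that $Y$ is an FOS-complex and that its dimension does not exceed that of $X$. For the FOS property, I would use the characterization of \Cref{lemCharFOS} (a non-degenerate simplex is determined by its vertex set) together with the fact that after sufficiently many iterations $\textnormal{sd}^k_P$ of a filtered simplex is an FOS-complex (a filtered analogue of the classical observation that $\textnormal{sd}^2$ of a simplicial set is a simplicial complex, visible in \Cref{exSDPforSim}); one may need to enlarge $k$, or further subdivide $\hat\phi$ via a simplicial approximation (\Cref{thrmSimplicialExtension}) applied to $|\hat\phi|_P$, so that $\hat\phi$ becomes a simplicial map of ordered simplicial complexes that is injective away from the identified subcomplex inside $Y'$, which guarantees the pushout $Y$ inherits the FOS structure.

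For dimension control, the explicit description of $\textnormal{sd}_P(\Delta^{\mathcal J})([k])$ from \Cref{exSDPforSim} shows that a $k$-simplex in $\textnormal{sd}_P(\Delta^{\mathcal J})$ is a chain of length $k$ in a product order on pairs $(\sigma_i,p_i)$ with $p_i \in \sigma_0$, and such chains are bounded in length by $\dim \Delta^{\mathcal J}$; hence $\dim \textnormal{sd}^k_P(\Delta^{\mathcal J}) = \dim \Delta^{\mathcal J}$. This is the delicate point: since $\textnormal{sd}_P$ thickens strata rather than merely refining, one must verify that thickening does not create longer chains, which uses the strict inclusion requirement $p_i \in \sigma_0$ in the definition. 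Once this dimension bound is in hand, the induction produces an FOS-complex $Y$ with $\dim Y = \max(\dim Y', \dim \Delta^{\mathcal J}) = \dim X$, completing the proof.
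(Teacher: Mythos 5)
Your overall skeleton (induct, replace the inductive piece by an FOS-complex, represent the attaching map by an honest simplicial map after applying $\textnormal{sd}_P^k$ via \Cref{propRepHoClasses}, push out, compare via left properness) matches the spirit of the paper's argument, which proves the stronger statement that $X$ even has the \emph{simple} homotopy type of a finite FOS-complex (\Cref{thrmSSvSC}, induction on dimension, attaching all top cells at once); the proposition is then an immediate corollary. But there is a genuine gap at exactly the point you flag as "the main obstacle": you cannot in general arrange $\hat\phi$ to be injective (even away from a subcomplex) by enlarging $k$ or by simplicial approximation. The homotopy class of the attaching map may simply have no injective representative --- think of $\partial\Delta^{\mathcal J}$ attached along a degree-two map to a triangulated circle in $Y'$, or of a boundary collapsed to a single vertex; subdividing the \emph{source} never separates points that the map is forced to identify, and \Cref{thrmSimplicialExtension} only changes the map within its stratified homotopy class. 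If $\hat\phi$ identifies vertices, the pushout $Y' \cup_{\hat\phi} \textnormal{sd}_P^k(\Delta^{\mathcal J})$ typically has non-degenerate simplices with repeated vertices or distinct non-degenerate simplices with the same vertex set, so it fails \Cref{lemCharFOS} and is not even non-singular. As written, your induction therefore does not close.

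The paper's repair is the missing idea: instead of forcing injectivity, replace the attaching map by a cofibration using the simplicial-complex mapping cylinder $M^{cx}$ of \Cref{conMcx}. By \Cref{corMcxFact}, after one barycentric subdivision the attaching map factors as a cofibration of finite FOS-complexes followed by a simple (in particular weak) equivalence; since the glued pieces are now non-singular and the gluing maps are cofibrations, \Cref{lemPushoutofFQS} keeps the pushout non-singular, and one further application of $\textnormal{sd}$ together with \Cref{lemFQSImproves} and \Cref{propLvtSim} yields an honest FOS-complex in the same (weak, indeed simple) homotopy class. Note also that the ordinary simplicial mapping cylinder $M_{\hat\phi}$ would not do here, since gluing $\,\textnormal{sd}_P^k(\partial\Delta^{\mathcal J})\otimes\Delta^1$ along the non-injective $\hat\phi$ already destroys non-singularity; this is precisely why the join-based complex $M^{cx}$ is introduced. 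Your dimension bookkeeping (that $\textnormal{sd}_P$ and $\textnormal{sd}$ preserve dimension) is correct and is also what the paper uses, with the additional observation that $M^{cx}$ of a map between $n$-dimensional complexes has dimension at most $n+1$, matching the dimension of the cells being attached.
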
 
In particular, many of the homotopy theoretic questions on filtered simplicial sets can be reduced to questions on FOS-complexes.
\subsection{Last vertex maps of relative subdivisions}
Another advantage, of FOS-complexes over nonordered ones, is that their subdivisions admit last vertex maps. This provides a way to represent subdivision homeomorphisms up to stratum preserving homotopy. We have already seen such constructions for filtered simplicial sets in \Cref{subsecPsset}. 
The fully faithful inclusions of categories $$S^o: \textnormal{\textbf{sCplx}}^{\operatorname{o}}_P \hookrightarrow \textnormal{s\textbf{Set}}_P$$ is compatible with subdivisions in the following sense.
\begin{definitionconstruction}
	Let $K$ be any filtered simplicial complex. Then $\textnormal{sd}(K)$ is naturally an FOS-complex, with the order on vertices given by inclusion. Hence, we can also think of $\textnormal{sd}$ as an endofunctor of $\textnormal{\textbf{sCplx}}^{\operatorname{o}}_P$, as well as as a functor $$\textnormal{\textbf{sCplx}}_P \longrightarrow \textnormal{\textbf{sCplx}}^{\operatorname{o}}_P.$$ It is not hard to see that there is a natural isomorphism $$\textnormal{sd}(S^o(K)) \cong S^o(\textnormal{sd}(K)).$$ Indeed, first note that the two definitions are clearly compatible on simplices and then extend by the colimit definition of $\textnormal{sd}$ for simplicial sets. Hence, this definition is also compatible with simplicial set version of subdivision. Finally, consider the last vertex map $$\textnormal{sd}(K) \longrightarrow K $$ given on vertices by $$\{ x_0 \leq ...\leq x_k \}\mapsto x_k.$$ This induces a natural transformation: 
	$$\textnormal{sd} \longrightarrow 1_{\textnormal{\textbf{sCplx}}^{\operatorname{o}}_P}.$$
	Again, it is easy to see that this is compatible with the definition of the last vertex transformation $sd \longrightarrow 1_{\textnormal{s\textbf{Set}}_P}$ under the inclusion of categories given by $S^o$.
\end{definitionconstruction}
In the ordered setting, we can also equip the relative versions of subdivision, \Cref{constrRelSd}, with last vertex maps.
\begin{definitionconstruction}\label{conLVLRel}
Let $K$ be an FOS-complex and $A$ a full subcomplex such that whenever two vertices $x \in K-A$ and $a \in A$ lie in a common simplex we have $a < x$. Then the relative subdivision $\textnormal{sd}(K\textnormal{ rel }A)$ becomes an FOS-complex (over $P$), by taking the induced ordering on $\textnormal{sd}(K-A) \sqcup A$ and setting $a < \sigma$, whenever two vertices $a \in A^{(0)}$ and $\sigma \in \textnormal{sd}(K-A)^{(0)}$ lie in a common simplex. Now, the last vertex map $\textnormal{l.v.}: \textnormal{sd}(K) \to K$ factors into two maps of FOS-complexes, \begin{equation*}
	\textnormal{sd}(K) \xrightarrow{l_0} \textnormal{sd}(K\textnormal{ rel }A) \xrightarrow{l_1} K,
\end{equation*}
fitting into the commutative diagram:
\begin{center}
\begin{tikzcd}
	\textnormal{sd}(A) \arrow[d, hook', swap] \arrow[r, "\textnormal{l.v.}"]& A \arrow[r, "\textnormal{1}"] \arrow[d, hook', swap]& A \arrow[d, hook', swap]\\
	\textnormal{sd}(K) \arrow[r, "l_0"]& \textnormal{sd}(K\textnormal{ rel }A) \arrow[r, "l_1"]& K\\
	\textnormal{sd}(K-A) \arrow[u, hook] \arrow[r, "1"]& \textnormal{sd}(K-A) \arrow[r, "\textnormal{l.v.}"] \arrow[hook, u]& K-A \arrow[hook, u] 
\end{tikzcd}
\end{center}
The maps are constructed as follows.\\
$l_0$ is defined on vertices via \begin{align*}
	\textnormal{sd}(K-A)^{(0)} &\ni &\tau &\longmapsto \textnormal{ }\tau \\
	\big (\textnormal{sd}(N(X,A))-\textnormal{sd}(A) \big )^{(0)} & \ni &\sigma \star \tau &\longmapsto \textnormal{ }\tau \\
	\textnormal{sd}(A)^{(0)} &\ni &\sigma &\longmapsto \textnormal{ }\textnormal{l.v.}(\sigma).
	\end{align*}
	This maps a simplex $\{ \sigma_0 \subset ... \subset \sigma_k \subset \sigma_{k+1} \star \tau_{1} \subset ... \subset \sigma_{k+l} \star \tau_{l} \}$, with $\sigma_i \in A$ and $\tau_j \in K-A$, to $$\{ \textnormal{l.v.}(\sigma_0) \leq ... \leq \textnormal{l.v.}(\sigma_k) \leq \tau_1 \leq ... \leq \tau_l \}=\{\textnormal{l.v.}(\sigma_0) \leq ... \leq \textnormal{l.v.}(\sigma_k)\}\star\{\tau_1 \leq ... \leq\tau_l\}.$$ As $\{\textnormal{l.v.}(\sigma_0) \leq ... \leq\textnormal{l.v.}(\sigma_k)\} \subset \sigma_{k+l} \star \tau_k$ this actually defines an (ordered) simplex in $\textnormal{sd}(K\textnormal{ rel }A)$. It is clearly filtered, as we have assumed that vertices in $A$ precede those in $K$ in the ordering. $l_1$ is given on vertices by:
	\begin{align*}
	\textnormal{sd}(K-A)^{(0)} \ni \tau &\longmapsto \textnormal{ } \textnormal{l.v.}(\tau) \\
	A^{(0)} \ni \sigma &\longmapsto \sigma,
	\end{align*}
	which clearly preserves strata.
	It maps a simplex $\sigma \star \{\tau_0 \leq ... \leq \tau_k\}$ to $$\sigma \star \{\textnormal{l.v.}(\tau_0) \leq ... \leq \textnormal{l.v.}(\tau_k)\}.$$ Again, by construction of the FOS-complex $\textnormal{sd}(K\textnormal{ rel }A)$ and the ordering assumption on $A\subset K$, this defines an ordered simplex.
\end{definitionconstruction}
\begin{proposition}\label{propLareHoToSd}
	In the setting of \Cref{conLVLRel}, the realizations of the two arrows $l_0$ and $l_1$ are stratified homotopic to the filtered subdivision homeomorphisms from \Cref{propBarRelSd}.
\end{proposition}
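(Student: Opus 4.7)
The plan is to build the required stratified homotopies by the same straight-line technique used in Proposition \ref{propHomoFilteredSubd}, refined by the simplex-stratification bookkeeping of Lemma \ref{lemLineSeqStrat}. Since both $|l_i|$ and $|s_i|$ are piecewise linear on realizations of FOS-complexes, constructing a homotopy between them reduces to a per-simplex task: define the homotopy on each realized closed simplex of the source as an affine straight-line homotopy in a suitable ambient closed simplex of the target, verify stratum preservation, and verify face compatibility so that the pieces glue.

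First, I would handle $|l_1|$ versus $|s_1|$. On a non-degenerate simplex of $\textnormal{sd}(K \text{ rel } A)$ of the form $\sigma \star \{\tau_0 \subsetneq \dots \subsetneq \tau_k\}$ with $\sigma \in A$ and $\tau_i \in K - A$, both maps take values in the single closed simplex $|\sigma \star_K \tau_k|$ of $|K|$: they agree on all vertices of $\sigma$ (both act as the identity there), while on a vertex $\tau_i$ one has $|l_1|(\tau_i) = |\textnormal{l.v.}(\tau_i)| \in |\tau_i|$ and $|s_1|(\tau_i) = \textnormal{bar}(\tau_i) \in \mathring{\tau_i}$. By construction both of these points lie in the stratum of $|K|$ indexed by $\max(p_K(\tau_i)) = p_{\textnormal{sd}(K \text{ rel } A)}(\tau_i)$. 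Hence the straight-line homotopy $H_t = (1-t)|l_1| + t|s_1|$, defined affinely on each realized simplex, lies in $|\sigma \star_K \tau_k|$ at every time and, by Lemma \ref{lemLineSeqStrat} applied to the segment joining the two vertex images within $|\tau_i|$ (and the fact that straight lines between points of the same stratum of a simplex stay in that stratum), is stratum preserving. Face compatibility is automatic because the formulas defining $l_1$ and $s_1$ on vertices are each restriction-stable under sub-simplices of the relative subdivision, so the per-simplex homotopies glue into the desired global stratified homotopy $|l_1| \simeq_P |s_1|$.

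For $|l_0|$ versus $|s_0|$ the strategy is identical. On a non-degenerate simplex of $\textnormal{sd}(K)$ of the form $\{\rho_0 \subsetneq \dots \subsetneq \rho_m\}$, split the indexing set according to the three cases of Construction \ref{conLVLRel}, namely whether $\rho_j \in \textnormal{sd}(A)$, $\rho_j \in \textnormal{sd}(N(K,A)) \setminus \textnormal{sd}(A)$ (so $\rho_j = \sigma_j \star_K \tau_j$), or $\rho_j \in \textnormal{sd}(K-A)$. Inspection of the formulas defining $s_0$ (from Proposition \ref{propBarRelSd}) and $l_0$ (from Construction \ref{conLVLRel}) shows that in each case both vertex images lie in the same single closed simplex of $\textnormal{sd}(K \text{ rel } A)$ spanned by the subset $\{\textnormal{l.v.}(\sigma_0), \dots, \textnormal{l.v.}(\sigma_k), \tau_1, \dots, \tau_l\}$ of vertices attached to our chain, and that at each vertex the two images agree on their $p$-index under $p_{\textnormal{sd}(K \text{ rel } A)}$. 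Consequently, the straight-line homotopy $(1-t)|l_0| + t|s_0|$ on this simplex, extended affinely, lands in the realization of that single closed simplex and is stratum preserving by Lemma \ref{lemLineSeqStrat}. Face compatibility is again routine and the homotopies glue.

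The only nontrivial obstacle is the simultaneous bookkeeping that, on every simplex of the source, the per-vertex images under the two maps lie in a single common closed simplex of the target and that the pairs agree on stratum index. This is a matter of unwinding the definitions in Construction \ref{constrRelSd} and Construction \ref{conLVLRel} case by case (ordered vertex in $A$; barycenter of a simplex of $\textnormal{sd}(A)$ versus its last vertex; join vertex $\sigma \star_K \tau$ versus $\tau$; vertex of $\textnormal{sd}(K-A)$ versus its barycenter). Once these matchings are recorded, Lemma \ref{lemLineSeqStrat} and affine extension finish the proof with no further analysis.
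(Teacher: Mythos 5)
Your proof is correct and is essentially the paper's own argument: the paper's proof consists of the single remark that the claim is immediate from straight-line homotopies, which is precisely the per-simplex affine homotopy you construct, with stratum preservation supplied by \Cref{lemLineSeqStrat} (both maps being stratum preserving and landing in a common closed simplex) and gluing by face compatibility. One minor repair in the $l_0$ case: the common closed target simplex must be taken to be $\sigma_{k+l} \star \{\tau_1 \subset \dots \subset \tau_l\}$ (a simplex of $\textnormal{sd}(K\textnormal{ rel }A)$ since $\sigma_{k+l}\star_K \tau_l \in K$), because $|s_0|$ sends the vertices $\sigma_i$ to barycenters, which require all of $\sigma_{k+l}$ rather than only the last vertices $\textnormal{l.v.}(\sigma_i)$; with this adjustment your argument goes through verbatim.
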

\begin{proof}
	This is immediate through a use of straight line homotopies.
\end{proof}
We now want to iterate this construction. For this we need: 
\begin{lemma}\label{lemOrderedFullSustain}
	In the setting of \Cref{conLVLRel} let $A' \subset K$ be another full subcomplex fulfilling the ordering condition and containing $A$. Then the subcomplex $\textnormal{sd}(A'\textnormal{ rel }A) \subset \textnormal{sd}(K\textnormal{ rel }A)$ satisfies the requirements of \Cref{conLVLRel} as well.
\end{lemma}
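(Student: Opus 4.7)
The plan is to verify each of the three hypotheses of \Cref{conLVLRel} for the ambient FOS-complex $\textnormal{sd}(K\textnormal{ rel }A)$ together with its candidate subcomplex $\textnormal{sd}(A'\textnormal{ rel }A)$: that $\textnormal{sd}(A'\textnormal{ rel }A)$ is an FOS-subcomplex, that this inclusion is full, and that whenever a vertex of $\textnormal{sd}(K\textnormal{ rel }A)-\textnormal{sd}(A'\textnormal{ rel }A)$ shares a simplex with a vertex of $\textnormal{sd}(A'\textnormal{ rel }A)$, the latter precedes the former in the ordering of $\textnormal{sd}(K\textnormal{ rel }A)$. The first point is essentially free: the vertex set description $\textnormal{sd}(A'\textnormal{ rel }A)^{(0)}=A^{(0)}\sqcup\textnormal{sd}(A'-A)^{(0)}$ from \Cref{constrRelSd} embeds into $\textnormal{sd}(K\textnormal{ rel }A)^{(0)}=A^{(0)}\sqcup\textnormal{sd}(K-A)^{(0)}$, and the ordering coming from \Cref{conLVLRel} applied to the pair $(A',A)$ is precisely the restriction of the one on $\textnormal{sd}(K\textnormal{ rel }A)$, by inspection of the three defining cases (on $A$, on subdivision vertices, and on mixed pairs).

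For fullness, I would first argue that $A'-A$ is a full subcomplex of $K-A$: a simplex of $K-A$ with all vertices in $(A')^{(0)}\setminus A^{(0)}$ has all vertices in $(A')^{(0)}$, hence lies in $A'$ by fullness of $A'\subset K$, and then in $A'-A$. This propagates to $\textnormal{sd}(A'-A)\subset \textnormal{sd}(K-A)$ via \Cref{lemFulnesusTained}. Now take any simplex of $\textnormal{sd}(K\textnormal{ rel }A)$; using the explicit form $\sigma\star\{\tau_0\subset\ldots\subset\tau_k\}$ with $\sigma\in A$, $\tau_i\in K-A$ and $\sigma\star_K\tau_k\in K$ from \Cref{constrRelSd}, if all vertices lie in $\textnormal{sd}(A'\textnormal{ rel }A)^{(0)}$, then each $\tau_i\in A'-A$, so $\sigma\star_K\tau_k$ has all vertices in $(A')^{(0)}$, whence $\sigma\star_K\tau_k\in A'$ by fullness of $A'\subset K$, placing the whole simplex in $\textnormal{sd}(A'\textnormal{ rel }A)$.

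For the ordering condition, a vertex of $\textnormal{sd}(K\textnormal{ rel }A)-\textnormal{sd}(A'\textnormal{ rel }A)$ is necessarily some $\tau\in\textnormal{sd}(K-A)^{(0)}$ that is not contained in $A'$ (it is automatically not in $A$). Let $v\in\textnormal{sd}(A'\textnormal{ rel }A)^{(0)}$ share a simplex with $\tau$. If $v=a\in A^{(0)}$, the inequality $a<\tau$ is built into the ordering on $\textnormal{sd}(K\textnormal{ rel }A)$. If instead $v\in\textnormal{sd}(A'-A)^{(0)}$, then $v$ and $\tau$ both live in the $\textnormal{sd}(K-A)$-part and therefore sit in a flag under $\subset$; since every subset of $v\subset A'$ is again in $A'$ while $\tau$ is not, the comparison must go $v\subsetneq\tau$, which is exactly $v<\tau$ in the subdivision ordering.

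There is no real obstacle here beyond careful bookkeeping of the vertex types in $\textnormal{sd}(K\textnormal{ rel }A)$; the only point that requires a moment of care is checking that the FOS-ordering on $\textnormal{sd}(A'\textnormal{ rel }A)$ as a standalone FOS-complex (from \Cref{conLVLRel} applied to $(A',A)$) agrees with the restricted one from the ambient complex, and this is immediate from the case-by-case definition. The lemma then follows.
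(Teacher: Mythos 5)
Your proof is correct and follows essentially the same route as the paper: the ordering condition is verified by the same case split (vertex in $A^{(0)}$ handled by the definition of the ordering on $\textnormal{sd}(K\textnormal{ rel }A)$; vertex in $\textnormal{sd}(A'-A)^{(0)}$ handled by noting that a face of a simplex of $A'-A$ stays in $A'-A$, forcing the inclusion $v \subsetneq \tau$). The only cosmetic difference is the fullness part, where the paper simply cites \Cref{lemFulnesusTained} via the subdivision $\textnormal{sd}(K\textnormal{ rel }A) \vartriangleleft K$, whereas you additionally spell it out combinatorially from the explicit description of simplices in \Cref{constrRelSd}; both are fine.
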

\begin{proof}
	We have already seen in \Cref{lemFulnesusTained} that fullness is a property that is preserved under subdivision. Therefore, it suffices to check the ordering condition. Let $a' \in \textnormal{sd}(A'\textnormal{ rel }A)$ and $x \in \textnormal{sd}(K'\textnormal{ rel }A) - \textnormal{sd}(A'\textnormal{ rel }A)$ be two vertices contained in a common simplex. In case $a \in A$, the result is immediate from the construction of the ordering on $\textnormal{sd}(K\textnormal{ rel }A)$, as $A \subset \textnormal{sd}(A'\textnormal{ rel }A) .$ If not, then both lie in $\textnormal{sd}(K-A)$. But here, the ordering relation holds for any, subcomplex $\textnormal{sd}(A'')$, $A'' \subset K$. This follows, by the definition of the ordering on $\textnormal{sd}(K)$ via inclusions of simplices, as no simplex not contained in $A''$ can contain a simplex contained in $A''$.
\end{proof}
We can now use this to study an iterative version of relative subdivisions of FOS-complexes.
\begin{definitionconstruction}\label{conIteratedOrdRelSd}
	Let $K$ be an FOS-complex and $ \mathcal A := (A^1 \subset ... \subset A^{n})$ be a family of full subcomplexes of $K$ each satisfying the ordering condition of $\Cref{conLVLRel}.$ Now, define $\textnormal{sd}^{\mathcal A}(K)$ by induction over $n$ as follows. For $n=1$, just set $\textnormal{sd}^{\mathcal A}(K):= \textnormal{sd}(K\textnormal{ rel }A_1)$. For $n$ to $n+1$, let $\tilde{\mathcal A}$ be the subfamily of $\mathcal A$ where $A^{n+1}$ was removed. By another inductive use of \Cref{lemOrderedFullSustain} we set $\textnormal{sd}^\mathcal A (K)$ to the FOS-complex defined by $$\textnormal{sd}^\mathcal A (K):=\textnormal{sd}\Big (\textnormal{sd}^{\tilde{\mathcal A}} (K) \textnormal{ rel } \textnormal{sd}^{\tilde{\mathcal A}}(A^{n+1}) \Big).$$ Then, inductively by \Cref{propBarRelSd}, we have subdivisions $$\textnormal{sd}^n(K) \vartriangleleft \textnormal{sd}^{\mathcal A}(K) \vartriangleleft K,$$ where the latter is so that no new vertices are added to $|{\mathring N}(K,A_1)|$.
\end{definitionconstruction}
\begin{corollary}\label{corLVTOrdSd}
	Let $K$ be an FOS-complex together with a family $\mathcal A$ as in \Cref{conIteratedOrdRelSd} of length $n$. Then the ($n$-times iterated) last vertex map $\textnormal{l.v.}^n: \textnormal{sd}^n(K) \to K$ factors into two maps of FOS-complees $$\textnormal{sd}^n(K) \xrightarrow{l_0^{\mathcal A}} \textnormal{sd}^{\mathcal A}(K) \xrightarrow{l_1^{\mathcal A}} K$$ fitting into the commutative diagram: 
	\begin{center}
		\begin{tikzcd}
		\textnormal{sd}^n(A^1) \arrow[d, hook', swap] \arrow[r, "\textnormal{l.v.}^n"]& A^1 \arrow[r, "\textnormal{1}"] \arrow[d, hook', swap]& A_1 \arrow[d, hook', swap]\\
		\textnormal{sd}^\mathcal{A}(K) \arrow[r, "l_0^\mathcal{A}"]& \textnormal{sd}^\mathcal{A}(K) \arrow[r, "l_1^\mathcal{A}"]& K
		\end{tikzcd}.
	\end{center}
	Furthermore, $|l_0^\mathcal{A}|_P$ and $|l_1^{\mathcal A}|_P$ are respectively stratified homotopic to the subdivision homeomorphisms $|\textnormal{sd}^n(K)|_P \xrightarrow{\sim} |\textnormal{sd}^{\mathcal A}(K)|_P$ and $|\textnormal{sd}^{\mathcal A}(K)|_P \xrightarrow{\sim} |K|_P$.
\end{corollary}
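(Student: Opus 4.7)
The plan is to induct on the length $n$ of the family $\mathcal A$. The base case $n = 1$ is exactly the content of \Cref{conLVLRel} together with \Cref{propLareHoToSd}. For the inductive step, write $\tilde{\mathcal A} = (A^1 \subset \dots \subset A^n)$ and assume the result for $\tilde{\mathcal A}$, so that I have FOS-maps $l_0^{\tilde{\mathcal A}}: \textnormal{sd}^n(K) \to \textnormal{sd}^{\tilde{\mathcal A}}(K)$ and $l_1^{\tilde{\mathcal A}}: \textnormal{sd}^{\tilde{\mathcal A}}(K) \to K$ composing to $\textnormal{l.v.}^n$ and fitting into the stated diagram. By iterated application of \Cref{lemOrderedFullSustain}, the subcomplex $B := \textnormal{sd}^{\tilde{\mathcal A}}(A^{n+1})$ of $L := \textnormal{sd}^{\tilde{\mathcal A}}(K)$ satisfies the ordering condition of \Cref{conLVLRel}, so the base case applied to the pair $(L, B)$ yields maps $l_0 : \textnormal{sd}(L) \to \textnormal{sd}(L \textnormal{ rel } B) = \textnormal{sd}^{\mathcal A}(K)$ and $l_1 : \textnormal{sd}^{\mathcal A}(K) \to L$ whose composition is $\textnormal{l.v.}_L$. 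I set
\[ l_0^{\mathcal A} := l_0 \circ \textnormal{sd}(l_0^{\tilde{\mathcal A}}) : \textnormal{sd}^{n+1}(K) \longrightarrow \textnormal{sd}^{\mathcal A}(K), \qquad l_1^{\mathcal A} := l_1^{\tilde{\mathcal A}} \circ l_1. \]

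To verify $l_1^{\mathcal A} \circ l_0^{\mathcal A} = \textnormal{l.v.}^{n+1}$, I apply naturality of the last-vertex transformation to the morphism $l_0^{\tilde{\mathcal A}}$:
\[ l_1^{\mathcal A} \circ l_0^{\mathcal A} = l_1^{\tilde{\mathcal A}} \circ (l_1 \circ l_0) \circ \textnormal{sd}(l_0^{\tilde{\mathcal A}}) = l_1^{\tilde{\mathcal A}} \circ \textnormal{l.v.}_L \circ \textnormal{sd}(l_0^{\tilde{\mathcal A}}) = (l_1^{\tilde{\mathcal A}} \circ l_0^{\tilde{\mathcal A}}) \circ \textnormal{l.v.} = \textnormal{l.v.}^n \circ \textnormal{l.v.} = \textnormal{l.v.}^{n+1}. \]
For commutativity of the left square, unfolding \Cref{conIteratedOrdRelSd} shows that $A^1$ is never subdivided during the formation of $\textnormal{sd}^{\tilde{\mathcal A}}(K)$ or $\textnormal{sd}^{\tilde{\mathcal A}}(A^{n+1})$, so $A^1 \subset B \subset L$ sits as an unsubdivided subcomplex. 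Consequently, by the explicit vertex formula of \Cref{conLVLRel}, $l_0$ restricted to $\textnormal{sd}(A^1) \subset \textnormal{sd}(B) \subset \textnormal{sd}(L)$ is the ordinary last vertex map $\textnormal{sd}(A^1) \to A^1$ followed by the inclusion $A^1 \hookrightarrow \textnormal{sd}^{\mathcal A}(K)$. Combined with the inductive left square (applied after one further $\textnormal{sd}$), this yields the desired commutativity. The right square is immediate because $l_1$, again by its defining vertex formula, is the identity on $B$ (and hence on $A^1 \subset B$), after which the inductive right square concludes.

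For the homotopy statement, \Cref{propLareHoToSd} gives stratum-preserving homotopies between $|l_0|_P, |l_1|_P$ and the two-step subdivision homeomorphisms for the pair $(L, B)$, while the inductive hypothesis provides the analogous homotopies for $|l_0^{\tilde{\mathcal A}}|_P$ and $|l_1^{\tilde{\mathcal A}}|_P$. All maps in sight are piecewise linear with respect to $\textnormal{sd}^{n+1}(K)$ on the source and $\textnormal{sd}^{\mathcal A}(K)$ on the target, and tracing the inductive construction through, one sees that the image of every closed simplex of $\textnormal{sd}^{n+1}(K)$ under both $|l_0^{\mathcal A}|_P$ and the full subdivision homeomorphism lies inside a common closed simplex of $|\textnormal{sd}^{\mathcal A}(K)|$; the straight-line homotopy between them is therefore well-defined, and stratum preserving by \Cref{lemLineSeqStrat}. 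An analogous argument handles $l_1^{\mathcal A}$. The main obstacle here is purely bookkeeping: keeping track of which relative subdivision each of the maps $l_0, l_1$ arises from and confirming that the ordering condition and the inclusion $A^1 \hookrightarrow K$ propagate unchanged through iterated subdivisions; once this is arranged, all the genuine mathematical content was already established in the base case.
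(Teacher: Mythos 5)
Your construction of $l_0^{\mathcal A}$ and $l_1^{\mathcal A}$, the verification $l_1^{\mathcal A}\circ l_0^{\mathcal A}=\textnormal{l.v.}^{n+1}$ via naturality of $\textnormal{l.v.}$, and the commutativity of the two squares (using that $A^1$ stays unsubdivided and the vertex formulas of \Cref{conLVLRel}) coincide in substance with the paper's proof and are fine. The issue is the homotopy statement, where you leave the paper's route and the argument has a genuine gap.

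You reduce everything to the claim that for each closed simplex of $\textnormal{sd}^{n+1}(K)$ its images under $|l_0^{\mathcal A}|_P$ and under the subdivision homeomorphism lie in a common closed simplex of $\textnormal{sd}^{\mathcal A}(K)$, i.e.\ that $l_0^{\mathcal A}$ is a simplicial approximation of the subdivision identification, and you justify this only by ``tracing the inductive construction through'', asserting that all genuine content is already in the base case. It is not: $l_0^{\mathcal A}=l_0\circ\textnormal{sd}(l_0^{\tilde{\mathcal A}})$, and applying $\textnormal{sd}$ destroys exactly the carrier property you would need to propagate. Concretely, already for $\Delta^{1}$ the map $\textnormal{sd}(\textnormal{l.v.})\colon \textnormal{sd}^{2}(\Delta^{1})\to\textnormal{sd}(\Delta^{1})$ sends the point $3/8$ to $3/4$, which does not lie in the carrier $[0,1/2]$ of $3/8$; moreover $\textnormal{sd}^{n+1}(K)$ need not even refine $\textnormal{sd}\big(\textnormal{sd}^{\tilde{\mathcal A}}(K)\big)$, so there is no intermediate carrier statement for $\textnormal{sd}(l_0^{\tilde{\mathcal A}})$ that could be composed with the one for $l_0$ from \Cref{propLareHoToSd}. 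Your carrier claim for the composite may in the end be true (the coarseness of $\textnormal{sd}^{\mathcal A}(K)$ near the $A^{i}$ works in its favour), but it needs its own combinatorial induction with a strengthened hypothesis; as written, the straight-line homotopy is not known to be defined, and the assertion that only ``bookkeeping'' remains is exactly where the proof is missing. The paper avoids all of this: for $l_1^{\mathcal A}$ it just composes the homotopies of \Cref{propLareHoToSd} inductively (pre- and post-composition of homotopies needs no carrier control), and for $l_0^{\mathcal A}$ it argues indirectly — compose with the stratum preserving homeomorphism $|\textnormal{sd}^{\mathcal A}(K)|_P\cong|K|_P$; the composite of the two subdivision homeomorphisms is the homeomorphism of $\textnormal{sd}^{n}(K)\vartriangleleft K$ (\Cref{propBarRelSd}), which is stratified homotopic to $|\textnormal{l.v.}^{n}|_P=|l_1^{\mathcal A}\circ l_0^{\mathcal A}|_P$ by \Cref{propHomoFilteredSubd}, while $|l_1^{\mathcal A}|_P$ is already known to be homotopic to its homeomorphism; cancelling the homeomorphism then gives the statement for $l_0^{\mathcal A}$. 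Either adopt that argument or supply the missing simplicial-approximation induction for $l_0^{\mathcal A}$ (and, for $l_1^{\mathcal A}$, simply compose homotopies instead of invoking another unproved carrier claim).
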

\begin{proof}
	For $l_1^\mathcal{A}$, just take the iterative composition of the respective $l_1$ from \Cref{conLVLRel}. This clearly make the right hand side of the diagram commute and is stratified homotopic to the subdivision homeomorphism by an inductive use of \Cref{propLareHoToSd}.\\ $l_0^n$ is constructed inductively as follows. For $n=1$, take $l_0$ from \Cref{conLVLRel} for $A= A^1$. For the inductive step from $n$ to $n+1$, set $l_0^{n+1}$ to the composition:
	\begin{align*}
	\textnormal{sd}^{n+1}(K)=\textnormal{sd}\big (\textnormal{sd}^{n}(K) \big ) \xrightarrow{\textnormal{sd}(l_0^{\tilde{\mathcal A}})} \textnormal{sd}\big (\textnormal{sd}^{\tilde{\mathcal A}}(K)\big)
	&\xrightarrow{l_0}
	\textnormal{sd}\big(\textnormal{sd}^{\tilde{\mathcal A}}(K) \textnormal{ rel } \textnormal{sd}^{\tilde{\mathcal A}}(A^{n+1}) \big ) \\&= \textnormal{sd}^{\mathcal A}(K),
	\end{align*}
	 with $\tilde{\mathcal{A}}$ as in \Cref{conIteratedOrdRelSd}. 
	The right hand side $l_0$ is the one corresponding to the FOS-pair $\textnormal{sd}^{\tilde{\mathcal A}}(A^{n+1}) \subset \textnormal{sd}^{\tilde{\mathcal A}}(K)$. Now, consider the diagram: 
	\begin{center}
		\begin{tikzcd}
		\textnormal{sd}^{n+1}(A^1) \arrow[d, hook', swap] \arrow[r, "\textnormal{sd}(\textnormal{l.v.}^{n})"]& \textnormal{sd}(A) \arrow[r, "\textnormal{l.v.}"] \arrow[d, hook', swap]& A \arrow[d, hook', swap]\\
		\textnormal{sd}^{n+1}(K) \arrow[r, "\textnormal{sd}(l_0^{\tilde{\mathcal A}})"]& \textnormal{sd}\big (\textnormal{sd}^{\tilde{\mathcal A}}(K) \big ) \arrow[r, "l_0"]& \textnormal{sd}^{\mathcal A}(K)
		\end{tikzcd}.
	\end{center}
	This commutes by the induction hypothesis and functoriality of $\textnormal{sd}$. Furthermore, by naturality of $\textnormal{l.v.}$, the upper horizontal composition equals $ \textnormal{l.v.}^{n+1}:\textnormal{sd}^{n+1}(A) \to A$. This shows the commutativity assertion. We then have \begin{align*}
		l^{\mathcal A}_1 \circ l^{\mathcal A}_0 &= l^{\tilde{\mathcal A}}_1 \circ l_1 \circ l_0 \circ \textnormal{sd}(l_0^{\tilde{\mathcal{A}}})\\
		 &= l^{\tilde{\mathcal A}}_1 \circ \textnormal{l.v.} \circ \textnormal{sd}(l_0^{\tilde{\mathcal{A}}})\\
		 &= l^{\tilde{\mathcal A}}_1 \circ l_0^{\tilde{\mathcal{A}}} \circ \textnormal{l.v.}\\
		 &= \textnormal{l.v.}^{n+1}
	\end{align*} 	
	For the homotopy claim, note that it suffices to construct such a homotopy for the compositions with $|\textnormal{sd}^{\mathcal A}(K)|_P \xrightarrow{\sim} |K|_P$ as the latter is a stratum preserving homeomorphism. The composition $$|\textnormal{sd}^{n}(K)|_P \xrightarrow{\sim}|\textnormal{sd}^{\mathcal A}(K)|_P \xrightarrow{\sim} |K|_P$$ is the subdivision homeomorphism corresponding to $\textnormal{sd}^n(K) \vartriangleleft K$, by \Cref{propBarRelSd}. The latter is stratified homotopic to the realization of $\textnormal{l.v.}^{n}$ by \Cref{propHomoFilteredSubd}. On the other hand, we have already seen that $|\textnormal{sd}^{\mathcal A}(K)|_P \xrightarrow{\sim} |K|_P.$ is homotopic to $|l_1^{\mathcal A}|_P$. Hence, its composition with $|l_0^{\mathcal A}|_P$ is also homotopic to $|\textnormal{l.v.}^n|_P$. This finishes the proof.
\end{proof}
In the next subsection, we use these relative last vertex maps to connect stratified topological and stratified simplicial homotopy classes.
\subsection{Filtered simplicial approximation, part two}
In this subsection, we are going to prove the following alternative version of the filtered simplicial approximation theorem.
\begin{theorem}[Filtered simplicial approximation B]\label{thrmSimplicialApproximationB}
	Let $K,L$ be $P$-filtered ordered simplicial complexes over $P$, where $K$ is finite. Let $$|K|_P \xrightarrow{\phi} |L|_P$$ be a stratum preserving map. Then there exists an $n \in \mathbb N$ and a stratum preserving (ordered) simplicial map $\textnormal{sd}^n(K) \xrightarrow{f} L$ such that $$|f|_P \simeq_P \Big ( |\textnormal{sd}^n(K)|_P \xrightarrow{|\textnormal{l.v.}^n|_P} |K|_P\xrightarrow{\phi} |L|_P\Big ) \simeq_P \Big ( |\textnormal{sd}^n(K)|_P \xrightarrow{\sim} |K|_P\xrightarrow{\phi} |L|_P\Big ).$$
	Conversely, if two stratum preserving (ordered) simplicial maps $f_0,f_1: K \to L$ are such that $$|f_0|_P \simeq_P |f_1|_P,$$ then there exists an $n \in \mathbb N$ and a stratum preserving (ordered) simplicial map $$H:\textnormal{sd}^n(K \otimes \Delta^1) \to L$$ such that the following diagrams commutes.
	\begin{center}
		\begin{tikzcd}
		\textnormal{sd}^n(K) \arrow[r, "\textnormal{l.v.}^n"] \arrow[d, hook, "\textnormal{sd}^n(i_j)", swap]	& K \arrow[d, "f_j"]\\
		\textnormal{sd}^n(K \otimes \Delta^1) \arrow[r, "H"] & L
		\end{tikzcd}.
	\end{center} Here $i_j$ denotes inclusion of $K$ into $K \otimes \Delta^1$ at the respective endpoint, for $j =0$, $1$.
\end{theorem}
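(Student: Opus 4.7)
The plan is to derive \Cref{thrmSimplicialApproximationB} by combining the absolute and relative filtered simplicial approximation results of the previous subsection (\Cref{thrmSimplicialApproximation} and \Cref{thrmSimplicialExtension}) with the last vertex factorizations from \Cref{corLVTOrdSd}. The substance of the statement, over and above those results, is that one can upgrade the rather intricate relative subdivisions $\textnormal{sd}^\Sigma(\cdot)$ and $\textnormal{sd}^\Sigma(\cdot \operatorname{rel} A)$ produced there to plain iterated barycentric subdivisions $\textnormal{sd}^n$, while retaining simplicial control of the approximation on endpoints in the homotopy case.

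For the first assertion I would apply \Cref{thrmSimplicialApproximation} to obtain a family $\Sigma$ and a stratum-preserving simplicial map $g : \textnormal{sd}^\Sigma(K) \to L$ whose realization is stratum-preservingly homotopic to $\phi$ composed with the subdivision homeomorphism. Since $K$ is an FOS-complex, the ordering condition of \Cref{conLVLRel} is automatic for each $K_{\leq p}$ (by the order compatibility in \Cref{remDeOSC}, vertices in lower strata precede those in higher strata inside every simplex), so by iterated application of \Cref{conLVLRel} and \Cref{lemOrderedFullSustain} in the spirit of \Cref{conIteratedOrdRelSd}, the construction of $\textnormal{sd}^\Sigma(K)$ in \Cref{conIteratedSub} actually takes place entirely within FOS-complexes and carries a canonical last-vertex factorization. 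Choosing $n \geq \sum_p \Sigma_p$, iterating \Cref{corLVTOrdSd} at each of the nested relative subdivision stages yields a stratum-preserving simplicial map $l_0 : \textnormal{sd}^n(K) \to \textnormal{sd}^\Sigma(K)$ together with $l_1 : \textnormal{sd}^\Sigma(K) \to K$ satisfying $l_1 \circ l_0 = \textnormal{l.v.}^n$ and $|l_0|_P \simeq_P \bigl(|\textnormal{sd}^n(K)|_P \xrightarrow{\sim} |\textnormal{sd}^\Sigma(K)|_P\bigr)$. Setting $f := g \circ l_0$ then gives the desired simplicial map, and the two stated homotopies follow from the one for $|l_0|_P$ together with \Cref{propHomoFilteredSubd}.

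For the converse, I would start with a stratum-preserving homotopy $H : |K \otimes \Delta^1|_P \to |L|_P$ between $|f_0|_P$ and $|f_1|_P$ and first pass to $K \otimes \textnormal{sd}^2 \Delta^1$ exactly as in \Cref{exAppOfHo}, so that the preconditions of \Cref{thrmSimplicialExtension} are met with $A := K \sqcup K$ the two endpoint copies and $f_A := f_0 \sqcup f_1$ (one preliminary barycentric subdivision of the result then secures the fullness condition on $A \cup (K \otimes \textnormal{sd}^2 \Delta^1)_{\leq p}$). \Cref{thrmSimplicialExtension} produces a family $\Sigma$ and a stratum-preserving simplicial map out of $\textnormal{sd}^\Sigma(K \otimes \textnormal{sd}^2 \Delta^1 \operatorname{rel} A)$ to $L$ extending $f_A$. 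Choosing $n$ large and iterating the relative last-vertex factorization of \Cref{corLVTOrdSd}, I would next construct a stratum-preserving simplicial map $\textnormal{sd}^n(K \otimes \Delta^1) \to \textnormal{sd}^\Sigma(K \otimes \textnormal{sd}^2 \Delta^1 \operatorname{rel} A)$ whose restriction to each endpoint subcomplex $\textnormal{sd}^n(K) \hookrightarrow \textnormal{sd}^n(K \otimes \Delta^1)$ is precisely $\textnormal{l.v.}^n$ into the corresponding copy of $K$ inside $A$; this endpoint compatibility is exactly what the leftmost square of the diagram in \Cref{corLVTOrdSd} guarantees. Composing with the map from \Cref{thrmSimplicialExtension} then produces the required $H : \textnormal{sd}^n(K \otimes \Delta^1) \to L$ closing the diagram in the statement.

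The main technical obstacle will be the bookkeeping at each level of the nested relative subdivisions: at every inductive step one must check that the pair $(\textnormal{sd}^{\Sigma_{\leq p}}(K), \textnormal{sd}^{\Sigma_{\leq p}}(K_{\leq p} \cup A))$ (respectively its analogue with $A = \emptyset$) is again an FOS-pair satisfying the ordering hypothesis of \Cref{conLVLRel}, and that the composed last-vertex map is stratum-preservingly homotopic to the subdivision homeomorphism at that level. Granted this iterated form of \Cref{corLVTOrdSd}, the passage from $\textnormal{sd}^\Sigma$ to $\textnormal{sd}^n$ is essentially formal, and both parts of the theorem reduce cleanly to \Cref{thrmSimplicialApproximation} and \Cref{thrmSimplicialExtension} combined with \Cref{exAppOfHo}.
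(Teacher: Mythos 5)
Your overall strategy mirrors the paper's proof, but there are two concrete gaps, one in each direction, plus a smaller omission.

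In the first direction, the map $g$ produced by \Cref{thrmSimplicialApproximation} is only a stratum preserving simplicial map, not an \emph{ordered} one: that theorem lives in $\textnormal{\textbf{sCplx}}_P$, while the conclusion of \Cref{thrmSimplicialApproximationB} demands a morphism of FOS-complexes. Consequently, $f := g \circ l_0$ is not yet of the required type. The paper repairs this by subdividing once more: it sets $n = |\Sigma|+1$ and takes $f$ to be the composition $\textnormal{sd}^n(K) \xrightarrow{\textnormal{sd}(g\circ l_0)} \textnormal{sd}(L) \xrightarrow{\textnormal{l.v.}} L$, using that $\textnormal{sd}$ of any simplicial map is automatically order-preserving and that $\textnormal{l.v.}$ is an ordered map. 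Your proposal should make this final step explicit, and it also never invokes the reduction to a finite linearly ordered $P$ that both \Cref{thrmSimplicialApproximation} and \Cref{thrmSimplicialExtension} require (the paper handles this at the end via a refinement-of-the-order argument).

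The more substantial gap is in the converse. You assert the existence of a stratum-preserving ordered simplicial map $\textnormal{sd}^n(K\otimes\Delta^1) \to \textnormal{sd}^\Sigma(K\otimes\textnormal{sd}^2\Delta^1 \operatorname{rel} A)$ restricting to $\textnormal{l.v.}^n$ at endpoints, but do not construct it. The source is built from $K\otimes\Delta^1$ while the target is built from $K\otimes\textnormal{sd}^2\Delta^1$, and nothing in \Cref{corLVTOrdSd} produces a map across this mismatch; one genuinely needs a bridge. The key ingredient is the map $l : \textnormal{sd}^2(K\otimes\Delta^1) \to K\otimes\textnormal{sd}^2(\Delta^1)$ defined componentwise by $\bigl(\textnormal{l.v.}^2\circ\textnormal{sd}^2(\pi_K),\ \textnormal{sd}^2(\pi_{\Delta^1})\bigr)$, which restricts at the endpoint inclusions to $\textnormal{l.v.}^2$ into $K$. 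Only after applying $\textnormal{sd}^{n'+2}$ to $l$ and composing with $\textnormal{sd}(H')$ and a final $\textnormal{l.v.}^2$ (whence $n = n'+4$) does one obtain the required $H:\textnormal{sd}^n(K\otimes\Delta^1)\to L$ whose endpoint restrictions agree with $f_j\circ\textnormal{l.v.}^n$ by naturality of $\textnormal{l.v.}$. Without identifying $l$ or an equivalent construction, the claimed commutative square cannot be closed.
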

\begin{proof}
	We start by showing the results for a linearly ordered set $P =[q]$. Note that, for a sequence $\Sigma=(\Sigma_i)_{i\in [q]}$ in $\mathbb N$, $\textnormal{sd}^{\Sigma}(K \textnormal{ rel }A)$ from \Cref{subsecProofofSimApp} is a special case of the iterative relative subdivision defined in \Cref{conIteratedOrdRelSd}, where we take 
	$$\mathcal A := \{\underbrace{A \subset ... \subset A}_{\Sigma_0} \subset A \underbrace{\cup K_0 \subset ... \subset A \cup K_0}_{\Sigma_1} \subset A \cup K_1 \subset ...\subset \underbrace{A \cup K_{q-1} \subset ... \subset A \cup K_{q-1}}_{\Sigma_{q}}\}.$$
	 Let $\Sigma$ and $f':\textnormal{sd}^\Sigma(K) \to L$ be a filtered simplicial approximation of $\phi$ given by \Cref{thrmSimplicialApproximation}. Denote $|\Sigma|:=\sum_{p \in [q]} \Sigma_p$. Then note that as the filtration map is ordered, the subcomplexes $K_q \subset K$ fulfill the ordering condition of \Cref{corLVTOrdSd}. Hence, by the latter, we can pull back $f'$ with $l_0^{\mathcal A}$ (using the notation of \Cref{corLVTOrdSd}) to obtain a stratum preserving simplicial map $$f'': \textnormal{sd}^{|\Sigma|} \xrightarrow{ l_0^{\mathcal A}} \textnormal{sd}^{\Sigma}(K) \to L$$ such that
	 \begin{equation}\label{equProofTHrmSimpB1}
	 	|f''|_P \simeq_P \Big ( |\textnormal{sd}^{|\Sigma|}(K)|_P \xrightarrow{|\textnormal{l.v.}^n|_P} |K|_P\xrightarrow{\phi} |L|_P\Big ) \simeq_P \Big ( |\textnormal{sd}^{|\Sigma|}(K)|_P \xrightarrow{\sim} |K|_P\xrightarrow{\phi} |L|_P\Big )
	 \end{equation}
	 Note however that this is not necessarily an ordered map. Now, set $n= |\Sigma| +1$ and $$f: \textnormal{sd}^{n}(K) \xrightarrow{\textnormal{sd}(f'')} \textnormal{sd}{L} \xrightarrow{\textnormal{l.v.}}L.$$ Then all of the maps involved are maps of FOS-complexes and by naturality of $\textnormal{l.v.}$ and \eqref{equProofTHrmSimpB1} we obtain:
	 \begin{align*}
	 	|f|_P &=_{\textnormal{ }} |\textnormal{l.v.} \circ \textnormal{sd}(f'')|_P \\
	 		&=_\textnormal{ } |f'' \circ \textnormal{l.v.} |_P \\
	 		& \simeq_P \phi \circ |\textnormal{l.v.} \circ \textnormal{l.v.}^{|\Sigma|} |_P \\
	 		& \simeq_P \phi \circ |\textnormal{l.v.}^{n}|_P
	 \end{align*} 
	 finishing the first part of the proof.\\
	 \\
	 For the proof of the homotopy statement, let $f_0, f_1: K \to L$ be maps of FOS-complexes over $P$ and $H: |K \otimes \Delta^{1}|_P \cong |K|_P \otimes \Delta^1\to |L|_P$ be a stratum preserving homotopy between their realizations. Then by \Cref{exAppOfHo}, we can apply \Cref{thrmHolWeakEqB} to the induced homotopy constructed in this example $$\tilde H: |\textnormal{sd}\big ( K \otimes \textnormal{sd}^2(\Delta ^1) \big)|_P \longrightarrow |\textnormal{sd}(L)|_P$$ between $|\textnormal{sd}(f_0)|_P$ and $|\textnormal{sd}(f_1)|_P$. 
	 Then, arguing just as we did for the previous statement but using \Cref{thrmSimplicialExtension} instead, we obtain an $n' \in \mathbb N$ and a morphism of filtered simplicial complexes
	 $$H':\textnormal{sd}^{n'}\Big(\textnormal{sd}\big ( K \otimes \textnormal{sd}^2(\Delta^1)\big) \Big) \longrightarrow \textnormal{sd}(L).$$ 
	 By the commutative diagram in \Cref{corLVTOrdSd}, this is given by 
	 $\textnormal{sd}(f_j) \circ \textnormal{l.v.}^{n'}$ on the respective inclusions of $\textnormal{sd}^{n'+1}(K)$. Now, consider the map of FOS-complex $$l: \textnormal{sd}^2(K \otimes \Delta^1) \xrightarrow{\big(\textnormal{l.v.}^2 \circ \textnormal{ sd}^2(\pi_K), \textnormal{ sd}^2(\pi_{\Delta^1})\big)} K \otimes \textnormal{sd}^2(\Delta^1).$$ $l$ restricts to 
	 $\textnormal{l.v.}^2$ followed by the inclusions $i'_j: K \hookrightarrow K \otimes \textnormal{sd}^2(\Delta^1)$ ,at the respective endpoints, $j=0,1$. Now, set $n=n'+4$ and $H$ to the composition of maps of FOS-complexes
	 \begin{equation*}
	H: \textnormal{sd}^{n}(K \otimes \Delta^1) \xrightarrow{\textnormal{sd}^{n'+2}(l)} \textnormal{sd}\Big( \textnormal{sd}^{n'+1}\big(K \otimes \textnormal{sd}^2(\Delta^1) \big )\Big) \xrightarrow{sd(H')} \textnormal{sd}^2(L) \xrightarrow{\textnormal{l.v.}^2} L.
	 \end{equation*}
	 Pulling this back with $\textnormal{sd}^n(i_j)$ and using the restriction statements on $H'$ and $l$ we obtain the commutative diagram:
	 \begin{center}
	 	\begin{tikzcd}[column sep = huge, row sep = huge]
	 		\textnormal{sd}^{n}(K) \arrow[d, "\textnormal{sd}^n(i_j)"] \arrow[r, "\textnormal{sd}^{n'+2}(\textnormal{l.v.}^2)"] & \textnormal{sd}^{n'+2}(K) \arrow[d, "\textnormal{sd}^{n'+2}(i'_j)"] \arrow[r, "\textnormal{sd}(\textnormal{l.v.}^{n'})"] & \textnormal{sd}^2(K) \arrow[d, "\textnormal{sd}^2(f_j)"] \arrow[r, "\textnormal{l.v.}^2"]& K \arrow[d, "f_j"] \\
	 		\textnormal{sd}^n(K \otimes \Delta ^1) \arrow[r, "\textnormal{sd}^{n'+2}(l)"]& \textnormal{sd}^{n'+2}\big ( K \otimes \textnormal{sd}^2(\Delta ^1)\big ) \arrow[r, "\textnormal{sd}(H')"]& \textnormal{sd}^2(L) \arrow[r, "\textnormal{l.v.}^2"]& L
	 	\end{tikzcd}.
	 \end{center}
 	By definition, the bottom horizontal composition is $H$. Furthermore, by naturality of $\textnormal{l.v.}$, the top horizontal composition is $\textnormal{l.v.}^n$. Hence, this provides the diagram in the statement of the theorem.
 We are left with showing the statements in the case where $P$ is not a finite linear partially ordered set. Note that all of the simplicial complexes involved are finite. Thus, can without loss of generality restrict to the case where $P$ is finite. (Rigorously, this is done by pulling back to the finite subset (subcomplex spanned by) the elements of $P$ in the image of the filtration, approximating, and then composing with the inclusion into $P$ again.) The finite, not linearly ordered case now is immediately reduced to the linear order one by refining the order on $P$ to a linear one and using the following lemma.
\end{proof}
\begin{lemma}
	Let $P$ be a partially ordered set. Let $P'$ be another partially ordered set together with a bijective map of partially ordered set $\rho: P \to P'$. Denote by 
	\begin{align*}
	N(\rho)_*: \textnormal{s\textbf{Set}}_P \longrightarrow \textnormal{s\textbf{Set}}_{P'}\\
	\rho_*: \textnormal{\textbf{Top}}_{P} \longrightarrow \textnormal{\textbf{Top}}_{P'}	
	\end{align*}
	the functors obtained by postcomposing filtrations with $N(\rho)$ and (the map induced on the Alexandroff spaces by) $\rho$ respectively. Then these two functors are fully faithful, and fit into a diagram (commutative up to natural isomorphism):
	\begin{center}
		\begin{tikzcd}
			\textnormal{s\textbf{Set}}_P \arrow[d, "|-|_{P}"]\arrow[r, "N(\rho)_*"] & \textnormal{s\textbf{Set}}_{P'} \arrow[d, "|-|_{P'}"]\\
			\textnormal{\textbf{Top}}_{P} \arrow[r,"\rho_*"]& \textnormal{\textbf{Top}}_{P'}
		\end{tikzcd}.
	\end{center}
	Furthermore, $N(\rho)_*$ commutes with $\otimes (-)$, $\textnormal{sd}$ and $\textnormal{l.v.}$.
\end{lemma}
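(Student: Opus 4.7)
The proof proceeds in three essentially independent steps, each rather formal once the right perspective is taken.

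\textbf{Fully faithfulness.} The key observation is that although $\rho\colon P\to P'$ need not be an isomorphism of posets (its inverse may fail to be monotone), it is still a bijection on underlying sets, hence both $N(\rho)\colon N(P)\to N(P')$ and the induced map of Alexandroff spaces $\rho\colon P\to P'$ are monomorphisms. For $N(\rho)$ this is immediate from the description in \Cref{exFilteredObj}: an $n$-simplex of $N(P)$ is a $d$-flag $(p_0\leq\cdots\leq p_n)$, and $N(\rho)$ sends it to $(\rho(p_0)\leq\cdots\leq \rho(p_n))$, which is injective in each simplicial degree since $\rho$ is. For $\rho$ on Alexandroff spaces, continuity is just monotonicity and injectivity is by assumption. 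Now the general fact that postcomposition along any monomorphism $c\hookrightarrow c'$ in a category $\mathcal C$ induces a fully faithful functor of slice categories $\mathcal C_{/c}\to \mathcal C_{/c'}$ immediately yields fully faithfulness of both $N(\rho)_*$ and $\rho_*$.

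\textbf{Commutativity of the square.} By construction (\Cref{conFiltFun} applied as in \Cref{exFiltFun}), the filtered realization functor $|-|_P$ is obtained by applying the classical $|-|$ to an arrow in $\textnormal{s\textbf{Set}}$ and then postcomposing with the natural transformation $\mu\colon |N(-)|\to \mathrm{Alex}(-)$ that sends a point $\sum t_p|p|\in |N(P)|$ to $\max\{p\in\mathcal J\mid t_p>0\}$. The point is that $\mu$ is natural in $P\in\textnormal{\textbf{Poset}}$: for a monotone map $\rho$, applying $\rho$ after taking the maximum yields the same result as taking the maximum of the image flag, because monotonicity of $\rho$ ensures $\max(\rho(\mathcal J))=\rho(\max\mathcal J)$. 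Unwinding the two compositions around the square for $X\in \textnormal{s\textbf{Set}}_P$ — both have underlying space $|X|$ — the filtration through the square is $|X|\to |N(P)|\xrightarrow{\mu_P}P\xrightarrow{\rho}P'$ on one side and $|X|\to |N(P)|\xrightarrow{|N(\rho)|}|N(P')|\xrightarrow{\mu_{P'}}P'$ on the other, which coincide by naturality of $\mu$.

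\textbf{Compatibility with $\otimes$, $\textnormal{sd}$, $\textnormal{l.v.}$} These are all purely formal. For the copower, $S\otimes X$ in $\textnormal{s\textbf{Set}}_P$ is by definition $S\times X\xrightarrow{\pi_X}X\xrightarrow{p_X}N(P)$; applying $N(\rho)_*$ postcomposes with $N(\rho)$, which is exactly the filtration of $S\otimes N(\rho)_*X$ in $\textnormal{s\textbf{Set}}_{P'}$. For $\textnormal{sd}$ and $\textnormal{l.v.}$, recall from \Cref{exFiltFun} that they are assembled via \Cref{conFiltFun} from the unfiltered $\textnormal{sd}$ on $\textnormal{s\textbf{Set}}$ together with the last vertex map $\textnormal{sd}(N(P))\to N(P)$; the latter is manifestly natural in $P\in\textnormal{\textbf{Poset}}$ (it assigns the maximum of a flag of flags, and $N(\rho)$ preserves this because $\rho$ is monotone). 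Chasing the defining diagram of \Cref{conFiltFun} through $N(\rho)$ and using this naturality gives natural isomorphisms $N(\rho)_*\circ\textnormal{sd}\cong\textnormal{sd}\circ N(\rho)_*$ and $N(\rho)_*(\textnormal{l.v.}_X)=\textnormal{l.v.}_{N(\rho)_*X}$.

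The only real content is the naturality of $\mu$ in the second step; everything else is a routine unwinding of definitions and no genuine obstacle arises, which is why this lemma, though notationally heavy, is essentially a bookkeeping statement about base change along a monotone bijection.
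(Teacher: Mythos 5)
Your proposal is correct and follows essentially the same route as the paper: fully faithfulness is deduced from the fact that $N(\rho)$ and $\rho$ are monomorphisms together with the general over-category fact that postcomposition with a monomorphism is fully faithful, while the commutativity statements are obtained by unwinding the definitions (the paper simply declares them immediate, and your naturality-of-$\mu$ argument is exactly the verification it leaves implicit).
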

\begin{proof}
	The commutativity statements are all immediate from the definitions of the respective functors and natural transformations. It remains to show that $\rho_*$ and $N(\rho)$ are fully faithful. Both maps we post-compose with, $\rho$ and $N(\rho)$, are monomorphisms in their respective categories. Indeed, it is an elementary and easily verified fact on over-categories that postcomposing with a monomorphism is a fully faithful functor. 
\end{proof}
This finishes our investigation into filtered simplicial complexes. We now return to investigating the relationship between $\textnormal{\textbf{Top}}_{P}$ and $\textnormal{s\textbf{Set}}_P$.
\section{A more detailed analysis of $\mathcal H \textnormal{\textbf{Top}}_{P}$}\label{secDetHTop}
We now have several tools available to start a detailed analysis of $\mathcal H \textnormal{\textbf{Top}}_{P}$. Our main tool is the filtered simplicial approximation theorem, \Cref{thrmSimplicialApproximationB}. However, by \Cref{propSSvSC}, this does not necessarily mean we need to restrict ourselfs only to FOS-complexes.
\subsection{$|-|_P:\mathcal H\textnormal{s\textbf{Set}}^{fin}_P \to \mathcal H \textnormal{\textbf{Top}}_{P}$ is fully faithful}
In the case where $P= \star$ is a one-point set, the Douteau model structure on $\textnormal{s\textbf{Set}}_P$ agrees with the classical Kan-Quillen one under the obvious isomorphism of categories $\textnormal{s\textbf{Set}}_P \cong \textnormal{s\textbf{Set}}$. In this setting, it is of course well known that the realization functor defines an equivalence of categories $$|-|: \mathcal{H}\textnormal{s\textbf{Set}} \xrightarrow{\sim} \mathcal{H}\textnormal{\textbf{Top}}.$$ By \Cref{thrmWeakEquSustain}, the filtered realization functor also induces a functor $|-|_P: \mathcal H \textnormal{s\textbf{Set}}_P \to \mathcal H \textnormal{\textbf{Top}}_{P}$ on homotopy categories. To the best of our knowledge it is not yet known whether this defines an equivalence of categories. However, we are going to show in this section 
(\Cref{corCatEqu}) that if one restricts to the finite setting this functor is fully faithful. This shows that at least for filtered topological spaces that are weakly equivalent to the realizations of finite filtered simplicial sets most question on homotopy theory can be reduced to the combinatorial setting. In particular, this allows us to define the Whitehead group and Whitehead torsion in the topological setting in \Cref{secTopWh}.
\\
\\ 
As the title of the subsection states, we are now going to prove \Cref{thrmFullyFaithful}. Our proof is somewhat unorthodox, at least from a model theoretical perspective, as we first to reduce to FOS-complexes and then use simplicial approximation theorems. This is certainly more of a classical than a modern approach to the problem. It comes at the price of having to restrict to the finite setting. We conjecture that this is just due to the insufficiency of our method of proof, and in fact $|-|_P: \mathcal H\textnormal{s\textbf{Set}}_P \longrightarrow \mathcal H\textnormal{\textbf{Top}}_{P}$ is an equivalence of categories (induced by a Quillen equivalence even, see also \Cref{remWeirdStruct}). However, as this work was written with simple homotopy theory in mind, some compactness assumptions are really not that big of a price to pay.
\begin{theorem}\label{thrmFullyFaithful}
The functor $$ |-|_P:\mathcal H\textnormal{s\textbf{Set}}_P^{fin} \longrightarrow \mathcal H\textnormal{\textbf{Top}}_{P}$$ induced by \Cref{thrmWeakEquSustain} is fully faithful.
\end{theorem}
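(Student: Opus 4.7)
The strategy is to reduce to finite FOS-complexes via \Cref{propSSvSC} and to combine the representation of morphisms in $\mathcal{H}\textnormal{s\textbf{Set}}_P^{fin}$ through $P$-filtered subdivisions (\Cref{propRepHoClasses}) with the filtered simplicial approximation theorem (\Cref{thrmSimplicialApproximationB}). The essential ingredients are that $|\textnormal{sd}_P^n(X)|_P$ is cofibrant in the Henrique--Douteau model structure (\Cref{propSDPisCof}) while every object in $\textnormal{\textbf{Top}}_P$ is fibrant, so that by \Cref{propHoClasses} stratified homotopy classes of stratum preserving maps out of $|\textnormal{sd}_P^n(X)|_P$ compute $\textnormal{Hom}_{\mathcal{H}\textnormal{\textbf{Top}}_P}(|\textnormal{sd}_P^n(X)|_P, |Y|_P)$. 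Moreover, $\textnormal{sd}_P$ preserves the property of being an FOS-complex (as seen from \Cref{exSDPforSim}), so $\textnormal{sd}_P^n(X)$ remains an FOS-complex and \Cref{thrmSimplicialApproximationB} applies after iterating $\textnormal{sd}_P$.

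\textbf{Faithfulness and fullness.} For faithfulness, given $\alpha_1, \alpha_2 : X \to Y$ in $\mathcal{H}\textnormal{s\textbf{Set}}_P^{fin}$ with $|\alpha_1|_P = |\alpha_2|_P$, use \Cref{propRepHoClasses} with a common $n$ to write $\alpha_i = [f_i] \circ [\textnormal{l.v.}_P^n]^{-1}$, so that the hypothesis combined with \Cref{propHoClasses} yields a stratified homotopy $|f_1|_P \simeq_P |f_2|_P$. Invoking the homotopy part of \Cref{thrmSimplicialApproximationB} produces $m$ and a simplicial map of FOS-complexes $H : \textnormal{sd}^m(\textnormal{sd}_P^n(X) \otimes \Delta^1) \to Y$ restricting to $f_j \circ \textnormal{l.v.}^m$ at the endpoints. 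For fullness, given $\beta : |X|_P \to |Y|_P$ in $\mathcal{H}\textnormal{\textbf{Top}}_P$, cofibrant replacement along $|\textnormal{l.v.}_P|_P$ (a weak equivalence by \Cref{thrmWeakEquSustain}) together with \Cref{propHoClasses} represents $\beta$ as $[\beta'] \circ [|\textnormal{l.v.}_P|_P]^{-1}$ for a stratum preserving $\beta' : |\textnormal{sd}_P(X)|_P \to |Y|_P$, and \Cref{thrmSimplicialApproximationB} produces $m$ and $g : \textnormal{sd}^m(\textnormal{sd}_P(X)) \to Y$ with $|g|_P \simeq_P \beta' \circ |\textnormal{l.v.}^m|_P$. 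In both cases, once one can invert the naive iterated last vertex map $[\textnormal{l.v.}^m]$ in $\mathcal{H}\textnormal{s\textbf{Set}}_P$, the simplicial approximations convert directly into morphisms in $\mathcal{H}\textnormal{s\textbf{Set}}_P$: a simplicial homotopy witnessing $[f_1]=[f_2]$ in the first case, and a preimage $\alpha := [g] \circ [\textnormal{l.v.}^m]^{-1} \circ [\textnormal{l.v.}_P]^{-1}$ of $\beta$ in the second.

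\textbf{Main obstacle.} The argument hinges on knowing that the naive iterated last vertex map $\textnormal{l.v.}^m : \textnormal{sd}^m(Z) \to Z$ is a Douteau weak equivalence in $\textnormal{s\textbf{Set}}_P$ for FOS-complexes $Z$, so that it becomes invertible in $\mathcal{H}\textnormal{s\textbf{Set}}_P$. Its realization is a stratified homotopy equivalence by \Cref{propHomoFilteredSubd}, but transferring this to a Douteau weak equivalence is not automatic, since $|-|_P$ is known to preserve weak equivalences (\Cref{thrmWeakEquSustain}) but not known to reflect them. I expect this to be the main technical hurdle, most plausibly resolved by directly analyzing $\operatorname{Ex}_P^\infty(\textnormal{l.v.}^m)$ and showing it admits a simplicial homotopy inverse built from the stratified homotopy inverses constructed in the proof of \Cref{propHomoFilteredSubd}, or alternatively by constructing an explicit zigzag in $\textnormal{s\textbf{Set}}_P$ of Douteau weak equivalences relating $\textnormal{sd}^m$ and $\textnormal{sd}_P^k$ via the relative subdivision factorizations of \Cref{corLVTOrdSd}.
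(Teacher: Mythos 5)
Your architecture is the same as the paper's: reduce to finite FOS-complexes via \Cref{propSSvSC}, replace $X$ by $\textnormal{sd}_P(X)$ (cofibrant by \Cref{propSDPisCof}, still an FOS-complex), use that every object of $\textnormal{\textbf{Top}}_P$ is fibrant so that \Cref{propHoClasses} identifies $\textnormal{Hom}_{\mathcal H\textnormal{\textbf{Top}}_P}$ with stratified homotopy classes, and then feed \Cref{propRepHoClasses} and both halves of \Cref{thrmSimplicialApproximationB} into surjectivity and injectivity respectively. Up to that point your proof and the paper's proof coincide essentially step for step.

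The gap is exactly the point you flag and then leave open: the invertibility of the naive last vertex map $[\textnormal{l.v.}^m]$ in $\mathcal H\textnormal{s\textbf{Set}}_P$ (and, in the faithfulness half, the closely related fact that $\textnormal{sd}^m(Z\otimes\Delta^1)$ still serves as a cylinder object, which again needs naive subdivision to preserve weak equivalences). You are right that one cannot argue through the realization, since reflection of weak equivalences by $|-|_P$ is \Cref{corRelRef}, a consequence of the very theorem being proved. But neither of your proposed repairs is needed, and neither is what the paper does: there is no analysis of $\operatorname{Ex}_P^\infty(\textnormal{l.v.}^m)$ and no zigzag comparing $\textnormal{sd}^m$ with $\textnormal{sd}_P^k$. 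Instead the paper invokes \Cref{propLvtSim} (equivalently \Cref{corSDRetainWeakEq}): for finite filtered simplicial sets both last vertex maps $\textnormal{sd}(X)\to X$ and $\textnormal{sd}_P(X)\to X$ are \emph{simple} equivalences, hence Douteau weak equivalences. That result is proved purely inside the simple homotopy machinery of the second chapter — the FSAE sections of the last vertex maps from \Cref{ExLotsOfFSAEs}, two out of three, and the cube lemma \Cref{corCubeForSim} — and is logically independent of \Cref{thrmFullyFaithful}; the paper's proof explicitly notes that no circularity arises. So your proposal would be complete once you cite (or reprove) that statement; as written, the decisive step is acknowledged but not established, and your sketched alternatives would amount to redoing a nontrivial argument whose feasibility you have not checked.
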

\begin{proof}
	Let $X,Y \in \textnormal{s\textbf{Set}}_P^{fin}$. We want to show that $$\textnormal{Hom}_{\mathcal H\textnormal{s\textbf{Set}}_P}(X,Y) \xrightarrow{|-|_P} \textnormal{Hom}_{\mathcal H \textnormal{\textbf{Top}}_{P}}(|X|_P,|Y|_P)$$ is a bijection. First, note that, by \Cref{propSSvSC}, we may assume without loss of generality that $X$ and $Y$ are finite FOS-complexes. We now want to reduce to the case, where the right hand side is given by actual stratified homotopy classes. By \Cref{propHoClasses} and the fact that by definition every object in $\textnormal{\textbf{Top}}_{P}$ is fibrant, we have for cofibrant $T$ and $T'$ arbitrary: \begin{equation}\label{proofThmFullFaithHoclass}
	\textnormal{Hom}_{\mathcal{H}\textnormal{\textbf{Top}}_{P}}(T,T') \cong [T,T']_{P}.
	\end{equation} 
	Now, consider the last vertex map
	$$\textnormal{l.v.}_P:\textnormal{sd}_P(X) \longrightarrow X.$$ By \cite[A.3]{douteauFren} or alternatively \Cref{propLvtSim}, the latter is a weak equivalence. In particular, the induced map $$|\textnormal{l.v.}_P|_P: |\textnormal{sd}_P(X)|_P \longrightarrow |X|_P$$ induces an isomorphism in $\mathcal H \textnormal{\textbf{Top}}_{P}$. 
	By \Cref{propSDPisCof}, $|\textnormal{sd}_P(X)|_P$ is cofibrant. Furthermore, similarly to the case of $\textnormal{sd}$ 
	one easily checks that $\textnormal{sd}_P(X)$ is still an FOS-complex.
	Hence, we assume without loss of generality that both $X$ and $Y$ are finite FOS-complex, and that $|X|_P$ is cofibrant. 
	Thus, using \Cref{proofThmFullFaithHoclass}, we are left with showing that 
	\begin{equation*}
	|-|_P: \textnormal{Hom}_{\mathcal H\textnormal{s\textbf{Set}}_P}(X,Y) \longrightarrow 	\textnormal{Hom}_{\mathcal H\textnormal{\textbf{Top}}_{P}}(|X|_P,|Y|_P) \cong [|X|_P,|Y|_P]_P
	\end{equation*}
	is a bijection. Now, let $[\phi]$ be any arrow on the right hand side, represented stratum preserving map $\phi:|X|_P \to |Y|_P$. By the second version of the filtered simplicial approximation theorem (\Cref{thrmSimplicialApproximationB}), for some sufficiently large $n$ and some map of FOS-complexes $f: \textnormal{sd}^n(X) \xrightarrow{f} Y$ we have $$[\phi]\circ [|\textnormal{l.v.}|_P^n]= [|f|_P].$$ $[\textnormal{l.v.}^n]$ is an isomorphism in $\mathcal{H}\textnormal{s\textbf{Set}}_P$. In particular, $$[\phi] = |[f] \circ [\textnormal{l.v.}^n]^{-1}|_P$$ showing surjectivity. Now, conversely let $\varphi_0, \varphi_1$ be two arrows on the left hand side of the equation. By \Cref{propRepHoClasses}, both fit into a commutative diagram \begin{center}
		\begin{tikzcd}
		& \textnormal{sd}^n_{P}(X) \arrow[ld, "{[\textnormal{l.v.}^n_P]}", swap] \arrow[rd, "{[f_i]}"]&\\
		X \arrow[rr, "\varphi_i"]& &Y 
		\end{tikzcd}
	\end{center}
	For appropriate morphisms in $\textnormal{s\textbf{Set}}_P^{fin}$ $f_i$. Without loss of generality, we can assume $n_0 = n_1$. Furthermore, inverting $[\textnormal{l.v.}^{n_i}]$, we can then assume that $n_i=0$. Hence, we are now in the setting where $[f_i] = \varphi_i$ and $|f_0|_P \simeq_P |f_1|_P$. By the homotopy part of \Cref{thrmSimplicialApproximationB}, there is a commutative diagram in $\textnormal{s\textbf{Set}}_P$:
	\begin{center}
		\begin{tikzcd}
		\textnormal{sd}^n(X) \arrow[r, "\textnormal{l.v.}^n"] \arrow[d, hook, "\textnormal{sd}^n(i_j)", swap]	& X \arrow[d, "f_j"]\\
		\textnormal{sd}^n(X \otimes \Delta^1) \arrow[r, "H"] & Y
		\end{tikzcd}.
	\end{center}
	\Cref{corSDRetainWeakEq} here, states that $\textnormal{l.v.}:\textnormal{sd}(-)$ preserves weak equivalences (for finite arguments, but this is easily generalized). Again, we assure that there is no risk of circularity involved here. Thus, 
	$\textnormal{sd}^n(X) \sqcup \textnormal{sd}^n(X) \hookrightarrow \textnormal{sd}^n(X \otimes \Delta^1)$
	gives a cylinder object for $\textnormal{sd}^n(X)$ that witnesses a left homotopy between $f_0 \circ \textnormal{l.v.}^n$ and $f_1 \circ \textnormal{l.v.}^n$. Hence, $$[f_0 \circ \textnormal{l.v.}^n] = [f_1 \circ \textnormal{l.v.}^n]$$ and, by the invertibility of $[\textnormal{l.v.}]$ in $\mathcal{H}\textnormal{s\textbf{Set}}_P$, we obtain $$[f_0] = [f_1].$$
\end{proof}
In particular, we obtain the following immediate corollary, using the fact that a morphism in the model category is a weak equivalence if and only if it descends to an isomorphism in the homotopy categoy.
\begin{corollary}\label{corRelRef}
A morphism $f:X \to Y$ in $\textnormal{s\textbf{Set}}_P^{fin}$ is a weak equivalence with respect to the Douteau model structure if and only if $|f|_P$ is a weak equivalence with respect to the Henrique-Douteau model structure.
\end{corollary}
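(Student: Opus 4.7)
The plan is to derive this directly from Theorem \ref{thrmFullyFaithful} together with Theorem \ref{thrmWeakEquSustain}, using the standard fact that a morphism in a model category is a weak equivalence if and only if its image in the homotopy category is an isomorphism, plus the elementary categorical fact that a fully faithful functor reflects isomorphisms.

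For the forward implication, there is nothing to do: Theorem \ref{thrmWeakEquSustain} says that $|-|_P : \textnormal{s\textbf{Set}}_P \to \textnormal{\textbf{Top}}_P$ sends weak equivalences to weak equivalences, so if $f$ is a weak equivalence with respect to the Douteau model structure then $|f|_P$ is a weak equivalence with respect to the Henrique--Douteau model structure. This holds without any finiteness assumption on $X$ and $Y$.

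For the converse, suppose that $f : X \to Y$ is a morphism in $\textnormal{s\textbf{Set}}_P^{fin}$ such that $|f|_P$ is a weak equivalence. Then $[|f|_P]$ is an isomorphism in $\mathcal H \textnormal{\textbf{Top}}_P$. Since $|-|_P$ descends to a functor $\mathcal H \textnormal{s\textbf{Set}}_P^{fin} \to \mathcal H \textnormal{\textbf{Top}}_P$ (by Theorem \ref{thrmWeakEquSustain}), and this induced functor is fully faithful by Theorem \ref{thrmFullyFaithful}, I invoke the elementary observation that a fully faithful functor reflects isomorphisms: if $F(\varphi)$ is an isomorphism with inverse $\psi$, then by fullness $\psi = F(\chi)$ for some $\chi$, and by faithfulness the identities $\chi \circ \varphi = 1$ and $\varphi \circ \chi = 1$ hold in the source. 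Applied here, this yields that $[f]$ is an isomorphism in $\mathcal H \textnormal{s\textbf{Set}}_P^{fin}$, and hence also in $\mathcal H \textnormal{s\textbf{Set}}_P$ (since $\mathcal H \textnormal{s\textbf{Set}}_P^{fin}$ sits inside $\mathcal H \textnormal{s\textbf{Set}}_P$ as a full subcategory, so isomorphisms in the former are isomorphisms in the latter). Therefore $f$ itself is a weak equivalence in the Douteau model structure.

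There is no real obstacle in this proof; the entire content has already been packed into Theorems \ref{thrmWeakEquSustain} and \ref{thrmFullyFaithful}. The only point worth being careful about is that the full subcategory inclusion $\mathcal H \textnormal{s\textbf{Set}}_P^{fin} \hookrightarrow \mathcal H \textnormal{s\textbf{Set}}_P$ detects isomorphisms, which is automatic for any full subcategory. Thus the statement really is an immediate consequence, and the proof is a two-line verification.
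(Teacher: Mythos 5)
Your proof is correct and is exactly the argument the paper intends: the corollary is stated as an immediate consequence of \Cref{thrmWeakEquSustain} and \Cref{thrmFullyFaithful}, using that a morphism in a model category is a weak equivalence if and only if it becomes an isomorphism in the homotopy category, together with the fact that a fully faithful functor reflects isomorphisms. Your extra remark that isomorphisms in the full subcategory $\mathcal H \textnormal{s\textbf{Set}}_P^{fin}$ remain isomorphisms in $\mathcal H \textnormal{s\textbf{Set}}_P$ is a correct and harmless piece of bookkeeping that the paper leaves implicit.
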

Hence, for spaces that are (stratified homotopy equivalent) stratum preserving homeomorphic to the realization of a finite simplicial set, questions on homotopies can often be reduced to the combinatorial setting of filtered simplicial sets. It is useful to have a name for such spaces.
\begin{definition}
Let $T \in \textnormal{\textbf{Top}}_{P}$. \begin{itemize} \item A filtered simplicial set $X \in \textnormal{s\textbf{Set}}_P$ together with a stratum preserving homeomorphism $|X|_{P} \cong T$ is called a \textit{triangulation} of $T$. If $X$ is finite, we call it a \textit{finite triangulation}.
	\item A filtered topological space admitting such a (finite) triangulation is called \textit{(finitely) triangulable}
\end{itemize}
\end{definition}
In particular, we obtain:
\begin{corollary}\label{corCatEqu}
	Denote by $\mathcal H\textnormal{\textbf{Top}}^{t-fin}_{P}$ the full subcategory of $\mathcal{H}\textnormal{\textbf{Top}}_{P}$ given by such filtered spaces that are weakly equivalent to a triangulable filtered space.
	Then $$ |-|_P:\mathcal H\textnormal{s\textbf{Set}}_P^{fin} \longrightarrow \mathcal H\textnormal{\textbf{Top}}_{P}$$
	induces an equivalence of categories $$\mathcal{H}\textnormal{s\textbf{Set}}_P^{fin} \xrightarrow{\simeq} \mathcal H\textnormal{\textbf{Top}}^{t-fin}_{P}.$$
\end{corollary}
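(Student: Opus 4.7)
The plan is to obtain the equivalence of categories essentially for free from the preceding results. Full faithfulness on the subcategory $\mathcal{H}\textnormal{\textbf{Top}}^{t\text{-}fin}_P$ is immediate: since $\mathcal{H}\textnormal{\textbf{Top}}^{t\text{-}fin}_P \hookrightarrow \mathcal{H}\textnormal{\textbf{Top}}_P$ is a full subcategory inclusion, the $\textnormal{Hom}$-sets agree with those computed in the ambient homotopy category, and Theorem~\ref{thrmFullyFaithful} asserts that $|-|_P$ is fully faithful when landing in $\mathcal{H}\textnormal{\textbf{Top}}_P$. Hence there is nothing further to verify on this side.

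For essential surjectivity, I would argue directly from the definition of $\mathcal{H}\textnormal{\textbf{Top}}^{t\text{-}fin}_P$. Given an object $T \in \mathcal{H}\textnormal{\textbf{Top}}^{t\text{-}fin}_P$, by definition there exists a finitely triangulable $P$-filtered space $S$ together with a zig-zag of weak equivalences connecting $T$ and $S$ in $\textnormal{\textbf{Top}}_P$. By the very notion of finite triangulation, $S$ is stratum preserving homeomorphic to $|X|_P$ for some finite filtered simplicial set $X \in \textnormal{s\textbf{Set}}_P^{fin}$. Composing the stratum preserving homeomorphism (which is in particular a weak equivalence in $\textnormal{\textbf{Top}}_P$) with the aforementioned zig-zag yields an isomorphism $T \cong |X|_P$ in $\mathcal{H}\textnormal{\textbf{Top}}_P$. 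Since $\mathcal{H}\textnormal{\textbf{Top}}^{t\text{-}fin}_P$ is a full subcategory, this isomorphism lives in $\mathcal{H}\textnormal{\textbf{Top}}^{t\text{-}fin}_P$, so $T$ is in the essential image of $|-|_P$.

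The only point requiring even mild justification is that the functor actually lands in $\mathcal{H}\textnormal{\textbf{Top}}^{t\text{-}fin}_P$, but this is tautological: for $X \in \textnormal{s\textbf{Set}}_P^{fin}$, the object $|X|_P$ is itself finitely triangulable (via its tautological triangulation by $X$), hence lies in $\mathcal{H}\textnormal{\textbf{Top}}^{t\text{-}fin}_P$. Therefore no obstacle arises at all: both the well-definedness of the restriction and essential surjectivity are unpacking of definitions, and full faithfulness is the content of Theorem~\ref{thrmFullyFaithful}. Thus there is no real ``hard part'' in this corollary — it is precisely packaging the main theorem of the subsection together with the definition of the target homotopy category.
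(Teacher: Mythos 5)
Your proposal is correct and is exactly the argument the paper intends: the corollary is stated as an immediate consequence of \Cref{thrmFullyFaithful}, with full faithfulness inherited by restricting to a full subcategory and essential surjectivity being a direct unwinding of the definitions of triangulation and of $\mathcal H\textnormal{\textbf{Top}}^{t-fin}_{P}$. Your reading of ``triangulable'' as ``finitely triangulable'' (as the notation $t\text{-}fin$ and the surrounding discussion indicate) is the intended one, so nothing is missing.
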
 
\begin{corollary}
	A morphism of finite filtered simplicial sets $f:X \to Y$ in $\textnormal{s\textbf{Set}}_P$ is a weak equivalence, with respect to the Douteau model structure if and only if $|f|_P$ is a weak equivalence, with respect to the Henrique-Douteau model structure on $\textnormal{\textbf{Top}}_{P}$.
\end{corollary}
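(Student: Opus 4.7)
The statement is essentially a restatement of the earlier Corollary \ref{corRelRef}, so the proof plan follows from assembling results that have already been established. The plan is to treat the two implications separately.

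For the forward direction I would invoke Theorem \ref{thrmWeakEquSustain} directly: that theorem states that $|-|_P \colon \textnormal{s\textbf{Set}}_P \to \textnormal{\textbf{Top}}_P$ preserves weak equivalences, with no finiteness hypothesis on the source, so any weak equivalence $f$ between finite objects realizes to a weak equivalence $|f|_P$.

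For the converse suppose $|f|_P$ is a weak equivalence in $\textnormal{\textbf{Top}}_P$. Then its class $[|f|_P]$ is an isomorphism in $\mathcal H\textnormal{\textbf{Top}}_P$, because a map in any model category is a weak equivalence if and only if it becomes an isomorphism in the homotopy category. Since $X$ and $Y$ are finite, Theorem \ref{thrmFullyFaithful} applies: the induced functor $|-|_P \colon \mathcal H\textnormal{s\textbf{Set}}_P^{fin} \to \mathcal H\textnormal{\textbf{Top}}_P$ is fully faithful. A fully faithful functor reflects isomorphisms (pick an inverse $g$ of $[|f|_P]$ in $\mathcal H\textnormal{\textbf{Top}}_P$; by fullness $g = |-|_P(\tilde g)$ for some $\tilde g$, and by faithfulness the identities $\tilde g \circ [f] = 1_X$ and $[f] \circ \tilde g = 1_Y$ hold in $\mathcal H\textnormal{s\textbf{Set}}_P^{fin}$). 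Therefore $[f]$ is an isomorphism in $\mathcal H\textnormal{s\textbf{Set}}_P^{fin}$, and since this is a full subcategory of $\mathcal H\textnormal{s\textbf{Set}}_P$, it is also an isomorphism in $\mathcal H\textnormal{s\textbf{Set}}_P$. Applying the general model-category principle once more, $f$ is a weak equivalence in $\textnormal{s\textbf{Set}}_P$.

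There is really no obstacle here, since all the substance lies in the two inputs (Theorems \ref{thrmWeakEquSustain} and \ref{thrmFullyFaithful}); the only point one has to be a little careful about is the passage from ``iso in the full subcategory $\mathcal H\textnormal{s\textbf{Set}}_P^{fin}$'' to ``iso in $\mathcal H\textnormal{s\textbf{Set}}_P$'', but this is immediate from fullness of the inclusion of homotopy categories.
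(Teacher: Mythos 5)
Your proof is correct and follows essentially the same route as the paper: the forward direction is Theorem \ref{thrmWeakEquSustain}, and the converse uses the fact that the fully faithful functor of Theorem \ref{thrmFullyFaithful} reflects isomorphisms, combined with the standard model-category fact that a morphism is a weak equivalence if and only if it becomes an isomorphism in the homotopy category — exactly the ingredients the paper cites when stating this as an immediate corollary.
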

\subsection{Homotopy classes into fibrant, finitely triangulable filtered spaces}
As we have already shown in the proof of \Cref{thrmFullyFaithful}, the $\textnormal{Hom}$-sets in $\mathcal{H\textnormal{\textbf{Top}}_{P}}$, with a triangulable filtered space at the domain, admit fairly explicit descriptions. This is owed to the fact that cofibrant approximations are given by $\textnormal{sd}_P(-)$. More precisely, in this case we have a natural bijection:
$$\textnormal{Hom}_{\mathcal H\textnormal{\textbf{Top}}_{P}}(|X|_P,T) \cong \textnormal{Hom}_{\mathcal H\textnormal{\textbf{Top}}_{P}}(|\textnormal{sd}_P(X)|_P,T) = \big[ |\textnormal{sd}_P(X)|_P,T \big]_P.$$
It would be nice to have such a result, not only for spaces cofibrant in the Douteau-Henrique model structure, but also for spaces arising more naturally in the study of spaces with singularities. Such a result would make the homotopy category of filtered spaces a lot easier to understand. Recall (see \Cref{defFStrar}) that a f-stratified space (over $P$) is a filtered space $T$ that has the right lifting property with respect to admissible horn inclusions as in the diagram below. 
\begin{center}
\begin{tikzcd}
	{|\Lambda_k^{\mathcal J}|_P} \arrow[d, hook]\arrow[r] & T\\
	{|\Delta^\mathcal{J}|_P} \arrow[ru, dashed]
\end{tikzcd}
\end{center}
As we already mentioned in \Cref{exOfFstrat}, examples of such spaces include all conically stratified spaces (in particular all PL pseudomanifolds and PL stratified spaces in the original sense of Goresky and MacPherson) and all homotopically stratified spaces in the sense of Quinn. One then has: 
\begin{lemma}\label{lemLiftForFAE}
	Let $T \in \textnormal{\textbf{Top}}_{P}$ be f-stratified. Then $T$ has the right lifting property with respect to all realizations of (filtered) anodyne extensions $A \hookrightarrow B$ in $\textnormal{s\textbf{Set}}_P$.
\end{lemma}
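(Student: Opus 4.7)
The plan is to use the standard saturation argument. By \Cref{thrmDouModSS}, the anodyne extensions in $\textnormal{s\textbf{Set}}_P$ form the saturated class generated by the admissible horn inclusions $\Lambda_k^{\mathcal{J}} \hookrightarrow \Delta^{\mathcal{J}}$, i.e.\ the smallest class containing these inclusions and closed under pushouts, transfinite compositions, retracts, and coproducts. First I would observe that the realization functor $|-|_P : \textnormal{s\textbf{Set}}_P \to \textnormal{\textbf{Top}}_P$ is a left adjoint (to $\operatorname{Sing}_P$), so it preserves all small colimits; in particular it commutes with pushouts and transfinite compositions, and like every functor it preserves retracts. Hence the image under $|-|_P$ of the anodyne extensions is contained in the saturated class in $\textnormal{\textbf{Top}}_P$ generated by the set $|\mathcal{S}|_P := \{|\Lambda_k^{\mathcal{J}}|_P \hookrightarrow |\Delta^{\mathcal{J}}|_P\}$ of realizations of admissible horn inclusions.

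Next I would invoke the purely formal fact that for any single object $T \in \textnormal{\textbf{Top}}_P$ the class of morphisms having the left lifting property with respect to $T \to \star$ is weakly saturated: it is closed under pushouts (using the universal property of the pushout to produce the lift against $T$), under transfinite compositions (by transfinite induction solving one lifting problem at each successor stage), under retracts, and under coproducts. The hypothesis that $T$ is f-stratified means exactly that every element of $|\mathcal{S}|_P$ has the left lifting property with respect to $T \to \star$. Since the set $|\mathcal{S}|_P$ sits inside a saturated class and the latter contains the saturation it generates, the left lifting property passes automatically to the entire saturated class generated by $|\mathcal{S}|_P$, and in particular to every realization of an anodyne extension.

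There is essentially no technical obstacle: the statement is a formal consequence of $|-|_P$ being a left adjoint together with the characterization of anodyne extensions as a generated saturated class in \Cref{thrmDouModSS}. The only bookkeeping concern is making sure the generating set in \Cref{thrmDouModSS} is indeed taken in the strong ``saturated class generated by'' sense and not merely in the ``class of acyclic cofibrations'' sense, but \Cref{thrmDouModSS} explicitly states this (the parenthetical remark that the acyclic cofibrations are the saturated class generated by admissible horn inclusions), so no extra work is needed.
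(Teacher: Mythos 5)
Your argument is correct and is essentially the paper's own proof: both rest on the facts that the class of maps with the left lifting property against a fixed map (here $T \to \star$) is saturated, that anodyne extensions form the saturated class generated by the admissible horn inclusions, and that $|-|_P$, being a colimit-preserving left adjoint, carries this saturated class into the one generated by the realized horn inclusions, against which $T$ lifts by the definition of f-stratified. No gap; the bookkeeping remark about the "saturated class generated by" reading of \Cref{thrmDouModSS} is exactly what the paper relies on as well.
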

\begin{proof}
	The class of morphisms having the left lifting property with respect to another class of morphisms, is always saturated. That is, it is closed under coproducts, pushouts, transfinite compositions (see \cite{nlab:transfinite_composition}, for a definition) and retracts and contains all isomorphisms. The class of anodyne extensions is the saturated class generated by all admissible horn inclusions. Hence, the result is immediate from the fact that $|-|_P$ preserves colimits, and hence maps the saturated class of the latter into the saturated class generated by realizations of horn inclusions. 
\end{proof}
In \Cref{remWeirdStruct}, we conjectured that f-stratified spaces can in fact be taken to be the fibrant objects in an appropriate model structure on $\textnormal{\textbf{Top}}_{P}$ that has the same weak equivalence as the Henrique-Douteau model structure. A strong hint at such a result is that it turns out that for f-stratified spaces, which are triangulable by finite FOS-complexes, the $\textnormal{Hom}$ sets in $\mathcal H \textnormal{\textbf{Top}}_{P}$ are actually given by sets of stratified homotopy classes.
\begin{theorem}\label{thrmHoClassofFSpace}
	Let $X,Y \in \textnormal{s\textbf{Set}}_P^{fin}$ such that $X$ is an FOS-complex and $Y$ such that $|Y|_P$ is an f-stratified space. Then the natural map $$[|X|_P,|Y|_P]_P \longrightarrow \textnormal{Hom}_{\mathcal H \textnormal{\textbf{Top}}_{P}}(|X|_P,|Y|_P)$$
	is a bijection.
\end{theorem}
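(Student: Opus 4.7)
The plan is to reduce the claim, via Theorem \ref{thrmFullyFaithful}, to a purely simplicial question and then exploit the f-stratified hypothesis through the adjunction $|-|_P \dashv \operatorname{Sing}_P$. Theorem \ref{thrmFullyFaithful} gives $\textnormal{Hom}_{\mathcal H\textnormal{\textbf{Top}}_P}(|X|_P,|Y|_P) \cong \textnormal{Hom}_{\mathcal H\textnormal{s\textbf{Set}}_P}(X,Y)$, so it suffices to show that the natural composite $[|X|_P,|Y|_P]_P \to \textnormal{Hom}_{\mathcal H\textnormal{s\textbf{Set}}_P}(X,Y)$ is bijective.

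For surjectivity I would combine Proposition \ref{propRepHoClasses} with Lemma \ref{lemRepExbySd} to rewrite any $\beta \in \textnormal{Hom}_{\mathcal H \textnormal{s\textbf{Set}}_P}(X,Y)$ as $[\iota]^{-1}\circ[\tilde f]$, with $\iota: Y \hookrightarrow \textnormal{Ex}_P^n(Y)$ the anodyne inclusion and $\tilde f: X \to \textnormal{Ex}_P^n(Y)$ the adjoint of some $\textnormal{sd}_P^n(X) \to Y$. Upon applying $|-|_P$, the map $|\iota|_P$ is the realization of an anodyne extension, so by Lemma \ref{lemLiftForFAE} and the f-stratified hypothesis on $|Y|_P$ there is a stratum-preserving retraction $r: |\textnormal{Ex}_P^n(Y)|_P \to |Y|_P$ with $r\circ|\iota|_P = 1_{|Y|_P}$. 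Since $|\iota|_P$ and $r$ are both weak equivalences (the former by Theorem \ref{thrmWeakEquSustain}, the latter by two-out-of-three), $[r] = [|\iota|_P]^{-1}$ in $\mathcal H\textnormal{\textbf{Top}}_P$, and the genuine stratum-preserving map $r \circ |\tilde f|_P: |X|_P \to |Y|_P$ represents the image of $\beta$.

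For injectivity I would use the enriched adjunction $|-|_P \dashv \operatorname{Sing}_P$. Given stratum-preserving $g_0, g_1: |X|_P \to |Y|_P$ with $[g_0] = [g_1]$ in $\mathcal H\textnormal{\textbf{Top}}_P$, their adjoints $\hat g_i: X \to \operatorname{Sing}_P(|Y|_P)$ land in a fibrant simplicial set (admissible horn fillings $\Lambda^{\mathcal J}_k \to \operatorname{Sing}_P(|Y|_P)$ transpose under adjunction to fillings of realized horns in $|Y|_P$, which exist by f-stratifiedness). As every object of $\textnormal{s\textbf{Set}}_P$ is cofibrant, Proposition \ref{propHoClasses} identifies $[X,\operatorname{Sing}_P(|Y|_P)]_P$ with $\textnormal{Hom}_{\mathcal H\textnormal{s\textbf{Set}}_P}(X,\operatorname{Sing}_P(|Y|_P))$, and the enriched adjunction transposes simplicial homotopies $\Delta^1\otimes X \to \operatorname{Sing}_P(|Y|_P)$ to stratum-preserving homotopies $\Delta^1\otimes|X|_P \to |Y|_P$. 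Thus $[\hat g_0] = [\hat g_1]$ as simplicial homotopy classes is equivalent to $g_0 \simeq_P g_1$, and the injectivity claim reduces to promoting the adjunction to the homotopy categories.

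The main obstacle will be exactly this last promotion, namely showing that post-composition with the unit $\eta_Y: Y \to \operatorname{Sing}_P(|Y|_P)$ is bijective on $\textnormal{Hom}_{\mathcal H\textnormal{s\textbf{Set}}_P}(X,-)$; this would follow from $\eta_Y$ being a weak equivalence. By the triangle identity, $\epsilon_{|Y|_P}\circ|\eta_Y|_P = 1_{|Y|_P}$, so two-out-of-three reduces the problem to showing that the realized counit $\epsilon_{|Y|_P}:|\operatorname{Sing}_P(|Y|_P)|_P \to |Y|_P$ is a weak equivalence in $\textnormal{\textbf{Top}}_P$, which can be checked on each generalized homotopy link $\textnormal{Hol}_P(\mathcal J,-)$, where the induced map is essentially the classical $|-|\dashv\operatorname{Sing}$ counit on the mapping space between $|\Delta^{\mathcal J}|_P$ and $|Y|_P$, known to be a weak equivalence. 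The transfer back from $|\eta_Y|_P$ to $\eta_Y$ is accomplished either by a direct check on the simplicial versions of $\textnormal{Hol}$ or by a compactness reduction via Corollary \ref{corRelRef}.
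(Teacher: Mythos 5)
Your surjectivity argument is essentially sound and in fact mirrors the paper's: both produce a retraction $r$ of the realized anodyne inclusion into a fibrant replacement (the paper uses $F(Y)$ from the small object argument, you use $\operatorname{Ex}_P^n(Y)$) via \Cref{lemLiftForFAE}, and then exhibit an honest stratum preserving representative $r\circ|\tilde f|_P$ of the given class. The problem is the injectivity half. You reduce everything to the assertion that the unit $\eta_Y\colon Y\to\operatorname{Sing}_P(|Y|_P)$ is a weak equivalence, equivalently (by the triangle identity and two-out-of-three) that the counit $\epsilon_{|Y|_P}\colon|\operatorname{Sing}_P(|Y|_P)|_P\to|Y|_P$ is one. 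This statement is nowhere available in the paper, and your one-line justification does not hold up: the induced map $\textnormal{Hol}_P(\mathcal J,|\operatorname{Sing}_P(|Y|_P)|_P)\to\textnormal{Hol}_P(\mathcal J,|Y|_P)$ is \emph{not} ``essentially the classical counit on a mapping space''. The $p$-th stratum and the generalized homotopy links of the realization $|\operatorname{Sing}_P(|Y|_P)|_P$ are glued from strata of realized filtered simplices and admit no ready identification with $\operatorname{Sing}$ of mapping spaces into $|Y|_P$; comparing homotopy links of a realization with those of the original space is exactly the kind of delicate analysis that occupies the entire proof of \Cref{thrmHolWeakEq} in the much more rigid $N(P)$-versus-$P$ comparison, and no analogue for $\operatorname{Sing}_P$ is proven in this work. (Relatedly, whether $\operatorname{Sing}_P$ even preserves weak equivalences, which you implicitly need to pass from $[g_0]=[g_1]$ in $\mathcal H\textnormal{\textbf{Top}}_P$ to $[\hat g_0]=[\hat g_1]$ in $\mathcal H\textnormal{s\textbf{Set}}_P$, is not established; the paper notes the adjunction is not Quillen for these structures.)

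A telling symptom is that your argument never uses the hypothesis that $X$ is an FOS-complex. The paper's injectivity step needs it: given $\phi_0,\phi_1$ with $[\phi_0]=[\phi_1]$, it invokes the filtered simplicial approximation theorem \Cref{thrmSimplicialApproximationB} (proved only for FOS-complexes) to replace them, up to subdivision and stratified homotopy, by realizations of simplicial maps $f_0,f_1\colon X\to Y$, and then runs these through the commutative square built from \Cref{propHoClasses}, \Cref{thrmFullyFaithful} and the retraction $r$. The paper explicitly remarks that dropping the FOS assumption would require extending the approximation theorem, so a proof that does not use it is almost certainly importing an unproven input — here, the unit/counit weak equivalence. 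To repair your proposal you would either have to prove that counit statement (a substantial independent result) or revert to the simplicial-approximation route for injectivity.
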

\begin{proof}
Let $Y \xhookrightarrow{i} F(Y)$ be a cofibrant fibrant replacement for $Y$. Then, by \Cref{lemLiftForFAE} $|i|_P$ admits a retract $r$, which is a weak equivalence as it is a retract of a weak equivalence by \Cref{thrmWeakEquSustain}. Now, consider the following commutative diagram: 
\begin{equation*}
	\begin{tikzcd}
	{\big [X,F(Y) \big ]_P} \arrow[r, "\sim"] \arrow[d] & \textnormal{Hom}_{\mathcal H \textnormal{s\textbf{Set}}_P}(X,F(Y)) \arrow[d, "\sim"]\\
	{\big [|X|_P,|F(Y)|_P \big ]_P} \arrow[r] \arrow[d,"r_*"] & \textnormal{Hom}_{\mathcal H \textnormal{\textbf{Top}}_{P}}(|X|_P, |F(Y)|_P) \arrow[d, "\sim ", ]\\
	{\big [|X|_P, |Y|_P \big ]_P} \arrow[r] & {\textnormal{Hom}_{\mathcal H \textnormal{\textbf{Top}}_{P}}(|X|_P, |Y|_P) } 
	\end{tikzcd},
\end{equation*}
where the right lower vertical is the isomorphism induced by the weak equivalence $r$. The upper right vertical is an isomorphism by \Cref{thrmFullyFaithful}, which clearly extends to all $P$-filtered simplicial sets that are weakly equivalent to a finite one, i.e. also to $F(Y)$. In particular, by commutativity, the bottom right horizontal is also onto. It remains to show injectivity. Consider stratified homotopy classes of maps $\phi_0,\phi_1: |X|_P \to |Y|_P$ whose images in $\textnormal{Hom}_{\mathcal H \textnormal{\textbf{Top}}_{P}}(|X|_P,|Y|_P)$ agree. By pulling back with the subdivision homeomorphism $|\textnormal{sd}^n(X)|_P \xrightarrow{\sim} |X|_p$ and invoking \Cref{thrmSimplicialApproximationB}, we may without loss of generality assume that both come from homotopy classes of morphisms of filtered simplicial sets $f_0, f_1: X \to Y$. They are mapped to by $ [i \circ f_0],[i \circ f_1] \in [X,F(Y)]_P$ respectively. Thus, by commutativity of the diagram and the fact that the composition "right, down, down" is a bijection, this already implies $[\phi_0] = [\phi_1]$.
\end{proof}
 In particular, this result also holds true for filtered spaces $T_X,T_Y$ which are respectively stratified homotopy equivalent to realizations of such filtered simplicial sets. 
\begin{remark}
	The additional assumption that $X$ is an FOS-complex and not just a finite simplicial set should be omittable. To do this, one only needs to extend the filtered simplicial approximation theorem to all finite $P$-filtered simplicial sets. One way to approach this, might be to define a model structure on $\textnormal{s\textbf{Set}}_P$ that uses $\textnormal{sd}$ instead of $\textnormal{sd}_P$. However, one might just as well take the approach we illuminated in \Cref{remWeirdStruct}, which should give a more complete understanding of the situation.
\end{remark}
This result gives a rather clear view of what the morphisms in $\mathcal H\textnormal{\textbf{Top}}_{P}$ look like for many classical examples of stratified spaces. To be more precise, we obtain:
\begin{corollary}
	Let $\textnormal{\textbf{Top}}_{P}^{f, fin,PL}$ be the full subcategory of f-stratified, compact polyhedra in $\textnormal{\textbf{Top}}_{P}$, i.e. f-stratified spaces, triangulable by a finite FOS-complex. 
	Let $\mathcal {H}^{naive}\textnormal{\textbf{Top}}_{P}^{f, fin, PL}$ be the category obtained from it, by identifying stratified homotopic maps. Then $\textnormal{\textbf{Top}}_{P}^{f, fin,PL} \hookrightarrow \textnormal{\textbf{Top}}_{P}$ induces a fully faithful embedding $$\mathcal H^{naive}\textnormal{\textbf{Top}}_{P}^{f, fin,PL} \hookrightarrow \mathcal H \textnormal{\textbf{Top}}_{P}.$$ In particular, the analogous result holds, for the full subcategories of $\textnormal{\textbf{Top}}_{P}^{f, fin,PL}$ given by such realizations of finite FOS-complexes that are pseudo-varieties, cs-stratified, conically stratified or homotopically stratified spaces.
\end{corollary}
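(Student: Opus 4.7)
The plan is to deduce the statement essentially verbatim from \Cref{thrmHoClassofFSpace}. The first thing to verify is that the inclusion $\textnormal{\textbf{Top}}_P^{f,fin,PL}\hookrightarrow\textnormal{\textbf{Top}}_P$ indeed descends to a functor out of $\mathcal H^{naive}\textnormal{\textbf{Top}}_P^{f,fin,PL}$: if $H\colon I\otimes T\to T'$ is a stratified homotopy between $f_0$ and $f_1$, then the two endpoint inclusions $i_0,i_1\colon T\hookrightarrow I\otimes T$ are both stratum preserving homotopy inverses of the projection, hence stratified homotopy equivalences, and therefore by \Cref{lemC0retainHo} weak equivalences in the Henrique--Douteau model structure. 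Thus $[i_0]=[i_1]$ in $\mathcal H\textnormal{\textbf{Top}}_P$, which forces $[f_0]=[H\circ i_0]=[H\circ i_1]=[f_1]$.

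For full faithfulness, I would fix $T,T'\in\textnormal{\textbf{Top}}_P^{f,fin,PL}$ together with stratum preserving homeomorphisms $T\cong|X|_P$, $T'\cong|Y|_P$ where $X,Y$ are finite FOS-complexes; stratum preserving homeomorphisms are isomorphisms in both $\mathcal H^{naive}\textnormal{\textbf{Top}}_P^{f,fin,PL}$ and $\mathcal H\textnormal{\textbf{Top}}_P$, so the bijectivity of
\[
\textnormal{Hom}_{\mathcal H^{naive}\textnormal{\textbf{Top}}_P^{f,fin,PL}}(T,T')\longrightarrow\textnormal{Hom}_{\mathcal H\textnormal{\textbf{Top}}_P}(T,T')
\]
reduces, via these identifications, to that of $[|X|_P,|Y|_P]_P\to\textnormal{Hom}_{\mathcal H\textnormal{\textbf{Top}}_P}(|X|_P,|Y|_P)$. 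Since $|Y|_P\cong T'$ is f-stratified by assumption, this is exactly the bijection provided by \Cref{thrmHoClassofFSpace}.

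The ``in particular'' part is then immediate by restriction: \Cref{exOfFstrat} records that pseudomanifolds, cs-stratified, conically stratified, and (finitely stratified) homotopically stratified metric spaces all are f-stratified, so whenever their underlying $P$-filtered space is additionally triangulable by a finite FOS-complex they sit as a full subcategory of $\textnormal{\textbf{Top}}_P^{f,fin,PL}$, and the result follows by restricting the embedding already established. I do not anticipate any genuine obstacle here --- the real content is already packaged inside \Cref{thrmHoClassofFSpace}. The one design point worth flagging is that $\textnormal{\textbf{Top}}_P^{f,fin,PL}$ is defined with triangulations specifically by \emph{FOS}-complexes; this is what allows \Cref{thrmHoClassofFSpace} to be invoked directly, without first having to replace an arbitrary finite filtered simplicial set via \Cref{propSSvSC}.
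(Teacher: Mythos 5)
Your proposal is correct and follows exactly the route the paper intends: the corollary is stated as an immediate consequence of \Cref{thrmHoClassofFSpace}, and your argument simply makes the routine steps explicit (descent to the naive homotopy category, transport along the triangulation homeomorphisms, and the restriction to the classes of stratified spaces listed in \Cref{exOfFstrat}). No gaps; the reduction is sound since both source and target in $\textnormal{\textbf{Top}}_{P}^{f, fin,PL}$ satisfy the hypotheses of \Cref{thrmHoClassofFSpace} (finite FOS triangulation on the source, f-stratified realization on the target).
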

We should note that a similar statement has been conjectured by Douteau in \cite[8.1.4]{douteauFren}. However, there it is conjectured for the strongly filtered setting.
\chapter{Simple Stratified Homotopy Theory} \label{chII}
Roughly speaking, the origin of simple homotopy theory is the question of which homotopy equivalences $f: X\to Y$ between appropriately combinatorial objects, say CW-complexes, can be represented through a sequence of certain combinatorial moves (expansions and collapses). As there is already a lot of excellent introductory work into simple homotopy theory, we refrain from giving an overview of the classical perspective. The reader completely new to the subject should find motivation and analogies for our stratified perspective in \cite{cohenCourse}. In the classical setting of CW-complexes (or simplicial complexes) and elementary expansions, it turns out that the answer to this question is encoded by an element $\tau(f)$ - the Whitehead torsion - of a purely algebraic group $Wh(X)$ - the Whitehead group of $X$. In this chapter we ask and attempt to answer the analogous question but for filtered (stratified) spaces. Before one can even dare to attempt giving an answer, one of course has to be a little bit more specific as to what the correct analogy should be. In other words:\\
\\ What are the ``appropriately combinatorial objects''? \\What are the ``homotopy equivalences''? \\What are the ``combinatorial moves''? \\
\\The first of these three questions we have already answered in \Cref{subsecPsset} with the category $\textnormal{s\textbf{Set}}_P$ of $P$-filtered simplicial sets. It simply seems the most well explored category of filtered objects of combinatorial nature that is not too restrictive when it comes to pushout and mapping cylinder constructions. To answer the second question, we take the weak equivalences in the Douteau model structure on $\textnormal{s\textbf{Set}}_P$ and the Henrique-Douteau model structure on $\textnormal{\textbf{Top}}_{P}$ respectively. We have seen in various results of \Cref{chI} (in particular \Cref{thrmWeakEquSustain}, \Cref{thrmFullyFaithful} and \Cref{corRelRef}) that they interact rather nicely. In particular, $|-|_P$ preserves and reflects weak equivalences as long as the source and target are finite filtered simplicial sets. The reason we chose weak equivalences over the more rigid filtered homotopy equivalences partially lies in the answer to the third question. The combinatorial moves of our choice are the pushouts of admissible horn inclusions. We illustrate in detail in \Cref{subsecElemExp} why we think that they, together with the weak equivalences, make for a good candidate for a study of a stratified simple homotopy theory. Now, that at least the question is well posed, we can illuminate how we are set out to answer it. The goal is of course to construct a stratified analogue to the Whitehead group and the Whitehead torsion and show that these have similar properties as their classical counterparts. To do so, we make use of a result of Eckmann, Bolthausen and Siebenmann which allows for the construction of such objects in a very general category theoretical setting, given that a certain set of axioms is verified. Introducing this approach (and fixing a minor mistake in the formulation of the axioms) is the content of \Cref{subsecEckSiebApp}. In the following section (\ref{secElandFSAE}) we then define the analogue to expansions in the classical theory and do an in-depth study of their behavior. One of our main tools of study is a recent publication on strong anodyne extensions by Sean Moss (\cite{MossSae}). We use the results obtained in this section to show that the axioms of \Cref{subsecEckSiebApp} are verified (\Cref{thrmEckSiebAxAreTrue}). This proof marks the beginning of \Cref{subsecEckSiebAppHolds}. In particular, doing so we have then shown the existence of a Whitehead group and torsion for filtered and in particular stratified spaces. They answer the initial question at the beginning chapter. The remainder of \Cref{subsecEckSiebAppHolds} is spent on analyzing the behavior of this stratified Whitehead torsion and group in more detail, showing that it behaves much like the classical theory. For example, we show that with respect to the relevant notion of simple equivalence every finite $P$-filtered simplicial set has the simple homotopy type of a $P$-filtered ordered simplicial complex (\Cref{thrmSSvSC}). Finally in \Cref{secTopWh}, we use the results of \Cref{chI} to give a series of equivalent descriptions of the stratified Whitehead group and torsion, also generalizing the latter to continuous stratum preserving maps instead of purely simplicial ones (\Cref{propCharOfWh}). We use this to prove that the Whitehead group we defined agrees with the classical one in case where $P= \star$ is a one point set (\Cref{thrmOldNewAgree}).
\section{The Eckmann-Siebenmann approach}\label{subsecEckSiebApp}
Historically, several authors have been concerned with the question of how to generalize Whitehead's simple homotopy theory to other settings such as locally finite CW-complexes (see \cite{siebenmannInfinite}, \cite{eckmann2006}) or even more generally to more abstract homotopy theoretical settings, for example categories of chaincomplexes (see \cite{kampsPo}). Luckily, these approaches have all been formulated in a very general category theoretical setting, so they are easily transferred to our question at hand. Here, we present the approach that was independently taken by Siebenmann as well as Eckmann and his student Bolthausen (see \cite{siebenmannInfinite}, \cite{eckmann2006}). It will turn out, later on in \Cref{subsecEckSiebApp}, that this approach can be used to formulate a simple homotopy theory for filtered simplicial sets, and ultimately for finitely triangulated filtered spaces. However, we should note that the way the axioms for this approach were formulated in \cite{siebenmannInfinite} and \cite{eckmann2006} is slightly flawed. To be more precise, in the precise way they are phrased in it does not apply to any setting known to us and certainly not to the settings the authors were interested in (see \Cref{remAxAreWrong}). However, this is more of a technical difficulty, and easily fixed by a slight reformulation of the axioms. To make sure that all the results we need still hold under these new conditions, we give sketches of the proofs in \cite{eckmann2006}. We note that it might certainly be fruitful to translate the work below to a more model category theoretic setting. This has already been hinted at in \cite[Ch.VI.2]{kampsPo}. However, the following suffices for the applications we have in mind.\\
\\
For the remainder of this section, let $\mathcal C$ be some category embedded in a larger category $\hat {\mathcal C}$ that has the same objects as $\mathcal C$ but potentially more morphisms. Further, let $\Sigma $ be a class of morphisms in $\mathcal C$. Now, consider the localized category $\mathcal C(\Sigma^{-1})$ (see for example \cite[Ch. I]{gabrielZisCalc}) and denote by $Q$ the structure functor $Q:\mathcal C \to \mathcal C(\Sigma^{-1})$. 
\begin{definition}\label{defSimpleSetting}
	A morphism in $\mathcal C(\Sigma^{-1})$ is called \textit{simple} if it is given by a composition of morphisms of the shape $Q(s),Q(s)^{-1}$ for $s \in \Sigma$. 
	Two morphisms $\alpha, \beta$ in $\mathcal C^{-1}$ with the same source $X \in \mathcal C$ are said to have the same \textit{simple morphism class} if $ \beta = \gamma \circ \alpha$ for some $\gamma$ that is simple (clearly this construction gives an equivalence relation, and hence a well-defined notion of equivalence class). We denote the class of $\alpha$ by $\langle \alpha\rangle $. Denote by $A(X)$ the class of simple morphism classes of morphisms with source $X \in \mathcal C$. Denote by $E(X)$ the subclass of $A(X)$ given by classes $\langle \alpha\rangle $ where some (and hence every) $\alpha' \in \langle \alpha\rangle $ is an isomorphism in $\mathcal C(\Sigma^{-1})$. $E(X)$ and hence $A(X)$ has a distinguished element given by $\langle 1_X\rangle $.
\end{definition}
Now, suppose we are in a sufficiently nice setting. That is, one where $\mathcal C(\Sigma^{-1})$ is equivalent to some homotopy category we are interested in studying and one where $A(X)$ is sufficiently small, i.e. of set size. This is for example the case for $\hat {\mathcal C}$ the category of finite CW-complexes, $\mathcal C$ the given by inclusions of subcomplexes and $\Sigma$ the class of compositions of elementary expansions. We know that in these cases $E$ in fact defines a functor on $\mathcal C(\Sigma^{-1})$ into monoids and $E(X)$ defines a functor into abelian groups, the Whitehead group. (\cite{eckmann2006},\cite{siebenmannInfinite}). One can ask the general question of what requirements need to be fulfilled for this to be the case. This is answered in the following definition and theorem.
\begin{definition}\label{defAxEckSieb}
	We say that a triple $\mathcal C \subset \hat{\mathcal{C}}$, $\Sigma$ as above (and such that $A(X)$ is of set size for $X \in \mathcal{C}$) \textit{admits a Whitehead group} if the following axioms are fulfilled. 
	\begin{enumerate}[label = {(\normalfont{\textbf{A\arabic*}})}]
		\setcounter{enumi}{-1}
		\item \label{eckAx0}$\Sigma$ contains all isomorphisms in $\mathcal{\hat C}$ and is closed under composition. 
		\item \label{eckAx1}Let $f: X \to X'$ and $s:X \to Y$ be morphisms in $\mathcal C$. Then the the pushout diagram $$\begin{tikzcd}
		X \arrow[r, "f"] \arrow[d, "s"] & Y \arrow[d, "s'"]\\
		X' \arrow[r, "f'"] & Y'
		\end{tikzcd},$$
		in $\hat{ \mathcal C}$ exists and furthermore $f',s' \in \mathcal C$. If further $s \in \Sigma$, then so is $s'$.
		\item \label{eckAx2}Let $f,g: X \to Y$ be two morphisms in $\mathcal{C}$ such that $Q(f) = Q(g)$. Then there exists a commutative diagram in $\mathcal{C}$
		$$\begin{tikzcd}
		X \arrow[r, "f"] \arrow[d, "g"] & Y \arrow [d, "s"]\\
		Y \arrow[r, "t"] & Z
		\end{tikzcd},$$
		where $s,t \in \Sigma$.
	\end{enumerate}
\end{definition}
\begin{theorem}\label{thrmEckSieb}
	Let $\mathcal{C} \subset \hat{\mathcal{C}}, \Sigma$ admit a Whitehead group.\\
	\\
	Then every morphism in $\mathcal C(\Sigma^{-1})$ is of the form $Q(s)^{-1}Q(f)$ for some $s \in \Sigma $, $f\in \mathcal C$. In particular, for each $X \in \mathcal C$ and each $\langle \alpha\rangle \in A(X)$ there exists a $a \in \mathcal C$ such that $\langle \alpha\rangle = \langle Q(a)\rangle .$\\
	\\
	Let $X \in \mathcal C$. For $\langle \alpha\rangle $ and $\langle \beta\rangle $ in $A(X)$ define $\langle \alpha\rangle + \langle \beta\rangle $ by the simple isomorphism class of the diagonal in a pushout diagram 
	$$\begin{tikzcd}
	X \arrow[r, "a"] \arrow[d, "b"] & Y \arrow[d]\\
	Z \arrow[r] & W
	\end{tikzcd}$$
	in $\hat{\mathcal{C}}$, where $Q(a) \in \langle \alpha\rangle $ and $Q(a) \in \langle \beta\rangle $. Furthermore for $\langle \alpha\rangle $ in $A(X)$ and $f:X \to X' \in \mathcal C$ define $f_*\langle \alpha\rangle \in E(Y)$ as $\langle Q(a')\rangle $, where $a'$ is the pushout of a representative of $\alpha$ in $\mathcal C$ along $f$ in $\hat{\mathcal C}$. Both of these constructions are well-defined. \\
	\\Then,
	\begin{align*} 
	\mathcal C &\to \textnormal{\textbf{AbMon}}\\
	 X &\mapsto (A(X),+, \langle 1_X\rangle )\\
	\{X \xrightarrow{f} X'\} &\mapsto \{\langle \alpha\rangle \mapsto f_*\langle \alpha \rangle \}											
	\end{align*}
	defines a functor into the category of abelian monoids. Furthermore,
	\begin{align*} 
	\mathcal C &\to \textnormal{\textbf{Ab}}\\
	X &\mapsto (E(X),+, \langle 1_X\rangle )\\
	\{X \xrightarrow{f} X'\} &\mapsto \{\langle \alpha\rangle \mapsto f_*\langle \alpha \rangle \}											
	\end{align*}
	defines a functor into the category of abelian groups. Both send $s \in \Sigma$ to an isomorphism, i.e. induce functors: 
	\begin{align*}
		E: \mathcal C(\Sigma^{-1}) &\to \textnormal{\textbf{Ab}},\\
		A: \mathcal C(\Sigma^{-1}) &\to \textnormal{\textbf{AbMon}}.
	\end{align*}
\end{theorem}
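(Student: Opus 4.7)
The plan is to proceed in three stages: first establish a calculus of left fractions for $Q: \mathcal{C} \to \mathcal{C}(\Sigma^{-1})$, then use it to verify that the operations $+$ and $f_*$ are well-defined, and finally check functoriality and the group structure on $E(X)$.

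First I would show that every morphism in $\mathcal{C}(\Sigma^{-1})$ can be written as $Q(s)^{-1}Q(f)$ for some $s \in \Sigma$ and $f \in \mathcal{C}$. Such a morphism is a priori presented by an alternating zigzag, and the key observation is that axiom (A1) allows any interior span $\bullet \xleftarrow{s} \bullet \xrightarrow{f} \bullet$ with $s \in \Sigma$ to be replaced by a cospan $\bullet \xrightarrow{f'} \bullet \xleftarrow{s'} \bullet$ with $s' \in \Sigma$ via the pushout supplied by the last clause of (A1). Iterating this flip and then absorbing consecutive forward arrows by composition in $\mathcal{C}$ and consecutive backward $\Sigma$-arrows by composition in $\Sigma$ (via (A0)) shortens any zigzag to the required two-arrow form. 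As an immediate consequence, each $\langle \alpha \rangle \in A(X)$ has a representative $Q(a)$ with $a \in \mathcal{C}$: writing $\alpha = Q(s)^{-1} Q(a)$, the factor $Q(s)^{-1}$ is itself simple, so $\langle \alpha \rangle = \langle Q(a) \rangle$.

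Next I would verify well-definedness of the sum. Given representatives $a: X \to Y$ and $b: X \to Z$ of $\langle \alpha \rangle$ and $\langle \beta \rangle$, (A1) produces a pushout square in $\hat{\mathcal{C}}$ whose sides and diagonal $d: X \to W$ all lie in $\mathcal{C}$. If $a': X \to Y'$ is another representative of $\langle \alpha \rangle$, then $Q(a)$ and $Q(a')$ differ by a simple morphism, which by the calculus of fractions just established can be exhibited as a finite zigzag in $\Sigma$. Each elementary $\Sigma$-step in this zigzag can be pushed out along $b$ via (A1) (producing new $\Sigma$-arrows) and any resulting parallel comparisons can be identified via (A2); this builds a new pushout of $a'$ along $b$ whose diagonal is simply equivalent to $d$. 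A symmetric argument handles changes in $b$. Commutativity and associativity of $+$ then follow from the symmetry of pushouts and the pasting lemma, and $\langle 1_X \rangle$ is an identity because the pushout of $a$ along $1_X$ is canonically $a$ itself.

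Functoriality of $f_*$ is obtained analogously: independence from the choice of representative uses the same bookkeeping as for $+$, and $(g \circ f)_* = g_* \circ f_*$ is the pasting law. To see that any $s \in \Sigma$ is sent to an isomorphism, I would argue that the composition of $s_*$ with the precomposition operation $-\circ Q(s)$ (well-defined on simple classes because $Q(s)$ itself is simple) is the identity on $A(X')$ and on $A(X)$ respectively, modulo a diagram chase using (A1) again. This provides the required inverse, and the group structure on $E(X) \subset A(X)$ is then forced: an invertible $\langle \alpha \rangle$ admits a representative $a: X \to Y$ becoming an isomorphism in the localization, and the inverse is realized by transporting $a_*\langle \alpha \rangle$ back to $A(X)$ via the now-available $(a^{-1})_*$. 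The main obstacle is the well-definedness of $+$ under change of representative: simple equivalence is generated by elementary zigzags of $\Sigma$-morphisms, and each such step must be tracked through the pushout with $b$ without losing control of the simple class of the resulting diagonal. This is exactly where (A1) and (A2) must be invoked in tandem, (A1) ensuring that pushouts of $\Sigma$-morphisms stay in $\Sigma$ and (A2) absorbing the parallel morphisms that inevitably appear when comparing two pushout squares with a common base. Once this bookkeeping is done, the remaining verifications become formal consequences of the universal property of pushouts and the pasting lemma.
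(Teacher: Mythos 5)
Your fraction calculus, the reduction of simple-class equality to an on-the-nose cocone via (A2), the well-definedness of $+$ and $f_*$ through composed pushout grids, and the two-sided-inverse argument showing $s_*$ is bijective for $s\in\Sigma$ all follow essentially the same route as the paper. The genuine gap is the final step, the abelian group structure on $E(X)$. As written, your inverse ``realized by transporting $a_*\langle\alpha\rangle$ back to $A(X)$ via the now-available $(a^{-1})_*$'' is no inverse at all: once the functor descends to $\mathcal C(\Sigma^{-1})$ one has $(a^{-1})_*=(a_*)^{-1}$, so this transport simply returns $\langle\alpha\rangle$ itself. Nothing in your sketch produces a class $\langle\beta\rangle$ with $\langle\alpha\rangle+\langle\beta\rangle=\langle 1_X\rangle$, and that is exactly the nontrivial content of the statement that $E(X)$ is a group; it is not ``forced'' by what precedes.

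What is needed (and what the paper supplies) is, first, a left-inverse characterization: if $Q(a)$ is invertible in $\mathcal C(\Sigma^{-1})$, write a left inverse as $Q(t)^{-1}Q(\bar f_0)$, so that $Q(\bar f_0\circ a)=Q(t)$, and apply (A2) together with closure of $\Sigma$ under composition to obtain an honest $b\in\mathcal C$ with $b\circ a\in\Sigma$. Second, the composition formula $a_*\langle b\circ a\rangle=a_*\langle a\rangle+\langle b\rangle$, read off from a composed pushout rectangle; since $b\circ a\in\Sigma$ the left side is the neutral element, so $a_*\langle a\rangle$ is a unit of $A(Y)$, and because $a_*$ is a monoid isomorphism (as $Q(a)$ is an isomorphism and $A$ is functorial on the localization), $\langle a\rangle$ is a unit of $A(X)$. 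Together with the converse (additively invertible classes are isomorphisms, which uses stability of ``has a left inverse'' under pushout), this identifies $E(X)$ with the group of units of $A(X)$, and only then do closure of $E(X)$ under $+$, existence of inverses inside $E(X)$, and the fact that $f_*$ restricts to a map $E(X)\to E(X')$ all follow formally — none of which your proposal addresses.
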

\begin{remark}\label{remAxAreWrong}
Before we move on to a proof, it might be useful to shed a little bit of light on the usage of the two categories $\mathcal C$ and $\hat{\mathcal C}$ here. Note that both the functoriality as well as the group structure are defined by taking pushouts of representatives of equivalence classes. In general pushouts interact badly with homotopies and the induced equivalence relation (see for example \cite[Ch.V]{kampsPo}). For example, the pushout of a homotopy equivalence might not be a homotopy equivalence anymore, which would interfere with the functoriality of $E$ above. Similarly glueings of homotopy equivalences might not be homotopy equivalences, interfering with the definition of addition. However, there are of course classes of maps that interact more nicely, when it comes to pushouts: Cofibrations (see for example \cite[Ch. V Sec. 6, 7, Sec]{kampsPo}). Note for example that the pushout of an acyclic cofibration of topological spaces is again an acyclic cofibration (\cite[Ch. I, Cor. 6.14]{kampsPo}) and of course this also holds true in any model category \cite{hirschhornModel}. Usually, the way to circumvent these difficulties is by replacing maps $f: X \to Y$ by the mapping cylinder inclusions $X \hookrightarrow M_f$, i.e. passing to homotopy pushouts. This is for example how abstract simple homotopy theory is built purely cylinder based in \cite[Ch. VI, Sec. 3]{kampsPo}. Alternatively, one can restrict to whatever class of morphisms interact nicely with homotopy and pushouts in this setting. This is essentially the step of passing from $\hat{\mathcal C}$ to $\mathcal C$. In many cases, one can then use an appropriate factorization system to represent morphisms in $\hat{\mathcal C}$ by morphisms in $ \mathcal{C}$. Furthermore, one then hopes that $\mathcal C(\Sigma^{-1})$ is still equivalent as a category to the homotopy category one is interested in studying. This is for example also done implicitly in \cite{cohenCourse}, where the geometric Whitehead group is constructed from pairs of CW-complexes that are deformation retractions, as any inclusion of a subcomplex of CW-complexes is a cofibration.\\
\\
In the original sources for this chapter (\cite{eckmann2006},\cite{siebenmannInfinite}), however, no mention of a second category $\hat{\mathcal C}$ is made. In fact, the analogue to \ref{eckAx1} in \Cref{defAxEckSieb} is just formulated via pushouts in $\mathcal C$. While both authors are applying the construction precisely as we mentioned in the above paragraph, they seem to be under the assumption that in the category with objects CW-complexes and with morphisms inclusions of subcomplexes the pushout is given by the pushout in the category of CW-complexes (\cite[p.6]{eckmann2006}, \cite[Paragraph 2]{siebenmannInfinite}). This is false. Yes, the pushout diagram in CW-complexes does exist and all of its structure maps are still inclusions of subcomplexes, but it does not fulfil the existence part of the universal property in the subcategory of inclusions of subcomplexes. Take for example the pushout along the empty complex of two points, $\star$. Trivially, the constant map into another point makes the diagram (given by the maps from the empty complex into the points) commute. But there is no map $\star \sqcup \star \to \star$ that is an inclusion of a subcomplex. \\
\\
Nevertheless, this is neither really hurtful to the results nor to the theoretical impact of their contributions as neither of them really uses the universal property of the pushout anywhere in their proof. What the authors are really using the pushout for is to have a canonical way to produce commutative squares. To be more precise, they need a way to produce commutative squares, unique up to isomorphism of diagrams and they want this to be functorial in the sense that these diagrams have the composition properties of pushout squares. This is precisely what we verify by our usage of \ref{eckAx1}.
\end{remark}
We now begin the proof of \Cref{thrmEckSieb}. All the following statements are to be understood in this setting.
\begin{lemma}\label{lemCharOfMorCs}
	 Every morphism in $\mathcal C(\Sigma^{-1})$ is of the form $Q(s)^{-1}Q(f)$, for some $s \in \Sigma $, $f\in \mathcal C$.
\end{lemma}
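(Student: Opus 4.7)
The plan is to set this up as a standard (left) calculus of fractions argument. I will not need \ref{eckAx2} for this lemma — that axiom governs when two fractions represent the same morphism — only \ref{eckAx0} and the pushout axiom \ref{eckAx1} are required. The key is an Ore-type swap, followed by an induction on zigzag length.

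First I would establish the Ore condition: given $f\colon X\to X'$ in $\mathcal C$ and $s\colon X\to Y$ in $\Sigma$, apply \ref{eckAx1} to form the pushout square in $\hat{\mathcal C}$
\[
\begin{tikzcd}
X \arrow[r,"f"] \arrow[d,"s"'] & X' \arrow[d,"s'"]\\
Y \arrow[r,"f'"'] & Y'
\end{tikzcd}
\]
which exists with $f'\in\mathcal C$ and $s'\in\Sigma$. Commutativity $s'f=f's$ together with the invertibility of $Q(s),Q(s')$ in the localization yields
\[
Q(f)\,Q(s)^{-1}\;=\;Q(s')^{-1}\,Q(f').
\]
Thus every "right fraction" can be converted into a "left fraction."

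Next I would show that morphisms already in the form $Q(s)^{-1}Q(f)$ (with $s\in\Sigma$, $f\in\mathcal C$) are closed under composition. Given $Q(s)^{-1}Q(f)$ and $Q(t)^{-1}Q(g)$ whose sources and targets match, the composite
\[
Q(t)^{-1}\,Q(g)\,Q(s)^{-1}\,Q(f)
\]
is rewritten by applying the Ore step to the middle pair $(g,s)$, producing $s''\in\Sigma$ and $g'\in\mathcal C$ with $Q(g)Q(s)^{-1}=Q(s'')^{-1}Q(g')$. The composite becomes $Q(t)^{-1}Q(s'')^{-1}Q(g')Q(f)=Q(s''t)^{-1}Q(g'f)$, where $s''t\in\Sigma$ by \ref{eckAx0} and $g'f\in\mathcal C$ because $\mathcal C$ is a category. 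So the form $Q(s)^{-1}Q(f)$ is preserved under composition.

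Finally, I would induct on the length of a zigzag representation of an arbitrary morphism $\alpha$ of $\mathcal C(\Sigma^{-1})$. Identities are covered by $1_X=Q(1_X)^{-1}Q(1_X)$, noting that $1_X$ lies in both $\mathcal C$ (being a subcategory) and $\Sigma$ (by \ref{eckAx0}). A single generator $Q(f)$ equals $Q(1)^{-1}Q(f)$, and $Q(s)^{-1}$ equals $Q(s)^{-1}Q(1)$. An arbitrary zigzag of length $n+1$ is the composition of a zigzag of length $n$ — expressible as $Q(s)^{-1}Q(f)$ by the inductive hypothesis — with either a $Q(g)=Q(1)^{-1}Q(g)$ or a $Q(t)^{-1}=Q(t)^{-1}Q(1)$, and in either case the previous paragraph reduces the total to the desired form. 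The only substantive step is the Ore swap in the first paragraph; everything else is bookkeeping, and the non-uniqueness of the representation (which \ref{eckAx2} would be needed to control) is irrelevant for mere existence.
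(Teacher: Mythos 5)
Your proof is correct and follows essentially the same route as the paper: both arguments induct on the length of a zigzag and use the pushout axiom \ref{eckAx1} as an Ore-type swap $Q(g)Q(s)^{-1}=Q(s')^{-1}Q(g')$, with \ref{eckAx0} supplying closure of $\Sigma$ under composition, and neither needs \ref{eckAx2}. Your packaging of the inductive step as "left fractions are closed under composition" is just a mild reorganization of the paper's reduction of a length-$n$ zigzag to the shape $X\to X_1\leftarrow X_2\leftrightarrow Y$.
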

\begin{proof}
	By definition of the localized category \cite[Ch. I]{gabrielZisCalc}, every morphism in $\mathcal C(\Sigma^{-1})$ from $X$ to $Y$ is given by a zigzag $$X = X_0\leftrightarrow X_1 \leftrightarrow ... \leftrightarrow X_n = Y,$$ where we allow all arrows in $\mathcal C$ in right direction and only arrows in $\Sigma $ in left direction. We need to show that each such zigzag can be brought into the shape $$ X \xrightarrow{f} Z \xleftarrow{s}Y, $$ for $s \in \Sigma$, under the operations inducing the morphism structure in $\mathcal C(\Sigma^{-1})$ \cite[Ch. I]{gabrielZisCalc}. Now, assume by induction that we have already proven this for zigzags up to length $n-1$. The case $n=1$ is obvious. Now, by the composition rules in a localized category, we can reduce a zigzag of length $n$ to the shape
	$$ X \xrightarrow{f} X_1 \xleftarrow{s} X_2 \xleftrightarrow{g} Y,$$ with $f,g \in \mathcal C$ and $s \in \Sigma$.
	If $g$ points to the left, it is necessarily in $\Sigma$. Hence, as $\Sigma$ is closed under composition by \ref{eckAx0}, we are done. So let us assume $g$ points to the right. Now, consider the diagram $$ \begin{tikzcd}
	& & X_2 \arrow[ld, "s", swap] \arrow[rd, "g"] &\\
	X \arrow[r, "f"]& X_1 \arrow[rd, "g'", swap] & & Y \arrow[ld, "s'"]\\
	& & X_2'&
	\end{tikzcd},$$ where the right hand square is a pushout diagram in $\hat{\mathcal C}$. By \ref{eckAx1}, this square again lies in $\mathcal C$ and $s' \in \Sigma$. By commutativity, the morphism in $\mathcal C(\Sigma^{-1})$ given by $$Q(s')^{-1}Q(g')Q(f) = Q(s')^{-1}Q(g' \circ f)$$ is then the same as the one induced by the zigzag we started with, concluding the proof.
\end{proof}
By the same argument as in the proof of \Cref{lemCharOfMorCs}, one obtains: 
\begin{lemma}\label{lemCharOfSimCs}
	Every simple morphism $\sigma$ in $\mathcal C(\Sigma^{-1})$ is of shape $Q(s)^{-1}Q(s')$ for some $s,s' \in \Sigma$.
\end{lemma}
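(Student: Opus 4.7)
The plan is to replay the inductive zigzag-reduction from the proof of \Cref{lemCharOfMorCs} almost verbatim, but keep track of the additional information that in the simple case every arrow of the zigzag, in both directions, lies in $\Sigma$. A simple morphism is by definition represented by a finite zigzag
\[
X = X_0 \leftrightarrow X_1 \leftrightarrow \dots \leftrightarrow X_n = Y
\]
in which every arrow, regardless of orientation, lies in $\Sigma$. The goal is to collapse such a zigzag, under the calculus of the localization, to one of length two of the form $X \xrightarrow{s'} Z \xleftarrow{s} Y$ with $s,s'\in\Sigma$.

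I would induct on $n$, with the base case $n\leq 1$ trivial after inserting identities (which are simple by \ref{eckAx0}). For the inductive step, using the composition rules of the localized category one may assume the zigzag has the shape
\[
X \xrightarrow{s'} X_1 \xleftarrow{s} X_2 \xleftrightarrow{g} Y
\]
with $s',s,g \in \Sigma$. If the last arrow $g$ points leftwards, i.e.\ $g \colon Y \to X_2$ is in $\Sigma$, then by \ref{eckAx0} the composition $s\circ g\in\Sigma$ and the zigzag reduces to $X \xrightarrow{s'} X_1 \xleftarrow{s\circ g} Y$, which is of the desired form. If $g$ points rightwards, $g \colon X_2\to Y$, form the pushout square
\[
\begin{tikzcd}
X_2 \arrow[r,"g"] \arrow[d,"s",swap] & Y \arrow[d,"s''"]\\
X_1 \arrow[r,"g'"] & X_2'
\end{tikzcd}
\]
in $\hat{\mathcal C}$. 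By \ref{eckAx1} this square lies in $\mathcal{C}$, and crucially, because $s \in \Sigma$, its pushout $s''$ again lies in $\Sigma$. The original zigzag then becomes equivalent to
\[
X \xrightarrow{g'\circ s'} X_2' \xleftarrow{s''} Y,
\]
where $g'\circ s'\in\Sigma$ by \ref{eckAx0} (since the zigzag we started with was simple, $g'$ is itself a pushout of $g\in\Sigma$ and hence in $\Sigma$, so both factors and their composition lie in $\Sigma$). This is again of the required length-two form.

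There is no real obstacle here: the argument is word-for-word that of \Cref{lemCharOfMorCs}, and the only additional content is the observation that each reduction step preserves the property that \emph{all} arrows of the zigzag lie in $\Sigma$. This is precisely guaranteed by the two clauses of \ref{eckAx1} (pushouts of morphisms in $\Sigma$ are in $\Sigma$) together with \ref{eckAx0} (closure of $\Sigma$ under composition). The main point to emphasize in writing up is therefore not any new construction, but the verification that the same inductive scheme respects the stronger hypothesis on orientations.
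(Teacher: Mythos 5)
Your proposal is correct and matches the paper exactly: the paper proves \Cref{lemCharOfSimCs} simply by remarking that it follows ``by the same argument as in the proof of \Cref{lemCharOfMorCs}'', which is precisely your replay of the zigzag reduction while tracking that all arrows stay in $\Sigma$ via \ref{eckAx0} and \ref{eckAx1}. Your only added content, the explicit check that $g'$ and the composites remain in $\Sigma$, is the intended (and correct) verification.
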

We can now use this to obtain an alternative description of $A(X)$ and $E(X)$.
\begin{lemma}\label{lemEqConE}
	For every $\langle \alpha\rangle $ in $A(X)$ there exists an $a: X \to Y$ in $\mathcal{C}$ such that $\langle \alpha\rangle = \langle Q(a)\rangle $.\\
	\\ 
	Furthermore, for $a: X \to Y$, $b: X \to Z$ in $\mathcal C$:\\$\langle Q(a)\rangle = \langle Q(b)\rangle $ if and only if there exists a commutative diagram in $\mathcal{C}$:
	 $$\begin{tikzcd}
	& Y \arrow[rd, "s"] &\\
	X \arrow[rd, "a", swap] \arrow[ru, "b"] & & S\\
	 & Z \arrow[ru , "t", swap] &
	\end{tikzcd},
	$$
	with $t,s \in \Sigma$. 
 \end{lemma}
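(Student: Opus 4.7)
The plan is to derive both claims directly from the two preceding characterization lemmas together with axioms \ref{eckAx0} and \ref{eckAx2}; no new ideas are needed.

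For the first claim, I would start with any representative $\alpha : X \to Y$ of $\langle \alpha \rangle$ and apply \Cref{lemCharOfMorCs} to write $\alpha = Q(s)^{-1} Q(f)$ with $s \in \Sigma$ and $f \in \mathcal{C}$. Rearranging gives $Q(f) = Q(s) \circ \alpha$, and since $Q(s)$ is simple by the very definition of a simple morphism, this yields $\langle \alpha \rangle = \langle Q(f) \rangle$.

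For the second claim I would handle the two implications in turn. The direction ($\Leftarrow$) is essentially tautological: from the displayed diagram with $s,t \in \Sigma$ one reads off an equation of the form $Q(b) = Q(s)^{-1} Q(t) \circ Q(a)$ (or its mirror, depending on which of $a,b$ sits on top in the diagram), and $Q(s)^{-1} Q(t)$ is simple by construction. For ($\Rightarrow$), pick a simple $\sigma$ with $Q(b) = \sigma \circ Q(a)$ furnished by the equality $\langle Q(a) \rangle = \langle Q(b) \rangle$ and invoke \Cref{lemCharOfSimCs} to write $\sigma = Q(s_1)^{-1} Q(s_2)$ with $s_1, s_2 \in \Sigma$. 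Composing with $Q(s_1)$ promotes this into an equality $Q(s_1 \circ b) = Q(s_2 \circ a)$ of two honest $\mathcal{C}$-morphisms sharing source $X$ and a common target. Axiom \ref{eckAx2} then supplies $u,v \in \Sigma$ with $u \circ s_1 \circ b = v \circ s_2 \circ a$ already in $\mathcal{C}$, and by \ref{eckAx0} the compositions $s := u \circ s_1$ and $t := v \circ s_2$ again belong to $\Sigma$. These are precisely the arrows of the diagram asserted in the lemma.

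The only step requiring any real care is the source/target bookkeeping when unpacking $\sigma$ via \Cref{lemCharOfSimCs} and then matching the resulting parallel pair to the shape demanded by \ref{eckAx2}; no conceptual obstacle arises.
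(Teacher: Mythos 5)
Your proof is correct and follows essentially the same route as the paper: the first claim is extracted from the normal form $\alpha = Q(s)^{-1}Q(f)$ of morphisms in $\mathcal C(\Sigma^{-1})$, and the second claim is obtained by writing the connecting simple morphism as $Q(s_1)^{-1}Q(s_2)$ via \Cref{lemCharOfSimCs}, then repairing the resulting equality $Q(s_1\circ b)=Q(s_2\circ a)$ inside $\mathcal C$ with \ref{eckAx2} and closing up with \ref{eckAx0}. The bookkeeping you flag works out exactly as in the paper's argument.
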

\begin{proof}
	The first statement is an immediate consequence of \Cref{lemCharOfSimCs}. For the second statement first note that, by \Cref{lemCharOfSimCs}, $\langle Q(a)\rangle = \langle Q(b)\rangle $ if and only if there exists a commutative diagram in $\mathcal C(\Sigma^{-1})$ $$\begin{tikzcd}
	& Y \arrow[rd, "Q(s)"] &\\
	X \arrow[rd, swap, "Q(b)"] \arrow[ru, "Q(a)"] & & S\\
	& Z \arrow[ru , swap, "Q(t)"] &
	\end{tikzcd},
	$$
	whith $s,t \in \Sigma$. Hence, we can apply \ref{eckAx2} to $s \circ a$ and $t \circ b$. Now, using that, by \ref{eckAx0}, $\Sigma$ is closed under composition, we obtain the result.
\end{proof}
In particular, we can think of $A(X)$ as the set of morphisms in $\mathcal C$ with source $X$ modulo composition with morphisms in $\Sigma$. Using this, we write $\langle a\rangle $ instead of $\langle Q(a)\rangle $ for $a \in \mathcal{C}$ from now on.
Note that this description of $A(X)$ (the induced one of $E(X)$ to be more precise) is the one used by Cohen in \cite{cohenCourse} if one takes $\hat C$ the category of finite CW-complexes with cellular maps, $\mathcal{C}$ the subcategory given by inclusions of subcomplexes and $\Sigma$ the class of finite compositions of elementary expansions. 
\begin{lemma}\label{lemAass}
	The construction $\mathcal C \to \textnormal{\textbf{MonAb}}$ in \Cref{thrmEckSieb} gives a well-defined functor into $\textnormal{\textbf{MonAb}}$.
\end{lemma}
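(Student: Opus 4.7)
The plan is to unpack the construction into a sequence of well-definedness checks and then to verify the monoid axioms and functoriality, all of which reduce to manipulations of pushout squares constructed via \ref{eckAx1}. Throughout, I would use \Cref{lemEqConE} to represent every class $\langle \alpha \rangle \in A(X)$ by an honest morphism $a \in \mathcal{C}$, so that I can always speak of pushouts of representatives (which exist in $\hat{\mathcal{C}}$ and whose structure morphisms lie in $\mathcal{C}$ by \ref{eckAx1}).

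First I would check that the sum is well-defined. Given $a : X \to Y$ and $b : X \to Z$ in $\mathcal{C}$ and alternative representatives $a' : X \to Y'$, $b' : X \to Z'$ with $\langle a \rangle = \langle a' \rangle$ and $\langle b \rangle = \langle b' \rangle$, \Cref{lemEqConE} yields commutative diagrams connecting $a,a'$ and $b,b'$ through morphisms in $\Sigma$. Forming the pushouts of $(a,b)$ and $(a',b')$ and then pushing out the $\Sigma$-morphisms witnessing $\langle a\rangle = \langle a'\rangle$ and $\langle b\rangle = \langle b'\rangle$ along the already-constructed sides, I obtain (using \ref{eckAx1} to keep everything in $\mathcal{C}$ and to preserve membership in $\Sigma$) a zigzag of $\Sigma$-morphisms between the two diagonals, and another invocation of \ref{eckAx2} collapses this zigzag to the diagram in \Cref{lemEqConE}, proving that the two diagonals define the same class in $A(X)$. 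A completely parallel argument gives well-definedness of $f_* \langle \alpha \rangle$: pushing out $a$ and $a'$ along $f$ produces two morphisms out of $X'$ connected by the pushout of the $\Sigma$-morphism relating $a$ and $a'$, which is again in $\Sigma$ by \ref{eckAx1}.

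Next I would verify the monoid axioms. For $\langle 1_X \rangle$ as neutral element, the pushout of $a : X \to Y$ along $1_X : X \to X$ is (up to unique isomorphism in $\hat{\mathcal{C}}$) simply $a$ itself, with diagonal $a$, so $\langle a\rangle + \langle 1_X\rangle = \langle a \rangle$. Commutativity is immediate from the symmetry of pushout squares. For associativity I would construct, for three representatives $a,b,c$ with common source $X$, the iterated pushout as a colimit of the span $Y \leftarrow X \to Z \leftarrow X \to W$ realized by two successive pushouts; the standard pasting property of pushouts (applied in $\hat{\mathcal{C}}$, using \ref{eckAx1} to see that the intermediate maps remain in $\mathcal{C}$) identifies $(\langle a\rangle + \langle b\rangle) + \langle c\rangle$ and $\langle a\rangle + (\langle b\rangle + \langle c\rangle)$ up to canonical isomorphism, which certainly preserves the simple class.

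Finally I would verify functoriality: $(1_X)_* = \mathrm{id}_{A(X)}$ is trivial from the same pushout-along-identity observation, and $(g \circ f)_* = g_* \circ f_*$ follows from the pasting lemma for pushouts applied to representatives in $\mathcal{C}$. It remains to check that $f_*$ is a monoid homomorphism, which is the standard fact that pushing a pushout square along a further morphism and taking the new pushout yields the same object as first pushing each leg and then taking the pushout of the resulting span; this is again a pasting argument for colimits in $\hat{\mathcal{C}}$. The main subtle point throughout will be ensuring that every time I identify diagrams I stay within $\mathcal{C}$ and produce the necessary $\Sigma$-morphisms: this is exactly where \ref{eckAx1} (stability of $\mathcal{C}$ and of $\Sigma$ under pushout) and \ref{eckAx2} (collapsing a zigzag of $\Sigma$-morphisms to a single commutative square with two $\Sigma$-edges) do all the work, and the entire proof is essentially a careful bookkeeping exercise reducing each claim to a finite composition of these two axioms.
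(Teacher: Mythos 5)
Your proposal is correct and takes essentially the same route as the paper: well-definedness, the monoid axioms, and functoriality are all reduced, via \Cref{lemEqConE} and the stability clauses of \ref{eckAx0} and \ref{eckAx1}, to pasting/composition of pushout squares in $\hat{\mathcal C}$ (your wide-pushout phrasing of associativity and of additivity of $f_*$ is exactly the paper's grid-and-cube argument in different language). The only blemish is the appeal to \ref{eckAx2} to ``collapse the zigzag'' in the well-definedness step: it is not needed, since composing the diagonals with the pushed-forward $\Sigma$-morphisms already exhibits equality of simple morphism classes directly (or via the criterion of \Cref{lemEqConE}), but this does not affect the validity of the argument.
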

\begin{proof}
Consider the following diagram of pushout squares in $\hat{\mathcal C}$: \begin{equation}\label{diagWhatAGrid}
	 \begin{tikzcd}
	X_{00} \arrow[r, "f_{00}"] \arrow[d, "g_{00}"] & X_{01} \arrow[r, "f_{01}"] \arrow[d, "g_{01}"] & X_{02} \arrow[d, "g_{02}"]\\
	X_{10} \arrow[r, "f_{10}"] \arrow[d, "g_{10}"] & X_{11} \arrow[r, "f_{11}"] \arrow[d, "g_{11}"] & X_{12} \arrow[d, "g_{12}"] \\
	X_{20} \arrow[r, "f_{20}"] & X_{11} \arrow[r, "f_{21}"] & X_{22} 
	\end{tikzcd},
\end{equation}
with $X_{00} = X$. By composability of pushout squares, if all the small squares are pushout squares, then so are all rectangles in the diagram. Further, note that for any operation involving pushout, the choice of pushout diagram is up to natural isomorphism. In particular, by \ref{eckAx0}, it is not relevant up to simple morphism class. \\
\\
We start by showing that ``$+$'' is well-defined on $A(X)$. So let $\langle a\rangle ,\langle b\rangle $ be simple morphism classes in $A(X)$, for $a,b \in \mathcal {C}$. By \Cref{lemEqConE}, we know that in fact any simple morphism class is of this shape and it suffices to show that the addition construction is invariant under the composition of $a$ and $b$ by morphisms in $\Sigma$. So in \eqref{diagWhatAGrid} take $f_{00} = a$, $g_{00} =b$ and $f_{01}, g_{10} \in \Sigma$. By \ref{eckAx1}, \eqref{diagWhatAGrid} then lies in $\mathcal{C}$, and all the arrows in the lower right square are in $\Sigma $. In particular, the diagonal of the upper left square differs from the diagonal of the large outer square by a composition with morphisms in $\Sigma$. As all squares involved are cocartesian, this shows both the inner upper left and the outer pushout diagonals belong to the same simple morphism class.\\
\\
If one takes $f_{00} = 1_{X_{00}}$, then the upper left square with $g_{01} = g_{00} \in \mathcal{C}$ is cocartesian in $\hat{\mathcal C}$, showing that $\langle 1_X\rangle $ in fact defines a neutral element with respect to addition. Addition clearly is commutative. To see associativity, consider a commutative cube \begin{equation}\label{diagComCube}
	\begin{tikzcd}[row sep=2.5em]
	X_{000} \arrow[rr,"f_{000}"] \arrow[dr,swap,"g_{000}"] \arrow[dd,swap,"h_{000}"] &&
	X_{100} \arrow[dd,swap] \arrow[dr] \\
	& X_{010} \arrow[rr,crossing over] && X_{110}
	 \arrow[dd] \\
	X_{001} \arrow[rr, near end] \arrow[dr,swap] && X_{101} \arrow[dr,swap] \\
	& X_{011} \arrow[rr] \arrow[uu,< -,crossing over]&& X_{111}
	\end{tikzcd},
\end{equation}in $\hat{\mathcal C}$ constructed by first taking the pushout of $f_{000}$ and $g_{000}$ and then pushing this diagram forward along $h_{000}$. A quick diagram chase and the standard properties of the pushout show that all squares in the diagram are pushout squares. In particular, the commutative squares running diagonally through the cube (given by the composition of two composable faces) are also cocartesian. Hence, by commutativity, the diagonal from $X_{000}$ to $X_{111}$ is obtained both by pushing the diagonal of the upper face square along $h_{000}$ and by pushing the diagonal of the left face square along $f_{000}$. If one takes $f, g,h \in \mathcal{C}$, then by \ref{eckAx1} the whole cube lies in $\mathcal{C}$ and the statement we have just shown is precisely the associativity of the addition on $A(X)$. Summarizing, we have shown that in fact $(A(X), +, \langle 1_X\rangle )$ defines an abelian monoid.\\
\\
We are now left with showing the functoriality of $A(-)$. Again consider \eqref{diagWhatAGrid}.
The argument for well-definedness of $f_*$ (see definition in \Cref{thrmEckSieb}), for $f: X \to X'$, works similarly to the well-definedness of ``+'', by taking $f = f_{00}$, $g$ to be $g_{00}$ and $g_{10}$ to be a morphism in $\Sigma$. One obtains ${f_1}_* \circ {f_0}_*= (f_1 \circ f_0)_{*}$ (wherever this is defined) by taking $f_{00} = f_0$, $f_{01} =f_1$ and $g_{00} = a$ a representative of a simple morphism class and again using composition of pushout diagrams. The proof that ${1_X}_* = 1_{A(X)}$ is pretty much identical to the one that $\langle 1_X\rangle $ gives a neutral element of addition and also the one that $f_*$ preserves the neutral element. To see that $f_*$ is compatible with addition, again refer to \eqref{diagComCube} and use the same argument as for associativity of "$+$".
\end{proof}
Now, that we have established that $A(-)$ induces a functor $\mathcal C \to \textbf{MonAb}$, we can also check that it descends to a functor on $\mathcal C(\Sigma^{-1})$. We start by proving the following useful lemma about simple morphism classes of compositions.
\begin{lemma}\label{lemPushAlongf}
	For $f: X \to Y$ in $\mathcal{C}$ and $g: Y \to Z$ in $\mathcal{C}$ the following equation holds:
	$$ f_*\langle g \circ f\rangle = f_*\langle f\rangle + \langle g\rangle. $$
\end{lemma}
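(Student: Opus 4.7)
\textit{Plan.} I would exhibit both sides as simple morphism classes of arrows into a single pushout object obtained by pasting two pushout squares. First, form the pushout of $f$ along $f$ in $\hat{\mathcal{C}}$; by \ref{eckAx1} the whole square lies in $\mathcal{C}$, and writing $p_B, p_R \colon Y \to Y'$ for its bottom-horizontal and right-vertical arrows respectively, $f_*\langle f\rangle$ is represented by $p_B$ (by definition). Next, form the pushout of $p_B$ along $g$ (again lying in $\mathcal{C}$ by \ref{eckAx1}), with right vertical $\rho \colon Y' \to V$ and bottom horizontal $\tau \colon Z \to V$. This second square is by construction the pushout square used to compute $f_*\langle f\rangle + \langle g\rangle$, so the common diagonal $\rho \circ p_B = \tau \circ g \colon Y \to V$ represents the RHS.

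Now apply the pasting lemma for pushouts to the vertical stack of these two squares. The outer $2\times 1$ rectangle becomes a pushout of $g \circ f \colon X \to Z$ along $f \colon X \to Y$, so the bottom arrow $Y \to V$ of this outer rectangle represents $f_*\langle g \circ f\rangle$. Reading this arrow off from the pasting, one finds it is $\rho \circ p_R$. Thus both sides are realized by morphisms $Y \to V$: the LHS by $\rho \circ p_R$ and the RHS by $\rho \circ p_B$. These differ only in the choice of leg into the symmetric pushout $Y'$, and they agree after precomposition with $f$, since $p_R \circ f = p_B \circ f$ by commutativity of the top square.

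To finish, I need to show $\langle \rho \circ p_R\rangle = \langle \rho \circ p_B\rangle$ in $A(Y)$. The plan is to exploit the symmetry of the pushout of $f$ along itself — which swaps the legs $p_R$ and $p_B$ — transported through $\rho$ to produce an identification of $\rho \circ p_R$ and $\rho \circ p_B$ by a morphism in $\Sigma$. Concretely, I would form in parallel the pushout of $p_R$ along $g$, obtaining a target $V'$, and then combine the two pushout presentations (of $V$ and $V'$) with a further pushout that forces the resulting comparison arrow into $\mathcal{C}$; \ref{eckAx1} then places the pushed-out structural arrows in $\Sigma$, and \ref{eckAx0} places the canonical comparison isomorphism (which lives a priori only in $\hat{\mathcal{C}}$) into $\Sigma$ as soon as it is realized in $\mathcal{C}$. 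By \Cref{lemEqConE}, this suffices to conclude. I expect this final step — realizing the abstract swap as a concrete composition of pushout structural morphisms in $\mathcal{C}$ rather than merely in $\hat{\mathcal{C}}$ — to be the main technical obstacle, as all the rest is formal pasting of pushout squares whose legality is guaranteed by \ref{eckAx1}.
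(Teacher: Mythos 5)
Your pasting of the two pushout squares is the same construction as the paper's, and your bookkeeping is right: the left-hand side is represented by $\rho\circ p_R$ and the right-hand side by $\rho\circ p_B$, so everything hinges on $\langle\rho\circ p_R\rangle=\langle\rho\circ p_B\rangle$ --- which is exactly the step the paper's own proof passes over by declaring that the two legs of the pushout of $f$ along itself coincide (in general they do not). Your proposed completion, however, cannot close this step. Write $\rho'\colon Y'\to V'$ for the structural map of the pushout of $g$ along $p_R$, and let $\sigma\colon Y'\to Y'$ be the swap automorphism with $\sigma\circ p_B=p_R$. The universal property gives the comparison isomorphism $\varphi\colon V\to V'$, but it satisfies $\varphi\circ\rho=\rho'\circ\sigma$; hence post-composing with $\varphi$ (simple by \ref{eckAx0}) carries $\rho\circ p_R$ to $\rho'\circ p_B$ and $\rho\circ p_B$ to $\rho'\circ p_R$. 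The symmetry therefore merely interchanges the two classes you are trying to identify, transporting the problem to $V'$ unchanged; no further pushout bookkeeping via \ref{eckAx1} produces the required $s,t\in\Sigma$ with $s\circ\rho\circ p_R=t\circ\rho\circ p_B$.

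Indeed the obstacle you flag is not technical: for arbitrary $f$ the identification, and with it the stated formula, fails. In the motivating example ($P$ a point, finite simplicial sets, $\Sigma$ the finite strong anodyne extensions) take $f\colon\emptyset\to\Delta^0$ and $g\colon\Delta^0\to S$ a vertex of the simplicial circle $S=\Delta^1/\partial\Delta^1$. Then $f_*\langle g\circ f\rangle$ is the class of the isolated-vertex inclusion $\Delta^0\hookrightarrow\Delta^0\sqcup S$, while $f_*\langle f\rangle+\langle g\rangle$ is the class of $\Delta^0\to S\sqcup\Delta^0$ hitting the circle. If these classes were equal, \Cref{lemEqConE} would give $s,t\in\Sigma$ into a common target equalizing the two vertices; but every morphism of $\Sigma$ here is a finite composition of pushouts of horn inclusions, and horns are nonempty and connected, so it induces a bijection on path components and a weak equivalence onto each component --- forcing a weakly contractible component to coincide with one weakly equivalent to a circle, a contradiction. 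The formula is salvaged exactly when $Q(f)$ is invertible in $\mathcal C(\Sigma^{-1})$ (for instance $f\in\Sigma$), which is the case in every later application; and under that hypothesis the missing step has a direct proof owing nothing to the symmetry of the self-pushout: since $p_R\circ f=p_B\circ f$, one cancels $Q(f)$ to get $Q(\rho\circ p_R)=Q(\rho\circ p_B)$, and \ref{eckAx2} together with \Cref{lemEqConE} yields $\langle\rho\circ p_R\rangle=\langle\rho\circ p_B\rangle$. So your instinct that this final identification is the real content was correct, but it requires a hypothesis on $f$ rather than a symmetry argument.
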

\begin{proof}
	Just consider the following composition of pushout squares. $$ \begin{tikzcd}
	X \arrow[d, "f"] \arrow[r,"f"] & Y \arrow[r, "g"] \arrow[d] & Z \arrow[d]\\
	Y \arrow[r]& \arrow[r] Y' & Z'
	\end{tikzcd}.$$ The sum to the right corresponds to the diagonal of the right square. $f_*\langle f\rangle $ corresponds both to the middle vertical and the lower left horizontal. $f_*\langle g \circ f\rangle $ corresponds to the lower horizontal composition (by composition of pushouts). But as the lower left horizontal and the middle vertical agree, the lower horizontal composition is the diagonal of the right hand square, showing the equation.
\end{proof}
\begin{lemma}
	Let $s:X\to X'$ $\in \Sigma$. Then $s$ induces an isomorphism of monoids $s_*:A(X) \to A(X')$.
\end{lemma}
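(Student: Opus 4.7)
The plan is to exhibit an explicit two-sided inverse to $s_*$ at the level of sets; since we already know from \Cref{lemAass} that $s_*$ is a monoid homomorphism, a bijective inverse automatically makes it an isomorphism of monoids. Concretely, I would define
$$s^{*}\colon A(X')\longrightarrow A(X),\qquad \langle a\rangle \longmapsto \langle a\circ s\rangle,$$
where a representative $a\colon X'\to Y$ in $\mathcal C$ of an element of $A(X')$ is chosen by \Cref{lemEqConE}, and use the fact that $a\circ s\in\mathcal C$ since $\mathcal C$ is a category. Well-definedness of $s^{*}$ follows immediately from \Cref{lemEqConE}: if $s_1\circ a=t_1\circ b$ with $s_1,t_1\in\Sigma$ is the witnessing diagram for $\langle a\rangle=\langle b\rangle$ in $A(X')$, then right-composing with $s$ gives $s_1\circ(a\circ s)=t_1\circ(b\circ s)$, witnessing $\langle a\circ s\rangle=\langle b\circ s\rangle$ in $A(X)$.

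For $s^{*}\circ s_{*}=1_{A(X)}$, I would start from a representative $a\colon X\to Y$ in $\mathcal C$ and form the defining pushout of $s_{*}\langle a\rangle$:
$$\begin{tikzcd}
X \arrow[r,"a"] \arrow[d,"s"'] & Y \arrow[d,"s''"] \\
X' \arrow[r,"a'"'] & Y'
\end{tikzcd}$$
By \ref{eckAx1}, $s''\in\Sigma$. Thus $s^{*}s_{*}\langle a\rangle=\langle a'\circ s\rangle=\langle s''\circ a\rangle$, and because $Q(s'')$ is a simple morphism in $\mathcal C(\Sigma^{-1})$, left-composition by $Q(s'')$ preserves the simple morphism class, giving $\langle s''\circ a\rangle=\langle a\rangle$ directly from the definition in \Cref{defSimpleSetting}.

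For $s_{*}\circ s^{*}=1_{A(X')}$ I would compute the pushout of $a\circ s$ along $s$ as a composition of two pushouts, first pushing $s$ along itself and then pushing $a$ along the resulting map:
$$\begin{tikzcd}
X \arrow[r,"s"] \arrow[d,"s"'] & X' \arrow[r,"a"] \arrow[d,"s'''"'] & Y \arrow[d,"s''"] \\
X' \arrow[r,"s'''"'] & X'' \arrow[r,"a''"'] & Y'
\end{tikzcd}$$
By \ref{eckAx1}, both $s'''$ and $s''$ lie in $\Sigma$, and by composability of pushouts the outer rectangle is a pushout of $a\circ s$ along $s$. Hence $s_{*}s^{*}\langle a\rangle=\langle a''\circ s'''\rangle$. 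Commutativity of the right square gives $a''\circ s'''=s''\circ a$, so by \Cref{lemEqConE} applied with $s_1=1_{Y'}$ and $t_1=s''$ (both in $\Sigma$ by \ref{eckAx0}), $\langle a''\circ s'''\rangle=\langle a\rangle$ in $A(X')$.

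The whole argument is really bookkeeping around \Cref{lemEqConE} and \ref{eckAx1}; the only genuinely non-formal point, which I expect to be the main obstacle to get right, is the second identity: one must choose the order of pushouts carefully (first $s$ against $s$, then $a$) so that the diagonal of the outer rectangle factors through a morphism already known to lie in $\Sigma$. Once this is done, the functoriality already established in \Cref{lemAass} upgrades the set-theoretic bijection $s_{*}$ to an isomorphism of abelian monoids, completing the proof.
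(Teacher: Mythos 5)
Your argument is correct, and it organizes the proof differently from the paper. The paper proves injectivity and surjectivity separately: injectivity by a diagram chase off \Cref{lemEqConE}, and surjectivity by invoking \Cref{lemPushAlongf} to get $s_*\langle b\circ s\rangle=s_*\langle s\rangle+\langle b\rangle=\langle b\rangle$ since $s\in\Sigma$. You instead construct the explicit candidate inverse $s^*\colon\langle a\rangle\mapsto\langle a\circ s\rangle$ and verify both composites are identities; the paper's surjectivity step is implicitly using the same map. Your $s^*\circ s_*=1_{A(X)}$ calculation is a tidier replacement for the paper's injectivity step: rather than chasing two parallel pushout squares against each other, you observe directly that the diagonal $s''\circ a$ of the defining pushout square differs from $a$ only by postcomposition with $s''\in\Sigma$. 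On the other side, your $s_*\circ s^*=1_{A(X')}$ verification via the two-stage pushout rectangle re-derives by hand the special case $f=s$ of \Cref{lemPushAlongf}; citing that lemma as the paper does avoids the duplication. Your closing observation, that a set-theoretic two-sided inverse suffices because \Cref{lemAass} has already established $s_*$ as a monoid homomorphism, is correct and worth making explicit.
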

\begin{proof}
	We need to show that $s_*$ is a bijection. Let $\langle a_0\rangle ,\langle a_1\rangle \in A(X)$ given by some $f,g \in \mathcal{C}$ with source $X$. If $s_*\langle a_0\rangle = s_*\langle a_1\rangle $, then, by \Cref{lemEqConE}, we have two commutative diagrams ($i=0,1$) in $\mathcal{C}$ $$ \begin{tikzcd}
	X \arrow[d, "s"] \arrow[r, "a_i"] & Y_i \arrow[d, "s_i'"]\\
	X' \arrow[r, " a_i'"] & Y_i' \arrow[r, "t_i"]& S
	\end{tikzcd} $$
	with all $s$ and $t$ simple by \ref{eckAx1}, such that $$ t_0 \circ a_0' = t_1 \circ a_1'.$$ Applying $Q$ and chasing the diagrams, we obtain $$ Q(t_0 \circ s_0') \circ Q(a_0) = Q(t_1 \circ s_1') \circ Q(a_1).$$ As, by \ref{eckAx0}, $\Sigma $ is closed under compositions, we obtain $\langle a_0\rangle = \langle a_1\rangle $. Conversely, let $\langle b\rangle $ be in $A(Y)$. Then by \Cref{lemPushAlongf} we have an equation $$ s_*\langle b \circ s\rangle = s_*\langle s\rangle + \langle b\rangle = s_*0 + \langle b\rangle = \langle b\rangle .$$ using the fact that $s \in \Sigma $ and that $s_*$ is a morphism of monoids. In particular, we have shown surjectivity of $s_*$. 
\end{proof}
By the universal property of the localization we obtain:
\begin{corollary}\label{lemFfactors}
	$X \mapsto A(X)$ induces a functor $\mathcal C(\Sigma^{-1}) \to \textnormal{\textbf{MonAb}}$ as described in \Cref{thrmEckSieb}.
\end{corollary}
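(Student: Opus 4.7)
The plan is to deduce this corollary directly from the two facts already established in the preceding sequence of lemmas, invoking the universal property of the localization. By Lemma \ref{lemAass} we already have a functor
\[ A: \mathcal{C} \longrightarrow \textnormal{\textbf{MonAb}}, \qquad X \mapsto A(X), \quad f \mapsto f_*, \]
and by the lemma immediately preceding this corollary, every morphism $s \in \Sigma$ is sent by $A$ to an isomorphism of abelian monoids $s_*: A(X) \to A(X')$.

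Recall the universal property of the localization functor $Q: \mathcal{C} \to \mathcal{C}(\Sigma^{-1})$ (see \cite[Ch.\ I]{gabrielZisCalc}): any functor $F: \mathcal{C} \to \mathcal{D}$ which sends every element of $\Sigma$ to an isomorphism in $\mathcal{D}$ factors uniquely through $Q$ as $F = \bar F \circ Q$ for a unique functor $\bar F: \mathcal{C}(\Sigma^{-1}) \to \mathcal{D}$. Applying this to $F = A$ and $\mathcal{D} = \textnormal{\textbf{MonAb}}$ produces the desired factorization, which we then denote by $A: \mathcal{C}(\Sigma^{-1}) \to \textnormal{\textbf{MonAb}}$ by slight abuse of notation. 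Since on objects $\bar A(X) = A(X)$ and since every morphism in $\mathcal{C}(\Sigma^{-1})$ is of the form $Q(s)^{-1} Q(f)$ for $s \in \Sigma$, $f \in \mathcal{C}$ by Lemma \ref{lemCharOfMorCs}, the induced functor is explicitly given by
\[ \bar A(Q(s)^{-1} Q(f)) = (s_*)^{-1} \circ f_*. \]

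There really is no serious obstacle here; the entire content has been packaged into the two preceding lemmas, so the proof amounts to a single invocation of the universal property. The only mild subtlety worth mentioning is that we are applying the universal property in the category $\textnormal{\textbf{MonAb}}$, which is legitimate because it is a locally small category, so the localization of a (locally small) category $\mathcal{C}$ at $\Sigma$ exists as an honest category (a set-theoretic point we have implicitly assumed throughout this section, alongside the assumption that $A(X)$ has set size). No further verification of compatibility with composition or identities is required, as these are automatic from the universal property.
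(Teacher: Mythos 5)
Your proof is correct and matches the paper's argument: the paper also deduces the corollary immediately from Lemma \ref{lemAass} and the preceding isomorphism lemma via the universal property of the localization. The explicit formula $\bar A(Q(s)^{-1}Q(f)) = (s_*)^{-1}\circ f_*$ and the set-theoretic aside are harmless additions beyond what the paper states.
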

We are left with showing that the statements in \Cref{thrmEckSieb} on $E(-)$ hold. In fact, we are going to show that $E(X)$ is the group of invertible elements of $A(X)$. In particular, $E$ is just given by postcomposing $A: \mathcal C(\Sigma^{-1}) \to \textnormal{\textbf{MonAb}}$ with the functor that sends an abelian monoid to the subgroup of its invertible elements. Again we start with a series of lemmata.
\begin{lemma}\label{lemCharOfLeftInv}
Let $f: X \to X'$ be a morphism in $\mathcal{C}$. $Q(f)$ has a left inverse if and only if there exists a $\bar f: X' \to X''$ in $\mathcal{C}$ such that $\bar f \circ f \in \Sigma$.
\end{lemma}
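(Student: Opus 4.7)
The plan is to prove the two implications separately, with the backward direction essentially formal and the forward direction reducing to a single application of axiom \ref{eckAx2} after putting the presumed left inverse into normal form.

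For the backward implication, I would simply observe that if $\bar f \circ f \in \Sigma$, then $Q(\bar f \circ f) = Q(\bar f) \circ Q(f)$ is invertible in $\mathcal C(\Sigma^{-1})$, so that $Q(\bar f \circ f)^{-1} \circ Q(\bar f)$ is a left inverse of $Q(f)$.

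For the forward direction, the starting point is to take an arbitrary left inverse $\alpha$ of $Q(f)$ in $\mathcal C(\Sigma^{-1})$ and use \Cref{lemCharOfMorCs} to rewrite it in the normal form $\alpha = Q(s)^{-1} \circ Q(g)$ for some $s \in \Sigma$ and some $g \in \mathcal C$ (whose source is $X'$ and whose target is the common codomain of $g$ and $s$). The assumption $\alpha \circ Q(f) = Q(1_X)$ then rearranges, after composing both sides with $Q(s)$, to the equality $Q(g \circ f) = Q(s)$ in $\mathcal C(\Sigma^{-1})$. The crucial point is that both sides are images under $Q$ of morphisms that already live in $\mathcal C$, so this is precisely the hypothesis of \ref{eckAx2} applied to the parallel pair $g \circ f$ and $s$.

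Invoking \ref{eckAx2} then yields morphisms $t, t' \in \Sigma$ in $\mathcal C$ with $t \circ g \circ f = t' \circ s$. Setting $\bar f := t \circ g$, which is a morphism in $\mathcal C$ with domain $X'$, gives $\bar f \circ f = t' \circ s$, and this lies in $\Sigma$ by the closure under composition from \ref{eckAx0}. The main obstacle, insofar as there is one, is simply confirming that \Cref{lemCharOfMorCs} supplies a normal form with the correct source and target so that the composition $g \circ f$ makes sense; a glance at its proof shows this is exactly what it provides. Everything beyond that is bookkeeping, and the substantive content of the lemma is entirely carried by axiom \ref{eckAx2}.
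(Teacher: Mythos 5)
Your proof is correct and follows essentially the same route as the paper: normalize the left inverse via \Cref{lemCharOfMorCs} to $Q(s)^{-1}\circ Q(g)$, deduce $Q(g\circ f)=Q(s)$, apply \ref{eckAx2}, and conclude with the closure of $\Sigma$ under composition from \ref{eckAx0}. The only difference is that you spell out the (genuinely obvious) backward direction, which the paper dismisses in one phrase.
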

\begin{proof}
This is an immediate consequence of \Cref{lemCharOfMorCs} and \ref{eckAx0} and \ref{eckAx2}. To be more precise, if $Q(f)$ has a left inverse, then, by \Cref{lemCharOfMorCs}, the inverse is of the shape $Q(s)^{-1} \circ Q(\bar f_0)$, for some $\bar f_0$ with source $X'$ and $s \in \Sigma$. Hence $$Q(\bar f_0 \circ f) = Q(s).$$ Now, apply \ref{eckAx2} to obtain $s'$ and $t$ in $\Sigma$ such that $$ t \circ \bar f_0\circ f = s' \circ s.$$ By \ref{eckAx0} the right hand side of this equation lies in $\Sigma$. Now, set $\bar f = t \circ \bar f_0$. This shows the only if part. The if part is obvious.
\end{proof}
As a consequence of this we have: 
\begin{lemma}
$\langle \alpha \rangle \in A(X)$ is invertible (w.r.t. ``+'') if and only if $\alpha$ is an isomorphism in $\mathcal C(\Sigma^{-1})$ i.e. $\langle \alpha \rangle \in E(X)$ . In particular, $E(X)$ is precisely the group of invertible elements of $A(X)$.
\end{lemma}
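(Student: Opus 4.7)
The plan is to prove the two inclusions of $E(X)$ with the group of invertible elements of $A(X)$ separately, using the representation of simple morphism classes by morphisms in $\mathcal C$ afforded by \Cref{lemEqConE}, together with the covariant functoriality of $A$ on $\mathcal C(\Sigma^{-1})$ (\Cref{lemFfactors}) and the ``left‑inverse criterion'' \Cref{lemCharOfLeftInv}. The key bridge in both directions is \Cref{lemPushAlongf}: the identity $f_*\langle g \circ f\rangle = f_*\langle f\rangle + \langle g\rangle$ converts information about compositions in $\mathcal C$ into equalities in the monoids $A(-)$.

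For the direction ``$\alpha$ iso $\Rightarrow \langle\alpha\rangle$ invertible,'' I would choose a representative $\alpha = Q(a)$, $a\in\mathcal C$, and use the argument of \Cref{lemCharOfLeftInv} (applied to the representation of $\alpha^{-1}$ via \Cref{lemCharOfMorCs} and a subsequent application of \ref{eckAx2}) to extract some $\bar a\colon Y\to W$ in $\mathcal C$ with $\sigma:=\bar a \circ a\in\Sigma$. Plugging $f=a$, $g=\bar a$ into \Cref{lemPushAlongf} yields
\[
0 = a_*\langle\sigma\rangle = a_*\langle a\rangle + \langle \bar a\rangle \quad\text{in } A(Y),
\]
since $\sigma\in\Sigma$ forces $\langle\sigma\rangle = 0$ in $A(X)$. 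The trick is now that $\alpha$ is an isomorphism in $\mathcal C(\Sigma^{-1})$, so by functoriality (\Cref{lemFfactors}) $a_* = A(\alpha)\colon A(X)\to A(Y)$ is an isomorphism of abelian monoids. Applying $a_*^{-1}$ to the displayed identity produces an element $a_*^{-1}\langle\bar a\rangle \in A(X)$ which serves as an additive inverse to $\langle a\rangle$, proving that $\langle\alpha\rangle$ is invertible.

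For the converse ``$\langle\alpha\rangle$ invertible $\Rightarrow \alpha$ iso,'' I pick representatives $\alpha = Q(a)$, $\beta = Q(b)$ of $\langle\alpha\rangle$ and its inverse, form the pushout square in $\hat{\mathcal C}$
\[
\begin{tikzcd}
X \arrow[r,"a"]\arrow[d,"b",swap] & Y \arrow[d,"b'"]\\
Z \arrow[r,"a'",swap] & W,
\end{tikzcd}
\]
and consider the diagonal $d=b'a=a'b$. The invertibility assumption gives $\langle d\rangle = \langle a\rangle + \langle b\rangle = 0$ in $A(X)$, so \Cref{lemEqConE} produces $s,t\in\Sigma$ with $s\circ d=t$, and therefore $Q(d)=Q(s)^{-1}Q(t)$ is an isomorphism in $\mathcal C(\Sigma^{-1})$. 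From $Q(d) = Q(b')\,Q(a)$ one reads off immediately that $\alpha = Q(a)$ has a \emph{left} inverse and that $Q(b')$ has a \emph{right} inverse. The main obstacle is then upgrading left‑invertibility of $\alpha$ to two‑sided invertibility: I would do this by bootstrapping. Since $a_*$ is a monoid morphism and $\langle b\rangle=-\langle\alpha\rangle$ is invertible in $A(X)$, the class $\langle b'\rangle = a_*\langle b\rangle$ is invertible in $A(Y)$; applying the preceding diagonal argument to $\langle b'\rangle$ and a chosen inverse yields a left inverse of $Q(b')$. Combined with the right inverse already constructed, $Q(b')$ is a genuine isomorphism, and hence so is $Q(a) = Q(b')^{-1}Q(d)$, i.e.\ $\alpha$ is an isomorphism. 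Both directions together identify $E(X)$ with the invertible elements of $A(X)$, and the group structure is the one inherited from the monoid structure on $A(X)$.
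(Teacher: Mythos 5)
Your proof is correct, and the easy direction ($\alpha$ an isomorphism $\Rightarrow \langle\alpha\rangle$ invertible) is exactly the paper's argument: extract $\bar a$ with $\bar a\circ a\in\Sigma$ from \Cref{lemCharOfLeftInv}, apply \Cref{lemPushAlongf}, and use that $a_*$ is an isomorphism of monoids. In the other direction you and the paper both form the same pushout square on representatives $a,b$ of $\langle\alpha\rangle$ and its inverse, both observe via \Cref{lemEqConE} that the diagonal $d=b'a$ becomes invertible after composing with elements of $\Sigma$, and both read off a right inverse of $Q(b')$; the divergence is in how the left inverse of $Q(b')$ is produced. The paper isolates a stability statement: left-invertibility in the sense of \Cref{lemCharOfLeftInv} (existence of $\bar b$ with $\bar b\circ b\in\Sigma$) is preserved under pushout, by \ref{eckAx1} and composition of pushout squares, so the left-invertibility of $Q(b)$ transfers to $Q(b')$. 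You instead transport invertibility at the level of classes — $\langle b'\rangle=a_*\langle b\rangle$ is invertible in $A(Y)$ because $a_*$ is a monoid morphism (\Cref{lemAass}) — and then rerun your diagonal argument at $Y$ to get a left inverse of $Q(b')$; this is valid and not circular, since the diagonal argument only uses \Cref{lemEqConE} and the well-definedness of ``$+$''. The paper's route buys a reusable lemma (pushout-stability of left-invertibility), while yours stays entirely at the level of $A(-)$ at the cost of a second application of the same class-level argument. One small imprecision: writing ``$\alpha=Q(a)$'' is not literally available (by \Cref{lemCharOfMorCs} a general $\alpha$ is only $Q(s)^{-1}Q(f)$); you should choose $a$ with $\langle Q(a)\rangle=\langle\alpha\rangle$, which is harmless because simple morphisms are isomorphisms — the paper handles this the same way.
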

\begin{proof}
We start with the only if part. 
By \Cref{lemEqConE}, there exist $a:X \to Y$ and $b:X \to X'$ in $\mathcal{C}$ representing $\langle \alpha \rangle $ and its inverse. In particular, we have a commutative diagram in $\mathcal{C}$ $$ \begin{tikzcd}
 X \arrow[r, "a"] \arrow[d, "b"] & Y \arrow[d,"b'"] \\
 X' \arrow[r, "a'"] & Y' \arrow[r, "t"] & S
\end{tikzcd} $$
such that the composition of the diagonal of the square with $t$ lies in $\Sigma$, and that the square is cocartesian. Inverting $Q(t)$ we obtain that $Q(b')$ has a right inverse. By \Cref{lemCharOfLeftInv}, $Q(b)$ has a left inverse. However, by \ref{eckAx2} and \Cref{lemCharOfLeftInv}, the property of $Q(f)$ having a left inverse is stable under pushout. In particular, $Q(b')$ also has a left inverse, making it an isomorphism. But then $Q(a)$ is such that composition with the isomorphism $Q(t) \circ Q(b')$ makes it an isomorphism (as $t \circ b' \circ a \in \Sigma)$. In particular, $Q(a)$ and hence also $\alpha$ is an isomorphism.\\
\\
Conversely, let $\langle \alpha \rangle$ be such that $\alpha$ is invertible. Take a representative $a: X\to Y$ in $\mathcal{C}$, using \Cref{lemComHo}) of $\langle \alpha \rangle$. Then $Q(a)$ is invertible and in particular by \Cref{lemCharOfLeftInv} we find some $b: Y \to Z$ such that $b \circ a \in \Sigma$. Now, by \Cref{lemPushAlongf} we then obtain an equation $$a_*\langle b \circ a \rangle = a_*\langle a\rangle + \langle b \rangle.$$
 But as $b \circ a \in \Sigma$, the left hand side of this equation is $0$. In particular, $a_*\langle a \rangle$ has an inverse. Since $Q(a)$ is an isomorphism $a_*$ is an isomorphism of monoids, by \Cref{lemFfactors}. In particular, $\langle a \rangle $ also has an inverse.
\end{proof}
This finishes the proof of \Cref{thrmEckSieb}. We now have a general theory at our disposal that allows us to construct Whitehead groups for a wide range of settings. We apply this to filtered simplicial sets in \Cref{subsecEckSiebAppHolds}.
\label{axEckSieb}
\label{thrmEckmann}
\section{Filtered Strong Anodyne Extensions}\label{secElandFSAE}
Now, that we have a general machinery for the construction of Whitehead groups available, the next step is to specify what the combinatorial moves (expansions) - that is, the morphism in $\Sigma$ (following the notation of \Cref{subsecEckSiebApp}) - should be. Similarly to the classical theory, we take the perspective that these expansion should be generated in some sense by certain elementary moves. These moves are the (pushouts of) admissible horn inclusions (\Cref{defAdmHorn}). In \Cref{subsecElemExp}, we compare these to another possible choice of candidate. We then begin studying the resulting class of expansions (called finite filtered strong anodyne extensions, or finite FSAEs for short) in detail through methods we generalize from their non-filtered analogues found in \cite{MossSae}. We introduce these methods in \Cref{subsecFSAE} and use them to show that, in particular, many morphisms arising in the setting of the subdivision functors $\textnormal{sd}_P$ and $\textnormal{sd}$ are FSAEs (\Cref{propExLotsOffFSAEs}). We then show in \Cref{secSmall} that FSAEs interact nicely with the homotopy category $\mathcal H \textnormal{s\textbf{Set}}_P$ (\Cref{propHoCharFin}), as they can be interpreted as relative cell complexes in a small object argument. Finally, we prove that in most aspects (such as their interactions with mapping cylinders and products) they behave much like classical expansions, making them a good candidate for the construction of a simple homotopy theory. This is the content of \Cref{subsecSAEandProd}. All of these result are then be used in the next section, showing that the axioms of \Cref{defAxEckSieb} hold in this setting; that is, that we obtain a (combinatorial) stratified Whitehead group.
\subsection{Elementary expansions}\label{subsecElemExp}
While, by \Cref{thrmEckSieb}, any class of morphism in $\textnormal{s\textbf{Set}}_P$ compatible with the axioms of \Cref{defAxEckSieb} serves for the construction of some sort of simple homotopy theory, the goal of this work is construct one that is - similarly to the classical setting - combinatorial in nature. That is, we want the class $\Sigma$ in \Cref{defAxEckSieb} to be given by composition of some type of elementary moves, which then ultimately define the whole simple homotopy theory. 
In this subsection, we try to motivate our particular choice of what an elementary expansion should be. Thus, this chapter is very example driven and not particularly result heavy. The reader (who is in a hurry) will be fine with just reading the definitions and passing on to the next chapter.\\
\\
In their recent paper \cite{banagl2020stratified}, the authors have proposed a notion of elementary expansion for filtered simplicial complexes over the poset $\{0 \leq 1\}$. They used these to reduce the size of certain filtered simplicial complexes obtained in the pursuit of applying intersection homology to a topological data analysis setting. They then went on to ask the question of whether there might be a larger class of expansion, allowing for more collapses. Translated to the language of filtered simplicial sets and generalized to arbitrary posets their definition essentially comes down to the following notion of ``strict admissibility''. For the sake of comparison, we also repeat the definition of an admissible horn inclusions (see \Cref{defStrictlyAdmissible}).
\begin{definition}\label{defStrictlyAdmissible}Let $\mathcal J = (p_0 \leq ... \leq p_n)$ be a $d$-flag in $P$.
\begin{itemize}
	\item A horn inclusion $\Lambda^\mathcal{J}_k \hookrightarrow \Delta^\mathcal J$, $0\leq k \leq n$, is called \textit{admissible} if the $k$-th vertex is repeated in $\Delta ^\mathcal J$, i.e. either $p_k = p_{k-1}$ or $p_k = p_{k+1}$. 
	\item A horn inclusion $\Lambda^\mathcal{J}_k \hookrightarrow \Delta^\mathcal J$, $0\leq k \leq n$, is called \textit{strictly admissible} if it is admissible and further $p_k$ is maximal in $\mathcal J$.
\end{itemize}
To be a little bit more concise, we sometimes just say $k$ is admissible (strictly admissible) when a flag $\mathcal J$ is specified, to refer to the above.
\end{definition}
\begin{remark}\label{RemElemAreEq}
One should note that the first condition is equivalent to the inclusion $\Lambda^\mathcal{J}_k \hookrightarrow \Delta^\mathcal J$ being a simplicial homotopy equivalence \cite[Proposition 1.13]{douSimp}, and also to its realization being stratified homotopy equivalence (see \Cref{lemCharHornInc}). However in general, only in the second case can this homotopy equivalence be made relative to the horn inclusion, i.e. only then is the inclusion a stratum preserving strong deformation retract. For a construction of such a relative homotopy, see the orthogonal deformation retraction in \cite{banagl2020stratified} and note that this works just as well for arbitrary $P$.
\begin{example}
	Consider the admissible horn inclusions from \Cref{exOfHorn}, shown again in \Cref{fig:ElemExp}.
	While in each case, checking whether the conditions of \Cref{defStrictlyAdmissible} are fulfilled is of course very easy, it can be interesting to check why they are not fulfilled from a more geometric perspective.\\
	(1) and (3) show the horn inclusions corresponding to the d-flag $\mathcal J = (0 \leq 0 \leq 1)$ for $k = 2$ and $k = 0$, respectively. While (3) is an admissible inclusion, (1) is not. A more geometric way to see the latter is that the $0$-stratum of the horn in (1) has two path components while the $0$-stratum of the simplex has one. In particular, the inclusion can not be a homotopy equivalence and thereby not admissible. (3) is not strictly admissible. Geometrically this is reflected in the fact that any map that collapses the simplex to the horn relative to the horn has to map some of the red part into the blue part. Hence, such a map is not stratum preserving.\\
	(2) and (4) show the horn inclusions corresponding to the d-flag $\mathcal J = (0 \leq 1 \leq 1)$ for $k = 0$ and $k = 2$ respectively. Geometrically speaking, (2) can not be admissible since in the horn the $1$-stratum has two connected components but in the simplex, it only has one. (4), however, even gives a strictly admissible inclusion. An inverse up to homotopy is given by projecting the lower face and the interior vertically upwards onto the horn and it is easy to check that this turns the horn inclusion into a stratum preserving strong deformation retract.
	\begin{figure}[H]
		\centering
		\includegraphics[width=\linewidth]{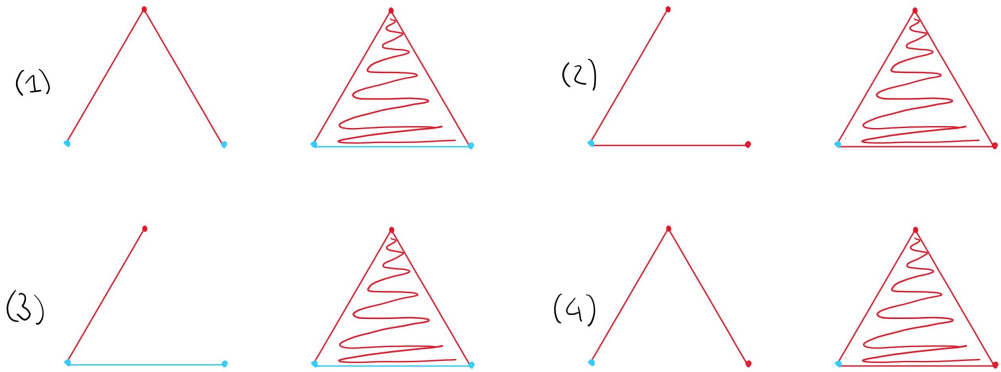}
		\caption{Examples of horn inclusions}
		\label{fig:ElemExp}
	\end{figure}
\end{example}
\end{remark}
\begin{definition}\label{defElem}
	An arrow $X \to Y$ in $\textnormal{s\textbf{Set}}_P$ is called a \textit{(strict) elementary expansion} if it fits into a pushout diagram
	$$\begin{tikzcd}
	\Lambda^\mathcal{J}_k \arrow[r, hook] \arrow[d] & \Delta^\mathcal J \arrow d\\
	X \arrow[r, hook ] & Y
	\end{tikzcd}$$
	where the upper horizontal is a (strictly) admissible horn inclusion. The formal inverse (and the inverse in the homotopy category by an abuse of notation) of such an inclusion is called a \textit{(strict) elementary collapse}.
\end{definition}
\begin{remark}
As, by construction of the latter, every admissible horn inclusion is a acyclic cofibration in the Douteau model structure, and acyclic cofibrations are closed pushouts (cobase change), every elementary expansion is a weak homotopy equivalence of $P$-filtered simplicial sets. In particular, by \Cref{corRelRef}, they realize to weak equivalences of filtered spaces. However, these will in general not be stratified homotopy equivalences (see example below). In case of a strict elementary expansion however, $|\Lambda^\mathcal{J}_k \hookrightarrow \Delta^\mathcal J |_P$ is not only a homotopy equivalence, but also a homotopy equivalence relative to $|\Lambda^\mathcal{J}_k|_P$ by \Cref{RemElemAreEq}. Hence, using the fact that $| - |_P$ preserves pushouts and the universal property of the latter, one obtains a homotopy inverse of $X \hookrightarrow Y$. Another way one could show this is by checking that in the case of strict horn inclusions the realization $|\Lambda^\mathcal{J}_k \hookrightarrow \Delta^\mathcal J |_P$ has the homotopy extension property with respect to stratum preserving homotopies. One can then use the standard argument found for example in \cite[Ch. 6]{kampsPo}
to show that stratified homotopy equivalences that have this property are stable under pushouts.
\end{remark}
\begin{example}\label{weakEqNotEq}
In \Cref{fig:SimpleEq} we see a zigzag of elementary expansions. This gives a weak equivalence in $\textnormal{s\textbf{Set}}_P$ between combinatorials of the models of the spaces in \Cref{fig:exWeakNotStr}. Notice how out of these only (3) is strict. This is reflected in the fact that the realization of the filtered simplicial set on the left hand side is actually not stratified homotopy equivalent to the one on the right hand side. We have already seen this in \Cref{exWeakNotStr}.
\begin{figure}[H]
	\centering
	\includegraphics[width=\linewidth]{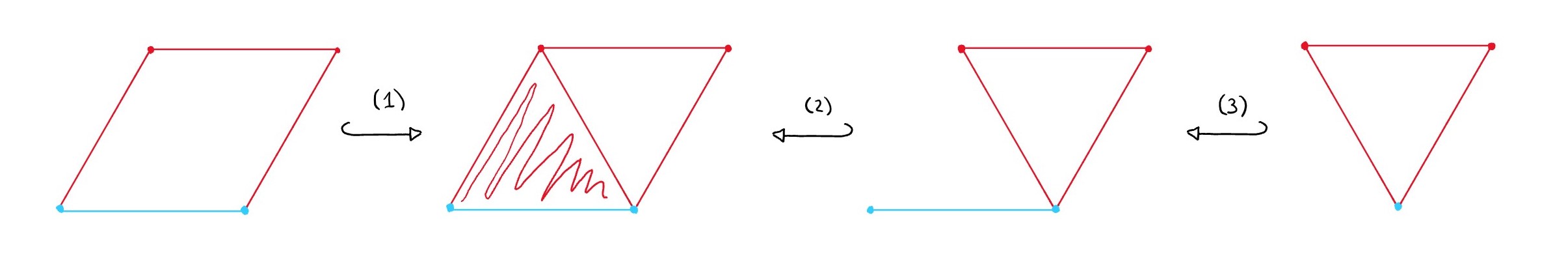}
	\caption{Example of a sequence of elementary expansions between combinatorial models of the spaces in \Cref{exWeakNotStr}.}
	\label{fig:SimpleEq}
\end{figure}
\end{example}
\begin{remark}\label{elemAreOk}
	At first glance \Cref{weakEqNotEq} might give the impression that for building a stratified simple homotopy theory "up to elementary expansion" induces too coarse of an equivalence relation and one should only consider strict expansions. However, one should note that in stratified topology one is usually mostly interested in the study of filtered spaces with particularly well behaved interactions between the strata, i.e. in increasing order of generality stratified pseudo manifolds, CS sets and homotopically stratified spaces. For the latter, we have already seen in \Cref{thrmWhitehead} that weak equivalences are also stratified homotopy equivalences. Hence, if two filtered simplicial sets have realizations that belong to one of the above classes and the two are connected by a zigzag of (realizations) of elementary expansions, then they are also stratified homotopy equivalent.
\end{remark} Another reason why working with general elementary expansions and not only with strict ones is preferable is that the latter have a series of shortcomings when it comes to building a simple homotopy theory from them. This is strongly reflected in the following example.
\begin{example}\label{exStrictBad}
Many geometric arguments in classical simple homotopy theory involve the mapping cylinder in some shape or form (see for example the classical book by Cohen \cite{cohenCourse} for a cellular or Whitehead's original notes \cite{whitehead1939simplicial} for a simplicial perspective). In particular, they often build upon the fact that for a (cellular) map $f:X \to Y$ in the classical setting the inclusion into the mapping cylinder $$Y \hookrightarrow Mf$$ is a composition of elementary expansions (or more generally a simple equivalence). However, in the filtered setting, this is hardly ever the case. Take for example $P = \{0 \leq 1\}$ and consider inclusion of $\Delta^\mathcal J$ with $\mathcal J = (0 \leq 1)$, into the cylinder $\Delta^1\otimes \Delta^\mathcal{J}$ at $0$ (see \Cref{fig:IncCyl}). \begin{figure}[H]
	\centering
	\includegraphics[width=80mm]{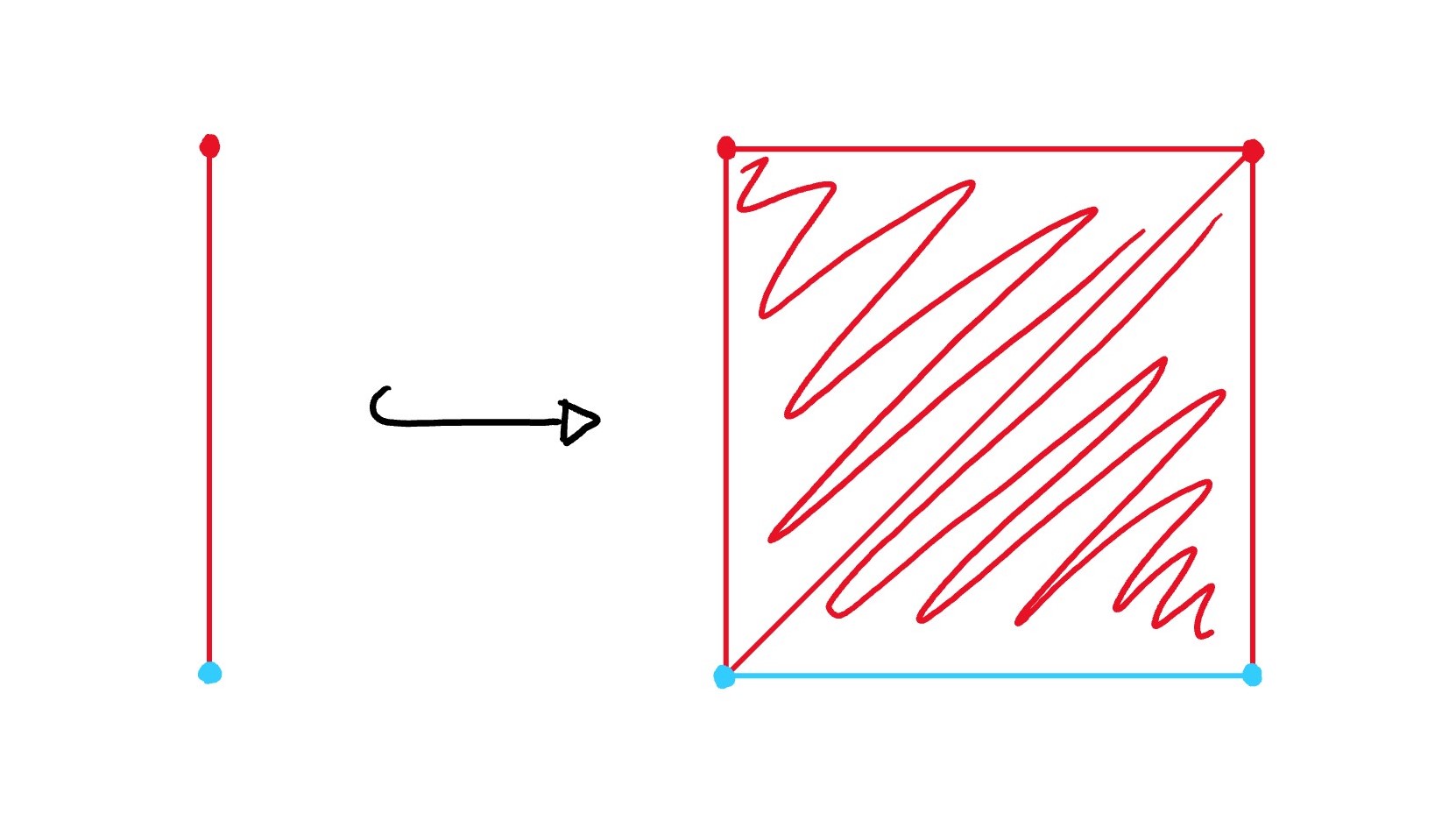}
	\caption{Inclusion at 0 into the cylinder of $\Delta^\mathcal J$, for $\mathcal J = (0 \leq 1).$}
	\label{fig:IncCyl}
\end{figure} 
In fact, these two filtered simplicial sets can not even be transformed into each other through strict elementary moves. To see this, note that the only way to remove the additional vertex in the $0$-stratum is by also removing the lower $1$-simplex of type $\mathcal J = (0 \leq 0)$ at the same or some earlier point in time. Denote this simplex by $\sigma$. In both cases, the simplex of with d-flag $(0 \leq 0 \leq 1)$ attached to $\sigma$ has to be removed first. The only way to remove such a simplex strictly is through a three simplex $\Delta^{\mathcal J'}$ with $\mathcal J'= (0 \leq 0 \leq 1 \leq 1)$ where the $(0 \leq 0)$ part corresponds to $\sigma$. But even after such a removal, $\sigma$ is again attached to a simplex with d-flag $(0 \leq 0 \leq 1)$, so we can repeat the argument.
\end{example}
The above example illustrates that the equivalence relation induced by strict elementary expansions is probably too fine for many applications of interest. A fruitful workaround might be taking simple equivalences to be the class of morphisms generated by inclusions into cylinders under pushouts with cofibrations and two out of three. This is for example done in a general setting in \cite{kampsPo}. While this might turn out to be an interesting approach, it lacks (at least a priori) an obvious elementary combinatorial interpretation.\\
\\
We will see in \Cref{subsecSAEandProd} that general elementary expansions do not suffer from the shortcomings illustrated in \Cref{exStrictBad}. In fact, they are very well suited for the development of a simple homotopy theory, while interacting smoothly with the Douteau model structure on $\textnormal{s\textbf{Set}}_P$. At the same time ``up to elementary expansion'' is not too course of an invariant to be of geometric interest as it models weak equivalences in the Douteau-Model structure on $\textnormal{\textbf{Top}}_{P}$. By \Cref{thrmWhitehead}, this means it models stratified homotopy equivalences as long as source and target are stratified in some reasonable sense.
\subsection{Filtered strong anodyne extensions}\label{subsecFSAE}
One advantage of the definition of elementary expansions in \Cref{defElem} is that in the non-filtered case there is already a good amount of machinery available, when it comes to the study of these objects. Much of this machinery generalizes to the filtered setting. Recall, that in the classical Quillen model structure on simplicial sets the acyclic cofibrations are also called anodyne extensions. 
(see for example \cite[Ch. 3]{joyalNotes}). Recall further, that an equivalent characterization of this class of morphisms is given as follows.
\begin{definition}\label{defSat}
	Let $\mathcal{C}$ be a category that has small colimits. Let $\Sigma$ be a class of morphisms in $\mathcal{C}$. $\Sigma$ is called \textit{saturated} if it fulfills the following properties. \begin{enumerate}
		\item $\Sigma$ contains all isomorphisms. \label{defSatPropIso}
		\item $\Sigma$ is closed under pushouts. That is, if for a pushout square
		$$\begin{tikzcd}
		A \arrow[r, "s"] \arrow[d] & B \arrow[d]\\
		A' \arrow[r, "s'"] & B'
		\end{tikzcd}$$
		 $s \in \Sigma$ then $s' \in \Sigma$.
		\item $\Sigma$ is closed under arbitrary coproducts. That is, if $A_i \to B_i$ is a family of morphisms in $\Sigma$, then $$ \bigsqcup A_i \to \bigsqcup B_i \in \Sigma .$$
		\item $\Sigma$ is closed under $\omega$-composites. That is, if $A_i \to A_{i+1}$, $i \in \omega$, is a countable family of morphisms in $\Sigma$, then so is $$ A_0 \to \varinjlim A_i.$$ Herem $\omega$ denotes the first countable ordinal. \label{defSatPropComp}
		\item $\Sigma$ is closed under retracts. That is, if for a commutative diagram 
		$$\begin{tikzcd}
		A \arrow[r] \arrow[d, "s"] & A' \arrow[d, "s'"] \arrow[r] & A \arrow[d, "s"]\\
		B \arrow[r] & B' \arrow[r] & B
		\end{tikzcd}$$
		with both horizontal compositions the identity $s' \in \Sigma$, then $s \in \Sigma$. 
	\end{enumerate} 
\end{definition}
\begin{proposition}\cite[Cor. 3.3.1.]{joyalNotes}
	Let $\mathcal H$ be the class of horn inclusions in $\textnormal{s\textbf{Set}}$, $\Lambda^n_k \hookrightarrow \Delta^n$, for $k\leq n \in \mathbb N$. Let $\mathcal{A}$ be the smallest saturated class containing $\mathcal H$. Then $\mathcal A$ is the class of anodyne extensions which is the class of acyclic cofibrations.
\end{proposition}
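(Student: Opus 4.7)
The plan is to prove the two inclusions $\mathcal A \subseteq \{\text{anodyne extensions}\}$ and $\{\text{anodyne extensions}\}\subseteq \mathcal A$ separately, using the small object argument for the harder direction.

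For the forward inclusion, I would first verify that the class $\mathcal W$ of acyclic cofibrations in the Kan-Quillen model structure is itself saturated. Closure under isomorphisms, pushouts, coproducts and retracts is a formal consequence of the general theory of model categories (acyclic cofibrations are characterized by a left lifting property against Kan fibrations, and any class defined by a left lifting property is closed under each of these operations). Closure under $\omega$-composites requires slightly more care, but follows from the fact that Kan fibrations are stable under countable inverse limits of towers (or equivalently, that the relevant $\mathrm{Hom}$-functors send filtered colimits of cofibrations to limits on the appropriate side). Once $\mathcal W$ is known to be saturated, the inclusion $\mathcal H \subseteq \mathcal W$ (horn inclusions are acyclic cofibrations by definition of the Kan-Quillen model structure) gives $\mathcal A \subseteq \mathcal W$ by minimality of $\mathcal A$.

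For the reverse inclusion, the main tool is the small object argument applied to the set $\mathcal H$. This produces, for every morphism $f : X \to Y$ in $\textnormal{s\textbf{Set}}$, a functorial factorization
\[
X \xrightarrow{\;i_f\;} Z_f \xrightarrow{\;p_f\;} Y
\]
with $i_f \in \mathcal A$ (built as a transfinite composition of pushouts of coproducts of horn inclusions, hence in $\mathcal A$ by saturation) and $p_f$ having the right lifting property against every horn inclusion. By the standard characterization of Kan fibrations, $p_f$ is thus a Kan fibration. Now let $f$ be an arbitrary anodyne extension. Applying the factorization to $f$ itself, $p_f$ is a Kan fibration, and $f$ is in particular an acyclic cofibration, so $f$ has the left lifting property against $p_f$. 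The resulting lift $r : Z_f \to Y$ in the square with $f$ on the left and $p_f$ on the right, together with $i_f$, exhibits $f$ as a retract of $i_f$:
\[
\begin{tikzcd}
X \arrow[r, "="] \arrow[d, "f"] & X \arrow[r, "="] \arrow[d, "i_f"] & X \arrow[d, "f"] \\
Y \arrow[r, "r^{-1}\text{-lift}"] & Z_f \arrow[r, "r"] & Y
\end{tikzcd}
\]
(more precisely, the left square commutes by construction, and the right square commutes because $r \circ i_f = f$ by the lifting property). Since $i_f \in \mathcal A$ and $\mathcal A$ is closed under retracts, $f \in \mathcal A$, giving the desired inclusion.

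The main obstacle is the small object argument, specifically verifying that the horn inclusions are $\omega$-small so that the transfinite construction terminates at countable stages and that the resulting map $p_f$ genuinely has the right lifting property against $\mathcal H$. This is where the compactness of the domains $\Lambda^n_k$ is used, paralleling \Cref{lemCharComp} for the filtered case; the remainder of the argument is essentially formal manipulation of lifting properties.
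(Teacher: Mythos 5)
The paper itself offers no proof of this statement---it is imported from Joyal's notes as background---so there is nothing internal to compare your argument against. Judged on its own, your proposal is the standard proof and is correct in substance: one inclusion holds because acyclic cofibrations are exactly the maps with the left lifting property against fibrations, and any class of the form $\mathrm{LLP}(\mathcal F)$ is saturated, while the reverse inclusion follows from the small object argument applied to $\mathcal H$ (whose domains are finite, hence small) together with the retract argument. Three small points deserve tightening. First, closure of an $\mathrm{LLP}$-class under $\omega$-composites is a purely formal inductive lifting argument; it neither requires nor is really justified by stability of Kan fibrations under inverse limits of towers, so your parenthetical reason there is a distraction. Second, your retract diagram is mislabeled: the lift in the square with $f$ on the left and $p_f$ on the right is a map $r\colon Y \to Z_f$ satisfying $r \circ f = i_f$ and $p_f \circ r = 1_Y$, so the bottom row of the retract diagram should read $Y \xrightarrow{r} Z_f \xrightarrow{p_f} Y$; with that correction the diagram exhibits $f$ as a retract of $i_f$ in the arrow category, as intended. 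Third, mind the logical bookkeeping: in Joyal's convention an anodyne extension is \emph{defined} to be an element of the saturated class generated by the horn inclusions, so that half of the statement is definitional, and the substantive content is the identification with the acyclic cofibrations of the Kan--Quillen model structure. Your argument establishes exactly this, but it presupposes that the model structure exists, both so that the horn inclusions are acyclic cofibrations (not literally ``by definition'', since weak equivalences are defined via realizations, but an easy standard fact) and so that acyclic cofibrations have the left lifting property against Kan fibrations; that presupposition is legitimate here, since the paper likewise takes the model structure as given.
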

The analogous statement holds for the Douteau model structure on $\textnormal{s\textbf{Set}}_P$.
\begin{proposition}\cite[Thm. 2.14.]{douSimp}\label{propACareAE}
	Let $\mathcal H$ be the class of admissible horn inclusions in $\textnormal{s\textbf{Set}}_P$. Let $\mathcal{A}$ be the smallest saturated class containing $\mathcal H$. Then $\mathcal A$ is the class of acyclic cofibrations, with respect to the Douteau model structure.
\end{proposition}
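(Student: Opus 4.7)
The plan is to prove the two inclusions separately via a standard cofibrant-generation argument. For the easy inclusion $\mathcal A \subseteq \{\text{acyclic cofibrations}\}$, recall that in any model category the acyclic cofibrations are closed under pushouts, coproducts, transfinite compositions, and retracts, and hence form a saturated class; so it suffices to verify that every admissible horn inclusion $\Lambda^{\mathcal J}_k \hookrightarrow \Delta^{\mathcal J}$ is itself an acyclic cofibration. Monomorphy is immediate, so it is a cofibration. For the weak-equivalence part, the stratum preserving deformation retract constructed in the proof of \Cref{lemCharHornInc} admits a strictly simplicial analogue in $\textnormal{s\textbf{Set}}_P$ (cf.\ \cite[Prop. 1.13]{douSimp}), so $\Lambda^{\mathcal J}_k \hookrightarrow \Delta^{\mathcal J}$ is a simplicial homotopy equivalence, which at once induces bijections on $[-,Z]_P$ for every fibrant $Z$.

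For the reverse inclusion I would run the small object argument with respect to the set of admissible horn inclusions. Each $\Delta^{\mathcal J}$ has only finitely many non-degenerate simplices; since colimits in $\textnormal{s\textbf{Set}}_P$ are computed on the underlying simplicial set (\Cref{remLimandColim}), the domains and codomains of admissible horn inclusions are $\omega$-small. Consequently any morphism $f : X \to Y$ in $\textnormal{s\textbf{Set}}_P$ admits a factorisation $X \xrightarrow{i} Z \xrightarrow{p} Y$ where $i$ is a transfinite composition of pushouts of coproducts of admissible horn inclusions --- so $i \in \mathcal A$ --- and where $p$ has the right lifting property against every admissible horn inclusion. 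By the characterisation of fibrations in the Douteau model structure (\Cref{thrmDouModSS}), $p$ is a fibration.

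Suppose now that $f$ is an acyclic cofibration. By the previous paragraph $i$ is a weak equivalence, and two-out-of-three then makes $p$ a weak equivalence and therefore an acyclic fibration. The lifting axiom supplies a diagonal filler $r : Y \to Z$ in the square with left side $f$, top side $i$, right side $p$ and bottom side $1_Y$. This exhibits $f$ as a retract of $i$ via the diagram
\begin{center}
\begin{tikzcd}
X \arrow[r,"1_X"] \arrow[d,"f"'] & X \arrow[r,"1_X"] \arrow[d,"i"] & X \arrow[d,"f"] \\
Y \arrow[r,"r"'] & Z \arrow[r,"p"'] & Y
\end{tikzcd}
\end{center}
so that saturation of $\mathcal A$ under retracts places $f$ in $\mathcal A$, finishing the proof.

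The main technical point is the smallness verification needed to legitimise the small object argument; once one knows that $P$-filtered simplicial sets with only finitely many non-degenerate simplices are $\omega$-small (which is the content of the forward-referenced \Cref{lemCharComp}, but can already be established here by the standard argument that finite limits commute with filtered colimits in $\textnormal{\textbf{Set}}$), the remainder is a formal manipulation of cofibrantly generated weak factorisation systems. Apart from the need to invoke Douteau's characterisation of fibrations as morphisms with RLP against admissible horn inclusions, the only model-theoretic input is the observation that the acyclic cofibrations form a saturated class.
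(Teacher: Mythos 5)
Your argument is correct as a formal derivation, but note first that the paper itself does not prove this statement at all: \Cref{propACareAE} is quoted from \cite[Thm.\ 2.14]{douSimp}, and indeed its content already appears verbatim in the parenthetical of \Cref{thrmDouModSS}. So your route is necessarily different from the paper's, which simply cites Douteau. What you give is the standard cofibrant-generation unpacking: the easy inclusion via saturation of the acyclic cofibrations plus the fact (also only cited in the paper, via \cite[Prop.\ 1.13]{douSimp}) that admissible horn inclusions are strict simplicial homotopy equivalences, and the reverse inclusion via the small object argument and the retract trick. Both halves are sound, and your smallness remark is fine since the horns have finitely many non-degenerate simplices and the factorization can be taken of length $\omega$, so the composite really lands in the paper's saturated class, which only demands closure under $\omega$-composites.

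The one point you should flag honestly is where the genuine mathematical content sits. Your step ``$p$ has the right lifting property against admissible horn inclusions, hence is a fibration'' is not something \Cref{thrmDouModSS} states explicitly: the theorem describes cofibrations, weak equivalences, and says the acyclic cofibrations are \emph{generated} by the admissible horn inclusions. If ``generated'' is read as ``these form a set of generating acyclic cofibrations in the cofibrantly generated sense'' (which is how the paper uses it elsewhere, e.g.\ in the proof of \Cref{corFibReplace}), then fibrations are by definition the maps with RLP against $\mathcal H$ and your retract argument closes correctly; but then your proof is a formal consequence of that reading rather than an independent proof, since the identification of fibrations with RLP$(\mathcal H)$ is precisely the nontrivial part of Douteau's theorem (the filtered analogue of Quillen's theorem that the Kan fibrations are the fibrations of the Kan--Quillen structure). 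If instead one reads \Cref{thrmDouModSS} as only asserting the saturated-class description, your appeal to a ``characterisation of fibrations'' is circular, because acyclic cofibrations having LLP against RLP$(\mathcal H)$ is equivalent to the statement being proved. So: correct modulo making explicit that you are importing ``fibration $=$ RLP against admissible horns'' from \cite{douSimp}, which is exactly what the paper's citation does implicitly.
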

From here on out, when we refer to cofibrations we always mean cofibrations with respect to the Douteau model structure on $\textnormal{s\textbf{Set}}_P$ if not stated otherwise.
Out of the closure properties in \Cref{defSat} only the closure under retracts is non-constructive. It turns out, that a lot can still be said about the closure of the class of horn inclusions under property \ref{defSatPropIso}-\ref{defSatPropComp} in \Cref{defSat}. 
\begin{definition}\label{defSae}
	Let $\mathcal H$ be the class of admissible horn inclusions \\$\Lambda^\mathcal J_k \hookrightarrow \Delta^\mathcal J$ in $\textnormal{s\textbf{Set}}_P$. Let $\mathcal{SA}$ be the smallest class containing $\mathcal H$ that is closed under property \ref{defSatPropIso}-\ref{defSatPropComp} of \Cref{defSat}. An element of $\mathcal{SA}$ is called a \textit{filtered strong anodyne extension}, or \textit{FSAE} for short. If both the target, as well as the source of an FSAE are finite, it is called a \textit{finite FSAE.}
\end{definition}
By \Cref{propACareAE}, every FSAE is an acyclic cofibration.
In the context of small object arguments (\cite[Ch. 10.5]{hirschhornModel}) the class constructed in \Cref{defSae} is often referred to as the relative cell complexes (with respect to the class of horn inclusions). We take this perspective in \Cref{secSmall}.
The astute reader will immediately notice that any composition of elementary expansions \Cref{defElem} is an FSAE. It turns out that the class of FSAEs is in fact the closure of elementary expansions under transfinite composition (see \Cref{propEqCharSaeTot}). 
\\
We start with a series of examples of finite FSAEs. However, we will only show that they are in fact FSAEs at the end of this section.
\begin{example}\label{ExLotsOfFSAEs}
All of the following filtered simplicial sets involved are simplicial complexes. Thus, we only describe the morphisms on the vertices. 
\begin{enumerate}
	\item Let $\mathcal J = ( p_0 \leq ... \leq p_n )$ and $\mathcal J'$ be d-flags in $P$ such that there is a degeneracy morphism $\Delta^{\mathcal J} \to \Delta^{\mathcal J'}$. Then any section of this morphism $$\Delta ^{\mathcal J'} \hookrightarrow \Delta ^{\mathcal J}$$ is a finite FSAE. \label{ExLotsOfFSAE1}
		\item Let $\mathcal J = ( p_0 \leq ... \leq p_n )$ be a d-flag in $P$. Then the cofibration 
	\begin{align*}
		 \Delta^{\mathcal J} &\hookrightarrow \textnormal{sd}(\Delta^{\mathcal J}) \\
		 p_k &\mapsto ( p_0 \leq ... \leq p_k )
	\end{align*} is a finite FSAE. \label{ExLotsOfFSAE2}
	\item Let $\mathcal{J}$ be a flag in $P$. Then the cofibration 
	\begin{align*}
		\Delta^{\mathcal J} &\hookrightarrow \textnormal{sd}_{P}(\Delta^{\mathcal J})\\
		p &\mapsto (p, \mathcal J)
	\end{align*}
	is a finite FSAE. See \Cref{fig:exLotsOfFSAE3}. \label{ExLotsOfFSAE3}
	\item For any FSAE, $f: X \hookrightarrow Y$, the induced map $$\textnormal{sd}_P(X) \xhookrightarrow{\textnormal{sd}_P(f)} \textnormal{sd}_P(Y)$$ is also an FSAE. \label{ExLotsOfFSAEDout}
\end{enumerate} 
\begin{figure}[H]
	\centering
	\includegraphics[width=100mm]{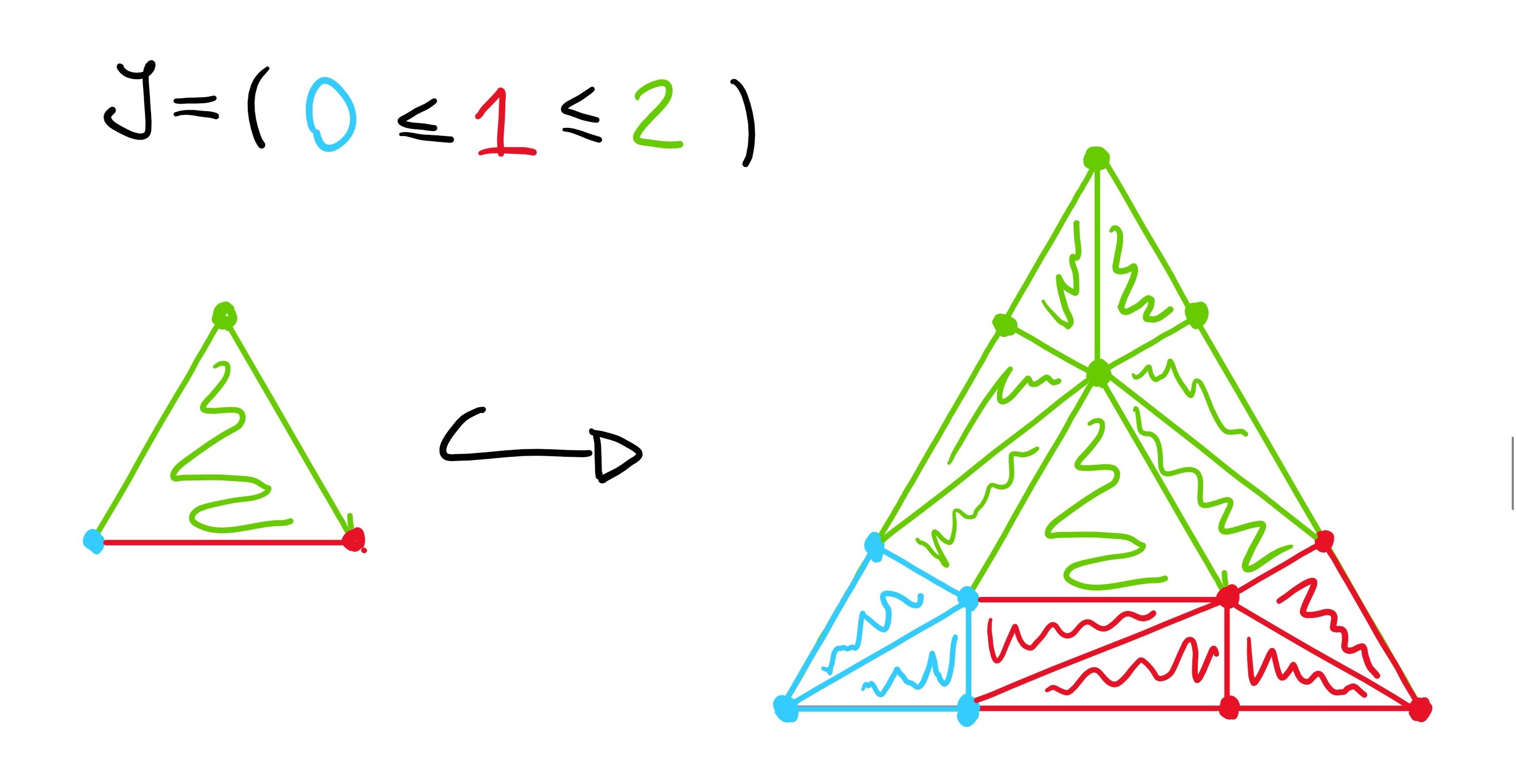}
	\caption{Illustration of the FSAE in \ref{ExLotsOfFSAE3}.}
	\label{fig:exLotsOfFSAE3}
\end{figure}
\end{example}
In the non-filtered case, strong anodyne extensions have recently been studied in \cite{MossSae}. There, the author develops an even more constructive perspective on the class of strong anodyne extentions. In particular, it is shown that the fibrant replacement inclusion $X \hookrightarrow \operatorname{Ex}(X)$ is a strong anodyne extension \cite[Thm. 22]{MossSae}. Douteau has transferred some of this work to the setting of filtered simplicial sets in \cite[Ch. 3.3.2]{douteauFren}. In this subsection, we expand on this generalization. We use this in \Cref{subsecSAEandProd} to prove that the filtered analogon of \cite[Thm. 22]{MossSae}, \Cref{propCylIncs}, still holds. This, together with the fact that FSAEs are just infinite compositions of elementary expansions, is one of the deciding steps in the development of a simple homotopy theory for filtered simplicial sets in \cref{subsecEckSiebAppHolds}. Finally, at the end of this subsection we show a useful lemma (\Cref{lemSav}) that essentially allows us to replace arbitrary FSAEs by finite compositions of elementary expansions in many situations.\\
\\
We start by translating some of the definition in \cite{MossSae} into the filtered setting and showing how they are related to the non-filtered on. This is pretty much a verbatim copy of what is being done there, replacing simplicial sets by filtered simplicial sets and adding the admissibility conditions. Hence, we do not go into too much detail and refer to \cite{MossSae}. By a slight abuse of notation, we treat cofibrations $A \hookrightarrow B$ as inclusions of sub-simplicial sets when it comes to notation such as $ B \setminus A$, translating all the inclusions to general monomorphisms would be as tedious as it is trivial. For a ($P$-filtered) simplicial set $B$, we denote by $B_{n.d.}$ the set of its non-degenerate simplices. For now, let 
$ A \xhookrightarrow{s} B$ be a cofibration in $\textnormal{s\textbf{Set}}_P$.
\begin{definition}
	An anodyne presentation of $s$ consists of an ordinal $\kappa$ and a $\kappa$-indexed increasing family of sub-simplicial sets $(A^{\alpha})_{\alpha \leq \kappa} $ of $B$ satisfying: 
	\begin{itemize}
		\item $A^{0} = A$ and $A^{\kappa} = B$,
		\item For every non-zero limit ordinal $\lambda < \kappa$, $\bigcup_{\alpha < \lambda} A^{\alpha} = A^{\lambda}$ holds.
		\item For every $\alpha < \kappa$ the inclusion $A^{\alpha} \hookrightarrow A^{\alpha +1}$ is a pushout of a coproduct of admissible horn inclusions.
	\end{itemize}
	In other words, $A \to B$ is the transfinite composition of pushouts of coproducts of admissible horn inclusions.
\end{definition}
Recall that a binary relation $\prec$ on a set $S$ is called \textit{well founded} if every non-empty $S' \subset S$ has a minimal element. That is, it exists an $x \in S'$ such that for no $y \in S'$ the relation $y \prec x$ holds.
\begin{definition} \text{ }
\begin{enumerate}
		\item A pairing $T$ on $s$ is a partition of $B_{n.d.} \setminus A_{n.d.}$ into disjoint sets $B_I$ and $B_{II}$ together with a bijection $T: B_{II} \to B_{I}$. Simplices in $B_{II}$ or $B_I$ are called of \textit{type} $II$ or \textit{type} $I$, respectively.
		\item A pairing is called \textit{proper} if for each $\sigma \in B_{II}$ the following holds: There exists a unique $k \leq \text{dim} (T(\sigma))$ such that $\sigma = d_k(T(\sigma))$ and $k$ is such that $\Lambda^\mathcal J_k \hookrightarrow \Delta^\mathcal J$ is an admissible horn inclusion, where $\mathcal J$ is the d-flag corresponding to $T(\sigma)$).
		\item A proper pairing $T$ on $s$ induces a relation on $B_{II}$ given by: $\sigma \prec \tau$ if and only if $\sigma \neq \tau$ and $\sigma$ is a face of $T(\tau)$. This is called the \textit{ancestral relation}.
		\item The smallest transitive and reflexive relation on $B_{n.d.} \setminus A_{n.d.}$ generated by $T(\sigma) \preceq \sigma$, for $\sigma \in B_{II}$, as well as $d_k(\sigma) \preceq \sigma$, for $\sigma \in B_{n.d.}$, is called the \textit{ancestral preorder} and denoted by $\preceq_T$. 
		\item A proper pairing $T$ is called \textit{regular} if its ancestral relation is well founded.
	\end{enumerate}
\end{definition}
It can be helpful to decode these definitions a bit.
\begin{remark}\label{remPresToPair}
 Essentially, the point is that regular pairings are what arises from an anodyne presentation one does the following. For every horn $ \Lambda^\mathcal J_k \hookrightarrow \Delta^\mathcal J \to B$ in $B$ that is filled at some point of the presentation, if $\tau$ is the non-degenerate simplex of $B$ corresponding to $\Delta^\mathcal{J} \to B$, then $\tau$ is assigned to $B_{II}$ and $\sigma:=d_k(\tau)$ to $B_{I}$. One then constructs $T$ via $T(\sigma):=\tau$. One easily verifies that this defines a proper pairing. The ancestral preorder for this pairing specifies in what order simplices appear in the anodyne presentation. The map \begin{align*}
	B_{II} &\to \kappa \\
	x &\mapsto \text{sup}\{\alpha \mid \sigma \notin A^\alpha\}
\end{align*}
is relation-preserving, with respect to the ancestral relation on the left. Any relation that maps relation preserving into an ordinal is well founded (this is an easy exercise in elementary set theory). Thus, this means our pairing is regular.\\
\\
We should thus think of the condition of properness as specifying that the pairing actually pairs non-degenerate simplices with faces that correspond to admissible horn inclusions. The condition of regularity on the other hand specifies that the adding of simplices happens in a well mannered order, i.e. that no simplex is added before all but one of its proper faces are present and that in the case of $B_{n.d.} \setminus A_{n.d}$ being infinite, this can be done in a transfinitely iterative fashion. In practice, there is a useful alternative characterization of regularity that is often easy to verify. The proof is identical to the non-filtered case in \cite{MossSae}.
\begin{lemma}\cite[Lem. 14]{MossSae}\label{lem14Moss}
	Let $T$ be a proper pairing on $s$. Then $T$ is regular if and only if
	there exists a function \begin{align*}
		\Phi: B_{II} & \longrightarrow \mathbb{N}
	\end{align*}such that, for each $n$ and for all type II simplices
	$\sigma$ and $\tau$ of dimension $n$, the implication \begin{align*}
	\sigma \prec \tau \implies \Phi(\sigma) < \Phi(\tau)
	\end{align*}
	holds.
\end{lemma}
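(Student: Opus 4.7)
The plan is to prove the two directions separately, exploiting in each direction the key dimensional observation that if $\sigma \prec \tau$ then $\dim(\sigma) \leq \dim(\tau)$: indeed $\sigma$ is by definition a proper face of $T(\tau)$ (proper, since $\sigma \in B_{II}$ while $T(\tau) \in B_I$), and $T(\tau)$ has dimension $\dim(\tau)+1$ because $\tau$ is a codimension-one face of $T(\tau)$ by properness of the pairing.

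For the direction $(\Leftarrow)$, I would assume such a $\Phi$ exists and let $S \subseteq B_{II}$ be an arbitrary non-empty subset. First choose $n$ minimal such that $S$ contains an $n$-dimensional simplex, and among those pick $\sigma_0 \in S$ with $\dim(\sigma_0) = n$ minimizing $\Phi(\sigma_0)$ (which is possible since $\Phi$ takes values in $\mathbb{N}$). If some $\tau \in S$ satisfied $\tau \prec \sigma_0$, the dimensional observation would force $\dim(\tau) = n$, so the hypothesis on $\Phi$ would give $\Phi(\tau) < \Phi(\sigma_0)$, contradicting the choice of $\sigma_0$. Hence $\sigma_0$ is $\prec$-minimal in $S$, witnessing well-foundedness.

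For the direction $(\Rightarrow)$, I would construct $\Phi$ by well-founded recursion performed separately on each dimension. For fixed $n$, the restriction of $\prec$ to $B_{II}^{(n)}$ (the $n$-dimensional type II simplices) inherits well-foundedness from $\prec$. The crucial local finiteness observation is that every $\tau \prec \sigma$ is a face of the single simplex $T(\sigma)$, so the set of $\prec$-predecessors of $\sigma$ is finite. Therefore the recursion
\begin{equation*}
\Phi_n(\sigma) \;=\; \max\bigl(\{0\}\cup\{\Phi_n(\tau)+1 : \tau \in B_{II}^{(n)},\ \tau \prec \sigma\}\bigr)
\end{equation*}
defines, by the recursion theorem for well-founded relations, a function $\Phi_n : B_{II}^{(n)} \to \mathbb{N}$ whose values are genuinely natural numbers (the max is taken over a finite set of naturals). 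Gluing the $\Phi_n$ into a single $\Phi(\sigma) := \Phi_{\dim(\sigma)}(\sigma)$ yields the desired function: for $\sigma, \tau \in B_{II}$ of the same dimension with $\sigma \prec \tau$, the definition immediately gives $\Phi(\sigma)+1 \leq \Phi(\tau)$.

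The only subtle step is the finiteness of the predecessor sets in the $(\Rightarrow)$ direction, which is what ensures the recursion stays inside $\mathbb{N}$ rather than escaping into infinite ordinals; this is not an obstacle but is the point that needs to be made explicit, since without it well-founded recursion would only produce an ordinal-valued rank. Both implications are otherwise straightforward given the definitions of proper pairing and ancestral relation, and the proof is essentially verbatim the same as in the non-filtered case \cite[Lem. 14]{MossSae}, the filtration and admissibility conditions playing no role here.
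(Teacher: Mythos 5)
Your proof is correct: the dimension comparison $\sigma \prec \tau \Rightarrow \dim\sigma \le \dim\tau$, the finiteness of $\prec$-predecessor sets (faces of the single simplex $T(\tau)$), and the dimension-wise well-founded recursion together give both implications, and you verify exactly the paper's ``every non-empty subset has a minimal element'' formulation of well-foundedness. The paper itself offers no proof beyond citing \cite[Lem.~14]{MossSae} and remarking that the filtered case is identical, and your argument is precisely that standard one, so there is nothing to compare beyond noting agreement.
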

\end{remark}
\begin{example}
\Cref{fig:Pairing} shows an anodyne presentation in $\textnormal{s\textbf{Set}}$ as well as the corresponding pairing and its ancestral relation. The paired simplices have been marked with the same color, which should not be confused with a filtration.
\begin{figure}[H]
\includegraphics[width=\linewidth]{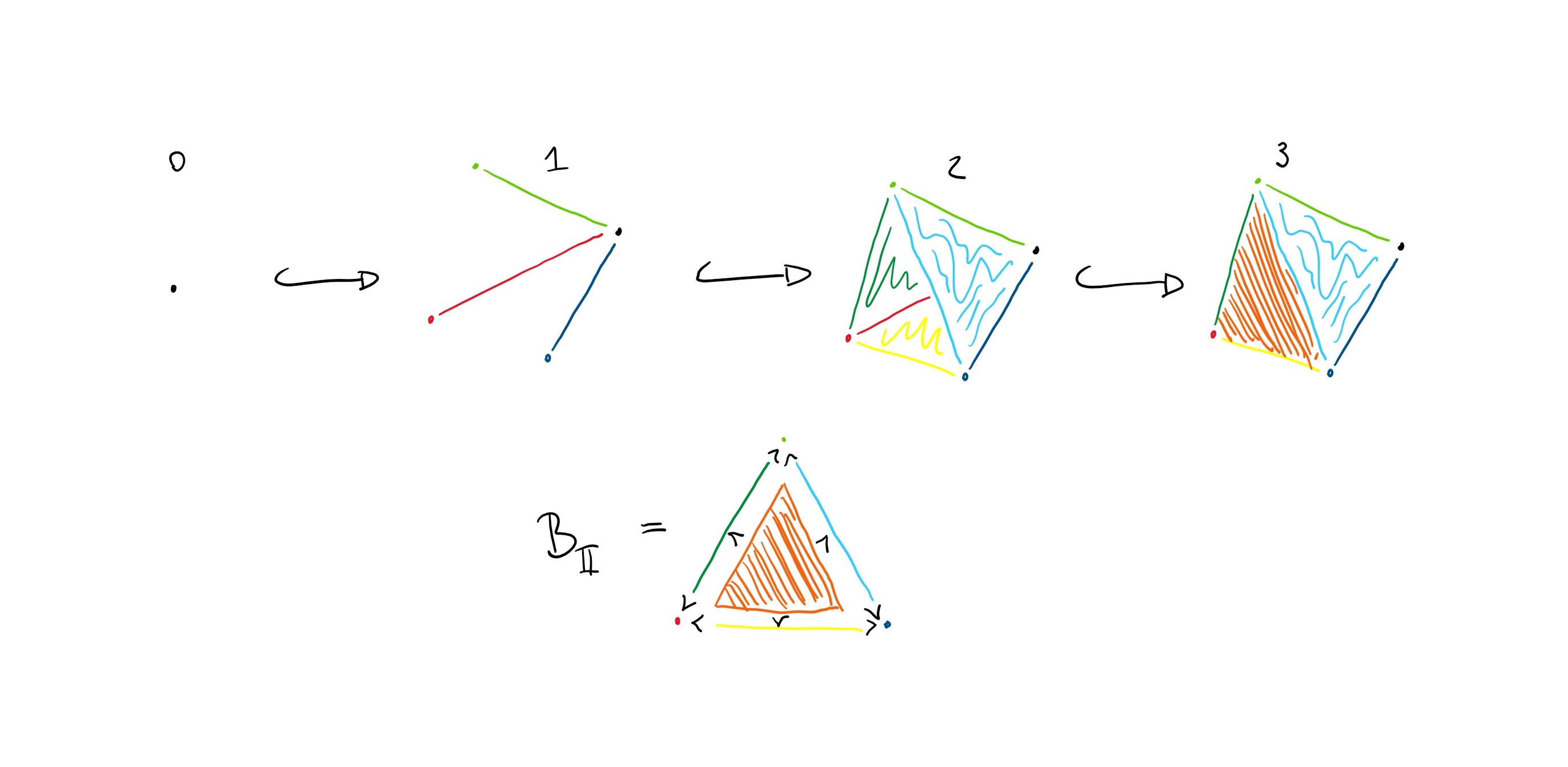}
\caption{An anodyne presentation of an inclusion $\Delta ^0 \hookrightarrow \Delta^3$.}
\label{fig:Pairing}
\end{figure}
\end{example}
As there has already been quite a bit of work on pairings in the non-filtered setting in \cite{MossSae}, it can be useful to know when a proper pairing on the underlying simplicial sets also specifies a pairing of filtered simplicial sets.
\begin{lemma}\label{lemSimtoFil}
	Let $T$ be a pairing on $s$. Let $s'$ be the image of $s$ under the forgetful functor $\textnormal{s\textbf{Set}}_P \to \textnormal{s\textbf{Set}}$. Let $T'$ be the pairing induced by $T$ on $s'$. Then the following hold. \begin{enumerate}
\item $T$ is proper if and only if $T'$ is proper and, for each $x \in B_{II}$ of type $\mathcal J$, the unique $k$ such that $d_k(T(x)) = x$ is such that $ \Lambda^{\mathcal J}_k \hookrightarrow \Delta^\mathcal J$ is admissible.
\item $T$ is regular if and only if $T'$ is regular.
	\end{enumerate}
\end{lemma}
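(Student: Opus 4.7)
The proof plan is essentially an unpacking of definitions, keyed on the observation that the forgetful functor $\textnormal{s\textbf{Set}}_P \to \textnormal{s\textbf{Set}}$ preserves all the combinatorial data that pairings see: the sets of $n$-simplices, the non-degenerate simplices, and the face and degeneracy operators. In particular the set $B_{n.d.} \setminus A_{n.d.}$, its partition into $B_I$ and $B_{II}$, and the bijection $T$ coincide with the data of $T'$; the only place where the filtration plays a role is in the admissibility condition on a horn, which is a statement about the d-flag attached to a non-degenerate simplex.

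For part (i), I would compare the two definitions of properness side by side. A proper pairing of $s'$ requires that for each $\sigma \in B_{II}$ there is a unique $k \leq \dim(T(\sigma))$ with $\sigma = d_k(T(\sigma))$. A proper pairing of $s$ requires both this uniqueness and that the horn $\Lambda^{\mathcal J}_k \hookrightarrow \Delta^{\mathcal J}$ is admissible, where $\mathcal J$ is the d-flag of $T(\sigma)$. Since the face maps of $B$ and of its underlying simplicial set agree, the existence-and-uniqueness clause for $T$ is literally the same statement as for $T'$. Hence properness of $T$ is equivalent to properness of $T'$ together with the stated admissibility condition, which is exactly what (i) asserts.

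For part (ii), I would note that the ancestral relation $\prec$ is defined purely in terms of $T$ and the face operators: $\sigma \prec \tau$ iff $\sigma \neq \tau$ and $\sigma$ is a face of $T(\tau)$. Neither the pairing data nor the face operators are altered by forgetting the filtration, so the relation $\prec$ induced by $T$ on $B_{II}$ is identical (as a relation on the same underlying set) to the relation induced by $T'$. Well-foundedness is a property of the relation alone, so $T$ is regular iff $T'$ is.

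There is no genuine obstacle here; the only thing to be careful about is making sure that passage to the underlying simplicial set really does preserve the non-degenerate/degenerate distinction and the face operators, which is immediate from the description of $\textnormal{s\textbf{Set}}_P$ as an over-category (so the forgetful functor $\textnormal{s\textbf{Set}}_P \to \textnormal{s\textbf{Set}}$ is the codomain-projection from $\textnormal{s\textbf{Set}}_{/N(P)}$ and clearly preserves simplices, faces and degeneracies). Once this is spelled out, both equivalences reduce to comparisons of the literal definitions.
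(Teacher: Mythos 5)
Your argument is correct and is essentially the paper's own proof (which simply notes that the claim is immediate from the definitions, since properness differs only by the admissibility clause and regularity is independent of the filtration map into $N(P)$); you have merely spelled out the details of why the forgetful functor preserves all the combinatorial data a pairing refers to. No issues.
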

\begin{proof}
	This is immediate from the definition of properness and the fact that the additional regularity condition is clearly independent from the filtration maps into $N(P)$.
\end{proof}
Next, we want to prove the converse of \Cref{remPresToPair}, i.e. that every regular pairing gives rise to an anodyne presentation. Before we can prove this, we need a useful technical lemma.
\begin{lemma}\label{lemTech}
	Let $A \hookrightarrow B$ be an FSAE with proper pairing $T$ and corresponding ancestral preorder $\preceq_T$.
	Let $\sigma \in B_{n.d.} \setminus A_{n.d.}$. Then the downset $S$ generated by $\sigma$ under $\preceq_T$ is still finite. 
\end{lemma}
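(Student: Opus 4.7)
The plan is to bound the dimensions appearing in $S$, then control the type II simplices at each dimension by a downward induction leveraging regularity, and finally recover all of $S$ from its type II part. First I would observe that no element of $S$ has dimension exceeding $\dim\sigma+1$: the generating steps of $\preceq_T$ either raise dimension by one (via $T$, applicable only to type II simplices) or lower it by one (via some $d_k$), and since $T(\tau)$ is type I it cannot itself serve as input to $T$. Hence no chain from $\sigma$ can push the dimension past $\dim\sigma+1$, and in particular any element of $S$ in dimension $\dim\sigma+1$ must be of the form $T(\tau'')$ for some $\tau'' \in S \cap B_{II}$ of dimension $\dim\sigma$.

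Writing $E_m := S \cap B_{II} \cap \{\dim = m\}$ for $0 \le m \le \dim\sigma$, I will prove each $E_m$ finite by downward induction on $m$, starting at $m = \dim\sigma$. The crux is that a type II simplex can only enter $S$ through a face operation, never via $T$ itself, so tracing one generating step back, any $\tau \in E_m$ equals $d_k(\rho)$ for some $\rho \in S$ of dimension $m+1$. Such a $\rho$ is either an iterated face of $\sigma$, an iterated face of $T(\tau'')$ for some $\tau'' \in E_{m'}$ with $m' > m$, or of the form $T(\tau'')$ for some $\tau'' \in E_m$. The first two possibilities together contribute a finite ``seed set'' $G_m \subseteq B_{II} \cap \{\dim = m\}$ by the inductive hypothesis, while the third forces $\tau$ to be either $\tau''$ itself or a same-dimension $\prec$-predecessor of some $\tau'' \in E_m$. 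Thus $E_m$ is exactly the closure of $G_m$ under passage to same-dimension $\prec$-predecessors.

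The main obstacle is showing that this closure remains finite, and here \Cref{lem14Moss} is indispensable. The direct $\prec$-predecessors of any $\tau \in B_{II}$ lie among the at most $\dim\tau + 2$ type II faces of $T(\tau)$ distinct from $\tau$, giving finite branching; and by the function $\Phi \colon B_{II} \to \mathbb{N}$ produced by regularity, any same-dimension $\prec$-chain strictly decreases $\Phi$, giving depth bounded by $\max_{\tau_0 \in G_m} \Phi(\tau_0) + 1$. Consequently the tree of $\prec$-chains rooted in the finite seed $G_m$ has finite branching and bounded depth, hence is finite, so $E_m$ itself is finite and the induction closes.

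To finish, $E := \bigcup_{m=0}^{\dim\sigma} E_m$ is finite as a finite union of finite sets, and every element of $S$ lies in $\{\sigma\} \cup E \cup T(E)$ together with their iterated non-degenerate faces in $B_{n.d.} \setminus A_{n.d.}$. Since each simplex has only finitely many iterated faces, this exhibits $S$ as finite.
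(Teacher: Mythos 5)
Your argument is correct in substance, but it takes a genuinely different route from the paper's proof. The paper argues by contradiction: it writes $S=\bigcup_{n\in\omega}S^n$ with each $S^n$ finite (closing alternately under $T$ and under the face relation), extracts from the assumed infinitude of $S$ an infinite chain of generating steps, observes that the $T$-steps along that chain form a strictly descending chain for the ancestral relation $\prec$ and hence are finite in number by well-foundedness, and concludes that the remaining steps each drop dimension by one, forcing negative dimension. You instead give a direct finiteness proof: a dimension bound $\dim\sigma+1$ on $S$, a downward induction over dimension on $S\cap B_{II}$, and, within each fixed dimension, the rank function $\Phi$ of \Cref{lem14Moss} bounding the depth of the finitely branching tree of same-dimension $\prec$-predecessors. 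Your version is constructive and in principle yields explicit bounds on $|S|$, and it avoids both the contradiction and the paper's lightly glossed K\H{o}nig-type extraction of an infinite chain; the cost is the detour through the $\Phi$-characterization of regularity (the paper only needs bare well-foundedness of $\prec$) and more bookkeeping. Both proofs in fact use regularity of $T$ rather than the mere properness stated in the lemma, so you are on the same footing as the paper there. Two points to tighten: the seed $G_{\dim\sigma}$ at the top dimension must include $\sigma$ itself when $\sigma$ is of type II, since $\sigma$ lies in $S$ without arising from any face step (as written, your seed at that level is empty and the closure would miss all of $E_{\dim\sigma}$); and the assertion that $E_m$ is contained in the closure of $G_m$ under same-dimension $\prec$-predecessors needs a short induction on the minimal length of a generating chain from $\sigma$, which is what excludes the circular alternative $\tau=\tau''$ in your trichotomy — in a minimal chain to $\tau$, the simplex $\tau''$ occurs strictly before $T(\tau'')$, so $\tau\neq\tau''$ and $\tau''$ has strictly smaller minimal chain length, letting the induction close.
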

\begin{proof}
First note that $\sigma' \succeq_T \tau$ is equivalent to there being a finite sequence $$\sigma'=\sigma_0 \succeq_T \sigma_1 \succeq_T ... \succeq_T \sigma_n = \tau$$ such that in every step either $\sigma_{i+1} = d_k(\sigma_{i})$ or $\sigma_{i+1} = T(\sigma_i)$. Hence, $S = \bigcup_{n \in \omega} S^n$ where $S^n$ is inductively defined as the closure of $S^{n-1}$, first under the pairing $T$ and then under the face relation, and we set $S^0 = \{\sigma\}$. In particular, as both closures preserve finiteness, each $S^n$ is finite. Now, assume $S$ is infinite. Then, by finiteness of the $S^n$, no $S^n$ can contain all of $S$. In particular, (by refining a little if necessary) we obtain an infinite sequence $$\sigma=\sigma_0 \succeq_T \sigma_1 \succeq_T ... \succeq_T \sigma_n ...$$ where in each step $d_k(\sigma_{i}) = \sigma_{i+1}$, for some $k$, or $T(\sigma_{i}) = \sigma_{i+1}$. Let $\sigma'_{i}$ be the subsequence given by such $\sigma_{i}$ where $T(\sigma_{i}) = \sigma_{i+1}$. By definition of the ancestral preorder, we then have $\sigma_i' \succ \sigma'_{i+1}$. In particular, by well foundedness, there are only finitely many $\sigma'_i$. But at the same time, these are precisely the elements of the sequence $\sigma_i$ where $\text{dim}(\sigma_{i+1}) > \text{dim}(\sigma_i)$. Everywhere else, $\text{dim}(\sigma_{i+1}) = \text{dim}(\sigma_i) -1$. In particular, the sequence $\sigma_i$ obtains negative dimension, which is a contradiction.
\end{proof}
The following remark corresponds to \cite[Prop. 12]{MossSae}.
\begin{remark}\label{remPairToPres}
Conversely to \Cref{remPresToPair}, every regular pairing $T$ gives rise to an anodyne presentation of length $\omega$ (i.e. of countable length) as follows. Define, using induction on the well foundedness of the ancestral relation: \begin{align*}
	F:B_{II} &\to \text{Ord}; \\
	\sigma&\mapsto \text{sup}\{1, F(\tau) + 1 \mid \tau \prec \sigma\},
\end{align*} where $\text{Ord}$ denotes well founded class of all ordinals.
As the ancestral relation is well founded and the set that we take the supremum over is necessarily finite by \Cref{lemTech}, one can inductively show that $F(\tau)$ is a natural number, i.e. the construction above maps into $\omega$.\\
\\
One now defines $A^n$, $n \in \omega$, as the filtered simplicial subset of $B$ with non-degenerate simplices $$A^n_{n.d.} = A_{n.d.} \cup \{\sigma, T(\sigma) \in B_{n.d.} \mid F(\sigma) \leq n\}$$ and checks that this fulfills the requirements of an anodyne presentation.
\end{remark}
These constructions are summarized in the following proposition:
\begin{proposition}\label{propEqCharSaeTot}
Let $A \xrightarrow{s} B$ be a morphism in $\textnormal{s\textbf{Set}}_P$. Then the following are equivalent.
\begin{enumerate}
	\item $s$ is an FSAE. \label{proEqCharSaeI1}
	\item $s$ is a cofibration and admits a regular pairing. \label{proEqCharSaeI2}
	\item $s$ is a cofibration and admits a countable anodyne presentation. \label{proEqCharSaeI3}
	\item $s$ is a cofibration and admits an anodyne presentation. \label{proEqCharSaeI4}
	\item $s$ is a transfinite composition of elementary expansions. (By a transfinite composition over $0$ we mean an isomorphism here.) \label{proEqCharSaeI5}
	\item $s$ lies in the smallest class of morphisms that contains all admissible horn inclusions and isomorphisms and is furthermore closed under pushout and transfinite composition. \label{proEqCharSaeI6}
\end{enumerate} 
\end{proposition}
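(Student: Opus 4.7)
The plan is to establish the equivalences by a circular argument, relying heavily on the constructions already laid out in \Cref{remPresToPair}, \Cref{lemTech}, and \Cref{remPairToPres}. The easy direction $(3) \Rightarrow (4)$ is tautological, and $(4) \Rightarrow (1)$ follows immediately from the definition of $\mathcal{SA}$: an anodyne presentation exhibits $s$ as a transfinite composition of pushouts of coproducts of admissible horn inclusions, and $\mathcal{SA}$ is by definition closed under exactly these operations. For $(4) \Rightarrow (2)$, one takes the pairing constructed in \Cref{remPresToPair}; the ancestral relation is well-founded because it is refined by the order-preserving assignment sending each type II simplex to the first ordinal at which it appears in the presentation. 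Conversely, $(2) \Rightarrow (3)$ is the content of \Cref{remPairToPres}: regularity together with \Cref{lemTech} ensures that the function $F:B_{II}\to\mathrm{Ord}$ takes values in $\omega$, so setting $A^n_{n.d.} = A_{n.d.}\cup\{\sigma,T(\sigma)\mid F(\sigma)\leq n\}$ gives a countable anodyne presentation.

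The genuinely non-trivial step in the loop is $(1) \Rightarrow (4)$, which requires showing that the class of cofibrations admitting an anodyne presentation is itself closed under isomorphism, pushout, coproduct, and $\omega$-composition. Closure under isomorphism is clear. For pushouts, if $(A^\alpha)_{\alpha\leq\kappa}$ is an anodyne presentation of $s:A\hookrightarrow B$ and $f:A\to A'$ is arbitrary, then the family $(A'\cup_A A^\alpha)_{\alpha\leq\kappa}$ in the pushout $B' = A'\cup_A B$ is an anodyne presentation, since pushouts of pushouts of coproducts of admissible horn inclusions are again such, and colimits commute with colimits at limit stages. For coproducts of a family $(s_i: A_i\hookrightarrow B_i)$ with presentations $(A_i^{\alpha_i})_{\alpha_i\leq\kappa_i}$, one obtains a presentation of $\bigsqcup s_i$ by taking any well-ordering of the disjoint union of the index sets compatible with each $\kappa_i$; again, coproducts of pushouts of coproducts of horns are pushouts of coproducts of horns. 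For $\omega$-composition, concatenating the presentations (reindexing by $\omega\cdot\kappa_i$-style ordinal arithmetic) yields a presentation whose length is still an ordinal. This establishes $(1)\Leftrightarrow(2)\Leftrightarrow(3)\Leftrightarrow(4)$.

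For the link with $(5)$ and $(6)$, note first that $(5)\Rightarrow(6)$ is immediate since elementary expansions are pushouts of admissible horn inclusions. The implication $(6)\Rightarrow(5)$ follows because the class of transfinite compositions of elementary expansions is itself closed under pushout: pushing out a transfinite composition yields a transfinite composition of pushouts, and each such pushout of an elementary expansion (which itself is a pushout of an admissible horn) is again a pushout of an admissible horn, hence an elementary expansion. Finally, $(4)\Leftrightarrow(5)$ is obtained by refining anodyne presentations into presentations by single-horn pushouts: a pushout of a coproduct $\bigsqcup_i (\Lambda^{\mathcal J_i}_{k_i}\hookrightarrow \Delta^{\mathcal J_i})$ can be decomposed as a (possibly transfinite) sequential composition of pushouts of the individual admissible horn inclusions, indexed by any well-ordering of the coproduct, since each single horn can be pushed out one at a time without altering the others. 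Conversely, a transfinite composition of elementary expansions is trivially an anodyne presentation, each stage being the pushout of a single horn.

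The main obstacle is the verification of $(1)\Rightarrow(4)$, specifically the bookkeeping needed to show closure of ``admits an anodyne presentation'' under $\omega$-composition and coproducts of arbitrary families, where the presentation lengths must be combined transfinitely and the filtration at limit ordinals must be verified to coincide with the union of earlier stages. Aside from this, the proof is essentially a translation of the non-filtered arguments of \cite{MossSae} into the filtered setting, using \Cref{lemSimtoFil} to import the combinatorial content and adding the admissibility condition on $k$ throughout.
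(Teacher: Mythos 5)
Your overall architecture is sound and most of the ingredients match the paper's, but one step is wrong as justified, and it is the only arrow in your scheme that re-enters (1). You claim $(4)\Rightarrow(1)$ is ``immediate from the definition of $\mathcal{SA}$'' because an anodyne presentation exhibits $s$ as a transfinite composition of pushouts of coproducts of admissible horns and $\mathcal{SA}$ is ``by definition closed under exactly these operations.'' It is not: by \Cref{defSae}, $\mathcal{SA}$ is only required to be closed under properties (i)--(iv) of \Cref{defSat}, and (iv) is closure under $\omega$-composites, not under transfinite compositions of arbitrary ordinal length. An anodyne presentation of uncountable length therefore does not land in $\mathcal{SA}$ ``by definition''; compressing arbitrary presentations to length $\omega$ is exactly the nontrivial content of the pairing machinery (\Cref{lemTech}, \Cref{remPairToPres}) and the reason item (3) appears in the statement at all. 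The repair is already inside your own proof: drop $(4)\Rightarrow(1)$ and instead close the loop via $(4)\Rightarrow(2)\Rightarrow(3)$, which you prove, followed by $(3)\Rightarrow(1)$, which genuinely is immediate since a countable presentation is an $\omega$-composite of pushouts of coproducts of admissible horns. A second, smaller omission: for $(6)\Rightarrow(5)$ you only verify that transfinite compositions of elementary expansions are closed under pushout; the minimality argument also needs closure under transfinite composition (composability of transfinite compositions) and the observation that isomorphisms and admissible horn inclusions themselves lie in that class.

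Beyond this, your route differs from the paper's in an interesting way. The paper never proves $(1)\Rightarrow(4)$ directly: it notes $(1)\Rightarrow(6)$ and $(3)\Rightarrow(1)$ essentially by definition, proves $(4)\Leftrightarrow(5)$ by well-ordering each coproduct of horns and refining the presentation into single-horn pushouts (the same refinement you use), and then closes the loop with $(6)\Rightarrow(5)$ by showing that transfinite compositions of elementary expansions are closed under pushout and transfinite composition. Your direct closure argument for $(1)\Rightarrow(4)$ --- showing that ``cofibration admitting an anodyne presentation'' is stable under isomorphism, pushout, coproduct and $\omega$-composition --- is correct modulo the bookkeeping you acknowledge (monomorphisms are preserved by these operations, limit stages commute with the relevant colimits, and concatenation/parallelization of presentations works), and it is a legitimate alternative; what the paper's detour through (5) and (6) buys is that the only saturation-style closure it has to verify is for the more flexible class of transfinite compositions of elementary expansions, which makes the transfinite bookkeeping essentially trivial.
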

\begin{proof}
	The equivalences \ref{proEqCharSaeI2} $\iff$ \ref{proEqCharSaeI3} $\iff$ \ref{proEqCharSaeI4} are the content of \Cref{remPairToPres} and \Cref{remPresToPair}. \\
	\\\ref{proEqCharSaeI5} implies \ref{proEqCharSaeI4} by definition. Conversely, let $(A^\alpha)_{\alpha \leq \kappa}$ be an anodyne presentation of $A \hookrightarrow B$. Each inclusion $A^\alpha \hookrightarrow A^{\alpha +1}$ is given by a pushout $$ 
	\begin{tikzcd}
	\bigsqcup_{i\in J} \Lambda^{\mathcal J_i}_{k_i} \arrow[r, hook] \arrow[d] & \bigsqcup_{i\in J} \Delta^{\mathcal J_i} \arrow[d]\\
	A^\alpha \arrow[r] & A^{\alpha+1}
	\end{tikzcd}
	$$ 
	with the vertical given by a disjoint union of admissible horn inclusions. Choose a well founded ordering of $J$, $J \xleftarrow[\psi]{\sim} \kappa_\alpha$, for some ordinal $\kappa_\alpha$. Via transfinite induction define $A^{\alpha,\beta} \hookrightarrow A^{\alpha,\beta +1}$ by the pushout:
	$$\begin{tikzcd}
	\Lambda^{\mathcal J_{\psi(\beta + 1)}}_{k_{\psi(\beta + 1)}}
	 \arrow[r, hook] \arrow[d]
	 & \Delta^{\mathcal J_{\psi(\beta + 1)}} \arrow[d]\\
	A^{\alpha, \beta} \arrow[r] & A^{\alpha, \beta +1}
	\end{tikzcd}$$
	and via $$ A^{\alpha, \beta} \hookrightarrow A^{\alpha,\lambda} = \varinjlim_{\beta' < \lambda} A^{\alpha,\beta'}$$ for limit ordinals $\lambda \leq \kappa_\alpha$. Then (up to natural isomorphism) $A^{\alpha} \hookrightarrow A^{\alpha +1}$ is given by the transfinite composition of the diagram given by $(A^{\alpha, \beta})_{\beta < \kappa_{\alpha}}$ All the maps to successor ordinals are given by elementary expansions. Thus, the statement to be shown follows by the fact that a transfinite composition of transfinite compositions is again a transfinite composition.\\
	\\Again, by definition, we have \ref{proEqCharSaeI1} $\implies$ \ref{proEqCharSaeI6} and \ref{proEqCharSaeI3} $\implies$ \ref{proEqCharSaeI1}. So, by the equivalences we have already shown, it suffices to show \ref{proEqCharSaeI6} $\implies$ \ref{proEqCharSaeI5}. We need to show that the class given by transfinite compositions of elementary expansions is closed under pushout and transfinite composition. For pushouts, this is clear by commutativity of cobase change and transfinite compositions (or any colimit, that is) and composition of pushout squares. For transfinite composition, closedness follows by composability of transfinite compositions.
\end{proof}
In particular, we obtain the following useful characterization of finite FSAEs.
\begin{corollary}\label{corSaeareExp}
	Let $A \xhookrightarrow{s} B$ be a cofibration such that $B_{n.d.} \setminus A_{n.d.}$ is finite. Then, the following are equivalent.
	\begin{enumerate}
		\item $s$ is a (finite) FSAE.
		\item $s$ admits a regular pairing. \label{corSaeareExpCHarReg}
		\item $s$ admits a finite anodyne presentation.
		\item $s$ is a composition, possibly empty, of elementary expansions. 
		\item $s$ lies in the smallest class of morphisms that contains all admissible horn inclusions and isomorphisms and is closed under pushout and composition. \label{corSaeareExpCharClass}
	\end{enumerate}
\end{corollary}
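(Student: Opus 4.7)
The plan is to deduce this corollary directly from Proposition \ref{propEqCharSaeTot}, tracking carefully how each transfinite operation used there collapses to a finite one under the additional hypothesis that $B_{n.d.} \setminus A_{n.d.}$ is finite. Most of the equivalences are immediate restrictions of the corresponding implications in \Cref{propEqCharSaeTot}; the actual content lies in verifying that, in the finite setting, the anodyne presentation and the composition of elementary expansions can both be chosen to be finite, so that conditions (3) and (4) really do match the statements there.

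The first step I would carry out is the implication from the existence of a regular pairing to the existence of a \emph{finite} anodyne presentation. Given $T$, \Cref{remPairToPres} produces a function $F \colon B_{II} \to \omega$ and an $\omega$-indexed presentation $(A^n)_{n \in \omega}$. Since $B_{II} \subset B_{n.d.} \setminus A_{n.d.}$ is finite by assumption, the image $F(B_{II})$ is a finite subset of $\omega$, so there exists $N \in \mathbb N$ with $F(B_{II}) \subset \{0,\dots,N\}$; by construction $A^N = B$, giving a finite anodyne presentation. The reverse direction is tautological.

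Next, to go from a finite anodyne presentation to a finite composition of elementary expansions, I would repeat the argument in the proof of \Cref{propEqCharSaeTot}, noting that at each successor stage $A^\alpha \hookrightarrow A^{\alpha+1}$ the indexing set $J_\alpha$ of the coproduct of admissible horn inclusions is finite (because each such inclusion contributes a fresh non-degenerate simplex to $B_{n.d.}\setminus A_{n.d.}$, which is finite in total). Choosing any linear order on $J_\alpha$ then decomposes the pushout into a finite composition of elementary expansions, and the resulting overall composition is finite. Conversely, any finite composition of elementary expansions is a very short transfinite composition, hence in particular an FSAE, which gives the implications from (4) to (1) and thence to (5) and (2) via \Cref{propEqCharSaeTot}.

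Finally, to close the cycle, I would observe that (5) $\Rightarrow$ (1) is immediate since the class of FSAEs is, by \Cref{defSae}, closed under pushouts and compositions and contains all admissible horn inclusions and isomorphisms. I do not expect any real obstacle here: the whole argument is essentially bookkeeping that guarantees the transfinite data produced by \Cref{propEqCharSaeTot} remains finite; the only subtle point worth double-checking is that at each stage of the anodyne presentation the pushout indexing set $J_\alpha$ is indeed finite, which follows because the union of the $J_\alpha$ injects into the finite set $B_{n.d.} \setminus A_{n.d.}$ via the map sending an admissible horn inclusion to the non-degenerate simplex of $B$ it fills.
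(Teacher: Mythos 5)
Your argument is correct and is essentially the paper's: the corollary is stated there without separate proof as an immediate consequence of \Cref{propEqCharSaeTot}, and the finiteness bookkeeping you supply (the function $F$ from \Cref{remPairToPres} has bounded image since $B_{II}$ is finite, and each pushout indexing set injects into the finite set $B_{n.d.}\setminus A_{n.d.}$) is exactly the implicit content needed to make the transfinite data of that proposition finite. The only nitpick is the phrase ``thence to (5) \dots via \Cref{propEqCharSaeTot}'': the class in the last item of that proposition is closed under \emph{transfinite} composition, so it does not directly yield condition (5); instead (5) should be reached from (4), which is immediate (and available in your chain), since elementary expansions are pushouts of admissible horn inclusions and the class in (5) is closed under pushout and finite composition.
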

\Cref{corSaeareExp} can now be used to finally show the following:
\begin{proposition}\label{propExLotsOffFSAEs}
All cofibrations in \Cref{ExLotsOfFSAEs} are finite FSAEs.
\end{proposition}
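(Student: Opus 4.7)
The plan is to reduce each claim to a criterion from Corollary \ref{corSaeareExp} (existence of a regular pairing, or membership in the smallest class closed under pushout and composition containing admissible horn inclusions and isomorphisms), using Lemma \ref{lemSimtoFil} where possible to recycle non-filtered combinatorics.

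I would begin with item \ref{ExLotsOfFSAEDout} (functoriality under $\textnormal{sd}_P$). By the characterization \ref{proEqCharSaeI6} in Proposition \ref{propEqCharSaeTot}, it suffices to check that $\textnormal{sd}_P$ sends admissible horn inclusions into the class of FSAEs, since $\textnormal{sd}_P$ preserves pushouts and transfinite colimits by Lemma \ref{lemSDPColim}. So this ultimately reduces to checking that each $\textnormal{sd}_P(\Lambda^\mathcal{J}_k) \hookrightarrow \textnormal{sd}_P(\Delta^\mathcal{J})$ admits a regular pairing; I would exhibit one by pairing each non-degenerate simplex of $\textnormal{sd}_P(\Delta^\mathcal{J})$ not in $\textnormal{sd}_P(\Lambda^\mathcal{J}_k)$ with its ``$k$-th coface'' along the thickening direction described in Example \ref{exSDPforSim}, verifying the properness condition using the admissibility of $k$ and regularity via Lemma \ref{lem14Moss} by stratifying type-II simplices by their dimension together with the distance from $\Lambda^\mathcal{J}_k$.

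For item \ref{ExLotsOfFSAE1} (sections of degeneracies), I would proceed by induction on the number of vertices to be added. A section of $\Delta^\mathcal{J} \to \Delta^{\mathcal{J}'}$ is determined by choosing, for each maximal repeated block of $\mathcal{J}$, one vertex; the remaining vertices need to be re-attached one at a time. The inductive step is an elementary expansion of the form $X \hookrightarrow X \cup_{\Lambda^\mathcal{I}_k} \Delta^\mathcal{I}$, where $\mathcal{I}$ is a sub-$d$-flag of $\mathcal{J}$ with a repeated entry at index $k$; the horn $\Lambda^\mathcal{I}_k$ is already present because all proper faces other than the $k$-th have been added in previous steps (they correspond to sub-$d$-flags of $\mathcal{J}'$). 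This makes each added vertex the apex of an admissible horn filling, so by Corollary \ref{corSaeareExp} \ref{corSaeareExpCharClass} the composition is a finite FSAE.

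For item \ref{ExLotsOfFSAE2} (inclusion into naive barycentric subdivision), I would exhibit an explicit regular pairing. Via Lemma \ref{lemSimtoFil} the regularity is a purely combinatorial fact on the underlying simplicial complexes, which is the classical observation that $\Delta^n \hookrightarrow \textnormal{sd}(\Delta^n)$ is an anodyne extension (treated in detail in \cite{MossSae}); what remains is to verify the filtered admissibility condition for the assigned horn index in each pair. I would pair a non-degenerate simplex $\sigma = \{\sigma_0 \subsetneq \cdots \subsetneq \sigma_r\}$ of $\textnormal{sd}(\Delta^\mathcal{J})$ (outside the image of $\Delta^\mathcal{J}$) with the appropriate coface obtained by inserting the unique relevant vertex, and choose the insertion position so that the associated $k$ in the horn $\Lambda^\mathcal{J'}_k \hookrightarrow \Delta^\mathcal{J'}$ corresponds to a repeated entry in the filtration; the choice is forced by the inclusion $\textnormal{l.v.}$ of $\textnormal{sd}(\Delta^\mathcal{J}) \to \Delta^\mathcal{J}$, and the key calculation is that in $\textnormal{sd}$ over $\mathcal{J}$ the stratum index is constant along the last-vertex direction. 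For item \ref{ExLotsOfFSAE3}, I would give the analogous pairing for $\textnormal{sd}_P(\Delta^\mathcal{J})$ using the explicit description from Example \ref{exSDPforSim}; alternatively, one can factor the inclusion $\Delta^\mathcal{J} \hookrightarrow \textnormal{sd}_P(\Delta^\mathcal{J})$ through $\Delta^\mathcal{J} \hookrightarrow \textnormal{sd}(\Delta^\mathcal{J})$ (item \ref{ExLotsOfFSAE2}) and a section of a degeneracy (item \ref{ExLotsOfFSAE1}), and appeal to closure under composition.

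The main obstacle is the bookkeeping for items \ref{ExLotsOfFSAE2} and \ref{ExLotsOfFSAE3}: one has to pair every interior subdivision simplex with a cofilling coface and then certify, for each pair, that the index $k$ of the horn actually satisfies the repetition condition defining admissibility. The potentially nontrivial point here is that the pairing that works combinatorially in the non-filtered setting may not be compatible with the filtration, and one may need to refine the choice (for instance by pairing in the order dictated by the filtration of the last vertex) so that the \emph{filtered} admissibility condition of Definition \ref{defAdmHorn} always holds; Lemma \ref{lemSimtoFil} then upgrades the classical regularity to the filtered statement.
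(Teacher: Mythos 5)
Your overall framework is the paper's: exhibit regular pairings certified by \Cref{lem14Moss} (using \Cref{lemSimtoFil} to handle the filtration), and for \ref{ExLotsOfFSAEDout} reduce via colimit preservation of $\textnormal{sd}_P$ to admissible horn inclusions (the paper then simply cites Douteau's proof that $\textnormal{sd}_P$ of an admissible horn inclusion is an FSAE, rather than constructing a pairing). However, there are genuine gaps where the actual work lies. For \ref{ExLotsOfFSAE1} your inductive step is false as stated: re-attaching a single repeated vertex to an $n$-simplex is not one elementary expansion --- it adds $2^{n+1}$ new non-degenerate simplices (all faces containing the new vertex) and hence requires $2^{n}$ horn fillings --- and the horn $\Lambda^{\mathcal I}_k$ you propose to fill is not ``already present'': every face of it containing the newly added vertex is not a sub-$d$-flag of the smaller simplex and has not been added in earlier steps. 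What is needed is a regular pairing organizing the whole cascade; the paper reduces by composability to a single repetition and pairs each flag containing the retained copy $y_k$ but not the adjacent pair $y_k\le y_{k+1}$ with the flag obtained by inserting $y_{k+1}$ immediately after $y_k$ (the ancestral relation is then empty, and admissibility is exactly the repetition $p(y_k)=p(y_{k+1})$).

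For \ref{ExLotsOfFSAE2} your plan stops precisely at the crux you yourself identify: ``insert the unique relevant vertex, with position forced by $\textnormal{l.v.}$'' does not specify a pairing, and ``the stratum index is constant along the last-vertex direction'' is not the statement that rescues admissibility. The paper does not produce a single global pairing at all; it interpolates $\Delta^{[q]}=D^{q}\subset D^{q-1}\subset\dots\subset D^{-1}=\textnormal{sd}(\Delta^{[q]})$ and, at stage $p$, pairs a flag $(\sigma_0\subsetneq\dots\subsetneq\sigma_r)$ with the one obtained by inserting $\sigma_m\cup\{p\}$ directly after the maximal $\sigma_m$ not containing $p$; admissibility holds because $\sigma_m$ already contains some $p'>p$, so the inserted vertex lies in the same stratum as $\sigma_m$, and regularity follows from \Cref{lem14Moss} with $\Phi=m$. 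Your primary route for \ref{ExLotsOfFSAE3} (a direct pairing read off from \Cref{exSDPforSim}) is the paper's; but the proposed alternative factorization through \ref{ExLotsOfFSAE2} and \ref{ExLotsOfFSAE1} does not exist --- there is no inclusion of $\textnormal{sd}(\Delta^{\mathcal J})$ into $\textnormal{sd}_P(\Delta^{\mathcal J})$ compatible with $p\mapsto(p,\mathcal J)$ (assignments like $\sigma\mapsto(\max p(\sigma),\sigma)$ violate the defining condition of $\textnormal{sd}_P$, and \ref{ExLotsOfFSAE1} only concerns maps between filtered standard simplices). So: right reduction strategy, but the two explicit constructions that carry the proposition --- the pairing for \ref{ExLotsOfFSAE1} and the admissibility-respecting, staged pairing for \ref{ExLotsOfFSAE2} --- are missing or incorrect as written.
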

\begin{proof}
For \ref{ExLotsOfFSAEDout} note that as $\textnormal{sd}_P$ is a left adjoint, it respects all colimits. Hence, by \ref{corSaeareExpCharClass} of \Cref{corSaeareExp}, it suffices to show that $\textnormal{sd}_P$ sends admissible horn inclusions into finite FSAEs. This in done in the proof of \cite[Prop. 2.9]{douSimp}. For the remaining three examples the proof is very similar and relies on \ref{corSaeareExpCHarReg} of \Cref{corSaeareExp} and \Cref{lem14Moss}.\\
\\
We begin with \ref{ExLotsOfFSAE2} of \Cref{ExLotsOfFSAEs} as the proof is by far the most involved of the remaining three. Note that we may assume that $\mathcal{J}$ is non-degenerate, as the requirements of a regular pairing are strictly stronger in the non-degenerate case. So, without loss of generality, let $\mathcal J = [q]$ for some $q \in \mathbb N$. For $p \in [q] \cup \{-1\} $ we define $D^p$ to be the full filtered subcomplex of $\textnormal{sd}(\Delta^{[q]})$ spanned by the vertices $\sigma$ such that for all $p' \in \sigma$ the implication
\begin{align*}
	r \leq \min(p',p) \implies r \in \sigma
\end{align*}
holds. Thinking of $\Delta^{[q]}$ as a subcomplex of $\textnormal{sd}(\Delta^{[q]})$ via the cofibration in question, we then have 
\begin{align*}
	\Delta^{[q]} = D^{q} \subset D^{q-1} \subset ... \subset D^{-1} = \textnormal{sd}(\Delta^{[q]}).
\end{align*}
Note that, due do a slight quirk in the indexing, which could certainly be fixed by a shift, we also have $D^q= D^{q-1}$. However, this makes the remainder of the proof a little more clean when it comes to indexing.
\begin{figure}[H]
	\centering
	\includegraphics[width=120mm]{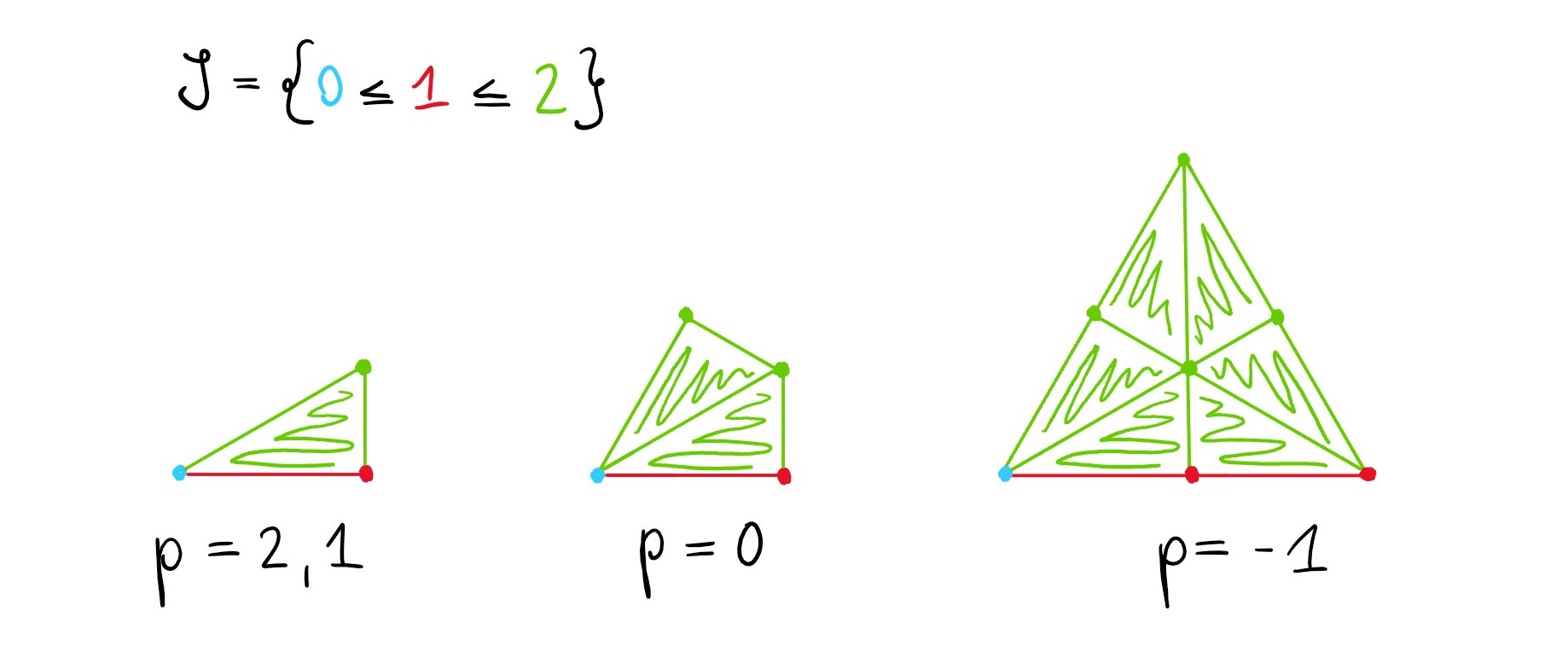}
	\caption{Illustration of the $D^p$ for $q=2$.}
	\label{fig:propExLotsOffFSAED}
\end{figure}
We construct a regular pairing for each of these inclusions (with the first one being trivial). Let $p \in [q]$ and $B:= D^{p-1}, A:= D^{p}$. For $(\sigma_0\leq ... \leq\sigma_k) \in D^{p-1}_{n.d.} \setminus D^{p}_{n.d.}$, by definition, there exists an $i \in \{0,...,k\}$ such that $\sigma_i \notin D^{p}$. One easily deduces, $p \notin \sigma_i$ and there exists a $p' \geq p$ such that $p' \in \sigma_i$. Let $m \in \{0,...,k\}$ be maximal with respect to $\sigma_m$ not containing $p$. Therefore, as $\sigma_i \subset \sigma_m$ we also have $p' \in \sigma_m$. Now, there are two cases: either $m < k$ and $\sigma_{m+1}=\sigma_m \cup \{p\}$, or not. We set $B_{I}$ to the set of simplices with the former property, and $B_{II}$ to the set of simplices with the latter property. Now, define 
\begin{align*}
	T: B_{II} &\longrightarrow B_{I}\\
	( \sigma_0\leq ...\leq \sigma_{k}) &\longmapsto (\sigma_0\leq ...\leq \sigma_m\leq \sigma_m \cup \{p\}\leq \sigma_{m+1}\leq...\leq\sigma_{k}).
\end{align*}
Note first that $\sigma_m \cup \{p\}$ trivially still lies in $D^{p}$. The map is clearly bijective. Furthermore, as $\sigma_m$ contains a $p' >p$, adding $p$ to $\sigma_m$ does not change the stratum $\sigma_m$ it lies in. Hence, the admissibility condition of properness is fulfilled, making this a proper pairing. Define \begin{align*}
	\Phi: B_{II} &\longrightarrow \mathbb{N}\\
	(\sigma_0\leq ...\leq \sigma_{k}) &\longmapsto m,
\end{align*}
where $m$ is maximal with respect to $\sigma_m$ not containing $p$. Now, if $(\sigma_0\leq ...\leq \sigma_{n}) \prec (\tau_0\leq ...\leq \tau_{n})$, then, by definition, $$(\sigma_0\leq ...\leq \sigma_{n}) \subset (\tau_0\leq ...\leq \tau_m\leq \tau_m \cup \{p\} \leq \tau_{m+1}\leq ...\leq \tau_{n}).$$ As $\sigma \neq \tau $ and both lie in $B_{II}$ this means $$(\sigma_0\leq ...\leq \sigma_{n}) = (\tau_0\leq ...\leq \tau_{m-1}\leq \tau_m \cup \{p\}\leq \tau_{m+1}\leq ...\leq \tau_{n}).$$ In particular, $$\Phi(\sigma) = m-1 < m = \Phi(\tau).$$ By \Cref{lem14Moss}, this shows the regularity of $T$. \begin{figure}[H]
	\centering
	\includegraphics[width=\textwidth]{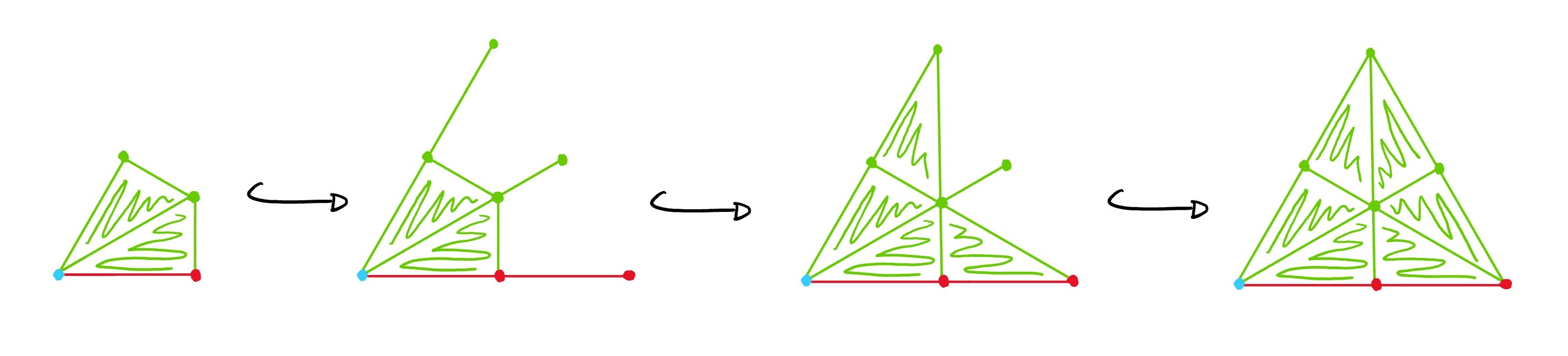}
	\caption{Illustration of the induced anodyne presentation of $D^{0} \hookrightarrow D^{-1}$, for $q=2$. The steps in this presentation are the ones corresponding to $\textnormal{dim}(\sigma)=0,1,1$ and $\Phi(\sigma)=0,0,1$ in this order.}
	\label{fig:propExLotsOffFSAEE}
\end{figure}
The proof of the remaining cases \ref{ExLotsOfFSAE1} and \ref{ExLotsOfFSAE3} is very similar to this. So we only specify $B_I$, $B_{II}$, $T$ and $\Phi$ and leave it to the reader to check that this actually defines a regular pairing. $B$ and $A$ here refer to the respective codomain and domain of the cofibration. For \ref{ExLotsOfFSAE1}, first note that ,by composability of FSAEs, we may restrict to the case $\mathcal J = (x_0\leq...\leq x_k\leq...\leq x_n)$ and 
$\mathcal J' = (x_0\leq...\leq x_k\leq x_k...\leq x_n)$. The section of the degeneracy map is the either given by the $k$-th or $(k+1)$-th face map. We do the $k$ case, the other one is analogous. Denote the vertices of $\mathcal J'$ by $y_0,...,y_{n+1}$. A simplex in $B_{n.d.} \setminus A_{n.d.}$ is given by a d-flag $(y_{i_0}\leq ...\leq y_{k} \leq ... \leq y_{i_{j}})$. We then set $B_{I}$ to those simplices where the vertex $y_{k}$ is followed by $y_{k+1}$, and $B_{II}$ to the set of those where it is not. Further, set: \begin{align*}
	T: B_{II} &\longrightarrow B_I\\
	(y_{i_0}\leq ...\leq y_{k}\leq ... y_{i_{j}}) &\longmapsto (y_{i_0}\leq ...\leq y_{k}\leq y_{k+1}\leq ... y_{i_{j}}).
\end{align*} 
Note that there is no relevant (in the sense of \Cref{lem14Moss}) ancestral relation to check for this pairing, so it is automatically regular. 
\\
For \ref{ExLotsOfFSAE3} a simplex in $B_{n.d.} \setminus A_{n.d.}$ is given by a flag $((p_0, \sigma_0)\leq...\leq(p_n, \mathcal \sigma_n))$ in $P \times \textnormal{sd}(\Delta^{\mathcal J})$, where $p_i \in p_{\Delta^{\mathcal J}}(\sigma_{0})$ and $\sigma_0 \neq \mathcal J$. Again, let $m$ be maximal with respect to $\sigma_m \neq \mathcal J$. Further, let $B_{I}$ be the set of those flags where $m < n$ and $p_m = p_{m+1}$, and $B_{II}$ to the set of those where this does not hold. Define \begin{align*}
	T: B_{II} &\longrightarrow B_I\\
	\big((p_0, \sigma_0)\leq...\leq(p_n, \mathcal \sigma_n) \big) &\longmapsto \big ((p_0, \sigma_0)\leq...\leq(p_m, \mathcal \sigma_m)\leq (p_m, \mathcal J)\leq...\leq(p_n, \mathcal J) \big)
\end{align*}
and 
\begin{align*}
	\Phi: B_{II} &\longrightarrow \mathbb{N}\\
		\big ((p_0, \sigma_0)\leq...\leq(p_n, \mathcal \sigma_n) \big ) &\longmapsto m,
\end{align*}
with $m$ as above. We have illustrated the induced anodyne presentation in \Cref{fig:AnIncSDp}.
\begin{figure}[H]
	\centering
	\includegraphics[width=\textwidth]{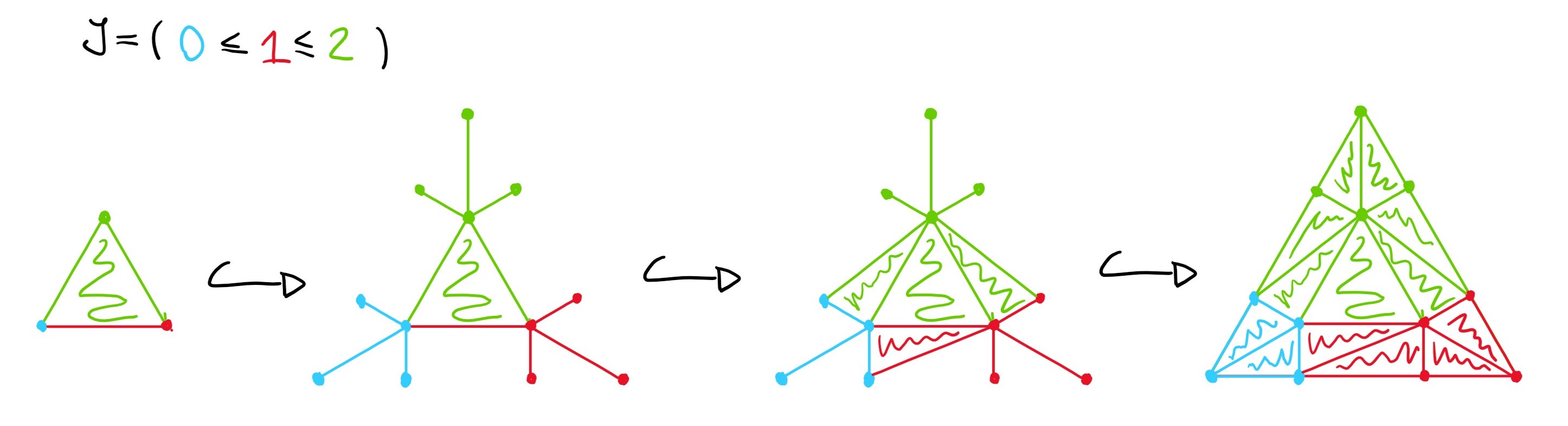}
	\caption{Illustration of the anodyne presentation induced by the construction for the proof of \ref{ExLotsOfFSAE3} above, in the case $\mathcal J = (0 \leq 1 \leq 2).$ The steps in this presentation are the ones corresponding to $\textnormal{dim}(\sigma)=0,1,1$ and $\Phi(\sigma)=0,0,1$ (in this order).}
	\label{fig:AnIncSDp}
\end{figure}
\end{proof}
To finish this section, we prove \Cref{lemSav}, which essentially allows us to replace fibrant replacements by finite FSAEs. It is at the heart of many arguments in the construction of a simple homotopy theory for filtered simplicial sets.
\begin{lemma}\label{lemSav}
Consider a diagram in $\textnormal{s\textbf{Set}}_P$ as below
$$\begin{tikzcd}
C \arrow[rd] & A \arrow[d, hook, "s"]\\
& B
\end{tikzcd},
$$
where $A$ and $C$ are finite simplicial sets and $s$ is an FSAE. Then there exists a finite filtered simplicial subset $B' \subset B$ and a factorization as below:
$$\begin{tikzcd}
C \arrow[rdd] \arrow[rd, dashed] & A \arrow[d, hook, dashed] \arrow[dd, hook, bend left = 60, "s"] 
\\ & B' \arrow[d , hook]
\\
& B
\end{tikzcd}
$$
such that both $A \hookrightarrow B'$ and $B' \hookrightarrow B$ are FSAEs.
\end{lemma}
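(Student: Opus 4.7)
The first step will be to pass, via \Cref{propEqCharSaeTot}, from the FSAE $s : A \hookrightarrow B$ to a regular pairing $T : B_{II} \to B_I$ together with the ordinal-valued function $F : B_{II} \to \omega$ from \Cref{remPairToPres}, which satisfies $\sigma \prec \tau \implies F(\sigma) < F(\tau)$. The plan is to build $B'$ directly as the filtered simplicial subset generated by $A$, the (finitely many) non-degenerate simplices of the image of $C$, and all paired prerequisites needed to fill the horns involved; the point is that regularity of $T$ makes the total set of such prerequisites finite. For convenience I set $m(\nu) := \nu$ if $\nu \in B_{II}$ and $m(\nu) := T^{-1}(\nu)$ if $\nu \in B_I$, so that $m(\nu)$ is always the type-II representative of a paired cell.

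Concretely, for every $\sigma \in B_{II}$ I would define by induction on $F(\sigma)$ a finite set
\[
\mathrm{Dep}(\sigma) := \{\sigma,\ T(\sigma)\} \cup \bigcup_{\nu} \mathrm{Dep}(m(\nu)) \subset B_{n.d.}\setminus A_{n.d.},
\]
where $\nu$ ranges over non-degenerate representatives of faces of $T(\sigma)$ lying in $B_{n.d.}\setminus A_{n.d.}$. That each such face must already belong to $A^{F(\sigma)-1}$ in the anodyne presentation from \Cref{remPairToPres} ensures $F(m(\nu)) < F(\sigma)$, so the recursion is well-founded, and an immediate induction gives that $\mathrm{Dep}(\sigma)$ is finite (this is the finiteness principle underlying \Cref{lemTech}). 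Using that any face of $\sigma = d_{k_\sigma} T(\sigma)$ is also a face of $T(\sigma)$, one further checks inductively that $\mathrm{Dep}(\sigma)$ is closed under taking non-degenerate faces, under $T$, and under $T^{-1}$. I then let $B'$ be the filtered simplicial subset of $B$ whose set of non-degenerate simplices is $A_{n.d.} \cup \bigcup_\mu \mathrm{Dep}(m(\mu))$, with $\mu$ running through the non-degenerate simplices of the image of $C$ that are not already in $A_{n.d.}$. Being a finite union of finite sets that is closed under non-degenerate faces, this is indeed the non-degenerate-simplex set of a finite filtered simplicial subset, and by construction $A \subset B'$ while the image of $C$ lies in $B'$, giving the required factorization.

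To conclude I would verify that $T$ restricts to a proper regular pairing both on $A \hookrightarrow B'$ (with type-II set $B'_{n.d.} \cap B_{II}$) and on $B' \hookrightarrow B$ (with type-II set $B_{II}\setminus B'_{II}$); this is exactly where the closure of $B'_{n.d.}\setminus A_{n.d.}$ under $T$ and $T^{-1}$ is crucial, since it guarantees that the partition into types is preserved by each restriction. Properness is inherited from $T$, and regularity of both restrictions follows because the restriction of $F$ continues to satisfy the defining implication for $\prec$, so the restricted ancestral relations are still well-founded. \Cref{propEqCharSaeTot} then yields that both $A \hookrightarrow B'$ and $B' \hookrightarrow B$ are FSAEs. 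The main subtlety, and the reason for the unusual shape of the definition of $\mathrm{Dep}$, is that a naive $\preceq_T$-downward closure as in \Cref{lemTech} is \emph{not} stable under $T^{-1}$: a type-I face $\mu$ of $T(\sigma)$ need not satisfy $T^{-1}(\mu) \prec \sigma$ (dimensions can even coincide), so $T^{-1}(\mu)$ may escape any naive downset. Building $T^{-1}(\nu)$ in explicitly via the substitution $\nu \mapsto m(\nu)$ circumvents this while keeping the recursion well-founded through $F$, and this single technical point is the core of the argument.
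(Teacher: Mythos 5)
Your construction does work, and at bottom it follows the same strategy as the paper: choose a regular pairing $T$ for $s$ via \Cref{propEqCharSaeTot}, carve out a finite, face-closed, $T$-stable subset of $B_{n.d.}\setminus A_{n.d.}$ containing the image of $C$, and observe that $T$ restricts to proper, regular pairings on both $A \hookrightarrow B'$ and $B' \hookrightarrow B$. The difference lies in how that subset is produced, and here your stated motivation is off. The paper simply takes $S$ to be the $\preceq_T$-downset generated by $C_{n.d.}\setminus A_{n.d.}$ (after reducing to $C, A \subset B$), which is finite by \Cref{lemTech}, and sets $B'_{n.d.} = S \cup A_{n.d.}$. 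Contrary to the ``main subtlety'' you describe, this naive downset \emph{is} stable under $T^{-1}$: for a type I simplex $\mu$ one has $T^{-1}(\mu) = d_k(\mu)$, so $T^{-1}(\mu)$ is a face of $\mu$ and hence $T^{-1}(\mu) \preceq_T \mu$ already via the face generators of the ancestral preorder; your worry conflates the ancestral relation $\prec$ on $B_{II}$ (where dimensions indeed need not drop) with the preorder $\preceq_T$, which contains all face relations. Since conversely $T(\sigma) \preceq_T \sigma$ by definition, one gets $\sigma \in S$ if and only if $T(\sigma) \in S$, which is exactly the closure the paper invokes. Your recursive $\mathrm{Dep}$-construction therefore reproves by hand, with the auxiliary function $F$, what the downset gives for free; note also that as literally written the recursion is circular at $\nu = \sigma$ (since $\sigma$ is itself a face of $T(\sigma)$ of type II, so $m(\sigma) = \sigma$), and that face must be excluded from the recursive calls — harmless, since $\sigma$ is already placed in $\mathrm{Dep}(\sigma)$ explicitly, but it should be said. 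With that fix your argument is correct; it just buys nothing over the one-line downset argument of the paper.
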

\begin{proof}
	By mono-epi factorization in $\textnormal{sSet}$ we may without loss of generality assume that $C, A \subset B$. Now, choose a regular pairing for $s$, $T$. This exists by \Cref{propEqCharSaeTot}. Now, let $S \subset B$ be the downset generated by $C_{n.d.}\setminus A_{n.d.}$ in $B_{n.d.}\setminus A_{n.d.}$ with respect to $\preceq_T$. As $C_{n.d.}$ is finite, by \Cref{lemTech}, $S$ is finite. Further, $S \cup A_{n.d.}$ is closed under the face relation, by construction. In particular, there is a unique finite filtered sub-simplicial set $B' \subset B$ with $B'_{n.d.} = S \cup A_{n.d.}$. By definition of the ancestral preorder, $\preceq_T$, and the fact that $S = B'_{n.d.} \setminus A_{n.d.}$, we have $\sigma \in B'_{n.d.} \setminus A_{n.d.}$ if and only if $T(\sigma) \in B'_{n.d.} \setminus A_{n.d.}$. Hence, $T$ restricts to a pairing on $A \hookrightarrow B'$. This is clearly still regular and it is proper, as any subset of a well founded set is well founded. By the same argument, the same holds for the restriction of $T$ to $B' \hookrightarrow B$. In particular, we get a factorization diagram as in the claim.
\end{proof}
\subsection{A small object argument}\label{secSmall} The reader familiar with Quillens small object argument (see for ex. \cite[Ch. 10.5]{hirschhornModel}) will have noticed that it applies to the class of admissible horn inclusions and that the relative cell complexes in this setting are just the FSAEs. The reader unfamiliar with the argument is advised a quick skim of \cite[Ch. 10.5]{hirschhornModel} as we rely on some nomenclature from this source. 
\begin{definition}\cite[Def. 10.5.15]{hirschhornModel}\label{defSmallApp}
	Let $\mathcal C$ be a category and $\mathcal I$ a set of morphisms in $\mathcal C$. We say that $\mathcal I$ \textit{permits the small object argument} if the domains of the elements in $\mathcal I$ are small relative to $\mathcal I$ (see \cite[Def. 10.5.12]{hirschhornModel}).
\end{definition}
\begin{example}\label{exHornSmall}
	The set of admissible horn inclusions $\mathcal H$ admits the small object argument. This is immediate, as all objects involved are finite filtered simplicial sets. Hence, by \Cref{lemCharComp}, the sources of $\mathcal H$ are small with respect to any class of morphisms.
\end{example}
The small object argument then states:
\begin{proposition}\label{propSmallObj}\cite[Prop. 10.5.16]{hirschhornModel}
	If $\mathcal C$ is a cocomplete category and $\mathcal I$ is a set of maps in $\mathcal C$ that permits the small object argument, then there is a functorial factorization of every map in $\mathcal C$ into a relative $\mathcal I$-cell complex (see \cite[Def. 10.5.8]{hirschhornModel}) followed by a morphism that has the right lifting property with respect to all morphisms in $\mathcal I$.
\end{proposition}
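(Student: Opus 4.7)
The plan is to give the standard transfinite construction underlying the small object argument, making explicit how the smallness hypothesis of \Cref{defSmallApp} guarantees termination. Since the statement is a general fact of category theory (and cited from \cite{hirschhornModel}), the proof amounts to reproducing the argument rather than relying on anything specific to $\textnormal{s\textbf{Set}}_P$.

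Given a morphism $f: X \to Y$ in $\mathcal C$, I would construct the factorization inductively over an ordinal $\kappa$ large enough so that every domain $A$ of a morphism in $\mathcal I$ is $\kappa$-small relative to the class of relative $\mathcal I$-cell complexes (such a $\kappa$ exists by the small object hypothesis). Set $Z^0 := X$ with $Z^0 \to Y$ equal to $f$. Given $Z^\alpha \to Y$, let $S^\alpha$ be the set of all commutative squares
\begin{center}
\begin{tikzcd}
A_i \arrow[r, "g_i"] \arrow[d, "\iota_i"'] & Z^\alpha \arrow[d] \\
B_i \arrow[r, "h_i"] & Y
\end{tikzcd}
\end{center}
with $\iota_i \in \mathcal I$, and form $Z^{\alpha+1}$ as the pushout
\begin{center}
\begin{tikzcd}
\bigsqcup_{S^\alpha} A_i \arrow[r, "{(g_i)}"] \arrow[d, "{\bigsqcup \iota_i}"'] & Z^\alpha \arrow[d] \\
\bigsqcup_{S^\alpha} B_i \arrow[r] & Z^{\alpha+1}
\end{tikzcd}
\end{center}
equipped with the induced map $Z^{\alpha+1} \to Y$ coming from the universal property and the maps $h_i$. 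For a limit ordinal $\lambda \leq \kappa$, set $Z^\lambda := \varinjlim_{\alpha < \lambda} Z^\alpha$. Finally set $Z := Z^\kappa$, so that $f$ factors as $X \to Z \to Y$.

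By construction, each step $Z^\alpha \hookrightarrow Z^{\alpha+1}$ is a pushout of a coproduct of maps in $\mathcal I$, and $Z^\lambda$ at limits is a transfinite composition; this is exactly what it means for $X \to Z$ to be a relative $\mathcal I$-cell complex. Functoriality is manifest because every step of the construction is a canonical (co)limit built from $f$ itself.

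The essential step, and the one that really uses the hypothesis of \Cref{defSmallApp}, is showing that $Z \to Y$ has the right lifting property with respect to all $\iota: A \to B$ in $\mathcal I$. Given a square $(g,h)\colon \iota \Rightarrow (Z \to Y)$, the map $g\colon A \to Z = \varinjlim_\alpha Z^\alpha$ factors through some $Z^\alpha$ by $\kappa$-smallness of $A$, producing a square over $Z^\alpha$. By construction this square is an element of $S^\alpha$, hence is filled by its pushout edge $B \to Z^{\alpha+1} \to Z$, yielding the desired lift. Compatibility with $h$ follows from the fact that the maps $Z^\alpha \to Y$ were defined coherently by the universal property of each pushout. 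This verification is the only real content of the argument; the rest is bookkeeping.
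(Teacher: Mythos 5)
The paper does not prove this proposition itself; it simply cites \cite[Prop.\ 10.5.16]{hirschhornModel}, and your reconstruction is precisely Hirschhorn's transfinite small object argument. Your proof is correct in all essentials (with the standard, elidable caveat that $\kappa$ should be chosen to be a suitably large \emph{regular} cardinal so that $\kappa$-smallness lets $A \to Z^\kappa$ factor through a proper stage), so it matches the source the paper defers to.
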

\begin{corollary}\label{corFibReplace}
	There exists a functor $F: \textnormal{s\textbf{Set}}_P \to \textnormal{s\textbf{Set}}_P$ together with a natural transformation $1 \xrightarrow{i} F$ such that for each $X \in \textnormal{s\textbf{Set}}_P$: \begin{enumerate}
		\item $F(X)$ is fibrant.
		\item $i_X$ is an FSAE.
	\end{enumerate}
\end{corollary}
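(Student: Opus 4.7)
The plan is to invoke the small object argument provided by \Cref{propSmallObj} with $\mathcal{I}$ taken to be the set $\mathcal{H}$ of admissible horn inclusions. By \Cref{exHornSmall}, $\mathcal{H}$ permits the small object argument. Since $\textnormal{s\textbf{Set}}_P$ is by construction the overcategory $\textnormal{s\textbf{Set}}_{/N(P)}$, the object $N(P)$ (equipped with the identity filtration) is terminal, and for each $X \in \textnormal{s\textbf{Set}}_P$ the filtration $p_X : X \to N(P)$ is the unique morphism into this terminal object. Any morphism $f: X \to Y$ in $\textnormal{s\textbf{Set}}_P$ automatically satisfies $p_Y \circ f = p_X$, so the assignment $X \mapsto p_X$ is itself functorial in a trivial sense.

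Applying \Cref{propSmallObj} to the family of morphisms $(p_X)_{X \in \textnormal{s\textbf{Set}}_P}$ yields a functorial factorization
$$X \xhookrightarrow{i_X} F(X) \xrightarrow{q_X} N(P),$$
where $i_X$ is a relative $\mathcal{H}$-cell complex and $q_X$ has the right lifting property against every element of $\mathcal{H}$. The functoriality clause of \Cref{propSmallObj}, together with the observation above about $p_X$, upgrades $F$ to a functor $\textnormal{s\textbf{Set}}_P \to \textnormal{s\textbf{Set}}_P$ and exhibits $i : 1_{\textnormal{s\textbf{Set}}_P} \to F$ as a natural transformation.

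The two asserted properties then reduce to translations of existing language. A relative $\mathcal{H}$-cell complex is by definition a transfinite composition of pushouts of (coproducts of) elements of $\mathcal{H}$, which is exactly what \Cref{propEqCharSaeTot} calls an anodyne presentation (condition \ref{proEqCharSaeI4}); hence $i_X$ is an FSAE. For the fibrancy of $F(X)$, given any admissible horn inclusion $\Lambda^{\mathcal{J}}_k \hookrightarrow \Delta^{\mathcal{J}}$ and any stratum preserving map $\sigma : \Lambda^{\mathcal{J}}_k \to F(X)$, postcomposition of $\sigma$ with $q_X$ agrees with the canonical filtration $\Delta^{\mathcal{J}} \to N(P)$ on $\Lambda^{\mathcal{J}}_k$, so the pair assembles into a lifting problem for $q_X$ against an element of $\mathcal{H}$; the right lifting property of $q_X$ then supplies the required filler $\Delta^{\mathcal{J}} \to F(X)$. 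By \Cref{thrmDouModSS}, this is precisely fibrancy in the Douteau model structure. No serious obstacle is expected here, since \Cref{propSmallObj}, \Cref{exHornSmall}, and the characterization of FSAEs in \Cref{propEqCharSaeTot} do all the real work; the only small points to be careful about are the interpretation of $N(P)$ as the terminal object and the fact that relative cell complexes in the sense of Hirschhorn coincide with our anodyne presentations.
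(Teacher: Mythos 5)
Your proposal is correct and follows essentially the same route as the paper: apply the small object argument (\Cref{propSmallObj}, with smallness from \Cref{exHornSmall}) to factor the terminal maps $X \to N(P)$ against the set of admissible horn inclusions, identify the relative $\mathcal H$-cell complex part as an FSAE via \Cref{propEqCharSaeTot}, and read off fibrancy of $F(X)$ from the right lifting property of $F(X) \to N(P)$. The only difference is that you spell out the fibrancy step a bit more explicitly than the paper does, which is harmless.
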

\begin{proof}
By \Cref{exHornSmall} we can apply \Cref{propSmallObj} to the class of admissible horn inclusions $\mathcal H$ and terminal maps $X \to N(P)$. We obtain a functorial splitting $$\begin{tikzcd}
X \arrow[rd] \arrow[d, dashed, hook] \\
F(X) \arrow[r] & N(P)
\end{tikzcd}.
$$ where the vertical morphism lies in $cell(\mathcal H)$, i.e. is an FSAE and the horizontal morphism has the right lifting property with respect to $\mathcal H$, making it a fibration.
\end{proof}
\Cref{corFibReplace} is particularly useful because it sheds light on the interaction of FSAEs and the homotopy category of $\textnormal{s\textbf{Set}}_P$, $\mathcal{H}\textnormal{s\textbf{Set}}_P$. This is summarized in the following proposition which we are using all throughout the remainder of this work. 
\begin{proposition}\label{propHoCharFin}
	Let $X,Y \in \textnormal{s\textbf{Set}}_P$.
	\begin{enumerate}
			\item Let $X \xrightarrow{\alpha} Y$ be a morphism in $\mathcal{H}\textnormal{s\textbf{Set}}_P$. Then there exists a morphism $f:X \xhookrightarrow Z$ in $\textnormal{s\textbf{Set}}_P$ and an FSAE $Y \xrightarrow{s} Z$ such that $$ \alpha = [s]^{-1}\circ [f].$$ If $X$ and $Y$ are finite, then $Z$ can be taken to be finite.
			\item Let $X \xrightarrow{f,g} Y$ be two morphisms in $\textnormal{s\textbf{Set}}_P$. Then $[f] = [g]$ if and only if there exists an FSAE $Y \xhookrightarrow{s} Z$ such that $s \circ f$ and $s \circ g$ are strictly simplicially (stratum preserving, elementary) homotopic. If $X$ and $Y$ are finite, then the same statement holds with $Z$ finite.
	\end{enumerate}
\end{proposition}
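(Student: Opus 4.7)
The plan is to use the functorial FSAE fibrant replacement $i_X\colon X\hookrightarrow F(X)$ from \Cref{corFibReplace} as the central technical device, together with \Cref{propHoClasses} to convert morphisms in $\mathcal H\textnormal{s\textbf{Set}}_P$ into strict simplicial homotopy classes, and finally \Cref{lemSav} to cut everything down to the finite subcomplex needed in the second half of each statement. Throughout, I will use that every object of $\textnormal{s\textbf{Set}}_P$ is cofibrant, since cofibrations in the Douteau model structure are precisely monomorphisms (\Cref{thrmDouModSS}), so the hypotheses of \Cref{propHoClasses} with respect to $F(Y)$ are automatic.

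For part (i), let $\alpha\colon X\to Y$ be a morphism in $\mathcal H\textnormal{s\textbf{Set}}_P$ and let $s_Y\colon Y\hookrightarrow F(Y)$ be the FSAE from \Cref{corFibReplace}. Since $s_Y$ is a weak equivalence, $[s_Y]\circ\alpha$ is a well-defined morphism in $\mathcal H\textnormal{s\textbf{Set}}_P$ with fibrant target; by \Cref{propHoClasses} it is represented by a strict morphism $\tilde f\colon X\to F(Y)$ in $\textnormal{s\textbf{Set}}_P$, giving $\alpha=[s_Y]^{-1}\circ[\tilde f]$ as required. When $X$ and $Y$ are finite, I then apply \Cref{lemSav} to the data $\tilde f\colon X\to F(Y)$ together with the FSAE $s_Y\colon Y\hookrightarrow F(Y)$: it produces a finite intermediate $Z\subset F(Y)$ with both $Y\hookrightarrow Z$ and $Z\hookrightarrow F(Y)$ FSAEs, and a factorization $\tilde f=(X\xrightarrow f Z\hookrightarrow F(Y))$. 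Composing with $[Z\hookrightarrow F(Y)]^{-1}$ rewrites $\alpha$ as $[Y\hookrightarrow Z]^{-1}\circ[f]$ with $Z$ finite.

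For part (ii), the direction ``right-to-left'' is immediate: if $Y\hookrightarrow Z$ is an FSAE and $s\circ f\simeq s\circ g$ by a strictly simplicial homotopy, then $s$ becomes invertible in $\mathcal H\textnormal{s\textbf{Set}}_P$ and $[f]=[s]^{-1}\circ[s\circ f]=[s]^{-1}\circ[s\circ g]=[g]$. For the converse, assume $[f]=[g]$ and precompose with $[s_Y]$ to get $[s_Y\circ f]=[s_Y\circ g]$ in $\mathcal H\textnormal{s\textbf{Set}}_P$. Now $X$ is cofibrant and $F(Y)$ is fibrant, so the map $[X,F(Y)]_P\to\textnormal{Hom}_{\mathcal H\textnormal{s\textbf{Set}}_P}(X,F(Y))$ of \Cref{propHoClasses} is a bijection: hence $s_Y\circ f$ and $s_Y\circ g$ are already equivalent modulo strictly simplicial homotopies. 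In the cofibrant/fibrant situation (since $\textnormal{s\textbf{Set}}_P$ is a simplicial model category by \Cref{thrmDouModSS}), this generated equivalence relation coincides with the bare strict simplicial homotopy relation, so there is a single $H\colon \Delta^1\otimes X\to F(Y)$ witnessing $s_Y\circ f\simeq s_Y\circ g$. When $X$ and $Y$ are finite, the domain $\Delta^1\otimes X$ is finite, so the image $C:=s_Y(Y)\cup H(\Delta^1\otimes X)$ is a finite sub-simplicial set of $F(Y)$ containing $s_Y(Y)$; applying \Cref{lemSav} to $C\hookrightarrow F(Y)$ and the FSAE $s_Y$ produces a finite $Z$ with $Y\hookrightarrow Z$ an FSAE into which $H$ factors, proving the finite version.

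The main obstacle is the subtle but key step that, in the cofibrant/fibrant setting, the strict simplicial homotopy relation on $\textnormal{Hom}_{\textnormal{s\textbf{Set}}_P}(X,F(Y))$ is already an equivalence relation and therefore equals the equivalence relation generated by it. Once this is in hand, the rest of the argument is purely bookkeeping, and \Cref{lemSav} is tailor-made to transfer the constructions back from the potentially huge $F(Y)$ to a finite subcomplex while preserving the FSAE property on both sides of the factorization. It will be worth pointing out explicitly that the two finite FSAEs produced by \Cref{lemSav} (namely $Y\hookrightarrow Z$ and $Z\hookrightarrow F(Y)$) are what let us invert $[Y\hookrightarrow Z]$ in $\mathcal H\textnormal{s\textbf{Set}}_P$, so that the representation $\alpha=[Y\hookrightarrow Z]^{-1}\circ[f]$ in the finite setting genuinely agrees with the a priori $F(Y)$-valued description.
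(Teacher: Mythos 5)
Your proposal is correct and follows essentially the same route as the paper: functorial fibrant replacement via \Cref{corFibReplace}, representation of homotopy-category morphisms via \Cref{propHoClasses}, and cutting down to a finite intermediate via \Cref{lemSav}. The only step you leave implicit in part (ii) — that the factored homotopy $H'\colon X\otimes\Delta^1\to Z$ really has endpoints $i'_Y\circ f$ and $i'_Y\circ g$ (not merely that $H$ factors through $Z$) — follows from the fact that $Z\hookrightarrow F(Y)$ is a monomorphism, which the paper spells out and you should record.
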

\begin{proof}
 Denote by $F, 1\xrightarrow{i}F$ the fibrant replacement functor from \Cref{corFibReplace}. We start with the proof of $(i)$.
We only do the proof in the finite case, as the infinite one is a strict simplification of it. Let $X,Y \in \textnormal{s\textbf{Set}}_P^{\textnormal{fin}}$ and $X \xrightarrow{\alpha} Y$ be a morphism in $\mathcal{H}\textnormal{s\textbf{Set}}_P$. Then, as $F(Y)$ is fibrant and $X$ cofibrant, $\alpha$ fits into a commutative diagram \begin{equation*}\label{diagFulA}
\begin{tikzcd}
X \arrow[r, "\alpha"] \arrow[rd, "{[f]}",swap] 
& 
Y \arrow[d, "{[i_Y]}"]\\
& F(Y)
\end{tikzcd},
\end{equation*}
where $f$ is a morphism $X \xrightarrow{f} F(Y)$ in $\textnormal{s\textbf{Set}}_P$ (see \Cref{propHoClasses}). Now, apply \Cref{lemSav} to get a factorization 
$$\begin{tikzcd}
X \arrow[r, "\alpha"] \arrow[rdd, "{[f]}", swap] \arrow[rd, dashed, "{[f']}"] & Y \arrow[d, hook, dashed, "{[i_Y']}"] \arrow[dd, hook, bend left = 60, "{[i_Y]}"] 
\\ & Y' \arrow[d , hook]
\\
& F(Y)
\end{tikzcd},
$$
where we know of the outer diagram and all of the triangle diagrams but the most upper one that they are commutative, and where $i'_Y$ is an FSAE and $Y'$ is finite. Using the fact that the morphism $Y' \to F(Y)$ is an FSAE and hence an isomorphism in the homotopy category and chasing the diagram shows that the most upper triangle also commutes. To summarize, we have shown $$[i'_Y] \circ \alpha = [f']$$ and hence $$\alpha = [i'_Y]^{-1}[f']$$ for the FSAE $i_Y'$ between finite filtered simplicial sets.\\
\\
The if part is immediate from the fact that every FSAE is a weak equivalence and again \Cref{propHoClasses}. For the only if statement, consider the fibrant replacement $Y \xhookrightarrow{i_Y} F(Y)$. Then $[i_Y \circ f] = [i_Y \circ g]$. As $X$ is cofibrant and $F(Y)$ is fibrant, we have a simplicial homotopy $H: X \otimes \Delta^1 \to F(Y)$ between $i_Y \circ f$ and $i_Y \circ g$ (see for ex. \cite{hirschhornModel}). In other words there is a commutative diagram
$$ \begin{tikzcd}
X \sqcup X \arrow[d, "i_0 \sqcup i_1"]\arrow[r, "{f \sqcup g}"] 					& Y \arrow[d, hook, "i_Y"]\\
X \otimes \Delta^1 \arrow[r, "H"]															& F(Y) 
\end{tikzcd}.$$
If we apply \Cref{lemSav} to this we obtain: 
$$ \begin{tikzcd}
X \sqcup X \arrow[dd, "i_0 \sqcup i_1"]\arrow[rr, "f \sqcup g"] &					& Y \arrow[dd, hook, "i_Y"] \arrow[ld, hook, dashed, "i'_Y"] \\
&	Y' \arrow[rd, hook] 	\\
X \otimes \Delta^1 \arrow[rr, "H"] \arrow[ru, "H'", dashed]			 &					& F(Y) 
\end{tikzcd}, $$
where $i_Y'$ is an FSAE of finite filtered simplicial sets.
A quick diagram chase together with the fact that $Y' \to F(Y)$ is a monomorphism shows that the upper left part of the diagram commutes. In particular, $i_Y' \circ f$ and $i_Y' \circ g$ are homotopic through a simplicial homotopy. 
\end{proof}
\subsection{The interaction of FSAEs with mapping cylinders and products}\label{subsecSAEandProd}
Vital to many of the arguments we need for the construction of a Whitehead group for filtered simplicial sets are the various interactions of strong anodyne extensions with mapping cylinders. In the classical cellular setting, this can be found in \cite[Ch. 2, \S 5]{cohenCourse} and from a simplicial perspective in Whiteheads original publication in \cite[Sec. 6]{whitehead1939simplicial}. We have already seen that the class of strict elementary expansions is rather limiting in this sense, c.f. \Cref{exStrictBad}. In this subsection we show that the same is not the case for general elementary expansions.\\
\\
Recall that in a simplicial model category the simplicial mapping cylinder, $M_f$ of a morphism $f: X \to Y$ is defined via the pushout
\begin{center}
	\begin{tikzcd}
		X \cong X \otimes \Delta^0 \arrow[r, "f"] \arrow[d, "i_0"] & \arrow[d] Y\\
		 X \otimes \Delta^1 \arrow[r] & M_f 
	\end{tikzcd}.
\end{center}
Recall further that if $X$ is cofibrant, then $Y \to M_f$ is an acylic cofibration and $X \xhookrightarrow{i_1} X \otimes \Delta^1 \to M_f$ is a cofibration. This allows to factor any such $f$ into an acyclic cofibration followed by the induced weak equivalence $p_Y:M_f \to Y$. For the category of simplicial sets with the Kan-Quillen model structure, the statement that $Y \hookrightarrow M_f$ is a acyclic cofibration (i.e. an anodyne extension) can be slightly strengthened. This is a consequence of the following result.
\begin{proposition}\cite[Prop. 16]{MossSae}
	Let $A \xhookrightarrow{s} B$ be a strong anodyne extensions and $X \xhookrightarrow{i} Y$ be any cofibration (both in $\textnormal {sSet}$). Then $$ B \times X \cup_{A \times X} A \times Y \hookrightarrow B \times Y$$ is also a strong anodyne extension.
\end{proposition}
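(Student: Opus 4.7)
The strategy is to promote the anodyne-presentation description of $s$ to one of the pushout-product with $i$ by two standard saturation reductions followed by an explicit combinatorial model case. The main tool will be the equivalence given earlier in the excerpt between strong anodyne extensions and cofibrations admitting a regular pairing (equivalently, an anodyne presentation).

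\textbf{Two reductions.} First, fix $s\colon A\hookrightarrow B$ and consider the class $\mathcal{I}_{s}$ of monomorphisms $i\colon X\hookrightarrow Y$ such that the pushout-product $s\,\square\, i\colon B\times X\cup_{A\times X}A\times Y\hookrightarrow B\times Y$ is a strong anodyne extension. Since $(-)\times B$ and $(-)\times A$ are left adjoints (so preserve pushouts, coproducts and transfinite compositions), and strong anodyne extensions are closed under all these operations, the class $\mathcal{I}_{s}$ is saturated. Because the monomorphisms in $\textnormal{sSet}$ are generated under saturation by the boundary inclusions $\partial\Delta^{m}\hookrightarrow\Delta^{m}$, we may assume $i$ is such a boundary inclusion. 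Second, fix $i$ and filter $B\times Y$ by the subobjects $F^{\alpha}:=B\times X\cup_{A\times X}A^{\alpha}\times Y$, where $(A^{\alpha})_{\alpha\leq\kappa}$ is an anodyne presentation of $s$. Then $F^{0}$ is the source, $F^{\kappa}=B\times Y$, and each successor step $F^{\alpha}\hookrightarrow F^{\alpha+1}$ is a pushout of $(\Lambda^{n}_{k}\hookrightarrow\Delta^{n})\,\square\,i$, where $\Lambda^{n}_{k}\hookrightarrow\Delta^{n}$ is the horn added at stage $\alpha$. By closure of strong anodyne extensions under pushout and transfinite composition, it therefore suffices to prove the claim when $s$ itself is a horn inclusion and $i=(\partial\Delta^{m}\hookrightarrow\Delta^{m})$.

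\textbf{The model case.} It remains to exhibit a regular pairing on
\[
\Lambda^{n}_{k}\times\Delta^{m}\ \cup_{\Lambda^{n}_{k}\times\partial\Delta^{m}}\ \Delta^{n}\times\partial\Delta^{m}\ \hookrightarrow\ \Delta^{n}\times\Delta^{m}.
\]
By Eilenberg--Zilber, non-degenerate simplices of $\Delta^{n}\times\Delta^{m}$ correspond to strictly increasing lattice paths in $[n]\times[m]$; such a simplex lies in the subobject precisely when its projection to $[n]$ avoids a vertex different from $k$ or its projection to $[m]$ is not surjective. The plan is to partition the remaining lattice paths into types I and II, and pair each type-II path with the path obtained by inserting one additional lattice step at the ``critical'' position determined by the horn index $k$ (so that the inserted vertex gives precisely the face of the partner corresponding to the admissible horn index). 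Concretely, one locates the first occurrence of first coordinate $k$ in the path and, depending on whether the preceding step was horizontal or vertical, declares the simplex type I or type II; the type-II partner is the unique path obtained by inserting the opposite step at that place.

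\textbf{Main obstacle.} The hard part is the combinatorial verification that this assignment is (i) a well-defined bijection between types, (ii) proper -- the face index that differentiates a type-II simplex from its partner must be unique and must correspond to the horn index of an actual horn inclusion in the subcomplex generated up to that stage -- and (iii) regular, i.e.\ the ancestral relation is well-founded. For (iii) I will produce an explicit rank function $\Phi$, most naturally a lexicographic score on the lattice path (for instance, the position of the critical step combined with the dimension), and verify that it strictly decreases along $\prec$, invoking Lemma~\ref{lem14Moss}. This bookkeeping, particularly the case analysis showing that replacing a face in the partner path still lands in the intended type-I/II classification, is the bulk of Moss's original argument and the only genuinely non-formal step in the proof.
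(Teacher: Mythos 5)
Your overall route coincides with the paper's (which, for this unfiltered statement, simply imports Moss's argument and only redoes the work in the filtered generality of \Cref{propIncInProd}): reduce by cellular generation in each variable to the single case of a horn inclusion against a boundary inclusion, then exhibit a regular pairing on the lattice-path description of the non-degenerate simplices of $\Delta^{n}\times\Delta^{m}$ and conclude via \Cref{lem14Moss}. Two remarks on your reductions. Strong anodyne extensions are \emph{not} closed under retracts, so the class $\mathcal{I}_{s}$ you define is not known to be saturated and you cannot literally invoke the saturation characterization of monomorphisms; what rescues the step is that monomorphisms of simplicial sets are already generated from the boundary inclusions by coproducts, pushouts and transfinite composition alone (skeletal filtration), which is the fact the paper uses. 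Similarly, $F^{\alpha}$ should be read as the union $B\times X\cup A^{\alpha}\times Y$ inside $B\times Y$ rather than as a pushout over $A\times X$; with that reading, your observation that each successor step is a pushout of (a coproduct of) pushout-products of horns with $i$ is correct, and it replaces the paper's ``the class of $s$ with the property is closed under these operations'' argument by an equivalent filtration argument.

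The genuine gap is in the model case, which is the only non-formal step and the one you defer. The single rule you propose --- locate the first point of the path with first coordinate $k$ and classify by whether the preceding step is horizontal or vertical --- is not well defined in all cases: for $k=0$ there is no preceding step (when column $0$ is present, its first point is the start of the path), and you do not say which type the paths that skip column $k$ altogether receive, nor what their partners are. Moss, and the paper in its filtered generalization, in fact use \emph{two} different pairings: one that removes/inserts the \emph{last} point of column $k$, keyed to the step leaving the column, valid for $k<n$; and one that removes/inserts the \emph{first} point, keyed to the entering step, valid for $k>0$; one then chooses whichever applies (in the filtered case the choice is forced by which neighbouring vertex of the flag repeats). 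As written, your recipe is only the second of these, so it breaks down precisely at the extreme horn $k=0$, and the properness/regularity bookkeeping you postpone is exactly where this case distinction and the skipped-column case are settled. Your proposed rank function (position of the critical step) is essentially the $\Phi$ used in the paper's analogous verifications, so once the pairing is corrected along Moss's lines the remainder of your plan does go through.
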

In particular, one obtains that the inclusions $X \hookrightarrow X \times \Delta^1$ at both endpoints is a strong anodyne extensions. Hence, for $X \xrightarrow{f} Y$ a morphism in $\textnormal{s\textbf{Set}}_P$ the inclusion of $Y \hookrightarrow M_f$, is a strong anodyne extension too, by stability under pushouts. This is of course the simplicial analogue to the respective result for mapping cylinders of cellular maps and cellular expansions (see for example \cite[Cor. 5.1A]{cohenCourse}). It turns out this result generalizes to the filtered setting. To obtain this in a fairly general form, we need the following outer product construction.
\begin{definition}
First, note that for two partially ordered sets $P$ and $P'$ there is a natural isomorphism $N(P) \times N(P') \cong N(P \times P')$. Under this identification, we obtain an outer product functor $$- \otimes -: \textnormal{s\textbf{Set}}_P \times \textnormal{s\textbf{Set}}_P \longrightarrow \textnormal{s\textbf{Set}}_{P\times P'},$$ by simply taking the product of arrows into $N(P)$ and $N(P')$ respectively. In case where $P'$ is a point, this corresponds to the simplicial copower on $\textnormal{s\textbf{Set}}_P$ under $\textnormal{s\textbf{Set}}_{P'} \cong \textnormal{s\textbf{Set}}$, justifying the overload of notation. From the presheaf perspective of \Cref{conPreShPers} this construction is given by: $$X \otimes Y: \Delta(P \times P')^{op} \hookrightarrow \Delta(P)^{op} \times \Delta(P')^{op} \xrightarrow{X \times Y} \textnormal{\textbf{Set}} \times \textbf{\textnormal{\textbf{Set}}} \xrightarrow{- \times -} \textbf{Set}.$$
\end{definition}
\begin{proposition}\label{propIncInProd}
	Let $A \xhookrightarrow{s} B$ be a strong anodyne extension in $\textnormal{s\textbf{Set}}_P$ and let $X \hookrightarrow Y$ be a cofibration in $\textnormal{sSet}_{P'}$. Then
	$$ B \otimes X \cup_{A \otimes X} A \otimes Y \hookrightarrow B \otimes Y$$
	is a strong anodyne extension in $\textnormal{sSet}_{P \times P'}$.
\end{proposition}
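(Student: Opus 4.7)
The strategy is the familiar pushout-product argument of Moss adapted to the filtered setting: reduce by saturation to a single generating case, then produce an explicit regular pairing on the residual complex and verify that every horn filled is admissible in $P\times P'$.

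First I would exploit the closure properties of FSAEs to reduce to generators. Fix the FSAE $s:A\hookrightarrow B$. The class $\mathcal E_s$ of cofibrations $X\hookrightarrow Y$ in $\textnormal{s\textbf{Set}}_{P'}$ for which the indicated pushout-product is an FSAE is stable under pushouts, coproducts, transfinite composition and retracts: these closures follow immediately from \Cref{propEqCharSaeTot} together with the fact that $B\otimes-$ and $A\otimes-$ are left adjoints (hence preserve colimits), so that the pushout-product construction sends colimits in the second variable to colimits of squares, which are computed pointwise. Since the generating cofibrations of the Douteau model structure on $\textnormal{s\textbf{Set}}_{P'}$ are the boundary inclusions $\partial\Delta^{\mathcal K}\hookrightarrow\Delta^{\mathcal K}$ (\Cref{thrmDouModSS}), it suffices to treat $X\hookrightarrow Y=\partial\Delta^{\mathcal K}\hookrightarrow\Delta^{\mathcal K}$. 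An entirely analogous reduction on the $s$-side, using the fact that the saturated class generated by the admissible horn inclusions is precisely the FSAEs, lets us further assume $s=\Lambda^{\mathcal J}_k\hookrightarrow\Delta^{\mathcal J}$ is an admissible horn inclusion. Let $n=\#\mathcal J$, $m=\#\mathcal K$, and denote by $C$ the codomain-side factor $\Delta^{\mathcal J}\otimes\partial\Delta^{\mathcal K}\cup_{\Lambda^{\mathcal J}_k\otimes\partial\Delta^{\mathcal K}}\Lambda^{\mathcal J}_k\otimes\Delta^{\mathcal K}$ inside $\Delta^{\mathcal J}\otimes\Delta^{\mathcal K}$.

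Next I would describe the non-degenerate simplices of $\Delta^{\mathcal J}\otimes\Delta^{\mathcal K}$ not lying in $C$, in order to define a pairing. A non-degenerate $\ell$-simplex of the outer product is a pair of monotone maps $(\alpha,\beta):[\ell]\to[n]\times[m]$ whose combined map is injective; its associated $d$-flag in $P\times P'$ is $\mathcal L(\alpha,\beta)=\bigl((p_{\alpha(0)},p'_{\beta(0)})\leq\cdots\leq(p_{\alpha(\ell)},p'_{\beta(\ell)})\bigr)$. Such a simplex lies outside $C$ iff $\alpha$ hits every element of $[n]\setminus\{k\}$ and $\beta$ is surjective. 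On these simplices I would use (the filtered variant of) Moss's shuffle pairing: without loss of generality $p_k=p_{k+1}$ in $\mathcal J$ (the case $p_k=p_{k-1}$ being symmetric), and I assign the \emph{type II} status to those $(\alpha,\beta)$ whose image in $[n]\times[m]$ does not contain both entries of some horizontal step from $k$ to $k+1$ at a fixed second coordinate, and \emph{type I} to those that do; the bijection $T$ inserts, respectively deletes, the vertex realising this step. An easily-defined integer-valued potential $\Phi$ counting the position of the relevant step verifies \Cref{lem14Moss}, so the pairing is regular.

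The main obstacle, and the only genuinely filtered content of the argument, is checking that $T$ is \emph{proper} in the sense of \Cref{defStrictlyAdmissible}, i.e.\ that at every type-II simplex the inserted face gives an \emph{admissible} horn inclusion in $\textnormal{s\textbf{Set}}_{P\times P'}$. By construction, the face $d_j$ along which $\sigma=d_j(T(\sigma))$ is extracted corresponds to the vertex in $T(\sigma)$ whose $\alpha$-coordinate is either $k$ or $k+1$ on the newly inserted horizontal step, while the two neighbouring vertices in $T(\sigma)$ have $\alpha$-coordinates $k$ and $k+1$ respectively. Because $p_k=p_{k+1}$ in $\mathcal J$, both neighbouring vertices carry the \emph{same} first coordinate $p_k=p_{k+1}\in P$, so the $P$-entry of $\mathcal L(T(\sigma))$ at position $j$ coincides with that at position $j+1$ (or $j-1$); the $P'$-entries are in any case equal by construction (the step being horizontal). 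Hence the $j$-th vertex of $\mathcal L(T(\sigma))$ is repeated in $P\times P'$, which is exactly the admissibility condition for the horn $\Lambda^{\mathcal L(T(\sigma))}_j\hookrightarrow\Delta^{\mathcal L(T(\sigma))}$. This verifies properness. Regularity having already been arranged via $\Phi$, the pairing witnesses via \Cref{corSaeareExp} (or its transfinite analogue from \Cref{propEqCharSaeTot}) that $C\hookrightarrow\Delta^{\mathcal J}\otimes\Delta^{\mathcal K}$ is an FSAE, completing the base case and, via the reductions of the first paragraph, the proposition.
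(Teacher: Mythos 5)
Your proposal is correct and follows essentially the same route as the paper's proof: reduce, via the colimit-closure properties of FSAEs in each variable, to the single pushout-product of an admissible horn inclusion with a boundary inclusion, then run Moss's walk pairing on $\Delta^{\mathcal J}\otimes\Delta^{\mathcal K}$ (choosing the ``leaving step'' or ``entering step'' version according to whether $p_k=p_{k+1}$ or $p_k=p_{k-1}$) and observe that the deleted vertex shares its $P\times P'$-label with its horizontal neighbour, which is exactly the paper's admissibility/properness check via \Cref{lemSimtoFil}. One small correction: drop the claim that the relevant class is closed under retracts --- FSAEs are deliberately not retract-closed, so this does not follow from \Cref{propEqCharSaeTot}, and it is also unnecessary, since every cofibration of filtered simplicial sets is a transfinite relative cell complex on the boundary inclusions by the skeletal filtration, which is all the reduction requires.
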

\begin{proof}
	This is really just a modification of the proof of \cite[Prop. 16]{MossSae}. The proof goes consists of two steps. First we show that the class of $X \xhookrightarrow{s} Y$ that have this property for some fixed $i$ is closed under: 
	\begin{enumerate}
		\item arbitrary coproducts,
		\item pushouts,
		\item transfinite composition,
	\end{enumerate}
	and that the analogous statement holds with rolls of $s$ and $i$ swapped. For the second step, note that the class of cofibrations in $\textnormal{sSet}_{P'}$ is generated under these operations by the boundary inclusions $ \partial \Delta ^{\mathcal J'}$ $\hookrightarrow \Delta^{\mathcal J'}$, for d-flags $\mathcal J'$ in $P'$ and the class of FSAEs is generated by the admissible horn inclusions $\Lambda^\mathcal J_k \hookrightarrow \Delta^\mathcal J$, for flags $\mathcal J$ in $P$. Hence, it suffices to show that, for such $\mathcal J$, $\mathcal J',$ $k$, 
	$$ \Delta ^\mathcal J \otimes \partial \Delta^{\mathcal J'} \cup_{\Lambda^{\mathcal J}_k
		 \otimes \partial \Delta ^{\mathcal J'}}\Lambda_k^\mathcal{J} \otimes \Delta ^{\mathcal J'} \hookrightarrow \Delta^\mathcal J \otimes \Delta ^{\mathcal J'} $$ 
	is an FSAE. \\
	\\
	The proof of the first step is analogous in both arguments, so we only do the one with fixed $i$. First note that the exterior product $\otimes$ does commute with colimits in both arguments (by the analogous statement in $\textbf{Set}$). From this (i) immediately follows. For (ii), note that for an arrow $A \to A'$ and $B'$ the pushout of $s$ along this arrow, the following diagram is a pushout diagram. $$ \begin{tikzcd}
	B \otimes X \cup_{A \otimes X} A \otimes Y \arrow[r] \arrow[d] & B \otimes Y \arrow d\\
	B' \otimes X \cup_{A' \otimes X} A' \otimes Y \arrow[r] & B' \otimes Y
	\end{tikzcd}$$
	An easy way to verify this statement is to note that by the alternative construction of $\otimes$ via the perspective of categories of $\textnormal{\textbf{Set}}$-valued presheaves (\Cref{conPreShPers}) we may just as well check this in $\textnormal{\textbf{Set}}$, replacing $\otimes$ by $\times$. This is an easy exercise of manipulations of pushouts in a closed monoidal category. It can be found in the appendix (\Cref{lemSetPerspective}). We are lacking (iii). So, let $A: \kappa \to \textnormal{sSet}_{P \times P'}$ be a transfinite composition diagram for some ordinal $\kappa$. By transfinite induction, let $\kappa$ be minimal such that we have not shown closedness under transfinite composition (the induction start is obvious). If $\kappa = \kappa' + 1$ is a successor ordinal then $A^\kappa = A^{\kappa'}$. If $\kappa'$ is a successor ordinal, then the transfinite composition is just the one given by the restriction of $A$ to $\kappa'$. So the result follows by the induction hypothesis. If $\kappa'=\kappa''+1$ is a successor ordinal, then consider the diagram below:
	\begin{equation}\label{diagSat}
	\begin{tikzcd}
	A^{\kappa''} \otimes X\cup_{A^0 \otimes X} A^0 \otimes Y \arrow[r, hook] \arrow[d, hook] & 	A^{\kappa''}\otimes Y \arrow[d, hook]\\
	A^{\kappa'} \otimes X\cup_{A^0 \otimes X} A^0 \otimes Y \arrow[r , hook] & A^{\kappa'} \otimes 
	X\cup_{A^{k''} \otimes X} A^{\kappa''} \otimes Y \arrow[r, hook] & A^{\kappa'} \otimes Y = A^{\kappa} \otimes Y
	\end{tikzcd}.
	\end{equation}
	Similarly to the previous diagram, one checks in \textnormal{\textbf{Set}} that the square in this diagram is a pushout diagram. By assumption, the upper vertical and the lower right morphism are FSAEs. By stability under pushouts and composition, the map on the bottom is an FSAE, but this is precisely the map, corresponding to the composition $A^0 \to A^{\kappa''} \to A^{\kappa'} = A^{\kappa}$ i.e. to the transfinite composition we considered. Now, let $\kappa$ be a limit ordinal. Just as before, consider the pushout square in \eqref{diagSat} but replace (formally) $\kappa'$ with $\kappa$, $\kappa''$ with $\kappa'$, and $0$ with $\kappa''$. We obtain a diagram $\kappa \to \textnormal{sSet}_{P\times P'}$ given by 
	$$ \kappa'' < \kappa' \mapsto \{ A^{\kappa} \otimes X\cup_{A^{\kappa''} \otimes X} A^{\kappa''} \otimes Y \hookrightarrow A^{\kappa} \otimes 
	X\cup_{A^{k'} \otimes X} A^{\kappa'} \otimes Y\}.$$ By the induction hypothesis and stability of FSAEs under pushout, all the arrows in this diagram are FSAEs. Further, by commutativity of colimits and the fact that $\otimes$ commutes with colimits, this fulfils the requirements of a transfinite composition diagram, for each limit ordinal $\kappa ' < \kappa$. The colimit of this diagram is $A^\kappa \otimes Y$ (again, this is easily checked in $\textbf{Set}$ by the same argument as before). In particular, $$ A^{\kappa} \otimes X\cup_{A^{0} \otimes X} A^{0} \otimes Y \hookrightarrow A^\kappa \otimes Y$$ is a transfinite composition of FSAEs, making it an FSAE also.\\
	\\
	For the second part of the proof, consider a d-flag $\mathcal J= (p_0 \leq ... \leq p_m)$ in $P$, a d-flag $\mathcal J'= (p'_0 \leq ... \leq p'_m) $ in $P'$ and some $k \leq n$ such that $\Lambda^{\mathcal{J}}_k \hookrightarrow \Delta ^\mathcal J$ is admissible. We use the pairing in the proof of \cite[Prop. 16]{MossSae} and \Cref{lemSimtoFil} to show that the latter still works in the filtered setting. We give a sketch of this construction. Let $A := \Delta ^\mathcal J \otimes \partial \Delta^{\mathcal J'} \cup_{\Lambda^{\mathcal J}_k
		\otimes \partial \Delta ^{\mathcal J'}}\Lambda_k^\mathcal{J} \otimes \Delta ^{\mathcal J'}$ and $B:= \Delta^{\mathcal J } \otimes \Delta ^{\mathcal J'}.$ \\
	\\
	We are now in the setting where all the simplicial sets involved come from simplicial complexes. On the underlying simplicial sets $\otimes$ is just given by the product of ordered simplicial complexes. Every $N$-simplex in $B$ is given by a finite sequence $$(\mu_0, \nu_0) \leq (\mu_1, \nu_1) \leq ... \leq (\mu_N, \nu_N)$$ in $[m] \times [n]$ with respect to the induced preorder on the product. The simplex is non-degenerate if at each step the $\mu$- or the $\nu$-entry strictly increases. The $i$-th face is obtained, by leaving out the $i$-th entry. Thus, $i$ is admissible with respect to the product filtration of this simplex if and only if $ (p_{\mu_i}, p'_{\nu_i}) = (p_{\mu_{i+1}}, p'_{\nu_{i+1}})$ or $ (p_{\mu_i}, p'_{\nu_i}) = (p_{\mu_{i-1}}, p'_{\nu_{i-1}}).$ A non-degenerate simplex in $B$ belongs to $B_{n.d.} \setminus A_{n.d.}$ if and only if in the corresponding sequence the the $\nu_i$ skip no value in $[n]$ and the $\mu_i$ skip no value in $[m]$ other than $k$. Pictorially, we may think of such a sequence as a walk in the $[m] \times [n]$ grid as in \Cref{fig:Path}.
	\begin{figure}[H]
		\includegraphics[width=\linewidth]{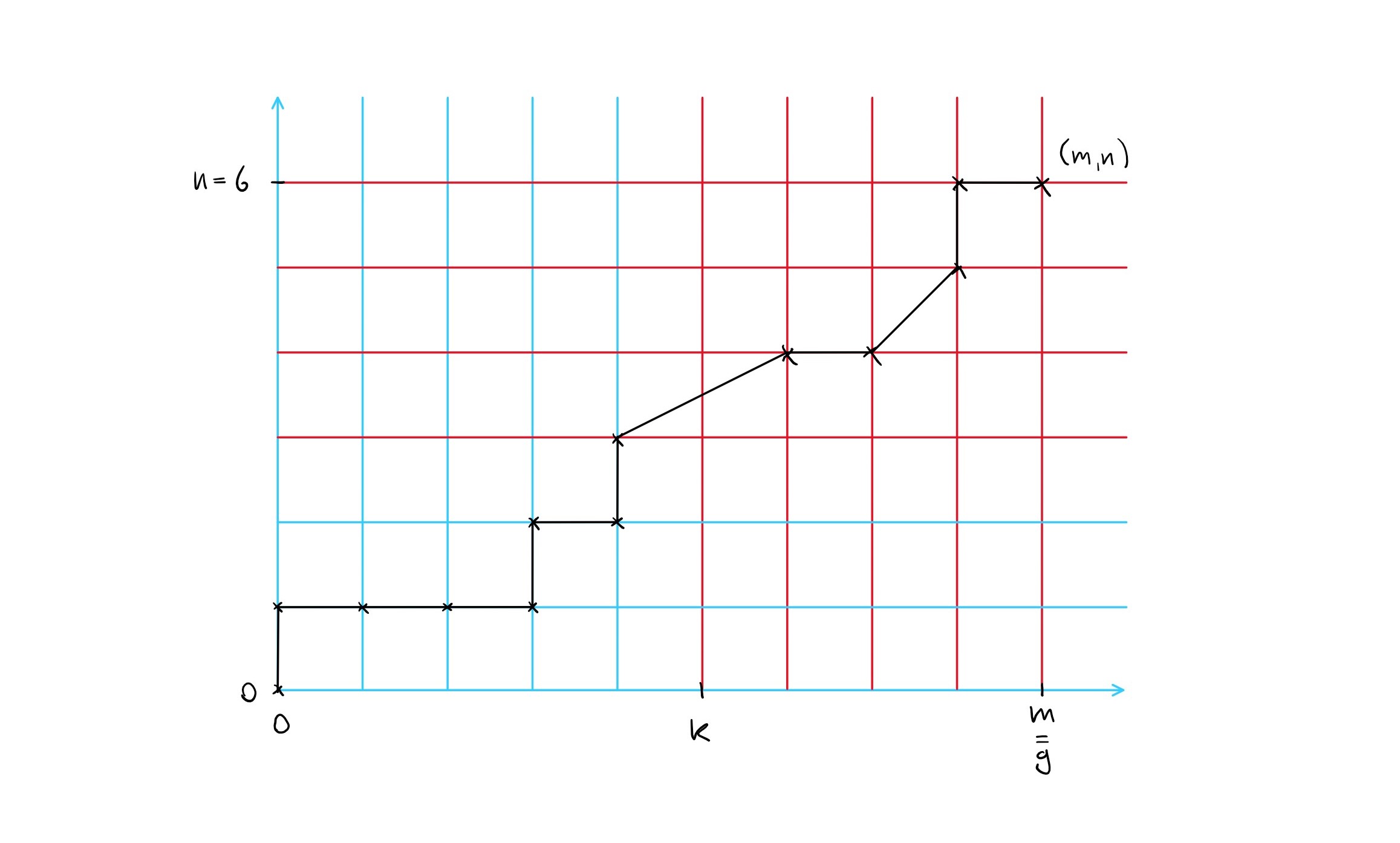}
		\caption{Walk in $[m] \times [n]$ where $P,P' = [1]$ and $\mathcal J = (0 \leq 0 \leq 0 \leq 0 \leq 0 \leq 1 \leq 1 \leq 1 \leq 1 \leq 1)$, $\mathcal J' = (0 \leq 0 \leq 0 \leq 1 \leq 1 \leq 1 \leq 1)$ indicated by blue and red.}
		\label{fig:Path}
	\end{figure}
	At each time $i$ in the walk, the next step is of type $(+1,+0),(+0,+1),(+1,+0)$, or in addition to this possibly of type $(+2,+0),(+2,+1)$ if the next column is the $k$-th and it is skipped. For $k < m$, \cite{MossSae} constructs a pairing on $A \hookrightarrow B$ by setting $B_{I}$ to consist of those simplices where the $k$-th column is not skipped and the step that leaves the column is of type $(+1,0)$. This is then paired with the simplex described by the path where the last point on the $k$-th column is removed (see \Cref{fig:prodPair} for an example). 
	\begin{figure}[H]
		\includegraphics[width=\linewidth]{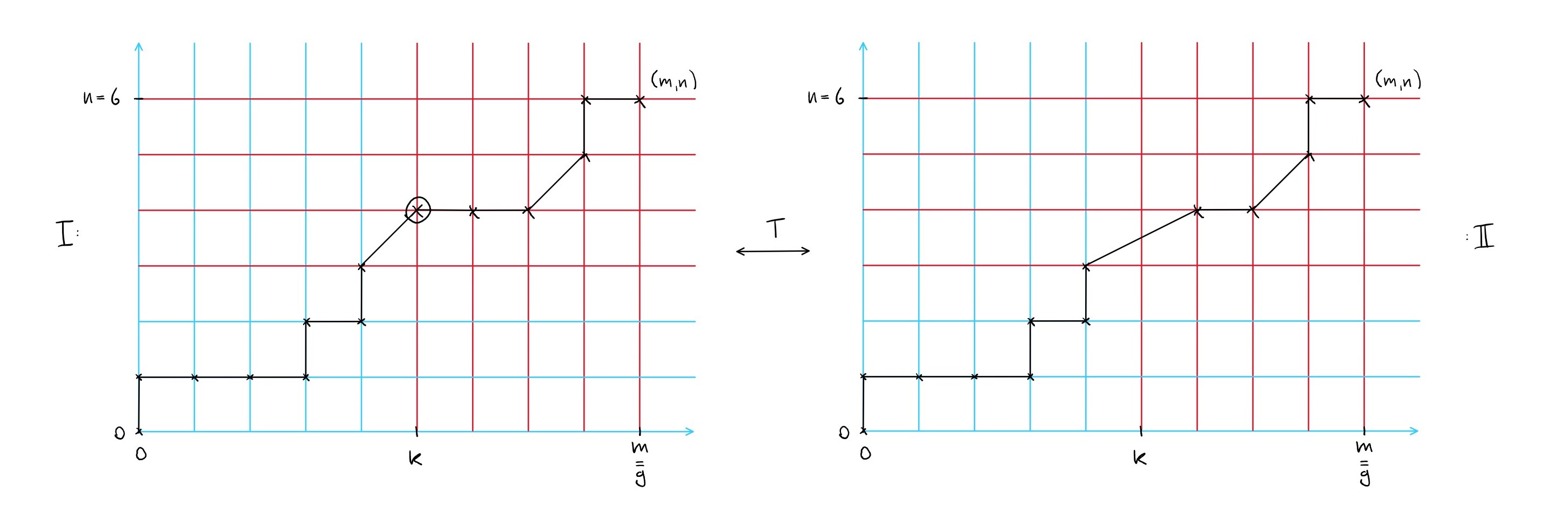}
		\caption{The walks corresponding to a pair of simplices in $B_{n.d.}\setminus A_{n.d.}$. The removed vertex is marked.}
		\label{fig:prodPair}
	\end{figure}
	For $0 < k$, one obtains a pairing by taking $B_{I}$ to consist of those simplices where the corresponding walk does not skip the $k$-th column and the prior step is of type $(+1,+0)$. One then pairs such a simplex with the one given removing the \textit{first} point on the $k$-th column. In both cases Moss shows in \cite{MossSae} that this gives a regular, proper pairing on the underlying simplicial sets. Now, if $p_k = p_{k+1}$ we take the prior pairing, if $p_k = p_{k-1}$ we take the latter pairing on the underlying simplicial map $A \hookrightarrow B$. To see this gives a proper, regular pairing on $A \hookrightarrow B$ we just need to verify that in both cases the requirements of \Cref{lemSimtoFil} are met. That is, we need to check that, for each pair given by $T$, if $i$ is the index of the removed point, then $i$ is admissible. If $p_k = p_{k+1}$, then the removed point corresponds to a vertex of stratum $(p_k,p_j')$, for some $0 \leq j \leq n$. As the next point is reached through a $(+1,+0)$, it corresponds to a $(p_{k+1},p'_j)= (p_{k},p'_j)$ vertex. In particular, $i$ is admissible. The case $p_k = p_{k-1}$ works analogously.
\end{proof}
We are mostly interested in the case where either $P$ or $P'$ is the partially ordered set with one element, i.e. the corresponding category of filtered simplicial sets is just the category of simplicial sets. We obtain the following immediate corollary of \Cref{propIncInProd}.
\begin{corollary}\label{corProdIncs}
	Let $X \xhookrightarrow{i} Y$ be a cofibration in $\textnormal{sSet}_{P}$ and $X' \xhookrightarrow{i'} Y'$ be a cofibration in $\textnormal{sSet}$. Then if either of the two is an FSAE, so is the morphism in $\textnormal{s\textbf{Set}}_P \cong \textnormal{s\textbf{Set}}_{P \times \star}$: 
	$$ Y \otimes X' \cup_{X \otimes X'} X \otimes Y' \hookrightarrow Y \otimes Y'$$
\end{corollary}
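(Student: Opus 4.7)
The plan is to derive the corollary as a direct specialization of Proposition \ref{propIncInProd}. The key observation is that the category $\textnormal{s\textbf{Set}}$ of plain simplicial sets is naturally isomorphic to $\textnormal{s\textbf{Set}}_\star$, where $\star$ denotes the one-point poset. Under this identification the outer product functor
$$- \otimes -: \textnormal{s\textbf{Set}}_P \times \textnormal{s\textbf{Set}}_\star \longrightarrow \textnormal{s\textbf{Set}}_{P \times \star} \cong \textnormal{s\textbf{Set}}_P$$
reduces to the simplicial copowering on $\textnormal{s\textbf{Set}}_P$, which is exactly the $\otimes$ appearing in the corollary. So the pushout-product in question lives naturally in $\textnormal{s\textbf{Set}}_P$ after the canonical identification $\textnormal{s\textbf{Set}}_{P \times \star} \cong \textnormal{s\textbf{Set}}_P$.

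First I would handle the case where $X \hookrightarrow Y$ is the FSAE. Apply Proposition \ref{propIncInProd} taking $s = i: X \hookrightarrow Y$ (now viewed as an FSAE in $\textnormal{s\textbf{Set}}_P$) and $i' : X' \hookrightarrow Y'$ as the given cofibration in $\textnormal{s\textbf{Set}} \cong \textnormal{s\textbf{Set}}_\star$. The proposition directly delivers that the pushout-product is an FSAE in $\textnormal{s\textbf{Set}}_{P \times \star} \cong \textnormal{s\textbf{Set}}_P$.

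For the symmetric case where $X' \hookrightarrow Y'$ is the FSAE and $X \hookrightarrow Y$ is merely a cofibration, I would apply Proposition \ref{propIncInProd} in the reversed configuration, i.e.\ with $P$ and $P'$ swapped: take $s$ to be the FSAE $X' \hookrightarrow Y'$ in $\textnormal{s\textbf{Set}}_\star$ and $i$ to be the cofibration $X \hookrightarrow Y$ in $\textnormal{s\textbf{Set}}_P$. The proposition yields an FSAE in $\textnormal{s\textbf{Set}}_{\star \times P}$, and using the obvious isomorphism $\textnormal{s\textbf{Set}}_{\star \times P} \cong \textnormal{s\textbf{Set}}_{P \times P} \cong \textnormal{s\textbf{Set}}_P$ together with the symmetry of the pushout-product in its two arguments gives the required statement.

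There is essentially no obstacle here beyond bookkeeping; the whole content of the corollary is already encoded in Proposition \ref{propIncInProd}. The only mild subtlety worth spelling out is that the statement of Proposition \ref{propIncInProd} is not manifestly symmetric in its two inputs (one is the FSAE, the other is just a cofibration), so one should note explicitly that in both subcases of the corollary the hypotheses of the proposition are satisfied, and that the symmetry of $\otimes$ and of the pushout-product formula is what allows one to identify the two resulting maps with the single map in the statement of the corollary.
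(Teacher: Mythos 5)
Your proposal is correct and is essentially the paper's own route: the corollary is obtained as an immediate specialization of Proposition~\ref{propIncInProd} with one of the two posets taken to be the one-point poset $\star$, the second case following by swapping the roles of $P$ and $P'$ and using the symmetry of $\otimes$ together with closure of FSAEs under isomorphism. (One minor slip: in your second case the identification should read $\textnormal{s\textbf{Set}}_{\star \times P} \cong \textnormal{s\textbf{Set}}_{P}$, not $\textnormal{s\textbf{Set}}_{P \times P}$, but this does not affect the argument.)
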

This result has a series of immediate consequences for the interactions of FSAEs with filtered simplicial mapping cylinders (see \Cref{propCylIncs}). These can be thought of as filtered, simplicial set analogues to the results found for example in (\cite[Ch. 2,$\paragraphmark 5$]{cohenCourse}), \cite[Sec. 6]{whitehead1939simplicial}, \cite[Ch. 6.3]{kampsPo}). They are the decisive arguments when it comes to verifying the Eckmann-Siebenmann axioms (\Cref{axEckSieb}) for the construction of a Whitehead group.
\begin{proposition}\label{propCylIncs}
Let $A \xhookrightarrow{s} B$ be an FSAE, $B' \xhookrightarrow{i} B$ be a cofibration and $B \xrightarrow{f} Z$ be a morphism in $\textnormal{s\textbf{Set}}_P$.
Then all of the following morphisms in $\textnormal{s\textbf{Set}}_P$ (illustrated in \Cref{fig:cylSaes}) are FSAEs.
 \begin{enumerate}	
\item $i_0,i_1: Z \hookrightarrow Z \otimes \Delta^1$ \label{propCylIncs1}
\item $Z \hookrightarrow M_f$	\label{propCylIncs2}
\item $B \cup_{A \hookrightarrow M_{f\circ s}} M_{f\circ s} \hookrightarrow M_{f}$ \label{propCylIncs3}
\item $M_{f \circ i} \hookrightarrow M_f$ \label{propCylIncs4}
\end{enumerate}
In particular, for any elementary stratum preserving simplicial homotopy $Y_0 \otimes \Delta^1 \xrightarrow{H} Z$ with $f:= H_0$ and $g:= H_1$, denote by $\hat{M}_H$ the pushout given by $$ \begin{tikzcd}
Y_0 \otimes \Delta^1 \arrow[r, hook] \arrow[d] & M_H \arrow d\\
Y_0 \arrow[r, hook] & \hat M_H
\end{tikzcd}.$$
Then the compositions 
\begin{enumerate}
	\setcounter{enumi}{4} 
	\item $M_f,M_g\hookrightarrow M_H \to \hat M_H$ \label{propCylIncs5}
\end{enumerate} are both FSAEs.
\end{proposition}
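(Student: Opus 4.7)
The strategy is to deduce each of the five statements from Corollary \ref{corProdIncs} (the pushout--product property relating FSAEs and cofibrations) together with stability of the class of FSAEs under pushouts. In each case the cylinder-based inclusion in question will be produced as the pushout of a suitably chosen two-factor FSAE along a structural map into $Z$.

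Parts (1) and (2) are dispatched quickly. For (1), apply Corollary \ref{corProdIncs} with the trivial cofibration $\emptyset \hookrightarrow Z$ in $\textnormal{s\textbf{Set}}_P$ and the endpoint inclusion $\{k\}\hookrightarrow\Delta^1$ in $\textnormal{s\textbf{Set}}$; the latter is an admissible horn inclusion in the trivially filtered setting (the admissibility condition is vacuous for $P=\star$) and hence an FSAE, and the resulting product inclusion is precisely $Z\hookrightarrow Z\otimes\Delta^1$ at $\{k\}$. Part (2) is then immediate from the definition of $M_f$: the inclusion $Z\hookrightarrow M_f$ is by construction the pushout of $B\cong B\otimes\{0\}\hookrightarrow B\otimes\Delta^1$ (an FSAE by (1)) along $f$. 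For (4), Corollary \ref{corProdIncs} applied to the cofibration $i\colon B'\hookrightarrow B$ in $\textnormal{s\textbf{Set}}_P$ and the FSAE $\{0\}\hookrightarrow\Delta^1$ in $\textnormal{s\textbf{Set}}$ yields that $B\cup_{B'}(B'\otimes\Delta^1)\hookrightarrow B\otimes\Delta^1$ is an FSAE; pushing this out along $f\colon B\otimes\{0\}\to Z$ identifies the two sides with $M_{f\circ i}$ and $M_f$ respectively. For (3), use Corollary \ref{corProdIncs} with the FSAE $s\colon A\hookrightarrow B$ and the cofibration $\partial\Delta^1\hookrightarrow\Delta^1$; the resulting FSAE is $B\otimes\partial\Delta^1\cup_{A\otimes\partial\Delta^1}(A\otimes\Delta^1)\hookrightarrow B\otimes\Delta^1$, and pushing out along $f\colon B\otimes\{0\}\to Z$ produces $Z\cup_{A\otimes\{0\}}(A\otimes\Delta^1)\cup_{A\otimes\{1\}}B = B\cup_A M_{f\circ s}$ on the left and $M_f$ on the right.

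The subtlest case is (5). Write $\Delta^1_a$ for the homotopy direction and $\Delta^1_b$ for the cylinder direction in $M_H$. Apply Corollary \ref{corProdIncs} to the FSAE $Y_0\hookrightarrow Y_0\otimes\Delta^1_a$ at $\{0\}_a$ (obtained from (1)) and the cofibration $\partial\Delta^1_b\hookrightarrow\Delta^1_b$; this yields an FSAE
\[
K := (Y_0\otimes\Delta^1_a)\otimes\partial\Delta^1_b \,\cup_{Y_0\otimes\partial\Delta^1_b}\, (Y_0\otimes\Delta^1_b) \hookrightarrow Y_0\otimes\Delta^1_a\otimes\Delta^1_b,
\]
namely the inclusion of three of the four edges of the prism into the full prism. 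Pushing this out along $H\colon Y_0\otimes\Delta^1_a\otimes\{0\}_b\to Z$ realizes $M_f\cup_{Y_0}(Y_0\otimes\Delta^1_a)\hookrightarrow M_H$ as an FSAE (the left side reorganizes as $Z\cup_f (Y_0\otimes\Delta^1_b)\cup_{Y_0\otimes\{1\}_b}(Y_0\otimes\Delta^1_a)$, glued at the top-right corner). Now consider
\[
\begin{tikzcd}
Y_0\otimes\Delta^1_a \arrow[r, hook] \arrow[d, "p"] & M_f\cup_{Y_0}(Y_0\otimes\Delta^1_a) \arrow[r, hook] \arrow[d] & M_H \arrow[d]\\
Y_0 \arrow[r, hook] & M_f \arrow[r, hook] & \hat M_H
\end{tikzcd}
\]
The outer rectangle is exactly the defining pushout of $\hat M_H$, and the left square is a pushout by construction of $M_f\cup_{Y_0}(Y_0\otimes\Delta^1_a)$: collapsing $Y_0\otimes\Delta^1_a$ along $p$ merely identifies it with the copy of $Y_0$ already sitting in $M_f$. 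By the pasting lemma for pushouts, the right square is also a pushout, exhibiting $M_f\hookrightarrow\hat M_H$ as the pushout of the FSAE on top. The statement for $M_g$ is established identically, using the endpoint $\{1\}_a$ in place of $\{0\}_a$ in the FSAE $Y_0\hookrightarrow Y_0\otimes\Delta^1_a$.

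The main obstacle is part (5): one must identify the correct two-variable FSAE from Corollary \ref{corProdIncs} whose pushout realizes the intermediate ``thickened cylinder'' $M_f\cup_{Y_0}(Y_0\otimes\Delta^1_a)$, and recognize it as precisely the subcomplex of $M_H$ obtained from the cylinder $(Y_0\otimes\Delta^1_a)\otimes\Delta^1_b$ by removing its interior and one $\{1\}_a$-facing side, while keeping the top fiber $Y_0\otimes\Delta^1_a\otimes\{1\}_b$ that is later collapsed to $Y_0$ by the reduced cylinder construction. Once this picture is in place, the pasting lemma performs the rest of the bookkeeping.
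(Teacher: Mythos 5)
Your proof is correct and follows essentially the same route as the paper: every map is exhibited as a pushout of a pushout--product FSAE supplied by \Cref{corProdIncs}, with the same pasting-of-pushouts bookkeeping, and your treatment of (5) merely re-derives in line the instance of (3) (for $s=i_0$ and the homotopy $H$) that the paper cites, before concluding with the same cancellation argument exhibiting $M_f\hookrightarrow\hat M_H$ as a pushout of $M_f\cup_{Y_0}(Y_0\otimes\Delta^1)\hookrightarrow M_H$. One cosmetic remark: in (1) the phrase ``trivial cofibration'' for $\emptyset\hookrightarrow Z$ should just read ``cofibration''; acyclicity is neither true in general nor needed there.
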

\begin{figure}[H]
	\includegraphics[width=\linewidth]{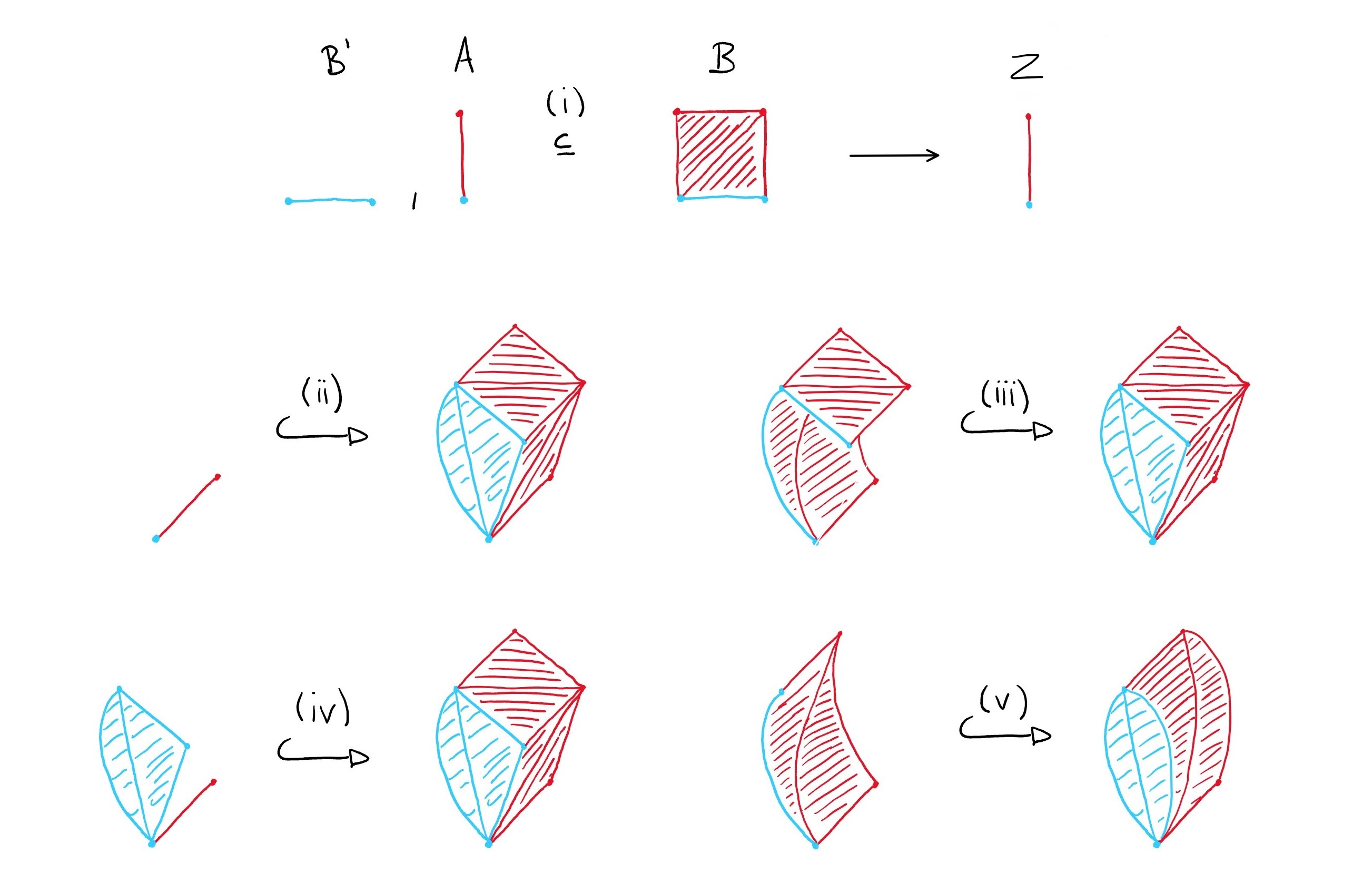}
	\caption{An illustration of some examples of the FSAEs \ref{propCylIncs1} to \ref{propCylIncs5} in \Cref{propCylIncs}.}
	\label{fig:cylSaes}
\end{figure}
\begin{proof}
For \ref{propCylIncs1} take $Y = Z$,$ X= \emptyset$, $X' = \Delta^0$ and $i' = i_0,i_1$ in \Cref{corProdIncs}. For \ref{propCylIncs2} note that $Z \hookrightarrow M_f$ is given by the pushout diagram
$$\begin{tikzcd}
B \arrow[r,hook, "i_0"] \arrow[d,"f"] & B \otimes \Delta^1 \arrow d\\
Y \arrow[r,hook] & M_f
\end{tikzcd}.$$
By \ref{propCylIncs1}, $i_0$ is an FSAE, hence, so is $Y \hookrightarrow M_f$. For \ref{propCylIncs3} first consider the case of \Cref{corProdIncs}, where $X = A, Y = B$ and $X'= \Delta^0 \sqcup \Delta^0 , Y'= \Delta^1, i'=i_0 \sqcup i_1$. We obtain that $$ B \underset{A \xhookrightarrow{i_1} A \otimes \Delta^1}{\cup} A\otimes \Delta^1 \underset{{A \xhookrightarrow{i_0} A \otimes \Delta^1} }{\cup} B \hookrightarrow B\otimes \Delta^1$$ is an FSAE. Now, take the inclusion of $B$ at $0$ into the previous homotopy pushout $j_0$ and consider the following commutative diagram. We omit some of the pushout subscript, to simplify notation from here on out.
$$\begin{tikzcd}
B \arrow [r, "j_0", hook] \arrow[d, "f"] & B \cup A \otimes \Delta^1 \cup B \arrow[d, "f'"] \arrow[r, hook] & B \otimes \Delta^1 \arrow[d]\\
 Z \arrow[r, hook ] & B \underset{A \hookrightarrow M_{f\circ s}}{\cup} M_{f \circ s} \arrow[r, hook] & M_f
\end{tikzcd}. $$ The upper horizontal composition is just the inclusion of $B$ into $B \otimes \Delta^1$ at $0$ and the lower horizontal composition is $Z \hookrightarrow M_f$. In particular, the inner left and the outer square are cocartesian. Hence, so is the inner right square. Thus, as we have just seen that the upper right horizontal is an FSAE, the same holds for the morphism in \ref{propCylIncs3}, by stability under pushouts. For \ref{propCylIncs4} take $X = B', Y = B$ and $X' = \Delta ^0, Y' = \Delta^1, i' = i_0$. Then the result follows by a similar pushout argument as we used for \ref{propCylIncs3}.\\
\\
We prove \ref{propCylIncs5} for $f$. The case of $g$ works analogously. First note that by \ref{propCylIncs1}, \ref{propCylIncs3} and \ref{propCylIncs4} the cofibrations $ M_f \hookrightarrow Y_0 \otimes \Delta^1 \cup M_f \hookrightarrow M_H$ as well as their composition are FSAEs. The $\cup$ stands for the pushout of $Y_0 \xhookrightarrow{i_0} Y_0 \otimes \Delta^1$ along $Y_0 \hookrightarrow M_f$ here. Now, consider the commutative diagram \begin{equation}\label{diagLetItEnd}
	\begin{tikzcd}
	Y_0 \otimes \Delta^1 \arrow[r, hook] \arrow[d] & Y_0 \otimes \Delta^1 \underset{...}{\cup} M_f \arrow [d] \arrow[r, hook] & M_H \arrow[d] \\
	Y_0 \arrow[r, hook] & M_f \arrow[r, hook] & \hat M_H
	\end{tikzcd}.
\end{equation} 
The outer square is cocartesian by definition. The inner left square fits into $$ \begin{tikzcd}\label{diagItDidEnd}
Y_0 \arrow[r, hook, "i_1"] \arrow[d, hook] & Y_0 \otimes \Delta^1 \arrow [d, hook] \arrow[r, hook] & Y_0 \arrow[d, hook] \\
M_f \arrow[r, hook] & Y_0 \otimes \Delta^1 \underset{...}{\cup} M_f \arrow[r] & M_f
\end{tikzcd}$$
The horizontal compositions of \ref{diagItDidEnd} are the identity making the outer square cocartesian. The left inner square is cocartesian by definition. In particular, the right inner square, which is the left inner square in \eqref{diagLetItEnd} is cocartesian. Hence, so is the inner right square in \eqref{diagLetItEnd}. By stability under pushouts, this shows that $M_f \hookrightarrow \hat M_H$ is an FSAE.
\end{proof}
In particular, \Cref{propCylIncs} shows that with our specification of elementary collapses we do not run into problems such as we illustrated in \Cref{exStrictBad}.
\section{The Whitehead group of a filtered simplicial set}
 In \Cref{subsecEckSiebApp} we have reviewed a general setting that Whitehead groups can be constructed in. Then, in in \Cref{secElandFSAE} we begun a detailed analysis of a class of expansions of $P$-filtered simplicial sets, FSAEs, which work as a filtered analogue to the expansion in the classical simple homotopy theory. We now show that (finite) FSAEs fit into this general setting (\Cref{subsecEckSiebApp}) and thus obtain a Whitehead group and torsion for filtered simplicial sets. We then study these invariants in detail, showing that they behave much like the classical ones (\Cref{subsecWHdetails}). Finally, we obtain the analogue to the result that every finite CW-complex is simple homotopy equivalent to a finite simplicial complex for our setting (\Cref{subsecSSvSC}).
\subsection{The Eckmann-Siebenmann approach for filtered simplicial sets}\label{subsecEckSiebAppHolds}
Recall the setting of \Cref{subsecEckSiebApp}. Given a category $\mathcal C$ contained in a larger category $\hat{ \mathcal C}$ and a class of morphisms $\Sigma$ in $\mathcal C$ fulfilling certain axioms, it allows us to construct a Whitehead group and a Whitehead torsion. In this subsection, we apply this theory to the case where $ \hat{ \mathcal C}$ is the category of finite filtered simplicial sets in $\textnormal{s\textbf{Set}}_P$, $\mathcal{C}$ is the subcategory of $\hat{ \mathcal C}$ with the same objects morphisms taken to be only cofibrations in the Douteau model structure - i.e. inclusions of filtered simplicial subsets - and $\Sigma$ is the class of (finite) FSAEs. \\
\\
Recall that we denote by $\mathcal H \textnormal{s\textbf{Set}}_P^{\textnormal{fin}}$ the full subcategory of $\mathcal H \textnormal{s\textbf{Set}}_P$ given by finite filtered simplicial sets. Further, let $\mathcal C$ ($\mathcal C'$) be the subcategory of $\textnormal{s\textbf{Set}}_P$ given by finite (arbitrary) filtered simplicial sets and morphisms cofibrations and $\Sigma$ $(\Sigma')$ the class of morphism in $\mathcal C$ ($\mathcal C'$)that are FSAEs. As every FSAE is in particular an anodyne extension and these are precisely the trivial cofibrations in the Douteau-Henrique model structure on $\textnormal{s\textbf{Set}}_P$ \cite[Thm. 2.14]{douSimp}, by the universal property of the localization of a category we obtain a commutative (on the nose) diagram of functors
\begin{equation*}\label{diagLotsofFun}
	\begin{tikzcd}
	\mathcal C(\Sigma^{-1}) \arrow[r] \arrow[d] & \mathcal H \textnormal{s\textbf{Set}}_P^{\textnormal{fin}} \arrow [d, hook]\\
	\mathcal C'(\Sigma'^{-1}) \arrow[r] & \mathcal{H}\textnormal{s\textbf{Set}}_P
	\end{tikzcd}.
\end{equation*}
All of the functors above are given by the identity on objects. It turns out, all of them are in fact fully faithful. Before we prove this, we start by proving two lemmata. Denote by $Q$ and $Q'$ the structure functors $\mathcal C \to \mathcal C(\Sigma^{-1})$ and $ \mathcal C' \to \mathcal C'(\Sigma'^{-1}$ respectively.
\begin{lemma}\label{lemComHo}
	 Let $X \xhookrightarrow{f,g} Y$ be two cofibrations in $\textnormal{s\textbf{Set}}_P$ that are homotopic through a (elementary, stratum preserving) simplicial homotopy. Then $$Q'(f) = Q'(g).$$If both $X$ and $Y$ are finite, then also $$Q(f) = Q(g).$$
\end{lemma}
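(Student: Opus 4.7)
The plan is to apply \Cref{propCylIncs}(\ref{propCylIncs5}) to the given elementary simplicial homotopy $H \colon X \otimes \Delta^1 \to Y$, which yields the collapsed mapping cylinder $\hat M_H$ together with FSAEs $s_f^M \colon M_f \hookrightarrow \hat M_H$ and $s_g^M \colon M_g \hookrightarrow \hat M_H$. Combined with the FSAEs $s_f \colon Y \hookrightarrow M_f$ and $s_g \colon Y \hookrightarrow M_g$ from \Cref{propCylIncs}(\ref{propCylIncs2}), this gives a configuration inside $\hat M_H$ from which one can extract the needed identification in the localization.

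Concretely, the first step is to verify directly from the construction of $\hat M_H$ that both compositions $s_f^M \circ s_f$ and $s_g^M \circ s_g$ coincide in $\mathcal{C}'$ with a single canonical inclusion $\iota_Y \colon Y \hookrightarrow \hat M_H$, and that the corresponding ``free-end'' inclusions $j_f \colon X \hookrightarrow M_f$ and $j_g \colon X \hookrightarrow M_g$ satisfy $s_f^M \circ j_f = s_g^M \circ j_g = \iota_X$ in $\mathcal{C}'$, where $\iota_X$ is the cofibration coming from the pushout square defining $\hat M_H$. Applying $Q'$ and using that every FSAE becomes invertible in $\mathcal{C}'((\Sigma')^{-1})$, these two equalities combine to give the key intermediate identity $Q'(s_f)^{-1} Q'(j_f) = Q'(s_g)^{-1} Q'(j_g)$.

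The main obstacle, and the step requiring the most work, is identifying $Q'(s_f)^{-1} Q'(j_f)$ with $Q'(f)$ (and analogously for $g$). This amounts to showing $Q'(j_f) = Q'(s_f \circ f)$, i.e.\ that the two cofibrations $X \to M_f$ corresponding to the two endpoints of the cylinder $X \otimes \Delta^1 \hookrightarrow M_f$ become equal in the localization. A naive attempt to handle this via another application of \Cref{propCylIncs}(\ref{propCylIncs5}) to the internal cylinder would fall into an infinite recursion. Instead, the plan is to exploit an additional FSAE structure: \Cref{propExLotsOffFSAEs}(\ref{ExLotsOfFSAE1}) together with \Cref{corProdIncs} makes $X \otimes \Delta^{\{0,1\}} \hookrightarrow X \otimes \Delta^2$ an FSAE, so pushing out along $H$ produces an FSAE enlargement $\alpha \colon Y \hookrightarrow W$ where $W = Y \cup_{X \otimes \Delta^{\{0,1\}}} X \otimes \Delta^2$. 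Inside $W$, the three vertices of $\Delta^2$ produce three cofibrations $X \to W$: two of them recover $\alpha \circ f$ and $\alpha \circ g$, and the third provides the extra room needed to mirror the $\hat M_H$-argument and conclude $Q'(s_f \circ f) = Q'(j_f)$.

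Combining these steps yields $Q'(f) = Q'(g)$. The passage from $Q'$ to $Q$ in the finite case is routine: whenever $X$ and $Y$ are finite, so are $M_f$, $M_g$, $\hat M_H$ and the auxiliary pushout $W$, so every FSAE used in the argument already lies in $\mathcal{C}$; any infinite intermediate object that arises can be truncated by \Cref{lemSav} to a finite sub-FSAE compatible with the same diagram, proving $Q(f) = Q(g)$.
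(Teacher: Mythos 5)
The first half of your argument is sound and is essentially the paper's own final diagram chase: the identities $s^M_f\circ s_f=\iota_Y=s^M_g\circ s_g$ and $s^M_f\circ j_f=\iota_X=s^M_g\circ j_g$ hold on the nose in $\hat M_H$, and inverting FSAEs gives $Q'(s_f)^{-1}Q'(j_f)=Q'(s_g)^{-1}Q'(j_g)$. You have also correctly located the real content: one still needs $Q'(j_f)=Q'(s_f\circ f)$, i.e.\ that the two endpoint inclusions of the cylinder $X\otimes\Delta^1\to M_f$ agree in the localization. The gap is that your proposed mechanism for this step does not deliver it. In $W=Y\cup_{X\otimes\Delta^{\{0,1\}}}X\otimes\Delta^2$ the vertex inclusions $v_0,v_1$ are $\alpha\circ f,\alpha\circ g$, and the subcomplexes generated by $Y$ together with the $02$- and $12$-edges are copies of $M_f$ and $M_g$, included by FSAEs (they are pushouts of $X\otimes\Lambda^2_0\hookrightarrow X\otimes\Delta^2$ resp.\ $X\otimes\Lambda^2_1\hookrightarrow X\otimes\Delta^2$), with their free ends both equal to $v_2$. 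But chasing this configuration only reproduces the identity $Q'(s_f)^{-1}Q'(j_f)=Q'(s_g)^{-1}Q'(j_g)$ that you already have from $\hat M_H$; to extract $Q'(j_f)=Q'(s_f\circ f)$ you would need the vertex-$0$ and vertex-$2$ inclusions into $W$ to agree in the localization, and that is again exactly the endpoint problem for the cylinder $X\otimes\Delta^{\{0,2\}}$ sitting inside $W$ — the circularity you were trying to avoid.

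There is also a structural reason why ``mirroring the $\hat M_H$-argument'' cannot close this gap: all morphisms of $\mathcal C'$ are monomorphisms, so no composition with further cofibrations or FSAEs can literally equalize the two distinct parallel maps $s_f\circ f, j_f\colon X\to M_f$ (the $\hat M_H$-trick only works because it equalizes maps into \emph{different} objects $M_f$, $M_g$ by a gluing that is injective on each piece). Knowing that various inclusions in $X\otimes\Delta^2$ are FSAEs buys invertibility in the localization, not equality of parallel arrows, and $\Delta^2$ has no nontrivial simplicial automorphism to exploit. The paper supplies the missing mechanism by a symmetry argument on the subdivided interval: writing $\textnormal{sd}(\Delta^1)=\Lambda^2_2$, the endpoint-swap automorphism $t$ of $X\otimes\textnormal{sd}(\Delta^1)$ fixes the middle-vertex inclusion $i_m$, which is an FSAE by \Cref{propIncInProd}, so $Q(t)=Q(1)$ and hence $Q(i'_0)=Q(i'_1)$; this is then transported to $X\otimes\Delta^1$ via the square built from $\Lambda^2_2\hookrightarrow\Delta^2$ and $d_2\colon\Delta^1\to\Delta^2$, which proves the cylinder-endpoint identity and, with it, the commutativity of the mapping cylinder splitting (\Cref{lemMapCylCom}) that your argument needs. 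Some such extra idea must be added to your proposal; as written, the crucial step is asserted rather than proved. (Your finiteness remark is fine, though \Cref{lemSav} is not needed there: $M_f$, $M_g$, $\hat M_H$ and $W$ are automatically finite when $X$ and $Y$ are.)
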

\begin{proof}
	 The argument both in the finite and in the infinite case are identical, so we prove the finite one.
	 We fist show the case where the two are homotopic through a cofibration $H: X \otimes \Delta^1\hookrightarrow Y$. We then show an additional lemma to complete the proof of this one. Consider the two cofibrations $X \xhookrightarrow{i_0,i_1} X \otimes \Delta^1$. If we can show that $Q(i_0) = Q(i_1)$, we are done since then $$Q(f) = Q(H) \circ Q(i_0) = Q(H) \circ Q(i_1) = Q(g).$$ 
	Now, note that $\textnormal{sd}(\Delta) = \Lambda^2_{2}$. Consider the following diagram in $\textnormal{s\textbf{Set}}$. $$ \begin{tikzcd}
	\Delta^0 \arrow[r, "i", hook] \arrow[d, hook, "sd(i)"] & \Delta^1 \arrow[d, "d_2", hook]\\
	 \textnormal{sd}(\Delta^1) = \Lambda_2^2\arrow[r, hook] & \Delta^2
	\end{tikzcd}, $$
	for $i$ the inclusion at $0$ or $1$. All of the maps in the diagram are FSAEs. In particular, if we apply $X \otimes (-)$ by \Cref{propIncInProd}, we obtain commutative diagrams of FSAEs 
	$$\begin{tikzcd}
	X \arrow[r, "i_j", hook] \arrow[d, hook, "i_j'"] & X \otimes \Delta^1\arrow[d, hook]\\
	X \otimes \Lambda^2_2\arrow[r, hook] & X \otimes \Delta^2
	\end{tikzcd}, $$
	$j = 0,1$. By commutativity and the fact that all arrows involved become isomorphisms under $Q$, it suffices to show that $$Q(i_0') = Q(i_1').$$ Let $t: X \otimes \textnormal{sd}(\Delta^1) \xrightarrow{\sim} X \otimes \textnormal{sd}(\Delta^1)$ be the isomorphism induced by the swap of endpoints $\textnormal{sd}(\Delta^1) \to \textnormal{sd}(\Delta^1)$. Denote by $i_m: X \hookrightarrow X \otimes \textnormal{sd}(\Delta^1)$ the inclusion induced by sending $ \Delta^0$ to the middle vertex in $\textnormal{sd}(\Delta^1)$. The diagram in $\mathcal C$ $$ \begin{tikzcd}
	X \arrow[r, "i_m" ,hook] \arrow[d , hook, "i_m"] & X \otimes \textnormal{sd}(\Delta^1) \arrow ["t", d]\\
	X \otimes \textnormal{sd}(\Delta^1) \arrow[r , "1"] & X \otimes \textnormal{sd}(\Delta^1) 
	\end{tikzcd}$$
	commutes.
	By the same usage of \Cref{propIncInProd} as before, one shows that $i_m$ is an FSAE. In particular, by commutativity of the last diagram, $$Q(t) = Q(1_{X \otimes \textnormal{sd}(\Delta^1)})$$ and it follows $$ Q(i'_0) = Q(t)Q(i'_1) = Q(1)Q(i'_1) =Q(i_1')$$ completing the first part of this proof.
\end{proof}
\begin{lemma}\label{lemMapCylCom}
Let $X \xhookrightarrow{f} Y$ be a cofibration in $\textnormal{s\textbf{Set}}_P$. Then the mapping cylinder splitting $$\begin{tikzcd}
X \arrow[r, "f"] \arrow[rd, hook] & Y \arrow[d, hook]\\
& M_f
\end{tikzcd}
$$
commutes in $\mathcal C'(\Sigma'^{-1})$. If $X$ and $Y$ are finite, it also commutes in $\mathcal C(\Sigma^{-1}))$. 
\end{lemma}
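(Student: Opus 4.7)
The plan is to exhibit the triangle in the statement as the two endpoints of the canonical simplicial homotopy furnished by the pushout defining $M_f$, and then to conclude by invoking the first case treated in the proof of \Cref{lemComHo} — the case of a homotopy which is itself a cofibration — which does not itself rely on \Cref{lemMapCylCom} and therefore introduces no circularity.

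First I would write out the defining pushout square
\[
\begin{tikzcd}
X \arrow[r,"f"] \arrow[d,"i_0"'] & Y \arrow[d,hook] \\
X \otimes \Delta^1 \arrow[r,"H"'] & M_f
\end{tikzcd}
\]
and identify the two composites $X \to M_f$ of interest: the top-right path $X \xrightarrow{f} Y \hookrightarrow M_f$ equals $H \circ i_0$ by commutativity of the pushout, and the diagonal cofibration in the splitting triangle equals $H \circ i_1$. Both are cofibrations, being compositions of monomorphisms, so they lie in $\mathcal C'$, and $H$ is a stratum preserving simplicial homotopy between them parameterised by $\Delta^1$.

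The key technical observation is that $H$ is itself a cofibration. Cofibrations in $\textnormal{s\textbf{Set}}_P$ are precisely the monomorphisms (\Cref{thrmDouModSS}), colimits in $\textnormal{s\textbf{Set}}_P$ are computed on underlying simplicial sets (\Cref{remLimandColim}), and monomorphisms are stable under pushout in the presheaf topos $\textnormal{s\textbf{Set}}$; so pushing the monomorphism $f$ out along $i_0$ yields a monomorphism $H$. Hence the hypothesis of the first step in the proof of \Cref{lemComHo} is satisfied, namely that the two parallel cofibrations are homotopic through a cofibration. That step is proved purely by the factorisation through $\Delta^2$ using $\textnormal{sd}(\Delta^1) = \Lambda^2_2$, by \Cref{propIncInProd} applied to $i_0, i_1 \colon X \hookrightarrow X \otimes \Delta^1$, and by the endpoint-swap isomorphism; none of these appeal to \Cref{lemMapCylCom}. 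Its application yields $Q'(H \circ i_0) = Q'(H \circ i_1)$, which is exactly the asserted commutativity in $\mathcal C'(\Sigma'^{-1})$.

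For the finite statement, when $X$ and $Y$ are finite so is $X \otimes \Delta^1$ and hence, again by \Cref{remLimandColim}, so is $M_f$; the entire argument then takes place in $\mathcal{C}$ using only finite FSAEs, giving commutativity in $\mathcal{C}(\Sigma^{-1})$. The only real obstacle is not mathematical but bookkeeping: one must carefully isolate the ``cofibration homotopy'' substep from the proof of \Cref{lemComHo} (as opposed to invoking its full statement) in order to avoid circularity.
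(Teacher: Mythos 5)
Your proposal is correct and is essentially the paper's own proof: identify the two legs of the triangle with $H\circ i_0$ and $H\circ i_1$ for the structure map $H\colon X\otimes\Delta^1\to M_f$, note that $H$ is a cofibration by stability of monomorphisms under pushout, and invoke only the first (cofibration-homotopy) part of the proof of \Cref{lemComHo}, exactly as the paper does to avoid circularity. The extra bookkeeping you supply (why $H$ is monic, finiteness of $M_f$) matches the paper's implicit justifications.
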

\begin{proof}
	Denote by $H$ the structure map $ X \otimes \Delta^1\to M_f$. Since $f$ is a cofibration, so is this (by stability under pushouts). $X \xhookrightarrow{i_1} X \otimes \Delta^1\xrightarrow{H} M_f$ is the lower diagonal map in the statement. $X \xhookrightarrow{i_0} X \otimes \Delta^1\xrightarrow{H} M_f$ is the composition of the upper right with the vertical. In particular, by the first part of the proof of \Cref{lemComHo}, the result follows.
\end{proof}
\begin{proof}[proof of \Cref{lemComHo}; part two] We still have to show that $Q(f) = Q(g)$ if the two are homotopic through a simplicial homotopy $H: X \otimes \Delta^1\to Y$ that is not necessarily a cofibration. To show this, consider the following diagram in $\textnormal{s\textbf{Set}}_P$.
$$
\begin{tikzcd}
 & M_f \arrow[rd, hook]\\
X \arrow[r, bend right = 40, "g"] \arrow[r, "f" , bend left = 40, swap] \arrow[ru, hook] \arrow[rd, hook] & Y \arrow[u, hook] \arrow[d, hook] \arrow[hook, r]& \hat M_H \\
 & M_g \arrow[ru, hook]
\end{tikzcd}
$$
Here $\hat{M}_H$ and the arrows into it are the ones constructed in \Cref{propCylIncs} (i.e. $\hat{M}_H$ is the mapping cylinder of $H$ where the $X \otimes \Delta^1$ at $1$ was collapsed to $X$.) By \Cref{propCylIncs} \Cref{propCylIncs5}, all arrows in the right part of the diagram are FSAEs and the upper right and lower right triangles are commutative by construction. By \Cref{lemMapCylCom} the upper left and lower left triangle commute in $\mathcal C(\Sigma^{-1})$. Passing to $\mathcal C(\Sigma^{-1})$ we now have a diagram where: \begin{itemize}
	
	\item The outer square commutes.
	\item The upper left and lower left triangle commute.
	\item The upper right and lower right triangle commute and consist only of isomorphisms.
\end{itemize}
In particular, a quick diagram chase shows that in $\mathcal C(\Sigma^{-1})$ we have $Q(f) = Q(g)$.
\end{proof}
\begin{proposition}\label{propEqCharOfHo}
	All of the functors in \eqref{diagLotsofFun} are fully faithful. In particular, the horizontals are isomorphisms of categories.
\end{proposition}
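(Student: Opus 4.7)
My plan is to recognize the right vertical as the inclusion of a full subcategory (hence automatically fully faithful), show that each horizontal is a bijection on Hom-sets (so in fact an isomorphism of categories, since each is the identity on objects), and conclude that the left vertical is fully faithful by commutativity of the square. The two horizontals are handled by essentially the same two-step argument, carried out uniformly with $\mathcal C,\Sigma$ or $\mathcal C',\Sigma'$; the only extra care needed in the finite case is to check that every auxiliary object stays finite, which follows from the finite parts of \Cref{propHoCharFin} and the fact that finite mapping cylinders have finitely many non-degenerate simplices.

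For surjectivity of a horizontal, take $\alpha\in\textnormal{Hom}_{\mathcal H}(X,Y)$. By \Cref{propHoCharFin}(i) write $\alpha=[s]^{-1}\circ[f]$ with $s:Y\hookrightarrow Z$ an FSAE and $f:X\to Z$ an arbitrary morphism in $\textnormal{s\textbf{Set}}_P$. Factor $f$ through its simplicial mapping cylinder as $X\xhookrightarrow{j} M_f\xrightarrow{p_Z}Z$; by \Cref{propCylIncs}(ii) the inclusion $Z\hookrightarrow M_f$ is an FSAE, and its homotopy inverse is $p_Z$, so $[f]=[Z\hookrightarrow M_f]^{-1}\circ[j]$. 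Composing $Y\xhookrightarrow{s} Z\hookrightarrow M_f$ yields an FSAE $\bar s$, and the expression $Q(\bar s)^{-1}\circ Q(j)$ is then a morphism in $\mathcal C(\Sigma^{-1})$ whose image in $\mathcal H$ is $\alpha$.

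For injectivity, suppose $\alpha_0,\alpha_1:X\to Y$ in $\mathcal C(\Sigma^{-1})$ agree in $\mathcal H$. By \Cref{lemCharOfMorCs} write $\alpha_i=Q(s_i)^{-1}Q(f_i)$ with $s_i:Y\hookrightarrow Z_i$ FSAEs and $f_i:X\hookrightarrow Z_i$ cofibrations. Form the pushout $W:=Z_0\cup_Y Z_1$; stability of FSAEs under pushout makes each leg $Z_i\hookrightarrow W$ an FSAE, and the common diagonal $\bar s:Y\hookrightarrow W$ an FSAE as well. Setting $\bar f_i:=(Z_i\hookrightarrow W)\circ f_i$, a direct computation gives $\alpha_i=Q(\bar s)^{-1}Q(\bar f_i)$, so the hypothesis collapses to $[\bar f_0]=[\bar f_1]$ in $\mathcal H$. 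Now invoke \Cref{propHoCharFin}(ii) to obtain an FSAE $t:W\hookrightarrow W'$ for which $t\circ\bar f_0$ and $t\circ\bar f_1$ are strictly simplicially homotopic. Since $t\circ\bar f_i$ are cofibrations, \Cref{lemComHo} (part two, which handles general simplicial homotopies) yields $Q(t\circ\bar f_0)=Q(t\circ\bar f_1)$ in $\mathcal C(\Sigma^{-1})$, and inverting $Q(t)$ gives $Q(\bar f_0)=Q(\bar f_1)$, so $\alpha_0=\alpha_1$.

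The principal obstacle is conceptual rather than technical: morphisms in the homotopy categories on the right are a priori much more flexible than zigzags of cofibrations and FSAEs, and one must show that neither nothing is lost nor anything is gained by passing to $\mathcal C(\Sigma^{-1})$. The mapping-cylinder results of \Cref{propCylIncs} do the heavy lifting for surjectivity (replacing an arbitrary map by a cofibration at the cost of an FSAE-correction), while \Cref{propHoCharFin} together with \Cref{lemComHo} let us detect equality in the homotopy category without ever leaving the class of cofibrations. Once both horizontals are identified as isomorphisms of categories, commutativity of the square and the full faithfulness of the right vertical force the left vertical to be fully faithful as well, finishing the proof.
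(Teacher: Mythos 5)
Your proposal is correct and follows essentially the same route as the paper: fullness via \Cref{propHoCharFin}(i) plus the mapping-cylinder FSAE of \Cref{propCylIncs}, and faithfulness via the $Q(s)^{-1}Q(f)$ normal form, \Cref{propHoCharFin}(ii) and \Cref{lemComHo}. The only differences are presentational — you make the paper's ``without loss of generality'' step in faithfulness explicit through the pushout $Z_0\cup_Y Z_1$ and spell out the (trivial) full faithfulness of the verticals, both of which the paper leaves implicit.
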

\begin{proof} We prove the finite case as the infinite one is basically just a simpler version of this one.\\
	\\
We start by showing fullness. By \Cref{propHoCharFin} it suffices to show this that morphisms in $\mathcal{H}\textnormal{s\textbf{Set}}_P^{\textnormal{fin}}$ of shape $[f]$ for some morphism $f:X \to Y$ in $\textnormal{s\textbf{Set}}_P^{\textnormal{fin}}$.
 Using mapping cylinder factorization we obtain the diagram
$$\begin{tikzcd}
X \arrow[r, "f"] \arrow[rd, hook] & Y \arrow[d, hook]\\
 & M_f
\end{tikzcd},
$$
which is commutative in $\mathcal{H}\textnormal{s\textbf{Set}}_P$. As, $M_f$ is again finite, by \Cref{propCylIncs}, $Y \hookrightarrow M_f$ is a (finite) FSAE, hence induces an isomorphism in $\mathcal C(\Sigma^{-1})$. So it suffices to show that the class of $X \hookrightarrow M_f$ in $\mathcal H\textnormal{s\textbf{Set}}_P$ comes from an arrow in $\mathcal C(\Sigma^{-1})$. But this is obvious, as $X \hookrightarrow M_f$ is a cofibration of finite filtered simplicial sets.\\
\\		
It remains to show the faithfulness. First note that by the same argument as in \Cref{subsecEckSiebApp} (i.e. the fact that we can always change direction in a zigzag by using pushouts) every morphism in $\mathcal C(\Sigma^{-1})$ is of the shape $Q(s)^{-1}Q(f)$ where $s$ is a (finite) FSAE and $f$ a morphism in $\mathcal C$. So suppose we are given two such morphisms $Q(s)^{-1}Q(f),Q(s')^{-1}Q(g)$ with $[s]^{-1}[f] = [s']^{-1}[g]$. We need to show $Q(s)^{-1}Q(f)=Q(s')^{-1}Q(g)$. By invertibility of $Q(s),Q(s')$ we may without loss of generality assume that $f,g$ have the same source $X$ and target $Y$ and that $s = s' = 1_Y$. In particular, we then have $$[f] = [g].$$ By \Cref{propHoCharFin} there then exists an $s \in \Sigma $ such that $s \circ f$ and $s \circ g$ are (stratum preserving, elementarily) simplicially homotopic. By \Cref{lemComHo} this implies $Q(s \circ f) = Q(s \circ g)$. As $Q(s)$ is invertible, we obtain $$Q(f) = Q(g).$$
\end{proof}
As a consequence of \Cref{propEqCharOfHo}, our choices of $\mathcal C$ and $\Sigma$ are promising candidates for constructing a Whitehead group (with respect to FSAEs i.e. with respect to elementary expansions) on $\mathcal{H}\textnormal{s\textbf{Set}}_P^{\textnormal{fin}}$ ($\mathcal{H}\textnormal{sSet}$). For $X \in \mathcal{C}$, $(\mathcal{C'})$ and $\Sigma$, $\Sigma'$ as before recall the definitions of a simple morphism and of $A(X), E(X)$ (\Cref{defSimpleSetting}). For the remainder of this section, we only be focus on the finite setting i.e. on $\mathcal{C},\Sigma$. Quite clearly, all of the arguments below also work in the infinite one, with the exception that $A(X)$ is in general probably not of set size in this case. If one is willing to work with groups of class size, this might produce a fruitful theory as well. \\
\\
In the finite setting however, $E(X)$ is clearly of set size, as the equivalence relation on $E(X)$ is up to isomorphism. We now show that the triple $\mathcal{C} \subset \hat{ \mathcal C},\Sigma$ satisfies the axioms in \Cref{defAxEckSieb}, i.e. admits a Whitehead group.
\begin{theorem}\label{thrmEckSiebAxAreTrue}
	Let $\hat{ \mathcal C}$ be the full subcategory of $\textnormal{s\textbf{Set}}_P$ given by finite filtered simplicial sets. Let $\mathcal{C}$ be the subcategory which has the same objects as $\mathcal{C}$ but only cofibrations in $\textnormal{s\textbf{Set}}_P$ as morphisms. Let $\Sigma$ be the class of FSAEs in $\hat{ \mathcal C}$. Then $\mathcal{C} \subset \hat{ \mathcal C}$, $\Sigma$ admits a Whitehead group (see \Cref{defAxEckSieb}).
\end{theorem}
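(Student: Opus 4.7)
The plan is to verify the three axioms \ref{eckAx0}, \ref{eckAx1}, \ref{eckAx2} of \Cref{defAxEckSieb} in turn.

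Axiom \ref{eckAx0} is immediate from the characterization of FSAEs given in \Cref{propEqCharSaeTot}: the class contains all isomorphisms (as transfinite compositions over the empty ordinal) and is closed under ordinary composition (the two-element case of closure under transfinite composition).

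For Axiom \ref{eckAx1}, the category $\textnormal{s\textbf{Set}}_P$ is a presheaf topos, hence cocomplete with pushouts computed sectionwise, and the pushout of a monomorphism along any morphism is again a monomorphism. Consequently, the pushout of two cofibrations in $\mathcal{C}$ exists and both of its structure maps are again cofibrations, i.e.\ lie in $\mathcal{C}$. Finiteness is preserved because the non-degenerate simplices of such a pushout form a finite union of those of the two targets. Closure of $\Sigma$ under pushouts is part of \Cref{propEqCharSaeTot}.

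Axiom \ref{eckAx2} is the central content of the theorem. Given $f, g : X \to Y$ in $\mathcal{C}$ with $Q(f) = Q(g)$, we first apply \Cref{propEqCharOfHo} to translate the equation into $[f] = [g]$ in $\mathcal{H}\textnormal{s\textbf{Set}}_P^{fin}$, and then invoke \Cref{propHoCharFin} to obtain a finite FSAE $s_0 : Y \hookrightarrow Z_0$ together with an elementary simplicial homotopy $H : X \otimes \Delta^1 \to Z_0$ between $s_0 f$ and $s_0 g$. Producing FSAEs $s_1, t_1 : Z_0 \to Z$ with $s_1 (s_0 f) = t_1 (s_0 g)$ then yields the required $s = s_1 s_0$ and $t = t_1 s_0$, so after this reduction we may assume from the outset that $f$ and $g$ are joined directly by an elementary simplicial homotopy $H : X \otimes \Delta^1 \to Y$.

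In this reduced setting, the equalizing space $Z$ is constructed from the mapping cylinders $M_f, M_g$ of $f, g$ and the reduced homotopy cylinder $\hat M_H$ of \Cref{propCylIncs}. The natural candidates for $s_1, t_1$ are the composites $Y \hookrightarrow M_f \hookrightarrow \hat M_H$ and $Y \hookrightarrow M_g \hookrightarrow \hat M_H$, which are FSAEs by items (ii) and (v) of \Cref{propCylIncs}. The main obstacle is that in $\hat M_H$ these two composites only equalize $f$ and $g$ up to a further simplicial homotopy through the cube sides of $M_H$, not on the nose; overcoming this requires an additional gluing step that uses the top-inclusion FSAEs (obtainable from items (i) and (iv) of \Cref{propCylIncs} via the push-product \Cref{propIncInProd}) to identify the bottom images of $f(X)$ and $g(X)$ inside a further enlargement $Z \supset \hat M_H$, together with a careful verification that each intermediate cofibration lies in $\Sigma$ using the pushout- and composition-closure of \Cref{propEqCharSaeTot}. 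Once this explicit assembly has been carried out, the axiom, and hence the theorem, follows.
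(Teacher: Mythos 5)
Your verification of \ref{eckAx0} and \ref{eckAx1} is fine and matches the paper, and your reduction of \ref{eckAx2} via \Cref{propEqCharOfHo} and \Cref{propHoCharFin} to the case where $f$ and $g$ are cofibrations joined by an elementary simplicial homotopy $H\colon X\otimes\Delta^1\to Y$ is exactly the paper's first step. But the proof stops precisely where the theorem's actual content begins: you correctly observe that the composites $Y\hookrightarrow M_f\hookrightarrow \hat M_H$ and $Y\hookrightarrow M_g\hookrightarrow \hat M_H$ do \emph{not} equalize $f$ and $g$ on the nose, and then you assert that ``an additional gluing step'' identifying the images of $f$ and $g$ in some enlargement $Z\supset\hat M_H$ will finish the argument, without constructing $Z$ or the maps, and without verifying the strict equality $sf=tg$. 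That is the genuine gap: the whole difficulty of \ref{eckAx2} is converting ``equal up to simplicial homotopy'' into ``strictly equal after postcomposing with FSAEs,'' and this step is deferred rather than done. Moreover, the phrase ``identify the bottom images of $f(X)$ and $g(X)$'' suggests a quotient-type identification, which is not how one can proceed if all intermediate maps are to remain cofibrations in $\Sigma$.

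For comparison, the paper's assembly is concrete and uses no identification of $f(X)$ with $g(X)$: since $f$ is a cofibration, $M_f\subset Y\otimes\Delta^1=M_{1_Y}$ and this inclusion $s'_f$ is an FSAE by \Cref{propCylIncs}(iv); one forms $R_f$ as the pushout of $\hat M_H\xleftarrow{s_f}M_f\xhookrightarrow{s'_f}Y\otimes\Delta^1$, symmetrically $R_g$, and then $R=R_f\cup_{\hat M_H}R_g$. The maps $s,t\colon Y\to R$ are the composites $Y\xhookrightarrow{i_1}Y\otimes\Delta^1\to R_f\to R$ and $Y\xhookrightarrow{i_1}Y\otimes\Delta^1\to R_g\to R$; they are FSAEs because $i_1$ is one by \Cref{propCylIncs}(i) and all the remaining legs are pushouts of the FSAEs $s_f,s_g,s'_f,s'_g$. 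The strict equality $sf=tg$ holds because inside $Y\otimes\Delta^1$ the composite $i_1\circ f$ coincides (on the nose, $f$ being an inclusion) with the free-end copy of $X$ in $M_f$, and the two free-end copies of $X$ in $M_f$ and $M_g$ are already identified in $\hat M_H$, since $\hat M_H$ is obtained by collapsing the top $X\otimes\Delta^1$ of $M_H$ to $X$. This double-pushout trick is the missing ``explicit assembly''; without it (or an equivalent construction) the proof of \ref{eckAx2}, and hence of the theorem, is incomplete.
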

\begin{proof}
\ref{eckAx0} of \Cref{defAxEckSieb} is obvious as FSAEs contain all isomorphism and are closed under composition by definition. Similiarly, as cofibrations are closed under pushouts and FSAEs are closed under pushouts by definition, we immediately obtain \ref{eckAx1}. It remains to show $\ref{eckAx2}$. So let $f,g:X \to Y$ be two cofibrations of simplicial sets, such that $Q(f)=Q(g)$. Then, in particular, $[f] = [g]$. By \Cref{propHoCharFin} and the fact that (finite) FSAEs are stable under composition we may hence assume without loss of generality that $f$ and $g$ are elementary simplicially homotopic through a simplicial homotopy $H$. 
By \Cref{propCylIncs} \ref{propCylIncs5}, we then have a commutative diagram $$ \begin{tikzcd}
 X \arrow[r, hook] \arrow[d, hook] & M_f \arrow[d, hook, "s_f"]\\
 M_g \arrow[r, hook ,"s_g"] & \hat M_H
\end{tikzcd}$$
in $\mathcal{C}$, with $s_f$ and $s_g$ FSAEs. Now, extend this diagram as follows: 
$$ \begin{tikzcd}
& & & X \arrow[llld , "f" , swap, hook'] \arrow[rd, hook] \arrow[ld, hook'] \arrow[rrrd, "g", hook ]& & &\\
Y \arrow[rd, hook, "i_1", swap] & & M_f \arrow[rd, hook, "s_f", swap] ,\arrow[ld, hook', "s'_f"]& & M_g \arrow[rd, hook, "s'_g", swap] \arrow[ld, hook' ,"s_g"] & &Y \arrow[ld, hook', "i_1"] \\
& Y \otimes \Delta^1\arrow[rd] & & \arrow[ld ]\hat M_H \arrow[rd]& & \arrow[ld] Y \otimes \Delta^1 & \\
& & R_f \arrow[rd] && R_g \arrow[ld] &&\\
& & & R & & &
\end{tikzcd}$$
All the squares ending in $R$s are obtained by pushing out. The upper left and upper right square commute by definition. Now, as both $f$ and $g$ are cofibrations, we have that $s'_f$ and $s'_g$ are FSAEs, by \Cref{propCylIncs} \ref{propCylIncs4}. Furher $i_1$ is an FSAE by \Cref{propCylIncs} \ref{propCylIncs4}. In particular, by stability under pushouts, every arrow, but the ones in the first row is an FSAE. Hence, by stability under composition and commutativity of the diagram, we have shown \ref{eckAx2}.
\end{proof}
Thus \Cref{thrmEckSiebAxAreTrue}, we can apply \Cref{thrmEckSieb} together with \Cref{propEqCharOfHo} to obtain two functors 
\begin{align}\label{diagFunWHM}
	\mathcal{H}\textnormal{s\textbf{Set}}_P^{\textnormal{fin}} \cong \mathcal C(\Sigma^{-1}) &\xrightarrow{A} \textnormal{\textbf{MonAb}};\\
	\label{diagFunWHG}
	\mathcal{H}\textnormal{s\textbf{Set}}_P^{\textnormal{fin}} \cong \mathcal C(\Sigma^{-1}) &\xrightarrow{E} \textnormal{\textbf{Ab}}.
\end{align} 
We give a more detailed analysis of these two functors in \Cref{subsecWHdetails}. It can also be helpful to go back to \Cref{thrmEckSieb}, where the basic constructions underlying $A$ and $E$ are listed.
We are now in shape to finally define the Whitehead group of a finite filtered simplicial set.
\begin{definition}\label{defWH}Let Let $X \in \textnormal{s\textbf{Set}}_P^{\textnormal{fin}}$. 
	\begin{itemize}
		\item Denote by $A_P$ the functor in \eqref{diagFunWHM}.\\ $A_P(X)$ is called the \textit{Whitehead monoid of the filtered simplicial set $X$}. 
		\item Denote by $Wh_P$ the functor in \eqref{diagFunWHG}.\\ $Wh_P(X)$ is called the \textit{Whitehead group of the filtered simplicial set $X$}. 
		\item For a morphism $\alpha:X \to Y$ in $\mathcal{H}\textnormal{s\textbf{Set}}_P^{\textnormal{fin}}$, let $\tau_P(\alpha) \in A_P(X)$ ($... \in Wh_P(X)$ if $\alpha$ is an isomorphism) be the simple morphism class of the morphism in $\mathcal C(\Sigma^{-1})$ corresponding to $\alpha$.\\ $\tau_P(\alpha)$ is called the \textit{Whitehead torsion of $\alpha$}.
		\item For a morphism $f: X \to Y$ in $\textnormal{s\textbf{Set}}_P$, define $\tau_P(f):= \tau_P([f]) \in A_P(X)$ ($ ... \in Wh_P(X)$ if $f$ is a weak equivalence). \\$\tau_P(f)$ is called the \textit{Whitehead torsion of $f$}.
	\end{itemize}
\end{definition}
In the following subsection, we investigate some of the properties of $A_P, Wh_P$ and $\tau_P$. In particular, we give a more explicit description of their functoriality for morphisms in $\textnormal{s\textbf{Set}}_P$. It turns out, in many ways they behave just like the classical theory. This of course stems from the fact, that most of what we have done here is a formalization of the arguments used there. More generally, it seems that if one keeps really good track of what we are actually using, our argument should generalize to any cofibrantly generated combinatorial simplicial model category, fulfilling an analogue of \Cref{propHoCharFin} for compact objects, and also the analogue of \Cref{propIncInProd}.
\subsection{Properties of $Wh_P$, $A_P$, $\tau_P$ and of simple equivalences}\label{subsecWHdetails}
The astute reader will have noticed that, while we have spent a great deal of time talking about FSAEs - the filtered simplicial analogue to expansions in the classical setting - we have yet to define what a simple equivalence is, in the setting of finite filtered simplicial sets. This is because to really say anything interesting about these objects, it is helpful to already have the theory of \Cref{thrmEckSieb} in place. Showing that one can use FSAEs of finite filtered simplicial sets to construct a Whitehead group (a Whitehead monoid) and a Whitehead torsion has been the content of the last subsection, of \Cref{propEqCharOfHo}, \Cref{thrmEckSiebAxAreTrue} and \Cref{defWH} to be more precise. Now, that these are set up, we can start our investigation of simple equivalences. It can be of interest to compare the results in this section to the ones found for example in \cite{whitehead1939simplicial} for a simplicial complex setting, \cite{whitehead1950simple}, \cite{cohenCourse} for a CW setting and \cite{kampsPo} for an abstract setting, which they and their proofs are clearly inspired by. \\
\\
First, we use \Cref{propEqCharOfHo} together with \Cref{defSimpleSetting} to define simple equivalences of filtered simplicial sets. Recall that we are in the setting of \Cref{subsecEckSiebApp} with $\hat{ \mathcal C}= \textnormal{s\textbf{Set}}_P^{\textnormal{fin}}$, $\mathcal{C}$ the subcategory given by cofibrations in $\hat{ \mathcal C}$ and $\Sigma$ the class of FSAEs in $\hat{ \mathcal C}$ and that we have a (canonical) isomorphism of categories $\mathcal C(\Sigma^{-1}) \cong \mathcal{H}\textnormal{s\textbf{Set}}_P^{\textnormal{fin}}$, by \Cref{propEqCharOfHo}.
\begin{definition}\hfill
	\begin{itemize}
	\item A morphism in $\alpha$ in $\mathcal{H}\textnormal{s\textbf{Set}}_P^{\textnormal{fin}}$ is called \textit{a simple equivalence} if and only if the morphism in $\mathcal C(\Sigma^{-1})$ corresponding to $\alpha$ is a simple morphism. That is, if it is a finite composition of morphisms of shape $[s]$ or $[s]^{-1}$, where $s$ is an FSAE in $\textnormal{s\textbf{Set}}_P^{\textnormal{fin}}$.
	\item Two objects in $X,Y \in \mathcal H\textnormal{s\textbf{Set}}_P$ are said to have the same \textit{simple homotopy type} or also to be \textit{simply equivalent} if there is a simple equivalence $X \xrightarrow{\alpha} Y$.
	\item A morphism in $\textnormal{s\textbf{Set}}_P^{\textnormal{fin}}$ is called a $\textit{simple equivalence}$ if $[f]$ is a simple equivalence.
	\item We say two morphisms $\alpha,\beta $ in $\mathcal{H}\textnormal{s\textbf{Set}}_P^{\textnormal{fin}}$ with the same source \textit{have the same simple morphism class} if their corresponding morphism in $\mathcal C(\Sigma^{-1})$ have the same simple morphism class. That is, if there is a simple equivalence $\sigma $ in $\mathcal{H}\textnormal{s\textbf{Set}}_P^{\textnormal{fin}}$ such that $\alpha = \sigma \circ \beta$. The equivalence class generated by this relation is called the \textit{simple morphism class of $\alpha$}, denoted by $\langle \alpha \rangle$. 
	\item The simple morphism class of a morphism $f$ in $\textnormal{s\textbf{Set}}_P^{\textnormal{fin}}$ is defined to be the simple morphism class of $[f]$, denoted by $\langle f \rangle$.
	\item A zigzag in $\textnormal{s\textbf{Set}}_P^{\textnormal{fin}}$ $$X = X^0 \leftrightarrow ... \leftrightarrow X^n = Y $$ with all arrows FSAEs is called a \textit{deformation} between $X$ and $Y$. We also say \textit{$X$ deforms into $Y$} and vice versa.
	\item A commutative diagram in $\textnormal{s\textbf{Set}}_P^{\textnormal{fin}}$ $$ \begin{tikzcd}
	& X \arrow[ld, hook', "a_0", swap] \arrow[d, hook] \arrow[rd, "a_n", hook]&\\
	Y^0\arrow[r, leftrightarrow] & ... \arrow[r, leftrightarrow]& Y^n
	\end{tikzcd},
	$$
	where the verticals are cofibrations, and the lower horizontal is a deformation from $Y_0$ to $Y_n$ is called a \textit{deformation from $a_0$ to $a_n$}. We also say \textit{$a_0$ deforms into $a_n$} and vice versa.
	\end{itemize}
\end{definition}
Clearly, every finite FSAE is a simple equivalence. Also, by definition, any deformation induces a simple equivalence, by taking the zigzag as a morphism in $\mathcal{H}\textnormal{s\textbf{Set}}_P^{\textnormal{fin}}$. In particular, we obtain a vast class of examples of simple equivalences, by using the results in \Cref{propCylIncs} together with the fact that simple equivalences fulfill a two out of three rule, by definition.
\begin{example}\label{exSimpleMorphismCyl}
Let $X \xrightarrow{f} X'$ be a morphism and $X \xhookrightarrow{a} Y$ be a cofibration in $\mathcal{H}\textnormal{s\textbf{Set}}_P^{\textnormal{fin}}$. Denote by $i_X,i_{X'}$ the respective inclusions into the mapping cylinder $M_f$. Then the following arrows are simple equivalences.
\begin{enumerate}
	\item $X \otimes \Delta^1\xrightarrow{p_X} X$ \label{exSimpleMorphismCyl1}
	\item $M_f \xrightarrow{p_X'} X'$ \label{exSimpleMorphismCyl2}
	\item $M_f\cup_{i_{X'},a} Y \xrightarrow{p_{X'} \cup a} X' \cup_{f,a} Y$ \label{exSimpleMorphismCyl3} 
\end{enumerate}
We have illustrated an example of the third in \Cref{fig:ExSimpleEq}. 
\begin{figure}[H]
	\includegraphics[width=\linewidth]{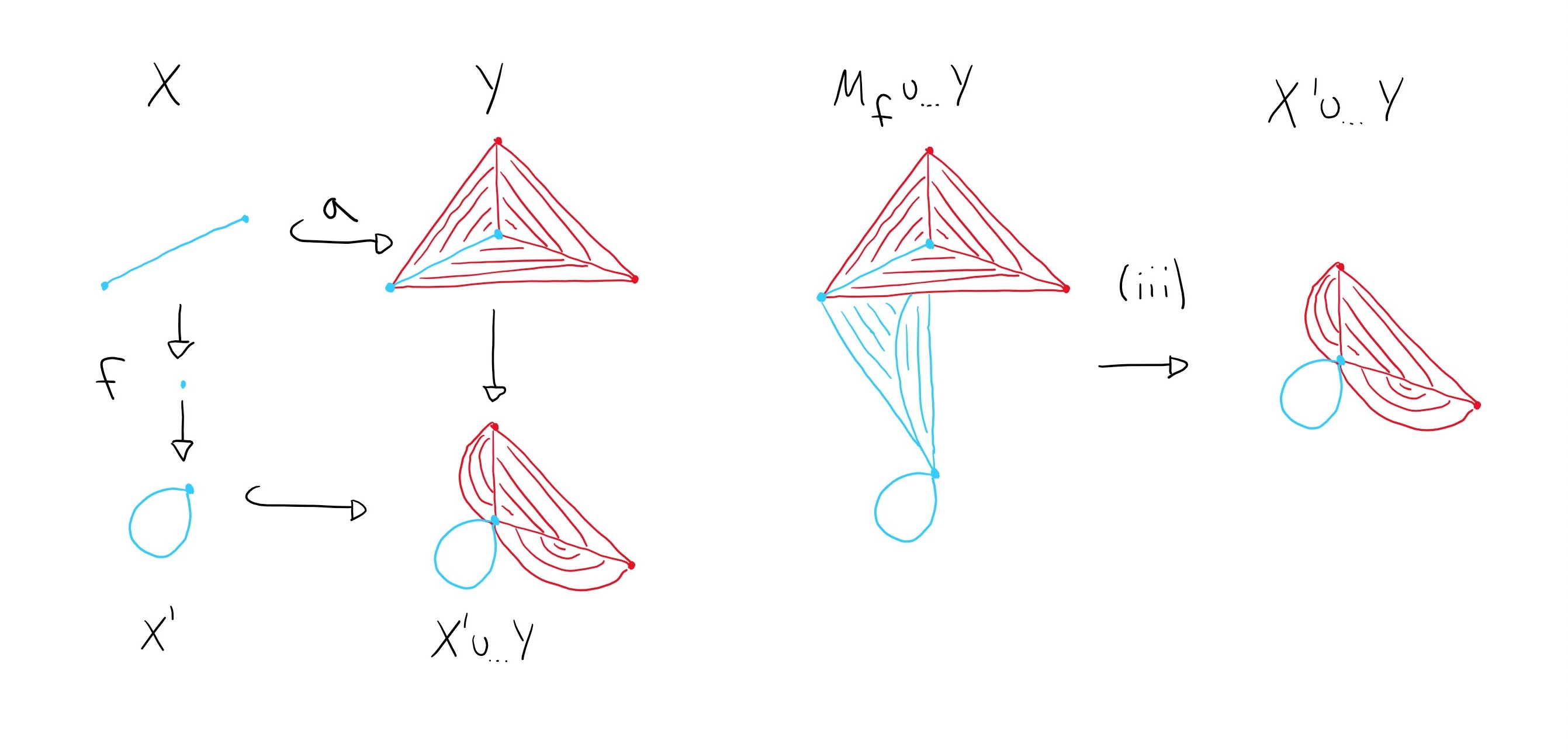}
	\caption{An example of the simple equivalence in \ref{exSimpleMorphismCyl3} for $P =\{0,1\}$ with the $0$-stratum marked in blue and the $1$-stratum marked in red. The pushout square is to the left, the simple equivalence to the right.}
	\label{fig:ExSimpleEq}
\end{figure}
The first two are an immediate consequence of the results in \Cref{propCylIncs} and the two out of three law. For the third one ... 
\end{example}
\begin{proof}
..., consider first the pushout square:
$$ \begin{tikzcd}
Y \arrow[r] \arrow[d, hook, "i_0"] & X' \cup_{f,g} Y \arrow[d, hook, "\phi"]\\
Y \otimes \Delta^1\arrow[r] & X' \cup_{i_0\circ f, a} Y \otimes \Delta^1
\end{tikzcd}.$$
As the left vertical is an FSAE, so is the right. Secondly, consider the pushout square 
$$ \begin{tikzcd}
 X \otimes \Delta^1\cup_{g,i_1} Y \arrow[r] \arrow[d, hook] & M_f \cup_{i_{X'},a} Y \arrow [d, hook, "\psi"]\\
 Y \otimes \Delta^1\arrow[r] & X' \cup_{i_0\circ f, a} Y \otimes \Delta^1
\end{tikzcd} $$
Again, as by $g$ is a cofibration, by \Cref{propIncInProd} the left vertical and hence also the right vertical is an FSAE. Now, consider the triangle diagram: 
\begin{equation}\label{diagAnotherTriangle}
 \begin{tikzcd}
M_f \cup_{i_{X'},a} Y \arrow[rr, "p_{X'}\cup a"] \arrow[rd, hook, "\psi", swap]& & X' \cup_{f,a} Y \arrow[ld, hook', "\phi"] \\
& X' \cup_{i_0\circ f, a} Y \otimes \Delta^1& 
\end{tikzcd}.
\end{equation}
It is clearly not commutative in $\textnormal{s\textbf{Set}}_P$. However, a left inverse to $\phi$ and hence an inverse in $\mathcal{H}\textnormal{s\textbf{Set}}_P^{\textnormal{fin}}$ is given by the collapsing the cylinder map, $X' \cup_{i_0\circ f, a} Y \otimes \Delta^1\to X' \cup_{f,a} Y$. The latter fits into a commutative diagram:
$$ \begin{tikzcd}
M_f \cup_{i_{X'},a} Y \arrow[rr, "p_{X'}\cup a"] \arrow[rd, hook, "\psi", swap]& & X' \cup_{f,a} Y \\
& X' \cup_{i_0\circ f, g} Y \otimes \Delta^1\arrow[ru]& 
\end{tikzcd}. $$
In particular, \eqref{diagAnotherTriangle} commutes in $\mathcal{H}\textnormal{s\textbf{Set}}_P^{\textnormal{fin}}$. Hence, by the two out of three property, we obtain that $M_f\cup_{i_{X'},a} Y \xrightarrow{p_{X'} \cup a} X' \cup_{f,a} Y$ is a simple equivalence.
\end{proof}

We begin with a few statements that are immediate consequences of the isomorphism of categories $\mathcal C(\Sigma^{-1}) \cong \mathcal{H}\textnormal{s\textbf{Set}}_P^{\textnormal{fin}}$ and the results in \Cref{subsecEckSiebApp}. For the following, let $X \in \textnormal{s\textbf{Set}}_P^{\textnormal{fin}}$.
\begin{remark}\label{remSimpleStuffA}
$\tau_P$ induces a one to one correspondence between the simple morphism classes with source $X$ and $A_P(X)$. In particular, $\alpha: X \to Y$ in $\mathcal{H}\textnormal{s\textbf{Set}}_P^{\textnormal{fin}}$ is a simple equivalence if and only if $\tau_P(\alpha) = 0$. Due to this, we do not distinguish between elements of $A_P(X)$ and simple morphism classes (of morphism with source $X$) and for the sake of notational brevity, we write $\langle f \rangle $ instead of $\tau_P(f)$ for the remainder of this subsection.

By \Cref{lemCharOfMorCs} every morphism in $\mathcal{H}\textnormal{s\textbf{Set}}_P^{\textnormal{fin}}$ is of the shape $[s]^{-1}\circ [a]$ where $a$ is a cofibration and $s$ is a FSAE. Hence, every simple morphism class in $A_P(X)$ is of the shape $\langle a \rangle$ for some cofibration $a:X \xhookrightarrow{} Y$ in $\textnormal{s\textbf{Set}}_P^{\textnormal{fin}}$. By \Cref{lemEqConE}, two such cofibrations have the same simple morphism class if and only if they differ up to a deformation. In particular, we can think also think of $A_P(X)$ as cofibrations in $\textnormal{s\textbf{Set}}_P^{\textnormal{fin}}$ with source $X$ modulo deformation. 

By the same argument as we used for the proof of \Cref{lemCharOfMorCs} (i.e. stability of FSAEs under pushout and composition), every deformation between object or morphism can be replaced by one that is of the shape $s: X \to S \xleftarrow{} Y:t$ with $s$ and $t$ FSAEs.
\end{remark}
\begin{remark}\label{remSimpleStuffB}
As a morphism in a model category is a weak equivalence if and only if it induces an isomorphism in the homotopy category, every simple equivalence in $\textnormal{s\textbf{Set}}_P^{\textnormal{fin}}$ is a weak equivalence. In particular, $f:X \to Y$ in $\textnormal{s\textbf{Set}}_P^{\textnormal{fin}}$ is a weak equivalence if and only if $\tau_P(f)$ ends up in $Wh_P(X)$. 
Hence, we can think of $Wh_P(X)$ as the simple morphism classes of weak equivalences, with source $X$ or alternatively by \Cref{remSimpleStuffA} as acyclic cofibrations with source $X$ modulo deformation. 
\end{remark}
Just as in the classical scenario, one then has the following characterizations of simple equivalences.
\begin{proposition}\label{propEqCharSimpleEq}
Let $f: X \to Y$ be a morphism in $\textnormal{s\textbf{Set}}_P^{\textnormal{fin}}$. Then the following are equivalent: 
\begin{enumerate}
		\item $f$ is a simple equivalence. \label{propEqCharSimpleEq1}
		\item $[f] = [t]^{-1}\circ [s]$ for $s,t$ finite FSAEs. \label{propEqCharSimpleEq2}
		\item $\tau_P(f) = 0$ \label{propEqCharSimpleEq3}
		\item $X \hookrightarrow M_f$ is a simple equivalence. \label{propEqCharSimpleEq4}
	\end{enumerate}
\end{proposition}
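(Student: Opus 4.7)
The plan is to prove the chain of equivalences (1) $\Leftrightarrow$ (3) $\Leftrightarrow$ (2) and (1) $\Leftrightarrow$ (4) by appealing to the formal machinery already built up in the preceding subsections. Most of this is essentially translation of what has already been established, so the proof will be short and the main work is keeping the references straight.

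First I would handle (1) $\Leftrightarrow$ (3). This is a direct consequence of the definition of $\tau_P(f) = \langle f \rangle \in A_P(X)$ together with the observation from \Cref{remSimpleStuffA} that $\tau_P$ realizes a bijection between simple morphism classes of morphisms with source $X$ and elements of $A_P(X)$. Since $f$ is a simple equivalence iff $[f]$ is a simple morphism iff $\langle f \rangle = \langle 1_X \rangle = 0$, this gives the equivalence.

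Next I would prove (1) $\Leftrightarrow$ (2). The implication (2) $\Rightarrow$ (1) is immediate, since any FSAE is by definition a simple morphism class generator, so $[t]^{-1}\circ[s]$ is a composition of morphisms of the shape $[\cdot]^{\pm 1}$ with entries FSAEs, hence simple. For (1) $\Rightarrow$ (2), we argue as in the proof of \Cref{lemCharOfMorCs}: any simple morphism is a zigzag of morphisms $[s_i]^{\pm 1}$ with each $s_i$ an FSAE; using stability of FSAEs under composition (axiom \ref{eckAx0}, verified in \Cref{thrmEckSiebAxAreTrue}) and stability under pushout (axiom \ref{eckAx1}, also verified there), one can reduce such a zigzag of length $n$ inductively to length two of the desired form, by using the pushout of two FSAEs with common domain to swap a right-pointing arrow past a left-pointing FSAE. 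No new difficulty arises beyond what was already handled in \Cref{lemCharOfMorCs}.

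Finally I would prove (1) $\Leftrightarrow$ (4) using the standard mapping cylinder factorization $f = p_Y \circ i_X$, where $i_X: X \hookrightarrow M_f$ is a cofibration and $p_Y: M_f \to Y$ is the collapse map. By \Cref{exSimpleMorphismCyl} (ii), $p_Y$ is a simple equivalence. Passing to $\mathcal H\textnormal{s\textbf{Set}}_P^{\textnormal{fin}}$, we have $[f] = [p_Y] \circ [i_X]$, with $[p_Y]$ a simple morphism and in particular an isomorphism. The simple morphisms form a subcategory of $\mathcal C(\Sigma^{-1})$ that is closed under composition and contains all isomorphisms built from FSAEs, and it satisfies two-out-of-three among isomorphisms (since if $\sigma \circ \mu$ and $\sigma$ are simple then $\mu = \sigma^{-1} \circ (\sigma \circ \mu)$ is simple). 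Thus $[f]$ is simple iff $[i_X]$ is simple, giving (1) $\Leftrightarrow$ (4).

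I do not expect any real obstacle here; every ingredient (closure properties of FSAEs, the mapping cylinder simple equivalences in \Cref{exSimpleMorphismCyl}, the bijection with $A_P(X)$, the Eckmann--Siebenmann axioms of \Cref{thrmEckSiebAxAreTrue}) is already in place. The only point that requires slight care is being explicit about two-out-of-three for simple morphism classes, which is the reason the mapping cylinder characterization (4) is a genuine corollary rather than a tautology.
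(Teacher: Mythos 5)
Your proposal is correct and follows essentially the same route as the paper: the equivalence of (1)--(3) via the identification with $A_P(X)$ (\Cref{remSimpleStuffA}) and the zigzag-reduction argument of \Cref{lemCharOfSimCs}, and (1)$\Leftrightarrow$(4) via mapping cylinder factorization, \Cref{propCylIncs}/\Cref{exSimpleMorphismCyl}, and two-out-of-three for simple equivalences. The only cosmetic difference is that you re-derive the content of \Cref{lemCharOfSimCs} and phrase the cylinder step through the collapse $p_Y$ rather than the FSAE $Y\hookrightarrow M_f$, which amounts to the same thing.
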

\begin{proof}
The first three statements are equivalent by \Cref{remSimpleStuffA} and \Cref{lemCharOfSimCs}. Clearly, weak equivalences fulfill a two out of three property by definition. Hence, by mapping cylinder factorization and \Cref{propCylIncs} (the fact that $Y \hookrightarrow M_f$ is an FSAE), we obtain that \ref{propEqCharSimpleEq1} is equivalent to \ref{propEqCharSimpleEq4}.
\end{proof}
Next up, we study the functoriality of $A_P$ and of $Wh_P$. By a slight abuse of notation, we denote any functoriality of the above by a $(-)_*$. That is, we use $f_*$ for $A_P([f]), Wh_P([f])$, for $f$ in $\textnormal{s\textbf{Set}}_P^{\textnormal{fin}}$ and $\alpha_*$ for $A_P(\alpha), Wh_P(\alpha)$ for $\alpha$ in $\mathcal{H}\textnormal{s\textbf{Set}}_P^{\textnormal{fin}}$. What is meant, will usually be obvious from context. So far, we really only have an explicit description of $\alpha_*\langle \beta \rangle$ after a choice of cofibrations $a$ and $b$ such that $[a]=\alpha$ ,$[b] = \beta$. By \Cref{thrmEckSieb}, $\alpha_*(\langle \beta \rangle$) is then given by $\langle b' \rangle$, where $b'$ is a pushout of $b$ along $a$ in $\textnormal{s\textbf{Set}}_P$. As is usually the case, for pushouts to interact well with homotopy properties, it suffices for one of the two maps to be a cofibration (and the objects to be cofibrant). This is reflected in \Cref{propDesOfFStar}. It is very reminiscent of the setting of homotopy pushouts (see for ex. \cite{kampsPo}), and also reflects in \ref{exSimpleMorphismCyl3} of \Cref{exSimpleMorphismCyl}. Before we prove this proposition, it is useful to know the following.
\begin{lemma}\label{lemInvOfs}
	Let $ X' \xrightarrow{s} X$ be an FSAE in $\textnormal{s\textbf{Set}}_P^{\textnormal{fin}}$ and $\langle \alpha \rangle \in A_P(X)$. Then $$ (s_*)^{-1}\langle \alpha \rangle = \langle \alpha \circ [s] \rangle $$
\end{lemma}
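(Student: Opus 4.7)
The plan is to push the identity through the mapping cylinder construction and exploit the various FSAEs produced by \Cref{propCylIncs}. Since $[s]$ is a weak equivalence (every FSAE is), $s_*$ is an isomorphism of monoids, so it suffices to show the equivalent identity
\[
s_*\langle \alpha\circ[s]\rangle \;=\; \langle\alpha\rangle.
\]

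To do this, I would first choose a concrete cofibration representing $\alpha$: by \Cref{remSimpleStuffA} we may pick a cofibration $a\colon X\hookrightarrow Y$ in $\textnormal{s\textbf{Set}}_P^{\textnormal{fin}}$ with $\langle a\rangle=\langle\alpha\rangle$. Then $a\circ s\colon X'\to Y$ represents $\alpha\circ[s]$ but is not generally a cofibration, so I would replace it by the mapping cylinder inclusion $i_{X'}\colon X'\hookrightarrow M_{a\circ s}$. The projection $p_Y\colon M_{a\circ s}\to Y$ is a simple equivalence (its section $Y\hookrightarrow M_{a\circ s}$ is an FSAE by \Cref{propCylIncs}\ref{propCylIncs2}), and $p_Y\circ i_{X'}=a\circ s$; hence $[i_{X'}]=[p_Y]^{-1}\circ\alpha\circ[s]$ has the same simple morphism class as $\alpha\circ[s]$, i.e.\ $\langle i_{X'}\rangle=\langle\alpha\circ[s]\rangle$ in $A_P(X')$.

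Next, I would compute $s_*\langle i_{X'}\rangle$ by the defining pushout in $\mathcal C$: pushing the cofibration $i_{X'}$ along $s$ gives the cofibration
\[
a'\colon X\;\hookrightarrow\; X\cup_{X'\xhookrightarrow{i_{X'}}M_{a\circ s}}M_{a\circ s}.
\]
Here \Cref{propCylIncs}\ref{propCylIncs3} yields that the canonical map
\[
X\cup_{X'\hookrightarrow M_{a\circ s}}M_{a\circ s}\;\hookrightarrow\; M_{a}
\]
is an FSAE, and \Cref{propCylIncs}\ref{propCylIncs2} gives that $Y\hookrightarrow M_a$ is an FSAE as well. Composing, the cofibration $a'$ differs from $a\colon X\hookrightarrow Y$ only by post-composition with FSAEs, so $\langle a'\rangle=\langle a\rangle=\langle\alpha\rangle$ in $A_P(X)$ by \Cref{lemEqConE}.

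Combining these, $s_*\langle\alpha\circ[s]\rangle=s_*\langle i_{X'}\rangle=\langle a'\rangle=\langle\alpha\rangle$, and applying $(s_*)^{-1}$ yields the claim. The only mildly delicate step is the bookkeeping identifying the pushout of $i_{X'}$ along $s$ with the object appearing in \Cref{propCylIncs}\ref{propCylIncs3}; this is a direct unravelling of the two pushout definitions of the filtered mapping cylinder, so no real obstacle is expected.
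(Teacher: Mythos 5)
Your argument is correct, but it takes a noticeably longer route than the paper's and contains an unnecessary detour rooted in a small factual slip. You claim that $a\circ s\colon X'\to Y$ ``is not generally a cofibration.'' This is false: cofibrations in the Douteau model structure on $\textnormal{s\textbf{Set}}_P$ are exactly the monomorphisms (\Cref{thrmDouModSS}), and a composition of monomorphisms is a monomorphism, so $a\circ s$ is a cofibration whenever $a$ and $s$ are --- which they are, since $s$ is an FSAE and $a$ was chosen to be a cofibration. Once that is observed, the entire mapping-cylinder replacement (passing to $i_{X'}\colon X'\hookrightarrow M_{a\circ s}$, pushing out along $s$, and invoking \Cref{propCylIncs}\ref{propCylIncs3} and \ref{propCylIncs2}) is not needed.

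The paper's proof instead applies \Cref{lemPushAlongf} directly. Since both $s\colon X'\hookrightarrow X$ and $a\colon X\hookrightarrow Y$ lie in $\mathcal{C}$, that lemma gives
\[
s_*\langle a\circ s\rangle \;=\; s_*\langle s\rangle + \langle a\rangle \;=\; 0 + \langle a\rangle \;=\; \langle a\rangle,
\]
using $s\in\Sigma$, and the claim follows because $s_*$ is a bijection. Your proof amounts to a concrete, mapping-cylinder rederivation of precisely this identity in the special case at hand; what you ``buy'' is a computation that would also go through if $a$ were not a cofibration (since $i_{X'}$ is always one), but since one may always pass to a cofibration representative via \Cref{remSimpleStuffA}, that extra generality is not needed here.

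One more small imprecision: your last step appeals to \Cref{lemEqConE} on the grounds that $a'$ and $a$ ``differ only by post-composition with FSAEs.'' This does not hold on the nose in $\textnormal{s\textbf{Set}}_P$: composing $a'$ with the FSAE of \Cref{propCylIncs}\ref{propCylIncs3} gives $i_X\colon X\hookrightarrow M_a$, whereas composing $a$ with the FSAE $i_Y\colon Y\hookrightarrow M_a$ gives a map which agrees with $i_X$ only after applying $Q$ --- the mapping-cylinder triangle commutes in $\mathcal{C}(\Sigma^{-1})$, not in $\textnormal{s\textbf{Set}}_P$, and one needs \Cref{lemMapCylCom} to conclude $\langle i_X\rangle=\langle a\rangle$. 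This is readily fixed, and the conclusion $s_*\langle\alpha\circ[s]\rangle=\langle\alpha\rangle$ is indeed correct.
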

\begin{proof}
By \Cref{remSimpleStuffA}, we may without loss of generality assume that $\alpha = [a]$, for a cofibration $a$. Now, use \Cref{lemPushAlongf} to obtain: $$ s_*\langle a \circ s \rangle = s_*\langle s \rangle + \langle a \rangle = \langle a \rangle.$$
In particular, as $s_*$ is a bijection this shows the result.
\end{proof}
\begin{proposition}\label{propDesOfFStar}
Let $X \in \textnormal{s\textbf{Set}}_P^{\textnormal{fin}}$ and $f,g$ morphisms in $\textnormal{s\textbf{Set}}_P^{\textnormal{fin}}$ with source $f$. Consider a pushout square in $\textnormal{s\textbf{Set}}_P^{\textnormal{fin}}$
$$ \begin{tikzcd}
X \arrow[r,"f"] \arrow[d, "g"] & X' \arrow[d, "g'"]\\
Y \arrow[r] & Y'
\end{tikzcd}$$ and denote by the diagonal by $d$. If either $f$ or $g$ is a cofibration, then:
$$ f_*\langle g \rangle = \langle g' \rangle,$$
and 
$$ \langle f \rangle + \langle g \rangle = \langle d \rangle.$$
\end{proposition}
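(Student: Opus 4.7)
The plan is to reduce everything to pushouts of cofibrations (where the Eckmann--Siebenmann constructions are immediate) by mapping cylinder factorization, and then to use \Cref{exSimpleMorphismCyl} \ref{exSimpleMorphismCyl3} to compare the ``naive'' pushout in $\textnormal{s\textbf{Set}}_P$ with the cofibrant version. Throughout I will exploit the fact that for a simple equivalence $\sigma$ one has $\langle \sigma \circ \alpha\rangle = \langle \alpha\rangle$ by the very definition of the simple morphism class.

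The setup is the following. Factor $f = p_f \circ i_X$ with $i_X\colon X \hookrightarrow M_f$ the cofibration at $0$ and $p_f\colon M_f \to X'$ the projection; there is a section $\iota_{X'}\colon X' \hookrightarrow M_f$ which is an FSAE by \Cref{propCylIncs} \ref{propCylIncs2}, and $p_f \circ \iota_{X'} = 1_{X'}$. Hence $[p_f] = [\iota_{X'}]^{-1}$ in $\mathcal{H}\textnormal{s\textbf{Set}}_P^{\textnormal{fin}}$, so $[f] = [\iota_{X'}]^{-1}\circ [i_X]$ and in particular $p_f$ is a simple equivalence, giving $\langle f\rangle = \langle i_X\rangle$ in $A_P(X)$. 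Analogously factor $g = p_g \circ i_X''$ with $i_X''\colon X \hookrightarrow M_g$ a cofibration, so that $\langle g\rangle = \langle i_X''\rangle$.

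For the first equation, assume first $g$ is already a cofibration (Case 1). Then $(i_X)_*\langle g\rangle = \langle g''\rangle$, where $g''\colon M_f \hookrightarrow M_f \cup_X Y$ is the pushout of $g$ along $i_X$ (a cofibration by stability of cofibrations under pushout). Composing with $(p_f)_* = (\iota_{X'\,*})^{-1}$ and invoking \Cref{lemInvOfs} gives
$$f_*\langle g\rangle = (\iota_{X'\,*})^{-1}\langle g''\rangle = \langle g''\circ \iota_{X'}\rangle.$$
Now \Cref{exSimpleMorphismCyl} \ref{exSimpleMorphismCyl3} supplies a simple equivalence $h\colon M_f \cup_X Y \to X' \cup_f Y$ making $g' = h \circ g'' \circ \iota_{X'}$ (as follows from the universal property of the pushout applied to $p_f$ and $1_Y$), hence $\langle g'\rangle = \langle g''\circ \iota_{X'}\rangle = f_*\langle g\rangle$. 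In Case 2 ($f$ is a cofibration, $g$ arbitrary), one instead rewrites $\langle g\rangle = \langle i_X''\rangle$, so that $f_*\langle g\rangle = \langle g'''\rangle$ with $g'''\colon X' \hookrightarrow X' \cup_X M_g$ the pushout cofibration, and applies \Cref{exSimpleMorphismCyl} \ref{exSimpleMorphismCyl3} with the roles of $f$ and $g$ swapped (this is legal since \textit{that} example needs a cofibration on the $Y$-side, which is now $f$) to produce a simple equivalence $h'\colon X' \cup_X M_g \to X' \cup_f Y$ satisfying $g' = h' \circ g'''$.

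For the additivity equation, suppose (without loss of generality, by symmetry of the pushout square in $f$ and $g$) that $g$ is a cofibration. Then $\langle g\rangle$ is represented by the cofibration $g$ and $\langle f\rangle = \langle i_X\rangle$ is represented by the cofibration $i_X$, so by the definition in \Cref{thrmEckSieb}
$$\langle f\rangle + \langle g\rangle = \langle d''\rangle,$$
where $d''\colon X \to M_f \cup_X Y$ is the diagonal of the pushout of $i_X$ and $g$. The simple equivalence $h\colon M_f \cup_X Y \to X' \cup_f Y$ from the previous paragraph satisfies $d = h \circ d''$ (both equal $g' \circ f$ after the identifications), hence $\langle d\rangle = \langle d''\rangle$. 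The only real obstacle is bookkeeping: making sure the map $h$ supplied by \Cref{exSimpleMorphismCyl} \ref{exSimpleMorphismCyl3} is indeed the canonical comparison fitting into the square connecting $g''\circ\iota_{X'}$ with $g'$ and $d''$ with $d$, which amounts to a routine diagram chase using the universal properties of the two pushouts. The key conceptual input --- that pushouts of simple equivalences along cofibrations remain simple equivalences in this setting --- is already packaged in \Cref{exSimpleMorphismCyl} \ref{exSimpleMorphismCyl3}, so no further torsion computations are required.
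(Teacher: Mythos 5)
Your proposal is correct and follows essentially the same route as the paper: factor $f$ (or $g$) through its mapping cylinder to replace the non-cofibration leg of the pushout by a cofibration, compute the pushforward there, undo the projection with \Cref{lemInvOfs}, and appeal to \Cref{exSimpleMorphismCyl}~\ref{exSimpleMorphismCyl3} to compare the resulting ``cylinder'' pushout with the original one via a simple equivalence. The paper packages both cases and the additivity claim in a single two-square diagram and invokes composability of pushouts slightly more compactly, but the decomposition, the key lemmas used, and the logical skeleton are the same as yours.
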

\begin{proof}
	We start with the case where $g$ is a cofibration. Denote by $i_{X'}$ and $i_X$ the respective inclusions into $M_f$. Then $[f] = [i_X']^{-1} \circ [i_X]$. Now, consider the pushout composition diagram \begin{equation}\label{diagDesOfStar}
	\begin{tikzcd}
	X \arrow[r, hook, "i_X"] \arrow[d, "g" , hook] & M_f \arrow[r, "p_{X'}"] \arrow[d, hook] & X' \arrow[d, "g'"]\\
	Y \arrow[r, hook] & M_f \cup_{i_X,g} X' \arrow[r] & Y' 
	\end{tikzcd}
	\end{equation} 
	Then as $[p_{X'}] = [i_{X'}]^{-1}$, by \Cref{lemInvOfs} $$f_*\langle g \rangle = \langle X' \xhookrightarrow{i_{X'}} M_f \hookrightarrow M_f \cup_{i_X,g} Y \rangle.$$ In \Cref{exSimpleMorphismCyl} we have shown that since $g$ is a cofibration $$M_f \cup_{i_X,g} Y \to Y' = X'\cup_{g,f}Y$$ is a simple morphism. Hence, by commutativity of \eqref{diagDesOfStar}, we obtain:
\begin{align*}
f_*\langle g \rangle & = \langle X' \xhookrightarrow{i_X'} M_f \hookrightarrow M_f \cup_{i_X,g} Y \rangle \\
&= \langle X' \xrightarrow{[p_{X'}]^{-1}} M_f \to M_f \cup_{i_X,g} Y \to X'\cup_{g,f}Y \rangle \\
&= \langle g' \rangle. 
\end{align*}
For the proof of the case where $f$ is a cofibration, again we refer to \eqref{diagDesOfStar} but with the roles of $f$ and $g$ swapped. In this case, the result follows from composability of pushouts and again the fact that if $f$ is a cofibration, then $$M_g \cup_{i_X,f} X' \to Y' = Y\cup_{f,g}X'$$ is a simple equivalence (\Cref{exSimpleMorphismCyl}). By the same fact, we also have that the diagonal of the left hand square in \eqref{diagDesOfStar} only differs from the diagonal of the rectangle by a simple equivalence, showing the addition formula.
\end{proof}
As two immediate corollaries of this and \Cref{propEqCharSimpleEq} we obtain:
\begin{corollary}\label{corStabofSim}
Simple equivalences are stable under pushout along cofibrations in $\textnormal{s\textbf{Set}}_P^{\textnormal{fin}}$. Simple equivalences that are also cofibrations are stable under pushouts along arbitrary morphisms in $\textnormal{s\textbf{Set}}_P^{\textnormal{fin}}$.
\end{corollary}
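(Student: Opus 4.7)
My plan is to derive both halves of the corollary from the pushout formula of \Cref{propDesOfFStar} by translating ``simple equivalence'' into the algebraic condition $\langle\,\cdot\,\rangle = 0$ via \Cref{propEqCharSimpleEq}. Consider a pushout square
\[
\begin{tikzcd}
X \arrow[r,"s"] \arrow[d,"f"] & Y \arrow[d]\\
Z \arrow[r,"s'"] & Z'
\end{tikzcd}
\]
in $\textnormal{s\textbf{Set}}_P^{\textnormal{fin}}$ with $s$ a simple equivalence. In the first claim of the corollary, $f$ is a cofibration; in the second claim, $s$ itself is a cofibration. In either case the hypothesis of \Cref{propDesOfFStar} (which requires one of the two maps out of $X$ to be a cofibration) is met, so that proposition applies with ``its $f$'' equal to our $f$ and ``its $g$'' equal to our $s$, yielding the identity $f_{*}\langle s \rangle = \langle s' \rangle$ in $A_{P}(Z)$.

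By \Cref{propEqCharSimpleEq}, $\langle s \rangle = 0$ in $A_{P}(X)$. Since $f_{*}\colon A_{P}(X) \to A_{P}(Z)$ is a morphism of abelian monoids (via the functoriality of $A_{P}$ set up in \Cref{thrmEckSieb} and the category equivalence of \Cref{propEqCharOfHo}), it must send the neutral element to the neutral element, so $\langle s' \rangle = f_{*}(0) = 0$. Applying the other direction of \Cref{propEqCharSimpleEq}, this says exactly that $s'$ is simple. Both halves of the corollary fall out simultaneously.

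I do not anticipate any real obstacle here: all of the genuine work has already been concentrated in \Cref{propDesOfFStar} and in the characterization of simple equivalences as those morphisms whose torsion vanishes. The only care required is the bookkeeping that matches the two slightly different cofibration hypotheses of the corollary to the single symmetric cofibration hypothesis of \Cref{propDesOfFStar}; this is purely cosmetic, and in fact the uniform identification $(f,s) \leftrightarrow (f_{\mathrm{prop}}, g_{\mathrm{prop}})$ handles both cases in one stroke.
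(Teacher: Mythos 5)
Your proposal is correct and follows essentially the same route as the paper, which presents \Cref{corStabofSim} as an immediate consequence of \Cref{propDesOfFStar} together with \Cref{propEqCharSimpleEq}: the pushout formula $f_*\langle s\rangle=\langle s'\rangle$ applies in both cofibration configurations, and since $f_*$ is a morphism of abelian monoids it sends the vanishing torsion of $s$ to that of $s'$. The only difference is that you spell out the bookkeeping the paper leaves implicit.
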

\begin{corollary}\label{corBasicFormula}
Let $f: X \to Y$ and $g: Y \to Z$ be morphisms in $\textnormal{s\textbf{Set}}_P^{\textnormal{fin}}$. Then $$ f_* \langle g \circ f \rangle = f_* \langle f \rangle + \langle g \rangle.$$
\end{corollary}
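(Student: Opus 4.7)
The plan is to reduce this to \Cref{lemPushAlongf}, the analogous identity for cofibrations, by replacing $f$ and the composition $g \circ p_Y$ with their mapping cylinder inclusions and then tracking the results back through the projections. First I would factor $f$ as $X \xhookrightarrow{i} M_f \xrightarrow{p} Y$, where $i$ is a cofibration and $p$ is a simple equivalence by \Cref{exSimpleMorphismCyl}. Let $j : Y \hookrightarrow M_f$ denote the other cylinder inclusion; by \Cref{propCylIncs}, $j$ is an FSAE, and $[p] = [j]^{-1}$ in $\mathcal H\textnormal{s\textbf{Set}}_P^{fin}$. Similarly, factor $g \circ p : M_f \to Z$ as $M_f \xhookrightarrow{k} M_{g \circ p} \xrightarrow{q} Z$, with $k$ a cofibration and $q$ a simple equivalence.

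With these factorizations in hand, I would apply \Cref{lemPushAlongf} to the two cofibrations $i$ and $k$ to obtain
$$i_* \langle k \circ i \rangle = i_* \langle i \rangle + \langle k \rangle.$$
The rest of the argument is bookkeeping to translate each term back. Since $[f] = [p][i]$ with $[p]$ a simple equivalence, $\langle f \rangle = \langle i \rangle$; similarly, since $[g \circ f] = [q][k \circ i]$, one has $\langle g \circ f \rangle = \langle k \circ i \rangle$. Functoriality of $A_P$ on $\mathcal H\textnormal{s\textbf{Set}}_P^{fin}$ (\Cref{propEqCharOfHo}) gives $f_* = p_* \circ i_*$.

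The one identity that still requires comment is $p_* \langle k \rangle = \langle g \rangle$. Here I would use $g = q \circ k \circ j$, which (via the simple equivalence $q$) yields $\langle g \rangle = \langle k \circ j \rangle$, and then invoke \Cref{lemPushAlongf} a second time, now applied to the FSAE $j$ and the cofibration $k$:
$$j_* \langle k \circ j \rangle = j_* \langle j \rangle + \langle k \rangle = \langle k \rangle,$$
since $\langle j \rangle = 0$ and $j_*$ is a morphism of monoids. Thus $j_* \langle g \rangle = \langle k \rangle$, and applying $p_* = j_*^{-1}$ gives $p_* \langle k \rangle = \langle g \rangle$. Combining everything,
$$f_* \langle g \circ f \rangle = p_* i_* \langle k \circ i \rangle = p_* i_* \langle i \rangle + p_* \langle k \rangle = f_* \langle f \rangle + \langle g \rangle.$$

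I do not anticipate a genuine obstacle: all the substantive work was already invested in the mapping cylinder results of \Cref{propCylIncs} and \Cref{exSimpleMorphismCyl} (guaranteeing that cylinder projections and inclusions are simple equivalences, resp.\ FSAEs) and in \Cref{lemPushAlongf}. The only care needed is to keep track of the distinct $A_P(-)$-sets involved at each step and to verify that the three identifications of simple morphism classes above each arise from composition on the left with a simple equivalence.
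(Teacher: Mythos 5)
Your proof is correct, and its overall strategy matches the paper's: factor $f$ through its mapping cylinder, apply the push-along formula for cofibrations, and transfer back along the simple equivalence $p$ using $p_* = j_*^{-1}$. The difference lies in how the second map is handled. Since $g \circ p$ is not a cofibration, the paper cannot invoke \Cref{lemPushAlongf} directly; instead it re-runs that pushout argument using \Cref{propDesOfFStar} (which needs only one leg of the square to be a cofibration) to obtain $i_*\langle (g\circ p)\circ i\rangle = i_*\langle i\rangle + \langle g\circ p\rangle$, and then finishes with \Cref{lemInvOfs}. You avoid \Cref{propDesOfFStar} altogether by also factoring $g\circ p$ through its mapping cylinder, so that both $i$ and $k$ are cofibrations and the abstract \Cref{lemPushAlongf} applies verbatim; your second application of \Cref{lemPushAlongf} to the FSAE $j$ and the cofibration $k$ is in effect an inline re-derivation of \Cref{lemInvOfs}, which you could simply cite: $(j_*)^{-1}\langle k\rangle = \langle k\circ [j]\rangle = \langle g\rangle$. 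The paper's route is a bit shorter because \Cref{propDesOfFStar} is already available at this point; yours trades that for a second cylinder but stays entirely within the formal Eckmann--Siebenmann machinery plus the cylinder results of \Cref{propCylIncs} and \Cref{exSimpleMorphismCyl}, which is a perfectly sound alternative. The bookkeeping identifications you list ($\langle f\rangle = \langle i\rangle$, $\langle g\circ f\rangle = \langle k\circ i\rangle$, $p_* = j_*^{-1}$) are all justified by postcomposition with simple equivalences and functoriality of $A_P$ on $\mathcal H\textnormal{s\textbf{Set}}_P^{fin}$, so there is no gap.
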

\begin{proof}
Take a mapping cylinder factorization of $f$, $f = p_Y \circ i_X $, and let $i_Y$ be the inclusion of $Y$ into $M_f$. Then set $\hat g : = g \circ p_Y$. Now, using \Cref{propDesOfFStar}, just as in \Cref{lemPushAlongf} one obtains:
$$(i_X)_*\langle \hat g \circ i_X \rangle = (i_X)_*\langle i_X \rangle + \langle \hat g \rangle .$$ Finally, applying $(p_Y)_* = (i_Y)_*^{-1}$ to the last equation gives:
\begin{align*}
	f_*\langle g \circ f \rangle = f_*\langle \hat g \circ i_X \rangle &= f_*\langle i_X \rangle + (p_Y)_*\langle \hat g \rangle \\ 
	&= f_*\langle p_Y \circ i_X \rangle + (i_Y)_*^{-1}\langle g \circ p_Y \rangle \\ 
	&= f_*\langle f \rangle + \langle g \circ p_Y \circ i_Y \rangle \\
	&= f_*\langle f \rangle + \langle g \rangle,
\end{align*} 
where the second equality follows from the fact that $p_Y$ is a simple equivalence, and the third equality follows from \Cref{lemInvOfs}.
\end{proof}
\begin{corollary}\label{corInvOfSimpleEq}
	Let $ X' \xrightarrow{s} X$ be a simple equivalence in $\textnormal{s\textbf{Set}}_P^{\textnormal{fin}}$ and $\langle \alpha \rangle \in A_P(X)$. Then $$ (s_*)^{-1}\langle \alpha \rangle = \langle \alpha \circ [s] \rangle $$
\end{corollary}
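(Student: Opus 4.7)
The plan is to imitate the proof of \Cref{lemInvOfs} but replace the appeal to \Cref{lemPushAlongf} (which needed an actual FSAE) by the more general formula in \Cref{corBasicFormula} (which was stated for arbitrary composable morphisms in $\textnormal{s\textbf{Set}}_P^{\textnormal{fin}}$). First I would observe that $s_*$ really is invertible on $A_P(X')$: by \Cref{remSimpleStuffB} every simple equivalence is a weak equivalence, so $[s]$ is an isomorphism in $\mathcal{H}\textnormal{s\textbf{Set}}_P^{\textnormal{fin}}$, and since $A_P$ is a functor on this category, $s_* = A_P([s])$ is an isomorphism of abelian monoids.

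Next, using \Cref{remSimpleStuffA}, I would pick a cofibration $a: X \to Y$ representing $\alpha$, i.e.\ $\langle a \rangle = \langle \alpha \rangle$. Applying \Cref{corBasicFormula} with $f = s$ and $g = a$ gives
\begin{equation*}
s_*\langle a \circ s \rangle \;=\; s_*\langle s \rangle + \langle a \rangle.
\end{equation*}
By \Cref{propEqCharSimpleEq}, the fact that $s$ is a simple equivalence means $\langle s \rangle = 0$ in $A_P(X')$. Since $s_*$ is a morphism of abelian monoids it sends $0$ to $0$, so the displayed equation reduces to $s_*\langle a \circ s \rangle = \langle \alpha \rangle$. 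Applying the (now available) inverse $s_*^{-1}$ yields
\begin{equation*}
\langle \alpha \circ [s] \rangle \;=\; \langle a \circ s \rangle \;=\; s_*^{-1}\langle \alpha \rangle,
\end{equation*}
which is the desired identity.

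There is no real obstacle here: the only subtle point is the invertibility of $s_*$, which in \Cref{lemInvOfs} was automatic from $s$ being an FSAE (hence acyclic cofibration) but now needs the remark that simple equivalences are weak equivalences, together with functoriality of $A_P$ on $\mathcal{H}\textnormal{s\textbf{Set}}_P^{\textnormal{fin}}$. Everything else is a one-line computation once \Cref{corBasicFormula} is in place, which is why the corollary is placed immediately after it.
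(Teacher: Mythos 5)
Your proof is correct and is essentially the paper's own argument: the paper proves the corollary by repeating the proof of \Cref{lemInvOfs} verbatim with \Cref{lemPushAlongf} replaced by \Cref{corBasicFormula}, exactly as you do. Your extra remark that invertibility of $s_*$ now follows from simple equivalences being weak equivalences (rather than FSAEs) is a correct and welcome explicit justification of a step the paper leaves implicit.
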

\begin{proof}
	The proof is identical to the one of \Cref{lemInvOfs}, but this time using \Cref{corBasicFormula}.
\end{proof}
The results in \Cref{corStabofSim} is of course reminiscent of the interaction of cofibrations and weak equivalences in a cofibrant model category. This makes on hope for analogue to the cube lemma (see for example \cite[Lem. 5.2.6]{hovey2007model}). In the classical setting, such a result follows from the sum theorem \cite[Prop. 23.1]{cohenCourse}. In the filtered case one can argue similarly. In fact, we now show the following result: 
\begin{proposition}\label{propBigSumFormula}
	Consider a commutative cube 
	\begin{equation}\label{diagComCube2}
	\begin{tikzcd}[row sep=2.5em]
	X^{0} \arrow[rr,"h_0"] \arrow[dr,swap, hook] \arrow[dd,swap] &&
	Y^{0} \arrow[dd,swap, near end] \arrow[dr,hook] \\
	& X^{1} \arrow[rr,crossing over, near start, "h_1"] && Y^{1}
	\arrow[dd] \\
	X^{2} \arrow[rr, near end, "h_2"] \arrow[dr,swap, "f_2" , hook] && Y^{2} \arrow[dr,swap, ,hook] \\
	& X \arrow[rr, "h"] \arrow[uu,< -,crossing over, "f_1", near end]&& Y
	\end{tikzcd},
	\end{equation}
in $\textnormal{s\textbf{Set}}_P^{\textnormal{fin}}$, where the left and the right face are cocartesian, the upper morphisms in these squares are cofibrations and $h_0$ is a weak homotopy equivalence. Denote by $f_0$ the diagonal in the left face.
Then: 
$$ \langle h \rangle = f_{1*}\langle h_1 \rangle + f_{2*} \langle h_2 \rangle - f_{0*} \langle h_0 \rangle .$$
\end{proposition}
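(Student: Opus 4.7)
The plan is to express $\langle h\rangle$ by factoring $h$ through an intermediate pushout and then applying \Cref{propDesOfFStar} and \Cref{corBasicFormula} repeatedly, much in the spirit of the classical sum theorem for Whitehead torsion. The negative sign on $f_{0*}\langle h_0\rangle$ is the crucial subtlety — and this is what forces the hypothesis that $h_0$ is a weak equivalence, so that $\langle h_0\rangle$ sits in $Wh_P(X^0)$ and can meaningfully be subtracted via \Cref{corInvOfSimpleEq}.

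First, I would reduce to the case where $h_0$, $h_1$, $h_2$ (and hence $h$) are cofibrations. This is done by replacing each $h_i$ with its mapping cylinder inclusion $X^i \hookrightarrow M_{h_i}$ and checking that the cube structure can be rebuilt compatibly: the point is that the simple equivalences $M_f\cup_{i_{X'},a}Y\to X'\cup_{f,a}Y$ from \Cref{exSimpleMorphismCyl}\,\ref{exSimpleMorphismCyl3} show that both sides of the claimed formula are unchanged under such substitutions. Because $h_0$ is a weak equivalence, its mapping cylinder inclusion is even an FSAE by \Cref{propEqCharSimpleEq}, which will be used in Step 3 below.

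Assuming now $X^i\hookrightarrow Y^i$ and $X\hookrightarrow Y$, I would introduce the intermediate space
$$P := X\cup_{X^1} Y^1,$$
the pushout of the cofibration $h_1$ along $f_1$ (which is itself a cofibration, as the pushout of the cofibration $X^0\hookrightarrow X^1$ along $X^0\to X^2$, using that $f_2$ is a cofibration and the left face of the cube is cocartesian). This yields a factorization $h\colon X\hookrightarrow P\hookrightarrow Y$, both maps being cofibrations. A first application of \Cref{propDesOfFStar} to the defining pushout gives
$$\langle X\hookrightarrow P\rangle \;=\; f_{1*}\langle h_1\rangle \;\in\; A_P(X).$$
Rearranging the pushouts in the cube, one identifies $P = Y^1\cup_{Y^0}Q$ with $Q := Y^0\cup_{X^0}X^2$, while $Y = Y^1\cup_{Y^0}Y^2$, so $P\hookrightarrow Y$ is the pushout along the cofibration $Q\hookrightarrow P$ of the canonical map $Q\to Y^2$ (induced by $h_2$ and $Y^0\hookrightarrow Y^2$). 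A second application of \Cref{propDesOfFStar} then gives
$$\langle P\hookrightarrow Y\rangle \;=\; (Q\hookrightarrow P)_*\,\langle Q\to Y^2\rangle.$$

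Step three is to decompose $\langle Q\to Y^2\rangle$ itself. The space $Q = Y^0\cup_{X^0}X^2$ is a pushout along $h_0$, and the map $Q\to Y^2$ restricts to $h_2$ on $X^2$ and to the cofibration $Y^0\hookrightarrow Y^2$ on $Y^0$; applying \Cref{propDesOfFStar} once more to the pushout defining $Q$, together with \Cref{corBasicFormula} for the composite $X^0\hookrightarrow X^2 \xhookrightarrow{h_2} Y^2$, expresses $\langle Q\to Y^2\rangle$ in terms of $\langle h_2\rangle$ and $\langle h_0\rangle$ pushed into the appropriate group. Because $h_0$ is a weak equivalence, so are its pushouts, and $\langle h_0\rangle \in Wh_P(X^0)$; by \Cref{corInvOfSimpleEq} its contribution can be inverted, producing the minus sign. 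Chaining everything back with \Cref{corBasicFormula} for the composition $h=(P\hookrightarrow Y)\circ(X\hookrightarrow P)$ and pushing the various intermediate identities forward along the $f_i$'s (using functoriality of $A_P$ as in \Cref{lemFfactors}) collapses the identity to
$$\langle h\rangle \;=\; f_{1*}\langle h_1\rangle + f_{2*}\langle h_2\rangle - f_{0*}\langle h_0\rangle.$$

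The main obstacle I anticipate is the bookkeeping in this last step: the various pushforwards live in different Whitehead monoids and must be carefully transported to $A_P(X)$ before being added, and the sign for $f_{0*}\langle h_0\rangle$ only emerges after using \Cref{corInvOfSimpleEq} on a weak equivalence derived from $h_0$. Keeping track of which pushout square is being used at each stage — and verifying that the ``cofibration'' hypothesis of \Cref{propDesOfFStar} is satisfied there — will require a somewhat lengthy diagram chase, but no new ideas beyond the formulas already established in \Cref{subsecWHdetails}.
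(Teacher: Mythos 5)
Your overall strategy (reduce to cofibrations via mapping cylinders, then chase pushouts with \Cref{propDesOfFStar} and \Cref{corBasicFormula}) is reasonable, but the final assembly step has a genuine gap. With your factorization $h\colon X\xhookrightarrow{a} P\hookrightarrow Y$, $P=X\cup_{X^1}Y^1$, the only identity \Cref{corBasicFormula} gives you is $a_*\langle h\rangle = a_*\langle a\rangle + \langle P\to Y\rangle$, an equation in $A_P(P)$. Your steps (a)--(c) do correctly compute $\langle a\rangle = f_{1*}\langle h_1\rangle$ and, using that $X^2\hookrightarrow Q=Y^0\cup_{X^0}X^2$ is an acyclic cofibration, $\langle P\to Y\rangle = a_*f_{2*}\langle h_2\rangle - a_*f_{0*}\langle h_0\rangle$; but what you end up with is $a_*\langle h\rangle = a_*\bigl(f_{1*}\langle h_1\rangle + f_{2*}\langle h_2\rangle - f_{0*}\langle h_0\rangle\bigr)$. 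Since $a$ is the pushout of $h_1$, which is \emph{not} assumed to be a weak equivalence, $a_*$ is in general not injective (already classically, including a complicated complex into a cone kills the Whitehead group), and there is no pullback or subtraction operation in $A_P$ that transports $\langle P\to Y\rangle\in A_P(P)$ back to $A_P(X)$. So the claimed identity in $A_P(X)$ does not follow; "pushing the intermediate identities forward along the $f_i$'s" moves everything in the wrong direction. The fix is exactly the paper's structural choice: the hinge object must be one that $X$ maps to by a \emph{weak equivalence} manufactured from $h_0$ --- the paper uses $\phi\colon X\hookrightarrow X\cup_{X^0}M_{h_0}$ (in your reduced setting one could use $X\hookrightarrow X\cup_{X^0}Y^0$, the pushout of the acyclic cofibration $h_0$ along $f_0$), factors $X\hookrightarrow X\cup M_{h_i}$ and $X\hookrightarrow M_h$ through it, and only then cancels $\phi_*$, which is an isomorphism by functoriality. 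That cancellation is where the hypothesis on $h_0$ really enters; in your write-up it is used only to invert the $h_0$-term, which is not where the difficulty lies.

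Two smaller points. First, your justification in the reduction step is wrong as stated: a weak equivalence $h_0$ makes $X^0\hookrightarrow M_{h_0}$ an \emph{acyclic cofibration}, not an FSAE, and \Cref{propEqCharSimpleEq} concerns simple equivalences, not weak ones --- if the inclusion were an FSAE you would have $\tau_P(h_0)=0$ and the correction term would vanish. Fortunately all you need in your Step 3 is that acyclic cofibrations are stable under pushout, so this is repairable. Second, note that your reduction differs from the paper's: you make the horizontals $h_i$ into cofibrations, whereas the paper factors the left-face vertical $X^0\to X^2$ through its mapping cylinder and keeps the $h_i$ arbitrary; your version is fine provided you actually verify that the mapping-cylinder cube again has cocartesian faces and a monic upper right-hand edge, which you only assert.
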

\begin{proof}
We first reduce to the case where all maps in the left cocartesian face are cofibrations. Using mapping cylinder factorization on the left vertical and factoring this pushout face we obtain a commutative diagram:
	\begin{equation}\label{diagComCubeAdv}
	\begin{tikzcd}[row sep=2.5em]
	X^{0} \arrow[rr,"h_0"] \arrow[dr,swap, hook] \arrow[dd,swap, hook] &&
	Y^{0} \arrow[dd,swap, near end] \arrow[dr,hook] \\
	& X^{1} \arrow[rr,crossing over, near start, "h_1"] && Y^{1}
	\arrow[dd] \\
	M \arrow[rr, near end, "\tilde h_2"] \arrow[dd, "s"] \arrow[dr,swap, " \tilde f_2" , hook] && Y^2 \arrow[dr,swap, ,hook] \arrow[dd,"=", near end] \\
	& M' \arrow[rr, "\tilde h", crossing over, near start] \arrow[uu,< -,crossing over, "\tilde f_1", near end]&& Y \arrow[dd, "="] \\
	X^{2} \arrow[rr, near end, "h_2"] \arrow[dr,swap, "f_2" , hook] && Y^{2} \arrow[dr,swap, ,hook] \\
	& X \arrow[rr, "h"] \arrow[uu,< -,crossing over, "s'", near end]&& Y
	\end{tikzcd},
	\end{equation}
where $\tilde f_1, \tilde f_2$ are cofibrations, as they are pushouts of cofibrations, and $s$ is a simple equivalence. By \Cref{corStabofSim}, $s'$ is also a simple equivalence. Denote by $\tilde f_0$ the diagonal going from $X_0$ to $M'$. Assuming we have already shown the formula in the two cofibration case, we then obtain:
\begin{equation}\label{diagcofibCase}
	\langle \tilde h \rangle = \tilde f_{1*}\langle h_1 \rangle + \tilde f_{2*} \langle \tilde h_2 \rangle - \tilde f_{0*} \langle h_0 \rangle
\end{equation} Now, by \Cref{corInvOfSimpleEq} and commutativity of the diagram, we have 
\begin{align*}
\langle \tilde h \rangle &= (s')^{-1}_*	\langle h \rangle;\\
\langle \tilde h_2 \rangle &= (s)^{-1}_*	\langle h_2 \rangle.
\end{align*}
Together with \eqref{diagcofibCase} we obtain:
\begin{align*}
	\langle h \rangle & = s'_*( \tilde f_{1*}\langle h_1 \rangle + \tilde f_{2*} \langle \tilde h_2 \rangle - \tilde f_{0*} \langle h_0 \rangle )\\
	& = (s' \circ \tilde f_1)_*\langle h_1 \rangle + (s' \circ \tilde f_2)_*\langle \tilde h_2 \rangle - (s' \circ \tilde f_0)_* \langle h_0 \rangle\\
	& = f_{1*}\langle h_1 \rangle + f_{2*}(s_*\langle \tilde h_2 \rangle) - f_{0*}\langle h_0 \rangle \\
	& = f_{1*}\langle h_1 \rangle + f_{2*}\langle h_2 \rangle - f_{0*}\langle h_0 \rangle.
\end{align*}
So it remains to show the case where all the maps in the left square are cofibrations. We proceed similarly to the proof of \cite[Prop. 23.1]{cohenCourse}. To make simplify notation we omit the subscript of the pushout cups in the remainder of the proof, wherever possible. Denote by $\phi$ the morphism $X \hookrightarrow X \cup_{X^0, i_1} M_{h_0} = X \cup X^0 \otimes \Delta^1\cup Y^0$. $\phi$ is the bottom horizontal composition in the commutative diagam: $$ \begin{tikzcd}
&X_0 \arrow[r,"h_0"] \arrow[d, "i_0", hook] & Y_0 \arrow d\\
X \arrow[r, hook] &X \cup X^0 \otimes \Delta^1\arrow[r] & X \cup X^0 \otimes \Delta^1\cup Y^0
\end{tikzcd},
$$
where the right square is cocartesian and $i_0$ is a cofibration, due to our assumption that all the morphisms in the left square are cofibrations. By \Cref{propIncInProd} the first lower horizontal is an FSAE, in particular a weak equivalence. As weak equivalences are stable under pushout along cofibrations in a cofibrant model category \cite[Prop. 13.1.3]{hirschhornModel}, $\phi$ is also a weak equivalence. By \Cref{propDesOfFStar}, we have \begin{equation}\label{equPropSumFormula0}
	\langle \phi \rangle = f_{0*}\langle h_0 \rangle.
\end{equation} Now, consider the pushout square $$ \begin{tikzcd}
X \cup_{X^0} M_{h_0} \arrow[r, "\psi_1", hook] \arrow[d, "\psi_2"] & X \cup_{X^1} M_{h_1} \arrow[d]\\
X \cup_{X^2} M_{h_2} \arrow[r] & X \cup_{X} M_{h} = M_h 
\end{tikzcd}, $$
where we denote the diagonal by $\psi$. As $Y^0 \to Y^1$ is a cofibration, so is $\psi_1$. In particular, by \Cref{propDesOfFStar}, we obtain \begin{equation}\label{equPropSumFormula1}
	\langle \psi \rangle = \langle \psi_1 \rangle + \langle \psi_2 \rangle.
\end{equation}
Denote by $\eta_1, \eta_2, \eta$ the respective inclusions $X \hookrightarrow X \cup M_{h_1}$, etc. By \Cref{propDesOfFStar} \begin{align}
	\langle \eta \rangle &= \langle h \rangle \label{equPropSumFormula-1}\\
	\langle \eta_i \rangle &= f_{i_*}\langle h_i \rangle \textnormal{, for $i=1,2$}.\label{equPropSumFormula-2}
\end{align} 
Furthermore, these factor as: 
$$ \begin{tikzcd}
X \arrow[r, "\phi"] \arrow[rr, bend left = 60, "\eta_i", hook] & X \hookrightarrow X \cup M_{h_0} \arrow[r, "\psi_{i}"] & X \cup M_{h_i}
\end{tikzcd}.$$ with $i$ either $1,2$ or "empty". Applying \Cref{corBasicFormula} we obtain:
\begin{align}\label{equPropSumFormula2}
	\phi_*\langle \eta \rangle &= \phi_* \langle \phi \rangle + \langle \psi \rangle \\
	\label{equPropSumFormula3}
	\phi_*\langle \eta_i \rangle &= \phi_* \langle \phi \rangle + \langle \psi_i \rangle \textnormal{, for $i=1,2$}.
\end{align}
Note that as $\phi$ is a weak equivalence, $\langle \phi \rangle$ has a unique inverse.
Combining \eqref{equPropSumFormula1}, \eqref{equPropSumFormula2} and \eqref{equPropSumFormula3} we get: 
$$ \phi_*\langle \eta \rangle = \phi_*\langle \eta_1 \rangle + \phi_*\langle \eta_2 \rangle - \phi_* \langle \phi \rangle. $$ As $\phi$ is a weak equivalence, $\phi_*$ is an isomorphism. Hence, $$\langle \eta \rangle = \langle \eta_1 \rangle + \langle \eta_2 \rangle - \langle \phi \rangle.$$ Now, insert \eqref{equPropSumFormula0}, \eqref{equPropSumFormula-1} and \eqref{equPropSumFormula-2} into the last formula, to obtain :
$$\langle h \rangle = f_{1*}\langle h_1 \rangle + f_{2*} \langle h_2 \rangle - f_{0*} \langle h_0 \rangle.$$
\end{proof} 
As an immediate consequence of this, we have the classical cube lemma (see \cite[Lem. 5.2.6]{hovey2007model}) and a version of it for simple equivalences.
\begin{corollary}\label{corCubeForSim}
Consider a cube as in \Cref{propBigSumFormula}. Then if $h_0, h_1$ and $h_2$ are weak equivalences, so is $h$. Further, if $h_0, h_1$ and $h_2$ are simple equivalences then so is $h$.
\end{corollary}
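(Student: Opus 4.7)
The plan is to deduce both assertions directly from the sum formula of Proposition \ref{propBigSumFormula}, using only the functoriality of $Wh_P$ and the torsion-theoretic characterizations of weak and simple equivalences. Recall that $\langle h \rangle \in A_P(X)$ is defined for any morphism $h$ with source $X$, that by Remark \ref{remSimpleStuffB} such a class lies in the subgroup $Wh_P(X) \subseteq A_P(X)$ if and only if $h$ is a weak equivalence, and that by Proposition \ref{propEqCharSimpleEq} it equals zero if and only if $h$ is a simple equivalence. Moreover, for each cofibration $f_i : X^i \hookrightarrow X$, the monoid map $f_{i*} : A_P(X^i) \to A_P(X)$ restricts to a group homomorphism $Wh_P(X^i) \to Wh_P(X)$, since $Wh_P$ is a functor valued in $\textnormal{\textbf{Ab}}$.

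For the first assertion, suppose $h_0, h_1, h_2$ are weak equivalences. Then each $\langle h_i \rangle$ lies in $Wh_P(X^i)$, and since $h_0$ in particular is a weak equivalence the hypotheses of Proposition \ref{propBigSumFormula} are met. The resulting identity
$$\langle h \rangle = f_{1*}\langle h_1 \rangle + f_{2*}\langle h_2 \rangle - f_{0*}\langle h_0 \rangle$$
expresses $\langle h \rangle$ as a $\mathbb{Z}$-linear combination of elements of $Wh_P(X)$, so $\langle h \rangle \in Wh_P(X)$. By Remark \ref{remSimpleStuffB}, $h$ is therefore a weak equivalence.

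For the second assertion, simple equivalences are in particular weak equivalences, so by the previous paragraph $h$ is automatically a weak equivalence and the sum formula is available. But now each $\langle h_i \rangle = 0$ by Proposition \ref{propEqCharSimpleEq}, and the group homomorphism $f_{i*}$ sends $0$ to $0$, so the entire right hand side of the displayed formula vanishes. Thus $\langle h \rangle = 0 \in Wh_P(X)$, and a final application of Proposition \ref{propEqCharSimpleEq} yields that $h$ is a simple equivalence.

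There is really no essential obstacle here: all the technical work has already been absorbed into the proof of Proposition \ref{propBigSumFormula}, and the present corollary is a purely formal consequence of the additive structure on $Wh_P$ together with its functoriality. One could alternatively prove the first assertion independently of the Whitehead machinery, by invoking left properness of the Douteau model structure (every object is cofibrant since cofibrations are exactly monomorphisms) together with the classical cube lemma for homotopy pushouts, but this seems unnecessarily roundabout given that the sum formula is already in hand.
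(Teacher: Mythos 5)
Your proof is correct and follows essentially the same route as the paper's: combine the sum formula of \Cref{propBigSumFormula} with the facts that $\tau_P$ detects weak equivalences via invertibility in $A_P$ (equivalently membership in $Wh_P$) and simple equivalences via vanishing, using that the pushforwards preserve these properties. (Only note that $f_0$ and $f_1$ need not be cofibrations, but this is immaterial since $f_{i*}$ is a monoid homomorphism for arbitrary morphisms and hence preserves invertibility and $0$ regardless.)
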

\begin{proof}
By \Cref{remSimpleStuffB}, being a weak equivalence is equivalent to having invertible Whitehead torsion. As being invertible in a monoid is sustained under monoid morphisms and as sums of invertible elements are invertible the first statement follows by \Cref{propBigSumFormula}. For the second statement, repeat the same argument replacing invertible by being $0$.
\end{proof}
This nearly finishes our abstract investigation of the properties of the Whitehead monoid and torsion for filtered simplicial sets. We end the section with the following characterization of simple equivalences, summarizing some of the results in this subsection. This connects our results to the axiomatic approach taken in \cite[Ch. 6, \paragraphmark 3]{kampsPo} as well as a series of examples instrumental to the remainder of this work.
\begin{proposition}\label{propClassCharofSim}
	Denote by $\mathcal{S}$ the class of simple equivalences in $\textnormal{s\textbf{Set}}_P^{\textnormal{fin}}$. Then $\mathcal{S}$ is given by the smallest class containing all isomorphisms and elementary expansions that is closed under: 
	\begin{enumerate}
		\item The two out of three law;\label{propClassCharofSim1}
		\item Finite coproducts;\label{propClassCharofSim2}
		\item Pushouts of cofibrations in $\mathcal S$ along arbitrary morphisms in $\textnormal{s\textbf{Set}}_P^{\textnormal{fin}}$.\label{propClassCharofSim3}

	\end{enumerate}
	\begin{proof}
	First we show that $\mathcal{S}$ is in fact closed under these operations. For the two out of three law, this is immediate by definition. For finite coproducts, apply \Cref{corCubeForSim} to the case where $X^0$ and $Y^0$ are the empty filtered simplicial set. \ref{propClassCharofSim3} is part of the statement of \Cref{corStabofSim}. \\
	To show the converse inclusion, first note that, by \Cref{corSaeareExp}, the class generated in this fashion contains all FSAEs. In particular, by two out of three and \Cref{propIncInProd}, it contains all the mapping cylinder collapses $M_f \to Y$ for $X \to Y$ a morphism in $\textnormal{s\textbf{Set}}_P^{\textnormal{fin}}$. Hence, again by two out of three, it suffices to show that this class contains all simple equivalences that are also cofibrations. By \Cref{remSimpleStuffB} and \Cref{remSimpleStuffA} such a simple equivalence $a: X \to Y$ fits into a commutative diagram $$ \begin{tikzcd} X \arrow[rr, "a", hook] \arrow[rd, hook, "t"]& &Y \arrow[ld, "s", hook' ] \\
	&S&
	\end{tikzcd}$$
	in $\textnormal{s\textbf{Set}}_P^{\textnormal{fin}}$, where $s$ and $t$ are FSAEs. A final application of two out of three shows that then $a$ is contained in the class, as we have already noted that all (finite) FSAEs are.
	\end{proof}
\end{proposition}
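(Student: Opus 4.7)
The plan is to prove both inclusions separately, showing first that $\mathcal{S}$ does satisfy the three listed closure properties, and then that any class fulfilling these properties must contain $\mathcal{S}$. Let me write $\mathcal{S}'$ for the smallest such class in what follows.

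To see that $\mathcal{S}$ satisfies the closure properties, the two out of three law follows directly from the additivity of the Whitehead torsion provided by \Cref{corBasicFormula}: given $g \circ f = h$, the equation $f_*\langle h\rangle = f_*\langle f\rangle + \langle g\rangle$ together with the fact that $f_*$ is a monoid isomorphism when $f$ is a weak equivalence (which any simple equivalence is, by \Cref{remSimpleStuffB}) forces the third torsion to vanish as soon as two of them do. Closure under finite coproducts is the immediate specialization of \Cref{corCubeForSim} to the case $X^0 = Y^0 = \emptyset$, since in that situation the two pushout faces of the cube become disjoint unions. Closure under pushouts of cofibrations in $\mathcal{S}$ along arbitrary morphisms is the content of \Cref{corStabofSim}. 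Thus $\mathcal{S} \supseteq \mathcal{S}'$.

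For the reverse inclusion, the main work is to show that $\mathcal{S}'$ contains every simple equivalence. First, the two out of three law, together with the fact that isomorphisms lie in $\mathcal{S}'$, implies closure under composition. Combined with closure under pushouts (which contains elementary expansions by assumption), \Cref{corSaeareExp} then yields that every finite FSAE lies in $\mathcal{S}'$. Next, given any morphism $f : X \to Y$ in $\textnormal{s\textbf{Set}}_P^{\textnormal{fin}}$, the inclusion $Y \hookrightarrow M_f$ is an FSAE by \Cref{propCylIncs}, hence in $\mathcal{S}'$; since the mapping cylinder projection $p_Y : M_f \to Y$ is a retraction of this inclusion, two out of three puts $p_Y$ in $\mathcal{S}'$ as well.

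The final step is a reduction via mapping cylinders. Let $f : X \to Y$ be any simple equivalence and factor it as $f = p_Y \circ i_X$ where $i_X : X \hookrightarrow M_f$ is a cofibration. By \Cref{propEqCharSimpleEq}, $i_X$ is also a simple equivalence. By \Cref{remSimpleStuffA} and \Cref{remSimpleStuffB}, any simple equivalence which is a cofibration fits into a commutative triangle with two FSAEs $t : X \hookrightarrow S$ and $s : M_f \hookrightarrow S$ satisfying $s \circ i_X = t$; since $s, t \in \mathcal{S}'$ (as FSAEs), another application of two out of three shows $i_X \in \mathcal{S}'$, and then $f = p_Y \circ i_X \in \mathcal{S}'$ by composition. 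The only potentially delicate point in the argument is ensuring that the two out of three axiom is strong enough to extract non-cofibrant simple equivalences from cofibrant ones; this is handled cleanly by the mapping cylinder factorization, which allows us to reduce to the cofibrant case where the structure of FSAEs and deformations from \Cref{remSimpleStuffA} becomes directly available.
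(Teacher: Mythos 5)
Your proposal is correct and follows essentially the same route as the paper: verify the three closure properties via \Cref{corCubeForSim} and \Cref{corStabofSim}, then obtain the converse by putting all finite FSAEs and the cylinder collapses $M_f \to Y$ into the generated class and reducing, via the mapping cylinder factorization, to simple equivalences that are cofibrations, which are handled by the FSAE triangle from \Cref{remSimpleStuffA} and \Cref{remSimpleStuffB}. The only cosmetic differences are that you justify two out of three for $\mathcal{S}$ through the torsion formula of \Cref{corBasicFormula} (the paper takes it as immediate from the definition) and you place $p_Y$ in the class via the retraction $p_Y \circ i_Y = 1_Y$ rather than citing \Cref{propIncInProd} directly.
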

\Cref{corCubeForSim} together with \Cref{propClassCharofSim} can be used to easily provide a useful range of examples of simple equivalences.
\begin{proposition}\label{propLvtSim}
	Let $X \in \textnormal{s\textbf{Set}}_P^{fin}$. Then both of the last vertex maps \begin{align*}
	\textnormal{sd}(X) &\longrightarrow X\\
	\textnormal{sd}_P(X) &\longrightarrow X
	\end{align*}
	are simple equivalences.
\end{proposition}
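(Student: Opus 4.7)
The plan is to prove both statements in parallel by building up from filtered simplices via a pushout induction, using the cube lemma for simple equivalences (\Cref{corCubeForSim}) to handle the inductive step. Write $\mathsf{S}$ for either $\textnormal{sd}$ or $\textnormal{sd}_P$; both functors preserve small colimits, hence commute with attaching cells, and both send cofibrations to cofibrations.

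First I would establish the base case that $\textnormal{l.v.}: \mathsf{S}(\Delta^{\mathcal J}) \to \Delta^{\mathcal J}$ is a simple equivalence for every d-flag $\mathcal J$ of $P$. For $\mathsf{S} = \textnormal{sd}$, \Cref{ExLotsOfFSAE2} provides an FSAE $j: \Delta^{\mathcal J} \hookrightarrow \textnormal{sd}(\Delta^{\mathcal J})$ sending the vertex $p_k$ to $(p_0 \leq \dots \leq p_k)$; one reads off that the composite $\textnormal{l.v.} \circ j$ is the identity, so two-out-of-three from \Cref{propClassCharofSim} yields that $\textnormal{l.v.}$ is a simple equivalence. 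For $\mathsf{S} = \textnormal{sd}_P$ with $\mathcal J$ a (non-degenerate) flag, the analogous argument applies using \Cref{ExLotsOfFSAE3} together with the definition of $\textnormal{l.v.}_P$ from \Cref{conLVP}, since the resulting composite is again the identity on vertices. For $\mathsf{S} = \textnormal{sd}_P$ on a degenerate d-flag $\mathcal J$ degenerating from a flag $\mathcal J'$, \Cref{ExLotsOfFSAE1} provides an FSAE $\iota: \Delta^{\mathcal J'} \hookrightarrow \Delta^{\mathcal J}$. Naturality of $\textnormal{l.v.}_P$ combined with the facts that $\textnormal{sd}_P(\iota)$ is again an FSAE by \Cref{ExLotsOfFSAEDout} and that the flag case was just settled, then reduces the degenerate case by another application of two-out-of-three.

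Next I would extend to arbitrary finite $X$ by a nested induction, outer on $\dim X$ and inner on the number of non-degenerate simplices of $X$. In the inductive step, pick a non-degenerate simplex $\sigma : \Delta^{\mathcal J} \to X$ of maximal dimension and decompose $X$ as the pushout $X' \cup_{\partial \Delta^{\mathcal J}} \Delta^{\mathcal J}$, where $X'$ is obtained by removing $\sigma$. Since $\mathsf{S}$ preserves colimits, applying $\mathsf{S}$ produces a second cocartesian square, and naturality of the last vertex map assembles the two squares into a commutative cube whose four vertical arrows are the last vertex maps on $\partial \Delta^{\mathcal J}$, $\Delta^{\mathcal J}$, $X'$, and $X$. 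The three vertical arrows other than the one on $X$ are simple equivalences by, respectively, the base case (for $\Delta^{\mathcal J}$), the outer induction hypothesis (for $\partial \Delta^{\mathcal J}$, whose dimension is strictly smaller than $\dim X$), and the inner induction hypothesis (for $X'$, which has strictly fewer non-degenerate simplices). The left and right faces of the cube are cocartesian, the top horizontal arrows $\partial \Delta^{\mathcal J} \hookrightarrow \Delta^{\mathcal J}$ and $\mathsf{S}(\partial \Delta^{\mathcal J}) \hookrightarrow \mathsf{S}(\Delta^{\mathcal J})$ are cofibrations, and $\textnormal{l.v.}: \mathsf{S}(\partial \Delta^{\mathcal J}) \to \partial \Delta^{\mathcal J}$ is in particular a weak equivalence, so all hypotheses of \Cref{corCubeForSim} are met, and the cube lemma concludes that $\textnormal{l.v.}: \mathsf{S}(X) \to X$ is a simple equivalence.

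The main technical nuisance is the handling of $\textnormal{sd}_P$ on degenerate d-flags, where the explicit FSAE of \Cref{ExLotsOfFSAE3} is not directly available; the naturality reduction above dispatches this cleanly. A secondary verification is that $\textnormal{sd}_P$ sends cofibrations to cofibrations, which follows from its description in \Cref{conSDp} as a pullback involving $\textnormal{sd}$, since both $\textnormal{sd}$ and the pullback construction in a presheaf category preserve monomorphisms.
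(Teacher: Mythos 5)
Your proof is correct and follows essentially the same route as the paper: reduce to the case of a filtered simplex by an inductive use of the cube lemma (\Cref{corCubeForSim}), using that both subdivision functors preserve colimits and cofibrations, and then settle the simplex case via the FSAE sections of \Cref{ExLotsOfFSAEs} together with two-out-of-three. Your treatment of degenerate d-flags for $\textnormal{sd}_P$ — applying $\textnormal{sd}_P$ to the FSAE section of the degeneracy and invoking \ref{ExLotsOfFSAEDout} directly — is a marginally more direct variant of the paper's step, which first establishes that $\textnormal{sd}_P$ preserves all simple equivalences; both versions are fine.
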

\begin{proof}
	By the fact that both subdivision functors preserve cofibrations and an inductive use of \Cref{corCubeForSim} it suffices to show this for $X = \Delta^{\mathcal J}$, for $\mathcal J$ a d-flag in $P$. In the case of $\textnormal{sd}$, \Cref{ExLotsOfFSAEs} \ref{ExLotsOfFSAE2} provides a section to the morphism that is an FSAE, hence the result follows by two out of three. In the case $\textnormal{sd}_P$, note that $\textnormal{sd}_P$ preserves simple equivalences. To see this, recall that as it has a right adjoint and sustains cofibrations, by \Cref{propClassCharofSim}, it suffices to show that it sends elementary expansions into simple equivalences. But this was already shown in \ref{ExLotsOfFSAEDout} of \Cref{ExLotsOfFSAEs}. Let $\mathcal J'$ be the non-degenerate flag in $P$ that $\mathcal J$ degenerates from. By \ref{ExLotsOfFSAE1} of \Cref{ExLotsOfFSAEs}, the degeneracy map is a simple equivalence. By the naturality of the last vertex map, we then have a commutative diagram 
	\begin{center}
		\begin{tikzcd}
			\Delta^{\mathcal J} \arrow[r] \arrow[d] & \Delta^{\mathcal J'} \arrow[d]\\
			\textnormal{sd}_P(\Delta^{\mathcal J}) \arrow[r] & \textnormal{sd}_P(\Delta^{\mathcal J'}).
		\end{tikzcd}
	\end{center}
	So again by two out of three, it suffices to show that the last vertex map is a simple equivalence for the case where $\mathcal J$ is non-degenerate. But, \ref{ExLotsOfFSAE3} of \Cref{ExLotsOfFSAEs} provides a section of the last vertex map that is given by a finite FSAE, completing the proof through a final appeal to two out of three.
\end{proof}
As an immediate corollary of this result we obtain by an application of two out of three:
\begin{corollary}\label{corSDRetainWeakEq}
	Both functors \begin{align*}
		\textnormal{s\textbf{Set}}_P^{fin} \xrightarrow{\textnormal{sd}} 	\textnormal{s\textbf{Set}}_P^{fin};\\
		\textnormal{s\textbf{Set}}_P^{fin} \xrightarrow{\textnormal{sd}_P} 	\textnormal{s\textbf{Set}}_P^{fin}
	\end{align*}
	preserve simple equivalences.
\end{corollary}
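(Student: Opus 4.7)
The plan is to exploit the naturality of the last vertex transformations $\textnormal{l.v.}: \textnormal{sd}(-) \to 1$ and $\textnormal{l.v.}_P: \textnormal{sd}_P(-) \to 1$ together with \Cref{propLvtSim} and the two-out-of-three property of simple equivalences from \Cref{propClassCharofSim}.

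Concretely, given a simple equivalence $f: X \to Y$ in $\textnormal{s\textbf{Set}}_P^{fin}$, I would consider the naturality square
\begin{center}
\begin{tikzcd}
\textnormal{sd}(X) \arrow[r, "\textnormal{sd}(f)"] \arrow[d, "\textnormal{l.v.}_X", swap] & \textnormal{sd}(Y) \arrow[d, "\textnormal{l.v.}_Y"]\\
X \arrow[r, "f"] & Y
\end{tikzcd}
\end{center}
in $\textnormal{s\textbf{Set}}_P^{fin}$. The two vertical arrows are simple equivalences by \Cref{propLvtSim}, and the bottom horizontal is a simple equivalence by hypothesis. Hence the composite $f \circ \textnormal{l.v.}_X = \textnormal{l.v.}_Y \circ \textnormal{sd}(f)$ is a simple equivalence (two-out-of-three applied to $f$ and $\textnormal{l.v.}_X$), and then a second application of two-out-of-three to this composite and the simple equivalence $\textnormal{l.v.}_Y$ forces $\textnormal{sd}(f)$ to be a simple equivalence. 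The argument for $\textnormal{sd}_P$ is verbatim the same, replacing $\textnormal{sd}$ by $\textnormal{sd}_P$ and $\textnormal{l.v.}$ by $\textnormal{l.v.}_P$, again using \Cref{propLvtSim} for the verticals.

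There is no real obstacle here: the result is essentially a formal consequence, once \Cref{propLvtSim} and the closure of simple equivalences under two-out-of-three (part of \Cref{propClassCharofSim}) are in hand. One only needs to make sure that the naturality squares actually lie inside $\textnormal{s\textbf{Set}}_P^{fin}$, which is immediate because both $\textnormal{sd}$ and $\textnormal{sd}_P$ send finite filtered simplicial sets to finite filtered simplicial sets (for $\textnormal{sd}_P$ this follows from the explicit description in \Cref{exSDPforSim}).
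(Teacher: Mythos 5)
Your proof is correct and is exactly the argument the paper intends: the paper derives this corollary "by an application of two out of three" from \Cref{propLvtSim}, and your naturality-square argument simply spells out that application. Nothing further is needed.
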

If one is entirely concerned with showing the last two statements in the weaker setting of weak equivalences, this can of course be done, just as in the classical setting and there is no restriction to finite simplicial sets necessary. Finally, we obtain an alternative description of the Whitehead torsion of a morphism in $\mathcal H\textnormal{s\textbf{Set}}_P^{fin}$.
\begin{corollary}\label{corLVdesTor}
	Let $\alpha: X\to Y $ be a morphism in $\mathcal H\textnormal{s\textbf{Set}}_P^{fin}$ and $n \in \mathbb N$ and \\$f:\textnormal{sd}_P^n(X) \to Y$ such that $\alpha = [f]\circ [\textnormal{l.v.}_P^n]^{-1}$ as in \Cref{propRepHoClasses}. Then
	$$\tau_P(\alpha) = (\textnormal{l.v.}^n_P)_* \big ( \tau_P(f) \big ).$$
\end{corollary}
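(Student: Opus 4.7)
The plan is to derive this almost immediately from the two results that are already in place: \Cref{propLvtSim}, which says $\textnormal{l.v.}^n_P : \textnormal{sd}^n_P(X) \to X$ is a simple equivalence in $\textnormal{s\textbf{Set}}_P^{fin}$, and \Cref{corInvOfSimpleEq}, which says that for a simple equivalence $s : X' \to X$ and any $\beta \in A_P(X)$ one has $(s_*)^{-1}\langle \beta \rangle = \langle \beta \circ [s] \rangle$.

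First I would invoke \Cref{propLvtSim} to conclude that $\textnormal{l.v.}^n_P$ is a simple equivalence; by \Cref{remSimpleStuffB} this in particular ensures that $(\textnormal{l.v.}^n_P)_*$ is an isomorphism and that it makes sense to push $\tau_P(f)\in A_P(\textnormal{sd}^n_P(X))$ forward to $A_P(X)$. Next, I would apply \Cref{corInvOfSimpleEq} with $s = \textnormal{l.v.}^n_P$ and $\beta = \alpha$, yielding
\begin{equation*}
((\textnormal{l.v.}^n_P)_*)^{-1}\tau_P(\alpha) = \langle \alpha \circ [\textnormal{l.v.}^n_P] \rangle.
\end{equation*}

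The hypothesis $\alpha = [f] \circ [\textnormal{l.v.}^n_P]^{-1}$ then gives $\alpha \circ [\textnormal{l.v.}^n_P] = [f]$ in $\mathcal{H}\textnormal{s\textbf{Set}}_P^{fin}$, so the right hand side is exactly $\tau_P(f)$. Applying $(\textnormal{l.v.}^n_P)_*$ to both sides yields the claimed equality $\tau_P(\alpha) = (\textnormal{l.v.}^n_P)_*\tau_P(f)$. There is really no hard step here --- the substantive content is already encoded in \Cref{propLvtSim} (which required the simplicial approximation machinery) and in the general formula of \Cref{corInvOfSimpleEq} (which in turn relied on \Cref{corBasicFormula} and the whole mapping cylinder analysis of \Cref{propCylIncs}). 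The only thing to double-check is the compatibility between ``$\alpha$ viewed as a simple morphism class'' in the sense of \Cref{defWH} and the formal manipulation in $\mathcal{H}\textnormal{s\textbf{Set}}_P^{fin} \cong \mathcal{C}(\Sigma^{-1})$ from \Cref{propEqCharOfHo}, but this identification is tautological once one recalls that $\tau_P$ was defined as the simple morphism class of the corresponding arrow in $\mathcal{C}(\Sigma^{-1})$.
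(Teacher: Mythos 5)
Your argument is correct and matches the paper's own proof exactly: the paper simply states that the result is immediate from \Cref{propLvtSim} and \Cref{corInvOfSimpleEq}, which is precisely the route you take. The only material steps are noting that $\textnormal{l.v.}_P^n$ is a simple equivalence (\Cref{propLvtSim}), invoking \Cref{corInvOfSimpleEq} to write $(\textnormal{l.v.}_P^n)_*^{-1}\tau_P(\alpha) = \langle \alpha \circ [\textnormal{l.v.}_P^n]\rangle = \tau_P(f)$, and rearranging.
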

\begin{proof}
	This is immediate from \Cref{propLvtSim} and \Cref{corInvOfSimpleEq}.
\end{proof}
\subsection{Filtered ordered simplicial complexes and simple homotopy type}\label{subsecSSvSC}
Recall the following classical result.
\begin{proposition}\cite[Prop. 7.2]{cohenCourse}\label{propCWsimSC}
	Every finite CW-complex has the simple homotopy type of a finite simplicial complex (of the same dimension).
\end{proposition}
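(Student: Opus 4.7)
The plan is to proceed by induction on the number of cells of the finite CW-complex $K$. The base case of the $0$-skeleton is trivial, since a finite discrete space is already a $0$-dimensional simplicial complex. For the inductive step, assume $K = L \cup_\phi D^n$, where $L$ is a subcomplex (with one fewer cell) and $\phi \colon S^{n-1} \to L$ is the attaching map of the additional $n$-cell. By induction, there is a simple equivalence $\alpha \colon L \to |L'|$ for some finite simplicial complex $L'$ whose dimension matches that of $L$.

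First I would replace the attaching map $\phi$ by a simplicial one, up to homotopy. Fix the standard triangulation $|\partial\Delta^n| \cong S^{n-1}$ and apply the classical (unfiltered) simplicial approximation theorem to the composition $|\partial\Delta^n| \xrightarrow{\phi} L \xrightarrow{\alpha} |L'|$. After subdividing sufficiently often, this composition becomes homotopic to the realization of a simplicial map $f \colon \operatorname{sd}^{n_0}(\partial\Delta^n) \to L'$. Since $\operatorname{sd}^{n_0}(\partial\Delta^n) = \partial\operatorname{sd}^{n_0}(\Delta^n)$, one may attach $\operatorname{sd}^{n_0}(\Delta^n)$ along $f$ to form a finite simplicial complex $K'$ whose dimension matches that of $K$ (subdivision does not change dimension, and the top-dimensional attached simplex has the correct dimension $n$).

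The main step is to show $K$ and $K'$ are simply equivalent. This is done by iterated use of the classical analogue of \Cref{corStabofSim}: simple equivalences are stable under pushouts along cofibrations. Writing $K$ as the pushout of the cofibration $S^{n-1} \hookrightarrow D^n$ along $\phi$, pushing forward first along $\alpha$ and then using the classical fact that homotopic attaching maps produce simply equivalent CW-complexes, one reduces $K$ through a chain of simple equivalences to the space obtained by attaching $D^n$ to $|L'|$ along $|f|$. The homotopy step is handled via a double mapping cylinder $|L'| \cup_{|f|} (|\partial\Delta^n| \times I) \cup_\phi D^n$, whose two natural inclusions are both mapping cylinder inclusions and hence simple equivalences (cf. the trivially filtered case of \Cref{propCylIncs}).

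The hard part is the bookkeeping needed to ensure each intermediate space encountered in the double mapping cylinder argument can be replaced, up to simple equivalence, by an honest finite simplicial complex of the correct dimension. This is achieved by iterating the simplicial approximation trick inside the cylinder constructions themselves, using that products and mapping cylinders of finite simplicial complexes admit canonical triangulations of the expected dimension. Composing all the simple equivalences produced, one obtains a simple equivalence $K \to K'$ with $\dim K' = \dim K$, closing the induction.
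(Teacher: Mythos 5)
First, a point of comparison: the paper does not prove this proposition at all — it is imported from Cohen (\cite[Prop. 7.2]{cohenCourse}) and used as a black box in the comparison with the classical Whitehead group; what the paper actually proves is the filtered analogue, \Cref{thrmSSvSC}. Your argument is essentially Cohen's classical one, and it is also structurally parallel to the paper's proof of \Cref{thrmSSvSC}: induction, simplicial approximation of attaching maps after sufficient subdivision, stability of simple equivalences under pushout along cofibrations (the classical analogues of \Cref{corStabofSim} and \Cref{corCubeForSim}), and mapping-cylinder comparisons (cf.\ \Cref{propCylIncs}). The differences are minor: you induct over cells and attach one cell at a time, whereas the paper inducts over dimension and handles all top-dimensional (non-degenerate) simplices in one pushout; and your intermediate objects are CW-complexes, which is harmless, since only the final space must be an honest simplicial complex — so the ``hard bookkeeping'' you describe for the intermediate double-cylinder spaces is largely unnecessary.

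The one step that, as written, does not go through is the formation of $K'$: ``one may attach $\operatorname{sd}^{n_0}(\Delta^n)$ along $f$ to form a finite simplicial complex $K'$.'' If this attachment means the pushout in the category of simplicial complexes, it is not the geometric adjunction space — pushouts of simplicial complexes along non-injective simplicial maps are badly behaved (gluing two $1$-simplices along their boundary yields a $1$-simplex, not $S^1$); this is precisely the issue stressed in \Cref{remPushoutInSim}. If instead the attachment is topological, the result is only a CW-complex, and you still owe a triangulation of it by a complex of dimension $\dim K$ together with a simple equivalence to it. The standard repair — which is also how the paper handles the analogous point in \Cref{thrmSSvSC} — is Whitehead's simplicial mapping cylinder: replace $f$ by the inclusion of (a subdivision of) $\operatorname{sd}^{n_0}(\partial\Delta^n)$ into the simplicial mapping cylinder $M^{cx}_f$ (the unfiltered case of \Cref{conMcx}, \Cref{corMcxFact}), whose collapse onto $\operatorname{sd}(L')$ is a simple equivalence, glue the subdivided $n$-simplex along this genuine subcomplex inclusion (where the simplicial pushout \emph{is} geometrically correct), and check the dimension bound $\dim M^{cx}_f \le \max(n,\dim L') = \dim K$; alternatively one can argue via non-singular simplicial sets and one extra subdivision as in \Cref{lemPushoutofFQS} and \Cref{lemFQSImproves}. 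You do invoke triangulability of mapping cylinders, but only in the double-cylinder comparison where it is not needed; the place where the mapping-cylinder trick is genuinely required is the construction of $K'$ itself. With that repair and the dimension check, your induction closes and the argument agrees with Cohen's.
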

In this subsection, we prove a filtered analogue of this proposition in the setting of $\textnormal{s\textbf{Set}}$. 
%
Recall that a \textit{filtered ordered simplicial complex} or short FOS-complex is equivalently a filtered simplicial-set, such that every non-degenerate simplex is uniquely determined by the set of its vertices (see \Cref{lemCharFOS}).
\begin{theorem}\label{thrmSSvSC}
	Every finite filtered simplicial set $X \in \textnormal{s\textbf{Set}}_P$ has the simple homotopy type of a finite filtered ordered simplicial complex (over $P$) of the same dimension.
\end{theorem}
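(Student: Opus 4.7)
The plan is to proceed by induction on the number of non-degenerate simplices of $X$. The base case $X = \emptyset$ is trivially an FOS-complex. For the inductive step, I would choose a top-dimensional non-degenerate simplex $\sigma : \Delta^{\mathcal J} \to X$ and let $X' \subset X$ be the sub-$P$-filtered simplicial set obtained by removing the interior of $\sigma$, so that $X$ decomposes as the pushout $X' \cup_{\partial \Delta^{\mathcal J}} \Delta^{\mathcal J}$ along the attaching map $g = \sigma|_{\partial \Delta^{\mathcal J}}$. By the inductive hypothesis applied to $X'$ (which has strictly fewer non-degenerate simplices), there is a finite FOS-complex $L$ of dimension at most $\dim X' \leq \dim X$ with a simple equivalence $X' \sim L$.

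The next step is to transport this simple equivalence across the pushout. By Proposition~\ref{propHoCharFin} together with Remark~\ref{remSimpleStuffA}, the simple equivalence $X' \sim L$ may be realized as a common roof of FSAEs $X' \xhookrightarrow{s} S \xhookleftarrow{t} L$ in $\textnormal{s\textbf{Set}}_P^{\operatorname{fin}}$. Applying the cube lemma for simple equivalences (Corollary~\ref{corCubeForSim}) to the pushout of the cofibration $\partial \Delta^{\mathcal J} \hookrightarrow \Delta^{\mathcal J}$ along $s \circ g$ yields that $X$ is simply equivalent to $S \cup_{s \circ g} \Delta^{\mathcal J}$; a second application of the cube lemma through the FSAE $t$ would then reduce the problem to showing that $L \cup_{\bar g} \Delta^{\mathcal J}$ is simply equivalent to a finite FOS-complex for some attaching map $\bar g : \partial \Delta^{\mathcal J} \to L$ representing the correct class. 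To obtain such a genuine simplicial representative $\bar g$, I would replace $\Delta^{\mathcal J}$ by an iterated filtered subdivision $\textnormal{sd}_P^N(\Delta^{\mathcal J})$ along the finite FSAE of Example~\ref{ExLotsOfFSAEs}(iii)--(iv) (with the last-vertex simple equivalence from Proposition~\ref{propLvtSim}) and apply the cube lemma a third time to replace the pushout by $L \cup_{\tilde g} \textnormal{sd}_P^N(\Delta^{\mathcal J})$, where $\tilde g : \textnormal{sd}_P^N(\partial \Delta^{\mathcal J}) \to L$ is the corresponding refined attaching map.

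The hardest part of the argument will be the combinatorial verification that, for $N$ sufficiently large, the pushout $L \cup_{\tilde g} \textnormal{sd}_P^N(\Delta^{\mathcal J})$ is itself an FOS-complex of dimension at most $\dim X$: this amounts to showing that after enough filtered subdivisions the refined attaching map no longer produces two distinct non-degenerate simplices in the pushout sharing the same vertex multiset, a combinatorial analogue of simplicial approximation carried out entirely within $\textnormal{s\textbf{Set}}_P$. Dimension is preserved throughout because $\textnormal{sd}_P$ preserves dimension (as one reads off from the explicit description in Example~\ref{exSDPforSim}), the cube lemma manipulations only involve cells of dimension at most $\dim X$, and the final attached simplex has the same dimension as $\sigma$. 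Combining the simple equivalences produced at each inductive step then yields the required simple equivalence between $X$ and a finite FOS-complex $K$ of the same dimension.
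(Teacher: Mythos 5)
There is a genuine gap, and it sits exactly where you flag the ``hardest part'': the claim that for $N$ large the pushout $L \cup_{\tilde g} \textnormal{sd}_P^N(\Delta^{\mathcal J})$ becomes an FOS-complex is false. Subdividing the attached cell cannot repair a non-injective attaching map. Concretely, take $P=\star$, $L=\Delta^0$ and attach $\Delta^2$ along the constant map $\partial\Delta^2 \to \Delta^0$ (this arises from the legitimate finite simplicial set $\Delta^2/\partial\Delta^2$). For every $N$, the top-dimensional simplices of $\textnormal{sd}^N(\Delta^2)$ that have an edge in $\partial\Delta^2$ survive as non-degenerate simplices of the pushout whose boundary edge has been identified with the degenerate edge at the point; their vertex multiset therefore contains the cone point twice, so the pushout is not an FOS-complex --- indeed not even non-singular --- no matter how large $N$ is. The simplices abutting the boundary always exist after subdivision, and their boundary faces are always mapped by (restrictions of) $\tilde g$, so whenever $\tilde g$ collapses simplices the defect persists. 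What is missing is a device that replaces the attaching map itself, up to simple equivalence, by a \emph{cofibration} of FOS-complexes: this is precisely the role of the combinatorial mapping cylinder $M^{cx}_f$ of \Cref{conMcx} and its factorization \Cref{corMcxFact} in the paper's proof, after which \Cref{lemPushoutofFQS} guarantees the pushout is non-singular and \Cref{lemFQSImproves} converts it into an FOS-complex by one further barycentric subdivision (note the paper uses $\textnormal{sd}$, not $\textnormal{sd}_P$, for this improvement step).

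A secondary, smaller issue: your second application of the cube lemma, transporting the attaching map backwards across the FSAE $t\colon L \hookrightarrow S$, does not go through as stated, since $s\circ g$ need not factor through the subcomplex $L$. The paper avoids this by subdividing the skeleton via $\textnormal{sd}_P^k$ and invoking \Cref{propRepHoClasses} to obtain an honest forward map $\textnormal{sd}_P^k(\hat X)\to K$, along which the (subdivided) attaching maps are pushed forward; you would need the same maneuver, together with an argument that changing the attaching map within its homotopy class only changes the pushout by a simple equivalence. Your inductive scheme (one cell at a time versus the paper's induction on dimension attaching all top cells simultaneously) is a harmless cosmetic difference; the essential missing ingredient is the mapping-cylinder replacement of the attaching map.
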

This will be particularly useful later on, when we are going to describe the connection between the topological and the simplicial filtered homotopy categories. In practice, it allows us, to reduce to the setting of FOS-complexes and make use of a simplicial approximation theorem. To show this result, we first begin with some generalities on the relationship between FOS-complexes and filtered simplicial sets. Recall that a non-singular simplicial set is a simplicial set $S$ such that each of its non-degenerate simplices $\Delta^n \to S$ is given by an inclusion of simplicial sets. A $P$-filtered simplicial set is called \textit{non-singular} if its underlying simplicial set is non-singular.
\\
\\
Clearly, every FOS-complex is a filtered non-singular filtered simplicial set. Furthermore, in a certain sense, these properties improve under subdivision. Consider, for example, the following simplicial model of a singular $S^1$.
\begin{figure}[H]
	\centering
	\includegraphics[width=120mm]{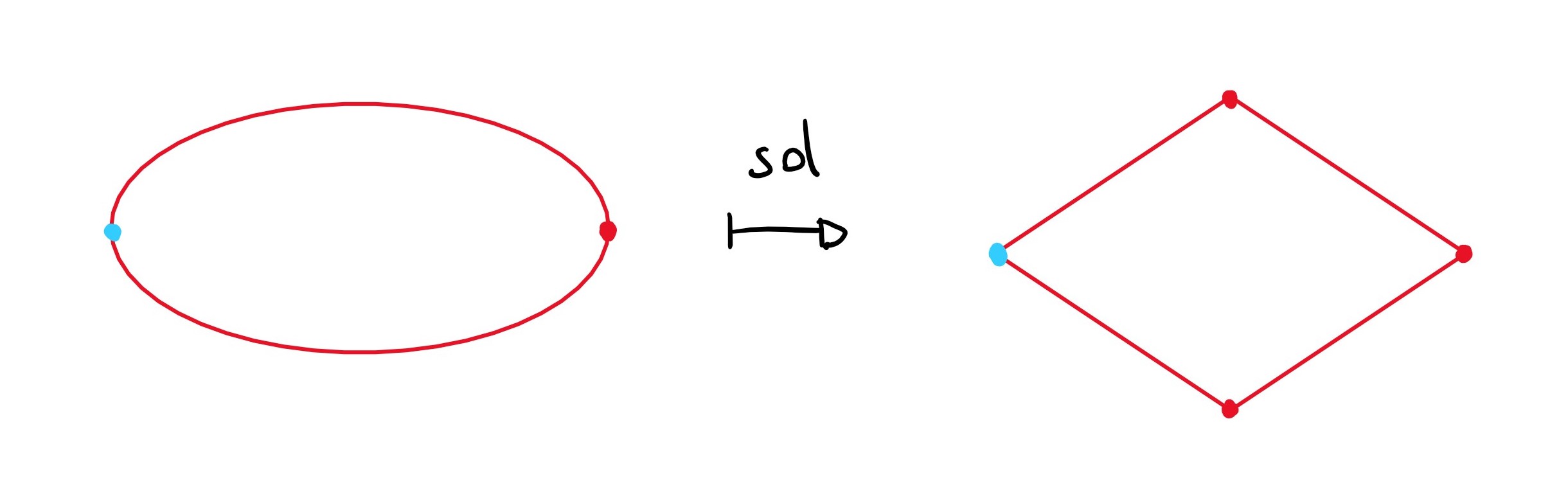}
	\caption{Triangulation of a $S^1$ filtered over $P = \{0,1\}$, with the standard color scheme. The left hand side is a non-singular filtered simplicial set. After subdividing once, we obtain an FOS-complex.}
	\label{fig:exFQStoFOS}
\end{figure}
This type of behavior is the content of the following lemma.
\begin{lemma}\label{lemFQSImproves}
	Let $K \in \textnormal{s\textbf{Set}}_P$ be a non-singular filtered simplicial set. Then $\textnormal{sd}(K)$ is an FOS-complex.
\end{lemma}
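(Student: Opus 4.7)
The plan is to verify the criterion given by \Cref{lemCharFOS}, namely that every non-degenerate simplex of $\textnormal{sd}(K)$ is uniquely determined by its (unordered) set of vertices. Since the filtration of $\textnormal{sd}(K)$ is inherited from $K$ via the functoriality recalled in \Cref{exFiltFun}, the FOS-complex property is really a condition on the underlying simplicial set; I would therefore forget the filtration throughout the argument.

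First I would give a concrete description of the simplices of $\textnormal{sd}(K)$ when $K$ is non-singular. Since $\textnormal{sd}$ is the left Kan extension of the functor $\Delta^n \mapsto \textnormal{sd}(\Delta^n) = N(\mathcal{P}_{\neq \emptyset}([n]))$, any simplex of $\textnormal{sd}(K)$ is represented by a pair $(\tau \colon \Delta^m \to K,\ \alpha \colon \Delta^n \to \textnormal{sd}(\Delta^m))$ modulo the colimit relations. Non-singularity of $K$ implies that for every simplex $\tau$ of $K$ there is a unique non-degenerate simplex $\hat\tau$ into which $\tau$ factors as a composition of degeneracies with an inclusion, which provides a canonical representative. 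From this one deduces the well-known description: an $n$-simplex of $\textnormal{sd}(K)$ is a chain $\sigma_0 \le \sigma_1 \le \cdots \le \sigma_n$ of non-degenerate simplices of $K$ ordered by the face relation (``$\sigma_i$ is a face of $\sigma_{i+1}$''), with face and degeneracy maps given by omission and repetition.

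Next, I would observe that a chain as above is degenerate precisely when some $\sigma_i = \sigma_{i+1}$; the non-degenerate $n$-simplices of $\textnormal{sd}(K)$ correspond bijectively to strict chains $\sigma_0 < \sigma_1 < \cdots < \sigma_n$ of non-degenerate simplices of $K$. The vertices of such an $n$-simplex of $\textnormal{sd}(K)$ are exactly the $\sigma_0, \ldots, \sigma_n$, and the face relation on non-degenerate simplices of $K$ is a partial order, hence the strict chain is uniquely reconstructed from the unordered set $\{\sigma_0, \ldots, \sigma_n\}$. This is exactly the criterion from \Cref{lemCharFOS}, so $\textnormal{sd}(K)$ is an FOS-complex.

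The only non-routine ingredient is the identification of $n$-simplices of $\textnormal{sd}(K)$ with chains of non-degenerate simplices of $K$; I expect this to be the main bookkeeping step. Once the canonical representative from non-singularity is fixed, the rest is purely combinatorial, and the filtration plays no further role.
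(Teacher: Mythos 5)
Your argument is correct and, once unpacked, is essentially the proof the paper gives: both reduce to the unfiltered statement, both verify the criterion of \Cref{lemCharFOS}, and both rest on the fact that non-singularity makes the representing maps of non-degenerate simplices injective, so that a non-degenerate simplex of $\textnormal{sd}(K)$ is recovered from its vertex set. The difference is one of packaging. You route through the global identification of $\textnormal{sd}(K)$ with the nerve of the poset of non-degenerate simplices of $K$ and then use that a strict chain in a poset is determined by its underlying set; the paper never states this isomorphism and only proves the half of it that is needed, namely injectivity of ``non-degenerate simplex $\mapsto$ vertex set'', by choosing a minimal representing pair $(\tau,\sigma)$ in which the flag $\sigma$ ends at the top face: the maximal element of the vertex set recovers $\tau$, and injectivity of $\tau$ then recovers the flag. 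Be aware that the ``well-known description'' you invoke is exactly where the content of the lemma sits: it is true (it is the non-singular case of the comparison between the subdivision and the nerve of the poset of non-degenerate simplices, as in Waldhausen--Jahren--Rognes), but as written you only assert it; either cite it or prove it, e.g.\ by skeletal induction using that $\textnormal{sd}$ preserves colimits and monomorphisms and that for non-singular $K$ the attaching maps $\partial\Delta^n \hookrightarrow K$ are injective, or, more economically, by the paper's minimal-representative argument, which also spares you the surjectivity onto chains, since \Cref{lemCharFOS} only requires that distinct non-degenerate simplices have distinct vertex sets.
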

\begin{proof}
This has nothing to do with the filtrations, so we might just as well prove the non-filtered case.
Every simplex of $\textnormal{sd}(K)$ is given by a pair $(\Delta^{n} \xrightarrow{\tau} K, \sigma)$ with $\sigma \in \textnormal{sd}(\Delta^{n})$. 
That is, it is given by $$\Delta^k \xrightarrow{\sigma} \textnormal{sd}(\Delta^{n}) \xrightarrow{\textnormal{sd}(\tau)} \textnormal{sd}(K).$$ 
Furthermore, by choosing $n$ minimally, we may assume that $\sigma = (\sigma_0 \subset ... \subset \sigma_k)$ fulfills $\sigma_k = [n]$, as otherwise, the simplex comes from the boundary of $\textnormal{sd}(\Delta^n)$ contradicting minimality. Under the minimality assumption, a simplex is then uniquely determined by such a pair. In particular,
 the vertices of $\textnormal{sd}(K)$ are then given by pairs $(\Delta^{n} \xrightarrow{\tau} K, [n])$, where $\tau$ is non-degenerate. In other words, they correspond to the non-degenerate simplices of $K$. The vertex set, for a simplex $(\Delta^{n} \xrightarrow{\tau} K, \sigma)$ as above, is then given by $$\{\Delta^{\#\sigma_i} \xrightarrow{\sigma_i} \Delta^{n} \xrightarrow{\tau} K \mid i \in [k]\}.$$ This set is ordered by the containment relation of the $\sigma_i$. By the minimality assumption, its maximal element always corresponds to $\tau$. Hence, for two non-degenerate simplices to have the same vertex set, they have to come from representing pairs $(\tau, \sigma)$, $(\tau', \sigma')$ with $\tau = \tau'$. Now, as $\tau$ is an inclusion by assumption, and thus pushing forward with it is injective, this means that the underlying set of the flags $\sigma$ and $\sigma'$ in $[n]$ agree. As they are non-degenerate, this implies that $\sigma = \sigma'$. Hence, we have shown that every non-degenerate simplex in $\textnormal{sd}(K)$ is uniquely determined by its vertices, i.e. that $K$ is an FOS-complex.
\end{proof}
The advantage of non-singular filtered simplicial sets over FOS-complexes is that they are stable under certain pushouts in $\textnormal{s\textbf{Set}}_P$.
\begin{lemma}\label{lemPushoutofFQS}
	Consider a pushout diagram in $\textnormal{s\textbf{Set}}_P$ \begin{center}
		\begin{tikzcd}
			L \arrow[hook, r, hook] \arrow[d, hook]& K \arrow[d, hook] \\
			L' \arrow[hook, r] & K'
		\end{tikzcd}
	\end{center}
with the upper horizontal and the left vertical (and hence all arrows) cofibrations and $K$ and $L'$ (and hence $L$) non-singular filtered simplicial sets. Then $K'$ is also a non-singular filtered simplicial set.
\end{lemma}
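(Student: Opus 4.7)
My plan is to work entirely on the level of underlying simplicial sets, since by the remark on limits and colimits in filtered categories, the pushout in $\textnormal{s\textbf{Set}}_P$ is computed by taking the pushout of underlying simplicial sets and then equipping the result with the induced filtration. Moreover, in the topos $\textnormal{s\textbf{Set}}$ (or equivalently in $\textnormal{\textbf{Set}}$ degreewise) the pushout of a monomorphism along any morphism is again a monomorphism, so all four arrows in the square are monomorphisms; I will treat $L$, $K$, $L'$ as sub-simplicial sets of $K'$ with $K \cap L' = L$. Since pushouts in $\textnormal{\textbf{Set}}$ are computed as the quotient of the disjoint union, every simplex $\sigma' \in K'([n])$ lies in the image of $K([n])$ or of $L'([n])$.

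Let $\sigma' : \Delta^n \to K'$ be a non-degenerate simplex of $K'$; by the preceding observation it factors through one of the subcomplexes, say $\sigma' = j \circ \sigma$ for $\sigma : \Delta^n \to K$ and $j : K \hookrightarrow K'$ the canonical inclusion (the case of $L'$ is symmetric). I will first verify that $\sigma$ is itself non-degenerate as a simplex of $K$: indeed, if $\sigma = s_i \tau$ for some $\tau \in K([n-1])$, then applying the simplicial map $j$ would give $\sigma' = s_i (j \circ \tau)$, contradicting the non-degeneracy of $\sigma'$. Now since $K$ is non-singular by hypothesis, $\sigma : \Delta^n \to K$ is a monomorphism of simplicial sets, and composing with the monomorphism $j$ we conclude that $\sigma' : \Delta^n \to K'$ is also a monomorphism. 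This shows that every non-degenerate simplex of $K'$ is a monomorphism, i.e.\ $K'$ is non-singular.

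There is no real obstacle here beyond assembling three standard facts: (a) colimits in $\textnormal{s\textbf{Set}}_P$ are computed on the underlying simplicial sets; (b) monomorphisms are stable under pushout in $\textnormal{s\textbf{Set}}$, so all four maps in the square are inclusions and every simplex of $K'$ lies in $K$ or in $L'$; (c) the pullback of a degeneracy along a monomorphism remains a degeneracy, so non-degeneracy is detected in any sub-simplicial set containing the simplex. The only place one needs to be slightly careful is in asserting that $K$ and $L'$ may be regarded as sub-simplicial sets of $K'$ with intersection exactly $L$, which is the standard description of pushouts of monomorphisms in $\textnormal{s\textbf{Set}}$ and uses that cofibrations in the Douteau model structure are by definition monomorphisms.
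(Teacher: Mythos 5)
Your argument is correct and is exactly the intended one: the paper dismisses this lemma with ``an easy exercise in composability of cofibrations,'' which amounts to your key step of composing the monomorphism $\Delta^n \to K$ (resp.\ $L'$), given by non-singularity, with the cofibration $K \hookrightarrow K'$ (resp.\ $L' \hookrightarrow K'$) obtained from stability of monomorphisms under pushout. Your write-up simply spells out the details (colimits computed on underlying simplicial sets, surjectivity of $K \sqcup L' \to K'$, preservation of non-degeneracy) that the paper leaves implicit.
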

\begin{proof}
This an easy exercise in composability of cofibrations.
\end{proof}
We now make use of a construction similar to the mapping cylinder of simplicial complexes, used in \cite{whitehead1939simplicial}. The essential idea is that by using this mapping cylinder construction, the attaching maps of a simplex can be replaced by cofibrations, allowing an application of \Cref{lemPushoutofFQS}. 
\begin{definitionconstruction}\label{conMcx}
Let $K \xrightarrow{f} K'$ be a morphism in $\textnormal{s\textbf{Set}}_P$ with $K$ and $K'$ FOS-complexes. For the remainder of this construction, we think of $f$ as a morphism of ordered simplicial complexes in the classical sense as in \Cref{subsecOrdered}, that is as a map on the vertices preserving the ordering and filtration and fulfilling that the image of ever simplex (thought of a a set of vertices) is again a simplex. By the fully faithful embedding from \Cref{corEmbedFos}, a construction in this setting transfers into the setting of simplicial sets. Now, denote by $M^{cx}_f$ the subcomplex of the join (recall \Cref{conJoin}) of $\textnormal {sd}(K')$ and $\textnormal {sd}(K)$ given by 
\begin{align*}
	M^{cx}_f:=\Big\{ \{\sigma_0 \subset ... \subset \sigma_k \} \star \{\tau_{k+1} \subset ... \subset \tau_{k+l}\} \mid \sigma_i \subset f(\tau_{k+j}) \textnormal{ for all $i,j$} \Big\}. 
\end{align*}
We have an induced filtration on the vertex set by the respective filtrations of $\textnormal{sd}(K)$ and $\textnormal{sd}(K')$. While such definition on the vertex set will generally not give a filtration for the join, since not any two vertices are contained in a common $1$-simplex in $\textnormal{sd}(P)$, it does define one on $M_f^{cx}$, as $f$ is stratum preserving. An ordering on $M_f^{cx}$ is then given by taking the two orderings on $\textnormal{sd}(K)$ and $\textnormal{sd}(K')$ and putting vertices in the latter before ones in the former, whenever they lie in a common simplex. Clearly, $M_f^{cx}$ comes with two inclusions $$ \textnormal {sd}(K') \hookrightarrow M_f^{cx} \hookleftarrow \textnormal{sd}(K).$$ Furthermore, the former inclusion comes with a retract \begin{align*}
	M^{cx}_f &\longrightarrow \textnormal{sd}(K') \\
	\{\sigma_0 \subset ... \subset \sigma_k\} \star \{\tau_{k+1} \subset ... \subset \tau_{k+l}\} &\longmapsto \{\sigma_0 \subset ... \subset \sigma_k \subset f(\tau_{k+1}), ... \subset f(\tau_{k+l})\}
\end{align*}
Hence, as for the usual mapping cylinder, one obtains a commutative mapping cylinder splitting diagram: \begin{center}
	\begin{tikzcd}
		\textnormal{sd}(K) \arrow[rr, "{\textnormal{sd}(f)}", bend left] \arrow[r, hook] & M^{cx}_f \arrow[r] & \textnormal{sd}(K')
	\end{tikzcd}.
\end{center}
Finally, note that if $K$ and $K'$ are both of dimension lower than $n$, then $M^{cx}_f$ is of dimension lower than $n+1$.
\end{definitionconstruction}
\begin{proposition}
	In the setting of \Cref{conMcx}, the inclusion $K' \hookrightarrow M_f^{cx}$ is an FSAE. In particular, whenever $K$ and $K'$ are finite, the retract $M^{cx}_f \longrightarrow \textnormal{sd}(K')$ is a simple equivalence.
\end{proposition}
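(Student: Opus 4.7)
The plan is to exhibit a regular proper pairing on the cofibration $\textnormal{sd}(K') \hookrightarrow M_f^{cx}$ (the statement's ``$K'$'' is evidently a typo for $\textnormal{sd}(K')$, since the ``in particular'' clause concerns the retract $M_f^{cx}\to\textnormal{sd}(K')$ and follows from this inclusion being an FSAE by two-out-of-three on the mapping cylinder splitting) and then to invoke \Cref{propEqCharSaeTot}. The non-degenerate simplices of $M_f^{cx}\setminus\textnormal{sd}(K')$ are exactly the joins $\sigma\star\tau$ with $\sigma = (\sigma_0\subset\dots\subset\sigma_k)$ a (possibly empty) non-degenerate simplex of $\textnormal{sd}(K')$ and $\tau = (\tau_{k+1}\subset\dots\subset\tau_{k+l})$ a non-empty non-degenerate simplex of $\textnormal{sd}(K)$, subject to the compatibility $\sigma_i\subset f(\tau_{k+j})$; in particular $\sigma_k \subseteq f(\tau_{k+1})$, with equality or strict containment.

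Declare $\sigma\star\tau$ of type I iff $\sigma$ is nonempty and $\sigma_k = f(\tau_{k+1})$, and of type II otherwise (so all simplices with $\sigma$ empty are type II), and set
\[ T(\sigma\star\tau)\,:=\,(\sigma_0\subset\dots\subset\sigma_k\subset f(\tau_{k+1}))\star(\tau_{k+1}\subset\dots\subset\tau_{k+l}). \]
This is a non-degenerate type I simplex of $M_f^{cx}$ (the new $\sigma$-vertex $f(\tau_{k+1})$ satisfies $f(\tau_{k+1})\subset f(\tau_{k+j})$ for all $j\geq 1$) having $\sigma\star\tau$ as its $(k{+}1)$-st face, with the convention $k = -1$ when $\sigma$ is empty. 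A two-sided inverse $B_I\to B_{II}$ removes the last $\sigma$-vertex of a type I simplex, which is type II because this vertex strictly contains the preceding $\sigma$-vertex. Properness reduces to admissibility of the $(k{+}1)$-st face: the deleted vertex $f(\tau_{k+1})$ has filtration
\[ p_{\textnormal{sd}(K')}(f(\tau_{k+1})) \,=\, \max p_{K'}(f(\tau_{k+1})) \,=\, \max p_K(\tau_{k+1}) \,=\, p_{\textnormal{sd}(K)}(\tau_{k+1}), \]
using that $f$ is a stratum preserving map; so it agrees with the filtration of its right neighbor $\tau_{k+1}$ in $T(\sigma\star\tau)$, making the horn inclusion admissible.

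For regularity, by \Cref{lem14Moss} it suffices to exhibit $\Phi:B_{II}\to\mathbb{N}$ strictly decreasing under the ancestral relation on same-dimensional pairs; set $\Phi(\sigma\star\tau) := l$, the number of $\tau$-vertices. Inspecting the codimension-$1$ faces of $T(\sigma\star\tau)$: deleting some $\sigma_i$ ($i\leq k$) leaves $f(\tau_{k+1})$ as the last $\sigma$-vertex paired with $\tau_{k+1}$, giving type I; deleting $\tau_{k+j}$ for $j\geq 2$ leaves $\tau_{k+1}$ as the first $\tau$-vertex, again type I; deleting $f(\tau_{k+1})$ recovers $\sigma\star\tau$ and is excluded; deleting $\tau_{k+1}$ (only possible when $l\geq 2$) produces $(\sigma_0\subset\dots\subset\sigma_k\subset f(\tau_{k+1}))\star(\tau_{k+2}\subset\dots\subset\tau_{k+l})$, which is type II only when $f(\tau_{k+2})\supsetneq f(\tau_{k+1})$ (and type I otherwise, since then $f(\tau_{k+2}) = f(\tau_{k+1})$ remains the last $\sigma$-vertex). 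In every genuine type II ancestor the $\tau$-length drops to $l-1$, so $\Phi$ strictly decreases.

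Once the pairing is verified, \Cref{propEqCharSaeTot} yields that $\textnormal{sd}(K')\hookrightarrow M_f^{cx}$ is an FSAE, hence a simple equivalence; the retract $M_f^{cx}\to\textnormal{sd}(K')$ then composes with this inclusion to the identity on $\textnormal{sd}(K')$, so by two-out-of-three (\Cref{propClassCharofSim}) it is itself a simple equivalence, yielding the ``in particular'' clause. The main obstacle is the face-by-face case analysis of $T(\sigma\star\tau)$, most delicately ensuring that the borderline case $f(\tau_{k+2}) = f(\tau_{k+1})$ (where a naive count might suggest a ``rogue'' same-dimensional type II ancestor) actually lands in $B_I$ once the last $\sigma$-vertex is recognized as $f(\tau_{k+1}) = f(\tau_{k+2})$; beyond this, the verification is routine bookkeeping.
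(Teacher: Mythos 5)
Your proposal is correct and takes essentially the same route as the paper: both exhibit a proper, regular pairing on $\textnormal{sd}(K')\hookrightarrow M_f^{cx}$ with the same type-$I$/type-$II$ split (last $\sigma$-vertex equal to $f(\tau_{k+1})$), the same $T$ inserting $f(\tau_{k+1})$, and the same $\Phi = l$ for regularity via \Cref{lem14Moss}, then deduce the ``in particular'' clause from the mapping-cylinder splitting and two-out-of-three. You supply the face-by-face bookkeeping (including the borderline $f(\tau_{k+2})=f(\tau_{k+1})$ case) that the paper defers to the reader; this and spotting the $K'$-versus-$\textnormal{sd}(K')$ notational slip are the only differences.
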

\begin{proof}
Again we think of $f$ also as a map of ordered filtered simplicial complexes.
Using \Cref{propEqCharSaeTot} we construct a regular pairing on $(M_f^{cx})_{n.d.} \setminus (K')_{n.d.}$. Denote $B:= M_f^{cx}$ and $A:=K'$. We say that a non-degenerate simplex in $B_{n.d.} \setminus A_{n.d.}$, $\{\sigma_0 \subsetneq ... \subsetneq \sigma_k\} \star \{\tau_{k+1} \subsetneq ... \subsetneq \tau_{k+l}]$ is of type $I$ if $\{\sigma_{0},...,\sigma_{k}\} \neq \emptyset$ and $\sigma_{k} = f(\tau_{k+1})$. Note that as we assume the simplex not to lie in $A$, such a $\tau_{k+1}$ always exists. Else, we say it is of type $II$. Then \begin{align*}
	T: B_{II} &\longrightarrow B_I \\
	 \{\sigma_0 \subsetneq ...\subsetneq \sigma_k\}\star \{\tau_{k+1} \subsetneq ... \subsetneq \tau_{k+l}\} &\longmapsto \{\sigma_0 \subsetneq ... \subsetneq \sigma_k \subsetneq f(\tau_{k+1})\} \star \{\tau_{k+1} \subsetneq ... \subsetneq\tau_{k+l}\}
\end{align*}
As $f$ is stratum preserving, this in fact defines a proper pairing. To see that it is regular, we use \Cref{lem14Moss} and set \begin{align*}
	\Phi: B_{II} &\longrightarrow \mathbb{N} \\
	\{\sigma_0 \subsetneq ... \subsetneq \sigma_k\} \star \{\tau_{k+1} \subsetneq ... \subsetneq\tau_{k+l}\} &\longmapsto l.
\end{align*}
It is an easy verification very much similar to the one in the proofs of \Cref{ExLotsOfFSAEs} that this gives a map such that $$\sigma \prec \sigma' \implies f(\sigma) < f(\sigma').$$
\end{proof}
Using the mapping cylinder splitting from \Cref{conMcx} we obtain from this result: 
\begin{corollary}\label{corMcxFact}
	Let $K \xrightarrow{f} K'$ be a morphism of finite FOS-complexes in $\textnormal{s\textbf{Set}}_P$. Then $\textnormal{sd}(f)$ factors into a cofibration of finite FOS-complexes followed by a simple equivalence of FOS-complexes.
\end{corollary}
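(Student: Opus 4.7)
The plan is simply to unpack the mapping cylinder splitting diagram from \Cref{conMcx} and combine it with the preceding proposition. Specifically, the splitting
$$\textnormal{sd}(K) \hookrightarrow M_f^{cx} \longrightarrow \textnormal{sd}(K'),$$
whose composition is $\textnormal{sd}(f)$, is already the desired factorization. The first arrow is a cofibration (it is a monomorphism of filtered simplicial sets, and by \Cref{thrmDouModSS} the cofibrations in the Douteau model structure are exactly the monomorphisms), and the second is the retract described in \Cref{conMcx}.

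First I would verify that $M_f^{cx}$ is a finite FOS-complex. By construction $M_f^{cx}$ is defined as a set of flags in the join $\textnormal{sd}(K') \star \textnormal{sd}(K)$, ordered so that vertices from $\textnormal{sd}(K')$ precede those from $\textnormal{sd}(K)$ whenever they share a simplex. The stratum-preserving property of $f$ guarantees that the induced assignment on vertices extends to a filtration over $P$ in the sense of \Cref{remDeOSC}, so that $M_f^{cx}$ really is an ordered filtered simplicial complex; via the fully faithful embedding of \Cref{corEmbedFos} this gives a finite FOS-complex. Finiteness is immediate from finiteness of $K$ and $K'$.

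Next I would handle the simple equivalence claim. The preceding proposition establishes that the other inclusion $\textnormal{sd}(K') \hookrightarrow M_f^{cx}$ is an FSAE between finite filtered simplicial sets; hence, by \Cref{corSaeareExp} together with the definitions in \Cref{subsecEckSiebAppHolds}, it is a simple equivalence. The mapping cylinder splitting from \Cref{conMcx} says that the composition
$$\textnormal{sd}(K') \hookrightarrow M_f^{cx} \longrightarrow \textnormal{sd}(K')$$
is the identity. Applying the two-out-of-three property for simple equivalences, which is part of the characterization in \Cref{propClassCharofSim}, to this composition shows that the retract $M_f^{cx} \to \textnormal{sd}(K')$ is itself a simple equivalence.

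There is no real obstacle here; the corollary is essentially an immediate reformulation of the proposition once one notices that mapping cylinder splitting directly produces the required factorization. The only point that warrants care is the bookkeeping for the FOS structure on $M_f^{cx}$, since the join of two FOS-complexes is only an FOS-complex after a choice of ordering, and one has to confirm that the prescribed convention is consistent with both the filtration and the face relations — but this is precisely what \Cref{conMcx} arranges.
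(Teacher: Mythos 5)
Your proposal is correct and follows essentially the same route as the paper: the corollary is exactly the mapping cylinder splitting $\textnormal{sd}(K) \hookrightarrow M_f^{cx} \to \textnormal{sd}(K')$ from \Cref{conMcx}, with the first arrow a monomorphism (hence cofibration) of finite FOS-complexes and the second a simple equivalence by the preceding proposition. Your extra steps — checking the FOS structure and finiteness of $M_f^{cx}$ and re-deriving the simple-equivalence claim via two-out-of-three applied to the retraction — are just an explicit spelling-out of what the paper leaves implicit in the proposition's ``in particular'' clause.
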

We are now in shape to prove \Cref{thrmSSvSC}.
\begin{proof}[Proof of \Cref{thrmSSvSC}]
	We prove the theorem via induction over the dimension of $X$. When $X$ is $0$-dimensional the result is trivial, as every $0$-dimensional filtered simplicial set is also an FOS-complex. Now, if $X$ is $(n+1)$-dimensional, then it it fits into a pushout diagram 
	\begin{equation}\label{prooThmSSvSCstart}
		\begin{tikzcd}
			\bigsqcup \partial \Delta^{\mathcal{J}_i} \arrow[r, hook] \arrow[d] &\bigsqcup \Delta^{\mathcal{J}_i} \arrow[d]\\
			\hat X \arrow[r, hook] &X
		\end{tikzcd}
	\end{equation}
where $\hat X$ is the $n$-skeleton of $X$ with the induced filtration and the disjoint unions (together with their induced maps) are given by the non-degenerate $n+1$-simplices of $X$. By the induction hypothesis, $\hat X$ has the simple homotopy type of a finite $n$-dimensional FOS-complex $K$. By \Cref{propRepHoClasses} this means that there is a $k \in \mathbb N$ and a simple equivalence $\textnormal{sd}^k_{P}(\hat X) \xrightarrow{a} K$. Now, as $\textnormal{sd}_{P}$ sustains colimits and cofibrations, this gives us a composition of pushout diagrams 
\begin{equation*}
	\begin{tikzcd}
	\bigsqcup \textnormal{sd}^k_{P}\big ( \partial \Delta^{\mathcal{J}_i} \big ) \arrow[r, hook] \arrow[d] &\bigsqcup \textnormal{sd}^k_{P} \big (\Delta^{\mathcal{J}_i} \big ) \arrow[d]\\
	\textnormal{sd}_{P}^k(\hat X) \arrow[r, hook] \arrow[d, "a"]& \textnormal{sd}^k_{P}(X) \arrow[d, "\tilde a"] \\
	K \arrow[r, hook] & \tilde X.
	\end{tikzcd}
\end{equation*}
By \Cref{propLvtSim}, $X$ is simple homotopy equivalent to $\textnormal{sd}^k_{P}(X)$. Furthermore, by \Cref{corStabofSim}, $\tilde a$ is also a simple equivalence. Hence, it suffices to show that $\tilde X$ has the simple homotopy type of a finite $n+1$-dimensional FOS-complex. By applying the mapping cylinder factorization of \Cref{corMcxFact} to $\bigsqcup \textnormal{sd}^k_{P}\big ( \partial \Delta^{\mathcal{J}_i} \big ) \to K$, we again obtain a new pushout composition diagram 
\begin{equation*}
\begin{tikzcd}
\textnormal{sd} \Big (\bigsqcup \textnormal{sd}^k_{P}\big ( \partial \Delta^{\mathcal{J}_i} \big ) \Big ) \arrow[r, hook] \arrow[d, hook] 
&\textnormal{sd}\Big ( \bigsqcup \textnormal{sd}^k_{P} \big (\Delta^{\mathcal{J}_i} \big ) \Big ) \arrow[d]
\\ \tilde K \arrow[r, hook] \arrow[d]& \tilde K' \arrow[d] \\
\textnormal{sd}(K) \arrow[r, hook] & \textnormal{sd}(\tilde X)
\end{tikzcd},
\end{equation*}
where $\tilde K$ is a finite FOS-complex of dimension $n+1$ (as it is given by the simplicial mapping cylinder from \Cref{conMcx}), the first left vertical is a cofibration and $\tilde K \to sd(K)$ is a simple equivalence. Thus, again by \Cref{corStabofSim}, $\tilde K' \to \textnormal{sd}(\tilde X)$ is a simple equivalence. As the latter target is simply equivalent to $\tilde X$ by \Cref{propLvtSim}, it suffices to show that $\tilde K'$ has the homotopy type of a finite $(n+1)$-dimensional FOS-complex. But by \Cref{lemPushoutofFQS}, $\tilde K'$ is a finite $(n+1)$-dimensional non-singular filtered simplicial set. Hence, by one last appeal to \Cref{propLvtSim}, this is simply equivalent to the finite $(n+1)$-dimensional FOS-complex $\textnormal{sd}(\tilde K')$, where we used \Cref{lemFQSImproves} to see that this is in fact an FOS-complex.
\end{proof}
\section{The Whitehead group in a topological filtered setting}
\label{secTopWh}
So far our Whitehead group is defined as a weak equivalence invariant of finite filtered simplicial sets and the Whitehead torsion as an invariant of morphisms in $\mathcal{H}\textnormal{s\textbf{Set}}_P^{fin}$. Recall, however, that in the classical setting (see for example \cite{cohenCourse}), the Whitehead group is a weak homotopy invariant of topological spaces (in fact it is even an invariant of the fundamental groupoid), and that, given a choice of finite CW-structure (or even a weakly homotopy equivalence to a finite CW-complexes), the Whitehead torsion is an invariant of the (weak) homotopy class of maps. Hence, while being purely combinatorially constructed, the classical Whitehead group measures things in the topological realm. One of course would hope that a similar result holds in the filtered setting. This turns out to be true. \\
\\
Classically, one explanation why the Whitehead group shifts to the topological setting is that the homotopy category of the category CW-complexes with cellular maps, $\mathcal H \textnormal{\textbf{CW}}$, is equivalent to the homotopy category of topological spaces $\mathcal H \textnormal{\textbf{Top}}$ (with respect to the Quillen Model Structure). In particular, one obtains an equivalence of categories from the full subcategory of $\mathcal H \textnormal{\textbf{CW}}^{fin}$ given by finite CW-complexes to the full subcategory of $\mathcal H \textnormal{\textbf{Top}}$ given by topological spaces, weakly equivalent to a finite CW-complex. One can then think of computing the Whitehead group and Whitehead torsion in the topological setting, as choosing an inverse to this equivalence (i.e. for each space a weakly equivalent CW-complex) and then composing this equivalence with the Whitehead group defined on CW-complexes. Of course, while the isomorphism type of the Whitehead groups does not depend on this choice, the Whitehead torsion still does. That is, for this construction to be well-defined, one needs to keep track of the choices of (weakly equivalent) CW-structure one makes.\\
\\
Now, to define simple homotopy theory in the filtered topological setting, we could take the approach of taking an inverse (up to isomorphism) to the restriction of the fully faithful functor $|-|_P:\mathcal H\textnormal{s\textbf{Set}}_P^{fin} \to \mathcal H \textnormal{\textbf{Top}}_{P}$ to its essential image (see \Cref{thrmFullyFaithful}), and then composing it with the Whitehead group functor. Note however that while this would make the Whitehead group well-defined up to natural isomorphism, the Whitehead torsion of a map would still depend on a choice of inverse. In fact, it would even depend on the choice of inverse whether the Whitehead torsion is $0$, i.e. whether a filtered map is a simple equivalence, in some sense. This problem of course already occurs in the classical setting. To make the Whitehead torsion well-defined, one needs to first make a choice of CW-structure (up to homotopy) for all spaces involved. This problem can somewhat be amended using the fact that every homeomorphism is a simple equivalence (see \cite[App. Main Theorem]{cohenCourse}), thus giving a notion of simple equivalence, only depending on the homeomorphism type of the spaces involved. However, as we have no such result in the filtered setting at hand, a priori, the construction of Whitehead torsion always depends on such choices. We thus restrict our-self to the setting, of realizations of filtered simplicial sets. A generalization to the ``up to weak equivalence'' setting is straightforward.\\
\\
Denote by $|\textnormal{s\textbf{Set}}^{fin}_P|$ the category with objects given by the objects in $\textnormal{s\textbf{Set}}_P$ and morphisms given by $$\textnormal{Hom}_{|\textnormal{s\textbf{Set}}^{fin}_P|}(X,Y) = \textnormal{Hom}_{\textnormal{\textbf{Top}}_{P}}(|X|_P,|Y|_P).$$ We call this the \textit{category of finitely triangulated spaces, filtered over $P$}. Further, we denote by $\mathcal H|\textnormal{s\textbf{Set}}^{fin}_P|$ the category with the same objects and morphisms given by $$\textnormal{Hom}_{\mathcal H|\textnormal{s\textbf{Set}}^{fin}_P|}(X,Y) = \textnormal{Hom}_{\mathcal H\textnormal{\textbf{Top}}_{P}}(|X|_P,|Y|_P).$$ Both, by definition, embed fully faithfully into $\textnormal{\textbf{Top}}_{P}$ and $\mathcal H \textnormal{\textbf{Top}}_{P}$ respectively. By \Cref{thrmFullyFaithful}, we immediately obtain:
\begin{corollary}\label{corSimApprox'}
	The realization functor $$|-|_P:\mathcal H\textnormal{s\textbf{Set}}_P^{fin} \longrightarrow \mathcal H\textnormal{\textbf{Top}}_{P}$$ induces an isomorphism of categories $$\mathcal H\textnormal{s\textbf{Set}}^{fin}_P \xrightarrow{\sim} \mathcal H|\textnormal{s\textbf{Set}}_P|^{fin}.$$ 
	In particular, every morphism $\phi: |X|_P \to |Y|_P$ in 
	$\mathcal H|\textnormal{s\textbf{Set}}_P|$ is of the shape 
	$$\phi = [|f|_P] \circ [|\textnormal{l.v.}_P^{n}|_P]^{-1},$$ for sufficiently large 
	$n \in \mathbb N$ and some $f: \textnormal{sd}_P^n(X) \to Y$.
\end{corollary}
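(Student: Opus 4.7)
The plan is to observe that the corollary is essentially an immediate reformulation of \Cref{thrmFullyFaithful} combined with the representation result \Cref{propRepHoClasses}. First I would unravel the definition of $\mathcal H|\textnormal{s\textbf{Set}}_P^{fin}|$: by construction its class of objects is identical to that of $\mathcal H\textnormal{s\textbf{Set}}_P^{fin}$, and its hom-sets are tautologically defined as $\textnormal{Hom}_{\mathcal H|\textnormal{s\textbf{Set}}_P^{fin}|}(X,Y) = \textnormal{Hom}_{\mathcal H\textnormal{\textbf{Top}}_P}(|X|_P,|Y|_P)$. The induced functor $|-|_P: \mathcal H\textnormal{s\textbf{Set}}_P^{fin} \to \mathcal H|\textnormal{s\textbf{Set}}_P^{fin}|$ is therefore a bijection on objects, so to obtain an isomorphism of categories it suffices to check that it is bijective on hom-sets.

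Bijectivity on hom-sets is exactly the statement of \Cref{thrmFullyFaithful}: the map
\[
\textnormal{Hom}_{\mathcal H\textnormal{s\textbf{Set}}_P}(X,Y) \xrightarrow{|-|_P} \textnormal{Hom}_{\mathcal H\textnormal{\textbf{Top}}_P}(|X|_P,|Y|_P)
\]
is a bijection whenever $X$ and $Y$ are finite. Under the identification with $\textnormal{Hom}_{\mathcal H|\textnormal{s\textbf{Set}}_P^{fin}|}(X,Y)$, this is precisely the claim that $|-|_P$ is fully faithful as a functor into $\mathcal H|\textnormal{s\textbf{Set}}_P^{fin}|$. Combined with bijectivity on objects, this gives the desired isomorphism of categories.

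For the ``in particular'' clause, I would simply transport \Cref{propRepHoClasses} along the isomorphism just established. Given $\phi:|X|_P \to |Y|_P$ in $\mathcal H|\textnormal{s\textbf{Set}}_P^{fin}|$, let $\alpha \in \textnormal{Hom}_{\mathcal H\textnormal{s\textbf{Set}}_P^{fin}}(X,Y)$ be the unique morphism with $|\alpha|_P = \phi$, which exists by full faithfulness. By \Cref{propRepHoClasses} there exist $n \in \mathbb N$ and a morphism $f:\textnormal{sd}_P^n(X) \to Y$ in $\textnormal{s\textbf{Set}}_P$ such that $\alpha = [f]\circ[\textnormal{l.v.}_P^n]^{-1}$. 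Applying the functor $|-|_P$ (which preserves composition and inversion in the homotopy category) yields $\phi = [|f|_P]\circ[|\textnormal{l.v.}_P^n|_P]^{-1}$, as required.

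There is no real obstacle: the hard work has already been done in \Cref{thrmFullyFaithful} (which in turn rested on the filtered simplicial approximation theorem \Cref{thrmSimplicialApproximationB} and the reduction from filtered simplicial sets to FOS-complexes via \Cref{propSSvSC}). The present corollary is essentially a bookkeeping translation of those earlier results into the more concrete topological language of finitely triangulated filtered spaces, and its proof should fit in a few lines.
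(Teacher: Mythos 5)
Your proof is correct and follows exactly the paper's own route: the paper obtains this corollary immediately from \Cref{thrmFullyFaithful} (the functor being the identity on objects and bijective on hom-sets by construction of $\mathcal H|\textnormal{s\textbf{Set}}_P^{fin}|$), with the ``in particular'' clause obtained by transporting \Cref{propRepHoClasses} through the realization functor, just as you do. No gaps; nothing further to add.
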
 Such an $f$ as in \Cref{corSimApprox'} is called a \textit{simplicial approximation to $\phi$.} Thus, we can now think of the functors $A_P$ and $Wh_P$ constructed in \Cref{subsecEckSiebAppHolds} as functors being defined on $\mathcal H|\textnormal{s\textbf{Set}}_P|^{fin}$. 
\begin{lemma}\label{lemEquCHarTopSim}
Let $\phi$ be a morphism in $\mathcal H|\textnormal{s\textbf{Set}}_P|^{fin}$. Then the following are equivalent.
\begin{enumerate}
	\item The morphism in $\mathcal H\textnormal{s\textbf{Set}}_P^{fin}$ corresponding to $\phi$ is a simple equivalence.
	\item $f$ is a simple equivalence, where $f$ is any simplicial approximation of $\phi$.
\end{enumerate}
\end{lemma}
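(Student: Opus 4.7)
The plan is to reduce the statement to a direct computation with the Whitehead torsion. By \Cref{corSimApprox'}, the morphism $\alpha$ in $\mathcal H\textnormal{s\textbf{Set}}_P^{fin}$ corresponding to $\phi$ decomposes as
\[
\alpha \;=\; [f] \circ [\textnormal{l.v.}_P^n]^{-1},
\]
where $f \colon \textnormal{sd}_P^n(X) \to Y$ is the chosen simplicial approximation. The claim then amounts to showing that $\alpha$ is a simple equivalence if and only if $f$ is.

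The key observation is that $\textnormal{l.v.}_P^n \colon \textnormal{sd}_P^n(X) \to X$ is itself a simple equivalence. Indeed, \Cref{propLvtSim} gives that each single step $\textnormal{l.v.}_P \colon \textnormal{sd}_P^{k+1}(X) \to \textnormal{sd}_P^k(X)$ is a simple equivalence, and simple equivalences are closed under composition by definition. Consequently, by \Cref{thrmEckSieb}, the functor $A_P$ sends $\textnormal{l.v.}_P^n$ to an isomorphism of abelian monoids $(\textnormal{l.v.}_P^n)_* \colon A_P(\textnormal{sd}_P^n(X)) \to A_P(X)$.

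Then \Cref{corLVdesTor} gives the identity
\[
\tau_P(\alpha) \;=\; (\textnormal{l.v.}_P^n)_*\bigl(\tau_P(f)\bigr),
\]
and since $(\textnormal{l.v.}_P^n)_*$ is an isomorphism, $\tau_P(\alpha) = 0$ if and only if $\tau_P(f) = 0$. Combining \Cref{remSimpleStuffA} (which characterises simple equivalences in $\mathcal H\textnormal{s\textbf{Set}}_P^{fin}$ by vanishing Whitehead torsion) with \Cref{propEqCharSimpleEq} (the analogous statement for morphisms in $\textnormal{s\textbf{Set}}_P^{fin}$) then yields the desired equivalence.

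There is no real obstacle here: the lemma is essentially a packaged restatement of the functoriality of $\tau_P$ under simple equivalences, applied to the canonical factorisation of $\alpha$ through $\textnormal{l.v.}_P^n$. The only subtlety worth flagging is that (ii) implicitly asserts independence from the choice of simplicial approximation; this independence is automatic from the equivalence with (i), since (i) is phrased purely in terms of $\phi$.
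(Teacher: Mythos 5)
Your proof is correct and follows essentially the same route as the paper: both rest on the factorization $\alpha = [f]\circ[\textnormal{l.v.}_P^n]^{-1}$ from \Cref{corSimApprox'} together with \Cref{propLvtSim} (that $\textnormal{l.v.}_P^n$ is a simple equivalence). The only cosmetic difference is that where the paper concludes directly by the two-out-of-three property for simple equivalences, you detour through the torsion formula of \Cref{corLVdesTor} and the vanishing-torsion characterizations (\Cref{remSimpleStuffA}, \Cref{propEqCharSimpleEq}), which is an equivalent repackaging of the same argument.
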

\begin{proof}
	This is an immediate consequence of \Cref{thrmSSvSC}, \Cref{propLvtSim} and the two out of three property for simple equivalences in $\mathcal H \textnormal{s\textbf{Set}}_P^{fin}$.
\end{proof}
Using the isomorphism of categories $\mathcal H \textnormal{s\textbf{Set}}_P^{fin} \cong \mathcal H|\textnormal{s\textbf{Set}}_P^{fin}|$ we can now transfer most of the nomenclature from the setting of stratum preserving simplicial maps to the setting of stratum preserving maps of the underlying realizations.
	\begin{definition}\hfill
		\begin{itemize}
			\item A morphism $\phi$ in $\mathcal{H}|\textnormal{s\textbf{Set}}_P^{\textnormal{fin}}|$ is called \textit{a simple equivalence} if any of the equivalent characterizations in \Cref{lemEquCHarTopSim} is fulfilled.
			\item Two objects in $X,Y \in \mathcal H|\textnormal{s\textbf{Set}}^{fin}_P|$ are said to have the same \textit{simple homotopy type} or also to be \textit{simply equivalent} if there is a simple equivalence $\phi:X \to Y$.
			\item A morphism $\varphi$ in $|\textnormal{s\textbf{Set}}_P^{\textnormal{fin}}|$ is called a $\textit{simple equivalence}$ if $[\varphi]$ is a simple equivalence.
			\item We say two morphisms $\phi,\phi' $ in $\mathcal{H}|\textnormal{s\textbf{Set}}_P^{\textnormal{fin}}|$ with the same source \textit{have the same simple morphism class} if their corresponding morphism in $\mathcal H\textnormal{s\textbf{Set}}_P^{fin}$ have the same simple morphism class. That is, if there is a simple equivalence $\sigma $ in $\mathcal{H}|\textnormal{s\textbf{Set}}_P^{\textnormal{fin}}|$ such that $\phi = \sigma \circ \phi'$. The equivalence class generated by this relation is called the \textit{simple morphism class of $\phi$}, denoted by $\langle \phi \rangle$. 
			\item The simple morphism class of a morphism $\varphi$ in $|\textnormal{s\textbf{Set}}_P^{\textnormal{fin}}|$ is defined to be the simple morphism class of $[\varphi]$, denoted by $\langle \varphi \rangle$.
			\item The \textit{Whitehead torsion}, $\tau_P(\phi)$, of a morphism $\phi \in \mathcal H|\textnormal{s\textbf{Set}}_P|$, is defined as the Whitehead torsion of the corresponding morphism in $\textnormal{s\textbf{Set}}_P$.
			\item The \textit{Whitehead torsion} $\tau_P(\varphi)$ of a morphism $\varphi \in|\textnormal{s\textbf{Set}}_P|$ is defined as $\tau_P([\varphi])$.
		\end{itemize}
	\end{definition} 
Summarizing results from the previous sections, we then have the following equivalent characterization of our Whitehead monoid, group and torsion.
\begin{proposition}\label{propCharOfWh}
	Let $X \in \textnormal{s\textbf{Set}}_P^{fin}$. Then there is a commutative diagram of bijections
\begin{center}
	\begin{tikzcd}
		A_P(X) \arrow[d] \arrow[r, leftarrow] & 
		{\sfrac{\Big \{a \mid X \xhookrightarrow{a} Y \in \textnormal{s\textbf{Set}}_P^{fin} \textnormal{ a cofibration} \Big \}}{\textnormal{deformation}} } \arrow[d, "\langle - \rangle"] \\
		
		{\Big \{\langle \alpha \rangle \mid X \xrightarrow{\alpha} Y \in \mathcal H\textnormal{s\textbf{Set}}_P^{fin} \Big\}} \arrow[d, "{|-|_P}"] & {\Big \{ \langle f \rangle \mid X \xrightarrow{f} Y \in \textnormal{s\textbf{Set}}_P^{fin} \Big \} } \arrow[l, hook'] \arrow[d, "|-|_P"]&\\
		 \Big \{\langle \phi \rangle \mid X \xrightarrow{\phi} Y \in \mathcal H|\textnormal{s\textbf{Set}}_P^{fin}| \Big\} & \Big \{\langle \varphi \rangle \mid X \xrightarrow{\varphi} Y \in \mathcal |\textnormal{s\textbf{Set}}_P^{fin}| \Big\} \arrow[l, hook']
	\end{tikzcd}.
\end{center}
Furthermore, they restrict to bijections with $Wh_P(X)$ if one adds the additional condition of the respective arrows being an isomorphism/weak equivalence respectively. Under these identifications, the Whitehead torsion of an arrow $\varphi: X\to Y \in |\textnormal{s\textbf{Set}}_P|$ is equivalently given by:
\begin{center}
	\begin{tikzcd}
		{\tau_P(\varphi)} \arrow[d, equal]\arrow[r, equal] & ... \arrow[d, equal]\\
		{ \langle [f] \circ [\textnormal{l.v.}_P^n] \rangle } \arrow[r, equal] \arrow[d, equal]& {(\textnormal{l.v.}_P^n)_*^{-1}(\langle f \rangle)} \arrow[d, equal]\\
		{ \langle [\varphi] \rangle }\arrow[r, equal] & \langle \varphi \rangle
	\end{tikzcd},
\end{center}
where $f: \textnormal{sd}^n_P(X) \to Y$ is a simplicial approximation of $\varphi$.
\end{proposition}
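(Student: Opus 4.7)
The plan is to observe that essentially every arrow and bijection appearing in this diagram has already been established (or is immediate) in the preceding sections, so the proof amounts to assembling these pieces and checking compatibility. I would begin by noting that the upper right vertical $\langle - \rangle$ is just the bijection from \Cref{remSimpleStuffA}: by the Eckmann--Siebenmann formalism, $A_P(X)$ is in one-to-one correspondence with cofibrations out of $X$ modulo the deformation relation, and this is precisely what \Cref{lemEqConE} (applied in our setting via \Cref{thrmEckSiebAxAreTrue}) gives. The upper left vertical, identifying $A_P(X)$ with simple morphism classes of morphisms in $\mathcal H \textnormal{s\textbf{Set}}_P^{fin}$ with source $X$, is immediate from the isomorphism $\mathcal C(\Sigma^{-1}) \cong \mathcal H \textnormal{s\textbf{Set}}_P^{fin}$ of \Cref{propEqCharOfHo} together with the definition of $\tau_P$. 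Commutativity of the upper square reduces to the tautology that, for a cofibration $a$, the simple morphism class $\langle a \rangle \in A_P(X)$ agrees with the simple morphism class of $[a]$ in $\mathcal H \textnormal{s\textbf{Set}}_P^{fin}$.

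Next I would handle the middle horizontal inclusion $\{\langle f \rangle\} \hookrightarrow \{\langle \alpha \rangle\}$: surjectivity follows from mapping cylinder factorization (every $\alpha$ is of the form $[i_{X'}]^{-1}\circ [i_X \circ f]$ with $i_{X'}$ an FSAE, hence $\alpha$ and $[i_X \circ f]$ have the same simple morphism class), and injectivity is the statement that two maps $f,g$ in $\textnormal{s\textbf{Set}}_P^{fin}$ satisfying $\langle [f]\rangle = \langle[g]\rangle$ have $\langle f \rangle = \langle g \rangle$, which is immediate from the definition of simple morphism class on the left being the pullback of the one on the right. The bottom row is handled analogously after invoking the isomorphism of categories $\mathcal H \textnormal{s\textbf{Set}}_P^{fin} \xrightarrow{\sim} \mathcal H |\textnormal{s\textbf{Set}}_P^{fin}|$ from \Cref{corSimApprox'}. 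The two lower verticals $|-|_P$ are then simultaneously shown to be bijections using \Cref{lemEquCHarTopSim}, which says precisely that simple equivalences correspond under the realization functor; their commutativity with the upper parts of the diagram is just functoriality of $|-|_P$.

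For the restriction to $Wh_P(X)$, I would recall \Cref{remSimpleStuffB}: a morphism in $\mathcal{C}$ is a weak equivalence in the Douteau model structure if and only if it becomes an isomorphism in $\mathcal C(\Sigma^{-1}) \cong \mathcal H \textnormal{s\textbf{Set}}_P^{fin}$, and $E(X) = Wh_P(X)$ is the subgroup of invertible elements of $A_P(X)$. Since all the bijections in the diagram are compatible with the property of being an isomorphism / weak equivalence (using \Cref{corRelRef} to pass between the simplicial and topological sides), they restrict to bijections on the corresponding subsets and these are exactly $Wh_P(X)$. Finally, the formulas for $\tau_P(\varphi)$ are essentially by definition: $\tau_P(\varphi) = \tau_P([\varphi])$ is the definition in the topological setting; $\tau_P([\varphi]) = \langle [\varphi]\rangle$ follows from the identifications established above; and for a simplicial approximation $f$ we have $[\varphi] = [|f|_P] \circ [|\textnormal{l.v.}_P^n|_P]^{-1}$ by \Cref{corSimApprox'}, so the remaining equality $\tau_P([\varphi]) = (\textnormal{l.v.}_P^n)_*^{-1}(\langle f \rangle)$ is exactly \Cref{corLVdesTor}.

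The main obstacle here is not any single hard step but rather the bookkeeping: one must verify that the various equivalence relations (deformation, simple morphism class in $\textnormal{s\textbf{Set}}_P^{fin}$, simple morphism class in $\mathcal H \textnormal{s\textbf{Set}}_P^{fin}$, and their topological analogues) all agree under the maps of the diagram. In practice these agreements are either definitional or were effectively proved earlier --- for instance, passing from a deformation zigzag to a simple morphism class uses the stability of FSAEs under pushout (\Cref{propEqCharSaeTot}) and the structure theorem \Cref{lemEqConE}, while passing from the simplicial to the topological setting requires the full strength of \Cref{thrmFullyFaithful} and \Cref{thrmSimplicialApproximationB}. So the entire proof is really a diagram chase, and no new lemmas are needed.
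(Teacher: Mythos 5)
Your proof is correct and takes essentially the same route as the paper's: both invoke \Cref{remSimpleStuffA} for the upper square, \Cref{corSimApprox'} for the isomorphism of categories giving the lower left vertical, \Cref{remSimpleStuffB} for the restriction to $Wh_P(X)$, and \Cref{corLVdesTor} for the torsion formula, with the remaining bijectivity claims following by commutativity and a short diagram chase. The paper is more terse (deducing surjectivity of the bottom horizontal from commutativity plus bijectivity of the known maps rather than arguing surjectivity of the middle horizontal directly), but the content is identical; your phrasing of the surjectivity argument via mapping cylinders is really \Cref{propHoCharFin} plus the mapping cylinder factorization, which is what \Cref{remSimpleStuffA} packages.
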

\begin{proof}
	The upper square was already described in detail in \Cref{remSimpleStuffA}. The lower square commutes by definition of simple morphism classes. Furthermore, the lower left vertical is a bijection by \Cref{corSimApprox'} and the definition of simple equivalences in $\mathcal H|\textnormal{s\textbf{Set}}_P|$. Hence, by commutativity, the lower left horizontal inclusion is onto, making all the maps involved bijective. The statement on the Whitehead group follows analogously to \Cref{remSimpleStuffB}, from the fact that a morphism is a weak equivalence in a model category if and only if its image in the homotopy category is an isomorphism. Finally, the statement on the Whitehead torsion is immediate from \Cref{corLVdesTor}.
\end{proof}
\subsection{Comparison to the classical Whitehead group}
The obvious question arises how our construction for the Whitehead group of a filtered simplicial set relates to the classical construction of the Whitehead group of a CW-complex. In this section, we are going to show that our construction can be thought of as a generalization from the case where $P$ is a one-point set to the case of arbitrary partially ordered sets. To be more specific:\\
\\
In the case where $P = \star$, $\textnormal{s\textbf{Set}}_P$ is isomorphic to $\textnormal{s\textbf{Set}}$ in the obvious way, and the Douteau model structure is the Kan-Quillen model structure on $\textnormal{s\textbf{Set}}$. The analogous statement can be made for $\textnormal{\textbf{Top}}_{\star}$, and the model structure on $\textnormal{\textbf{Top}}_{\star} \cong \textnormal{\textbf{Top}}$ is just the classical Kan-Quillen one (on $\Delta$-generated spaces, to be precise). Note that we used ``$\star$'' to avoid any ambiguity with pointed spaces. Let $\mathcal H \textnormal{\textbf{CW}}^{fin}$ denote the homotopy category of finite CW-complexes. By the cellular approximation theorem, $\mathcal H \textnormal{\textbf{CW}}^{fin}$ embeds fully faithfully into $\mathcal H \textnormal{\textbf{Top}}$. The realization of a simplicial set naturally carries the structure of a CW-complex. This induces a fully faithful embedding $$\mathcal H \textnormal{s\textbf{Set}}_\star^{fin} \cong \mathcal H|\textnormal{s\textbf{Set}}^{fin}_*|= \mathcal H|\textnormal{s\textbf{Set}}^{fin}| \hookrightarrow \mathcal H \textnormal{\textbf{CW}}^{fin}.$$ As every finite CW-complex has the simple homotopy type of a finite simplicial complex (see \cite[Prop. 7.2]{cohenCourse}), this is even an equivalence of categories. Now, consider the following diagram
\begin{equation}\label{diagWhAreIso}
	\begin{tikzcd}
		\mathcal H \textnormal{s\textbf{Set}}_\star \arrow[r, "\sim"] \arrow[rd, "Wh_{\star}", swap]&\mathcal H \textnormal{\textbf{CW}}^{fin} \arrow[d, "Wh"]\\
		{ }& \textnormal{\textbf{Ab}}
	\end{tikzcd},
\end{equation}
where by $Wh$ we refer to the classical Whitehead group functor. In this subsection, we are going to prove the following result. (Note that for some reason, some authors define the classical Whitehead torsion to live in the Whitehead group of the target space, not the source. We always mean the corresponding element in the source).
\begin{theorem}\label{thrmOldNewAgree}
	The diagram \eqref{diagWhAreIso} commutes up to a natural isomorphism, uniquely described by the property that, for $f\in|\textnormal{s\textbf{Set}}_\star^{fin}|$, $$ \tau_\star(f) \mapsto \tau(f),$$ with $\tau(f)$ the classical Whitehead torsion.
\end{theorem}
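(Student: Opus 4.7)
The plan is to construct the natural isomorphism concretely using the representation of Whitehead torsions by cofibrations from \Cref{propCharOfWh}, then verify well-definedness, group-homomorphism property, and bijectivity, in that order.

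\textbf{Construction of the candidate transformation.} Given $X \in \textnormal{s\textbf{Set}}_\star^{fin}$, \Cref{propCharOfWh} lets us represent any element of $Wh_\star(X)$ by a weakly equivalent cofibration $a: X \hookrightarrow Y$ of finite simplicial sets, and two such cofibrations represent the same element if and only if they are related by a deformation (a zigzag of finite FSAEs). The candidate map $\eta_X: Wh_\star(X) \to Wh(|X|)$ is defined by
$$\langle a \rangle \longmapsto \tau(|a|),$$
the classical Whitehead torsion of the CW-pair $(|Y|,|X|)$.

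\textbf{Well-definedness.} It suffices to show that the realization of a finite FSAE is a classical simple equivalence of CW-complexes. By \Cref{propEqCharSaeTot}, a finite FSAE is a finite composition of pushouts of admissible horn inclusions. In the trivially filtered case, every horn inclusion $\Lambda^n_k \hookrightarrow \Delta^n$ is admissible, and its realization $|\Lambda^n_k| \hookrightarrow |\Delta^n|$ is precisely the classical elementary simplicial expansion of attaching an $n$-cell together with its $k$-th free face. Pushouts of such inclusions along cellular maps (cellularity is automatic for realizations of simplicial maps) yield classical elementary CW-expansions, and these are classical simple equivalences by \cite[Ch. 5]{cohenCourse}. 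Hence deformations map to classical simple equivalences and $\eta_X$ is well-defined.

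\textbf{Homomorphism and naturality.} The additive structure on $Wh_\star(X)$ is defined by pushouts (\Cref{propDesOfFStar}), and the classical Whitehead torsion satisfies the analogous sum formula under pushout along cellular cofibrations, so $\eta_X$ is a group homomorphism. For naturality in $X$, note that both $Wh_\star$-pushforward (\Cref{propDesOfFStar}) and classical $Wh$-pushforward are computed via the same pushout diagram of CW-pairs, and $|-|$ preserves pushouts; this gives commutativity of the square defining naturality. The uniqueness assertion then follows from surjectivity of the representation $\langle a \rangle \mapsto \tau_\star(a)$.

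\textbf{Bijectivity.} For surjectivity, let $\omega \in Wh(|X|)$ be represented by a classical CW-inclusion $|X| \hookrightarrow Z$ which is a homotopy equivalence. By \cite[Prop. 7.2]{cohenCourse} every finite CW-complex is simply equivalent to a finite simplicial complex, so there exists a simplicial inclusion $X \hookrightarrow Y$ and a classical simple equivalence $Z \to |Y|$ extending the identity on $|X|$; by well-definedness the change of representative does not affect the class, so $\omega = \eta_X(\langle X \hookrightarrow Y\rangle)$. For injectivity, suppose $\eta_X\langle a \rangle = 0$, i.e.\ $|a|:|X| \hookrightarrow |Y|$ is a classical simple equivalence. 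Then there is a zigzag of classical elementary CW-expansions from $|X|$ to $|Y|$ under $|X|$. The main obstacle here is to convert this zigzag into a simplicial deformation; this is the heart of the proof. One approximates each classical elementary expansion by a simplicial one using \Cref{thrmSimplicialApproximationB} applied to the attaching maps (after sufficient barycentric subdivision, each cellular attaching map is homotopic to a simplicial one, and a simplicial attachment of a simplex along a horn is a strong anodyne extension). Combining these finitely many simplicial FSAEs via \Cref{corStabofSim} and \Cref{propClassCharofSim}, and using that subdivisions are simple equivalences (\Cref{propLvtSim}), we produce a deformation from $a$ to the identity; hence $\langle a \rangle = 0$ in $Wh_\star(X)$.
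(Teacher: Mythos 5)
Your construction of the comparison map, its well-definedness (realizations of finite FSAEs are classical simple equivalences), the homomorphism/naturality argument via pushouts, and the surjectivity step are essentially the paper's $\Phi$ from \Cref{conWhSSvCW} and \Cref{lemPhiOnto} (though for surjectivity the paper works with simple morphism classes of arbitrary maps and full faithfulness of $\mathcal H|\textnormal{s\textbf{Set}}_\star^{fin}| \hookrightarrow \mathcal H\textnormal{\textbf{CW}}^{fin}$, which avoids the relative triangulation statement you assert without proof, namely that the simple equivalence $Z \to |Y|$ can be chosen to extend the identity on $|X|$; Cohen's Prop.\ 7.2 is an absolute statement).

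The genuine gap is in your injectivity step. You assume $|a|$ is a classical simple equivalence, take a zigzag of elementary CW-expansions under $|X|$, and propose to simplicially approximate each attaching map to obtain a deformation of $a$ to $1_X$ in $\textnormal{s\textbf{Set}}$. This does not work as written: the intermediate complexes of the CW-deformation are arbitrary finite CW-complexes, not realizations of (filtered ordered) simplicial complexes, so \Cref{thrmSimplicialApproximationB} simply does not apply to their attaching maps; and even after replacing them inductively by simplicial models, you must keep control of the simple type \emph{relative to $X$ and inside the simplicial-set category} while homotoping attaching maps and changing cell structures --- producing an actual zigzag of finite FSAEs from $a$ to $1_X$, not merely a chain of homotopy equivalences whose realizations are simple in the CW sense, since the latter is what you are trying to conclude. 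Carrying this out amounts to re-proving the comparison between CW and simplicial simple homotopy theory. The paper avoids this by a different route: it introduces the auxiliary monoid $\tilde E(K)$ of elementary expansions of (unordered) simplicial complexes, stabilized under barycentric subdivision (\Cref{conWhSCtoSS}), shows the composite $\Phi''\colon \tilde E(K) \to Wh_\star(X) \to Wh(|X|)$ is explicitly $\langle a\rangle \mapsto |\textnormal{l.v.}^n|_*\langle |a|\rangle$ (\Cref{lemRepPhiprimeprime}), and then quotes Whitehead's original simplicial-complex theorem (\cite{whitehead1939simplicial}, via \Cref{propKerSCvCW}) that an inclusion of finite simplicial complexes with vanishing algebraic torsion deforms to the identity class by elementary simplicial expansions; combined with surjectivity of $\Phi'$ this yields injectivity of $\Phi$ by a purely formal monoid argument. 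Your sketch needs either this classical input or a genuine substitute for it; the simplicial-approximation argument you outline is not such a substitute.
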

In particular, our theory turns out to be a straight up generalization of the non filtered setting. We need a result about the relationship between simple homotopy equivalence in the simplicial complex and the CW-complex setting, that seems to be somewhat folklore. Recall that an elementary expansion of a simplicial complex $K$ (not necessarily ordered) is a map $e$ obtained by a pushout in simplicial complexes as below 
\begin{center} 	
	\begin{tikzcd} \Lambda^n \arrow[r, hook] \arrow[d, hook] & \Delta^n \arrow[d] \\
	K \arrow[r,"e", hook] & K'
	\end{tikzcd},
\end{center}
where $\Lambda^n \hookrightarrow K$ is an inclusion of a full subcomplex. Note that both conditions, injectivity and fullness, are necessary as colimits in the category of simplicial complexes are generally rather non-geometrical in their behavior.
\begin{remark}\label{remElemAreSame}
	 Consider some arbitrary order of $K'$. This induces compatible orderings on the whole diagram. Then $e$ maps to an elementary expansion in the sense of \Cref{defElem}, under the fully faithful inclusion of ordered simplicial complexes into simplicial sets, $S^o$ from \Cref{corEmbedFos}. Conversely, every finite (F)SAE of ordered simplicial complexes maps to a composition of elementary expansion of simplicial complexes, under the forgetful functor.
\end{remark} 
For a fixed finite simplicial complex $K$, one then sets $$E_{scx}(K)=\sfrac{\big \{ K \xhookrightarrow{a} K' \mid \text{ s.t. } K' \text{ finite and } |a| \text{ is a homotopy equivalence} \big \}}{\sim_e},$$ where "$\sim_e$" is the equivalence relation induced by composition with elementary expansions (and isomorphisms). The equivalence class of $1_K$, $\langle 1_K \rangle$, turns this into a pointed set.
\begin{remark}\label{remPushoutInSim}
	We should say a few words on the matter of why we are not just applying the theory in \Cref{subsecEckSiebApp}. While at first sight it might be very tempting and in fact the authors of \cite{siebenmannInfinite} seem to think it is a straightforward application, the theory is not applicable. We have already noted in \Cref{remAxAreWrong} that pushouts in the category of inclusions of CW-complexes do not exits. The same argument holds for simplicial complexes. But even if one decides to work with the modified axioms that we adopted in \Cref{subsecEckSiebApp}, one runs into the problem of pushouts, even along cofibrations in the larger category of all simplicial complexes, being highly ungeometric. For example the pushout of two $1$-simplices along their boundary is not a $S^1$ as one might hope, but again a $1$-simplex. To obtain the correct geometrical thing, one needs to either pass to a larger combinatorial category, such as simplicial sets or allow for some notion of subdivision in the morphisms, i.e. pass to the piecewise linear setting (with the caveat, that even there not all pushouts do exist).
	In particular, a priori $E_{scx}(-)$, as defined above, neither carries a group structure, nor is it a functor. We circumvent these shortcoming, through artificially introducing subdivisions in the next construction. It might be that if one very carefully substitutes some mapping cylinder arguments through mapping cylinders of simplicial complexes, as used in \cite{whitehead1939simplicial}, such a step might not be necessary.
\end{remark}
\begin{definitionconstruction}\label{conWhSCtoSS}
Let $K$ be a finite simplicial complex. In \ref{ExLotsOfFSAE3} of \Cref{ExLotsOfFSAEs}, we have shown that, for any (F)SAE $f: K \hookrightarrow K'$, $\textnormal{sd}(f)$ is again a (F)SAE. In particular, by \Cref{remElemAreSame}, the induced map of unordered simplicial complexes $\textnormal{sd}(f): \textnormal{sd}(K) \hookrightarrow \textnormal{sd}(K')$ is again a composition of elementary expansions. Hence, $\textnormal{sd}$ induces a well-defined map $$E_{scx}(K) \xrightarrow{\textnormal{sd}} E_{scx}(\textnormal{sd}(K)).$$ Clearly, this is a map of pointed sets. Now, we define $\tilde E(K)$ as the colimit 
\begin{equation}
	E_{scx}(K) \to E_{scx}(\textnormal{sd}(K)) \to E_{scx}(\textnormal{sd}^2(K)) \to ... \to \tilde E(K). 
\end{equation} 
We make the analogous construction for $Wh_\star(X)$, for $X$ some ordering of $K$. We can again consider the colimit $$Wh_\star(X) \xrightarrow{\textnormal{sd}} Wh_\star(\textnormal{sd}(X)) \xrightarrow{\textnormal{sd}} Wh_\star(\textnormal{sd}^2(X)) \to ... \to \varinjlim Wh_\star (\textnormal{sd}^n(X)).$$ 
However, as $\textnormal{l.v.}$ is a simple equivalence (\Cref{propLvtSim}) and by its naturality together with the characterization of functoriality of the Whitehead group on simple morphisms (\Cref{corInvOfSimpleEq}), the subdivision map is given by $\textnormal{l.v.}^{-1}_*$. In particular, it is an isomorphism. Hence, we may identify $Wh_\star(X)$ with the colimit on the right. Now, under this identification, by the universal property of the colimit, we obtain a map of pointed sets $\Phi': \tilde E(K) \to Wh_*(X)$ by mapping $$\big \langle \textnormal{sd}^n(K) \xhookrightarrow{a} K' \big \rangle \longmapsto \big \langle \textnormal{sd}^{n+1}(K) = \textnormal{sd}^{n+1}(X) \xhookrightarrow{\textnormal{sd}(a)} \textnormal{sd}(K') \big \rangle.$$ Note that as subdivisions of a simplicial complex are naturally ordered, the right hand side is in fact a morphism of ordered simplicial complexes, and hence, by the fully faithfull embedding of the latter into simplicial sets, a morphism of simplicial sets. Furthermore, this naturally transfers a group structure onto $\tilde E(K)$ as follows. For two inclusions $a: \textnormal{sd}^n(K) \hookrightarrow K_0$, $b: \textnormal{sd}^m(K) \hookrightarrow K_1$, first pass to a common degree of subdivision, i.e. for the sake of simplicity we assume $n=m=0$. Then, subdivide once and take the pushout in simplicial sets 
\begin{center}
	\begin{tikzcd}
		\textnormal{sd}(X) = \textnormal{sd}(K) \arrow[d, hook, "a"] \arrow[r, hook] & \textnormal{sd}(K_0) \arrow[d, hook] \\
		\textnormal{sd}(K_1) \arrow[r, hook ,"b"] & L
	\end{tikzcd}.
\end{center}
Denote by $d$ the diagonal.
While $L$ might not necessarily be a ordered simplicial complex, by \Cref{lemPushoutofFQS} and \Cref{lemFQSImproves}, $\textnormal{sd}(L)$ in fact is one. Thus, we set $$[\langle a \rangle ] + [\langle b \rangle ]:= [\langle d' \rangle ],$$ where $d'$ is the map of unordered simplicial complexes underlying $\textnormal{sd}(d)$. It is easily verified, using the fact that $\textnormal{sd}$ preserves pushouts in $\textnormal{s\textbf{Set}}$ and the same argument as in \Cref{lemAass}, that this in fact defines an abelian monoid structure on $\tilde E(K)$ and that $\Phi'$ is a monoid homomorphism. Showing that inverses exist is a little more subtle, but will follow from the group structure on $Wh_\star(X)$ in the end.
\end{definitionconstruction}
\begin{lemma}\label{lemPhiPrimeOnto}
	In the setting of \Cref{conWhSCtoSS}, $\Phi'$ is onto.
\end{lemma}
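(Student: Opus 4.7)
The plan is to show that every element of $Wh_\star(X)$ admits a representative arising from an inclusion of unordered simplicial complexes, and then to recognize this representative as lying in the image of $\Phi'$. First, given $\xi \in Wh_\star(X)$, I would apply \Cref{propRepHoClasses} (noting that $\textnormal{sd}_\star = \textnormal{sd}$ for $P = \star$) together with the description of $Wh_\star(X)$ as $\varinjlim Wh_\star(\textnormal{sd}^n(X))$ to find an $n \in \mathbb{N}$ and a weak equivalence $f: \textnormal{sd}^n(X) \to Y$ of finite simplicial sets such that $\xi$ is represented by $\langle f \rangle \in Wh_\star(\textnormal{sd}^n(X))$.

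Next, I would use \Cref{thrmSSvSC} to produce a simple equivalence $\sigma: Y \to Y'$ with $Y'$ a finite FOS-complex. By \Cref{corBasicFormula} and the fact that $\langle \sigma \rangle = 0$, one obtains $\langle \sigma \circ f \rangle = \langle f \rangle$ in $Wh_\star(\textnormal{sd}^n(X))$. Since $\textnormal{sd}^n(X)$ is itself an FOS-complex, the composition $\sigma \circ f$ is a map between FOS-complexes, so I can apply the simplicial mapping cylinder construction from \Cref{conMcx}. This yields a cofibration of FOS-complexes $b: \textnormal{sd}^{n+1}(X) \hookrightarrow M^{cx}_{\sigma \circ f}$ together with a simple equivalence $M^{cx}_{\sigma \circ f} \to \textnormal{sd}(Y')$ whose composition with $b$ is $\textnormal{sd}(\sigma \circ f)$. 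Combining this with the fact that $\textnormal{sd}$ preserves simple equivalences (\Cref{corSDRetainWeakEq}), a two-out-of-three argument gives $\langle b \rangle = \langle \textnormal{sd}(f) \rangle$ in $Wh_\star(\textnormal{sd}^{n+1}(X))$, which represents $\xi$ under the colimit identification.

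Finally, forgetting the orderings on the FOS-complexes (using the inclusion $\textnormal{\textbf{sCplx}}^{\operatorname{o}} \hookrightarrow \textnormal{s\textbf{Set}}$ from \Cref{corEmbedFos}), the cofibration $b$ corresponds to an inclusion of finite simplicial complexes $a: \textnormal{sd}^{n+1}(K) \hookrightarrow K' := M^{cx}_{\sigma \circ f, \mathrm{unord}}$ whose realization is a homotopy equivalence. Thus $\langle a \rangle$ represents a class in $\tilde E(K)$, and by definition of $\Phi'$ one has $\Phi'([\langle a \rangle]) = \langle \textnormal{sd}(a) \rangle$. Since simplicial subdivision of ordered simplicial complexes agrees with simplicial-set subdivision of the associated FOS-complex, $\textnormal{sd}(a)$ coincides with $\textnormal{sd}(b)$ as simplicial set maps, and under the colimit identification its class equals $\xi$.

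The main obstacle here is not conceptual but bookkeeping: one has to track three parallel colimit descriptions (the one defining $\tilde E(K)$ via successive barycentric subdivisions of simplicial complexes, the one identifying $Wh_\star(X)$ with $\varinjlim Wh_\star(\textnormal{sd}^n(X))$ via the isomorphisms $(\textnormal{l.v.})_*^{-1}$, and the compatibility between the two subdivision functors on FOS-complexes), and verify that the element constructed through the mapping-cylinder reduction lands in the correct equivalence class after these identifications.
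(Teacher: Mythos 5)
Your overall route is the paper's: represent the class by a simplicial approximation, replace the target by a finite FOS-complex via \Cref{thrmSSvSC}, turn the resulting map of FOS-complexes into an inclusion using the simplicial mapping cylinder of \Cref{conMcx} (i.e.\ \Cref{corMcxFact}), and then forget the orderings and subdivide once more. Your handling of the two colimit identifications (the $(\textnormal{l.v.})_*^{-1}$ maps on $Wh_\star$ and the subdivision maps defining $\tilde E(K)$) is also correct.

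There is, however, one genuine gap: you invoke \Cref{thrmSSvSC} to ``produce a simple equivalence $\sigma: Y \to Y'$'' and then feed the composite $\sigma \circ f$ into \Cref{conMcx}. That construction requires an honest simplicial map of FOS-complexes, but \Cref{thrmSSvSC} only provides an isomorphism in $\mathcal H\textnormal{s\textbf{Set}}_\star^{fin}$, which by \Cref{propEqCharSimpleEq} is of the form $[t]^{-1}\circ [s]$ for FSAEs $s,t$; since the finite FOS-complex $Y'$ is in general not fibrant, there need not exist any simplicial map $Y \to Y'$ realizing this equivalence, so the composite $\sigma\circ f$ you want to plug into the mapping cylinder does not exist as a map. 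The repair is to reorder the argument as the paper does: first replace the target by an FOS-complex at the level of simple morphism classes in the homotopy category (postcomposition with a simple equivalence does not change the class), and only then apply \Cref{propRepHoClasses}/\Cref{corLVdesTor} to obtain an honest simplicial map $f: \textnormal{sd}^n(X) \to Y'$ landing directly in the FOS-complex; alternatively, approximate $\sigma$ itself by an actual map $\textnormal{sd}^m(Y) \to Y'$ via \Cref{propRepHoClasses} and adjust the subdivision indices accordingly. With that modification the remainder of your argument goes through essentially as written.
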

\begin{proof}
	We start by considering $Wh_*(X)$ from the simple morphism class perspective in \Cref{propCharOfWh}. Let $\langle \alpha: X \to Y \rangle$ be such a simple morphism class in $Wh_\star(X)$. By \Cref{thrmSSvSC}, we may without loss of generality assume that $Y$ is a ordered simplicial complex. By \Cref{corLVdesTor}, for some sufficiently large $n$ and some map $f$ of simplicials set $f: \textnormal{sd}^n(X) \to Y$, we have $$ \textnormal{l.v.}^{n}_* \langle f \rangle = \langle \alpha \rangle.$$ Hence, under the colimit identification of \Cref{conWhSCtoSS}, the two classes agree. Now, $f$ is a map of ordered simplicial complexes. Again, under the colimit identification, $$\langle f \rangle = \langle \textnormal{sd}(f) \rangle .$$ The latter however, by \Cref{corFac}, has the simple morphism class of some inclusion of ordered simplicial complexes $$\langle f' \rangle. $$ Now, finally forgetting about the order structure and then subdividing, this clearly lies in the image of $\Phi'$.
\end{proof}
We now construct the natural transformation in \Cref{thrmOldNewAgree}. 
\begin{definitionconstruction}\label{conWhSSvCW}
	Clearly,  the requirement on Whitehead torsions in \Cref{thrmOldNewAgree} already uniquely determines a map. Now, to see this is well-defined, 
	first note that as in the filtered simplicial set setting, where the Whitehead group can be thought of as simple morphism classes of stratum preserving maps (\Cref{propCharOfWh}), the classical Whitehead group can also be thought of as homotopy classes of maps of finite CW-complexes with a fixed source, modulo postcomposition with simple equivalences. For a source, see for example \cite[Sec. 6]{eckmann2006}, with the caveat that one needs to use the amended axioms of \Cref{subsecEckSiebApp}, for this to be correct. As clearly the realization of an elementary expansion of simplicial sets gives an elementary expansion of CW-complexes, passing from simple morphism classes in the $\textnormal{s\textbf{Set}}$-setting to simple morphism classes in the CW-setting by realization defines a map $Wh_\star(X) \to Wh(|X|)$ that fulfills the torsion condition (for $X \in \textnormal{s\textbf{Set}}_\star$). To see this in fact defines a natural transformation of groups, note that under the equivalence classes of inclusion of subsets (complexes) (see \cite[\paragraphmark 6]{cohenCourse} for the CW-case), this map simply corresponds to $$\langle a \rangle \mapsto \langle |a| \rangle.$$ In particular, as functoriality and addition in both settings is defined via pushouts, and $|-|$ preserves pushouts, this defines a natural transformation of abelian group valued functors
	\begin{align*}
		\Phi: Wh_*(X) \longrightarrow Wh(|X|).
	\end{align*}
\end{definitionconstruction}
\begin{lemma}\label{lemPhiOnto}
	$\Phi$ as in \Cref{conWhSSvCW} is onto.
\end{lemma}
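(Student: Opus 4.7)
The plan is to deduce surjectivity of $\Phi$ from the surjectivity of $\Phi'$ established in \Cref{lemPhiPrimeOnto}, using a classical description of the Whitehead group of a polyhedron via simplicial inclusions. Since $\Phi$ is a natural transformation of functors and every finite filtered simplicial set is simply equivalent to a finite ordered simplicial complex by \Cref{thrmSSvSC}, it suffices to treat the case where $X$ is (the simplicial set underlying) a finite ordered simplicial complex, and we let $K$ denote the corresponding unordered simplicial complex.

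Under these assumptions, consider the composition $\Psi := \Phi \circ \Phi': \tilde E(K) \to Wh(|X|)$. Unwinding the definitions of $\Phi'$ and $\Phi$, this composition sends the class of an inclusion of unordered finite simplicial complexes $a: \textnormal{sd}^n(K) \hookrightarrow K'$ to $\langle |a| \rangle \in Wh(|X|)$, under the canonical identification $|\textnormal{sd}^n(K)| \cong |K| = |X|$ (and possibly a further subdivision that is a classical simple equivalence, hence does not affect the class). Since $\Phi'$ is onto by \Cref{lemPhiPrimeOnto}, it suffices to prove that $\Psi$ is onto.

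To show this, I would take an arbitrary class $\langle h \rangle \in Wh(|X|)$, represented by a homotopy equivalence $h: |X| \to L$ with $L$ a finite CW-complex. By the classical result \cite[Prop. 7.2]{cohenCourse} we may, up to simple equivalence, replace $L$ with the realization of a finite simplicial complex, so assume $L = |K'|$. Invoking the classical (unfiltered) simplicial approximation theorem, there exist $n \in \mathbb{N}$ and a simplicial map $f: \textnormal{sd}^n(K) \to K'$ whose realization is homotopic to $h$ under the canonical homeomorphism $|\textnormal{sd}^n(K)| \cong |X|$. Factoring $f$ through its classical simplicial mapping cylinder $M^{scx}_f$ in the sense of Whitehead \cite{whitehead1939simplicial}, we obtain an inclusion $a: \textnormal{sd}^n(K) \hookrightarrow M^{scx}_f$ together with a collapse $M^{scx}_f \to K'$ that is a classical simple equivalence, so $|a|$ and $h$ define the same class in $Wh(|X|)$. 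This exhibits $\langle h \rangle$ as the image under $\Psi$ of the class of $a$ in $\tilde E(K)$.

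The main technical burden is bookkeeping: one must verify that each of the constructions (classical simplicial approximation, simplicial mapping cylinder, and the auxiliary subdivisions built into the definition of $\Phi'$) preserves the Whitehead class along the chain $\tilde E(K) \to Wh_*(X) \to Wh(|X|)$. Most of this is standard in the classical theory — in particular, the fact that the simplicial mapping cylinder collapse is a classical simple equivalence, and that $\textnormal{sd}$-subdivisions lie in the saturated class generating the simple equivalences — but the cross-check that $\Phi \circ \Phi'$ really agrees with the naive map $[\langle a \rangle] \mapsto \langle |a| \rangle$ needs to be traced carefully through the colimit definition of $\tilde E(K)$ and through the identification of simplicial complexes with their associated ordered simplicial sets via \Cref{corEmbedFos}.
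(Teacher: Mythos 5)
Your argument is correct, but it takes a different route from the paper. The paper proves surjectivity of $\Phi$ directly: using \Cref{propCharOfWh} (and its CW analogue from \Cref{conWhSSvCW}), both sides are viewed as simple morphism classes of maps out of a fixed source, $\Phi$ just realizes such a class, and since $\mathcal H|\textnormal{s\textbf{Set}}^{fin}_\star| \to \mathcal H\textnormal{\textbf{CW}}^{fin}$ is fully faithful, every class of a map $|X| \to |Y|$ with simplicial target is hit; by \Cref{propCWsimSC} (plus choosing an ordering and \Cref{corEmbedFos}) every CW class has such a representative. You instead prove that the composite $\Phi'' = \Phi \circ \Phi'$ is onto, by running the classical simplicial approximation theorem, Whitehead's simplicial mapping cylinder, and \Cref{propCWsimSC} by hand, and then conclude surjectivity of $\Phi$. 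Two remarks: first, your appeal to \Cref{lemPhiPrimeOnto} is logically superfluous — surjectivity of the composite alone forces surjectivity of the outer map $\Phi$, whatever $\Phi'$ does; second, the explicit description of $\Phi \circ \Phi'$ that your bookkeeping requires is exactly \Cref{lemRepPhiprimeprime}, which the paper proves only after this lemma (there is no circularity, since its proof is independent of surjectivity of $\Phi$, but you are re-deriving material the paper keeps separate). What your route buys is a constructive preimage and the stronger statement that $\Phi''$ itself is onto; what the paper's route buys is brevity — it stays at the level of homotopy categories and full faithfulness, so the simplicial-approximation and mapping-cylinder work is not repeated here but is concentrated in \Cref{lemPhiPrimeOnto} and \Cref{lemRepPhiprimeprime}. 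Your initial reduction to the case of an ordered simplicial complex via \Cref{thrmSSvSC} and naturality of $\Phi$ is sound (and is needed for your route, since $\Phi'$ and $\tilde E(K)$ are only defined there), whereas the paper's argument needs no such reduction.
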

\begin{proof}
This time, we take the perspective of simple morphism clases of morphisms in $\mathcal H|\textnormal{s\textbf{Set}}_\star^{fin}|$ and $\mathcal H \textnormal{\textbf{CW}}^{fin}$ respectively. For $Wh_\star$, this is given by \Cref{propCharOfWh}. Now, with respect to these identifications, a simple morphism class $\langle \phi \rangle $ is mapped to the simple morphism class of the underlying map of topological spaces. In particular, as $\mathcal H|\textnormal{s\textbf{Set}}^{fin}_\star| \to \mathcal H\textnormal{\textbf{CW}}^{fin}$ is fully faithful, the simple morphism class of every arrow $|X| \to T$ with target $T= |Y|$, the realization of a finite simplicial set, is met. But by \Cref{propCWsimSC}, the fact that every finite CW-complex has the simple homotopy type of a simplicial complex (and taking some ordering on this complex), and the embedding of ordered simplicial complexes into simplicial sets (\Cref{conEmbedFOS}), this is the case for every simple morphism class in the CW-setting.
\end{proof}
\begin{lemma}\label{lemRepPhiprimeprime}
In the setting of \Cref{conWhSCtoSS} and \Cref{conWhSSvCW}, denote by $\Phi''$ the composition $$\tilde E(K) \xrightarrow{\Phi'} Wh_\star(X) \xrightarrow{\Phi} Wh(|X|).$$ Then this is explicitly given by $$ \big \langle \textnormal{sd}^n(K) \xhookrightarrow{a} K' \big \rangle \longmapsto \big \langle |X| = |K| \xrightarrow{\sim} |\textnormal{sd}^n(K)| \xhookrightarrow{|a|} |K'| \big \rangle = |\textnormal{l.v.}^n|_*\big( \langle |a| \rangle \big) .$$ 
\end{lemma}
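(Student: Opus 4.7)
The plan is to trace both maps through their constructions, then to use naturality of the last vertex map to cancel one level of subdivision, and finally to exploit the fact that realizations of simple equivalences of simplicial sets remain simple equivalences of CW-complexes. First I will unravel $\Phi'(\langle a\rangle)$: by \Cref{conWhSCtoSS} it is represented (at level $n+1$) by $\langle\textnormal{sd}(a)\rangle\in Wh_\star(\textnormal{sd}^{n+1}(X))$, and since the colimit identification $Wh_\star(X)\cong\varinjlim_m Wh_\star(\textnormal{sd}^m(X))$ uses the structure maps $(\textnormal{l.v.})_*^{-1}$, the corresponding element of $Wh_\star(X)$ is $(\textnormal{l.v.}^{n+1})_*\langle\textnormal{sd}(a)\rangle$. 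Because $\textnormal{l.v.}^{n+1}$ is a simple equivalence (\Cref{propLvtSim}), \Cref{corInvOfSimpleEq} identifies this with the simple morphism class, in $\mathcal H\textnormal{s\textbf{Set}}_\star$ and with source $X$, of $[\textnormal{sd}(a)]\circ[\textnormal{l.v.}^{n+1}]^{-1}$. Applying $\Phi$, which by its construction in \Cref{conWhSSvCW} sends such a simple morphism class to the simple morphism class of the realization, then yields
$$\Phi''(\langle a\rangle)=\big\langle [|\textnormal{sd}(a)|]\circ[|\textnormal{l.v.}^{n+1}|]^{-1}\big\rangle\in Wh(|X|).$$

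Next I will apply naturality of $\textnormal{l.v.}$ to the commutative square $\textnormal{l.v.}\circ\textnormal{sd}(a)=a\circ\textnormal{l.v.}$ and realize it, obtaining
$$[|\textnormal{sd}(a)|]\circ[|\textnormal{l.v.}^{n+1}|]^{-1}=[|\textnormal{l.v.}|]^{-1}\circ[|a|]\circ[|\textnormal{l.v.}^n|]^{-1}$$
in $\mathcal H\textnormal{\textbf{CW}}^{fin}$. Now $|\textnormal{l.v.}|:|\textnormal{sd}(K')|\to|K'|$ is the realization of a simple equivalence of simplicial sets (\Cref{propLvtSim}), and $\Phi$ sends simple equivalences to simple equivalences by construction, so precomposing with $[|\textnormal{l.v.}|]^{-1}$ does not alter the simple morphism class. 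Representing $[|\textnormal{l.v.}^n|]^{-1}$ by the classical subdivision homeomorphism $|K|\xrightarrow{\sim}|\textnormal{sd}^n(K)|$, which is homotopic to $|\textnormal{l.v.}^n|$ by \Cref{propHomoFilteredSubd}, then gives the first expression in the lemma. The identification with $|\textnormal{l.v.}^n|_*(\langle|a|\rangle)$ is immediate from the CW analogue of \Cref{corInvOfSimpleEq}, whose proof transcribes verbatim since $|\textnormal{l.v.}^n|$ is again a simple equivalence in $\mathcal H\textnormal{\textbf{CW}}^{fin}$.

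The hard part is not conceptual but bookkeeping: the argument hinges on correctly tracking which space serves as the source of each Whitehead group as we pass through the colimit identification, through $\Phi$, and through pre- and postcomposition with various last-vertex maps, and on verifying at each step that every manipulation preserves the relevant simple morphism class. No new ingredient beyond \Cref{propHomoFilteredSubd}, \Cref{propLvtSim}, \Cref{corInvOfSimpleEq} (and its straightforward CW analogue) is required.
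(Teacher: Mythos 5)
Your proposal is correct and follows essentially the same route as the paper: both unravel $\Phi'(\langle a\rangle)$ as $(\textnormal{l.v.}^{n+1})_*\langle\textnormal{sd}(a)\rangle$, invoke \Cref{propLvtSim}, \Cref{propHomoFilteredSubd} and \Cref{corInvOfSimpleEq} (plus its CW analogue), and cancel the extra level of subdivision via naturality of the last vertex map resp.\ of the subdivision homeomorphisms -- the only difference being that you perform this cancellation on the simplicial side before applying $\Phi$, while the paper does it inside $Wh(|X|)$ after applying $\Phi$. One cosmetic point: since the factor $[|\textnormal{l.v.}|]^{-1}$ sits on the left of the composite, dropping it is \emph{post}composition with a simple equivalence (which is exactly what the definition of simple morphism class allows), not precomposition.
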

\begin{proof}
	By construction, $\Phi'(\langle a \rangle )$ for $a$ as above, is given by $$(\textnormal{l.v.}^{n+1})_*(\langle \textnormal{sd}(a) \rangle).$$ Hence, under $\Phi$ this maps to ${\big (|\textnormal{l.v.}^{n+1}|\big )_*}(|\textnormal{sd}(a)|)$. Now, $|\textnormal{l.v.}^{n+1}|$ is the realization of a simple equivalence (\Cref{propLvtSim}), and in particular a simple equivalence of CW-complexes. Let $l$ be a cellular homotopy inverse to it. Then as $|\textnormal{l.v.}^{n+1}|$ is homotopic to the sudivision homeomorphism $|\textnormal{sd}^{n+1}(K)| \xrightarrow{\sim} |K|$, a homotopy inverse to $l$ is homotopic to the inverse of the latter and hence, the latter is also a simple homotopy equivalence. Now, applying the analogue to \Cref{corInvOfSimpleEq} for the cellular Whitehead group (see \cite[Prop. 22.4]{cohenCourse}) we obtain:
	\begin{align*}
		\Phi'' \big( \textnormal{l.v.}^{n+1}_*(\langle \textnormal{sd}(a) \rangle ) \big) &= l_*^{-1} \big( \langle |\textnormal{sd}(a)| \rangle \big) \\
		&= \Big\langle |K| \xrightarrow{\sim} |\textnormal{sd}^{n+1}(K)| \xhookrightarrow{|\textnormal{sd}(a)|} |\textnormal{sd}(L)|\big) \Big \rangle \\
		&= \Big\langle |K| \xrightarrow{\sim} |\textnormal{sd}^{n+1}(K)| \xhookrightarrow{|\textnormal{sd}(a)|} |\textnormal{sd}(L)| \xrightarrow{\sim} |L| \big) \Big \rangle \\
		&= \Big\langle |X| = |K| \xrightarrow{\sim} |\textnormal{sd}^{n}(K)| \xhookrightarrow{|a|} |(L)| \Big \rangle \\
		&= |\textnormal{l.v.}^n|_*\big( \langle |a| \rangle \big) 
	\end{align*} 
	where the third equality follows again from the fact that the subdivision isomorphisms are simple equivalences, the fourth from their naturality and the final one again from an appeal to the CW-analogue of \Cref{corInvOfSimpleEq}.
\end{proof}
%
Next, we show $\Phi$ is injective. For this, we use a somewhat well known, but seemingly badly documented result on the relationship between simple equivalences in the simplicial complex and the cellular sense.
\begin{proposition}\label{propKerSCvCW}
	Let $K$ be a finite simplicial complex. Using the notation from \Cref{conWhSCtoSS} and \Cref{lemRepPhiprimeprime},
	the kernel (in the sense of pointed sets) of the map $E_{scx}(K) \to \tilde{E}(K) \xrightarrow{\Phi''} Wh(|K|)$ is trivial.
\end{proposition}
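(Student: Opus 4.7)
The plan is to reduce to the classical simplicial simple homotopy theorem of Whitehead. First I would unpack the hypothesis using \Cref{lemRepPhiprimeprime}: for $a\colon K\hookrightarrow K'$ a cofibration of finite simplicial complexes representing $\langle a\rangle\in E_{scx}(K)$, the composition $E_{scx}(K)\to\tilde E(K)\xrightarrow{\Phi''}Wh(|K|)$ sends $\langle a\rangle$ to $\langle |a|\rangle$. Assuming this element vanishes, $|a|$ is a CW-simple homotopy equivalence (in the classical sense of \cite{cohenCourse}).

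The goal is then to produce a commutative diagram in $\textnormal{\textbf{sCplx}}$
\begin{center}
\begin{tikzcd}
K \arrow[r, "a", hook] \arrow[rd, hook, "b"'] & K' \arrow[d, hook, "c"] \\
& L
\end{tikzcd}
\end{center}
in which both $b$ and $c$ are compositions of elementary simplicial expansions (in particular, $L$ is a finite simplicial complex). Given such a diagram, $a\sim_e 1_K$ in $E_{scx}(K)$ is immediate: post-composing $a$ with the expansion $c$ yields $b$, which is itself a composition of elementary expansions on $K$, while post-composing $1_K$ with $b$ also yields $b$. Thus $\langle a\rangle = \langle b\rangle = \langle 1_K\rangle$ as required.

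The existence of such an $L$ is the content of Whitehead's original simplicial simple homotopy theorem from \cite{whitehead1939simplicial} (see also \cite[Thm.~25.1]{cohenCourse}): a CW-simple equivalence between the realizations of finite simplicial complexes can be represented by a finite formal deformation of simplicial expansions and collapses, and any such formal deformation can be rearranged, via a standard reshuffling argument, into a pair of simplicial expansions sharing a common target. Applying this to $a$ directly yields the diagram above.

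The main obstacle is precisely this appeal to Whitehead's theorem, since a naive translation of the CW formal deformation to simplicial complexes typically requires passing to subdivisions, which is not allowed inside $E_{scx}(K)$ itself (only in $\tilde E(K)$). A self-contained proof in the language of this thesis would proceed by a variant of the simplicial mapping cylinder construction of \Cref{conMcx} (adapted to unordered simplicial complexes): after choosing a compatible ordering of $K$ and $K'$, one shows that the cylinder $M^{cx}_a$ expands onto $K'$ and, using that $|a|$ has vanishing torsion, that it also expands onto $K$ through $a$. Setting $L := M^{cx}_a$ then produces the desired diagram. The technical heart is verifying that the torsion-vanishing hypothesis yields an expansion $K\hookrightarrow M^{cx}_a$ whose pairing can be made regular in the sense of \Cref{propEqCharSaeTot}, paralleling \cite[Ch.~5]{cohenCourse} but keeping track that the construction stays within unsubdivided simplicial complexes.
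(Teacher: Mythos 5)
Your proof takes essentially the same route as the paper: both reduce the statement to Whitehead's original simplicial theory in \cite{whitehead1939simplicial}. The paper's proof explicitly appeals to \cite[Thm.~20]{whitehead1939simplicial}, claiming that Whitehead already showed an inclusion of finite simplicial complexes $K \hookrightarrow K'$ belongs to $\langle 1_K \rangle$ (in the pure simplicial-complex sense, no subdivision) whenever a certain algebraic class element in $Wh(\pi_1(|K|))$ vanishes, and then identifies that element with $\Phi''(\langle a \rangle)$ via \Cref{lemRepPhiprimeprime} and the natural isomorphism $Wh(|K|) \cong Wh(\pi_1(|K|))$ (assuming $|K|$ connected without loss of generality). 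Your version frames the goal more geometrically — produce $L$ together with expansions $b: K \hookrightarrow L$ and $c: K' \hookrightarrow L$ with $c \circ a = b$ — which is a correct sufficient condition, and the paper's appeal to Whitehead delivers precisely that.

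The concern you raise about subdivisions is well-taken but framed slightly off-target. Whitehead's 1939 theory operates directly with simplicial expansions and contractions of simplicial complexes, not with a CW theory that one then ``translates'' back. So the danger is not that one is importing a CW formal deformation and losing control of the triangulation; it is rather the one you gesture at indirectly: the ``reshuffling'' of a zigzag of expansions and collapses into a roof of two expansions involves pushout-type manipulations, and as \Cref{remPushoutInSim} notes, pushouts in $\textnormal{\textbf{sCplx}}$ do not behave as one would hope. Whitehead's original paper circumvents this via hand-built simplicial mapping cylinders — the exact device the paper mentions in \Cref{remPushoutInSim} and that you independently propose in your final paragraph. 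So your proposed alternative route via $M^{cx}_a$ is in spirit what Whitehead actually did, and it is a reasonable plan; but as stated it remains a sketch, and the technical heart (that vanishing torsion produces a regular pairing, hence an expansion, of $K \hookrightarrow M^{cx}_a$ without leaving the unsubdivided simplicial-complex world) would need to be carried out. The paper sidesteps this by deferring to Whitehead's proof wholesale, which is a legitimate but equally non-self-contained choice.
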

\begin{proof}
	A complete proof would essentially come down to replicating much of what Whitehead did in \cite{whitehead1939simplicial}, in the original simplicial setting. We instead refer there for most of the details. Without loss of generality, we may clearly assume $|K|$ to be connected. In the proof of \cite[Thm. 20]{whitehead1939simplicial} it is effectively shown that an inclusion of finite simplicial complexes $K \hookrightarrow K'$ belongs to $\langle 1_K \rangle $ if a certain class element associated to it in the algebraic Whitehead group of $\pi_1(|K|)$, $Wh(\pi_1(|K|))$, disappears. The proof back then essentially used the same method as in the cellular setting, but without the availability of the language of CW-complexes. If one explicitly tracks down the construction there, and uses \Cref{lemRepPhiprimeprime}, one sees that the element is precisely the one given by first applying $\Phi''$ and then using the natural isomorphism $Wh(|K|) \cong Wh(\pi_1(|K|))$, constructed for example in \cite[\paragraphmark 21]{cohenCourse}. Hence, the statement.
\end{proof}
As a corollary we obtain (using that $\textnormal{l.v.}$ is an isomorphism): 
\begin{corollary}\label{corPhiPrimPrimeInj}
	$\Phi''$ from \Cref{lemRepPhiprimeprime} has trivial kernel.
\end{corollary}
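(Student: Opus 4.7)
The plan is to reduce $\Phi''$ to $\Phi$ on each stage of the colimit, and then invoke \Cref{propKerSCvCW} after a cofinality argument. Concretely, an arbitrary element $\xi \in \tilde E(K)$ is represented by an inclusion $a : \textnormal{sd}^n(K) \hookrightarrow K'$ of finite simplicial complexes whose realization is a homotopy equivalence, for some $n \in \mathbb N$; write $\xi = [\langle a \rangle]$ for its class in the colimit. Assume $\Phi''(\xi) = 0$ in $Wh(|K|)$. By the explicit description in \Cref{lemRepPhiprimeprime}, this reads
\begin{equation*}
|\textnormal{l.v.}^n|_*\bigl(\langle |a|\rangle\bigr) = 0 \quad \text{in } Wh(|K|).
\end{equation*}

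Next I would use that $|\textnormal{l.v.}^n| : |\textnormal{sd}^n(K)| \to |K|$ is a homotopy equivalence of CW-complexes (in fact a simple equivalence by \Cref{propLvtSim}), so the induced map $|\textnormal{l.v.}^n|_* : Wh(|\textnormal{sd}^n(K)|) \to Wh(|K|)$ is an isomorphism of abelian groups. Consequently $\langle |a|\rangle = 0$ in $Wh(|\textnormal{sd}^n(K)|)$. Now apply \Cref{propKerSCvCW} with $\textnormal{sd}^n(K)$ in place of $K$: the kernel of
\begin{equation*}
E_{scx}(\textnormal{sd}^n(K)) \longrightarrow \tilde E(\textnormal{sd}^n(K)) \xrightarrow{\Phi''} Wh(|\textnormal{sd}^n(K)|)
\end{equation*}
is trivial, hence $\langle a\rangle$ already becomes trivial in $\tilde E(\textnormal{sd}^n(K))$.

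To finish, one observes that $\tilde E(\textnormal{sd}^n(K))$ is by definition the colimit of $E_{scx}(\textnormal{sd}^n(K)) \to E_{scx}(\textnormal{sd}^{n+1}(K)) \to \ldots$, which is a cofinal subsystem of the one defining $\tilde E(K)$. Therefore the canonical comparison map $\tilde E(\textnormal{sd}^n(K)) \to \tilde E(K)$ is an isomorphism of pointed sets, and by construction it sends the class of $\langle a\rangle$ to $\xi$. Since that class is trivial, $\xi = 0$, as desired. The main thing to verify — and essentially the only mildly subtle point — is that the cofinal colimit identification is compatible with the explicit formula for $\Phi''$ so that the reduction at level $n$ genuinely pulls back the vanishing from $Wh(|K|)$ to $Wh(|\textnormal{sd}^n(K)|)$; this follows directly from the naturality of $\textnormal{l.v.}$ together with the identification of the subdivision transition map with $\textnormal{l.v.}^{-1}_*$ used in \Cref{conWhSCtoSS}.
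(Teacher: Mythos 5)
Your argument is correct and is essentially the paper's (largely implicit) one: represent the class at a finite stage $E_{scx}(\textnormal{sd}^n(K))$, use the formula of \Cref{lemRepPhiprimeprime} together with the fact that $|\textnormal{l.v.}^n|_*$ is an isomorphism of Whitehead groups to transport the vanishing to $Wh(|\textnormal{sd}^n(K)|)$, and then apply \Cref{propKerSCvCW} to $\textnormal{sd}^n(K)$. The detour through the identification $\tilde E(\textnormal{sd}^n(K)) \cong \tilde E(K)$ is harmless but unnecessary, since triviality of the kernel in \Cref{propKerSCvCW} already forces $\langle a \rangle = \langle 1 \rangle$ in $E_{scx}(\textnormal{sd}^n(K))$, whose pointed map to $\tilde E(K)$ then kills $\xi$ directly.
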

We can now comple the proof of \Cref{thrmOldNewAgree}.
\begin{corollary}
	Let $\Phi$ be as in \Cref{conWhSSvCW}, $\Phi'$ as in \Cref{conWhSCtoSS} and $\Phi''$ as in \Cref{lemRepPhiprimeprime}. Then all of them are isomorphisms of abelian monoids. In particular, $\tilde E_{scx}(K)$ is an abelian group and \Cref{thrmOldNewAgree} holds.
\end{corollary}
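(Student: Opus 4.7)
The plan is to extract the three isomorphism claims in sequence from what has already been established: $\Phi'$ and $\Phi$ are surjective (Lemmas \ref{lemPhiPrimeOnto} and \ref{lemPhiOnto}), and their composite $\Phi'' = \Phi \circ \Phi'$ has trivial kernel (Corollary \ref{corPhiPrimPrimeInj}). First I would verify that $\Phi : Wh_\star(X) \to Wh(|X|)$ is an isomorphism of abelian groups. Since both source and target are already known to be groups and $\Phi$ is a surjective group homomorphism, it suffices to check that $\ker \Phi$ is trivial: given $x \in Wh_\star(X)$ with $\Phi(x) = 0$, surjectivity of $\Phi'$ provides $y \in \tilde E(K)$ with $\Phi'(y) = x$, whence $\Phi''(y) = \Phi(x) = 0$ and triviality of $\ker \Phi''$ forces $y = 0$, giving $x = 0$.

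Next I would promote $\tilde E(K)$ from an abelian monoid to an abelian group, filling in the gap flagged in \Cref{conWhSCtoSS}. Given $b \in \tilde E(K)$, surjectivity of $\Phi''$ (as a composition of surjections) produces some $c \in \tilde E(K)$ with $\Phi''(c) = -\Phi''(b)$, computed inside the group $Wh(|K|)$; then $\Phi''(b + c) = 0$, and the triviality of $\ker \Phi''$ forces $b + c = \langle 1_K \rangle$ in $\tilde E(K)$, exhibiting $c$ as an additive inverse for $b$. With $\tilde E(K)$ now a group, $\Phi''$ becomes a homomorphism of abelian groups which is both surjective and has trivial kernel, hence an isomorphism. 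Consequently $\Phi' = \Phi^{-1} \circ \Phi''$ is a composition of group isomorphisms, so $\Phi'$ is an isomorphism as well.

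The remaining assertions of \Cref{thrmOldNewAgree} --- the naturality of the isomorphism and its characterization by $\tau_\star(f) \mapsto \tau(f)$ --- were already built into \Cref{conWhSSvCW}, where $\Phi$ was produced as a natural transformation of abelian-group-valued functors that by design sends the simple morphism class of a simplicial map to that of its realization; uniqueness of such a natural transformation is automatic from the fact that every element of $Wh_\star(X)$ is represented by a Whitehead torsion of some weak equivalence. The main substantive obstacle has therefore been cleared earlier, in \Cref{propKerSCvCW}, which via Whitehead's original algebraic description of simple equivalences of finite simplicial complexes from \cite{whitehead1939simplicial} reduces triviality of $\ker \Phi''$ to a classical fact; once that input is available, the present corollary is a purely formal diagram chase of the kind sketched above.
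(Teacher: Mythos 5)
Your argument is correct and is essentially the paper's own proof: both rest on the same three inputs (surjectivity of $\Phi'$ and $\Phi$, triviality of $\ker\Phi''$) together with $\Phi''=\Phi\circ\Phi'$, and differ only in the order of deductions. Where the paper first concludes that $\Phi'$ is an isomorphism by citing the general fact that a surjective monoid homomorphism onto a group with trivial kernel is an isomorphism, you instead prove $\Phi$ is injective first and then exhibit inverses in $\tilde E(K)$ explicitly --- which is exactly the proof of that general fact, so the content is the same.
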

\begin{proof}
	This is just putting together what we already know. We have $$\Phi'' = \Phi \circ \Phi'.$$ By \Cref{lemPhiPrimeOnto}, $\Phi'$ is onto. By \Cref{corPhiPrimPrimeInj}, $\Phi''$ and hence also $\Phi'$ has trivial kernel. In particular, $\Phi'$ is an map from an abelian monoid into an abelian group that is onto and has trivial kernel. Such a map is always an isomorphism. Hence, $\Phi$ is injective. By \Cref{lemPhiOnto} it is also onto, i.e. also an isomorphism. Thus, finally the same holds for $\Phi''$.
\end{proof}

\bibliographystyle{alpha}
\bibliography{TowSimStratHo}
\begin{appendices}
\chapter{Appendix}
\subsection{A result on pullbacks of colimits}
The following result on base changes is useful for local constructions on certain subspaces of realizations of filtered simplicial sets. Recall that limits in the category of $\Delta$-generated spaces are taken by taking limits in the naive category of topological spaces (i.e. arbitrary ones) and then applying $k_{\Delta}$, i.e. putting the final topology with respect to simplices on the limit (see \cite{duggerDelta}).
\begin{proposition}\label{AppPropPullback}
	Let $X \in \textnormal{s\textbf{Set}}$ be a locally finite simplicial set, together with a map $|X| \xrightarrow{\varphi_X} A$ in $\textnormal{\textbf{Top}}$. Let $B \xrightarrow{f} A$ be another map in $\textnormal{\textbf{Top}}$. Let $|\Delta_i|$ be the diagram, given by the realizations of the non-degenerate simplices of $X$. Further, denote by $f^*(|X|)$ the total space of the base change of $\varphi_X$ along $f$ and by $f^*(|\Delta_i|)$ the total space of the base change of $|\Delta_i| \to X \to A$ along $f$. Consider the commutative diagram induced by the universal property of the pullback and colimit:
	$$ \begin{tikzcd}
	\varinjlim(f^*(|\Delta_i|)) \arrow[rr] \arrow[rd] & & f^*(|X|) \arrow[ld] \\
	&B&
	\end{tikzcd}.$$
	Then the horizontal map is a homeomorphism. Furthermore, if $f$ is such that, for each non-degenerate simplex of $X$, $\Delta_i$, the fiber product space in the naive category of topological space (that is topological spaces that are not necessarily $\Delta$-generated) of $|\Delta^i| \to |X| \to A$ and $f$ is a $\Delta$-generated space, then so is the fiber product space in the naive category of topological space of $f$ and $\varphi_X$, denoted $f^*(|X|)^{naive}$. In particular, then, $$f^*(|X|)^{naive} = f^*(|X|).$$ 
\end{proposition}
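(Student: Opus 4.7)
The plan is to work on two levels: underlying sets and topologies. On underlying sets, both $\varinjlim f^*(|\Delta_i|)$ and $f^*(|X|)$ coincide with the fibre product $\{(x,b) \in |X| \times B \mid \varphi_X(x) = f(b)\}$ in $\textbf{Set}$. This uses that $|X| = \varinjlim |\Delta_i|$ in $\textbf{Set}$ and that the functor $- \times_A B$ preserves all colimits in the first argument in $\textbf{Set}$. The universal property of the colimit thus presents the canonical comparison map $g$ as a continuous bijection, and the task reduces to showing $g$ is a homeomorphism. Next, both sides are $\Delta$-generated: the left as a colimit in $\textnormal{\textbf{Top}}$ of $\Delta$-generated spaces, the right by construction of limits in the $\Delta$-generated category as $k_\Delta$ applied to the naive limit. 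Hence both topologies are characterised by their simplicial probes, so continuity of $g^{-1}$ reduces to the following: for every probe $\tau = (\sigma, \beta) : |\Delta^n| \to f^*(|X|)$ and every subset $U$ of the common underlying set whose preimage is open in each $f^*(|\Delta_i|)$, we must show $\tau^{-1}(U)$ is open in $|\Delta^n|$.

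This is where the key compactness and subdivision argument enters. By local finiteness of $X$, the image of $\sigma$ lies in a finite subcomplex $|X'|$ of $|X|$. Applying Lebesgue's lemma to the $\sigma$-pullback of the open cover of $|X'|$ by open stars of vertices, and then sharpening (by a classical simplicial-approximation-style refinement, or by inducting on the number of non-degenerate simplices of $|X'|$), one obtains a sufficiently fine iterated barycentric subdivision of $|\Delta^n|$ such that each closed simplex $|\tau'|$ of the subdivision maps under $\sigma$ into a single closed simplex $|\Delta_{i(\tau')}|$ of $|X|$. Then $\tau|_{|\tau'|}$ factors through $f^*(|\Delta_{i(\tau')}|)$; as the latter is $\Delta$-generated and $|\tau'|$ is itself a simplex, $(\tau|_{|\tau'|})^{-1}(U)$ is open in $|\tau'|$. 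Since $|\Delta^n|$ carries the weak topology with respect to its subdivision (being a finite CW-complex), these local openness facts assemble to openness of $\tau^{-1}(U)$ in $|\Delta^n|$, finishing the homeomorphism claim.

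The main obstacle is precisely the subdivision refinement: a naive Lebesgue-number argument only forces each closed simplex of the subdivision to land in a closed star, which is a union of closed simplices rather than a single one, and sharpening this to a single $|\Delta_i|$ requires either a secondary combinatorial refinement analogous to what appears in the classical simplicial approximation theorem (cf.\ \Cref{subsecProofofSimApp}), or, more robustly, a separate inductive reduction to a finite subcomplex via pushout-along-cofibration arguments exploiting that $|X'|$ admits a filtration by CW-inclusions.

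For the final additional statement, if each naive fibre product $|\Delta_i| \times_A^{naive} B$ is already $\Delta$-generated, then it coincides with $f^*(|\Delta_i|)$; combining with the homeomorphism part, $f^*(|X|)$ is identified with the colimit in $\textnormal{\textbf{Top}}$ of the $|\Delta_i| \times_A^{naive} B$. One then checks that this colimit topology on the common underlying set agrees with the subspace topology inherited from $|X| \times B$: the continuous bijection $\varinjlim (|\Delta_i| \times_A^{naive} B) \to f^*(|X|)^{naive}$ has the same homeomorphism proof via simplicial probes (since simplices into $|X| \times B$ are pairs $(\sigma, \beta)$ of simplices into the factors). This yields that $f^*(|X|)^{naive}$ is $\Delta$-generated and equal to $f^*(|X|)$, as asserted.
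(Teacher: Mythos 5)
Your reduction to probes is fine as far as it goes, but the step your whole argument rests on is false: for an arbitrary continuous probe $\sigma:|\Delta^n|\to|X|$ there is in general \emph{no} subdivision of $|\Delta^n|$ whose closed simplices are carried by $\sigma$ itself into single closed simplices of $|X|$. Take $|X|=[0,2]$ glued from the two $1$-simplices $[0,1]$ and $[1,2]$, and $\sigma:[0,1]\to[0,2]$, $\sigma(t)=1+\tfrac{t}{2}\sin(1/t)$ for $t>0$, $\sigma(0)=1$: every neighbourhood of $0$ meets both open $1$-simplices, so the subdivision simplex containing $0$ never lands in a single closed simplex. A Lebesgue-number argument only puts subdivision simplices into open stars, and the simplicial-approximation machinery you point to repairs this by \emph{changing the map up to homotopy}, which is useless here because you need openness of $\tau^{-1}(U)$ for the actual probe $\tau$. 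So the factorization of $\tau|_{|\tau'|}$ through a single $f^*(|\Delta_{i(\tau')}|)$ is not available, and the continuity of $g^{-1}$ is not established. Your sketch of the second claim has an additional circularity: you test continuity of a map \emph{out of} $f^*(|X|)^{naive}$ on simplicial probes, which presupposes exactly the $\Delta$-generatedness of $f^*(|X|)^{naive}$ that you are trying to prove.

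The paper's proof avoids both problems by working entirely in the naive category: bijectivity is the same set-theoretic observation you make, and the topological content is reduced to showing that $\bigsqcup f^*(|\Delta_i|)^{naive}\to f^*(|X|)^{naive}$ is a quotient map. Since pullbacks commute with coproducts, this follows once the surjection $\bigsqcup|\Delta_i|\to|X|$ remains a quotient after base change, and that is supplied by a properness argument: by local finiteness this map is proper and $|X|$ is locally compact Hausdorff, so the map is universally closed, and a closed surjection is a quotient map. This proves the statement at the level of naive topologies, so the $\Delta$-generated version follows by applying $k_\Delta$, and the second claim follows because a colimit of $\Delta$-generated spaces is $\Delta$-generated. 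If you want to keep your probe reduction, the viable repair is not subdivision but compactness: the probe factors through $f^*(|X'|)$ for a finite subcomplex $X'$, and for $X'$ the map $\bigsqcup_i|\Delta_i|\to|X'|$ is a proper surjection (compact source, Hausdorff target), hence universally closed — i.e.\ you still need the closedness/properness input; a purely combinatorial Lebesgue/subdivision argument cannot deliver it.
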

\begin{proof}
	To show that $\varinjlim(f^*(|\Delta_i|)) \longrightarrow f^*(|X|)$ is a homeomorphism, it really suffices to show that it is a quotient map. That it is bijective follows immediately from the fact that in all of the categories involved, limits and colimits are given by limits and colimits in $\textbf{Set}$ equipped with a topology and in $\textbf{Set}$ base change commutes with arbitrary colimits. We show that $\varinjlim(f^*(|\Delta_i|)^{naive}) \longrightarrow f^*(|X|)^{naive}$ is a homeomorphism. Then, as colimits in the $\Delta$-generated category are computed in the naive topological one and as $k_{\Delta}$ preserves colimits, the result follows. It suffices to show that $$\bigsqcup f^*(|\Delta_i|)^{naive} \longrightarrow f^*(|X|)^{naive}$$ is a quotient map. As pullbacks in naive topological spaces commute with arbitrary coproducts, this map fits into a commutative diagram in the naive category of topological spaces
	$$\begin{tikzcd}
	{\bigsqcup f^*(|\Delta_i|)^{naive}} \arrow[r] \arrow[d] & \bigsqcup {|\Delta_i|} \arrow[d]\\
	{f^*(|X|)}^{naive} \arrow[r] \arrow[d] & {|X| }\arrow[d, "\varphi_X"]\\
	B \arrow[r, "f"] &A	
	\end{tikzcd}$$
	with both squares cartesian. Hence, it suffices to show that the quotient map $\bigsqcup |\Delta_i| \to |X|$ stays a quotient map, under pulling back.
	However, it is a well known fact that proper maps between locally compact Hausdorff spaces are universally closed (i.e. every pullback of such a map is closed). See for example \cite[\href{https://stacks.math.columbia.edu/tag/005R}{Tag 005R}]{stacks-project} together with \cite[Ch.9 Prop. 7]{bourbaki1966general}. As $X$ is locally finite, this is the case, i.e. the quotient map is proper, and $|X|$ is locally compact Hausdorff. As a surjective, closed map is a quotient map, this finishes the proof.
\end{proof}
\subsection{A basic set theoretic manipulation}
\begin{lemma}\label{lemSetPerspective}
	For arrows in \textnormal{\textbf{Set}} $A \to B$, $X \to Y$, $A \to A'$ and $B' = B \cup_{A} A'$ the induced diagram
	$$ \begin{tikzcd}
		B \times X \cup_{A \times X} A \times Y \arrow[r] \arrow[d] & B \times Y \arrow d\\
		B' \times X \cup_{A' \times X} A' \times Y \arrow[r] & B' \times Y
	\end{tikzcd}$$
is a pushout diagram.
\end{lemma}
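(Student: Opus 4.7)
The approach is to verify the universal property of the pushout directly by an element chase, exploiting that in $\textbf{Set}$ both product functors $- \times X$ and $- \times Y$ preserve colimits in each variable.

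Preservation of colimits by $- \times Y$ identifies $B' \times Y$ canonically with the pushout $B \times Y \cup_{A \times Y} A' \times Y$. So every element of $B' \times Y$ admits a representative either of the form $(b, y)$ with $b \in B$, or of the form $(a', y)$ with $a' \in A'$, two representatives being identified precisely when they both come from some $(a, y)$ with $a \in A$. Given compatible maps $\varphi \colon B \times Y \to Z$ and $\psi \colon P' \to Z$ — meaning that their composites with the two structure maps out of $P$ agree — I will define the candidate extension $h \colon B' \times Y \to Z$ by $h(b, y) := \varphi(b, y)$ on representatives of the first type and $h(a', y) := \psi(a', y)$ on representatives of the second type (using the defining inclusion $A' \times Y \hookrightarrow P'$). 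Well-definedness on the overlap uses the compatibility assumption via the element $(a, y) \in A \times Y \subseteq P$, whose images in $B \times Y$ and in $P'$ are precisely $(b, y)$ and $(a', y)$ respectively. Uniqueness of $h$ is automatic since $B' \times Y$ is set-theoretically covered by the images of $B \times Y$ and $A' \times Y$.

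The main point requiring care is bookkeeping: one must correctly track that the structure map $P \to B \times Y$ restricts on the $B \times X$ summand of $P$ via $\mathrm{id}_B \times (X \to Y)$ and on the $A \times Y$ summand via $(A \to B) \times \mathrm{id}_Y$, and analogously for $P \to P'$. All such identifications follow mechanically from the pushout descriptions of $P$ and $P'$. A more categorical alternative, likely cleaner if one wants to bypass elements entirely, is to apply the pushout pasting lemma twice — first identifying $P' \cong P \cup_{A \times Y} A' \times Y$ (using preservation of colimits by $- \times X$ to rewrite $B' \times X$ and then pasting), and then pasting the resulting square with the defining square of $P$ — to obtain $(B \times Y) \cup_P P' \cong (B \times Y) \cup_{A \times Y} A' \times Y \cong B' \times Y$, with naturality guaranteeing that the resulting isomorphism is the canonical comparison map.
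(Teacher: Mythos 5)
Your proof is correct, but your primary route is genuinely different from the paper's. Writing $P=B\times X\cup_{A\times X}A\times Y$ and $P'=B'\times X\cup_{A'\times X}A'\times Y$ for the two left-hand corners, you verify the universal property directly by an element chase: identify $B'\times Y$ with $B\times Y\cup_{A\times Y}A'\times Y$ (as $-\times Y$ preserves colimits), define the comparison map on representatives from $B\times Y$ and from $A'\times Y$, and use compatibility to see it descends. The paper instead argues purely diagrammatically: it pastes the square in question onto the square with left edge $A\times Y\to A'\times Y$, observes that the outer rectangle is a pushout because $-\times Y$ preserves pushouts, and reduces to showing the pasted-on left square is cocartesian, which it does by a chain of natural isomorphisms amounting to $P'\cong P\cup_{A\times Y}A'\times Y$; pushout pasting/cancellation then yields the claim. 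This is exactly the ``more categorical alternative'' you sketch at the end, so your fallback coincides with the paper's proof. What each buys: your element chase is elementary and self-contained but needs the bookkeeping you allude to carried out in full --- in particular one must also check that the induced map, restricted along $P'\to B'\times Y$, agrees with $\psi$ on the $B'\times X$ summand of $P'$ and not only on $A'\times Y$ (this does follow mechanically, by compatibility in one case and by the defining identification over $A'\times X$ in the other); the paper's argument is shorter and transfers verbatim to any setting where the relevant product functors preserve pushouts, which is how it is actually invoked (the lemma is used to justify a computation in a presheaf category). One small imprecision in your write-up: in a general pushout of sets two representatives are identified when they are linked by a \emph{chain} of elementary identifications through $A\times Y$, not ``precisely when'' a single $(a,y)$ maps to both; this does not damage your argument, since constancy on the generating relation already suffices for the map to descend to the quotient.
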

\begin{proof}
Really all we are going to need is the fact that in $\textbf{Set}$ the product preserves colimits as $\textbf{Set}$ is a close monoidal category. Now, consider the following composition of commutative diagrams.
\begin{center}

\begin{tikzcd}
	A \times Y \arrow[r] \arrow[d]&B \times X \cup_{A \times X} A \times Y \arrow[r] \arrow[d] & B \times Y \arrow d\\
	A' \times Y \arrow[r]&B' \times X \cup_{A' \times X} A' \times Y \arrow[r] & B' \times Y
\end{tikzcd}
\end{center}
With the obvious arrows. As $ - \times Y$ preserves colimits, the larger outer commutative square is a pushout. So it suffices to show that the left hand square is a pushout square. This follows from the natural isomorphisms:
\begin{align*}
(B \times X \cup_{A \times X} A \times Y)\cup_{A \times Y} A' \times Y & \cong B \times X \cup_{A \times X} A' \times Y \\ & \cong B \times X \cup_{A \times X} (A' \times X \cup_{A' \times X} A' \times Y)\\
& \cong (B \times X \cup_{A \times X} A' \times X) \cup_{A' \times X} A' \times Y
\\ &\cong B' \times X \cup_{A' \times X} A' \times Y
\end{align*}
\end{proof}
\end{appendices}
		
\chapter*{\normalsize Declaration of Authorship\\Eigenständigkeitserklärung}
\vspace{3 cm}
I, Lukas Waas, hereby declare that this thesis is my own work and that I used no sources other than those indicated.\\
\vspace{1 cm}\\
Ich, Lukas Waas, erkläre hiermit, dass ich diese Arbeit eigenständig verfasst habe und keine anderen Quellen als die angegebenen verwendet habe.\\
\vspace{3 cm}\\
\begin{tabular}{p{7cm}p{.5cm}l}
	\dotfill \\
	Place and date\\ 
	Ort, Datum
\end{tabular}%
\begin{tabular}{p{7cm}p{.5cm}l}
	\dotfill \\ 
	Signature\\
	Unterschrift
\end{tabular}
\hfill
\end{document}